\numberwithin{equation}{section}
\numberwithin{figure}{section}
  \theoremstyle{plain}
  \newtheorem*{thm*}{\protect\theoremname}
\theoremstyle{plain}
\newtheorem{thm}{\protect\theoremname}[section]
  \theoremstyle{definition}
  \newtheorem{defn}[thm]{\protect\definitionname}
  \theoremstyle{remark}
  \newtheorem*{rem*}{\protect\remarkname}
  \theoremstyle{plain}
  \newtheorem{assumption}[thm]{\protect\assumptionname}
  \theoremstyle{plain}
  \newtheorem{prop}[thm]{\protect\propositionname}
  \theoremstyle{plain}
  \newtheorem{lem}[thm]{\protect\lemmaname}
  \theoremstyle{plain}
  \newtheorem{cor}[thm]{\protect\corollaryname}
  \theoremstyle{remark}
  \newtheorem{rem}[thm]{\protect\remarkname}
 \newlist{casenv}{enumerate}{4}
 \setlist[casenv]{leftmargin=*,align=left,widest={iiii}}
 \setlist[casenv,1]{label={{\itshape\ \casename} \arabic*.},ref=\arabic*}
 \setlist[casenv,2]{label={{\itshape\ \casename} \roman*.},ref=\roman*}
 \setlist[casenv,3]{label={{\itshape\ \casename\ \alph*.}},ref=\alph*}
 \setlist[casenv,4]{label={{\itshape\ \casename} \arabic*.},ref=\arabic*}
  \theoremstyle{definition}
  \newtheorem{example}[thm]{\protect\examplename}
  \theoremstyle{plain}
  \newtheorem*{lem*}{\protect\lemmaname}
\let\emptyset\varnothing
\DeclareMathOperator*{\esssup}{ess\,sup}
\theoremstyle{remark}
\renewenvironment{rem}
  {\pushQED{\qed}\remx}
  {\popQED\endremx}
\theoremstyle{remark}
\renewenvironment{rem*}
  {\pushQED{\qed}\remxx}
  {\popQED\endremxx}
\theoremstyle{definition}
\renewenvironment{example}
  {\pushQED{\qed}\examplex}
  {\popQED\endexamplex}
\theoremstyle{definition}
\renewenvironment{assumption}
  {\pushQED{\qed}\assumex}
  {\popQED\endassumex}
\theoremstyle{definition}
\renewenvironment{defn}
  {\pushQED{\qed}\defnx}
  {\popQED\enddefnx}
\theoremstyle{plain}
\renewenvironment{lem}
  {\pushQED{\qed}\lemx}
  {\popQED\endlemx}
\theoremstyle{plain}
\renewenvironment{cor}
  {\pushQED{\qed}\corx}
  {\popQED\endcorx}
\theoremstyle{plain}
\renewenvironment{prop}
  {\pushQED{\qed}\propx}
  {\popQED\endpropx}
\theoremstyle{definition}
\renewenvironment{thm}
  {\pushQED{\qed}\thmx}
  {\popQED\endthmx}
\newsavebox{\@linebox}
\savebox{\@linebox}[3em][t]{\parbox[t]{3em}{%
  \@tempcnta\@ne\relax
  \loop{\underline{\scriptsize\the\@tempcnta}}\\
    \advance\@tempcnta by \@ne\ifnum\@tempcnta<55\repeat}}
  \providecommand{\assumptionname}{Assumption}
  \providecommand{\corollaryname}{Corollary}
  \providecommand{\definitionname}{Definition}
  \providecommand{\examplename}{Example}
  \providecommand{\lemmaname}{Lemma}
  \providecommand{\propositionname}{Proposition}
  \providecommand{\remarkname}{Remark}
  \providecommand{\theoremname}{Theorem}
 \providecommand{\casename}{Case}
\providecommand{\theoremname}{Theorem}
\begin{document}

\title{Analysis vs.\@ Synthesis Sparsity for \texorpdfstring{{\larger[3]$\alpha$}}{α}-Shearlets}

\date{}\subjclass[2010]{41A25; 41A30; 42C40; 42C15; 42B35}{}

\keywords{Shearlets; Sparsity; Nonlinear approximation; Decomposition spaces;
Smoothness spaces; Banach frames; Atomic decompositions}

\author{Felix Voigtlaender, Anne Pein}

\global\long\def\vertiii#1{{\left\vert \kern-0.25ex  \left\vert \kern-0.25ex  \left\vert #1\right\vert \kern-0.25ex  \right\vert \kern-0.25ex  \right\vert }}
\global\long\def\essup{\esssup}
\global\long\def\with{\,\middle|\,}
\global\long\def\DecompSp#1#2#3#4{{\mathcal{D}\left({#1},L_{#4}^{#2},{#3}\right)}}
\global\long\def\FourierDecompSp#1#2#3#4{{\mathcal{D}_{\mathcal{F}}\left({#1},L_{#4}^{#2},{#3}\right)}}
\global\long\def\BAPUFourierDecompSp#1#2#3#4{{\mathcal{D}_{\mathcal{F},{#4}}\left({#1},L^{#2},{#3}\right)}}
\global\long\def\R{\mathbb{R}}
\global\long\def\Compl{\mathbb{C}}
\global\long\def\N{\mathbb{N}}
\global\long\def\Z{\mathbb{Z}}
\global\long\def\CalQ{\mathcal{Q}}
\global\long\def\CalP{\mathcal{P}}
\global\long\def\CalR{\mathcal{R}}
\global\long\def\CalS{\mathcal{S}}
\global\long\def\CalD{\mathcal{D}}
\global\long\def\BesovInhom#1#2#3{\mathcal{B}_{#3}^{#1,#2}}
\global\long\def\BesovHom#1#2#3{\dot{\mathcal{B}}_{#3}^{#1,#2}}
\global\long\def\dimension{d}
\global\long\def\ModSpace#1#2#3{M_{#3}^{#1,#2}}
\global\long\def\AlphaModSpace#1#2#3#4{M_{#3,#4}^{#1,#2}}
\global\long\def\ShearletSmoothness#1#2#3{S_{#3}^{#1,#2}}
\global\long\def\GL{\mathrm{GL}}
\global\long\def\CalO{\mathcal{O}}
\global\long\def\Coorbit{\mathrm{Co}}
\global\long\def\d{\operatorname{d}}
\global\long\def\TestFunctionSpace#1{C_{c}^{\infty}\left(#1\right)}
\global\long\def\DistributionSpace#1{\mathcal{D}'\left(#1\right)}
\global\long\def\SpaceTestFunctions#1{Z\left(#1\right)}
\global\long\def\SpaceReservoir#1{Z'\left(#1\right)}
\global\long\def\Schwartz{\mathcal{S}}
\global\long\def\Fourier{\mathcal{F}}
\global\long\def\supp{\operatorname{supp}}
\global\long\def\dist{\operatorname{dist}}
\global\long\def\Xhookrightarrow#1{\xhookrightarrow{#1}}
\global\long\def\Xmapsto#1{\xmapsto{#1}}
\global\long\def\Indicator{{\mathds{1}}}
\global\long\def\identity{\operatorname{id}}
\global\long\def\RaisedSup#1{\raisebox{0.15cm}{\mbox{\ensuremath{{\displaystyle \sup_{#1}}}}}}
\global\long\def\Raised#1{\raisebox{0.12cm}{\mbox{\ensuremath{#1}}}}
\global\long\def\osc#1{\operatorname{osc}_{#1}}
\global\long\def\mybullet{\bullet}
\global\long\def\uin{\left(n,m,\delta\right)}
\global\long\def\cin{\left(n,m,\varepsilon,\delta\right)}

\begin{abstract}
There are two notions of sparsity associated to a frame $\Psi=\left(\psi_{i}\right)_{i\in I}$:
\emph{Analysis sparsity} of $f$ means that the analysis coefficients
$\left(\left\langle f,\,\psi_{i}\right\rangle \right)_{i\in I}$ are
sparse, while \emph{synthesis sparsity} means that we can write $f=\sum_{i\in I}c_{i}\psi_{i}$
with sparse synthesis coefficients $\left(c_{i}\right)_{i\in I}$.
Here, \emph{sparsity} of a sequence $c=\left(c_{i}\right)_{i\in I}$
means $c\in\ell^{p}\left(I\right)$ for a given $p<2$. We show that
both notions of sparsity coincide if $\Psi={\rm SH}\left(\varphi,\psi;\,\delta\right)=\left(\psi_{i}\right)_{i\in I}$
is a discrete (cone-adapted) shearlet frame with sufficiently nice
generators $\varphi,\psi$ and sufficiently small sampling density
$\delta>0$. The required 'niceness' of $\varphi,\psi$ is explicitly
quantified in terms of Fourier-decay and vanishing moment conditions.
In addition to $\ell^{p}$-sparsity, we even allow \emph{weighted}
$\ell^{p}$-spaces $\ell_{w^{s}}^{p}$ as a sparsity measure, with
weights of the form $w^{s}=\left(2^{js}\right)_{\left(j,\ell,\delta,k\right)}$
where $j$ encodes the \emph{scale} of the corresponding shearlet
elements.

More precisely, we show that the \emph{shearlet smoothness spaces}
$\mathscr{S}_{s}^{p,q}\left(\R^{2}\right)$ introduced by Labate et
al\@.\@ \emph{simultaneously} characterize analysis and synthesis
sparsity with respect to a shearlet frame, in the sense that—for suitable
$\varphi,\psi,\delta$—the following are equivalent: 1) $f\in\mathscr{S}_{s+\frac{3}{2}\left(p^{-1}-2^{-1}\right)}^{p,p}\left(\R^{2}\right)$;
\quad{}2) $\left(\left\langle f,\psi_{i}\right\rangle \right)_{i\in I}\in\ell_{w^{s}}^{p}$;
\quad{}3) $f=\sum_{i\in I}c_{i}\psi_{i}$ for suitable coefficients
$c=\left(c_{i}\right)_{i\in I}\in\ell_{w^{s}}^{p}$.

As an application, we prove that \emph{shearlets yield (almost) optimal
approximation rates for the class of cartoon-like functions}: If $f$
is cartoon-like and $\varepsilon>0$, then $\left\Vert f-f_{N}\right\Vert _{L^{2}}\lesssim N^{-\left(1-\varepsilon\right)}$,
where $f_{N}$ is a linear combination of $N$ shearlets. This might
appear to be a well-known statement, but an inspection of the existing
proofs reveals that these only establish \emph{analysis sparsity}
of cartoon-like functions, which implies $\left\Vert f-g_{N}\right\Vert _{L^{2}}\lesssim N^{-1}\cdot\left(1+\log N\right)^{3/2}$,
where $g_{N}$ is a linear combination of $N$ elements of the \emph{dual
frame} $\widetilde{\Psi}$ to the shearlet frame $\Psi$. This is
not completely satisfying, since only limited knowledge about the
structure and properties of $\widetilde{\Psi}$ is available.

In addition to classical shearlets, we also consider more general
$\alpha$-shearlet systems. For these, the parabolic scaling is replaced
by $\alpha$-parabolic scaling. The resulting systems range from ridgelet-like
systems (for $\alpha=0$) over classical shearlets ($\alpha=\frac{1}{2}$)
to wavelet-like systems ($\alpha=1$). In this more general case,
the shearlet smoothness spaces $\mathscr{S}_{s}^{p,q}\left(\R^{2}\right)$
have to be replaced by the \emph{$\alpha$-shearlet smoothness spaces}
$\mathscr{S}_{\alpha,s}^{p,q}\left(\R^{2}\right)$. We completely
characterize the existence of embeddings between these spaces for
\emph{different} values of $\alpha$. This allows us to decide whether
sparsity with respect to $\alpha_{1}$-shearlets implies sparsity
with respect to $\alpha_{2}$-shearlets, even for $\alpha_{1}\neq\alpha_{2}$.
\end{abstract}

\maketitle

\section{Introduction}

\label{sec:Introduction}A cone-adapted \textbf{shearlet system}\cite{CompactlySupportedShearletFrames,CompactlySupportedShearletsAreOptimallySparse,OptimallySparseMultidimensionalRepresentationUsingShearlets,shearlet_book,ConeAdaptedShearletFirstPaper}
${\rm SH}\left(\varphi,\psi,\theta;\delta\right)$ is a \emph{directional}
multiscale system in $L^{2}\left(\R^{2}\right)$ that is obtained
by applying suitable translations, shearings and parabolic dilations
to the generators $\varphi,\psi,\theta$. The shearings are utilized
to obtain elements with different \emph{orientations}; precisely,
the number of different orientations on scale $j$ is approximately
$2^{j/2}$, in stark contrast to wavelet-like systems which only employ
a constant number of directions per scale. We refer to Definition
\ref{def:AlphaShearletSystem} for a more precise description of shearlet
systems.

One of the most celebrated properties of shearlets is their ability
to provide ``optimally sparse approximations'' for functions that
are governed by directional features like edges. This can be made
more precise by introducing the class $\mathcal{E}^{2}\left(\R^{2}\right)$
of \textbf{$C^{2}$-cartoon-like functions}; roughly, these are all
compactly supported functions that are \emph{$C^{2}$ away from a
$C^{2}$ edge}\cite{OptimallySparseMultidimensionalRepresentationUsingShearlets}.
More rigorously, the class $\mathcal{E}^{2}\left(\R^{2}\right)$ consists
of all functions $f$ that can be written as $f=f_{1}+\Indicator_{B}\cdot f_{2}$
with $f_{1},f_{2}\in C_{c}^{2}\left(\R^{2}\right)$ and a compact
set $B\subset\R^{2}$ whose boundary $\partial B$ is a $C^{2}$ Jordan
curve; see also Definition \ref{def:CartoonLikeFunction} for a completely
formal description of the class of cartoon-like functions. With this
notion, the (almost) optimal sparse approximation of cartoon-like
functions as understood in \cite{OptimallySparseMultidimensionalRepresentationUsingShearlets,CompactlySupportedShearletsAreOptimallySparse}
means that
\begin{equation}
\left\Vert f-f_{N}\right\Vert _{L^{2}}\lesssim N^{-1}\cdot\left(1+\log N\right)^{3/2}\qquad\forall N\in\N\text{ and }f\in\mathcal{E}^{2}\left(\R^{2}\right).\label{eq:IntroductionShearletApproximationRate}
\end{equation}
Here, the \textbf{$N$-term approximation} $f_{N}$ is obtained by
retaining only the $N$ largest coefficients in the expansion $f=\sum_{i\in I}\left\langle f,\psi_{i}\right\rangle \widetilde{\psi_{i}}$,
where $\widetilde{\Psi}=\left(\smash{\widetilde{\psi_{i}}}\right)_{i\in I}$
is a dual frame for the shearlet frame $\Psi={\rm SH}\left(\varphi,\psi,\theta;\delta\right)=\left(\psi_{i}\right)_{i\in I}$.
Formally, this means $f_{N}=\sum_{i\in I_{N}}\left\langle f,\psi_{i}\right\rangle \widetilde{\psi_{i}}$,
where the set $I_{N}\subset I$ satisfies $\left|I_{N}\right|=N$
and $\left|\left\langle f,\psi_{i}\right\rangle \right|\geq\left|\left\langle f,\psi_{j}\right\rangle \right|$
for all $i\in I_{N}$ and $j\in I\setminus I_{N}$.

One can even show that the approximation rate in equation \eqref{eq:IntroductionShearletApproximationRate}
is optimal up to log factors; i.e., up to log factors, no reasonable
system $\left(\varrho_{n}\right)_{n\in\N}$ can achieve a better approximation
rate for the whole class $\mathcal{E}^{2}\left(\R^{2}\right)$. The
restriction to ``reasonable'' systems is made to exclude pathological
cases like dense subsets of $L^{2}\left(\R^{2}\right)$ and involves
a restriction of the \emph{search depth}: The $N$-term approximation
$f_{N}=\sum_{n\in J_{N}}c_{n}\varrho_{n}$ has to satisfy $\left|J_{N}\right|=N$
and furthermore $J_{N}\subset\left\{ 1,\dots,\pi\left(N\right)\right\} $
for a fixed polynomial $\pi$. For more details on this restriction,
we refer to \cite[Section 2.1.1]{CartoonApproximationWithAlphaCurvelets}.

The approximation rate achieved by shearlets is precisely the same
as that obtained by (second generation) \textbf{curvelets}\cite{CandesDonohoCurvelets}.
Note, however, that the construction of curvelets in \cite{CandesDonohoCurvelets}
uses \emph{bandlimited} frame elements, while shearlet frames can
be chosen to have compact support\cite{CompactlySupportedShearletsAreOptimallySparse,CompactlySupportedShearletFrames}.
A frame with compactly supported elements is potentially advantageous
for implementations, but also for theoretical considerations, since
localization arguments are highly simplified and since compactly supported
frames can be adapted to frames on bounded domains, see e.g.\@ \cite{AnisotropicMultiscaleSystemsOnBoundedDomains,ShearletFramesForSobolevSpaces}.
A further advantage of shearlets over curvelets is that curvelets
are defined using \emph{rotations}, while shearlets employ \emph{shearings}
to change the orientation; in contrast to rotations, these shearings
leave the digital grid $\Z^{2}$ invariant, which is beneficial for
implementations.

\subsection{Cartoon approximation \emph{by shearlets}}

Despite its great utility, the approximation result in equation \eqref{eq:IntroductionShearletApproximationRate}
has one remaining issue: It yields a rapid approximation of $f$ by
a linear combination of $N$ elements of the \emph{dual frame} $\widetilde{\Psi}$
of the shearlet frame $\Psi$, \emph{not} by a linear combination
of $N$ elements of $\Psi$ itself. If $\Psi$ is a tight frame, this
is no problem, but the only known construction of tight cone-adapted
shearlet frames uses \emph{bandlimited} generators. In case of a \emph{non-tight}
cone-adapted shearlet frame, the only knowledge about $\widetilde{\Psi}$
that is available is that $\widetilde{\Psi}$ is a frame with dual
$\Psi$; but nothing seems to be known\cite{IntrinsicLocalizationOfAnisotropicFrames}
about the support, the smoothness, the decay or the frequency localization
of the elements of $\widetilde{\Psi}$. Thus, it is highly desirable
to have an approximation result similar to equation \eqref{eq:IntroductionShearletApproximationRate},
but with $f_{N}$ being a linear combination of $N$ elements \emph{of
the shearlet frame $\Psi={\rm SH}\left(\varphi,\psi,\theta;\delta\right)$
itself}.

We will provide such a result by showing that \textbf{analysis sparsity}
with respect to a (suitable) shearlet frame ${\rm SH}\left(\varphi,\psi,\theta;\delta\right)$
is \emph{equivalent} to \textbf{synthesis sparsity} with respect to
the same frame, cf.\@ Theorem \ref{thm:AnalysisAndSynthesisSparsityAreEquivalent}.
Here, \emph{analysis sparsity} with respect to a frame $\Psi=\left(\psi_{i}\right)_{i\in I}$
means that the \textbf{analysis coefficients} $A_{\Psi}f=\left(\left\langle f,\psi_{i}\right\rangle \right)_{i\in I}$
are \emph{sparse}, i.e., they satisfy $A_{\Psi}f\in\ell^{p}\left(I\right)$
for some fixed $p\in\left(0,2\right)$. Note that an arbitrary function
$f\in L^{2}\left(\R^{2}\right)$ always satisfies $A_{\Psi}f\in\ell^{2}\left(I\right)$
by the frame property. \emph{Synthesis sparsity} means that we can
write $f=S_{\Psi}c=\sum_{i\in I}c_{i}\psi_{i}$ for a \emph{sparse}
sequence $c=\left(c_{i}\right)_{i\in I}$, i.e., $c\in\ell^{p}\left(I\right)$.
For general frames, these two properties need not be equivalent, as
shown in Section \ref{sec:AnalysisSynthesisSparsityNotEquivalentInGeneral}.

Note though that such an equivalence would indeed imply the desired
result, since the proof of equation \eqref{eq:IntroductionShearletApproximationRate}
given in \cite{CompactlySupportedShearletsAreOptimallySparse} proceeds
by a careful analysis of the analysis coefficients $A_{\Psi}f$ of
a cartoon-like function $f$: By counting how many shearlets intersect
the ``problematic'' region $\partial B$ where $f=f_{1}+\Indicator_{B}\cdot f_{2}$
is not $C^{2}$ and by then distinguishing whether the orientation
of the shearlet is aligned with the boundary curve $\partial B$ or
not, the authors show $\sum_{n>N}\left|\theta_{n}\left(f\right)\right|^{2}\lesssim N^{-2}\cdot\left(1+\log N\right)^{3}$,
where $\left(\theta_{n}\left(f\right)\right)_{n\in\N}$ is the \emph{nonincreasing
rearrangement} of the shearlet analysis coefficients $A_{\Psi}f$.
It is not too hard to see (see e.g.\@ the proof of Theorem \ref{thm:CartoonApproximationWithAlphaShearlets})
that this implies $A_{\Psi}f\in\ell^{p}\left(I\right)$ for all $p>\frac{2}{3}$.
Assuming that analysis sparsity with respect to the shearlet frame
$\Psi$ is indeed equivalent to synthesis sparsity, this implies $f=\sum_{i\in I}c_{i}\psi_{i}$
for a sequence $c=\left(c_{i}\right)_{i\in I}\in\ell^{p}\left(I\right)$.
Then, simply by taking only the $N$ largest coefficients of the sequence
$c$ and by using that the synthesis map $S_{\Psi}:\ell^{2}\left(I\right)\to L^{2}\left(\R^{2}\right),\left(e_{i}\right)_{i\in I}\mapsto\sum_{i\in I}e_{i}\psi_{i}$
is bounded, it is not hard to see $\left\Vert f-f_{N}\right\Vert _{L^{2}}\lesssim\left\Vert c-c\cdot\Indicator_{I_{N}}\right\Vert _{\ell^{2}}\lesssim N^{-\left(p^{-1}-2^{-1}\right)}$,
where $I_{N}\subset I$ is a set containing $N$ largest coefficients
of $c$.

Thus, once we know that analysis sparsity with respect to a (suitable)
shearlet frame is equivalent to synthesis sparsity, we only need to
make the preceding argument completely rigorous.

\subsection{Previous results concerning the equivalence of analysis and synthesis
sparsity for shearlets}

As noted above, analysis sparsity and synthesis sparsity need not
be equivalent for general frames. To address this and other problems,
Gröchenig\cite{LocalizationOfFrames} and Gröchenig \& Cordero\cite{LocalizationOfFrames2},
as well as Gröchenig \& Fornasier\cite{IntrinsicLocalizationOfFrames}
introduced the concept of \textbf{(intrinsically) localized frames}
for which these two properties are indeed equivalent, cf.\@ \cite[Proposition 2]{GribonvalNielsenHighlySparseRepresentations}.

In contrast to Gabor- and wavelet frames, it is quite nontrivial,
however, to verify that a shearlet or curvelet frame is intrinsically
localized: To our knowledge, the only papers discussing \emph{a variant}
of this property are \cite{IntrinsicLocalizationOfAnisotropicFrames,IntrinsicLocalizationOfAnisotropicFrames2},
where the results from \cite{IntrinsicLocalizationOfAnisotropicFrames}
about curvelets and shearlets are generalized in \cite{IntrinsicLocalizationOfAnisotropicFrames2}
to the setting of $\alpha$-molecules; a generalization that we will
discuss below in greater detail. For now, let us stick to the setting
of \cite{IntrinsicLocalizationOfAnisotropicFrames}. In that paper,
Grohs considers a certain \textbf{distance function} $\omega:\Lambda^{S}\times\Lambda^{S}\to\left[1,\infty\right)$
(cf.\@ \cite[Definition 3.9]{IntrinsicLocalizationOfAnisotropicFrames}
for the precise formula) on the index set
\[
\Lambda^{S}:=\left\{ \left(j,\ell,k,\delta\right)\in\N_{0}\times\Z\times\Z^{2}\times\left\{ 0,1\right\} \with-2^{\left\lfloor j/2\right\rfloor }\leq\ell<2^{\left\lfloor j/2\right\rfloor }\right\} ,\vspace{-0.05cm}
\]
which is (a slightly modified version of) the index set that is used
for shearlet frames. A shearlet frame $\Psi=\left(\psi_{\lambda}\right)_{\lambda\in\Lambda^{S}}$
is called \textbf{$N$-localized with respect to $\omega$} if the
associated \textbf{Gramian matrix} $\mathbf{A}:=\mathbf{A}_{\Psi}:=\left(\left\langle \psi_{\lambda},\psi_{\lambda'}\right\rangle \right)_{\lambda,\lambda'\in\Lambda^{S}}$
satisfies
\begin{equation}
\left|\left\langle \psi_{\lambda},\,\psi_{\lambda'}\right\rangle \right|\leq\left\Vert \mathbf{A}\right\Vert _{\mathcal{B}_{N}}\cdot\left[\omega\left(\lambda,\lambda'\right)\right]^{-N}\qquad\forall\lambda,\lambda'\in\Lambda^{S},\label{eq:GrohsLocalizationDefinition}
\end{equation}
where $\left\Vert \mathbf{A}\right\Vert _{\mathcal{B}_{N}}$ is chosen
to be the optimal constant in the preceding inequality.

Then, if $\Psi$ is a frame with \textbf{frame bounds} $A,B>0$, i.e.,
if $A^{2}\cdot\left\Vert f\right\Vert _{L^{2}}^{2}\leq\sum_{\lambda\in\Lambda^{S}}\left|\left\langle f,\psi_{\lambda}\right\rangle \right|^{2}\leq B^{2}\cdot\left\Vert f\right\Vert _{L^{2}}^{2}$
for all $f\in L^{2}\left(\R^{2}\right)$, \cite[Lemma 3.3]{IntrinsicLocalizationOfAnisotropicFrames}
shows that the infinite matrix $\mathbf{A}$ induces a bounded, positive
semi-definite operator $\mathbf{A}:\ell^{2}\left(\Lambda^{S}\right)\to\ell^{2}\left(\Lambda^{S}\right)$
that furthermore satisfies $\sigma\left(\mathbf{A}\right)\subset\left\{ 0\right\} \cup\left[A,B\right]$
and the \textbf{Moore-Penrose pseudoinverse} $\mathbf{A}^{+}$ of
$\mathbf{A}$ is the Gramian associated to the \textbf{canonical dual
frame} $\widetilde{\Psi}$ of $\Psi$. This is important, since \cite[Theorem 3.11]{IntrinsicLocalizationOfAnisotropicFrames}
now yields the following:
\begin{thm*}
Assume that $\Psi=\left(\psi_{\lambda}\right)_{\lambda\in\Lambda^{S}}$
is a shearlet frame with sampling density $\delta>0$ and frame bounds
$A,B>0$. Furthermore, assume that $\Psi$ is $N+L$-localized with
respect to $\omega$, where
\[
N>2\qquad\text{ and }\qquad L>2\cdot\frac{\ln\left(10\right)}{\ln\left(5/4\right)}.
\]
Then the canonical dual frame $\widetilde{\Psi}$ of $\Psi$ is $N^{+}$-localized
with respect to $\omega$, with
\begin{equation}
N^{+}=N\cdot\left(1+\frac{\log\left(1+\frac{2}{A^{2}+B^{2}}\left\Vert \mathbf{A}\right\Vert _{N+L}\cdot\left[1+C_{\delta}\cdot\left(\frac{2}{1-2^{-L/2-2}}+\frac{8}{3}+\frac{1}{1-2^{2-L/2}}+\frac{1}{1-2^{-L/2}}\right)\right]^{2}\right)}{\log\left(\frac{B^{2}+A^{2}}{B^{2}-A^{2}}\right)}\right)^{-1},\label{eq:GrohsNPlusDefinition}
\end{equation}
where the constant $C_{\delta}>0$ only depends on the sampling density
$\delta>0$.
\end{thm*}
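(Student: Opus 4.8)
The plan is to prove this as a \emph{quantitative} form of inverse-closedness for the matrix algebra with $\omega$-off-diagonal decay. For $s>0$ write $\mathcal{B}_{s}$ for the space of matrices $\mathbf{M}=(m_{\lambda,\lambda'})_{\lambda,\lambda'\in\Lambda^{S}}$ with $\|\mathbf{M}\|_{\mathcal{B}_{s}}:=\sup_{\lambda,\lambda'\in\Lambda^{S}}|m_{\lambda,\lambda'}|\cdot[\omega(\lambda,\lambda')]^{s}<\infty$, so that the hypothesis becomes $\mathbf{A}\in\mathcal{B}_{N+L}$ with $\|\mathbf{A}\|_{\mathcal{B}_{N+L}}=\|\mathbf{A}\|_{N+L}$, while the conclusion to be shown is $\mathbf{A}^{+}\in\mathcal{B}_{N^{+}}$ (recall that $\mathbf{A}^{+}$ is the Gramian of $\widetilde{\Psi}$). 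From the frame property one already knows that $\mathbf{A}$ is a bounded, positive semidefinite operator on $\ell^{2}(\Lambda^{S})$ with $\sigma(\mathbf{A})\subseteq\{0\}\cup[A^{2},B^{2}]$; in particular $0$ is an \emph{isolated} point of $\sigma(\mathbf{A})$, so that $\mathbf{A}^{+}$ is bounded on $\ell^{2}$.

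The first --- and by far the hardest --- step is a \textbf{composition (Schur-type) estimate} for the shearlet distance $\omega$: there is an explicit threshold such that, for $s$ above it, $\mathcal{B}_{s}$ is a Banach algebra and left multiplication by $\mathbf{A}$ maps $\mathcal{B}_{s}$ into itself with operator norm at most $C_{s}\cdot\|\mathbf{A}\|_{\mathcal{B}_{s+L}}$, where $C_{s}=\sup_{\lambda,\lambda'\in\Lambda^{S}}\sum_{\lambda''\in\Lambda^{S}}[\omega(\lambda,\lambda')]^{s}\,[\omega(\lambda,\lambda'')]^{-(s+L)}\,[\omega(\lambda'',\lambda')]^{-s}$. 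Bounding $C_{s}$ is exactly where the geometry of the shearlet index set enters: one uses that $\omega$ from \cite[Definition 3.9]{IntrinsicLocalizationOfAnisotropicFrames} is submultiplicative up to a constant, decomposes $\omega(\lambda,\lambda')$ into a scale part $\asymp2^{|j-j'|}$, a rescaled shear part and an anisotropic translation part, and counts the indices $\lambda''$ in an ``$\omega$-annulus'' $\{\lambda'':\omega(\lambda,\lambda'')\asymp2^{m}\}$; after the usual peeling into the regions $\omega(\lambda,\lambda'')\ge\tfrac{1}{2}\omega(\lambda,\lambda')$ and $\omega(\lambda'',\lambda')\ge\tfrac{1}{2}\omega(\lambda,\lambda')$, this turns $C_{s}$ into a finite product of geometric series whose ratios are fixed powers of $2^{-1/2}$, coming from the parabolic dilations and the roughly $2^{j/2}$ shears per scale. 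Requiring these series to converge to a controllable value is precisely what forces the thresholds $N>2$ and $L>2\ln(10)/\ln(5/4)$ (together with $L>4$), and what produces the sampling-density constant $C_{\delta}$ and the bracketed sum $\tfrac{2}{1-2^{-L/2-2}}+\tfrac{8}{3}+\tfrac{1}{1-2^{2-L/2}}+\tfrac{1}{1-2^{-L/2}}$ appearing in \eqref{eq:GrohsNPlusDefinition}, with $C_{N}\lesssim[1+C_{\delta}(\cdots)]^{2}$. The upshot is that every polynomial in $\mathbf{A}$ without constant term lies in $\mathcal{B}_{N}$, with $\mathcal{B}_{N}$-norm growing at most geometrically in its degree, at a rate built from $\tfrac{2}{A^{2}+B^{2}}$, $\|\mathbf{A}\|_{N+L}$ and $C_{\delta}$.

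Second, one approximates the pseudoinverse by such polynomials, chosen so as to \textbf{vanish on $\ker\mathbf{A}$}. Since $0$ is isolated in $\sigma(\mathbf{A})$ and $\sigma(\mathbf{A})\setminus\{0\}\subseteq[A^{2},B^{2}]$, one has $\mathbf{A}^{+}=h(\mathbf{A})$ for any $h$ holomorphic near $\sigma(\mathbf{A})$ with $h\equiv0$ near $0$ and $h(t)=1/t$ on $[A^{2},B^{2}]$. With the optimal relaxation parameter $c:=\tfrac{2}{A^{2}+B^{2}}$, so that $\sup_{t\in[A^{2},B^{2}]}|1-ct|=\tfrac{B^{2}-A^{2}}{B^{2}+A^{2}}=:Y^{-1}<1$, the polynomials
\[
p_{n}(\mathbf{A})\ :=\ \mathbf{A}\cdot\Bigl(c\sum_{k=0}^{n}(I-c\mathbf{A})^{k}\Bigr)^{2}
\]
vanish on $\ker\mathbf{A}$ and satisfy $|p_{n}(t)-1/t|\lesssim Y^{-n}$ on $[A^{2},B^{2}]$; since $p_{n}(0)=0=h(0)$, the spectral theorem gives $\|p_{n}(\mathbf{A})-\mathbf{A}^{+}\|_{\ell^{2}\to\ell^{2}}\lesssim Y^{-n}$. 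On the other hand the first step gives $\|p_{n}(\mathbf{A})\|_{\mathcal{B}_{N}}\lesssim X^{n}$, where $X$ is the geometric growth rate just described --- precisely the quantity whose logarithm appears in the numerator of \eqref{eq:GrohsNPlusDefinition}, the square there reflecting the squaring in the Neumann-type approximant $\bigl(c\sum_{k=0}^{n}(I-c\mathbf{A})^{k}\bigr)^{2}$.

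Finally one \textbf{interpolates}. The elementary bound $\|\mathbf{M}\|_{\mathcal{B}_{\theta}}\le\|\mathbf{M}\|_{\ell^{2}\to\ell^{2}}^{\,1-\theta/N}\cdot\|\mathbf{M}\|_{\mathcal{B}_{N}}^{\,\theta/N}$ for $0\le\theta\le N$ (a consequence of $|m_{\lambda,\lambda'}|\le\|\mathbf{M}\|_{\ell^{2}\to\ell^{2}}$ and of the factorization $[\omega]^{\theta}=([\omega]^{N})^{\theta/N}$), applied to the telescoping increments $p_{n+1}(\mathbf{A})-p_{n}(\mathbf{A})$, yields $\sum_{n}\|p_{n+1}(\mathbf{A})-p_{n}(\mathbf{A})\|_{\mathcal{B}_{\theta}}<\infty$ as soon as $Y^{-(1-\theta/N)}X^{\theta/N}<1$, that is, as soon as $\theta<N\cdot\bigl(1+\tfrac{\log X}{\log Y}\bigr)^{-1}=N^{+}$. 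For such $\theta$ the series $p_{0}(\mathbf{A})+\sum_{n\ge0}\bigl(p_{n+1}(\mathbf{A})-p_{n}(\mathbf{A})\bigr)$ converges in $\mathcal{B}_{\theta}$, and since it also converges to $\mathbf{A}^{+}$ entrywise, its limit is $\mathbf{A}^{+}$; hence $\mathbf{A}^{+}\in\mathcal{B}_{N^{+}}$, i.e., $\widetilde{\Psi}$ is $N^{+}$-localized with respect to $\omega$. As indicated, I expect the main obstacle to be the first step: making the composition estimate for the specific anisotropic, direction-dependent metric $\omega$ fully explicit --- controlling how $\omega$ behaves under composition and counting the $\omega$-annuli --- since that is where every constant in \eqref{eq:GrohsNPlusDefinition} is born.
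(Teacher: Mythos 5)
Note first that the paper does not prove this theorem---it is quoted (as a \texttt{thm*}) from Grohs, \emph{Intrinsic localization of anisotropic frames}, Theorem 3.11, to illustrate why the intrinsic-localization route is hard to apply; there is no proof in the present paper to compare against. With that understood, your strategy is indeed the one used in the cited source and throughout the Jaffard--Gr\"ochenig--Baskakov literature on inverse-closedness of matrix algebras with polynomial off-diagonal decay: (i) a Schur-type composition estimate for the anisotropic shearlet metric $\omega$, which is where the thresholds $N>2$, $L>2\ln(10)/\ln(5/4)$ and the sampling-dependent constant $C_{\delta}$ and geometric-series bracket must be produced; (ii) polynomial approximation of the Moore--Penrose pseudoinverse $\mathbf{A}^{+}$ by functions of $\mathbf{A}$ vanishing on $\ker\mathbf{A}$, using the spectral gap; (iii) the elementary interpolation between operator-norm decay and $\mathcal{B}_{N}$-norm growth. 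All three ingredients and their roles are correctly identified, and the interpolation inequality you state is correct.

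Two points deserve to be flagged. First, the step you yourself call the hardest---making the composition bound for the shearlet distance $\omega$ fully quantitative, including the index-counting in $\omega$-annuli over scale, shear and translation---is only sketched, and this is precisely where every explicit constant in \eqref{eq:GrohsNPlusDefinition} must come from; as written, the proposal does not actually establish that the bracketed expression is the correct Schur constant. Second, your bookkeeping for the approximants does not reproduce the stated $N^{+}$. The polynomial $p_{n}(\mathbf{A})=\mathbf{A}\bigl(c\sum_{k=0}^{n}(I-c\mathbf{A})^{k}\bigr)^{2}$ has degree $2n+1$, so its $\mathcal{B}_{N}$-norm grows like $X_{0}^{2n}$ with $X_{0}=1+cC_{N}\|\mathbf{A}\|_{N+L}$, while the operator-norm error decays like $Y^{-n}$; your interpolation then yields $\theta<N\bigl(1+\tfrac{2\log X_{0}}{\log Y}\bigr)^{-1}$, carrying an extra factor $2$. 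You attempt to identify this $2$ with the outer square on the bracket in \eqref{eq:GrohsNPlusDefinition}, but $X_{0}^{2}=(1+cC_{N}\|\mathbf{A}\|_{N+L})^{2}$ is not of the form $1+c\|\mathbf{A}\|_{N+L}[1+C_{\delta}(\cdots)]^{2}$, so the identification does not go through. Recovering the exact exponent in \eqref{eq:GrohsNPlusDefinition} requires either a degree-$n$ approximant of $\mathbf{A}^{+}$ (not a degree-$2n$ one) or a different accounting of the Schur constant, and the proposal should make that reconciliation explicit rather than attributing the square to the Neumann doubling.
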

To see how this theorem could in principle be used, note that the
dual frame coefficients satisfy
\[
\left(\left\langle f,\,\smash{\widetilde{\psi_{\lambda}}}\right\rangle \right)_{\lambda\in\Lambda^{S}}=\mathbf{A}^{+}\left(\left\langle f,\psi_{\lambda}\right\rangle \right)_{\lambda\in\Lambda}.
\]
Consequently, if(!)\@ the Gramian $\mathbf{A}^{+}$ of the canonical
dual frame $\widetilde{\Psi}$ of $\Psi$ restricts to a well-defined
and bounded operator $\mathbf{A}^{+}:\ell^{p}\left(\Lambda^{S}\right)\to\ell^{p}\left(\Lambda^{S}\right)$,
then analysis sparsity with respect to $\Psi$ would imply analysis
sparsity with respect to $\widetilde{\Psi}$ and thus synthesis sparsity
with respect to $\Psi$, as desired. In fact, \cite[Proposition 3.5]{IntrinsicLocalizationOfAnisotropicFrames}
shows that if $\mathbf{A}^{+}$ is $N^{+}$-localized with respect
to $\omega$, then $\mathbf{A}^{+}:\ell^{p}\left(\Lambda^{S}\right)\to\ell^{p}\left(\Lambda^{S}\right)$
is bounded \emph{as long as} $N^{+}>2p^{-1}$.

\medskip{}

Thus, it seems that all is well, in particular since a combination
of \cite[Theorem 2.9 and Proposition 3.11]{ParabolicMolecules} provides\footnote{Strictly speaking, \cite[Definition 2.4]{ParabolicMolecules} uses
the index distance $\omega\left(\lambda,\lambda'\right)=2^{\left|s_{\lambda}-s_{\lambda'}\right|}\left(1+\smash{2^{\min\left\{ s_{\lambda},s_{\lambda'}\right\} }}d\left(\lambda,\lambda'\right)\right)$
which is \emph{different} from the distance $\omega\left(\lambda,\lambda'\right)=2^{\left|s_{\lambda}-s_{\lambda'}\right|}\left(1+d\left(\lambda,\lambda'\right)\right)$
used in \cite[Definition 3.9]{IntrinsicLocalizationOfAnisotropicFrames}.
Luckily, this inconsistency is no serious problem, since the distance
in \cite{ParabolicMolecules} dominates the distance from \cite{IntrinsicLocalizationOfAnisotropicFrames},
so that $N$-localization with respect to the \cite{ParabolicMolecules}-distance
implies $N$-localization with respect to the \cite{IntrinsicLocalizationOfAnisotropicFrames}-distance.} readily verifiable conditions on the generators $\varphi,\psi,\theta$
which ensure that the shearlet frame $\Psi={\rm SH}\left(\varphi,\psi,\theta;\delta\right)$
is $N$-localized with respect to $\omega$.

There is, however, a well-hidden remaining problem which is also the
reason why the equivalence of analysis and synthesis sparsity is not
explicitly claimed in any of the papers \cite{IntrinsicLocalizationOfAnisotropicFrames,IntrinsicLocalizationOfAnisotropicFrames2,ParabolicMolecules,AlphaMolecules}:
As seen above, we need $N^{+}>2p^{-1}$, but it is not clear at all
that this can be achieved with $N^{+}$ as in equation \eqref{eq:GrohsNPlusDefinition}:
There are strong interdependencies between the different quantities
on the right-hand side of equation \eqref{eq:GrohsNPlusDefinition}
which make it next to impossible to verify $N^{+}>2p^{-1}$. Indeed,
the results in \cite{ParabolicMolecules} only yield $\left\Vert \mathbf{A}\right\Vert _{N+L}<\infty$
under certain assumptions (which depend on $N+L$) concerning $\varphi,\psi,\theta$,
but \emph{no explicit control} over $\left\Vert \mathbf{A}\right\Vert _{N+L}$
is given. Thus, it is not at all clear that increasing $N$ (or $L$)
will increase $N^{+}$. Likewise, the frame bounds $A,B$ only depend
on $\varphi,\psi,\theta$ (which are more or less fixed) and on the
sampling density $\delta$. Thus, one could be tempted to change $\delta$
to influence $A,B$ in equation \eqref{eq:GrohsNPlusDefinition} and
thus to achieve $N^{+}>2p^{-1}$. But the sampling density $\delta$
also influences $C_{\delta}$ and $\left\Vert \mathbf{A}\right\Vert _{N+L}$,
so that it is again not clear at all whether one can ensure $N^{+}>2p^{-1}$
by modifying $\delta$.

\medskip{}

A further framework for deriving the equivalence between analysis
and synthesis sparsity for frames is provided by \textbf{(generalized)
coorbit theory}\cite{FeichtingerCoorbit0,FeichtingerCoorbit1,FeichtingerCoorbit2,RauhutCoorbitQuasiBanach,GeneralizedCoorbit1,GeneralizedCoorbit2}.
Here, one starts with a \emph{continuous} frame $\Psi=\left(\psi_{x}\right)_{x\in X}$
which is indexed by a locally compact measure space $\left(X,\mu\right)$.
In the case of classical, group-based coorbit theory\cite{FeichtingerCoorbit0,FeichtingerCoorbit1,FeichtingerCoorbit2},
it is even required that $\left(\psi_{x}\right)_{x\in G}=\left(\pi\left(x\right)\psi\right)_{x\in G}$
arises from an integrable, irreducible \textbf{unitary representation}
of a locally compact topological group $G$, although one can weaken
certain of these conditions\cite{CoorbitOnHomogenousSpaces,CoorbitOnHomogenousSpaces2,CoorbitWithVoiceInFrechetSpace,CoorbitSpacesForDualPairs}.

Based on the continuous frame $\Psi$, one can then introduce so-called
\textbf{coorbit spaces} ${\rm Co}\left(Y\right)$ which are defined
in terms of decay conditions (specified by the function space $Y$)
concerning the \textbf{voice transform} $V_{\Psi}f\left(x\right):=\left\langle f,\,\psi_{x}\right\rangle $
of a function or distribution $f$. Coorbit theory then provides conditions
under which one can sample the continuous frame $\Psi$ to obtain
a discrete frame $\Psi_{d}=\left(\smash{\psi_{x_{i}}}\right)_{i\in I}$,
but such that membership of a distribution $f$ in ${\rm Co}\left(Y\right)$
is \emph{simultaneously} equivalent to analysis sparsity and to synthesis
sparsity of $f$ with respect to $\Psi_{d}$.

Thus, if one could find a \emph{continuous frame} $\Psi$ such that
the prerequisites of coorbit theory are satisfied and such that the
discretized frame $\Psi_{d}$ coincides with a discrete, \emph{cone-adapted}
shearlet frame, one would obtain the desired equivalence between analysis
sparsity and synthesis sparsity. There is, however, no known construction
of such a frame $\Psi$: Although there is a rich theory of \textbf{shearlet
coorbit spaces}\cite{Dahlke_etal_sh_coorbit1,Dahlke_etal_sh_coorbit2,DahlkeShearletArbitraryDimension,DahlkeShearletCoorbitEmbeddingsInHigherDimensions,DahlkeToeplitzShearletTransform,MR2896277,FuehrSimplifiedVanishingMomentCriteria}
which fits into the more general framework of \textbf{wavelet-type
coorbit spaces}\cite{FuehrContinuousWaveletTransformsFromSemidirectProducts,FuehrContinuousWaveletTransformsSemidirectProducts,FuehrCoorbit1,FuehrCoorbit2,FuehrGeneralizedCalderonConditions,FuehrSimplifiedVanishingMomentCriteria,FuehrVoigtlaenderCoorbitSpacesAsDecompositionSpaces,FuehrWaveletFramesAndAdmissibility},
the resulting discretized frames are \emph{not} cone-adapted shearlet
frames; instead, they are highly \emph{directionally biased} (i.e.,
they treat the $x$ and $y$ direction in very different ways) and
the number of directions per scale is \emph{infinite} for each scale;
therefore, these systems are unsuitable for most practical applications
and for the approximation of cartoon-like functions, cf.\@ \cite[Section 3.3]{ConeAdaptedShearletFirstPaper}.
Hence—at least using the currently known constructions of continuous
shearlet frames—coorbit theory can \emph{not} be used to derive the
desired equivalence of analysis and synthesis sparsity with respect
to cone-adapted shearlet frames.

\subsection{Our approach for proving the equivalence of analysis and synthesis
sparsity for shearlets}

In this paper, we use the recently introduced theory of \textbf{structured
Banach frame decompositions of decomposition spaces}\cite{StructuredBanachFrames}
to obtain the desired equivalence between analysis and synthesis sparsity
for (cone-adapted) shearlet frames. A more detailed and formal exposition
of this theory will be given in Section \ref{sec:BanachFrameDecompositionCrashCourse};
for this introduction, we restrict ourselves to the bare essentials.

The starting point in \cite{StructuredBanachFrames} is a \emph{covering}
$\CalQ=\left(Q_{i}\right)_{i\in I}$ of the frequency space $\R^{\dimension}$,
where it is assumed that each $Q_{i}$ is of the form $Q_{i}=T_{i}Q+b_{i}$
for a fixed \emph{base set} $Q\subset\R^{\dimension}$ and certain
linear maps $T_{i}\in\GL\left(\R^{\dimension}\right)$ and $b_{i}\in\R^{\dimension}$.
Then, using a suitable \emph{partition of unity} $\Phi=\left(\varphi_{i}\right)_{i\in I}$
subordinate to $\CalQ$ and a suitable \emph{weight} $w=\left(w_{i}\right)_{i\in I}$
on the index set $I$ of the covering $\CalQ$, one defines the associated
\textbf{decomposition space (quasi)-norm}
\[
\left\Vert g\right\Vert _{\DecompSp{\CalQ}p{\ell_{w}^{q}}{}}:=\left\Vert \left(w_{i}\cdot\left\Vert \Fourier^{-1}\left(\varphi_{i}\cdot\widehat{g}\right)\right\Vert _{L^{p}}\right)_{i\in I}\right\Vert _{\ell^{q}},
\]
while the associated \textbf{decomposition space} $\DecompSp{\CalQ}p{\ell_{w}^{q}}{}$
contains exactly those distributions $g$ for which this quasi-norm
is finite.

Roughly speaking, the decomposition space (quasi)-norm measures the
size of the distribution $g$ by frequency-localizing $g$ to each
of the sets $Q_{i}$ (using the partition of unity $\Phi$), where
each of these frequency-localized pieces is measured in $L^{p}\left(\R^{\dimension}\right)$,
while the individual contributions are aggregated using a certain
weighted $\ell^{q}$-norm. The underlying idea in \cite{StructuredBanachFrames}
is to ask whether the \emph{strict} frequency localization using the
compactly supported partition of unity $\Phi$ can be replaced by
a soft, qualitative frequency localization: Indeed, if $\psi\in L^{1}\left(\R^{\dimension}\right)$
has \emph{essential} frequency support in the base set $Q$, then
it is not hard to see that the function
\[
\psi^{\left[i\right]}:=\left|\det T_{i}\right|^{-1/2}\cdot\Fourier^{-1}\left(L_{b_{i}}\left[\smash{\widehat{\psi}}\circ T_{i}^{-1}\right]\right)=\left|\det T_{i}\right|^{1/2}\cdot M_{b_{i}}\left[\psi\circ T_{i}^{T}\right]
\]
has essential frequency support in $Q_{i}=T_{i}Q+b_{i}$, for arbitrary
$i\in I$. Here, $L_{x}$ and $M_{\xi}$ denote the usual translation
and modulation operators, cf.\@ Section \ref{subsec:Notation}.

Using this notation, the theory developed in \cite{StructuredBanachFrames}
provides criteria pertaining to the \textbf{generator} $\psi$ which
guarantee that the generalized shift-invariant system
\begin{equation}
\Psi_{\delta}:=\left(L_{\delta\cdot T_{i}^{-T}k}\:\psi^{\left[i\right]}\right)_{i\in I,\,k\in\Z^{\dimension}}\label{eq:IntroductionStructuredFrameDefinition}
\end{equation}
forms, respectively, a \textbf{Banach frame} or an \textbf{atomic
decomposition} for the decomposition space $\DecompSp{\CalQ}p{\ell_{w}^{q}}{}$,
for sufficiently fine sampling density $\delta>0$. The notions of
Banach frames and atomic decompositions generalize the concept of
frames for Hilbert spaces to the setting of (Quasi)-Banach spaces.
The precise definitions of these two concepts, however, are outside
the scope of this introduction; see e.g.\@ \cite{GroechenigDescribingFunctions}
for a lucid exposition.

For us, the most important conclusion is the following: If $\Psi_{\delta}$
\emph{simultaneously} forms a Banach space and an atomic decomposition
for $\DecompSp{\CalQ}p{\ell_{w}^{q}}{}$, then there is an explicitly
known (Quasi)-Banach space of sequences $C_{w}^{p,q}\leq\Compl^{I\times\Z^{\dimension}}$,
called the \textbf{coefficient space}, such that the following are
equivalent for a distribution $g$:
\begin{enumerate}
\item $g\in\DecompSp{\CalQ}p{\ell_{w}^{q}}{}$,
\item the analysis coefficients $\left(\left\langle g,\,L_{\delta\cdot T_{i}^{-T}k}\:\psi^{\left[i\right]}\right\rangle \right)_{i\in I,\,k\in\Z^{\dimension}}$
belong to $C_{w}^{p,q}$,
\item we can write $g=\sum_{i\in I}\,\sum_{k\in\Z^{\dimension}}\left(\smash{c_{k}^{\left(i\right)}}\cdot\psi^{\left[i\right]}\right)$
for a sequence $\left(\smash{c_{k}^{\left(i\right)}}\right)_{i\in I,\,k\in\Z^{\dimension}}\in C_{w}^{p,q}$.
\end{enumerate}
One can even derive slightly stronger conclusions which make these
purely qualitative statements quantitative. Now, if one chooses $p=q\in\left(0,2\right)$
and a suitable weight $w=\left(w_{i}\right)_{i\in I}$ depending on
$p$, one can achieve $C_{w}^{p,q}=\ell^{p}\left(I\times\Z^{\dimension}\right)$.
Thus, in this case, the preceding equivalence can be summarized as
follows:
\[
\text{If }\psi\text{ is nice and }\delta>0\text{ is small, then \textbf{analysis sparsity is equivalent to synthesis sparsity} w.r.t. }\Psi_{\delta}.
\]

In fact, the theory developed in \cite{StructuredBanachFrames} even
allows the base set $Q$ to vary with $i\in I$, i.e., $Q_{i}=T_{i}Q_{i}'+b_{i}$,
at least as long as the family $\left\{ Q_{i}'\with i\in I\right\} $
of different base sets remains finite. Similarly, the generator $\psi$
is allowed to vary with $i\in I$, so that $\psi^{\left[i\right]}=\left|\det T_{i}\right|^{1/2}\cdot M_{b_{i}}\left[\psi_{i}\circ T_{i}^{T}\right]$,
again with the provision that the set $\left\{ \psi_{i}\with i\in I\right\} $
of generators is finite.

As we will see, one can choose a suitable covering $\CalQ=\CalS$—the
so-called \textbf{shearlet covering} of the frequency space $\R^{2}$—such
that the system $\Psi_{\delta}$ from above coincides with a shearlet
frame. The resulting decomposition spaces $\DecompSp{\CalS}p{\ell_{w}^{q}}{}$
are then (slight modifications of) the \textbf{shearlet smoothness
spaces} as introduced by Labate et al.\cite{Labate_et_al_Shearlet}.

In summary, the theory of \textbf{structured Banach frame decompositions
of decomposition spaces} will imply the desired equivalence of analysis
and synthesis sparsity with respect to cone-adapted shearlet frames.
To this end, however, we first need to show that the technical conditions
on the generators that are imposed in \cite{StructuredBanachFrames}
are indeed satisfied if the generators of the shearlet system are
sufficiently smooth and satisfy certain vanishing moment conditions.
As we will see, this is by no means trivial and requires a huge amount
of technical estimates.

\medskip{}

Finally, we remark that spaces similar to the shearlet smoothness
spaces have also been considered by Vera: In \cite{VeraShearBesovSpaces},
he introduced so-called \textbf{shear anisotropic inhomogeneous Besov
spaces}, which are essentially a generalization of the shearlet smoothness
spaces to $\R^{\dimension}$. Vera then shows that the analysis and
synthesis operators with respect to certain \emph{bandlimited} shearlet
systems are bounded between the shear anisotropic inhomogeneous Besov
spaces and certain sequence spaces. Note that the assumption of bandlimited
frame elements excludes the possibility of having compact support
in space. Furthermore, boundedness of the analysis and synthesis operators
alone does \emph{not} imply that the \emph{bandlimited} shearlet systems
form Banach frames or atomic decompositions for the shear anisotropic
Besov spaces, since this requires existence of a certain \emph{reproducing
formula}.  In \cite{VeraShearTriebelLizorkin}, Vera also considers
\emph{Triebel-Lizorkin type} shearlet smoothness spaces and again
derives similar boundedness results for the analysis and synthesis
operators. Finally, in both papers \cite{VeraShearBesovSpaces,VeraShearTriebelLizorkin},
certain embedding results between the classical Besov or Triebel-Lizorkin
spaces and the new ``shearlet adapted'' smoothness spaces are considered,
similarly to our results in Section \ref{sec:EmbeddingsBetweenAlphaShearletSmoothness}.
Note though that we are able to completely characterize the existence
of such embeddings, while \cite{VeraShearBesovSpaces} only establishes
certain necessary and certain sufficient conditions, without achieving
a characterization.

\subsection{\texorpdfstring{$\alpha$}{α}-shearlets and cartoon-like functions
of different regularity}

The usual construction of shearlets employs the \textbf{parabolic
dilations} ${\rm diag}\left(2^{j},2^{j/2}\right)$ and (the dual frames
of) the resulting shearlet systems turn out to be (almost) optimal
for the approximation of functions that are $C^{2}$ away from a $C^{2}$
edge. Beginning with the paper \cite{OptimallySparse3D}, it was realized
that different regularities—i.e., ``functions that are $C^{\beta}$
away from a $C^{\beta}$ edge''—can be handled by employing a different
type of dilations, namely\footnote{In fact, in \cite[Section 4.1]{OptimallySparse3D} the three-dimensional
counterparts of the scaling matrices ${\rm diag}\left(2^{\beta j/2},\,2^{j/2}\right)$
are used, but the resulting hybrid shearlet systems have the same
approximation properties as those defined using the $\alpha$-parabolic
dilations ${\rm diag}\left(2^{j},\,2^{\alpha j}\right)$ with $\alpha=\beta^{-1}$;
see Section \ref{sec:CartoonLikeFunctionsAreBoundedInAlphaShearletSmoothness}
for more details.} the \textbf{$\alpha$-parabolic dilations} ${\rm diag}\left(2^{j},\,2^{\alpha j}\right)$,
with the specific choice $\alpha=\beta^{-1}$.

These modified shearlet systems were called \textbf{hybrid shearlets}
in \cite{OptimallySparse3D}, where they were introduced in the three-dimensional
setting. In the Bachelor's thesis \cite{SandraBachelorArbeit}, precisely
in \cite[Section 4]{SandraBachelorArbeit}, it was then shown also
in the two-dimensional setting that shearlet systems using $\alpha$-parabolic
scaling—from now on called \textbf{$\alpha$-shearlet systems}—indeed
yield (almost) optimal approximation rates for the model class of
\textbf{$C^{\beta}$-cartoon-like functions}, if $\alpha=\beta^{-1}$.
Again, this comes with the caveat that the approximation is actually
performed using the \emph{dual frame} of the $\alpha$-shearlet frame.

Note, however, that the preceding result requires the regularity $\beta$
of the $C^{\beta}$-cartoon-like functions to satisfy $\beta\in\left(1,2\right]$.
Outside of this range, the arguments in \cite{SandraBachelorArbeit}
are not applicable; in fact, it was shown in \cite{RoleOfAlphaScaling}
that the result concerning the optimal approximation rate fails for
$\beta>2$, at least for \textbf{$\alpha$-curvelets\cite{CartoonApproximationWithAlphaCurvelets}}
instead of $\alpha$-shearlets.

These $\alpha$-curvelets are related to $\alpha$-shearlets in the
same way that shearlets and curvelets are related\cite{ParabolicMolecules},
in the sense that the associated coverings of the Fourier domain are
equivalent and in that they agree with respect to \emph{analysis}
sparsity: If $f$ is $\ell^{p}$-analysis sparse with respect to a
(reasonable) $\alpha$-curvelet system, then the same holds with respect
to any (reasonable) $\alpha$-shearlet system and vice versa. This
was derived in \cite{AlphaMolecules} as an application of the framework
of \textbf{$\alpha$-molecules}, a common generalization of $\alpha$-shearlets
and $\alpha$-curvelets; see also \cite{MultivariateAlphaMolecules}
for a generalization to dimensions larger than two.

\medskip{}

As we will see, one can modify the shearlet covering $\CalS$ slightly
to obtain the so-called \textbf{$\alpha$-shearlet covering} $\CalS^{\left(\alpha\right)}$.
The systems $\Psi_{\delta}$ (cf.\@ equation \eqref{eq:IntroductionStructuredFrameDefinition})
that result from an application of the theory of structured Banach
frame decompositions with the covering $\CalS^{\left(\alpha\right)}$
then turn out to be $\alpha$-shearlet systems. Therefore, we will
be able to establish the equivalence of analysis and synthesis sparsity
not only for classical cone-adapted shearlet systems, but in fact
for cone-adapted $\alpha$-shearlet systems for arbitrary $\alpha\in\left[0,1\right]$,
essentially without additional effort.

Even more, recall from above that the theory of structured Banach
frame decompositions not only yields equivalence of analysis and synthesis
sparsity, but also shows that each of these properties is equivalent
to membership of the distribution $f$ under consideration in a suitable
decomposition space $\DecompSp{\CalS^{\left(\alpha\right)}}p{\ell_{w}^{q}}{}$.
We will call these spaces \textbf{$\alpha$-shearlet smoothness spaces}
and denote them by $\mathscr{S}_{\alpha,s}^{p,q}\left(\R^{2}\right)$,
where the \emph{smoothness parameter} $s$ determines the weight $w$.
Using a recently developed theory for embeddings between decomposition
spaces\cite{DecompositionEmbedding}, we are then able to completely
characterize the existence of embeddings between $\alpha$-shearlet
smoothness spaces for different values of $\alpha$. Roughly, such
an embedding $\mathscr{S}_{\alpha_{1},s_{1}}^{p_{1},q_{1}}\hookrightarrow\mathscr{S}_{\alpha_{2},s_{2}}^{p_{2},q_{2}}$
means that sparsity (in a certain sense) with respect to $\alpha_{1}$-shearlets
implies sparsity (in a possibly different sense) with respect to $\alpha_{2}$-shearlets.

In a way, this extends the results of \cite{AlphaMolecules}, where
it is shown that analysis sparsity transfers from one $\alpha$-scaled
system to another (e.g.\@ from $\alpha$-curvelets to $\alpha$-shearlets);
in contrast, our embedding theory characterizes the possibility of
transferring such results from $\alpha_{1}$-shearlet systems to $\alpha_{2}$-shearlet
systems, even for $\alpha_{1}\neq\alpha_{2}$. It will turn out, however,
that simple $\ell^{p}$-sparsity with respect to $\alpha_{1}$-shearlets
\emph{never} yields a nontrivial $\ell^{q}$-sparsity with respect
to $\alpha_{2}$-shearlets, if $\alpha_{1}\neq\alpha_{2}$. Luckily,
one can remedy this situation by requiring $\ell^{p}$-sparsity in
conjunction with a certain decay of the coefficients with the scale.
Fore more details, we refer to Section \ref{sec:EmbeddingsBetweenAlphaShearletSmoothness}.

\subsection{Structure of the paper}

\label{subsec:Structure}Before we properly start the paper, we introduce
several standard and non-standard notations in the next subsection.

In Section \ref{sec:BanachFrameDecompositionCrashCourse}, we give
an overview over the main aspects of the theory of \emph{structured
Banach frame decompositions of decomposition spaces} that was recently
developed by one of the authors in \cite{StructuredBanachFrames}.

The most important ingredient for the application of this theory is
a suitable covering $\CalQ=\left(Q_{i}\right)_{i\in I}=\left(T_{i}Q_{i}'+b_{i}\right)_{i\in I}$
of the frequency space $\R^{2}$ such that the provided Banach frames
and atomic decompositions are of the desired form; in our case we
want to obtain cone-adapted $\alpha$-shearlet systems. Thus, in Section
\ref{sec:AlphaShearletSmoothnessDefinition}, we introduce the so-called
\textbf{$\alpha$-shearlet coverings} $\CalS^{\left(\alpha\right)}$
for $\alpha\in\left[0,1\right]$ and we verify that these coverings
fulfill the standing assumptions from \cite{StructuredBanachFrames}.
The more technical parts of this verification are deferred to Section
\ref{sec:AlphaShearletCoveringAlmostStructured} in order to not disrupt
the flow of the paper. Furthermore, Section \ref{sec:AlphaShearletSmoothnessDefinition}
also contains the definition of the \textbf{$\alpha$-shearlet smoothness
spaces} $\mathscr{S}_{\alpha,s}^{p,q}\left(\R^{2}\right)=\DecompSp{\smash{\CalS^{\left(\alpha\right)}}}p{\ell_{w^{s}}^{q}}{}$
and an analysis of their basic properties.

Section \ref{sec:CompactlySupportedShearletFrames} contains the main
results of the paper. Here, we provide readily verifiable conditions—smoothness,
decay and vanishing moments—concerning the generators $\varphi,\psi$
of the $\alpha$-shearlet system ${\rm SH}_{\alpha}^{\left(\pm1\right)}\left(\varphi,\psi;\delta\right)$
which ensure that this $\alpha$-shearlet system forms, respectively,
a Banach frame or an atomic decomposition for the $\alpha$-shearlet
smoothness space $\mathscr{S}_{\alpha,s}^{p,q}\left(\R^{2}\right)$.
This is done by verifying the technical conditions of the theory of
structured Banach frame decompositions. All of these results rely
on one technical lemma whose proof is extremely lengthy and therefore
deferred to Section \ref{sec:MegaProof}.

For $\alpha$-shearlet systems, it is expected that $\frac{1}{2}$-shearlets
are identical to the classical cone-adapted shearlet systems. This
is not quite the case, however, for the shearlet systems ${\rm SH}_{1/2}^{\left(\pm1\right)}\left(\varphi,\psi;\delta\right)$
considered in Section \ref{sec:CompactlySupportedShearletFrames}.
The reason for this is that the $\alpha$-shearlet covering $\CalS^{\left(\alpha\right)}$
divides the frequency plane into \emph{four} conic regions (the top,
bottom, left, and right frequency cones) and a low-frequency region,
while the usual definition of shearlets only divides the frequency
plane into two cones (horizontal and vertical) and a low-frequency
region. To remedy this fact, Section \ref{sec:UnconnectedAlphaShearletCovering}
introduces a slightly modified covering, the so-called \textbf{unconnected
$\alpha$-shearlet covering} $\CalS_{u}^{\left(\alpha\right)}$; the
reason for this terminology being that the individual sets of the
covering are not connected anymore. Essentially, $\CalS_{u}^{\left(\alpha\right)}$
is obtained by combining each pair of opposing sets of the $\alpha$-shearlet
covering $\CalS^{\left(\alpha\right)}$ into one single set. We then
verify that the associated decomposition spaces coincide with the
previously defined $\alpha$-shearlet smoothness spaces. Finally,
we show that the Banach frames and atomic decompositions obtained
by applying the theory of structured Banach frame decompositions with
the covering $\CalS_{u}^{\left(1/2\right)}$ indeed yield conventional
cone-adapted shearlet systems.

In Section \ref{sec:CartoonLikeApproximation}, we apply the equivalence
of analysis and synthesis sparsity for $\alpha$-shearlets to prove
that $\alpha$-shearlet frames with sufficiently nice generators indeed
yield (almost) optimal $N$-term approximations for the class $\mathcal{E}^{\beta}\left(\R^{2}\right)$
of $C^{\beta}$-cartoon-like functions, for $\beta\in\left(1,2\right]$
and $\alpha=\beta^{-1}$. In case of usual shearlets (i.e., for $\alpha=\frac{1}{2}$),
this is a straightforward application of the analysis sparsity of
$C^{2}$-cartoon-like functions with respect to shearlet systems.
But in case of $\alpha\neq\frac{1}{2}$, our $\alpha$-shearlet systems
use the $\alpha$-parabolic scaling matrices ${\rm diag}\left(2^{j},2^{\alpha j}\right)$,
while analysis sparsity of $C^{\beta}$-cartoon-like functions is
only known with respect to $\beta$-shearlet systems, which use the
scaling matrices ${\rm diag}\left(2^{\beta j/2},\,2^{j/2}\right)$.
Bridging the gap between these two different shearlet systems is not
too hard, but cumbersome, so that part of the proof for $\alpha\neq\frac{1}{2}$
is deferred to Section \ref{sec:CartoonLikeFunctionsAreBoundedInAlphaShearletSmoothness},
since most readers are probably mainly interested in the (easier)
case of classical shearlets (i.e., $\alpha=\frac{1}{2}$). The obtained
approximation rate is almost \emph{optimal} (cf.\@ \cite[Theorem 2.8]{CartoonApproximationWithAlphaCurvelets})
if one restricts to systems where the $N$-term approximation is formed
under a certain \emph{polynomial search depth restriction}. But in
the main text of the paper, we  just construct \emph{some} $N$-term
approximation, which not necessarily fulfills this restriction concerning
the search depth. In Section \ref{sec:PolynomialSearchDepth}, we
give a modified proof which shows that one can indeed retain the same
approximation rate, \emph{even under a polynomial search depth restriction}.

Finally, in Section \ref{sec:EmbeddingsBetweenAlphaShearletSmoothness}
we \emph{completely} characterize the existence of embeddings $\mathscr{S}_{\alpha_{1},s_{1}}^{p_{1},q_{1}}\left(\R^{2}\right)\hookrightarrow\mathscr{S}_{\alpha_{2},s_{2}}^{p_{2},q_{2}}\left(\R^{2}\right)$
between $\alpha$-shearlet smoothness spaces for different values
of $\alpha$. Effectively, this characterizes the cases in which one
can obtain sparsity with respect to $\alpha_{2}$-shearlets \emph{when
the only knowledge available is a certain sparsity with respect to
$\alpha_{1}$-shearlets}.

\subsection{Notation}

\label{subsec:Notation}We write $\N=\Z_{\geq1}$ for the set of \textbf{natural
numbers} and $\N_{0}=\Z_{\geq0}$ for the set of natural numbers including
$0$. For a matrix $A\in\R^{\dimension\times\dimension}$, we denote
by $A^{T}$ the transpose of $A$. The norm $\left\Vert A\right\Vert $
of $A$ is the usual \textbf{operator norm} of $A$, acting on $\R^{\dimension}$
equipped with the usual euclidean norm $\left|\mybullet\right|=\left\Vert \mybullet\right\Vert _{2}$.
The \textbf{open euclidean ball} of radius $r>0$ around $x\in\R^{\dimension}$
is denoted by $B_{r}\left(x\right)$. For a linear (bounded) operator
$T:X\to Y$ between (quasi)-normed spaces $X,Y$, we denote the \textbf{operator
norm} of $T$ by 
\[
\vertiii T:=\vertiii T_{X\to Y}:=\sup_{\left\Vert x\right\Vert _{X}\leq1}\left\Vert Tx\right\Vert _{Y}.\vspace{-0.05cm}
\]

For an arbitrary set $M$, we let $\left|M\right|\in\N_{0}\cup\left\{ \infty\right\} $
denote the number of elements of the set. For $n\in\N_{0}$, we write
$\underline{n}:=\left\{ 1,\dots,n\right\} $; in particular, $\underline{0}=\emptyset$.
For the \textbf{closure} of a subset $M$ of some topological space,
we write $\overline{M}$.

The $\dimension$-dimensional \textbf{Lebesgue measure} of a (measurable)
set $M\subset\R^{\dimension}$ is denoted by $\lambda\left(M\right)$
or by $\lambda_{\dimension}\left(M\right)$. Occasionally, we will
also use the constant $s_{\dimension}:=\mathcal{H}^{\dimension-1}\left(S^{\dimension-1}\right)$,
the \textbf{surface area of the euclidean unit-sphere} $S^{\dimension-1}\subset\R^{\dimension}$.
The \textbf{complex conjugate} of $z\in\Compl$ is denoted by $\overline{z}$.
We use the convention $x^{0}=1$ for all $x\in\left[0,\infty\right)$,
even for $x=0$.

For a subset $M\subset B$ of a fixed \emph{base set} $B$ (which
is usually implied by the context), we define the \textbf{indicator
function} (or \textbf{characteristic function}) $\Indicator_{M}$
of the set $M$ by
\[
\Indicator_{M}:B\to\left\{ 0,1\right\} ,x\mapsto\begin{cases}
1, & \text{if }x\in M,\\
0, & \text{otherwise}.
\end{cases}
\]

The \textbf{translation} and \textbf{modulation} of a function $f:\R^{\dimension}\to\Compl^{k}$
by $x\in\R^{\dimension}$ or $\xi\in\R^{\dimension}$ are, respectively,
denoted by 
\[
L_{x}f:\R^{\dimension}\to\Compl^{k},y\mapsto f\left(y-x\right),\qquad\text{ and }\qquad M_{\xi}f:\R^{\dimension}\to\Compl^{k},y\mapsto e^{2\pi i\left\langle \xi,y\right\rangle }f\left(y\right).
\]
Furthermore, for $g:\R^{\dimension}\to\Compl^{k}$, we use the notation
$\widetilde{g}$ for the function $\widetilde{g}:\R^{\dimension}\to\Compl^{k},x\mapsto g\left(-x\right)$.

For the \textbf{Fourier transform}, we use the convention $\widehat{f}\left(\xi\right):=\left(\Fourier f\right)\left(\xi\right):=\int_{\R^{\dimension}}f\left(x\right)\cdot e^{-2\pi i\left\langle x,\xi\right\rangle }\d x$
for $f\in L^{1}\left(\R^{\dimension}\right)$. It is well-known that
the Fourier transform extends to a unitary automorphism $\Fourier:L^{2}\left(\R^{\dimension}\right)\to L^{2}\left(\R^{\dimension}\right)$.
The inverse of this map is the continuous extension of the inverse
Fourier transform, given by $\left(\Fourier^{-1}f\right)\left(x\right)=\int_{\R^{\dimension}}f\left(\xi\right)\cdot e^{2\pi i\left\langle x,\xi\right\rangle }\d\xi$
for $f\in L^{1}\left(\R^{\dimension}\right)$. We will make frequent
use of the space $\Schwartz\left(\R^{\dimension}\right)$ of \textbf{Schwartz
functions} and its topological dual space $\Schwartz'\left(\R^{\dimension}\right)$,
the space of \textbf{tempered distributions}. For more details on
these spaces, we refer to \cite[Section 9]{FollandRA}; in particular,
we note that the Fourier transform restricts to a linear homeomorphism
$\Fourier:\Schwartz\left(\R^{\dimension}\right)\to\Schwartz\left(\R^{\dimension}\right)$;
by duality, we can thus define $\Fourier:\Schwartz'\left(\R^{\dimension}\right)\to\Schwartz'\left(\R^{\dimension}\right)$
by $\Fourier\varphi=\varphi\circ\Fourier$ for $\varphi\in\Schwartz'\left(\R^{\dimension}\right)$.

Given an open subset $U\subset\R^{\dimension}$, we let $\DistributionSpace U$
denote the space of \textbf{distributions} on $U$, i.e., the topological
dual space of $\DistributionSpace U:=\TestFunctionSpace U$. For the
precise definition of the topology on $\TestFunctionSpace U$, we
refer to \cite[Chapter 6]{RudinFA}. We remark that the dual pairings
$\left\langle \cdot,\cdot\right\rangle _{\CalD',\CalD}$ and $\left\langle \cdot,\cdot\right\rangle _{\Schwartz',\Schwartz}$
are always taken to be \emph{bilinear} instead of sesquilinear.

Occasionally, we will make use of the \textbf{Sobolev space}
\[
W^{N,p}\left(\smash{\R^{\dimension}}\right)=\left\{ f\in L^{p}\left(\smash{\R^{\dimension}}\right)\with\forall\alpha\in\N_{0}^{\dimension}\text{ with }\left|\alpha\right|\leq N:\quad\partial^{\alpha}f\in L^{p}\left(\smash{\R^{\dimension}}\right)\right\} \qquad\text{ with }p\in\left[1,\infty\right].
\]
Here, as usual for Sobolev spaces, the partial derivatives $\partial^{\alpha}f$
have to be understood in the distributional sense.

Furthermore, we will use the notations $\left\lceil x\right\rceil :=\min\left\{ k\in\Z\with k\geq x\right\} $
and $\left\lfloor x\right\rfloor :=\max\left\{ k\in\Z\with k\leq x\right\} $
for $x\in\R$. We observe $\left\lfloor x\right\rfloor \leq x<\left\lfloor x\right\rfloor +1$
and $\left\lceil x\right\rceil -1<x\leq\left\lceil x\right\rceil $.
Sometimes, we also write $x_{+}:=\left(x\right)_{+}:=\max\left\{ 0,x\right\} $
for $x\in\R$.

Finally, we will frequently make use of the \textbf{shearing matrices}
$S_{x}$, the \textbf{$\alpha$-parabolic dilation matrices} $D_{b}^{\left(\alpha\right)}$
and the involutive matrix $R$, given by
\begin{equation}
S_{x}:=\left(\begin{matrix}1 & x\\
0 & 1
\end{matrix}\right),\quad\text{ and }\quad D_{b}^{\left(\alpha\right)}:=\left(\begin{matrix}b & 0\\
0 & b^{\alpha}
\end{matrix}\right),\quad\text{ as well as }\quad R:=\left(\begin{matrix}0 & 1\\
1 & 0
\end{matrix}\right),\label{eq:StandardMatrices}
\end{equation}
for $x\in\R$ and $\alpha,b\in\left[0,\infty\right)$.

\section{Structured Banach frame decompositions of decomposition spaces —
A crash course}

\label{sec:BanachFrameDecompositionCrashCourse}In this section, we
give a brief introduction to the theory of structured Banach frames
and atomic decompositions for decomposition spaces that was recently
developed by one of the authors in \cite{StructuredBanachFrames}.

We start with a crash course on decomposition spaces. These are defined
using a suitable covering $\CalQ=\left(Q_{i}\right)_{i\in I}$ of
(a subset of) the \emph{frequency space} $\R^{\dimension}$. For the
decomposition spaces to be well-defined and for the theory in \cite{StructuredBanachFrames}
to be applicable, the covering $\CalQ$ needs to be a \textbf{semi-structured
covering} for which a \textbf{regular partition of unity} exists.
For this, it suffices if $\CalQ$ is an \textbf{almost structured
covering}. Since the notion of almost structured coverings is somewhat
easier to understand than general semi-structured coverings, we will
restrict ourselves to this concept.
\begin{defn}
\label{def:AlmostStructuredCovering}Let $\emptyset\neq\CalO\subset\R^{\dimension}$
be open. A family $\CalQ=\left(Q_{i}\right)_{i\in I}$ is called an
\textbf{almost structured covering} of $\CalO$, if for each $i\in I$,
there is an invertible matrix $T_{i}\in\GL\left(\R^{\dimension}\right)$,
a translation $b_{i}\in\R^{\dimension}$ and an open, bounded set
$Q_{i}'\subset\R^{\dimension}$ such that the following conditions
are fulfilled:

\begin{enumerate}
\item We have $Q_{i}=T_{i}Q_{i}'+b_{i}$ for all $i\in I$.
\item We have $Q_{i}\subset\CalO$ for all $i\in I$.
\item $\CalQ$ is \textbf{admissible}, i.e., there is some $N_{\CalQ}\in\N$
satisfying $\left|i^{\ast}\right|\leq N_{\CalQ}$ for all $i\in I$,
where the \textbf{index-cluster} $i^{\ast}$ is defined as
\begin{equation}
i^{\ast}:=\left\{ \ell\in I\with Q_{\ell}\cap Q_{i}\neq\emptyset\right\} \qquad\text{ for }i\in I.\label{eq:IndexClusterDefinition}
\end{equation}
\item There is a constant $C_{\CalQ}>0$ satisfying $\left\Vert T_{i}^{-1}T_{j}\right\Vert \leq C_{\CalQ}$
for all $i\in I$ and all $j\in i^{\ast}$.
\item For each $i\in I$, there is an open set $P_{i}'\subset\R^{\dimension}$
with the following additional properties:

\begin{enumerate}
\item $\overline{P_{i}'}\subset Q_{i}'$ for all $i\in I$.
\item The sets $\left\{ P_{i}'\with i\in I\right\} $ and $\left\{ Q_{i}'\with i\in I\right\} $
are finite.
\item We have $\CalO\subset\bigcup_{i\in I}\left(T_{i}P_{i}'+b_{i}\right)$.\qedhere
\end{enumerate}
\end{enumerate}
\end{defn}
\begin{rem*}

\begin{itemize}[leftmargin=0.4cm]
\item In the following, if we require $\CalQ=\left(Q_{i}\right)_{i\in I}=\left(T_{i}Q_{i}'+b_{i}\right)_{i\in I}$
to be an almost structured covering of $\CalO$, it is always implicitly
understood that $T_{i},Q_{i}'$ and $b_{i}$ are chosen in such a
way that the conditions in Definition \ref{def:AlmostStructuredCovering}
are satisfied. 
\item Since each set $Q_{i}'$ is bounded and since the set $\left\{ Q_{i}'\with i\in I\right\} $
is finite, the family $\left(Q_{i}'\right)_{i\in I}$ is uniformly
bounded, i.e., there is some $R_{\CalQ}>0$ satisfying $Q_{i}'\subset\overline{B_{R_{\CalQ}}}\left(0\right)$
for all $i\in I$.\qedhere
\end{itemize}
\end{rem*}
A crucial property of almost structured coverings is that these always
admit a \textbf{regular partition of unity}, a notion which was originally
introduced in \cite[Definition 2.4]{DecompositionIntoSobolev}.
\begin{defn}
\label{def:RegularPartitionOfUnity}Let $\CalQ=\left(Q_{i}\right)_{i\in I}=\left(T_{i}Q_{i}'+b_{i}\right)_{i\in I}$
be an almost structured covering of the open set $\emptyset\neq\CalO\subset\R^{\dimension}$.
We say that the family $\Phi=\left(\varphi_{i}\right)_{i\in I}$ is
a \textbf{regular partition of unity} subordinate to $\CalQ$ if the
following hold:

\begin{enumerate}
\item We have $\varphi_{i}\in\TestFunctionSpace{\CalO}$ with $\supp\varphi_{i}\subset Q_{i}$
for all $i\in I$.
\item We have $\sum_{i\in I}\varphi_{i}\equiv1$ on $\CalO$.
\item For each $\alpha\in\N_{0}^{\dimension}$, the constant
\[
C^{\left(\alpha\right)}:=\sup_{i\in I}\left\Vert \partial^{\alpha}\smash{\varphi_{i}^{\natural}}\right\Vert _{\sup}
\]
is finite, where for each $i\in I$, the \textbf{normalized version}
$\varphi_{i}^{\natural}$ of $\varphi_{i}$ is defined as
\[
\varphi_{i}^{\natural}:\R^{\dimension}\to\Compl,\xi\mapsto\varphi_{i}\left(T_{i}\xi+b_{i}\right).\qedhere
\]
\end{enumerate}
\end{defn}
\begin{thm}
(cf.\@ \cite[Theorem 2.8]{DecompositionIntoSobolev} and see \cite[Proposition 1]{BorupNielsenDecomposition}
for a similar statement)

Every almost structured covering $\CalQ$ of an open subset $\emptyset\neq\CalO\subset\R^{\dimension}$
admits a regular partition of unity $\Phi=\left(\varphi_{i}\right)_{i\in I}$
subordinate to $\CalQ$.
\end{thm}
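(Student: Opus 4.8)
The plan is to construct $\Phi$ by the standard recipe ``Urysohn bumps, transported to the $Q_i$, summed, and normalized'', with the two finiteness conditions in Definition \ref{def:AlmostStructuredCovering}(5)(b), together with the admissibility condition (3) and the structural condition (4), doing the real work. Write $\CalQ=(Q_i)_{i\in I}=(T_iQ_i'+b_i)_{i\in I}$ with the auxiliary sets $P_i'$ as in Definition \ref{def:AlmostStructuredCovering}. Since $\left\{Q_i'\with i\in I\right\}$ and $\left\{P_i'\with i\in I\right\}$ are finite, there are only finitely many pairs $(P_i',Q_i')$, and for each of them I would pick, using the smooth Urysohn lemma, a function $\gamma_i\in\TestFunctionSpace{Q_i'}$ with $0\le\gamma_i\le1$ and $\gamma_i\equiv1$ on $\overline{P_i'}$. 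Because only finitely many distinct functions $\gamma_i$ occur, $M_k:=\sup_{i\in I}\,\max_{|\alpha|\le k}\|\partial^\alpha\gamma_i\|_{\sup}<\infty$ for every $k\in\N_0$.

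Next, transport each bump to $Q_i$ by setting $\eta_i:=\gamma_i\bigl(T_i^{-1}(\mybullet-b_i)\bigr)$, so that $\eta_i\in\TestFunctionSpace{\R^{\dimension}}$, $\supp\eta_i=T_i(\supp\gamma_i)+b_i$ is a compact subset of $Q_i\subset\CalO$, and $\eta_i\equiv1$ on $T_iP_i'+b_i$. Put $\Sigma:=\sum_{i\in I}\eta_i$. This sum is locally finite on $\CalO$: given $\xi\in\CalO$, Definition \ref{def:AlmostStructuredCovering}(5)(c) provides an index $i_0$ with $\xi\in T_{i_0}P_{i_0}'+b_{i_0}=:U$, and $U$ is an open neighbourhood of $\xi$ meeting $Q_\ell$ only for $\ell\in i_0^\ast$, hence meeting at most $N_\CalQ$ of the supports $\supp\eta_\ell$ by admissibility. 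Therefore $\Sigma\in C^\infty(\CalO)$, and since every $\xi\in\CalO$ lies in some $T_iP_i'+b_i$ where $\eta_i\equiv1$, we have $\Sigma\ge1$ on $\CalO$. Defining $\varphi_i:=\eta_i/\Sigma$ on $\CalO$ and $\varphi_i:=0$ on $\R^{\dimension}\setminus\supp\eta_i$ then gives consistent, well-defined functions $\varphi_i\in\TestFunctionSpace{\CalO}$ with $\supp\varphi_i\subset\supp\eta_i\subset Q_i$, and $\sum_{i\in I}\varphi_i=\Sigma/\Sigma\equiv1$ on $\CalO$; this settles conditions (1) and (2) of Definition \ref{def:RegularPartitionOfUnity}.

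The remaining point—the only one requiring care—is Definition \ref{def:RegularPartitionOfUnity}(3): uniform bounds on $\partial^\alpha\varphi_i^\natural$, where $\varphi_i^\natural(\xi)=\varphi_i(T_i\xi+b_i)$. On an open neighbourhood of $\supp\gamma_i$ one has $\varphi_i^\natural=\gamma_i/g_i$ with $g_i(\xi):=\Sigma(T_i\xi+b_i)$, while $\varphi_i^\natural$ vanishes off $\supp\gamma_i$. There $T_i\xi+b_i\in Q_i\subset\CalO$, so $g_i\ge1$; moreover $g_i(\xi)=\sum_{\ell\in i^\ast}\gamma_\ell\bigl(T_\ell^{-1}T_i\xi+T_\ell^{-1}(b_i-b_\ell)\bigr)$, since a nonzero $\ell$-term forces $T_i\xi+b_i\in Q_\ell\cap Q_i$. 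Each summand is $\gamma_\ell$ precomposed with an affine map whose linear part has norm $\|T_\ell^{-1}T_i\|\le C_\CalQ$ by Definition \ref{def:AlmostStructuredCovering}(4), so its derivatives of order $\le|\alpha|$ are controlled by a constant (depending only on $\alpha$) times $C_\CalQ^{|\alpha|}M_{|\alpha|}$; summing the at most $N_\CalQ$ contributing terms (Definition \ref{def:AlmostStructuredCovering}(3)) bounds $\|\partial^\beta g_i\|_{\sup}$ uniformly in $i$ for all $|\beta|\le|\alpha|$. Since $g_i\ge1$, applying the chain rule to $t\mapsto 1/t$ then yields a uniform bound on $\partial^\beta(1/g_i)$, and the Leibniz rule applied to $\varphi_i^\natural=\gamma_i\cdot(1/g_i)$ gives $\sup_{i\in I}\|\partial^\alpha\varphi_i^\natural\|_{\sup}<\infty$, as required. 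The soft part of this argument is routine; the crux is precisely this last estimate, where the ``almost structured'' hypotheses earn their keep: the bound $\|T_i^{-1}T_j\|\le C_\CalQ$ for $j\in i^\ast$ confines the rescaled neighbours of $\gamma_i$ to a compact family, and admissibility caps how many of them can accumulate near any point, so that dividing by $\Sigma$ does not destroy the uniform control of derivatives.
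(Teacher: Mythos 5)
Your construction is correct, and it is the standard one: build Urysohn bumps $\gamma_i$ on the (finitely many distinct) base pairs $(\overline{P_i'},Q_i')$, transport them to the $Q_i$ by the affine maps, sum, verify local finiteness and the lower bound $\Sigma\ge1$ via the admissibility and covering conditions, normalize, and then obtain the uniform derivative bounds for $\varphi_i^\natural=\gamma_i/g_i$ from $g_i\ge1$ together with the constraint $\|T_\ell^{-1}T_i\|\le C_\CalQ$ for $\ell\in i^\ast$ (using the symmetry $\ell\in i^\ast\iff i\in\ell^\ast$) and the cap $|i^\ast|\le N_\CalQ$. The paper itself does not re-prove this statement but cites \cite[Theorem 2.8]{DecompositionIntoSobolev}; your argument reproduces the proof given there in all essentials.
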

Before we can give the formal definition of decomposition spaces,
we need one further notion:
\begin{defn}
\label{def:QModerateWeightClusteringMap}(cf.\@ \cite[Definition 3.1]{DecompositionSpaces1})
Let $\emptyset\neq\CalO\subset\R^{\dimension}$ be open and assume
that $\CalQ=\left(Q_{i}\right)_{i\in I}$ is an almost structured
covering of $\CalO$. A \textbf{weight} $w$ on the index set $I$
is simply a sequence $w=\left(w_{i}\right)_{i\in I}$ of positive
numbers $w_{i}>0$. The weight $w$ is called \textbf{$\CalQ$-moderate}
if there is a constant $C_{\CalQ,w}>0$ satisfying
\begin{equation}
w_{j}\leq C_{\CalQ,w}\cdot w_{i}\qquad\forall\:i\in I\text{ and all }j\in i^{\ast}.\label{eq:ModerateWeightDefinition}
\end{equation}

For an arbitrary weight $w=\left(w_{i}\right)_{i\in I}$ on $I$ and
$q\in\left(0,\infty\right]$ we define the \textbf{weighted $\ell^{q}$
space} $\ell_{w}^{q}\left(I\right)$ as
\[
\ell_{w}^{q}\left(I\right):=\left\{ \left(c_{i}\right)_{i\in I}\in\Compl^{I}\with\left(w_{i}\cdot c_{i}\right)_{i\in I}\in\ell^{q}\left(I\right)\right\} ,
\]
equipped with the natural (quasi)-norm $\left\Vert \left(c_{i}\right)_{i\in I}\right\Vert _{\ell_{w}^{q}}:=\left\Vert \left(w_{i}\cdot c_{i}\right)_{i\in I}\right\Vert _{\ell{}^{q}}$.
We will also use the notation $\left\Vert c\right\Vert _{\ell_{w}^{q}}$
for arbitrary sequences $c=\left(c_{i}\right)_{i\in I}\in\left[0,\infty\right]^{I}$
with the understanding that $\left\Vert c\right\Vert _{\ell_{w}^{q}}=\infty$
if $c_{i}=\infty$ for some $i\in I$ or if $c\notin\ell_{w}^{q}\left(I\right)$.
\end{defn}
Now, we can finally give a precise definition of decomposition spaces.
We begin with the (easier) case of the so-called \textbf{Fourier-side
decomposition spaces}.
\begin{defn}
\label{def:FourierSideDecompositionSpaces}Let $\CalQ=\left(Q_{i}\right)_{i\in I}$
be an almost structured covering of the open set $\emptyset\neq\CalO\subset\R^{\dimension}$,
let $w=\left(w_{i}\right)_{i\in I}$ be a $\CalQ$-moderate weight
on $I$ and let $p,q\in\left(0,\infty\right]$. Finally, let $\Phi=\left(\varphi_{i}\right)_{i\in I}$
be a regular partition of unity subordinate to $\CalQ$. We then define
the associated \textbf{Fourier-side decomposition space (quasi)-norm}
as
\[
\left\Vert g\right\Vert _{\FourierDecompSp{\CalQ}p{\ell_{w}^{q}}{}}:=\left\Vert \left(\left\Vert \Fourier^{-1}\left(\varphi_{i}\cdot g\right)\right\Vert _{L^{p}}\right)_{i\in I}\right\Vert _{\ell_{w}^{q}}\in\left[0,\infty\right]\qquad\text{ for each distribution }g\in\DistributionSpace{\CalO}.
\]
The associated \textbf{Fourier-side decomposition space} is simply
\[
\FourierDecompSp{\CalQ}p{\ell_{w}^{q}}{}:=\left\{ g\in\DistributionSpace{\CalO}\with\left\Vert g\right\Vert _{\FourierDecompSp{\CalQ}p{\ell_{w}^{q}}{}}<\infty\right\} .\qedhere
\]
\end{defn}
\begin{rem*}
Before we continue with the definition of the actual (space-side)
decomposition spaces, a few remarks are in order:

\begin{itemize}[leftmargin=0.4cm]
\item The expression $\left\Vert \Fourier^{-1}\left(\varphi_{i}\cdot g\right)\right\Vert _{L^{p}}\in\left[0,\infty\right]$
makes sense for each $i\in I$, since $\varphi_{i}\in\TestFunctionSpace{\CalO}$,
so that $\varphi_{i}\cdot g$ is a compactly supported distribution
on $\R^{\dimension}$ (and thus also a tempered distribution), so
that the Paley-Wiener theorem (see e.g.\@ \cite[Theorem 7.23]{RudinFA})
shows that the tempered distribution $\Fourier^{-1}\left(\varphi_{i}\cdot g\right)$
is given by (integration against) a smooth function of which we can
take the $L^{p}$ quasi-norm.
\item The notations $\left\Vert g\right\Vert _{\FourierDecompSp{\CalQ}p{\ell_{w}^{q}}{}}$
and $\FourierDecompSp{\CalQ}p{\ell_{w}^{q}}{}$ both suppress the
specific regular partition of unity $\Phi$ that was chosen. This
is justified, since \cite[Corollary 3.18]{DecompositionEmbedding}
shows that any two $L^{p}$-BAPUs\footnote{The exact definition of an $L^{p}$-BAPU is not important for us.
The interested reader can find the definition in \cite[Definition 3.5]{DecompositionEmbedding}.} $\Phi,\Psi$ yield equivalent quasi-norms and thus the same (Fourier-side)
decomposition spaces. This suffices, since \cite[Corollary 2.7]{DecompositionIntoSobolev}
shows that every regular partition of unity is also an $L^{p}$-BAPU
for $\CalQ$, for arbitrary $p\in\left(0,\infty\right]$.
\item Finally, \cite[Theorem 3.21]{DecompositionEmbedding} shows that $\FourierDecompSp{\CalQ}p{\ell_{w}^{q}}{}$
is a Quasi-Banach space.\qedhere
\end{itemize}
\end{rem*}
\begin{defn}
\label{def:SpaceSideDecompositionSpaces}For an open set $\emptyset\neq\CalO\subset\R^{\dimension}$,
let $Z\left(\CalO\right):=\Fourier\left(\TestFunctionSpace{\CalO}\right)\subset\Schwartz\left(\R^{\dimension}\right)$
and equip this space with the unique topology which makes the Fourier
transform $\Fourier:\TestFunctionSpace{\CalO}\to Z\left(\CalO\right),\varphi\mapsto\widehat{\varphi}$
into a homeomorphism. The topological dual space of $Z\left(\CalO\right)$
is denoted by $Z'\left(\CalO\right)$. By duality, we define the Fourier
transform on $Z'\left(\CalO\right)$ by $\widehat{g}:=\Fourier g:=g\circ\Fourier\in\DistributionSpace{\CalO}$
for $g\in Z'\left(\CalO\right)$.

Finally, under the assumptions of Definition \ref{def:FourierSideDecompositionSpaces},
we define the \textbf{(space-side) decomposition space} associated
to the parameters $\CalQ,p,q,w$ as
\[
\DecompSp{\CalQ}p{\ell_{w}^{q}}{}:=\left\{ g\in Z'\left(\CalO\right)\with\left\Vert g\right\Vert _{\DecompSp{\CalQ}p{\ell_{w}^{q}}{}}:=\left\Vert \widehat{g}\right\Vert _{\FourierDecompSp{\CalQ}p{\ell_{w}^{q}}{}}<\infty\right\} .
\]
It is not hard to see that the Fourier transform $\Fourier:Z'\left(\CalO\right)\to\DistributionSpace{\CalO}$
is an isomorphism which restricts to an isometric isomorphism $\Fourier:\DecompSp{\CalQ}p{\ell_{w}^{q}}{}\to\FourierDecompSp{\CalQ}p{\ell_{w}^{q}}{}$.
\end{defn}
\begin{rem*}
For an explanation why the reservoirs $\DistributionSpace{\CalO}$
and $Z'\left(\CalO\right)$ are the correct choices for defining $\FourierDecompSp{\CalQ}p{\ell_{w}^{q}}{}$
and $\DecompSp{\CalQ}p{\ell_{w}^{q}}{}$, even in case of $\CalO=\R^{\dimension}$,
we refer to \cite[Remark 3.13]{DecompositionEmbedding}.
\end{rem*}
Now that we have formally introduced the notion of decomposition spaces,
we present the framework developed in \cite{StructuredBanachFrames}
for the construction of Banach frames and atomic decompositions for
these spaces. To this end, we introduce the following set of notations
and standing assumptions:
\begin{assumption}
\label{assu:CrashCourseStandingAssumptions}We fix an almost structured
covering $\CalQ=\left(T_{i}Q_{i}'+b_{i}\right)_{i\in I}$ with associated
regular partition of unity $\Phi=\left(\varphi_{i}\right)_{i\in I}$
for the remainder of the section. By definition of an almost structured
covering, the set $\left\{ Q_{i}'\with i\in I\right\} $ is finite.
Hence, we have $\left\{ Q_{i}'\with i\in I\right\} =\left\{ \smash{Q_{0}^{\left(1\right)}},\dots,\smash{Q_{0}^{\left(n\right)}}\right\} \vphantom{Q_{0}^{\left(n\right)}}$
for certain (not necessarily distinct) open, bounded subsets $Q_{0}^{\left(1\right)},\dots,Q_{0}^{\left(n\right)}\subset\R^{\dimension}$.
In particular, for each $i\in I$, there is some $k_{i}\in\underline{n}$
satisfying $Q_{i}'=Q_{0}^{\left(k_{i}\right)}$.

We fix the choice of $n\in\N$, of the sets $Q_{0}^{\left(1\right)},\dots,Q_{0}^{\left(n\right)}$
and of the map $I\to\underline{n},i\mapsto k_{i}$ for the remainder
of the section.
\end{assumption}
Finally, we need a suitable \textbf{coefficient space} for our Banach
frames and atomic decompositions:
\begin{defn}
\label{def:CoefficientSpace}For given $p,q\in\left(0,\infty\right]$
and a given weight $w=\left(w_{i}\right)_{i\in I}$ on $I$, we define
the associated \textbf{coefficient space} as
\[
\begin{split}C_{w}^{p,q} & :=\ell_{\left(\left|\det T_{i}\right|^{\frac{1}{2}-\frac{1}{p}}\cdot w_{i}\right)_{i\in I}}^{q}\!\!\!\!\!\left(\left[\ell^{p}\left(\Z^{\dimension}\right)\right]_{i\in I}\right)\\
 & :=\left\{ c=\left(\smash{c_{k}^{\left(i\right)}}\right)_{i\in I,k\in\Z^{\dimension}}\with\left\Vert c\right\Vert _{C_{w}^{p,q}}:=\left\Vert \left(\left|\det T_{i}\right|^{\frac{1}{2}-\frac{1}{p}}\cdot w_{i}\cdot\left\Vert \left(\smash{c_{k}^{\left(i\right)}}\right)_{k\in\Z^{\dimension}}\right\Vert _{\ell^{p}}\right)_{i\in I}\right\Vert _{\ell^{q}}<\infty\right\} \leq\Compl^{I\times\Z^{\dimension}}.\qedhere
\end{split}
\]
\end{defn}
\begin{rem*}
Observe that if $w_{i}=\left|\det T_{i}\right|^{\frac{1}{p}-\frac{1}{2}}$
and if $p=q$, then $C_{w}^{p,q}=\ell^{p}\left(I\times\Z^{\dimension}\right)$,
with equal (quasi)-norms.
\end{rem*}
Now that we have introduced the coefficient space $C_{w}^{p,q}$,
we are in a position to discuss the existence criteria for Banach
frames and atomic decompositions that were derived in \cite{StructuredBanachFrames}.
We begin with the case of \textbf{Banach frames}.
\begin{thm}
\label{thm:BanachFrameTheorem}Let $w=\left(w_{i}\right)_{i\in I}$
be a $\CalQ$-moderate weight, let $\varepsilon,p_{0},q_{0}\in\left(0,1\right]$
and let $p,q\in\left(0,\infty\right]$ with $p\geq p_{0}$ and $q\geq q_{0}$.
Define
\[
N:=\left\lceil \frac{\dimension+\varepsilon}{\min\left\{ 1,p\right\} }\right\rceil ,\qquad\tau:=\min\left\{ 1,p,q\right\} \qquad\text{ and }\qquad\sigma:=\tau\cdot\left(\frac{\dimension}{\min\left\{ 1,p\right\} }+N\right).
\]
Let $\gamma_{1}^{\left(0\right)},\dots,\gamma_{n}^{\left(0\right)}:\R^{\dimension}\to\Compl$
be given and define $\gamma_{i}:=\gamma_{k_{i}}^{\left(0\right)}$
for $i\in I$. Assume that the following conditions are satisfied:

\begin{enumerate}
\item We have $\gamma_{k}^{\left(0\right)}\in L^{1}\left(\R^{\dimension}\right)$
and $\Fourier\gamma_{k}^{\left(0\right)}\in C^{\infty}\left(\R^{\dimension}\right)$
for all $k\in\underline{n}$, where all partial derivatives of $\Fourier\gamma_{k}^{\left(0\right)}$
are polynomially bounded.
\item We have $\gamma_{k}^{\left(0\right)}\in C^{1}\left(\R^{\dimension}\right)$
and $\nabla\gamma_{k}^{\left(0\right)}\in L^{1}\left(\R^{\dimension}\right)\cap L^{\infty}\left(\R^{\dimension}\right)$
for all $k\in\underline{n}$.
\item We have $\left[\Fourier\gamma_{k}^{\left(0\right)}\right]\left(\xi\right)\neq0$
for all $\xi\in\overline{Q_{0}^{\left(k\right)}}$ and all $k\in\underline{n}$.
\item We have
\[
C_{1}:=\sup_{i\in I}\:\sum_{j\in I}M_{j,i}<\infty\quad\text{ and }\quad C_{2}:=\sup_{j\in I}\:\sum_{i\in I}M_{j,i}<\infty,
\]
where
\[
\qquad\qquad M_{j,i}:=\left(\frac{w_{j}}{w_{i}}\right)^{\tau}\cdot\left(1+\left\Vert T_{j}^{-1}T_{i}\right\Vert \right)^{\sigma}\cdot\max_{\left|\beta\right|\leq1}\left(\left|\det T_{i}\right|^{-1}\cdot\int_{Q_{i}}\:\max_{\left|\alpha\right|\leq N}\left|\left(\left[\partial^{\alpha}\widehat{\partial^{\beta}\gamma_{j}}\right]\left(T_{j}^{-1}\left(\xi-b_{j}\right)\right)\right)\right|\d\xi\right)^{\tau}.
\]
\end{enumerate}
Then there is some $\delta_{0}=\delta_{0}\left(p,q,w,\varepsilon,\left(\gamma_{i}\right)_{i\in I}\right)>0$
such that for arbitrary $0<\delta\leq\delta_{0}$, the family 
\[
\left(L_{\delta\cdot T_{i}^{-T}k}\:\widetilde{\gamma^{\left[i\right]}}\right)_{i\in I,k\in\Z^{\dimension}}\quad\text{ with }\quad\gamma^{\left[i\right]}=\left|\det T_{i}\right|^{1/2}\cdot M_{b_{i}}\left[\gamma_{i}\circ T_{i}^{T}\right]\quad\text{ and }\quad\widetilde{\gamma^{\left[i\right]}}\left(x\right)=\gamma^{\left[i\right]}\left(-x\right)
\]
forms a \textbf{Banach frame} for $\DecompSp{\CalQ}p{\ell_{w}^{q}}{}$.
Precisely, this means the following:

\begin{itemize}[leftmargin=0.7cm]
\item The \textbf{analysis operator} 
\[
A^{\left(\delta\right)}:\DecompSp{\CalQ}p{\ell_{w}^{q}}{}\to C_{w}^{p,q},f\mapsto\left(\left[\smash{\gamma^{\left[i\right]}}\ast f\right]\left(\delta\cdot T_{i}^{-T}k\right)\right)_{i\in I,k\in\Z^{\dimension}}
\]
is well-defined and bounded for each $\delta\in\left(0,1\right]$.
Here, the convolution $\gamma^{\left[i\right]}\ast f$ is defined
as
\begin{equation}
\left(\gamma^{\left[i\right]}\ast f\right)\left(x\right)=\sum_{\ell\in I}\Fourier^{-1}\left(\widehat{\gamma^{\left[i\right]}}\cdot\varphi_{\ell}\cdot\widehat{f}\:\right)\left(x\right)\qquad\forall x\in\R^{\dimension},\label{eq:SpecialConvolutionDefinition}
\end{equation}
where the series converges normally in $L^{\infty}\left(\R^{\dimension}\right)$
and thus absolutely and uniformly, for each $f\in\DecompSp{\CalQ}p{\ell_{w}^{q}}{}$.
For a more convenient expression of $\left(\gamma^{\left[i\right]}\ast f\right)\left(x\right)$,
at least for $f\in L^{2}\left(\R^{\dimension}\right)\subset Z'\left(\CalO\right)$,
see Lemma \ref{lem:SpecialConvolutionClarification}. 
\item For $0<\delta\leq\delta_{0}$, there is a bounded linear \textbf{reconstruction
operator} $R^{\left(\delta\right)}:C_{w}^{p,q}\to\DecompSp{\CalQ}p{\ell_{w}^{q}}{}$
satisfying $R^{\left(\delta\right)}\circ A^{\left(\delta\right)}=\identity_{\DecompSp{\CalQ}p{\ell_{w}^{q}}{}}$.
\item We have the following \textbf{consistency property}: If $\CalQ$-moderate
weights $w^{\left(1\right)}=\left(\smash{w_{i}^{\left(1\right)}}\right)_{i\in I}$
and $w^{\left(2\right)}=\left(\smash{w_{i}^{\left(2\right)}}\right)_{i\in I}$
and exponents $p_{1},p_{2},q_{1},q_{2}\in\left(0,\infty\right]$ are
chosen such that the assumptions of the current theorem are satisfied
for $p_{1},q_{1},w^{\left(1\right)}$, as well as for $p_{2},q_{2},w^{\left(2\right)}$
and if $0<\delta\leq\min\left\{ \delta_{0}\left(p_{1},q_{1},w^{\left(1\right)},\varepsilon,\left(\gamma_{i}\right)_{i\in I}\right),\delta_{0}\left(p_{2},q_{2},w^{\left(2\right)},\varepsilon,\left(\gamma_{i}\right)_{i\in I}\right)\right\} $
then we have the following equivalence:
\[
\forall f\in\DecompSp{\CalQ}{p_{2}}{\ell_{w^{\left(2\right)}}^{q_{2}}}{}:\quad f\in\DecompSp{\CalQ}{p_{1}}{\ell_{w^{\left(1\right)}}^{q_{1}}}{}\Longleftrightarrow\left(\left[\smash{\gamma^{\left[i\right]}}\ast f\right]\left(\delta\cdot T_{i}^{-T}k\right)\right)_{i\in I,k\in\Z^{\dimension}}\in C_{w^{\left(1\right)}}^{p_{1},q_{1}}.
\]
\end{itemize}
Finally, there is an estimate for the size of $\delta_{0}$ which
is independent of the choice of $p\geq p_{0}$ and $q\geq q_{0}$:
There is a constant $K=K\left(p_{0},q_{0},\varepsilon,\dimension,\CalQ,\Phi,\smash{\gamma_{1}^{\left(0\right)},\dots,\gamma_{n}^{\left(0\right)}}\right)>0$
such that we can choose 
\[
\delta_{0}=1\big/\left[1+K\cdot C_{\CalQ,w}^{4}\cdot\smash{\left(C_{1}^{1/\tau}+C_{2}^{1/\tau}\right)^{2}}\:\vphantom{\sum}\right].\qedhere
\]
\end{thm}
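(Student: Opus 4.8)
The plan is to obtain this as a specialization of the main Banach frame theorem of \cite{StructuredBanachFrames}; its long proof (a chain of almost‑orthogonality and sampling estimates) I will not reproduce, but instead recall that theorem and match its hypotheses with conditions (1)--(4). The first step is purely structural. By Assumption \ref{assu:CrashCourseStandingAssumptions} we are already given an almost structured covering $\CalQ$ together with an associated regular partition of unity $\Phi$; since every almost structured covering is in particular a semi‑structured covering for which a regular partition of unity exists (and, by \cite[Corollary 2.7]{DecompositionIntoSobolev}, $\Phi$ is an $L^p$-BAPU for every $p\in(0,\infty]$), the standing covering‑side hypotheses under which the theory of \cite{StructuredBanachFrames} operates hold automatically, $\DecompSp{\CalQ}p{\ell_{w}^{q}}{}$ is a well‑defined Quasi‑Banach space, and everything reduces to checking the conditions on the generators.

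Next I would verify the generator‑side hypotheses one by one, identifying which feature of \cite{StructuredBanachFrames} each of (1)--(4) feeds. Conditions (1) and (2) are designed to place each $\gamma_k^{(0)}$ — and, through (2), each first‑order partial $\partial^\beta\gamma_k^{(0)}$ with $|\beta|\le 1$ — in the function class required there for the frequency‑localized convolution \eqref{eq:SpecialConvolutionDefinition} to be well defined and for the associated analysis and pseudo‑synthesis maps to be bounded between $\DecompSp{\CalQ}p{\ell_{w}^{q}}{}$ and the coefficient space $C_w^{p,q}$: smoothness of $\Fourier\gamma_k^{(0)}$ with polynomially bounded derivatives controls the high‑frequency side and makes the series in \eqref{eq:SpecialConvolutionDefinition} converge normally, while $\gamma_k^{(0)}\in C^1$ with $\nabla\gamma_k^{(0)}\in L^1\cap L^\infty$ controls the oscillation of $\gamma^{[i]}\ast f$ between the lattice points of $\delta\, T_i^{-T}\Z^{\dimension}$, so that passing from the continuous convolution to its samples costs only a fixed constant. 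Condition (3), non‑vanishing of $\Fourier\gamma_k^{(0)}$ on the closed base sets $\overline{Q_0^{(k)}}$, is the Calderón‑type non‑degeneracy condition that makes ``synthesize from the samples, then re‑analyze'' a small perturbation of a fixed isomorphism on $C_w^{p,q}$ for small $\delta$, so that $R^{(\delta)}$ can be produced by a Neumann series. Condition (4), finiteness of $C_1=\sup_i\sum_j M_{j,i}$ and $C_2=\sup_j\sum_i M_{j,i}$, is the quantitative almost‑orthogonality input: the $M_{j,i}$ dominate the interaction of the band indexed by $i$ with the dilated, sheared, modulated generator indexed by $j$, absorbing the weight ratio, the geometric distortion $(1+\|T_j^{-1}T_i\|)$ and the $N$‑th order Fourier estimates, and uniform control of their row/column sums is precisely what a Schur test needs to bound the discrete operators on $C_w^{p,q}$.

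With the hypotheses matched, the cited theorem of \cite{StructuredBanachFrames} delivers the entire statement: the threshold $\delta_0=\delta_0(p,q,w,\varepsilon,(\gamma_i)_{i\in I})>0$, boundedness of $A^{(\delta)}$ for all $\delta\in(0,1]$, a bounded $R^{(\delta)}$ with $R^{(\delta)}\circ A^{(\delta)}=\identity$ for $\delta\le\delta_0$, and the consistency property — the latter because there one and the same sampled family $(L_{\delta\, T_i^{-T}k}\,\widetilde{\gamma^{[i]}})_{i\in I,k\in\Z^{\dimension}}$ serves as a Banach frame for every parameter triple $(p,q,w)$ passing (1)--(4), while the analysis coefficients do not depend on $(p,q,w)$. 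The explicit lower bound $\delta_0 = 1/(1+K\, C_{\CalQ,w}^{4}\,(C_1^{1/\tau}+C_2^{1/\tau})^2)$ then follows by tracking, through that proof, how the smallness threshold depends on the only two data that genuinely enter it — the $\CalQ$-moderateness constant $C_{\CalQ,w}$ and the Schur bounds $C_1,C_2$ of the matrix $(M_{j,i})$ — with the remaining dependence on $p_0,q_0,\varepsilon,\dimension,\CalQ,\Phi$ and the finitely many base generators $\gamma_1^{(0)},\dots,\gamma_n^{(0)}$ absorbed into the constant $K$.

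I expect the main obstacle to be bookkeeping in this matching step rather than any new analytic estimate: one has to confirm that the (possibly more abstractly phrased) hypotheses of \cite{StructuredBanachFrames} are literally implied by (1)--(4), not merely morally equivalent, and that the claimed form of $\delta_0$ — in particular the powers $C_{\CalQ,w}^{4}$ and $(\cdot)^2$ and the exponent $1/\tau$ — is consistent with the Neumann‑series estimate there, where the perturbation operator has norm that tends to $0$ as $\delta\to 0$ at a rate governed by $C_1,C_2$ and $C_{\CalQ,w}$. For the abstract statement above nothing further is needed, since (1)--(4) are taken as hypotheses; the one nontrivial verification left for later in the paper is condition (4) for concrete $\alpha$-shearlet generators, which is exactly the content of the technical lemma deferred to Section \ref{sec:MegaProof}.
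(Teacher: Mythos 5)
Your proposal is correct and takes essentially the same route as the paper: the paper proves Theorem~\ref{thm:BanachFrameTheorem} as the special case $K=0$, $\Omega_{0}=\Omega_{1}=1$, $v=v_{0}\equiv 1$ of the weighted Theorem~\ref{thm:WeightedBanachFrameTheorem}, whose proof carries out precisely the matching with the abstract machinery of \cite{StructuredBanachFrames} that you describe — verifying \cite[Assumption 3.6]{StructuredBanachFrames} via \cite[Lemma 3.7]{StructuredBanachFrames}, bounding the relevant operators via \cite[Corollary 6.6]{StructuredBanachFrames}, invoking \cite[Theorem 4.7]{StructuredBanachFrames} for the Banach frame property, and tracking the threshold $\delta_{0}$ through the Neumann-series estimate of \cite[Lemma 4.6]{StructuredBanachFrames} together with $\vertiii{\Gamma_{\CalQ}}\lesssim C_{\CalQ,w}$ from \cite[Lemma 4.13]{DecompositionEmbedding}. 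Your reading of what each of conditions (1)--(4) supplies (function class for the generators, non-degeneracy enabling the perturbation argument, Schur-type row/column bounds) and of where the explicit form of $\delta_{0}$ comes from is accurate.
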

\begin{proof}
This is a special case of Theorem \ref{thm:WeightedBanachFrameTheorem},
for $\Omega_{0}=\Omega_{1}=1$, $K=0$ and $v=v_{0}\equiv1$.
\end{proof}
Now, we provide criteria which ensure that a given family of prototypes
generates \textbf{atomic decompositions}.
\begin{thm}
\label{thm:AtomicDecompositionTheorem}Let $w=\left(w_{i}\right)_{i\in I}$
be a $\CalQ$-moderate weight, let $\varepsilon,p_{0},q_{0}\in\left(0,1\right]$
and let $p,q\in\left(0,\infty\right]$ with $p\geq p_{0}$ and $q\geq q_{0}$.
Define
\[
N:=\left\lceil \frac{\dimension+\varepsilon}{\min\left\{ 1,p\right\} }\right\rceil ,\qquad\tau:=\min\left\{ 1,p,q\right\} ,\qquad\vartheta:=\left(\frac{1}{p}-1\right)_{+}\:,\qquad\text{ and }\qquad\varUpsilon:=1+\frac{\dimension}{\min\left\{ 1,p\right\} },
\]
as well as
\[
\sigma:=\begin{cases}
\tau\cdot\left(\dimension+1\right), & \text{if }p\in\left[1,\infty\right],\\
\tau\cdot\left(p^{-1}\cdot\dimension+\left\lceil p^{-1}\cdot\left(\dimension+\varepsilon\right)\right\rceil \right), & \text{if }p\in\left(0,1\right).
\end{cases}
\]
Let $\gamma_{1}^{\left(0\right)},\dots,\gamma_{n}^{\left(0\right)}:\R^{\dimension}\to\Compl$
be given and define $\gamma_{i}:=\gamma_{k_{i}}^{\left(0\right)}$
for $i\in I$. Assume that there are functions $\gamma_{1}^{\left(0,j\right)},\dots,\gamma_{n}^{\left(0,j\right)}$
for $j\in\left\{ 1,2\right\} $ such that the following conditions
are satisfied:

\begin{enumerate}
\item We have $\gamma_{k}^{\left(0,1\right)}\in L^{1}\left(\R^{\dimension}\right)$
for all $k\in\underline{n}$.
\item We have $\gamma_{k}^{\left(0,2\right)}\in C^{1}\left(\R^{\dimension}\right)$
for all $k\in\underline{n}$.
\item We have
\[
\Omega^{\left(p\right)}:=\max_{k\in\underline{n}}\left\Vert \gamma_{k}^{\left(0,2\right)}\right\Vert _{\varUpsilon}+\max_{k\in\underline{n}}\left\Vert \nabla\gamma_{k}^{\left(0,2\right)}\right\Vert _{\varUpsilon}<\infty,
\]
where $\left\Vert f\right\Vert _{\varUpsilon}=\sup_{x\in\R^{\dimension}}\left(1+\left|x\right|\right)^{\varUpsilon}\cdot\left|f\left(x\right)\right|$
for $f:\R^{\dimension}\to\Compl^{\ell}$ and (arbitrary) $\ell\in\N$.
\item We have $\Fourier\gamma_{k}^{\left(0,j\right)}\in C^{\infty}\left(\R^{\dimension}\right)$
and all partial derivatives of $\Fourier\gamma_{k}^{\left(0,j\right)}$
are polynomially bounded for all $k\in\underline{n}$ and $j\in\left\{ 1,2\right\} $.
\item We have $\gamma_{k}^{\left(0\right)}=\gamma_{k}^{\left(0,1\right)}\ast\gamma_{k}^{\left(0,2\right)}$
for all $k\in\underline{n}$.
\item We have $\left[\Fourier\gamma_{k}^{\left(0\right)}\right]\left(\xi\right)\neq0$
for all $\xi\in\overline{Q_{0}^{\left(k\right)}}$ and all $k\in\underline{n}$.
\item We have $\left\Vert \gamma_{k}^{\left(0\right)}\right\Vert _{\varUpsilon}<\infty$
for all $k\in\underline{n}$.
\item We have
\[
K_{1}:=\sup_{i\in I}\:\sum_{j\in I}N_{i,j}<\infty\qquad\text{ and }\qquad K_{2}:=\sup_{j\in I}\:\sum_{i\in I}N_{i,j}<\infty,
\]
where $\gamma_{j,1}:=\gamma_{k_{j}}^{\left(0,1\right)}$ for $j\in I$
and
\[
\qquad\qquad N_{i,j}:=\left(\frac{w_{i}}{w_{j}}\cdot\left(\left|\det T_{j}\right|\big/\left|\det T_{i}\right|\right)^{\vartheta}\right)^{\tau}\!\!\cdot\left(1\!+\!\left\Vert T_{j}^{-1}T_{i}\right\Vert \right)^{\sigma}\!\cdot\left(\left|\det T_{i}\right|^{-1}\!\cdot\int_{Q_{i}}\:\max_{\left|\alpha\right|\leq N}\left|\left[\partial^{\alpha}\widehat{\gamma_{j,1}}\right]\left(T_{j}^{-1}\left(\xi\!-\!b_{j}\right)\right)\right|\d\xi\right)^{\tau}.
\]
\end{enumerate}
Then there is some $\delta_{0}\in\left(0,1\right]$ such that the
family 
\[
\Psi_{\delta}:=\left(L_{\delta\cdot T_{i}^{-T}k}\:\gamma^{\left[i\right]}\right)_{i\in I,\,k\in\Z^{\dimension}}\qquad\text{ with }\qquad\gamma^{\left[i\right]}=\left|\det T_{i}\right|^{1/2}\cdot M_{b_{i}}\left[\gamma_{i}\circ T_{i}^{T}\right]
\]
forms an \textbf{atomic decomposition} of $\DecompSp{\CalQ}p{\ell_{w}^{q}}{}$,
for all $\delta\in\left(0,\delta_{0}\right]$. Precisely, this means
the following:

\begin{itemize}[leftmargin=0.7cm]
\item The \textbf{synthesis map}
\[
S^{\left(\delta\right)}:C_{w}^{p,q}\to\DecompSp{\CalQ}p{\ell_{w}^{q}}{},\left(\smash{c_{k}^{\left(i\right)}}\right)_{i\in I,\,k\in\Z^{\dimension}}\mapsto\sum_{i\in I}\:\sum_{k\in\Z^{\dimension}}\left[c_{k}^{\left(i\right)}\cdot L_{\delta\cdot T_{i}^{-T}k}\:\gamma^{\left[i\right]}\right]
\]
is well-defined and bounded for every $\delta\in\left(0,1\right]$.
\item For $0<\delta\leq\delta_{0}$, there is a bounded linear \textbf{coefficient
map} $C^{\left(\delta\right)}:\DecompSp{\CalQ}p{\ell_{w}^{q}}{}\to C_{w}^{p,q}$
satisfying 
\[
S^{\left(\delta\right)}\circ C^{\left(\delta\right)}=\identity_{\DecompSp{\CalQ}p{\ell_{w}^{q}}{}}.
\]
\end{itemize}
Finally, there is an estimate for the size of $\delta_{0}$ which
is independent of $p\geq p_{0}$ and $q\geq q_{0}$: There is a constant
$K=K\left(p_{0},q_{0},\varepsilon,\dimension,\CalQ,\Phi,\smash{\gamma_{1}^{\left(0\right)},\dots,\gamma_{n}^{\left(0\right)}}\right)>0$
such that we can choose
\[
\delta_{0}=\min\left\{ 1,\,\left[K\cdot\Omega^{\left(p\right)}\cdot\left(K_{1}^{1/\tau}+K_{2}^{1/\tau}\right)\right]^{-1}\right\} .\qedhere
\]
\end{thm}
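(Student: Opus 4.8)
The plan is to obtain Theorem~\ref{thm:AtomicDecompositionTheorem} from the general theory of structured Banach frame decompositions of \cite{StructuredBanachFrames}, in exactly the same spirit in which Theorem~\ref{thm:BanachFrameTheorem} above is deduced from its weighted counterpart: the statement here should be the special case of the weighted atomic decomposition theorem (Theorem~\ref{thm:WeightedAtomicDecompositionTheorem}) obtained by taking the auxiliary weights trivial, $v = v_0 \equiv 1$, and the bandwidth parameters $\Omega_0 = \Omega_1 = 1$, so that $\Omega^{\left(p\right)}$ reduces to the quantity displayed in condition~(3). Thus the first —and essentially bookkeeping— step is to check that hypotheses (1)--(8) above are precisely a specialization of those required by the general theorem (the factorization $\gamma_k^{\left(0\right)} = \gamma_k^{\left(0,1\right)} \ast \gamma_k^{\left(0,2\right)}$, the $L^1$-membership of the first factor, the $\varUpsilon$-decay of the second factor and of $\gamma_k^{\left(0\right)}$ itself, the non-vanishing of $\Fourier\gamma_k^{\left(0\right)}$ on $\overline{Q_0^{\left(k\right)}}$, and the summability conditions $K_1, K_2 < \infty$), and that the covering $\CalQ$ being almost structured supplies the remaining standing assumptions.

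If instead one unwinds the proof directly, the first substantive point is boundedness of the synthesis map $S^{\left(\delta\right)} : C_w^{p,q} \to \DecompSp{\CalQ}p{\ell_w^q}{}$. Since $\gamma^{\left[i\right]}$ has (essential) frequency support in $Q_i$, the frequency-localized piece $\Fourier^{-1}\bigl(\varphi_\ell \cdot \widehat{S^{\left(\delta\right)}c}\bigr)$ only receives contributions from the finitely many $i$ with $Q_i \cap Q_\ell \neq \emptyset$; for each such pair one estimates the $L^p$-norm of $\sum_{k\in\Z^\dimension} c_k^{\left(i\right)} \, L_{\delta T_i^{-T}k}\,\gamma^{\left[i\right]}$ by $\bigl\Vert (c_k^{\left(i\right)})_k \bigr\Vert_{\ell^p}$ times an integral of exactly the type appearing in $N_{i,j}$, where for $p<1$ the extra determinant factor $\bigl(\left|\det T_j\right|/\left|\det T_i\right|\bigr)^{\vartheta}$ accounts for the failure of the triangle inequality. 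Summing these estimates in the outer weighted $\ell^q$-norm is then possible precisely because $K_1 = \sup_i \sum_j N_{i,j}$ and $K_2 = \sup_j \sum_i N_{i,j}$ are finite, the $\CalQ$-moderateness of $w$ and the factors $\left(1+\Vert T_j^{-1}T_i\Vert\right)^\sigma$ being already absorbed into $N_{i,j}$.

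For the coefficient map $C^{\left(\delta\right)}$ with $S^{\left(\delta\right)} \circ C^{\left(\delta\right)} = \identity$ one builds a discrete reproducing formula. Using $\sum_i \varphi_i \equiv 1$ and dividing $\varphi_i \cdot \widehat{f}$ by $\widehat{\gamma^{\left[i\right]}}$ on $\supp\varphi_i$ (legitimate by condition~(6)), one first writes $f$ as an over-sampled expansion $\sum_i$ of band-limited pieces carrying the atoms $\gamma^{\left[i\right]}$; discretizing each piece by sampling on the lattice $\delta \cdot T_i^{-T}\Z^\dimension$ produces a "naive" coefficient operator $C^{\left(\delta\right)}_0$ whose composition $S^{\left(\delta\right)}\circ C^{\left(\delta\right)}_0$ differs from the identity by a sampling-error operator $E_\delta$ on $\DecompSp{\CalQ}p{\ell_w^q}{}$. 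Here the two factors of $\gamma_k^{\left(0\right)}$ play their usual complementary roles: the $L^1$-factor $\gamma_k^{\left(0,1\right)}$ governs the convolution/change-of-lattice relations between neighbouring boxes, while the $C^1$-factor $\gamma_k^{\left(0,2\right)}$ with $\varUpsilon$-decay controls the pointwise sampling through a Plancherel--Polya type inequality for the band-limited pieces. One shows $\vertiii{E_\delta} \to 0$ as $\delta \to 0$, with an explicit bound of the form $\vertiii{E_\delta} \lesssim \delta^{\varepsilon'} \cdot \Omega^{\left(p\right)} \cdot \bigl(K_1^{1/\tau}+K_2^{1/\tau}\bigr)$; a Neumann series then inverts $\identity - E_\delta$ for $\delta \leq \delta_0$ and sets $C^{\left(\delta\right)} := (\identity - E_\delta)^{-1}\circ C^{\left(\delta\right)}_0$, and tracking the constants in this estimate yields exactly $\delta_0 = \min\{1,\,[K\cdot\Omega^{\left(p\right)}\cdot(K_1^{1/\tau}+K_2^{1/\tau})]^{-1}\}$.

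The step I expect to be the main obstacle is this last one: obtaining the sampling-error estimate with a rate $\delta^{\varepsilon'}$ and with all implied constants depending only on $p_0, q_0, \varepsilon, \dimension, \CalQ, \Phi$ and the generators —but \emph{not} on the particular $p \geq p_0$, $q \geq q_0$— so that $\delta_0$ is uniform over the admissible exponent range. This requires a Plancherel--Polya/Nikolskii inequality for functions frequency-supported in the boxes $Q_i = T_i Q_i' + b_i$ with constants uniform in $i$, which is where the uniform boundedness $Q_i' \subset \overline{B_{R_{\CalQ}}}(0)$ from the definition of an almost structured covering and the derivative bounds $C^{\left(\alpha\right)} = \sup_i \Vert \partial^\alpha \varphi_i^{\natural}\Vert_{\sup}$ on the normalized partition of unity are indispensable, and in the quasi-Banach regime $p < 1$ this is genuinely delicate. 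Once that uniform sampling estimate is in hand, the division step, the Neumann inversion, and the matching of hypotheses with the general theorem of \cite{StructuredBanachFrames} are routine.
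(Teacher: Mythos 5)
Your first paragraph is exactly the paper's proof: Theorem~\ref{thm:AtomicDecompositionTheorem} is obtained from Theorem~\ref{thm:WeightedAtomicDecompositionTheorem} by specializing $v=v_0\equiv1$, $\Omega_0=\Omega_1=1$ (and $K=0$, which you leave implicit but is forced by $v_0\equiv1$), after which $N$, $\sigma$, $\varUpsilon$, and $\Omega^{(p)}$ collapse to the unweighted quantities in the statement. The remaining paragraphs are a correct sketch of the machinery behind the weighted theorem in \cite{StructuredBanachFrames}, but the paper's proof itself is just that one-line reduction.
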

\begin{rem*}

\begin{itemize}[leftmargin=0.4cm]
\item Convergence of the series defining $S^{\left(\delta\right)}$ has
to be understood as follows: For each $i\in I$, the series
\[
\sum_{k\in\Z^{\dimension}}\left[c_{k}^{\left(i\right)}\cdot L_{\delta\cdot T_{i}^{-T}k}\:\gamma^{\left[i\right]}\right]
\]
converges pointwise absolutely to a function $g_{j}\in L_{{\rm loc}}^{1}\left(\R^{\dimension}\right)\cap\Schwartz'\left(\R^{\dimension}\right)$
and the series $\sum_{j\in I}\,g_{j}=S^{\left(\delta\right)}\left(\smash{c_{k}^{\left(i\right)}}\right)_{i\in I,k\in\Z^{\dimension}}$
converges unconditionally in the weak-$\ast$-sense in $Z'\left(\CalO\right)$,
i.e., for every $\phi\in Z\left(\CalO\right)=\Fourier\left(\TestFunctionSpace{\CalO}\right)$,
the series $\sum_{j\in I}\left\langle g_{j},\,\phi\right\rangle _{\Schwartz',\Schwartz}$
converges absolutely and the functional $\phi\mapsto\sum_{j\in I}\left\langle g_{j},\,\phi\right\rangle _{\Schwartz',\Schwartz}$
is continuous on $Z\left(\CalO\right)$.
\item The action of $C^{\left(\delta\right)}$ on a given $f\in\DecompSp{\CalQ}p{\ell_{w}^{q}}{}$
is \emph{independent} of the precise choice of $p,q,w$, as long as
$C^{\left(\delta\right)}f$ is defined at all.\qedhere
\end{itemize}
\end{rem*}
\begin{proof}[Proof of Theorem \ref{thm:AtomicDecompositionTheorem}]
This is a special case of Theorem \ref{thm:WeightedAtomicDecompositionTheorem},
for $\Omega_{0}=\Omega_{1}=1$ and $v=v_{0}\equiv1$.
\end{proof}
The main limitation of Theorem \ref{thm:AtomicDecompositionTheorem}—in
comparison to Theorem \ref{thm:BanachFrameTheorem}—is that we require
each $\gamma_{k}^{\left(0\right)}$ to be factorized as a convolution
product $\gamma_{k}^{\left(0\right)}=\gamma_{k}^{\left(0,1\right)}\ast\gamma_{k}^{\left(0,2\right)}$,
which is tedious to verify. To simplify such verifications, the following
result is helpful:
\begin{prop}
\label{prop:ConvolutionFactorization}(cf.\@ \cite[Lemma 6.9]{StructuredBanachFrames})

Let $\varrho\in L^{1}\left(\R^{\dimension}\right)$ with $\varrho\geq0$.
Let $N\in\N$ with $N\geq\dimension+1$ and assume that $\gamma\in L^{1}\left(\R^{\dimension}\right)$
satisfies $\widehat{\gamma}\in C^{N}\left(\R^{\dimension}\right)$
with
\[
\left|\partial^{\alpha}\widehat{\gamma}\left(\xi\right)\right|\leq\varrho\left(\xi\right)\cdot\left(1+\left|\xi\right|\right)^{-\left(\dimension+1+\varepsilon\right)}\qquad\forall\xi\in\R^{\dimension}\quad\forall\alpha\in\N_{0}^{\dimension}\text{ with }\left|\alpha\right|\leq N
\]
for some $\varepsilon\in\left(0,1\right]$.

Then there are functions $\gamma_{1}\in C_{0}\left(\R^{\dimension}\right)\cap L^{1}\left(\R^{\dimension}\right)$
and $\gamma_{2}\in C^{1}\left(\R^{\dimension}\right)\cap W^{1,1}\left(\R^{\dimension}\right)$
with $\gamma=\gamma_{1}\ast\gamma_{2}$ and with the following additional
properties:

\begin{enumerate}
\item We have $\left\Vert \gamma_{2}\right\Vert _{K}\leq s_{\dimension}\cdot2^{1+\dimension+3K}\cdot K!\cdot\left(1+\dimension\right)^{1+2K}$
and $\left\Vert \nabla\gamma_{2}\right\Vert _{K}\leq\frac{s_{\dimension}}{\varepsilon}\cdot2^{4+\dimension+3K}\cdot\left(1+\dimension\right)^{2\left(1+K\right)}\cdot\left(K+1\right)!$
for all $K\in\N_{0}$, where $\left\Vert g\right\Vert _{K}:=\sup_{x\in\R^{\dimension}}\left(1+\left|x\right|\right)^{K}\left|g\left(x\right)\right|$.
\item We have $\widehat{\gamma_{2}}\in C^{\infty}\left(\R^{\dimension}\right)$
with all partial derivatives of $\widehat{\gamma_{2}}$ being polynomially
bounded (even bounded).
\item If $\widehat{\gamma}\in C^{\infty}\left(\R^{\dimension}\right)$ with
all partial derivatives being polynomially bounded, the same also
holds for $\widehat{\gamma_{1}}$.
\item We have $\left\Vert \gamma_{1}\right\Vert _{N}\leq\left(1+\dimension\right)^{1+2N}\cdot2^{1+\dimension+4N}\cdot N!\cdot\left\Vert \varrho\right\Vert _{L^{1}}$
and $\left\Vert \gamma\right\Vert _{N}\leq\left(1+\dimension\right)^{N+1}\cdot\left\Vert \varrho\right\Vert _{L^{1}}$.
\item We have $\left|\partial^{\alpha}\widehat{\gamma_{1}}\left(\xi\right)\right|\leq2^{1+\dimension+4N}\cdot N!\cdot\left(1+\dimension\right)^{N}\cdot\varrho\left(\xi\right)$
for all $\xi\in\R^{\dimension}$ and $\alpha\in\N_{0}^{\dimension}$
with $\left|\alpha\right|\leq N$.\qedhere
\end{enumerate}
\end{prop}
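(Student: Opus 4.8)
The statement to prove is Proposition \ref{prop:ConvolutionFactorization} — a convolution factorization lemma. Let me think about how to prove it.The plan is to carry out the factorization on the Fourier side, splitting the built-in decay $\left(1+|\xi|\right)^{-\left(\dimension+1+\varepsilon\right)}$ between the two factors. Concretely, set $s:=\dimension+1+\varepsilon$ and define
\[
\widehat{\gamma_{2}}\left(\xi\right):=\left(1+|\xi|^{2}\right)^{-s/2},\qquad\widehat{\gamma_{1}}\left(\xi\right):=\widehat{\gamma}\left(\xi\right)\cdot\left(1+|\xi|^{2}\right)^{s/2},
\]
and $\gamma_{j}:=\Fourier^{-1}\widehat{\gamma_{j}}$; note that $\gamma_{2}$ depends only on $\dimension$ and $\varepsilon$, as required. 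Once I know $\gamma_{1},\gamma_{2}\in L^{1}\left(\R^{\dimension}\right)$, the factorization $\gamma=\gamma_{1}\ast\gamma_{2}$ is immediate: indeed $\widehat{\gamma}\in L^{1}$ (since $|\widehat{\gamma}|\leq\varrho$), $\widehat{\gamma_{1}\ast\gamma_{2}}=\widehat{\gamma_{1}}\cdot\widehat{\gamma_{2}}=\widehat{\gamma}$, and injectivity of the Fourier transform on $L^{1}$ then forces $\gamma=\gamma_{1}\ast\gamma_{2}$.

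The technical heart is a \emph{symbol estimate}: by induction on $\left|\alpha\right|$ one shows, for every real $t$,
\[
\left|\partial^{\alpha}\left(1+|\xi|^{2}\right)^{t/2}\right|\leq C\left(\dimension,t,\left|\alpha\right|\right)\cdot\left(1+|\xi|\right)^{t-\left|\alpha\right|}\qquad\forall\,\xi\in\R^{\dimension},
\]
with an explicitly trackable constant. Applying this with $t=-s$ handles $\gamma_{2}$: since $t-\left|\alpha\right|<-\dimension$ once $\left|\alpha\right|\leq1$, both $\widehat{\gamma_{2}}$ and $\xi_{j}\widehat{\gamma_{2}}$ lie in $L^{1}$, so $\gamma_{2}\in C^{1}$ (and $\widehat{\gamma_{2}}\in C^{\infty}$ with all derivatives bounded, which is Part~(2)); moreover every $\partial^{\alpha}\widehat{\gamma_{2}}$ and every $\partial^{\alpha}\left(\xi_{j}\widehat{\gamma_{2}}\right)$ lies in $L^{1}$. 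Writing $x^{\alpha}\gamma_{2}=\left(2\pi i\right)^{-\left|\alpha\right|}\Fourier^{-1}\left(\partial^{\alpha}\widehat{\gamma_{2}}\right)$ and similarly for $\nabla\gamma_{2}$, and then converting monomial bounds into bounds on $\left(1+|x|\right)^{K}|\gamma_{2}\left(x\right)|$ by a combinatorial estimate, one obtains $\left\Vert \gamma_{2}\right\Vert _{K},\left\Vert \nabla\gamma_{2}\right\Vert _{K}<\infty$ for all $K$; in particular $\gamma_{2},\nabla\gamma_{2}\in L^{1}$, so $\gamma_{2}\in W^{1,1}$. The precise constants come out of tracking the symbol constant together with the radial integrals $\int_{0}^{\infty}\left(1+r\right)^{-m}r^{\dimension-1}\,\d r$; these produce the surface-area factor $s_{\dimension}$, an $\varepsilon$-free bound for $\gamma_{2}$ (because $m=s$ gives $\int_{0}^{\infty}\left(1+r\right)^{-2-\varepsilon}\,\d r\leq1$) and the factor $\frac{1}{\varepsilon}$ for $\nabla\gamma_{2}$ (where the worst term leaves $\int_{0}^{\infty}\left(1+r\right)^{-1-\varepsilon}\,\d r=\frac{1}{\varepsilon}$).

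For $\gamma_{1}$ I would differentiate $\widehat{\gamma_{1}}=\widehat{\gamma}\cdot\left(1+|\xi|^{2}\right)^{s/2}$ by the Leibniz rule, bound $\partial^{\beta}\widehat{\gamma}$ by the hypothesis ($\left|\partial^{\beta}\widehat{\gamma}\right|\leq\varrho\cdot\left(1+|\xi|\right)^{-s}$ for $\left|\beta\right|\leq N$) and $\partial^{\alpha-\beta}\left(1+|\xi|^{2}\right)^{s/2}$ by the symbol estimate with $t=+s$; the powers of $\left(1+|\xi|\right)$ cancel exactly, yielding $\left|\partial^{\alpha}\widehat{\gamma_{1}}\left(\xi\right)\right|\leq C\cdot\varrho\left(\xi\right)$ for all $\left|\alpha\right|\leq N$ — this is Part~(5), and integration gives $\left\Vert \partial^{\alpha}\widehat{\gamma_{1}}\right\Vert _{L^{1}}\leq C\left\Vert \varrho\right\Vert _{L^{1}}$. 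Using $x^{\alpha}\gamma_{1}=\left(2\pi i\right)^{-\left|\alpha\right|}\Fourier^{-1}\left(\partial^{\alpha}\widehat{\gamma_{1}}\right)$ for $\left|\alpha\right|\leq N$ and the same combinatorial conversion gives $\left\Vert \gamma_{1}\right\Vert _{N}\leq C\left\Vert \varrho\right\Vert _{L^{1}}$ (Part~(4)), and since $N\geq\dimension+1$ this forces $\gamma_{1}\in L^{1}\cap C_{0}$. The estimate $\left\Vert \gamma\right\Vert _{N}\leq\left(1+\dimension\right)^{N+1}\left\Vert \varrho\right\Vert _{L^{1}}$ follows in exactly the same way directly from the hypothesis on $\partial^{\alpha}\widehat{\gamma}$, using $\left(1+|\xi|\right)^{-s}\leq1$ and $\left(2\pi\right)^{-\left|\alpha\right|}\leq1$. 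Finally, Part~(3) is easy: multiplication by the $C^{\infty}$-function $\left(1+|\xi|^{2}\right)^{s/2}$, all of whose derivatives are polynomially bounded by the symbol estimate, preserves the class of $C^{\infty}$-functions with polynomially bounded derivatives, so $\widehat{\gamma_{1}}$ belongs to this class whenever $\widehat{\gamma}$ does.

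The conceptual content — splitting the Bessel-potential-type weight $\left(1+|\xi|^{2}\right)^{\pm s/2}$ — is routine; the genuine labor, and the reason the statement is quoted from \cite[Lemma 6.9]{StructuredBanachFrames} rather than reproved here, lies entirely in pinning down the explicit constants: the careful induction giving sharp bounds on $\partial^{\alpha}\left(1+|\xi|^{2}\right)^{\pm s/2}$, the exact estimation of the radial integrals, and the precise combinatorial passage from control of the monomials $x^{\alpha}g\left(x\right)$ to control of $\left(1+|x|\right)^{K}|g\left(x\right)|$. This is exactly the kind of bookkeeping I would expect to be the main obstacle.
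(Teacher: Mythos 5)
The paper cites this result (\cite[Lemma 6.9]{StructuredBanachFrames}) rather than proving it, so there is no in-paper proof to compare against. Your Bessel-potential split $\widehat{\gamma_{2}}\left(\xi\right)=\left(1+\left|\xi\right|^{2}\right)^{-s/2}$, $\widehat{\gamma_{1}}=\widehat{\gamma}\cdot\left(1+\left|\xi\right|^{2}\right)^{s/2}$ with $s=\dimension+1+\varepsilon$ is correct and is almost certainly the mechanism behind the cited lemma: it matches the structure of the claimed constants exactly, in particular the fact that $\varepsilon^{-1}$ appears only in $\left\Vert \nabla\gamma_{2}\right\Vert _{K}$ and not in $\left\Vert \gamma_{2}\right\Vert _{K}$ (the extra power of $\xi_{j}$ drops the worst radial integrand from $\left(1+r\right)^{-\left(2+\varepsilon\right)}$, whose integral is $\leq1$, to $\left(1+r\right)^{-\left(1+\varepsilon\right)}$, whose integral is $\varepsilon^{-1}$), the Leibniz cancellation that gives $\left|\partial^{\alpha}\widehat{\gamma_{1}}\right|\lesssim\varrho$ for $\left|\alpha\right|\leq N$, and the factorization $\gamma=\gamma_{1}\ast\gamma_{2}$ via $L^{1}$-Fourier injectivity. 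What you do not carry out is the explicit constant bookkeeping (the inductive symbol estimate for $\partial^{\alpha}\left(1+\left|\xi\right|^{2}\right)^{t/2}$ with trackable factorial constants and the multinomial passage from $x^{\alpha}g$ to $\left(1+\left|x\right|\right)^{K}\left|g\right|$), but you correctly flag this as the remaining labor; since the paper itself defers that labor to the reference, this is the appropriate level of detail.
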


\section{Definition and basic properties of \texorpdfstring{$\alpha$}{α}-shearlet
smoothness spaces}

\label{sec:AlphaShearletSmoothnessDefinition}In this section, we
introduce the class of \textbf{$\alpha$-shearlet smoothness spaces}.
These spaces are a generalization of the ``ordinary'' shearlet smoothness
spaces as introduced by Labate et al.\cite{Labate_et_al_Shearlet}.
Later on (cf.\@ Theorem \ref{thm:AnalysisAndSynthesisSparsityAreEquivalent}),
it will turn out that these spaces simultaneously describe analysis
and synthesis sparsity with respect to (suitable) $\alpha$-shearlet
frames.

We will define the $\alpha$-shearlet smoothness spaces as certain
decomposition spaces. Thus, we first have to define the associated
covering and the weight for the sequence space $\ell_{w}^{q}\left(I\right)$
that we will use:
\begin{defn}
\label{def:AlphaShearletCovering}Let $\alpha\in\left[0,1\right]$.
The \textbf{$\alpha$-shearlet covering} $\CalS^{\left(\alpha\right)}$
is defined as 
\[
\CalS^{\left(\alpha\right)}:=\left(\smash{S_{i}^{\left(\alpha\right)}}\right)_{i\in I^{\left(\alpha\right)}}=\left(\smash{T_{i}}Q_{i}'\right)_{i\in I^{\left(\alpha\right)}}=\left(\smash{T_{i}}Q_{i}'+b_{i}\right)_{i\in I^{\left(\alpha\right)}},
\]
where:

\begin{itemize}[leftmargin=0.6cm]
\item The \emph{index set} $I^{\left(\alpha\right)}$ is given by $I:=I^{\left(\alpha\right)}:=\left\{ 0\right\} \cup I_{0}$,
where 
\[
\qquad I_{0}:=I_{0}^{(\alpha)}:=\left\{ \left(n,m,\varepsilon,\delta\right)\in\N_{0}\times\Z\times\left\{ \pm1\right\} \times\left\{ 0,1\right\} \with\left|m\right|\leq G_{n}\right\} \quad\text{ with }\quad G_{n}:=G_{n}^{\left(\alpha\right)}:=\left\lceil \smash{2^{n\left(1-\alpha\right)}}\right\rceil .
\]
\item The \emph{basic sets} $\left(Q_{i}'\right)_{i\in I^{\left(\alpha\right)}}$
are given by $Q_{0}':=\left(-1,1\right)^{2}$ and by $Q_{i}':=Q:=U_{\left(-1,1\right)}^{\left(3^{-1},3\right)}$
for $i\in I_{0}^{\left(\alpha\right)}$, where we used the notation
\begin{equation}
U_{(a,b)}^{\left(\gamma,\mu\right)}:=\left\{ \begin{pmatrix}\xi\\
\eta
\end{pmatrix}\in\left(\gamma,\mu\right)\times\R\left|\frac{\eta}{\xi}\in\left(a,b\right)\right.\right\} \quad\text{ for }a,b\in\R\text{ and }\gamma,\mu\in\left(0,\infty\right).\label{eq:BasicShearletSet}
\end{equation}
\item The \emph{matrices} $\left(T_{i}\right)_{i\in I^{\left(\alpha\right)}}$
are given by $T_{0}:=\identity$ and by $T_{i}:=T_{i}^{\left(\alpha\right)}:=R^{\delta}\cdot A_{n,m,\varepsilon}^{\left(\alpha\right)}$,
with $A_{n,m,\varepsilon}^{\left(\alpha\right)}:=\varepsilon\cdot D_{2^{n}}^{\left(\alpha\right)}\cdot S_{m}^{T}$
for $i=\left(n,m,\varepsilon,\delta\right)\in I_{0}^{\left(\alpha\right)}$.
Here, the matrices $R,S_{x}$ and $D_{b}^{\left(\alpha\right)}$ are
as in equation \eqref{eq:StandardMatrices}.
\item The \emph{translations} $\left(b_{i}\right)_{i\in I^{\left(\alpha\right)}}$
are given by $b_{i}:=0$ for all $i\in I^{\left(\alpha\right)}$.
\end{itemize}
Finally, we define the \emph{weight} $w=\left(w_{i}\right)_{i\in I}$
by $w_{0}:=1$ and $w_{n,m,\varepsilon,\delta}:=2^{n}$ for $\left(n,m,\varepsilon,\delta\right)\in I_{0}$.
\end{defn}
Our first goal is to show that the covering $\CalS^{\left(\alpha\right)}$
is an almost structured covering of $\R^{2}$ (cf.\@ Definition \ref{def:AlmostStructuredCovering}).
To this end, we begin with the following auxiliary lemma:
\begin{lem}
\label{lem:AlphaShearletCoveringAuxiliary}

\begin{enumerate}[leftmargin=0.6cm]
\item Using the notation $U_{\left(a,b\right)}^{\left(\gamma,\mu\right)}$
from equation \eqref{eq:BasicShearletSet} and the shearing matrices
$S_{x}$ from equation \eqref{eq:StandardMatrices}, we have for arbitrary
$m,a,b\in\R$ and $\kappa,\lambda,\gamma,\mu>0$ that
\begin{equation}
S_{m}^{T}U_{\left(a,b\right)}^{\left(\gamma,\mu\right)}=U_{\left(m+a,m+b\right)}^{\left(\gamma,\mu\right)}\qquad\text{ and }\qquad{\rm diag}\left(\lambda,\,\kappa\right)U_{\left(a,b\right)}^{\left(\gamma,\mu\right)}=U_{\left(\frac{\kappa}{\lambda}a,\frac{\kappa}{\lambda}b\right)}^{\left(\lambda\gamma,\lambda\mu\right)}.\label{eq:BaseSetTransformationRules}
\end{equation}
Consequently,
\begin{equation}
T_{i}^{\left(\alpha\right)}U_{\left(a,b\right)}^{\left(\gamma,\mu\right)}=\varepsilon\cdot U_{\left(2^{n\left(\alpha-1\right)}\left(m+a\right),2^{n\left(\alpha-1\right)}\left(m+b\right)\right)}^{\left(2^{n}\gamma,\,2^{n}\mu\right)}\qquad\text{ for all }\quad i=\left(n,m,\varepsilon,0\right)\in I_{0}.\label{eq:deltazeroset}
\end{equation}
In particular, $S_{n,m,\varepsilon,0}^{\left(\alpha\right)}=\varepsilon\cdot U_{\left(2^{n\left(\alpha-1\right)}\left(m-1\right),2^{n\left(\alpha-1\right)}\left(m+1\right)\right)}^{\left(2^{n}/3,\,3\cdot2^{n}\right)}$.
\item Let $i=\left(n,m,\varepsilon,\delta\right)\in I_{0}$ and let $\left(\begin{smallmatrix}\xi\\
\eta
\end{smallmatrix}\right)\in S_{i}^{\left(\alpha\right)}$ be arbitrary. Then the following hold:

\begin{enumerate}
\item If $i=\left(n,m,\varepsilon,0\right)$, we have $\left|\eta\right|<3\cdot\left|\xi\right|$.
\item If $i=\left(n,m,\varepsilon,1\right)$, we have $\left|\xi\right|<3\cdot\left|\eta\right|$.
\item We have $2^{n-2}<\frac{2^{n}}{3}<\left|\left(\begin{smallmatrix}\xi\\
\eta
\end{smallmatrix}\right)\right|<12\cdot2^{n}<2^{n+4}$.\qedhere
\end{enumerate}
\end{enumerate}
\end{lem}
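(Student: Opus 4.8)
The plan is to establish everything by direct computation from the explicit description of the sets $U_{(a,b)}^{(\gamma,\mu)}$: part (1) is a short chain of substitutions, and part (2) then reads the geometric bounds off the resulting explicit formula for $S_{n,m,\varepsilon,0}^{(\alpha)}$. Below I describe the bookkeeping rather than carry it out in full; throughout I write $(\xi,\eta)$ for the column vector $\left(\begin{smallmatrix}\xi\\\eta\end{smallmatrix}\right)$.

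\textbf{Part (1).} First I would check the two rules in \eqref{eq:BaseSetTransformationRules}. If $(\xi,\eta)\in U_{(a,b)}^{(\gamma,\mu)}$ then $\xi\in(\gamma,\mu)$ (so $\xi\neq0$) and $\eta/\xi\in(a,b)$; since $S_m^T=\left(\begin{smallmatrix}1&0\\m&1\end{smallmatrix}\right)$, the image $S_m^T(\xi,\eta)=(\xi,\,m\xi+\eta)$ keeps the same first coordinate and has coordinate-quotient $m+\eta/\xi\in(m+a,m+b)$, which gives $S_m^TU_{(a,b)}^{(\gamma,\mu)}=U_{(m+a,m+b)}^{(\gamma,\mu)}$; likewise ${\rm diag}(\lambda,\kappa)(\xi,\eta)=(\lambda\xi,\kappa\eta)$ has first coordinate in $(\lambda\gamma,\lambda\mu)$ (using $\lambda>0$) and quotient $\tfrac{\kappa}{\lambda}\cdot\tfrac{\eta}{\xi}\in(\tfrac{\kappa}{\lambda}a,\tfrac{\kappa}{\lambda}b)$ (using $\kappa/\lambda>0$), which is the second rule. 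Then \eqref{eq:deltazeroset} follows by composing these: for $i=(n,m,\varepsilon,0)$ we have $T_i^{(\alpha)}=A_{n,m,\varepsilon}^{(\alpha)}=\varepsilon\cdot D_{2^n}^{(\alpha)}\cdot S_m^T$, so applying the first rule, then the second with $\lambda=2^n$ and $\kappa=2^{n\alpha}$ (hence $\kappa/\lambda=2^{n(\alpha-1)}$), and finally multiplying by $\varepsilon\in\{\pm1\}$ — which merely reflects the set through the origin — yields the stated formula. The description of $S_{n,m,\varepsilon,0}^{(\alpha)}$ is the special case $(a,b,\gamma,\mu)=(-1,1,\tfrac13,3)$, i.e.\ $Q_i'=Q$.

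\textbf{Part (2).} For (a) I would plug into the formula for $S_{n,m,\varepsilon,0}^{(\alpha)}$: any $(\xi,\eta)$ in this set has $\xi\neq0$ and $\eta/\xi$ in the \emph{open} interval with endpoints $2^{n(\alpha-1)}(m-1)$ and $2^{n(\alpha-1)}(m+1)$ (the factor $\varepsilon$ cancels in the quotient), so it is enough to bound $2^{n(\alpha-1)}|m\pm1|$ by $3$. Using $m\in\Z$, $|m|\leq G_n=\lceil 2^{n(1-\alpha)}\rceil$, the elementary bound $\lceil x\rceil\leq x+1$, and $n\geq0$, $\alpha\in[0,1]$ (so $2^{n(\alpha-1)}\leq1$),
\[
2^{n(\alpha-1)}\,|m\pm1|\ \leq\ 2^{n(\alpha-1)}(G_n+1)\ \leq\ 2^{n(\alpha-1)}\bigl(2^{n(1-\alpha)}+2\bigr)\ =\ 1+2^{\,1+n(\alpha-1)}\ \leq\ 3,
\]
and since the interval is open this forces $|\eta/\xi|<3$, i.e.\ $|\eta|<3|\xi|$. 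For (b) I would observe that $T_{n,m,\varepsilon,1}^{(\alpha)}=R\cdot A_{n,m,\varepsilon}^{(\alpha)}=R\cdot T_{n,m,\varepsilon,0}^{(\alpha)}$, hence $S_{n,m,\varepsilon,1}^{(\alpha)}=R\cdot S_{n,m,\varepsilon,0}^{(\alpha)}$ where $R$ just interchanges the two coordinates; thus $(\xi,\eta)\in S_{n,m,\varepsilon,1}^{(\alpha)}$ iff $(\eta,\xi)\in S_{n,m,\varepsilon,0}^{(\alpha)}$, and (b) is (a) applied to $(\eta,\xi)$. For (c) with $\delta=0$, the first coordinate of $\varepsilon(\xi,\eta)$ lies in $(2^n/3,\,3\cdot2^n)\subset(0,\infty)$, so $|\xi|\in(2^n/3,3\cdot2^n)$, and by (a) $|\eta|<3|\xi|<9\cdot2^n$; hence
\[
2^{n-2}\ <\ \tfrac{2^n}{3}\ <\ |\xi|\ \leq\ \bigl|(\xi,\eta)\bigr|\ \leq\ |\xi|+|\eta|\ <\ 3\cdot2^n+9\cdot2^n\ =\ 12\cdot2^n\ <\ 2^{n+4};
\]
for $\delta=1$ the same chain holds because $R$ is an isometry, so $\bigl|(\xi,\eta)\bigr|=\bigl|(\eta,\xi)\bigr|$ with $(\eta,\xi)\in S_{n,m,\varepsilon,0}^{(\alpha)}$.

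\textbf{Main difficulty.} There is essentially no obstacle; the one point that needs care is pinning down the constant $3$ in Part (2)(a), which forces one to keep the bound $|m|\leq G_n=\lceil 2^{n(1-\alpha)}\rceil$ and to exploit both $n\geq0$ and $\alpha\leq1$ — a coarser estimate would only give $|\eta/\xi|<3+o(1)$, and similarly would not produce the clean constants $2^{n-2}$ and $2^{n+4}$ in Part (2)(c). Everything else is a substitution into \eqref{eq:BaseSetTransformationRules}.
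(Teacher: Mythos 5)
Your proof is correct and follows essentially the same route as the paper: part (1) by direct coordinate computation composing the two rules in \eqref{eq:BaseSetTransformationRules}, and part (2) by reading off the bound $|\eta/\xi|<3$ from the explicit interval description of $S_{n,m,\varepsilon,0}^{(\alpha)}$ and reducing $\delta=1$ to $\delta=0$ via the coordinate swap $R$. The only cosmetic differences are that you bound the two endpoints of the interval by $3$ in absolute value and invoke openness, whereas the paper includes the interval in a symmetric one and uses the strict inequality $\lceil x\rceil<x+1$; and in (2)(c) you write $|\xi|+|\eta|<3\cdot2^n+9\cdot2^n$ rather than the paper's $4|\xi|<12\cdot2^n$ — both give the same constant.
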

\begin{proof}
We establish the different claims individually:

\begin{enumerate}[leftmargin=0.6cm]
\item The following is essentially identical with the proof of \cite[Lemma 6.3.4]{VoigtlaenderPhDThesis}
and is only given here for the sake of completeness. We first observe
the following equivalences:
\begin{align*}
\left(\begin{matrix}\xi\\
\eta
\end{matrix}\right)\in U_{\left(m+a,m+b\right)}^{\left(\gamma,\delta\right)} & \Longleftrightarrow\xi\in\left(\gamma,\delta\right)\quad\text{ and }\quad m+a<\frac{\eta}{\xi}<m+b\\
 & \Longleftrightarrow\xi\in\left(\gamma,\delta\right)\quad\text{ and }\quad a<\frac{\eta-m\xi}{\xi}<b\\
 & \Longleftrightarrow\left(\begin{matrix}1 & 0\\
m & 1
\end{matrix}\right)^{-1}\left(\begin{matrix}\xi\\
\eta
\end{matrix}\right)=\left(\begin{matrix}\xi\\
\eta-m\xi
\end{matrix}\right)\in U_{\left(a,b\right)}^{\left(\gamma,\delta\right)}
\end{align*}
and
\begin{align*}
\left(\begin{matrix}\xi\\
\eta
\end{matrix}\right)\in U_{\left(\frac{\kappa}{\lambda}a,\frac{\kappa}{\lambda}b\right)}^{\left(\lambda\gamma,\lambda\mu\right)} & \Longleftrightarrow\xi\in\left(\lambda\gamma,\lambda\mu\right)\quad\text{ and }\quad\frac{\kappa}{\lambda}a<\frac{\eta}{\xi}<\frac{\kappa}{\lambda}b\\
 & \Longleftrightarrow\lambda^{-1}\xi\in\left(\gamma,\mu\right)\quad\text{ and }\quad a<\frac{\kappa^{-1}\eta}{\lambda^{-1}\xi}<b\\
 & \Longleftrightarrow\left(\begin{matrix}\lambda & 0\\
0 & \kappa
\end{matrix}\right)^{-1}\left(\begin{matrix}\xi\\
\eta
\end{matrix}\right)=\left(\begin{matrix}\lambda^{-1}\xi\\
\kappa^{-1}\eta
\end{matrix}\right)\in U_{\left(a,b\right)}^{\left(\gamma,\mu\right)}.
\end{align*}
These equivalences show ${\rm diag}\left(\lambda,\kappa\right)U_{\left(a,b\right)}^{\left(\gamma,\mu\right)}=U_{\left(\frac{\kappa}{\lambda}a,\frac{\kappa}{\lambda}b\right)}^{\left(\lambda\gamma,\lambda\mu\right)}$
and $S_{m}^{T}U_{\left(a,b\right)}^{\left(\gamma,\mu\right)}=U_{\left(m+a,m+b\right)}^{\left(\gamma,\mu\right)}$.
But for $i=\left(n,m,\varepsilon,0\right)$, we have $T_{i}^{\left(\alpha\right)}=R^{0}\cdot A_{n,m,\varepsilon}^{\left(\alpha\right)}=\varepsilon\cdot{\rm diag}\left(2^{n},2^{n\alpha}\right)\cdot S_{m}^{T}$.
This easily yields the claim.
\item We again show the three claims individually:

\begin{enumerate}
\item For $i=\left(n,m,\varepsilon,0\right)\in I_{0}$, equation \eqref{eq:deltazeroset}
yields for $\left(\begin{smallmatrix}\xi\\
\eta
\end{smallmatrix}\right)\in S_{i}^{(\alpha)}$ that 
\[
\frac{\eta}{\xi}\in\left(2^{n\left(\alpha-1\right)}\left(m-1\right),2^{n\left(\alpha-1\right)}\left(m+1\right)\right)\subset\left(-2^{n\left(\alpha-1\right)}\left(\left|m\right|+1\right),\,2^{n\left(\alpha-1\right)}\left(\left|m\right|+1\right)\right),
\]
since $2^{n\left(\alpha-1\right)}\left(m+1\right)\leq2^{n\left(\alpha-1\right)}\left(\left|m\right|+1\right)$
and 
\[
2^{n\left(\alpha-1\right)}\left(m-1\right)\geq2^{n\left(\alpha-1\right)}\left(-|m|-1\right)=-2^{n\left(\alpha-1\right)}\left(|m|+1\right).
\]
Because of $|m|\leq G_{n}=\lceil2^{n(1-\alpha)}\rceil<2^{n(1-\alpha)}+1$
and $\left|\xi\right|>0$, it follows that
\[
\begin{aligned}\left|\eta\right|=\left|\xi\right|\cdot\left|\frac{\eta}{\xi}\right|\leq\left|\xi\right|\cdot2^{n(\alpha-1)}\cdot\left(\left|m\right|+1\right) & <\left|\xi\right|\cdot2^{-n(1-\alpha)}\cdot\left(2^{n(1-\alpha)}+2\right)\\
 & \leq\left|\xi\right|\cdot\left(1+2\cdot2^{-n(1-\alpha)}\right)\leq3\cdot\left|\xi\right|.
\end{aligned}
\]
\item For $i=\left(n,m,\varepsilon,1\right)\in I_{0}$ we have 
\[
\begin{pmatrix}\eta\\
\xi
\end{pmatrix}=R\cdot\begin{pmatrix}\xi\\
\eta
\end{pmatrix}\in RS_{n,m,\varepsilon,1}^{(\alpha)}=RT_{n,m,\varepsilon,1}^{\left(\alpha\right)}Q=RRA_{n,m,\varepsilon}^{(\alpha)}Q=A_{n,m,\varepsilon}^{(\alpha)}Q=S_{n,m,\varepsilon,0}^{(\alpha)},
\]
so that we get $\left|\xi\right|<3\cdot\left|\eta\right|$ from the
previous case.
\item To prove this claim, we again distinguish two cases:

\begin{enumerate}
\item For $i=\left(n,m,\varepsilon,0\right)$, equation \eqref{eq:deltazeroset}
yields $\varepsilon\xi\in(2^{n}/3,3\cdot2^{n})$ and thus $\frac{2^{n}}{3}<|\xi|<3\cdot2^{n}$.
Moreover, we know from a previous part of the lemma that $|\eta|<3\cdot|\xi|$.
Thus 
\begin{align*}
\frac{2^{n}}{3}<|\xi|\leq\left|\begin{pmatrix}\xi\\
\eta
\end{pmatrix}\right| & \leq|\xi|+|\eta|<|\xi|+3|\xi|=4|\xi|<12\cdot2^{n}.
\end{align*}
\item For $i=\left(n,m,\varepsilon,1\right)$ we have $\varepsilon\eta\in(2^{n}/3,3\cdot2^{n})$
and thus $\frac{2^{n}}{3}<|\eta|<3\cdot2^{n}$. Moreover, we know
from the previous part of the lemma that $|\xi|<3\cdot|\eta|$. Thus
\begin{align*}
\frac{2^{n}}{3}<|\eta|\leq\left|\begin{pmatrix}\xi\\
\eta
\end{pmatrix}\right| & \leq|\xi|+|\eta|<3|\eta|+|\eta|=4|\eta|<12\cdot2^{n}.\qedhere
\end{align*}
\end{enumerate}
\end{enumerate}
\end{enumerate}
\end{proof}
Using the preceding lemma—which will also be frequently useful elsewhere—one
can show the following:
\begin{lem}
\noindent \label{lem:AlphaShearletCoveringIsAlmostStructured}The
$\alpha$-shearlet covering $\CalS^{(\alpha)}$ from Definition \ref{def:AlphaShearletCovering}
is an almost structured covering of $\R^{2}$.
\end{lem}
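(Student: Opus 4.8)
The plan is to verify the five defining conditions of Definition~\ref{def:AlmostStructuredCovering} for $\CalS^{\left(\alpha\right)}$ in turn, relying throughout on Lemma~\ref{lem:AlphaShearletCoveringAuxiliary}, which describes both the slopes contained in the sets $S_i^{\left(\alpha\right)}$ and their radial location. Conditions (1) and (2) are immediate from Definition~\ref{def:AlphaShearletCovering}: $S_i^{\left(\alpha\right)} = T_i Q_i' + b_i$ with $b_i = 0$, and $S_i^{\left(\alpha\right)}\subset\R^2 = \CalO$. For condition (5) I would take $P_0'$ to be a fixed slightly shrunk open square with $\overline{P_0'}\subset(-1,1)^2$ and $B_r(0)\subset P_0'$ for a suitable $r\in(1/2,1)$, and $P_i' := \widetilde Q$ for all $i\in I_0$, where $\widetilde Q$ is a fixed slightly shrunk open copy of $Q = U_{\left(-1,1\right)}^{\left(3^{-1},3\right)}$ with $\overline{\widetilde Q}\subset Q$, obtained by shrinking both the angular range $(-1,1)$ and the radial range $(3^{-1},3)$ by a small amount. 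Then $\{P_i' : i\in I\} = \{P_0',\widetilde Q\}$ and $\{Q_i' : i\in I\} = \{(-1,1)^2,Q\}$ are finite, which establishes (5a) and (5b).

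The covering property (5c), $\R^2\subset\bigcup_{i\in I}T_iP_i'$, is the first genuinely geometric point. Using \eqref{eq:deltazeroset}, one sees that for a fixed scale $n\in\N_0$ and sign $\varepsilon$ the sets $S^{\left(\alpha\right)}_{n,m,\varepsilon,0}$, $\left|m\right|\le G_n$, have slope intervals $\bigl(2^{n\left(\alpha-1\right)}(m-1),\,2^{n\left(\alpha-1\right)}(m+1)\bigr)$ which overlap consecutively on a subinterval of length $2^{n\left(\alpha-1\right)}$, so their union is the connected interval $\bigl(-(1+2^{n\left(\alpha-1\right)}),\,1+2^{n\left(\alpha-1\right)}\bigr)$, which contains $[-1,1]$ because $G_n\ge2^{n\left(1-\alpha\right)}$; together with $\left|\xi\right|\in(2^n/3,\,3\cdot2^n)$ this covers the whole ``horizontal cone'' $\{\left|\eta\right|\le\left|\xi\right|\}$, while the sets with $\delta=1$ cover the ``vertical cone'' by the reflection built into the definition of $T_i$. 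Since $\bigcup_{n\in\N_0}(2^n/3,\,3\cdot2^n) = (1/3,\infty)$ and $P_0'$ covers a neighbourhood of $0$, the sets $S_i^{\left(\alpha\right)}$ cover $\R^2$; because the overlaps of consecutive slope intervals and of consecutive radial shells are genuine (not merely at endpoints), one checks that a sufficiently small shrinkage preserves the cover, i.e.\ $\R^2\subset\bigcup_i T_iP_i'$.

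It remains to verify admissibility (3) and the bound (4) on $\left\Vert T_i^{-1}T_j\right\Vert$ for $j\in i^\ast$, which are the substantial parts. By Lemma~\ref{lem:AlphaShearletCoveringAuxiliary} every $\xi\in S_i^{\left(\alpha\right)}$ with $i$ of scale $n$ satisfies $2^{n-2}<\left|\xi\right|<2^{n+4}$; hence $S_i^{\left(\alpha\right)}\cap S_j^{\left(\alpha\right)}\ne\emptyset$ forces the scales $n,n'$ of $i,j$ to satisfy $\left|n-n'\right|\le5$, and only the finitely many $i\in I_0$ with $n\le2$ can meet $S_0^{\left(\alpha\right)} = (-1,1)^2$ (so $0^\ast$ is finite). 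Fixing $i = (n,m,\varepsilon,\delta)\in I_0$ and running over the boundedly many admissible $(n',\varepsilon',\delta')$, I would show that the existence of a common frequency point confines $m'$ to an interval of universally bounded length: in the same-cone case $\delta'=\delta$ the overlap of slope intervals gives $\left|2^{\left(n-n'\right)\left(\alpha-1\right)}m - m'\right| < 1 + 2^{\left(n-n'\right)\left(\alpha-1\right)}$, where $2^{\left(n-n'\right)\left(\alpha-1\right)}\in[2^{-5},2^5]$; in the opposite-cone case the slope of a common point has modulus in $[1/3,3]$ (as the horizontal sets obey $\left|\eta\right|<3\left|\xi\right|$ and the vertical ones $\left|\xi\right|<3\left|\eta\right|$), which, together with the reciprocal-slope description of the $\delta=1$ sets, again confines $m'$ to a bounded interval, now located near $\pm G_{n'}$. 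This yields a uniform bound on $\left|i^\ast\right|$, hence admissibility.

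For condition (4) I would compute $T_i^{-1}T_j$ directly from $T_i = R^{\delta}\,\varepsilon\,D_{2^n}^{\left(\alpha\right)}S_m^T$. When $\delta_i=\delta_j$ one obtains $T_i^{-1}T_j = \pm\left(\begin{smallmatrix} a & 0\\ bm'-am & b\end{smallmatrix}\right)$ with $a = 2^{n'-n}$ and $b = 2^{\left(n'-n\right)\alpha}$ bounded above and below (since $\left|n-n'\right|\le5$), and off-diagonal entry $bm'-am = 2^{n'-n}\bigl(2^{\left(n'-n\right)\left(\alpha-1\right)}m' - m\bigr) = O(1)$ by the same-cone slope bound above. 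When $\delta_i\ne\delta_j$ the analogous but longer computation produces a matrix of ``antidiagonal plus shear'' type in which two entries are a priori of size $\sim 2^{n'-n\alpha}$ (which grows like $2^{n\left(1-\alpha\right)}$ when $\alpha<1$), but whose difference reduces to a bounded remainder precisely because of the \emph{exact} matching of the slopes of a common frequency point, as opposed to a mere order-of-magnitude comparison. This cancellation in the opposite-cone case is the main obstacle; since the verification of (3) and (4) requires a number of such case distinctions and is rather long, it is deferred to Section~\ref{sec:AlphaShearletCoveringAlmostStructured}.
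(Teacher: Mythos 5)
Your proposal is correct and follows essentially the same route as the paper's proof (given in Section~\ref{sec:AlphaShearletCoveringAlmostStructured}): shrunk base sets $P_i'$ (the paper chooses $U_{(-3/4,3/4)}^{(1/2,5/2)}$ and $(-3/4,3/4)^2$ concretely), the scale-separation bound from the first part of Lemma~\ref{lem:AlphaShearletCoveringAuxiliary} to restrict $|n-n'|$, a slope-interval overlap argument for the same-cone case, and for the opposite-cone case the crucial observation that the common slope having modulus in $[1/3,3]$ pins $m$ and $m'$ to bounded neighbourhoods of $\pm G_n$ and $\pm G_{n'}$, which is precisely what produces the cancellation in the a priori large entry $2^{n'-n\alpha}-2^{n'\alpha-n}m'm$ of $T_j^{-1}T_i$. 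The only structural element you do not mention is that the paper further splits the opposite-cone case according to $\alpha=1$ versus $\alpha<1$ and, when $\alpha<1$, according to $n<\frac{12}{1-\alpha}$ versus $n\ge\frac{12}{1-\alpha}$: for small $n$ one simply bounds over the finitely many remaining indices, and the cancellation argument is only invoked once $n$ is large enough that the slope constraint really forces $|m|\approx G_n$; but this is a routine safeguard rather than a different idea, and your description of the main mechanism is accurate.
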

Since the proof of Lemma \ref{lem:AlphaShearletCoveringIsAlmostStructured}
is quite lengthy, although it does not yield too much insight, we
postpone it to the appendix (Section \ref{sec:AlphaShearletCoveringAlmostStructured}).

Finally, before we can formally define the $\alpha$-shearlet smoothness
spaces, we still need to verify that the weight $w$ from Definition
\ref{def:AlphaShearletCovering} is $\CalS^{\left(\alpha\right)}$-moderate
(cf.\@ Definition \ref{def:QModerateWeightClusteringMap}).
\begin{lem}
\label{lem:AlphaShearletWeightIsModerate}For arbitrary $s\in\R$,
the weight $w^{s}=\left(w_{i}^{s}\right)_{i\in I}$, with $w=\left(w_{i}\right)_{i\in I}$
as in Definition \ref{def:AlphaShearletCovering}, is $\CalS^{\left(\alpha\right)}$-moderate
(cf.\@ equation \eqref{eq:ModerateWeightDefinition}) with
\[
C_{\CalS^{\left(\alpha\right)},w^{s}}\leq39^{\left|s\right|}.
\]
Furthermore, we have
\[
\frac{1}{3}\cdot w_{i}\leq1+\left|\xi\right|\leq13\cdot w_{i}\qquad\forall\:i\in I\text{ and all }\xi\in S_{i}^{\left(\alpha\right)}.\qedhere
\]
\end{lem}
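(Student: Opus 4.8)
The plan is to reduce both assertions to the size estimate $\tfrac{2^{n}}{3}<|\xi|<12\cdot2^{n}$, valid for every $\xi\in S_{n,m,\varepsilon,\delta}^{(\alpha)}$, which is part (2)(c) of Lemma~\ref{lem:AlphaShearletCoveringAuxiliary}, together with the trivial facts that $S_{0}^{(\alpha)}=Q_{0}'=(-1,1)^{2}$ and $w_{0}=1$.

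First I would dispatch the ``furthermore'' part. For $i=0$ and $\xi\in(-1,1)^{2}$ we have $|\xi|<\sqrt{2}$, so $\tfrac13\,w_{0}=\tfrac13<1\le1+|\xi|<1+\sqrt2<13=13\,w_{0}$. For $i=(n,m,\varepsilon,\delta)\in I_{0}$ we have $w_{i}=2^{n}\ge1$, and Lemma~\ref{lem:AlphaShearletCoveringAuxiliary} gives $\tfrac13\,w_{i}=\tfrac{2^{n}}{3}<|\xi|\le1+|\xi|$ as well as $1+|\xi|<1+12\cdot2^{n}\le2^{n}+12\cdot2^{n}=13\,w_{i}$, where the middle step uses $1\le2^{n}$. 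This settles the displayed two-sided bound.

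For moderateness it suffices to prove a two-sided ratio bound: whenever $S_{i}^{(\alpha)}\cap S_{j}^{(\alpha)}\neq\emptyset$ one has $\tfrac{1}{32}\le\tfrac{w_{j}}{w_{i}}\le32$. Indeed, raising to the power $s$ and distinguishing the signs of $s$ then yields $w_{j}^{s}\le32^{|s|}\,w_{i}^{s}\le39^{|s|}\,w_{i}^{s}$ for all $j\in i^{\ast}$, i.e.\ $C_{\CalS^{(\alpha)},w^{s}}\le39^{|s|}$. To establish the ratio bound, I would fix $\xi$ in the intersection and split into cases. If $i,j\in I_{0}$, with respective scales $n$ and $n'$, then $\tfrac{2^{n}}{3}<|\xi|<12\cdot2^{n'}$ and, symmetrically, $\tfrac{2^{n'}}{3}<|\xi|<12\cdot2^{n}$; hence $2^{|n-n'|}<36$, and since $2^{5}=32<36<64=2^{6}$ this forces $|n-n'|\le5$, so $\tfrac{w_{j}}{w_{i}}=2^{n'-n}\in[2^{-5},2^{5}]$. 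If $i=0$ (the case $j=0$ being symmetric), then the $I_{0}$-index in the pair, of scale $n$ say, must satisfy $\tfrac{2^{n}}{3}<|\xi|<\sqrt2$, whence $2^{n}\le4$, i.e.\ $n\le2$, and therefore $\tfrac{w_{j}}{w_{i}}\in\{2^{0},2^{1},2^{2}\}\subset[2^{-5},2^{5}]$. Finally, if $i=j=0$ the ratio is $1$. In every case the ratio lies in $[2^{-5},2^{5}]$, which is the claim.

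I do not expect a genuine obstacle here: the argument is entirely elementary. The only point that requires a moment's care is the passage from $2^{|n-n'|}<36$ to the integer bound $|n-n'|\le5$, which is exactly where the comfortable constant $2^{5}=32\le39$ originates; everything else is bookkeeping of the case split ``$0\in\{i,j\}$ versus $i,j\in I_{0}$'' and the elementary inequality $1\le2^{n}$.
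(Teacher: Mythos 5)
Your proof is correct. The ``furthermore'' part is handled essentially as in the paper (case split $i=0$ vs.\ $i\in I_{0}$, both relying on Lemma~\ref{lem:AlphaShearletCoveringAuxiliary}~(2)(c)), but your moderateness argument takes a genuinely different route. The paper proves moderateness as an immediate corollary of the two-sided bound $\tfrac13 w_{i}\le 1+|\xi|\le 13 w_{i}$: if $\xi\in S_{i}^{(\alpha)}\cap S_{j}^{(\alpha)}$, one chains these to get $\tfrac{w_{i}}{w_{j}}\le \tfrac{3(1+|\xi|)}{(1+|\xi|)/13}=39$ in one line, with no additional case analysis. You instead carry out a standalone scale-comparison argument, exploiting the fact that $w$ takes values in $\{1\}\cup 2^{\N_{0}}$, to obtain $|n-n'|\le 5$ and hence the sharper constant $32=2^{5}\le 39$; this costs you the extra ``$0\in\{i,j\}$ versus $i,j\in I_{0}$'' split and the careful integer rounding from $2^{|n-n'|}<36$ down to $|n-n'|\le 5$. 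The paper's argument is shorter and more structural, yours yields a marginally better constant. Both are valid, and both give $C_{\CalS^{(\alpha)},w^{s}}\le 39^{|s|}$.
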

\begin{proof}
First, let $i=\left(n,m,\varepsilon,\delta\right)\in I_{0}$ be arbitrary.
By Lemma \ref{lem:AlphaShearletCoveringAuxiliary}, we get
\[
\frac{1}{3}\cdot w_{i}=\frac{2^{n}}{3}\leq\left|\xi\right|\leq1+\left|\xi\right|\leq1+12\cdot2^{n}\leq13\cdot2^{n}=13\cdot w_{i}\qquad\forall\xi\in S_{i}^{\left(\alpha\right)}.
\]
Furthermore, for $i=0$, we have $S_{i}^{\left(\alpha\right)}=\left(-1,1\right)^{2}$
and thus
\[
\frac{1}{3}\cdot w_{i}\leq w_{i}=1\leq1+\left|\xi\right|\leq3=3\cdot w_{i}\leq13\cdot w_{i}\qquad\forall\xi\in S_{i}^{\left(\alpha\right)}.
\]
This establishes the second part of the lemma.

Next, let $i,j\in I$ with $S_{i}^{\left(\alpha\right)}\cap S_{j}^{\left(\alpha\right)}\neq\emptyset$.
Pick an arbitrary $\xi\in S_{i}^{\left(\alpha\right)}\cap S_{j}^{\left(\alpha\right)}$
and note as a consequence of the preceding estimates that
\[
\frac{w_{i}}{w_{j}}\leq\frac{3\cdot\left(1+\left|\xi\right|\right)}{\frac{1}{13}\cdot\left(1+\left|\xi\right|\right)}=39.
\]
By symmetry, this implies $\frac{1}{39}\leq\frac{w_{i}}{w_{j}}\leq39$
and thus also
\[
\frac{w_{i}^{s}}{w_{j}^{s}}=\left(\frac{w_{i}}{w_{j}}\right)^{s}\leq39^{\left|s\right|}.\qedhere
\]
\end{proof}
Now, we can finally formally define the $\alpha$-shearlet smoothness
spaces:
\begin{defn}
\label{def:AlphaShearletSmoothnessSpaces}For $\alpha\in\left[0,1\right]$,
$p,q\in\left(0,\infty\right]$ and $s\in\R$, we define the \textbf{$\alpha$-shearlet
smoothness space} $\mathscr{S}_{\alpha,s}^{p,q}\left(\R^{2}\right)$
associated to these parameters as
\[
\mathscr{S}_{\alpha,s}^{p,q}\left(\R^{2}\right):=\DecompSp{\smash{\CalS^{\left(\alpha\right)}}}p{\ell_{w^{s}}^{q}}{},
\]
where the covering $\CalS^{\left(\alpha\right)}$ and the weight $w^{s}$
are as in Definition \ref{def:AlphaShearletCovering} and Lemma \ref{lem:AlphaShearletWeightIsModerate},
respectively.
\end{defn}
\begin{rem*}
Since $\CalS^{\left(\alpha\right)}$ is an almost structured covering
by Lemma \ref{lem:AlphaShearletCoveringIsAlmostStructured} and since
$w^{s}$ is $\CalS^{\left(\alpha\right)}$-moderate by Lemma \ref{lem:AlphaShearletWeightIsModerate},
Definition \ref{def:FourierSideDecompositionSpaces} and the associated
remark show that $\mathscr{S}_{\alpha,s}^{p,q}\left(\R^{2}\right)$
is indeed well-defined, i.e., independent of the chosen regular partition
of unity subordinate to $\CalS^{\left(\alpha\right)}$. The same remark
also implies that $\mathscr{S}_{\alpha,s}^{p,q}\left(\R^{2}\right)$
is a Quasi-Banach space.
\end{rem*}
Recall that with our definition of decomposition spaces, $\mathscr{S}_{\alpha,s}^{p,q}\left(\R^{2}\right)$
is a subspace of $Z'\left(\R^{2}\right)=\left[\Fourier\left(\TestFunctionSpace{\R^{2}}\right)\right]'$.
But as our next result shows, each $f\in\mathscr{S}_{\alpha,s}^{p,q}\left(\R^{2}\right)$
actually extends to a tempered distribution:
\begin{lem}
\label{lem:AlphaShearletIntoTemperedDistributions}Let $\alpha\in\left[0,1\right]$,
$p,q\in\left(0,\infty\right]$ and $s\in\R$. Then
\[
\mathscr{S}_{\alpha,s}^{p,q}\left(\smash{\R^{2}}\right)\hookrightarrow\Schwartz'\left(\smash{\R^{2}}\right),
\]
in the sense that each $f\in\mathscr{S}_{\alpha,s}^{p,q}\left(\smash{\R^{2}}\right)$
extends to a uniquely determined tempered distribution $f_{\Schwartz}\in\Schwartz'\left(\R^{2}\right)$.
Furthermore, the map $\mathscr{S}_{\alpha,s}^{p,q}\left(\smash{\R^{2}}\right)\hookrightarrow\Schwartz'\left(\smash{\R^{2}}\right),f\mapsto f_{\Schwartz}$
is linear and continuous with respect to the weak-$\ast$-topology
on $\Schwartz'\left(\R^{2}\right)$.
\end{lem}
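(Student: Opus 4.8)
The plan is to realize each $f\in\mathscr{S}_{\alpha,s}^{p,q}(\R^{2})$ as the sum of its frequency-localized pieces and to show that this sum converges in $\Schwartz'(\R^{2})$ with the required continuity. Fix a regular partition of unity $\Phi=(\varphi_{i})_{i\in I^{(\alpha)}}$ subordinate to $\CalS^{(\alpha)}$ (which exists since $\CalS^{(\alpha)}$ is an almost structured covering, Lemma~\ref{lem:AlphaShearletCoveringIsAlmostStructured}) and set $g_{i}:=\Fourier^{-1}(\varphi_{i}\cdot\widehat{f})$. By the Paley--Wiener theorem each $g_{i}$ is a smooth function with $\supp\widehat{g_{i}}\subset\supp\varphi_{i}\subset S_{i}^{(\alpha)}$, and a standard band-limitedness (Nikolskii) argument gives $g_{i}\in L^{\infty}(\R^{2})\subset\Schwartz'(\R^{2})$, so $\langle g_{i},\phi\rangle$ makes sense for $\phi\in\Schwartz(\R^{2})$. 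Since $\ell^{q}\hookrightarrow\ell^{\infty}$ with constant $1$ for every $q\in(0,\infty]$, the definition of the decomposition (quasi)-norm yields the pointwise bound $\left\Vert g_{i}\right\Vert _{L^{p}}\le w_{i}^{-s}\left\Vert f\right\Vert _{\mathscr{S}_{\alpha,s}^{p,q}}$. Moreover $Z(\R^{2})=\Fourier(\TestFunctionSpace{\R^{2}})$ is dense in $\Schwartz(\R^{2})$ (because $\TestFunctionSpace{\R^{2}}$ is dense in $\Schwartz(\R^{2})$ and $\Fourier$ is a homeomorphism of $\Schwartz(\R^{2})$), so any continuous extension of $f$ from $Z(\R^{2})$ to $\Schwartz(\R^{2})$ is unique, and consistency with $f$ is automatic: for $\phi=\Fourier\chi$ with $\chi\in\TestFunctionSpace{\R^{2}}$, only finitely many of the terms $\langle g_{i},\phi\rangle=\langle\varphi_{i}\widehat{f},\chi\rangle=\langle\widehat{f},\varphi_{i}\chi\rangle$ are nonzero (admissibility of $\CalS^{(\alpha)}$), and by $\sum_{i}\varphi_{i}\equiv1$ on $\R^{2}$ they sum to $\langle\widehat{f},\chi\rangle=\langle f,\phi\rangle$. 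Thus the whole lemma reduces to the estimate
\[
\sum_{i\in I^{(\alpha)}}\left|\langle g_{i},\phi\rangle\right|\le C_{\phi}\cdot\left\Vert f\right\Vert _{\mathscr{S}_{\alpha,s}^{p,q}}\qquad\text{for all }\phi\in\Schwartz(\R^{2}),
\]
where $C_{\phi}$ is bounded by finitely many Schwartz seminorms of $\phi$ (their number and order depending only on $\alpha,p,s$); this gives absolute convergence of $f_{\Schwartz}:=\sum_{i}g_{i}$ against every test function, continuity of $f_{\Schwartz}$ on $\Schwartz(\R^{2})$, and weak-$\ast$ continuity of $f\mapsto f_{\Schwartz}$.

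To prove this estimate I would use reproducing kernels adapted to the covering. Since $\CalS^{(\alpha)}$ has only the two base sets $(-1,1)^{2}$ and $Q=U_{(-1,1)}^{(3^{-1},3)}$, pick $\gamma^{(0)},\gamma^{(1)}\in\Schwartz(\R^{2})$ with $\widehat{\gamma^{(k)}}$ compactly supported and $\equiv1$ on the closure of the respective base set, where $\widehat{\gamma^{(1)}}$ is moreover supported in a bounded neighborhood of $\overline{Q}$ that stays away from the origin, and put $\widehat{\Gamma_{i}}:=\widehat{\gamma^{(k_{i})}}\circ T_{i}^{-1}$ (recall $b_{i}=0$ for $\CalS^{(\alpha)}$), i.e.\ $\Gamma_{i}=|\det T_{i}|\cdot\gamma^{(k_{i})}(T_{i}^{T}\,\cdot\,)$. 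Then $\widehat{\Gamma_{i}}\equiv1$ on $\overline{S_{i}^{(\alpha)}}\supset\supp\varphi_{i}$, so $\widehat{g_{i}}=\widehat{g_{i}}\cdot\widehat{\Gamma_{i}}$, hence $g_{i}=g_{i}\ast\Gamma_{i}$ and $\langle g_{i},\phi\rangle=\langle g_{i},\widetilde{\Gamma_{i}}\ast\phi\rangle$; also $\left\Vert \Gamma_{i}\right\Vert _{L^{1}}=\left\Vert \gamma^{(k_{i})}\right\Vert _{L^{1}}$ and $\left\Vert \widehat{\Gamma_{i}}\right\Vert _{L^{\infty}}=\left\Vert \widehat{\gamma^{(k_{i})}}\right\Vert _{L^{\infty}}$ are uniformly bounded, and $\supp\widehat{\Gamma_{i}}=T_{i}\cdot\supp\widehat{\gamma^{(k_{i})}}$ lies, by a slight enlargement of Lemma~\ref{lem:AlphaShearletCoveringAuxiliary}, in an annulus $\{\,c^{-1}2^{n}\le|\xi|\le c\,2^{n}\,\}$ for $i=(n,m,\varepsilon,\delta)\in I_{0}^{(\alpha)}$ (and in a fixed bounded set for $i=0$). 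Applying Hölder's inequality when $p\ge1$ and, when $p<1$, first the Nikolskii inequality $\left\Vert g_{i}\right\Vert _{L^{1}}\lesssim|\det T_{i}|^{1/p-1}\left\Vert g_{i}\right\Vert _{L^{p}}$ (valid since $\widehat{g_{i}}$ is supported in $S_{i}^{(\alpha)}=T_{i}Q$, an affine image of the fixed bounded set $Q$), one obtains
\[
\left|\langle g_{i},\phi\rangle\right|\lesssim|\det T_{i}|^{(1/p-1)_{+}}\cdot\left\Vert g_{i}\right\Vert _{L^{p}}\cdot\left\Vert \widetilde{\Gamma_{i}}\ast\phi\right\Vert _{L^{r}},\qquad r:=p'\text{ for }p\ge1,\quad r:=\infty\text{ for }p<1.
\]
The decisive point is that $\widetilde{\Gamma_{i}}\ast\phi$ is band-limited to the reflection of $\supp\widehat{\Gamma_{i}}$, hence to a high-frequency annulus $\{\,|\xi|\gtrsim2^{n}\,\}$, on which $\widehat{\phi}$ and all of its derivatives decay faster than any negative power of $2^{n}$.

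Combining this decay with $\left\Vert \widetilde{\Gamma_{i}}\ast\phi\right\Vert _{L^{2}}\le\left\Vert \widehat{\Gamma_{i}}\right\Vert _{L^{\infty}}\left\Vert \widehat{\phi}\cdot\Indicator_{\{|\xi|\gtrsim2^{n}\}}\right\Vert _{L^{2}}$ and $\left\Vert \widetilde{\Gamma_{i}}\ast\phi\right\Vert _{L^{\infty}}\le\left\Vert \widehat{\Gamma_{i}}\right\Vert _{L^{\infty}}\left\Vert \widehat{\phi}\cdot\Indicator_{\{|\xi|\gtrsim2^{n}\}}\right\Vert _{L^{1}}$, with simple interpolation between $L^{1},L^{2},L^{\infty}$ (and the Cauchy--Schwarz bound $\left\Vert h\right\Vert _{L^{1}}\le\left\Vert (1+|\cdot|)^{3/2}h\right\Vert _{L^{2}}\left\Vert (1+|\cdot|)^{-3/2}\right\Vert _{L^{2}}$ to cover $1<r<2$), and with the crude polynomial bound $\left\Vert T_{i}^{-1}\right\Vert \lesssim2^{Kn}$ needed to estimate the derivatives of $\widehat{\Gamma_{i}}$, one obtains, for every $N\in\N$, that $\left\Vert \widetilde{\Gamma_{i}}\ast\phi\right\Vert _{L^{r}}\le C_{N}(\phi)\cdot2^{-n(N-c_{0})}$ with $c_{0}=c_{0}(\alpha,p)$ fixed and $C_{N}(\phi)$ a Schwartz seminorm of order $\lesssim N$. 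Since there are at most $\lesssim2^{n(1-\alpha)}$ indices $i\in I_{0}^{(\alpha)}$ of scale $n$, each with $|\det T_{i}|=2^{n(1+\alpha)}$ and $w_{i}=2^{n}$, and since $\left\Vert g_{i}\right\Vert _{L^{p}}\le w_{i}^{-s}\left\Vert f\right\Vert _{\mathscr{S}_{\alpha,s}^{p,q}}$ (Lemma~\ref{lem:AlphaShearletWeightIsModerate}), the sum over $I_{0}^{(\alpha)}$ is dominated by $C_{N}(\phi)\left\Vert f\right\Vert _{\mathscr{S}_{\alpha,s}^{p,q}}\sum_{n\ge0}2^{n(c_{1}-N)}$ with $c_{1}=c_{1}(\alpha,p,s)$ fixed, which converges once $N=N(\alpha,p,s)$ is chosen large enough; the single term $i=0$ is handled trivially. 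This establishes the displayed estimate, and with it the lemma. The hard part is exactly this last estimate --- securing the decay $\left\Vert \widetilde{\Gamma_{i}}\ast\phi\right\Vert _{L^{r}}\le C_{N}(\phi)2^{-nN}$ uniformly over the relevant exponents $r$ (in particular covering the quasi-Banach range $p<1$ through the $|\det T_{i}|^{(1/p-1)_{+}}$ factor) in a way that still dominates the $\asymp2^{n(1-\alpha)}$ shearings per scale together with the weight $2^{-ns}$ for arbitrary fixed $s$; all the remaining steps are soft.
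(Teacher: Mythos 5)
Your proposal is correct and takes a genuinely different route from the paper. The paper treats this lemma as an application of a black-box embedding criterion, namely \cite[Theorem 8.3]{DecompositionEmbedding}: it only verifies the hypothesis of that theorem, which amounts to checking that the weight sequence $w^{(N)}_i = |\det T_i|^{1/p}\max\{1,\|T_i^{-1}\|^3\}\bigl[\inf_{\xi\in(S_i^{(\alpha)})^\ast}(1+|\xi|)\bigr]^{-N}$ lies in $\ell^{q'}_{1/w^s}(I)$ for $N$ large; the substance of the verification is exactly the two facts you also use — $\|T_i^{-1}\|\le3$ and $1+|\xi|\asymp 2^n$ on $S_i^{(\alpha)}$ — followed by a geometric-series estimate. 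You instead construct the tempered extension by hand: you split $f$ into the band-limited pieces $g_i=\Fourier^{-1}(\varphi_i\widehat f)$, insert a reproducing kernel $\Gamma_i$ with $\widehat{\Gamma_i}\equiv1$ on $\operatorname{supp}\varphi_i$ so that $\langle g_i,\phi\rangle=\langle g_i,\widetilde{\Gamma_i}\ast\phi\rangle$, and exploit that $\widetilde{\Gamma_i}\ast\phi$ is band-limited to a dyadic annulus $\{|\xi|\asymp 2^n\}$ where $\widehat\phi$ decays faster than any negative power of $2^n$; a Nikolskii step absorbs the quasi-Banach range $p<1$ via the factor $|\det T_i|^{(1/p-1)_+}$, and the geometric decay in $n$ dominates the $\asymp 2^{n(1-\alpha)}$ shearings per scale as well as the weight $2^{-ns}$. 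Your route is more self-contained and makes the mechanism of the embedding transparent — it essentially internalizes the proof of the cited black-box theorem — at the cost of several pages of routine estimates and a few case splits ($p\ge1$ vs.\ $p<1$, and the intermediate exponents $1<r<2$, where your ``Cauchy--Schwarz bound'' as stated needs a small fix: you want to bound $\|\widetilde{\Gamma_i}\ast\phi\|_{L^1}$ by, say, $\|(1+|x|)^2(\widetilde{\Gamma_i}\ast\phi)(x)\|_{L^\infty}$ times a fixed constant, and then pass to the Fourier side to get the $2^{-nN}$ decay). The paper's route is shorter but opaque without chasing the reference. One minor inefficiency in your write-up: you invoke a ``crude polynomial bound $\|T_i^{-1}\|\lesssim 2^{Kn}$'', but for the $\alpha$-shearlet covering one in fact has $\|T_i^{-1}\|\le 3$ uniformly in $i$ (this is computed explicitly in the paper), so the derivative estimates on $\widehat{\Gamma_i}$ are uniform and no polynomial loss is incurred there — the decay rate you get is actually cleaner than you claim.
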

\begin{proof}
It is well known (cf.\@ \cite[Proposition 9.9]{FollandRA}) that
$\TestFunctionSpace{\R^{2}}\leq\Schwartz\left(\R^{2}\right)$ is dense.
Since $\Fourier:\Schwartz\left(\R^{2}\right)\to\Schwartz\left(\R^{2}\right)$
is a homeomorphism, we see that $Z\left(\R^{2}\right)=\Fourier\left(\TestFunctionSpace{\R^{2}}\right)\leq\Schwartz\left(\R^{2}\right)$
is dense, too. Hence, for arbitrary $f\in\mathscr{S}_{\alpha,s}^{p,q}\left(\smash{\R^{2}}\right)$,
if there is \emph{any} extension $g\in\Schwartz'\left(\R^{2}\right)$
of $f\in Z'\left(\R^{2}\right)$, then $g$ is uniquely determined.

Next, by Lemma \ref{lem:AlphaShearletCoveringIsAlmostStructured},
$\CalS^{\left(\alpha\right)}$ is almost structured, so that \cite[Theorem 8.2]{DecompositionEmbedding}
shows that $\CalS^{\left(\alpha\right)}$ is a regular covering of
$\R^{2}$. Thus, once we verify that there is some $N\in\N_{0}$ such
that the sequence $w^{\left(N\right)}=\left(\smash{w_{i}^{\left(N\right)}}\right)_{i\in I}$
defined by
\[
w_{i}^{\left(N\right)}:=\left|\det\smash{T_{i}^{\left(\alpha\right)}}\vphantom{T_{i}}\right|^{1/p}\cdot\max\left\{ 1,\,\left\Vert T_{i}^{-1}\right\Vert ^{2+1}\right\} \cdot\left[\vphantom{\sum_{i}}\smash{\inf_{\xi\in\vphantom{S_{i}^{\left(\alpha\right)}}\left(\smash{S_{i}^{\left(\alpha\right)}}\right)^{\ast}}}\left(1+\left|\xi\right|\right)\right]^{-N}
\]
satisfies $w^{\left(N\right)}\in\ell_{1/w^{s}}^{q'}\left(I\right)$
with $q'=\infty$ in case of $q\in\left(0,1\right)$, then the claim
of the present lemma is a consequence of \cite[Theorem 8.3]{DecompositionEmbedding}
and the associated remark. Here, $\vphantom{S_{i}^{\left(\alpha\right)}}\left(\smash{S_{i}^{\left(\alpha\right)}}\right)^{\ast}=\bigcup_{j\in i^{\ast}}S_{j}^{\left(\alpha\right)}$.

Since $I=\left\{ 0\right\} \cup I_{0}$ and since the single (finite(!))\@
term $w_{0}^{\left(N\right)}$ does not influence membership of $w^{\left(N\right)}$
in $\ell_{1/w^{s}}^{q'}$, we only need to show $w^{\left(N\right)}|_{I_{0}}\in\ell_{1/w^{s}}^{q'}\left(I_{0}\right)$.
But for $i=\left(n,m,\varepsilon,\delta\right)\in I_{0}$, we have
\[
\left\Vert T_{i}^{-1}\right\Vert =\left\Vert \left(\begin{matrix}2^{-n} & 0\\
-2^{-n}m & 2^{-\alpha n}
\end{matrix}\right)\right\Vert \leq3.
\]
Here, the last step used that $\left|2^{-n}\right|\leq1$, $\left|2^{-\alpha n}\right|\leq1$
and that $\left|m\right|\leq G_{n}=\left\lceil 2^{n\left(1-\alpha\right)}\right\rceil \leq\left\lceil 2^{n}\right\rceil =2^{n}$,
so that $\left|-2^{-n}m\right|\leq1$ as well.

Furthermore, Lemma \ref{lem:AlphaShearletCoveringAuxiliary} shows
$\frac{2^{n}}{3}\leq\left|\xi\right|\leq12\cdot2^{n}$ for all $\xi\in S_{i}^{\left(\alpha\right)}$.
In particular, since we have $\left|\xi\right|\leq2$ for arbitrary
$\xi\in S_{0}^{\left(\alpha\right)}=\left(-1,1\right)^{2}$, we have
$i^{\ast}\subset I_{0}$ as soon as $\frac{2^{n}}{3}>2$, i.e., for
$n\geq3$. Now, for $n\geq3$ and $j=\left(\nu,\mu,e,d\right)\in i^{\ast}\subset I_{0}$,
there is some $\eta\in S_{i}^{\left(\alpha\right)}\cap S_{j}^{\left(\alpha\right)}$,
so that Lemma \ref{lem:AlphaShearletCoveringAuxiliary} yields $\frac{2^{n}}{3}\leq\left|\eta\right|\leq12\cdot2^{\nu}$.
Another application of Lemma \ref{lem:AlphaShearletCoveringAuxiliary}
then shows $\left|\xi\right|\geq\frac{2^{\nu}}{3}\geq\frac{1}{3^{2}\cdot12}\cdot2^{n}=\frac{2^{n}}{108}$
for all $\xi\in S_{j}^{\left(\alpha\right)}$. All in all, we have
shown $1+\left|\xi\right|\geq\left|\xi\right|\geq\frac{2^{n}}{108}$
for all $\xi\in\left(\smash{S_{i}^{\left(\alpha\right)}}\right)^{\ast}$
for arbitrary $i=\left(n,m,\varepsilon,\delta\right)\in I_{0}$ with
$n\geq3$. But in case of $n\leq2$, we simply have $1+\left|\xi\right|\geq1\geq\frac{2^{n}}{108}$,
so that this estimate holds for all $i=\left(n,m,\varepsilon,\delta\right)\in I_{0}$.

Overall, we conclude
\[
w_{i}^{\left(N\right)}\leq3^{3}\cdot2^{\left(1+\alpha\right)\frac{n}{p}}\cdot\left(\frac{2^{n}}{108}\right)^{-N}=3^{3}\cdot108^{N}\cdot2^{n\left(\frac{1+\alpha}{p}-N\right)}\qquad\forall\:i=\left(n,m,\varepsilon,\delta\right)\in I_{0}.
\]
For arbitrary $\theta\in\left(0,1\right]$, this implies
\begin{align*}
\sum_{i=\left(n,m,\varepsilon,\delta\right)\in I_{0}}\left[\frac{1}{w_{i}^{s}}\cdot w_{i}^{\left(N\right)}\right]^{\theta} & \leq4\cdot\left(3^{3}\cdot108^{N}\right)^{\theta}\cdot\sum_{n=0}^{\infty}\:\sum_{\left|m\right|\leq G_{n}}2^{n\theta\left(\frac{1+\alpha}{p}-s-N\right)}\\
\left({\scriptstyle \text{since }G_{n}\leq2^{n}}\right) & \leq12\cdot\left(3^{3}\cdot108^{N}\right)^{\theta}\cdot\sum_{n=0}^{\infty}\:2^{\theta n\left(\frac{1}{\theta}+\frac{1+\alpha}{p}-s-N\right)}<\infty
\end{align*}
as soon as $N>\frac{1}{\theta}+\frac{1+\alpha}{p}-s$, which can always
be satisfied. Since we have $\ell^{\theta}\left(I_{0}\right)\hookrightarrow\ell^{q'}\left(I_{0}\right)$
for $\theta\leq q'$, this shows that we always have $w^{\left(N\right)}\in\ell_{1/w^{s}}^{q'}\left(I\right)$,
for sufficiently large $N\in\N_{0}$. As explained above, we can thus
invoke \cite[Theorem 8.3]{DecompositionEmbedding} to complete the
proof.
\end{proof}
Now that we have verified that the $\alpha$-shearlet smoothness spaces
are indeed well-defined (Quasi)-Banach spaces, our next goal is to
verify that the theory of structured Banach frame decompositions for
decomposition spaces—as outlined in Section \ref{sec:BanachFrameDecompositionCrashCourse}—applies
to these spaces. This is the goal of the next section. As we will
see (see e.g.\@ Theorem \ref{thm:AnalysisAndSynthesisSparsityAreEquivalent}),
this implies that the $\alpha$-shearlet smoothness spaces \emph{simultaneously}
characterize analysis sparsity and synthesis sparsity with respect
to (suitable) $\alpha$-shearlet systems.

\section{Construction of Banach frame decompositions for \texorpdfstring{$\alpha$}{α}-shearlet
smoothness spaces}

\label{sec:CompactlySupportedShearletFrames}We now want to verify
the pertinent conditions from Theorems \ref{thm:BanachFrameTheorem}
and \ref{thm:AtomicDecompositionTheorem} for the $\alpha$-shearlet
smoothness spaces. To this end, first recall from Definition \ref{def:AlphaShearletCovering}
that we have $Q_{i}'=Q$ for all $i\in I_{0}$ and furthermore $Q_{0}'=\left(-1,1\right)^{2}$.
Consequently, in the notation of Assumption \ref{assu:CrashCourseStandingAssumptions},
we can choose $n=2$ and $Q_{0}^{\left(1\right)}:=Q=U_{\left(-1,1\right)}^{\left(3^{-1},3\right)}$,
as well as $Q_{0}^{\left(2\right)}:=\left(-1,1\right)^{2}$.

We fix a \textbf{low-pass filter} $\varphi\in W^{1,1}\left(\R^{2}\right)\cap C^{1}\left(\R^{2}\right)$
and a \textbf{mother shearlet} $\psi\in W^{1,1}\left(\R^{2}\right)\cap C^{1}\left(\R^{2}\right)$.
Then we set (again in the notation of Assumption \ref{assu:CrashCourseStandingAssumptions})
$\gamma_{1}^{\left(0\right)}:=\psi$ and $\gamma_{2}^{\left(0\right)}:=\varphi$,
as well as $k_{0}:=2$ and $k_{i}:=1$ for $i\in I_{0}$. With these
choices, the family $\Gamma=\left(\gamma_{i}\right)_{i\in I}$ introduced
in Theorems \ref{thm:BanachFrameTheorem} and \ref{thm:AtomicDecompositionTheorem}
satisfies $\gamma_{i}=\gamma_{k_{i}}^{\left(0\right)}=\gamma_{1}^{\left(0\right)}=\psi$
for $i\in I_{0}$ and $\gamma_{0}=\gamma_{k_{0}}^{\left(0\right)}=\gamma_{2}^{\left(0\right)}=\varphi$,
so that the family $\Gamma$ is completely determined by $\varphi$
and $\psi$.

Our main goal in this section is to derive readily verifiable conditions
on $\varphi,\psi$ which guarantee that the generalized shift-invariant
system $\Psi_{\delta}:=\left(L_{\delta\cdot T_{i}^{-T}k}\:\gamma^{\left[i\right]}\right)_{i\in I,\,k\in\Z^{2}}$,
with $\gamma^{\left[i\right]}=\left|\det T_{i}\right|^{1/2}\cdot\gamma_{i}\circ T_{i}^{T}$,
generates, respectively, a Banach frame or an atomic decomposition
for the $\alpha$-shearlet smoothness space $\mathscr{S}_{\alpha,s}^{p,q}\left(\R^{2}\right)$,
for sufficiently small $\delta>0$.

Precisely, we assume $\widehat{\psi},\widehat{\varphi}\in C^{\infty}\left(\R^{2}\right)$,
where all partial derivatives of these functions are assumed to be
polynomially bounded. Furthermore, we assume (at least for the application
Theorem \ref{thm:BanachFrameTheorem}) that
\begin{equation}
\begin{split}\max_{\left|\beta\right|\leq1}\max_{\left|\theta\right|\leq N}\left|\left(\partial^{\theta}\widehat{\partial^{\beta}\psi}\right)\left(\xi\right)\right| & \leq C\cdot\min\left\{ \left|\xi_{1}\right|^{M_{1}}\!\!,\left(1+\left|\xi_{1}\right|\right)^{-M_{2}}\right\} \cdot\left(1+\left|\xi_{2}\right|\right)^{-K}\!=C\cdot\theta_{1}\left(\xi_{1}\right)\cdot\theta_{2}\left(\xi_{2}\right)=C\cdot\varrho\left(\xi\right),\\
\max_{\left|\beta\right|\leq1}\max_{\left|\theta\right|\leq N}\left|\left(\partial^{\theta}\widehat{\partial^{\beta}\varphi}\right)\left(\xi\right)\right| & \leq C\cdot\left(1+\left|\xi\right|\right)^{-H}=C\cdot\varrho_{0}\left(\xi\right)
\end{split}
\label{eq:MotherShearletMainEstimate}
\end{equation}
for all $\xi=\left(\xi_{1},\xi_{2}\right)\in\R^{2}$, a suitable constant
$C>0$ and certain $M_{1},M_{2},K,H\in\left[0,\infty\right)$ and
$N\in\N$. To be precise, we note that equation \eqref{eq:MotherShearletMainEstimate}
employed the abbreviations 
\[
\theta_{1}\left(\xi_{1}\right):=\min\left\{ \left|\xi_{1}\right|^{M_{1}},\left(1+\left|\xi_{1}\right|\right)^{-M_{2}}\right\} \quad\text{ and }\quad\theta_{2}\left(\xi_{2}\right):=\left(1+\left|\xi_{2}\right|\right)^{-K}\qquad\text{ for }\xi_{1},\xi_{2}\in\R,
\]
as well as $\varrho\left(\xi\right):=\theta_{1}\left(\xi_{1}\right)\cdot\theta_{2}\left(\xi_{2}\right)$
and $\varrho_{0}\left(\xi\right):=\left(1+\left|\xi\right|\right)^{-H}$
for $\xi=\left(\begin{smallmatrix}\xi_{1}\\
\xi_{2}
\end{smallmatrix}\right)\in\R^{2}$.

Our goal in the following is to derive conditions on $N,M_{1},M_{2},K,H$
(depending on $p,q,s,\alpha$) which ensure that the family $\Psi_{\delta}$
indeed forms a Banach frame or an atomic decomposition for $\mathscr{S}_{\alpha,s}^{p,q}\left(\R^{2}\right)=\DecompSp{\CalS^{\left(\alpha\right)}}p{\ell_{w^{s}}^{q}}{}$.

\medskip{}

To verify the conditions of Theorem \ref{thm:BanachFrameTheorem}
(recalling that $b_{j}=0$ for all $j\in I$), we need to estimate
the quantity
\begin{align}
M_{j,i} & :=\left(\frac{w_{j}^{s}}{w_{i}^{s}}\right)^{\tau}\cdot\left(1+\left\Vert T_{j}^{-1}T_{i}\right\Vert \right)^{\sigma}\cdot\max_{\left|\beta\right|\leq1}\left(\left|\det T_{i}\right|^{-1}\cdot\int_{S_{i}^{\left(\alpha\right)}}\max_{\left|\theta\right|\leq N}\left|\left(\partial^{\theta}\widehat{\partial^{\beta}\gamma_{j}}\right)\left(T_{j}^{-1}\xi\right)\right|\d\xi\right)^{\tau}\nonumber \\
\left({\scriptstyle \text{eq. }\eqref{eq:MotherShearletMainEstimate}}\right) & \leq C^{\tau}\cdot\left(\frac{w_{j}^{s}}{w_{i}^{s}}\right)^{\tau}\cdot\left(1+\left\Vert T_{j}^{-1}T_{i}\right\Vert \right)^{\sigma}\cdot\left(\left|\det T_{i}\right|^{-1}\cdot\int_{S_{i}^{\left(\alpha\right)}}\varrho_{j}\left(T_{j}^{-1}\xi\right)\d\xi\right)^{\tau}=:C^{\tau}\cdot M_{j,i}^{\left(0\right)}\label{eq:AlphaShearletConditionTargetTerm}
\end{align}
with $\sigma,\tau>0$ and $N\in\N$ as in Theorem \ref{thm:BanachFrameTheorem}
and arbitrary $i,j\in I$, where we defined $\varrho_{j}:=\varrho$
for $j\in I_{0}$, with $\varrho$ and $\varrho_{0}$ as defined in
equation \eqref{eq:MotherShearletMainEstimate}.

In view of equation \eqref{eq:AlphaShearletConditionTargetTerm},
the following—highly nontrivial—lemma is crucial:
\begin{lem}
\label{lem:MainShearletLemma}Let $\alpha\in\left[0,1\right]$ and
$\tau_{0},\omega,c\in\left(0,\infty\right)$. Furthermore, let $K,H,M_{1},M_{2}\in\left[0,\infty\right)$.
Then there is a constant $C_{0}=C_{0}\left(\alpha,\tau_{0},\omega,c,K,H,M_{1},M_{2}\right)>0$
with the following property:

If $\sigma,\tau\in\left(0,\infty\right)$ and $s\in\R$ satisfy $\tau\geq\tau_{0}$
and $\frac{\sigma}{\tau}\leq\omega$ and if we have $K\geq K_{0}+c$
, $M_{1}\geq M_{1}^{(0)}+c$, and $M_{2}\geq M_{2}^{(0)}+c$, as well
as $H\geq H_{0}+c$ for

\begin{align*}
K_{0} & :=\begin{cases}
\max\left\{ \frac{\sigma}{\tau}-s,\,\frac{2+\sigma}{\tau}\right\} , & \text{if }\alpha=1,\\
\max\left\{ \frac{1-\alpha}{\tau}+2\frac{\sigma}{\tau}-s,\,\frac{2+\sigma}{\tau}\right\} , & \text{if }\alpha\in\left[0,1\right),
\end{cases}\\
M_{1}^{(0)} & :=\begin{cases}
\frac{1}{\tau}+s, & \text{if }\alpha=1,\\
\frac{1}{\tau}+\max\left\{ s,\,0\right\} , & \text{if }\alpha\in\left[0,1\right),
\end{cases}\\
M_{2}^{(0)} & :=\left(1+\alpha\right)\frac{\sigma}{\tau}-s,\\
H_{0} & :=\frac{1-\alpha}{\tau}+\frac{\sigma}{\tau}-s,
\end{align*}
then we have
\[
\max\left\{ \sup_{i\in I}\sum_{j\in I}M_{j,i}^{\left(0\right)},\,\sup_{j\in I}\sum_{i\in I}M_{j,i}^{\left(0\right)}\right\} \leq C_{0}^{\tau},
\]
where $M_{j,i}^{\left(0\right)}$ is as in equation \eqref{eq:AlphaShearletConditionTargetTerm},
i.e.,
\[
M_{j,i}^{\left(0\right)}:=\left(\frac{w_{j}^{s}}{w_{i}^{s}}\right)^{\tau}\cdot\left(1+\left\Vert T_{j}^{-1}T_{i}\right\Vert \right)^{\sigma}\cdot\left(\left|\det T_{i}\right|^{-1}\cdot\int_{S_{i}^{\left(\alpha\right)}}\varrho_{j}\left(T_{j}^{-1}\xi\right)\d\xi\right)^{\tau},
\]
with $\varrho_{0}\left(\xi\right)=\left(1+\left|\xi\right|\right)^{-H}$
and $\varrho_{j}\left(\xi\right)=\min\left\{ \left|\xi_{1}\right|^{M_{1}},\left(1+\left|\xi_{1}\right|\right)^{-M_{2}}\right\} \cdot\left(1+\left|\xi_{2}\right|\right)^{-K}$
for arbitrary $j\in I_{0}$.
\end{lem}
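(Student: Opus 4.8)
The claim is a uniform summability bound for the double-indexed quantity $M_{j,i}^{(0)}$, so the overall strategy is to obtain a pointwise upper bound of the form $M_{j,i}^{(0)}\lesssim \omega(i,j)^{-\tau\cdot(\text{something}>1)}$ for a suitable index-distance $\omega$, and then to invoke a standard Schur-test-type lemma (of the kind already used in \cite{StructuredBanachFrames}) to conclude that both row and column sums are finite. The dependence of the final constant only on $\alpha,\tau_0,\omega,c,K,H,M_1,M_2$ — and \emph{not} on the particular $\sigma,\tau,s$ satisfying the stated inequalities — is exactly what forces us to be careful: we must ensure the "excess" exponent $c$ always produces geometric decay with a ratio bounded away from $1$ \emph{uniformly}, and that the combinatorial factors (number of shears $m$ per scale $n$, which is $G_n\asymp 2^{n(1-\alpha)}$) are absorbed by the decay we have budgeted in $K_0,M_1^{(0)},M_2^{(0)},H_0$.

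\textbf{Key steps, in order.}
First I would dispose of the low-frequency index $i=0$ or $j=0$ separately: there are only finitely many interactions of a bounded set with the rest (as in the proof of Lemma~\ref{lem:AlphaShearletIntoTemperedDistributions}), so only finitely many terms involve the index $0$, and each is finite because $\widehat\varphi$ decays like $(1+|\xi|)^{-H}$ with $H\geq H_0+c$; these contribute a bounded amount to both sums. Second, for $i,j\in I_0$ with $i=(n,m,\varepsilon,\delta)$ and $j=(\nu,\mu,e,d)$, I would estimate the three factors of $M_{j,i}^{(0)}$. The weight ratio is $(w_j^s/w_i^s)^\tau=2^{\tau s(\nu-n)}$. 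The matrix-norm factor $(1+\|T_j^{-1}T_i\|)^\sigma$ is controlled using the explicit form $T_i^{(\alpha)}=R^\delta D_{2^n}^{(\alpha)}S_m^T$ from Definition~\ref{def:AlphaShearletCovering}: one computes $\|T_j^{-1}T_i\|$ in terms of $2^{|\nu-n|}$, $2^{|\nu-n|\alpha}$ and the "shear distance" $|2^{-\nu(1-\alpha)}\mu - 2^{-n(1-\alpha)}m|$ (the same quantities appearing in the $\alpha$-molecule index distance of \cite{AlphaMolecules,ParabolicMolecules}), distinguishing whether $\delta=d$ or $\delta\neq d$ (in the latter case the extra $R$ makes the norm large, which helps). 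Third, and most importantly, I would estimate the integral $|\det T_i|^{-1}\int_{S_i^{(\alpha)}}\varrho_j(T_j^{-1}\xi)\,d\xi$. Substituting $\eta=T_j^{-1}\xi$ turns this into $\frac{|\det T_i|^{-1}}{|\det T_j|^{-1}}\int_{T_j^{-1}S_i^{(\alpha)}}\varrho_j(\eta)\,d\eta = 2^{(1+\alpha)(\nu-n)}\int_{T_j^{-1}S_i^{(\alpha)}}\varrho(\eta)\,d\eta$, where $\varrho(\eta)=\min\{|\eta_1|^{M_1},(1+|\eta_1|)^{-M_2}\}(1+|\eta_2|)^{-K}$. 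The set $T_j^{-1}S_i^{(\alpha)}$ lives, in the $\eta_1$-direction, at scale roughly $2^{n-\nu}$ (from Lemma~\ref{lem:AlphaShearletCoveringAuxiliary}(2)(c)), and in the $\eta_2$-direction it is a sheared strip whose width and offset depend on $|m2^{-n(1-\alpha)}-\mu 2^{-\nu(1-\alpha)}|$ and on $2^{(n-\nu)\alpha}$. So:
\begin{itemize}[leftmargin=0.5cm]
\item If $\nu\leq n$ (the "coarser $j$" case): the factor $|\eta_1|^{M_1}$ in $\varrho$ with $|\eta_1|\asymp 2^{n-\nu}\geq 1$ is harmless, but on the overlap region we instead use $(1+|\eta_1|)^{-M_2}\asymp 2^{-(n-\nu)M_2}$; combined with the $\eta_2$-integral, which is of order $2^{-(\text{shear distance factor})K}$ times the width, and the prefactor $2^{(1+\alpha)(\nu-n)}$, one gets decay $2^{-(n-\nu)\cdot(\text{linear in }M_2,K)}$.
\item If $\nu> n$ (the "finer $j$" case): now $|\eta_1|\asymp 2^{n-\nu}\leq 1$ is small, so the \emph{vanishing-moment} factor $|\eta_1|^{M_1}\asymp 2^{-(\nu-n)M_1}$ kicks in and supplies the decay in $\nu-n$; the prefactor $2^{(1+\alpha)(\nu-n)}$ is the thing that $M_1^{(0)}=\tau^{-1}+\max\{s,0\}$ (times $\tau$ after taking $\tau$-th powers) is designed to beat, together with the weight ratio $2^{\tau s(\nu-n)}$.
\end{itemize}
Assembling these, $M_{j,i}^{(0)}$ is bounded by $2^{-\tau c'|\nu-n|}$ times a negative power of $(1+\text{shear distance})$ and times a bounded number of combinatorial terms; summing over $\mu$ with $|\mu|\leq G_\nu\asymp 2^{\nu(1-\alpha)}$ costs a factor $2^{\nu(1-\alpha)}$ which is exactly absorbed by the $\frac{1-\alpha}{\tau}$-contributions in $K_0,H_0$, and summing over $\varepsilon,e,\delta,d$ costs only a factor $\leq 16$. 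Finally summing the geometric series in $|\nu-n|$ converges with a ratio $\leq 2^{-\tau c'}\leq 2^{-\tau_0 c'}$ bounded away from $1$ uniformly, so the bound $C_0^\tau$ depends only on the allowed parameters; taking $\tau$-th roots gives $C_0$.

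\textbf{Main obstacle.}
The genuinely hard part is the geometric analysis of the set $T_j^{-1}S_i^{(\alpha)}$ and the resulting integral estimate $\int_{T_j^{-1}S_i^{(\alpha)}}\varrho(\eta)\,d\eta$ — in particular tracking, \emph{simultaneously}, (i) the $\eta_1$-location ($\asymp 2^{n-\nu}$), (ii) the $\eta_2$-strip width and center, which are governed by the non-linear shear-distance expression and by $2^{(n-\nu)\alpha}$, and (iii) the two competing regimes of $\varrho$ (the $|\eta_1|^{M_1}$ moment bound versus the $(1+|\eta_1|)^{-M_2}$ decay bound), and doing all this with enough quantitative precision that the exact break-even exponents $K_0,M_1^{(0)},M_2^{(0)},H_0$ come out as stated. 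This is why the authors flag the lemma as "highly nontrivial" and defer its proof to a dedicated section (Section~\ref{sec:MegaProof}); the bookkeeping — splitting into the cases $\delta=d$ vs.\ $\delta\neq d$, $\nu\leq n$ vs.\ $\nu>n$, and sub-cases according to which branch of $\min\{\cdot,\cdot\}$ is active and whether the shear offset is large or small — is lengthy but, once the geometry is pinned down, each piece is a routine power-counting estimate. One should also double-check the edge case $\alpha=1$, where $G_n$ is constant (no $(1-\alpha)/\tau$ loss, consistent with the simpler formulas for $K_0,M_1^{(0)}$ in that case) and the system is wavelet-like, and the edge case $\alpha=0$, where the shears proliferate maximally.
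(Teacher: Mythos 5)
Your outline captures the right shape of the argument — compute $T_j^{-1}T_i$ explicitly, rescale the integral, split into the regimes $\nu\le n$ (use the $(1+|\eta_1|)^{-M_2}$ branch) and $\nu>n$ (use the vanishing-moment branch $|\eta_1|^{M_1}$), and sum a geometric series whose ratio stays $\le 2^{-\tau_0 c'}$ uniformly. But two points of your plan are concretely off.

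First, your treatment of the low-pass index is wrong as stated. You claim that ``only finitely many terms involve the index $0$,'' but neither $\varrho_0$ nor $\varrho_j$ has compact support, so for $j=0$ the sum $\sum_{i\in I_0}M^{(0)}_{0,i}$ has infinitely many nonzero terms: every $S_i^{(\alpha)}$ meets the unbounded support of $\varrho_0\circ T_0^{-1}$, and the convergence really does require the budget $H\ge H_0+c$ (and in particular the $\frac{1-\alpha}{\tau}$ term in $H_0$, which in the paper absorbs the $G_n\asymp 2^{n(1-\alpha)}$ multiplicity by a crude ``number of shears'' count; cf.\ Subsection~\ref{subsec:RemainingStuffAndLowPass}). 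Similarly, for $i=0$ the sum over $j\in I_0$ has infinitely many nonzero terms; the convergence there hinges on the moment factor $|\xi_1|^{M_1}$ evaluated at $T_j^{-1}$ of the bounded set $(-1,1)^2$, with first coordinate $\asymp 2^{-\nu}$ (Subsection~\ref{subsec:LowPassAndRemainingStuff}).

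Second, and more importantly, your claim that ``summing over $\mu$ with $|\mu|\le G_\nu\asymp 2^{\nu(1-\alpha)}$ costs a factor $2^{\nu(1-\alpha)}$ which is exactly absorbed by the $\frac{1-\alpha}{\tau}$-contributions'' is only half the story, and as stated it would not reproduce the claimed exponent $K_0$. In the dominant cone-cone interactions the paper does \emph{not} pay the factor $2^{n(1-\alpha)}$: instead it performs the shear sum exactly, using the separate technical Lemma~\ref{lem:WeightedSumOfShiftedIntegrals} on
\[
\sum_{k\in\Z}\bigl(1+\left|\beta k+M\right|\bigr)^{N}\left(\int_{\beta k+M-L}^{\beta k+M+L}\left(1+\left|x\right|\right)^{-K}\d x\right)^{\tau},
\]
which is finite with a bound independent of $M$ provided $K\ge\frac{2+N}{\tau}$. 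The $\eta_2$-centers $2^{\alpha(n-\nu)}m-2^{n-\nu}\mu$ form an arithmetic progression as one of $m,\mu$ ranges over $\Z$, so the $(1+|\eta_2|)^{-K}$ decay makes the shear sum bounded with no $2^{n(1-\alpha)}$ loss at all; this is exactly what produces the second branch $\frac{2+\sigma}{\tau}$ of $K_0$. The branch $\frac{1-\alpha}{\tau}+2\frac{\sigma}{\tau}-s$ of $K_0$ comes from a \emph{different} sub-case (Case~3 in Subsection~\ref{subsec:UpperConeRightConeGeneralAlpha}) where a crude $G_n$ count is unavoidable. Without identifying something like Lemma~\ref{lem:WeightedSumOfShiftedIntegrals} and deciding case by case whether to use the refined shear sum or the crude multiplicity bound, you would be forced into a strictly larger $K_0$ than the one claimed. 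The remaining machinery of your plan (explicit matrix entries, $\kappa=\mu 2^{-\nu(1-\alpha)}$, $\iota=m2^{-n(1-\alpha)}$, further sub-cases depending on $|\kappa|,|\iota|$ small/large and $\delta=d$ vs.\ $\delta\ne d$, exploiting the symmetries $T_{n,m,-\varepsilon,\delta}=-T_{n,m,\varepsilon,\delta}$ and $T_{n,m,\varepsilon,1}=RT_{n,m,\varepsilon,0}$ to cut down the $25$ label-pairs to a few essentially distinct ones) is indeed how the paper proceeds, so the rest of your outline is sound.
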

The proof of Lemma \ref{lem:MainShearletLemma} is highly technical
and very lengthy. In order to not disrupt the flow of the paper too
severely, we deferred the proof to the appendix (Section \ref{sec:MegaProof}).

Using the general result of Lemma \ref{lem:MainShearletLemma}, we
can now derive convenient sufficient conditions concerning the low-pass
filter $\varphi$ and the mother shearlet $\psi$ which ensure that
$\varphi,\psi$ generate a Banach frame for $\mathscr{S}_{\alpha,s}^{p,q}\left(\R^{2}\right)$.
\begin{thm}
\label{thm:NicelySimplifiedAlphaShearletFrameConditions}Let $\alpha\in\left[0,1\right]$,
$\varepsilon,p_{0},q_{0}\in\left(0,1\right]$ and $s_{0},s_{1}\in\R$
with $s_{0}\leq s_{1}$. Assume that $\varphi,\psi:\R^{2}\rightarrow\Compl$
satisfy the following:

\begin{itemize}[leftmargin=0.6cm]
\item $\varphi,\psi\in L^{1}\left(\R^{2}\right)$ and $\widehat{\varphi},\widehat{\psi}\in C^{\infty}\left(\R^{2}\right)$,
where all partial derivatives of $\widehat{\varphi},\widehat{\psi}$
have at most polynomial growth.
\item $\varphi,\psi\in C^{1}\left(\R^{2}\right)$ and $\nabla\varphi,\nabla\psi\in L^{1}\left(\R^{2}\right)\cap L^{\infty}\left(\R^{2}\right)$.
\item We have 
\begin{align*}
\widehat{\psi}\left(\xi\right)\neq0 & \text{ for all }\xi=\left(\xi_{1},\xi_{2}\right)\in\R^{2}\text{ with }\xi_{1}\in\left[3^{-1},3\right]\text{ and }\left|\xi_{2}\right|\leq\left|\xi_{1}\right|,\\
\widehat{\varphi}\left(\xi\right)\ne0 & \text{ for all }\xi\in\left[-1,1\right]^{2}.
\end{align*}
\item There is some $C>0$ such that $\widehat{\psi}$ and $\widehat{\varphi}$
satisfy the estimates 
\begin{equation}
\begin{split}\left|\partial^{\theta}\smash{\widehat{\psi}}\left(\xi\right)\right| & \leq C\cdot\left|\xi_{1}\right|^{M_{1}}\left(1+\left|\xi_{2}\right|\right)^{-\left(1+K\right)}\qquad\forall\xi=\left(\xi_{1},\xi_{2}\right)\in\R^{2}\text{ with }\left|\xi_{1}\right|\leq1,\\
\left|\partial^{\theta}\smash{\widehat{\psi}}\left(\xi\right)\right| & \leq C\cdot\left(1+\left|\xi_{1}\right|\right)^{-\left(M_{2}+1\right)}\left(1+\left|\xi_{2}\right|\right)^{-\left(K+1\right)}\qquad\forall\xi=\left(\xi_{1},\xi_{2}\right)\in\R^{2},\\
\left|\partial^{\theta}\widehat{\varphi}\left(\xi\right)\right| & \leq C\cdot\left(1+\left|\xi\right|\right)^{-\left(H+1\right)}\qquad\forall\xi\in\R^{2}
\end{split}
\label{eq:ShearletFrameFourierDecayCondition}
\end{equation}
for all $\theta\in\N_{0}^{2}$ with $\left|\theta\right|\leq N_{0}$,
where $N_{0}:=\left\lceil p_{0}^{-1}\cdot\left(2+\varepsilon\right)\right\rceil $
and
\begin{align*}
K & :=\varepsilon+\max\left\{ \frac{1-\alpha}{\min\left\{ p_{0},q_{0}\right\} }+2\left(\frac{2}{p_{0}}+N_{0}\right)-s_{0},\,\frac{2}{\min\left\{ p_{0},q_{0}\right\} }+\frac{2}{p_{0}}+N_{0}\right\} ,\\
M_{1} & :=\varepsilon+\frac{1}{\min\left\{ p_{0},q_{0}\right\} }+\max\left\{ s_{1},\,0\right\} ,\\
M_{2} & :=\max\left\{ 0,\,\varepsilon+\left(1+\alpha\right)\left(\frac{2}{p_{0}}+N_{0}\right)-s_{0}\right\} ,\\
H & :=\max\left\{ 0,\,\varepsilon+\frac{1-\alpha}{\min\left\{ p_{0},q_{0}\right\} }+\frac{2}{p_{0}}+N_{0}-s_{0}\right\} .
\end{align*}
\end{itemize}
Then there is some $\delta_{0}\in\left(0,1\right]$ such that for
$0<\delta\leq\delta_{0}$ and all $p,q\in\left(0,\infty\right]$ and
$s\in\R$ with $p\geq p_{0}$, $q\geq q_{0}$ and $s_{0}\leq s\leq s_{1}$,
the following is true: The family 
\[
\widetilde{{\rm SH}}_{\alpha,\varphi,\psi,\delta}^{\left(\pm1\right)}:=\left(L_{\delta\cdot T_{i}^{-T}k}\widetilde{\gamma^{\left[i\right]}}\right)_{i\in I,k\in\Z^{2}}\quad\text{ with }\quad\widetilde{\gamma^{\left[i\right]}}(x)=\gamma^{\left[i\right]}(-x)\quad\text{ and }\quad\gamma^{\left[i\right]}:=\begin{cases}
\left|\det T_{i}\right|^{1/2}\cdot\left(\psi\circ T_{i}^{T}\right), & \text{if }i\in I_{0},\\
\varphi, & \text{if }i=0
\end{cases}
\]
forms a Banach frame for $\mathscr{S}_{\alpha,s}^{p,q}\left(\R^{2}\right)=\mathcal{D}\left(\CalS^{\left(\alpha\right)},L^{p},\ell_{w^{s}}^{q}\right)$.
Precisely, this means the following:

\begin{enumerate}[leftmargin=0.6cm]
\item The \textbf{analysis operator} 
\[
A^{(\delta)}:\mathscr{S}_{\alpha,s}^{p,q}\left(\R^{2}\right)\to C_{w^{s}}^{p,q},f\mapsto\left[\left(\smash{\gamma^{\left[i\right]}\ast f}\right)\left(\delta\cdot T_{i}^{-T}k\right)\right]_{i\in I,k\in\Z^{2}}
\]
is well-defined and bounded for arbitrary $\delta\in\left(0,1\right]$,
with the coefficient space $C_{w^{s}}^{p,q}$ from Definition \ref{def:CoefficientSpace}.
The convolution $\gamma^{\left[i\right]}\ast f$ has to be understood
as explained in equation \eqref{eq:SpecialConvolutionDefinition};
see Lemma \ref{lem:SpecialConvolutionClarification} for a more convenient
expression for this convolution, for $f\in L^{2}\left(\R^{2}\right)$.
\item For $0<\delta\leq\delta_{0}$, there is a bounded linear \textbf{reconstruction
operator}
\[
R^{(\delta)}:C_{w^{s}}^{p,q}\to\mathscr{S}_{\alpha,s}^{p,q}\left(\R^{2}\right)
\]
satisfying $R^{\left(\delta\right)}\circ A^{\left(\delta\right)}=\identity_{\mathscr{S}_{\alpha,s}^{p,q}\left(\R^{2}\right)}$.
\item For $0<\delta\leq\delta_{0}$, we have the following \textbf{consistency
statement}: If $f\in\mathscr{S}_{\alpha,s}^{p,q}\left(\R^{2}\right)$
and if $p_{0}\leq\tilde{p}\leq\infty$, $q_{0}\leq\tilde{q}\leq\infty$
and $s_{0}\leq\tilde{s}\leq s_{1}$, then the following equivalence
holds: 
\[
f\in\mathscr{S}_{\alpha,\tilde{s}}^{\tilde{p},\tilde{q}}\left(\R^{2}\right)\quad\Longleftrightarrow\quad\left[\left(\smash{\gamma^{\left[i\right]}\ast f}\right)\left(\delta\cdot T_{i}^{-T}k\right)\right]_{i\in I,k\in\Z^{2}}\in C_{w^{\tilde{s}}}^{\tilde{p},\tilde{q}}.\qedhere
\]
\end{enumerate}
\end{thm}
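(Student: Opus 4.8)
The plan is to derive the statement from the general Banach frame result, Theorem~\ref{thm:BanachFrameTheorem}, applied to the covering $\CalQ=\CalS^{(\alpha)}$ and the weight $w^{s}$, the summability condition~(4) of that theorem being supplied by Lemma~\ref{lem:MainShearletLemma}. First I would fix the dictionary: by Lemma~\ref{lem:AlphaShearletCoveringIsAlmostStructured} the $\alpha$-shearlet covering is almost structured; by Lemma~\ref{lem:AlphaShearletWeightIsModerate} the weight $w^{s}$ is $\CalS^{(\alpha)}$-moderate with $C_{\CalS^{(\alpha)},w^{s}}\le 39^{\max\{|s_{0}|,|s_{1}|\}}$ uniformly in $s\in[s_{0},s_{1}]$; and---as already noted at the start of Section~\ref{sec:CompactlySupportedShearletFrames}---one takes $n=2$, $Q_{0}^{(1)}=Q=U_{(-1,1)}^{(3^{-1},3)}$, $Q_{0}^{(2)}=(-1,1)^{2}$, $\gamma_{1}^{(0)}:=\psi$, $\gamma_{2}^{(0)}:=\varphi$, $k_{0}:=2$ and $k_{i}:=1$ for $i\in I_{0}$, so that (since $b_{i}=0$ and $T_{0}=\identity$) the family produced by Theorem~\ref{thm:BanachFrameTheorem} is exactly $\widetilde{{\rm SH}}_{\alpha,\varphi,\psi,\delta}^{(\pm1)}$. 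Then conditions~(1) and~(2) of Theorem~\ref{thm:BanachFrameTheorem} are exactly our standing hypotheses on $\varphi,\psi$, and condition~(3) follows because $\overline{Q_{0}^{(1)}}\subset\{(\xi_{1},\xi_{2}):\xi_{1}\in[3^{-1},3],\ |\xi_{2}|\le|\xi_{1}|\}$ and $\overline{Q_{0}^{(2)}}=[-1,1]^{2}$, on which sets $\widehat{\psi}$ resp.\ $\widehat{\varphi}$ are nonzero by assumption.

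The substantive step is condition~(4), i.e.\ $C_{1}=\sup_{i}\sum_{j}M_{j,i}<\infty$ and $C_{2}=\sup_{j}\sum_{i}M_{j,i}<\infty$. For this I would first recast the pointwise bounds~\eqref{eq:ShearletFrameFourierDecayCondition} into the form~\eqref{eq:MotherShearletMainEstimate}: since $\widehat{\partial^{\beta}\psi}(\xi)=(2\pi i\xi)^{\beta}\widehat{\psi}(\xi)$, the Leibniz rule writes $\partial^{\theta}\widehat{\partial^{\beta}\psi}$ as a finite combination of terms $\xi^{\beta}\partial^{\theta'}\widehat{\psi}$ and---when $|\beta|=1$ and $\beta\le\theta$---$\partial^{\theta-\beta}\widehat{\psi}$; the extra monomial $\xi^{\beta}$ of degree $\le1$ is precisely accounted for by the ``$+1$'' in the exponents $M_{2}+1,K+1,H+1$ of~\eqref{eq:ShearletFrameFourierDecayCondition}, while in the region $|\xi_{1}|\le1$ the factor $|\xi_{1}|$ only improves the estimate (so the exponent $M_{1}$ is retained), and for $|\xi_{1}|>1$ one has $|\xi_{1}|^{M_{1}}\ge1\ge(1+|\xi_{1}|)^{-M_{2}}$, whence the second line dominates the first. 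Thus~\eqref{eq:MotherShearletMainEstimate} holds with the stated $M_{1},M_{2},K,H$ and with $N:=\lceil(2+\varepsilon)/\min\{1,p\}\rceil\le N_{0}$ (the derivatives up to order $N$ being covered by~\eqref{eq:ShearletFrameFourierDecayCondition}), so that $M_{j,i}\le C^{\tau}M_{j,i}^{(0)}$ as in~\eqref{eq:AlphaShearletConditionTargetTerm}.

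It then remains to apply Lemma~\ref{lem:MainShearletLemma}, and the only real care lies in making its hypotheses hold \emph{uniformly} for all admissible $p\ge p_{0}$, $q\ge q_{0}$, $s\in[s_{0},s_{1}]$. Set $\tau_{0}:=\min\{p_{0},q_{0}\}$, $\omega:=2p_{0}^{-1}+N_{0}$ and $c:=\varepsilon$. Then $\tau=\min\{1,p,q\}\ge\tau_{0}$ and $\sigma/\tau=2/\min\{1,p\}+N\le\omega$ (using $\min\{1,p\}\ge p_{0}$ and $N\le N_{0}$); substituting the elementary bounds $\tfrac{1}{\tau}\le\tfrac{1}{\min\{p_{0},q_{0}\}}$, $\sigma/\tau\le\omega$, $-s\le-s_{0}$, $s\le s_{1}$ into the quantities $K_{0},M_{1}^{(0)},M_{2}^{(0)},H_{0}$ of Lemma~\ref{lem:MainShearletLemma} shows that each of them is dominated by the corresponding bracketed expression appearing in the definitions of $K,M_{1},M_{2},H$, whence $K\ge K_{0}+\varepsilon$, $M_{1}\ge M_{1}^{(0)}+\varepsilon$, $M_{2}\ge M_{2}^{(0)}+\varepsilon$ and $H\ge H_{0}+\varepsilon$ (the outer $\max\{0,\cdot\}$ in the definitions of $M_{2}$ and $H$ being harmless, since $M_{2}^{(0)}$ and $H_{0}$ only have to be dominated). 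Lemma~\ref{lem:MainShearletLemma} now yields a constant $C_{0}$ depending only on $\alpha,p_{0},q_{0},\varepsilon$ and the exponents, with $\max\{C_{1},C_{2}\}\le(C\,C_{0})^{\tau}$ for \emph{every} such $(p,q,s)$. Plugging this into Theorem~\ref{thm:BanachFrameTheorem} produces the analysis operator $A^{(\delta)}$, the reconstruction operator $R^{(\delta)}$ and the stated equivalence; and since the size estimate for $\delta_{0}$ in that theorem depends only on the uniformly bounded quantities $C_{1},C_{2},C_{\CalS^{(\alpha)},w^{s}}$, one single $\delta_{0}\in(0,1]$ serves the whole parameter range, while the consistency statement is precisely the consistency property of Theorem~\ref{thm:BanachFrameTheorem} applied to the triples $(p,q,s)$ and $(\tilde p,\tilde q,\tilde s)$. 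The genuine difficulty of the argument is thus entirely packed into the deferred Lemma~\ref{lem:MainShearletLemma}; the present proof is essentially a careful bookkeeping of exponents to secure this uniformity.
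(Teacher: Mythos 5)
Your proposal is correct and follows essentially the same route as the paper's proof: recast the decay hypotheses~\eqref{eq:ShearletFrameFourierDecayCondition} into the form~\eqref{eq:MotherShearletMainEstimate} via the Leibniz rule (absorbing the degree-one monomial from $\widehat{\partial^{\beta}\cdot}$ into the ``$+1$'' exponents), then feed the result into Theorem~\ref{thm:BanachFrameTheorem} with condition~(4) supplied by Lemma~\ref{lem:MainShearletLemma} applied with $c=\varepsilon$, $\tau_{0}=\min\{p_{0},q_{0}\}$, $\omega=2/p_{0}+N_{0}$, checking uniformity of the constants over the admissible range of $(p,q,s)$ so that one $\delta_{0}$ works throughout.
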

\begin{proof}
First, we show that there are constants $K_{1},K_{2}>0$ such that
\begin{equation}
\max_{\left|\beta\right|\leq1}\max_{\left|\theta\right|\leq N_{0}}\left|\left(\partial^{\theta}\smash{\widehat{\partial^{\beta}\smash{\psi}}}\right)\left(\xi\right)\right|\leq K_{1}\cdot\min\left\{ \left|\xi_{1}\right|^{M_{1}},\left(1+\left|\xi_{1}\right|\right)^{-M_{2}}\right\} \cdot\left(1+\left|\xi_{2}\right|\right)^{-K}=:K_{1}\cdot\varrho\left(\xi\right)\label{eq:NicelySimplifiedAlphaShearletFrameConditionTargetEstimate}
\end{equation}
and 
\begin{equation}
\max_{\left|\beta\right|\leq1}\max_{\left|\theta\right|\leq N_{0}}\left|\left(\partial^{\theta}\smash{\widehat{\partial^{\beta}\varphi}}\right)\left(\xi\right)\right|\leq K_{2}\cdot\left(1+\left|\xi_{2}\right|\right)^{-H}=:K_{2}\cdot\varrho_{0}\left(\xi\right)\label{eq:NiceSimplifiedAlphaShearletFrameConditionTargetEstimateLowPass}
\end{equation}
for all $\xi=\left(\xi_{1},\xi_{2}\right)\in\R^{2}$.

To this end, we recall that $\varphi,\psi\in C^{1}\left(\R^{2}\right)\cap W^{1,1}\left(\R^{2}\right)$,
so that standard properties of the Fourier transform show for $\beta=e_{\ell}$
(the $\ell$-th unit vector) that
\[
\widehat{\partial^{\beta}\psi}\left(\xi\right)=2\pi i\cdot\xi_{\ell}\cdot\widehat{\psi}\left(\xi\right)\text{ \ \ and \ \  }\widehat{\partial^{\beta}\varphi}\left(\xi\right)=2\pi i\cdot\xi_{\ell}\cdot\widehat{\varphi}\left(\xi\right)\qquad\forall\xi\in\R^{2}.
\]
Then, Leibniz's rule yields for $\beta=e_{\ell}$ and arbitrary $\theta\in\N_{0}^{2}$
with $\left|\theta\right|\leq N_{0}$ that
\begin{align}
\left|\left(\partial^{\theta}\smash{\widehat{\partial^{\beta}\psi}}\right)\left(\xi\right)\right| & =2\pi\cdot\left|\sum_{\nu\leq\theta}\binom{\theta}{\nu}\cdot\left(\partial^{\nu}\xi_{\ell}\right)\cdot\left(\partial^{\theta-\nu}\smash{\widehat{\psi}}\,\right)\left(\xi\right)\right|\nonumber \\
 & \leq2^{N_{0}+1}\pi\cdot\left(1+\left|\xi_{\ell}\right|\right)\cdot\max_{\left|\eta\right|\leq N_{0}}\left|\left(\partial^{\eta}\smash{\widehat{\psi}}\,\right)\left(\xi\right)\right|\label{eq:NicelySimplifiedShearletFrameConditionsDerivativeEstimate1}\\
 & \leq2^{N_{0}+1}\pi\cdot\left(1+\left|\xi_{\ell}\right|\right)\cdot C\cdot\left(1+\left|\xi_{1}\right|\right)^{-\left(1+M_{2}\right)}\left(1+\left|\xi_{2}\right|\right)^{-\left(1+K\right)}\nonumber \\
 & \leq2^{N_{0}+1}\pi C\cdot\left(1+\left|\xi_{1}\right|\right)^{-M_{2}}\cdot\left(1+\left|\xi_{2}\right|\right)^{-K},\label{eq:NicelySimplifiedShearletFrameConditionsDerivativeEstimate2}
\end{align}
since we have
\[
\left|\partial^{\nu}\xi_{\ell}\right|=\begin{cases}
\left|\xi_{\ell}\right|, & \text{if }\nu=0\\
1, & \text{if }\nu=e_{\ell}\\
0, & \text{otherwise}
\end{cases}\qquad\text{ and thus }\qquad\left|\partial^{\nu}\xi_{\ell}\right|\leq1+\left|\xi_{\ell}\right|\leq1+\left|\xi\right|.
\]
Above, we also used that $\sum_{\nu\leq\theta}\binom{\theta}{\nu}=\left(2,\dots,2\right)^{\theta}=2^{\left|\theta\right|}\leq2^{N_{0}}$,
as a consequence of the $\dimension$-dimensional binomial theorem
(cf.\@ \cite[Section 8.1, Exercise 2.b]{FollandRA}).

Likewise, we get 
\[
\begin{aligned}\left|\left(\partial^{\theta}\widehat{\partial^{\beta}\varphi}\right)\left(\xi\right)\right| & =2\pi\cdot\left|\sum_{\nu\leq\text{\ensuremath{\theta}}}\binom{\theta}{\nu}\cdot\left(\partial^{\nu}\xi_{\ell}\right)\cdot\left(\partial^{\theta-\nu}\widehat{\varphi}\right)\left(\xi\right)\right|\\
 & \leq2^{N_{0}+1}\pi\cdot\left(1+\left|\xi\right|\right)\cdot\max_{\left|\eta\right|\leq N_{0}}\left|\partial^{\eta}\widehat{\varphi}\left(\xi\right)\right|\\
 & \leq2^{N_{0}+1}\pi C\cdot\left(1+\left|\xi\right|\right)^{-H}\\
 & =2^{N_{0}+1}\pi C\cdot\varrho_{0}(\xi)
\end{aligned}
\]
and, by assumption,
\[
\left|\partial^{\theta}\widehat{\varphi}\left(\xi\right)\right|\leq C\cdot\left(1+\left|\xi\right|\right)^{-\left(H+1\right)}\leq C\cdot\left(1+\left|\xi\right|\right)^{-H}=C\cdot\varrho_{0}\left(\xi\right).
\]
With this, we have already established equation \eqref{eq:NiceSimplifiedAlphaShearletFrameConditionTargetEstimateLowPass}
with $K_{2}:=2^{N_{0}+1}\pi C$.

\medskip{}

To validate equation \eqref{eq:NicelySimplifiedAlphaShearletFrameConditionTargetEstimate},
we now distinguish the two cases $\left|\xi_{1}\right|>1$ and $\left|\xi_{1}\right|\leq1$:

\textbf{Case 1}: We have $\left|\xi_{1}\right|>1$. In this case,
$\varrho\left(\xi\right)=\left(1+\left|\xi_{1}\right|\right)^{-M_{2}}\left(1+\left|\xi_{2}\right|\right)^{-K}$,
so that equation \eqref{eq:NicelySimplifiedShearletFrameConditionsDerivativeEstimate2}
shows $\left|\left(\partial^{\theta}\widehat{\partial^{\beta}\psi}\right)\left(\xi\right)\right|\leq2^{N_{0}+1}\pi C\cdot\varrho\left(\xi\right)$
for $\beta=e_{\ell},$ $\ell\in\left\{ 1,2\right\} $ and arbitrary
$\theta\in\N_{0}^{2}$ with $\left|\theta\right|\leq N_{0}$. Finally,
we also have
\[
\left|\partial^{\theta}\widehat{\psi}\left(\xi\right)\right|\leq C\cdot\left(1+\left|\xi_{1}\right|\right)^{-\left(1+M_{2}\right)}\left(1+\left|\xi_{2}\right|\right)^{-\left(1+K\right)}\leq C\cdot\left(1+\left|\xi_{1}\right|\right)^{-M_{2}}\left(1+\left|\xi_{2}\right|\right)^{-K}=C\cdot\varrho\left(\xi\right)
\]
and hence $\max_{\left|\beta\right|\leq1}\max_{\left|\theta\right|\leq N_{0}}\left|\left(\partial^{\theta}\widehat{\partial^{\beta}\psi}\right)\left(\xi\right)\right|\leq2^{N_{0}+1}\pi C\cdot\varrho\left(\xi\right)$
for all $\xi\in\R^{2}$ with $\left|\xi_{1}\right|>1$.

\medskip{}

\textbf{Case 2}: We have $\left|\xi_{1}\right|\leq1$. First note
that this implies $\left(1+\left|\xi_{1}\right|\right)^{-M_{2}}\geq2^{-M_{2}}\geq2^{-M_{2}}\left|\xi_{1}\right|^{M_{1}}$
and consequently $\varrho\left(\xi\right)\geq2^{-M_{2}}\left|\xi_{1}\right|^{M_{1}}\cdot\left(1+\left|\xi_{2}\right|\right)^{-K}$.
Furthermore, we have for arbitrary $\ell\in\left\{ 1,2\right\} $
that
\[
1+\left|\xi_{\ell}\right|\leq\max\left\{ 1+\left|\xi_{1}\right|,\,1+\left|\xi_{2}\right|\right\} \leq\max\left\{ 2,\,1+\left|\xi_{2}\right|\right\} \leq2\cdot\left(1+\left|\xi_{2}\right|\right).
\]
In conjunction with equation \eqref{eq:NicelySimplifiedShearletFrameConditionsDerivativeEstimate1},
this shows for $\beta=e_{\ell}$, $\ell\in\left\{ 1,2\right\} $ and
$\theta\in\N_{0}^{2}$ with $\left|\theta\right|\leq N_{0}$ that
\begin{align*}
\left|\left(\partial^{\theta}\widehat{\partial^{\beta}\psi}\right)\left(\xi\right)\right| & \leq2^{N_{0}+1}\pi\cdot\left(1+\left|\xi_{\ell}\right|\right)\cdot\max_{\left|\eta\right|\leq N_{0}}\left|\partial^{\eta}\widehat{\psi}\left(\xi\right)\right|\\
 & \leq2^{N_{0}+2}\pi C\cdot\left(1+\left|\xi_{2}\right|\right)\cdot\left|\xi_{1}\right|^{M_{1}}\cdot\left(1+\left|\xi_{2}\right|\right)^{-\left(1+K\right)}\\
 & \leq2^{2+M_{2}+N_{0}}\pi C\cdot\varrho\left(\xi\right).
\end{align*}
Finally, we also have
\[
\left|\partial^{\theta}\widehat{\psi}\left(\xi\right)\right|\leq C\cdot\left|\xi_{1}\right|^{M_{1}}\left(1+\left|\xi_{2}\right|\right)^{-\left(1+K\right)}\leq C\cdot\left|\xi_{1}\right|^{M_{1}}\left(1+\left|\xi_{2}\right|\right)^{-K}\leq2^{M_{2}}C\cdot\varrho\left(\xi\right).
\]
All in all, we have shown $\max_{\left|\beta\right|\leq1}\max_{\left|\theta\right|\leq N_{0}}\left|\left(\partial^{\theta}\widehat{\partial^{\beta}\psi}\right)\left(\xi\right)\right|\leq2^{2+M_{2}+N_{0}}\pi C\cdot\varrho\left(\xi\right)$
for all $\xi\in\R^{2}$ with $\left|\xi_{1}\right|\leq1$.

\medskip{}

All together, we have thus established eq.\@ \eqref{eq:NicelySimplifiedAlphaShearletFrameConditionTargetEstimate}
with $K_{1}:=2^{2+M_{2}+N_{0}}\pi C$. Now, define $C_{\diamondsuit}:=\max\left\{ K_{1},K_{2}\right\} =K_{1}$.

Now, for proving the current theorem, we want to apply Theorem \ref{thm:BanachFrameTheorem}
with $\gamma_{1}^{\left(0\right)}:=\psi$, $\gamma_{2}^{\left(0\right)}:=\varphi$
and $k_{i}:=1$ for $i\in I_{0}$ and $k_{0}:=2$, as well as $Q_{0}^{\left(1\right)}:=Q=U_{\left(-1,1\right)}^{\left(3^{-1},3\right)}$
and $Q_{0}^{\left(2\right)}:=\left(-1,1\right)^{2}$, cf.\@ Assumption
\ref{assu:CrashCourseStandingAssumptions} and Definition \ref{def:AlphaShearletCovering}.
In the notation of Theorem \ref{thm:BanachFrameTheorem}, we then
have $\gamma_{i}=\gamma_{k_{i}}^{\left(0\right)}$ for all $i\in I$,
i.e., $\gamma_{i}=\psi$ for $i\in I_{0}$ and $\gamma_{0}=\varphi$.
Using this notation and setting furthermore $\varrho_{i}:=\varrho$
for $i\in I_{0}$, we have thus shown for arbitrary $N\in\N_{0}$
with $N\leq N_{0}$ that
\[
\begin{aligned}M_{j,i}: & =\left(\frac{w_{j}^{s}}{w_{i}^{s}}\right)^{\tau}\cdot\left(1+\left\Vert T_{j}^{-1}T_{i}\right\Vert \right)^{\sigma}\cdot\max_{\left|\beta\right|\leq1}\left(\left|\det T_{i}\right|^{-1}\cdot\int_{S_{i}^{\left(\alpha\right)}}\max_{\left|\theta\right|\leq N}\left|\left(\partial^{\theta}\widehat{\partial^{\beta}\gamma_{j}}\right)\left(T_{j}^{-1}\xi\right)\right|\d\xi\right)^{\tau}\\
 & \leq C_{\diamondsuit}^{\tau}\cdot\left(\frac{w_{j}^{s}}{w_{i}^{s}}\right)^{\tau}\cdot\left(1+\left\Vert T_{j}^{-1}T_{i}\right\Vert \right)^{\sigma}\cdot\left(\left|\det T_{i}\right|^{-1}\cdot\int_{S_{i}^{\left(\alpha\right)}}\varrho_{j}\left(T_{j}^{-1}\xi\right)\d\xi\right)^{\tau}=:C_{\diamondsuit}^{\tau}\cdot M_{j,i}^{\left(0\right)}
\end{aligned}
\]
for arbitrary $\sigma,\tau>0$, $s\in\R$ and the $\mathcal{S}^{\left(\alpha\right)}$-moderate
weight $w^{s}$ (cf. Lemma \ref{lem:AlphaShearletWeightIsModerate}). 

In view of the assumptions of the current theorem, the prerequisites
(1)-(3) of Theorem \ref{thm:BanachFrameTheorem} are clearly fulfilled,
but we still need to verify 
\[
C_{1}:=\sup_{i\in I}\:\sum_{j\in I}M_{j,i}<\infty\quad\text{ and }\quad C_{2}:=\sup_{j\in I}\:\sum_{i\in I}M_{j,i}<\infty,
\]
with $M_{j,i}$ as above, $\tau:=\min\left\{ 1,p,q\right\} \geq\min\left\{ p_{0},q_{0}\right\} =:\tau_{0}$,
and 
\begin{equation}
N:=\left\lceil \frac{2+\varepsilon}{\min\left\{ 1,p\right\} }\right\rceil \leq\left\lceil \frac{2+\varepsilon}{p_{0}}\right\rceil =N_{0},\quad\text{ as well as }\quad\sigma:=\tau\cdot\left(\frac{2}{\min\left\{ 1,p\right\} }+N\right)\leq\tau\cdot\left(\frac{2}{p_{0}}+N_{0}\right).\label{eq:NicelySimplifiedAlphaShearletFrameConditionSigmaDefinition}
\end{equation}
In particular, we have $\frac{\sigma}{\tau}\leq\frac{2}{p_{0}}+N_{0}=\frac{2}{p_{0}}+\left\lceil \frac{2+\varepsilon}{p_{0}}\right\rceil =:\omega$.

Hence, Lemma \ref{lem:MainShearletLemma} (with $c=\varepsilon$)
yields a constant $C_{0}=C_{0}\left(\alpha,\tau_{0},\omega,\varepsilon,K,H,M_{1},M_{2}\right)$
with $\max\left\{ C_{1},C_{2}\right\} \leq C_{\diamondsuit}^{\tau}C_{0}^{\tau}$,
provided that we can show $H\geq H_{0}+\varepsilon$, $K\geq K_{0}+\varepsilon$
and $M_{\ell}\geq M_{\ell}^{\left(0\right)}+\varepsilon$ for $\ell\in\left\{ 1,2\right\} $,
with $H_{0},K_{0},M_{1}^{(0)},M_{2}^{(0)}$ as defined in Lemma \ref{lem:MainShearletLemma}.
But we have
\[
\begin{aligned}H_{0} & =\frac{1-\alpha}{\tau}+\frac{\sigma}{\tau}-s\leq\frac{1-\alpha}{\tau_{0}}+\omega-s_{0}\\
 & =\frac{1-\alpha}{\min\left\{ p_{0},q_{0}\right\} }+\frac{2}{p_{0}}+N_{0}-s_{0}\\
 & \leq H-\varepsilon.
\end{aligned}
\]
Furthermore, 
\[
\begin{aligned}M_{2}^{(0)} & =(1+\alpha)\frac{\sigma}{\tau}-s\leq(1+\alpha)\omega-s_{0}\\
 & =\left(1+\alpha\right)\left(\frac{2}{p_{0}}+N_{0}\right)-s_{0}\\
 & \leq M_{2}-\varepsilon
\end{aligned}
\]
and
\[
M_{1}^{(\text{0})}\leq\frac{1}{\tau}+\max\left\{ s,0\right\} \leq\frac{1}{\min\left\{ p_{0},q_{0}\right\} }+\max\left\{ s_{1},0\right\} =M_{1}-\varepsilon,
\]
as well as 
\[
\begin{aligned}K_{0} & \leq\max\left\{ \frac{1-\alpha}{\tau}+2\frac{\sigma}{\tau}-s,\,\frac{2+\sigma}{\tau}\right\} \\
 & \leq\max\left\{ \frac{1-\alpha}{\tau_{0}}+2\omega-s_{0},\frac{2}{\tau_{0}}+\omega\right\} \\
 & =\max\left\{ \frac{1-\alpha}{\min\left\{ p_{0},q_{0}\right\} }+2\left(\frac{2}{p_{0}}+N_{0}\right)-s_{0},\frac{2}{\min\left\{ p_{0},q_{0}\right\} }+\frac{2}{p_{0}}+N_{0}\right\} \\
 & =K-\varepsilon.
\end{aligned}
\]

Thus, Lemma \ref{lem:MainShearletLemma} is applicable, so that 
\[
C_{1}^{1/\tau}=\left(\sup_{i\in I}\:\smash{\sum_{j\in I}}M_{j,i}\right)^{1/\tau}\leq C_{\diamondsuit}C_{0},
\]
where the right-hand side is independent of $p,q$ and $s$, since
$C_{0}$ is independent of $p,q$ and $s$ and since 
\[
C_{\diamondsuit}=C_{\diamondsuit}\left(\varepsilon,p_{0},M_{2},C\right)=2^{2+M_{2}+N_{0}}\pi C=2^{2+M_{2}+\left\lceil \frac{2+\varepsilon}{p_{0}}\right\rceil }\pi C.
\]
The exact same estimate holds for $C_{2}$.

We have shown that all prerequisites for Theorem \ref{thm:BanachFrameTheorem}
are fulfilled. Hence, the theorem implies that there is a constant
$K_{\diamondsuit}=K_{\diamondsuit}\left(p_{0},q_{0},\varepsilon,\mathcal{S}^{(\alpha)},\varphi,\psi\right)>0$
(independent of $p,q,s$) such that the family $\widetilde{{\rm SH}}_{\alpha,\varphi,\psi,\delta}^{\left(\pm1\right)}$
forms a Banach frame for $\mathscr{S}_{\alpha,s}^{p,q}\left(\R^{2}\right)$,
as soon as $\delta\leq\delta_{00}$, where 
\[
\delta_{00}:=\left(1+K_{\diamondsuit}\cdot C_{\mathcal{S}^{\left(\alpha\right)},w^{s}}^{4}\cdot\left(C_{1}^{1/\tau}+C_{2}^{1/\tau}\right)^{2}\right)^{-1}.
\]
From Lemma \ref{lem:AlphaShearletWeightIsModerate} we know that $C_{\mathcal{S}^{\left(\alpha\right)},w^{s}}\leq39^{\left|s\right|}\leq39^{s_{2}}$
where $s_{2}:=\max\left\{ \left|s_{0}\right|,\left|s_{1}\right|\right\} $.
Hence, choosing 
\[
\delta_{0}:=\left(1+4\cdot K_{\diamondsuit}\cdot C_{\diamondsuit}^{2}\cdot C_{0}^{2}\cdot39^{4s_{2}}\right)^{-1},
\]
we get $\delta_{0}\leq\delta_{00}$ and $\delta_{0}$ is independent
of the precise choice of $p,q,s$, as long as $p\geq p_{\text{0}},$
$q\geq q_{0}$ and $s_{0}\leq s\leq s_{1}$. Thus, for $0<\delta\leq\delta_{0}$
and arbitrary $p,q\in\left(0,\infty\right]$, $s\in\R$ with $p\geq p_{0}$,
$q\geq q_{0}$ and $s_{0}\leq s\leq s_{1}$, the family $\widetilde{{\rm SH}}_{\alpha,\varphi,\psi,\delta}^{\left(\pm1\right)}$
forms a Banach frame for $\mathscr{S}_{\alpha,s}^{p,q}\left(\R^{2}\right)$.
\end{proof}
Finally, we also come to verifiable sufficient conditions which ensure
that the low-pass $\varphi$ and the mother shearlet $\psi$ generate
atomic decompositions for $\mathscr{S}_{\alpha,s}^{p,q}\left(\R^{2}\right)$.
\begin{thm}
\label{thm:ReallyNiceShearletAtomicDecompositionConditions}Let $\alpha\in\left[0,1\right]$,
$\varepsilon,p_{0},q_{0}\in\left(0,1\right]$ and $s_{0},s_{1}\in\R$
with $s_{0}\leq s_{1}$. Assume that $\varphi,\psi\in L^{1}\left(\R^{2}\right)$
satisfy the following properties:

\begin{itemize}[leftmargin=0.6cm]
\item We have $\left\Vert \varphi\right\Vert _{1+\frac{2}{p_{0}}}<\infty$
and $\left\Vert \psi\right\Vert _{1+\frac{2}{p_{0}}}<\infty$, where
$\left\Vert g\right\Vert _{\Lambda}=\sup_{x\in\R^{2}}\left(1+\left|x\right|\right)^{\Lambda}\left|g\left(x\right)\right|$
for $g:\R^{2}\to\Compl^{\ell}$ (with arbitrary $\ell\in\N$) and
$\Lambda\geq0$.
\item We have $\widehat{\varphi},\widehat{\psi}\in C^{\infty}\left(\R^{2}\right)$,
where all partial derivatives of $\widehat{\varphi},\widehat{\psi}$
are polynomially bounded.
\item We have
\begin{align*}
\widehat{\psi}\left(\xi\right)\neq0 & \text{ for all }\xi=\left(\xi_{1},\xi_{2}\right)\in\R^{2}\text{ with }\xi_{1}\in\left[3^{-1},3\right]\text{ and }\left|\xi_{2}\right|\leq\left|\xi_{1}\right|,\\
\widehat{\varphi}\left(\xi\right)\ne0 & \text{ for all }\xi\in\left[-1,1\right]^{2}.
\end{align*}
\item We have
\begin{equation}
\begin{split}\left|\partial^{\beta}\widehat{\varphi}\left(\xi\right)\right| & \lesssim\left(1+\left|\xi\right|\right)^{-\Lambda_{0}},\\
\left|\partial^{\beta}\smash{\widehat{\psi}}\left(\xi\right)\right| & \lesssim\min\left\{ \left|\xi_{1}\right|^{\Lambda_{1}},\left(1+\left|\xi_{1}\right|\right)^{-\Lambda_{2}}\right\} \cdot\left(1+\left|\xi_{2}\right|\right)^{-\Lambda_{3}}\cdot\left(1+\left|\xi\right|\right)^{-\left(3+\varepsilon\right)}
\end{split}
\label{eq:ShearletAtomicDecompositionFourierDecayCondition}
\end{equation}
for all $\xi=\left(\xi_{1},\xi_{2}\right)\in\R^{2}$ and all $\beta\in\N_{0}^{2}$
with $\left|\beta\right|\leq\left\lceil p_{0}^{-1}\cdot\left(2+\varepsilon\right)\right\rceil $,
where
\begin{align*}
\qquad\qquad\Lambda_{0} & :=\begin{cases}
3+2\varepsilon+\max\left\{ \frac{1-\alpha}{\min\left\{ p_{0},q_{0}\right\} }+3+s_{1},\,2\right\} , & \text{if }p_{0}=1,\\
3+2\varepsilon+\max\left\{ \frac{1-\alpha}{\min\left\{ p_{0},q_{0}\right\} }+\frac{1-\alpha}{p_{0}}+1+\alpha+\left\lceil \frac{2+\varepsilon}{p_{0}}\right\rceil +s_{1},\,2\right\} , & \text{if }p_{0}\in\left(0,1\right),
\end{cases}\\
\qquad\qquad\Lambda_{1} & :=\varepsilon+\frac{1}{\min\left\{ p_{0},q_{0}\right\} }+\max\left\{ 0,\,\left(1+\alpha\right)\left(\frac{1}{p_{0}}-1\right)-s_{0}\right\} ,\\
\qquad\qquad\Lambda_{2} & :=\begin{cases}
\varepsilon+\max\left\{ 2,\,3\left(1+\alpha\right)+s_{1}\right\} , & \text{if }p_{0}=1,\\
\varepsilon+\max\left\{ 2,\,\left(1+\alpha\right)\left(1+\frac{1}{p_{0}}+\left\lceil \frac{2+\varepsilon}{p_{0}}\right\rceil \right)+s_{1}\right\} , & \text{if }p_{0}\in\left(0,1\right),
\end{cases}\\
\qquad\qquad\Lambda_{3} & :=\begin{cases}
\varepsilon+\max\left\{ \frac{1-\alpha}{\min\left\{ p_{0},q_{0}\right\} }+6+s_{1},\,\frac{2}{\min\left\{ p_{0},q_{0}\right\} }+3\right\} , & \text{if }p_{0}=1,\\
\varepsilon+\max\left\{ \frac{1-\alpha}{\min\left\{ p_{0},q_{0}\right\} }+\frac{3-\alpha}{p_{0}}+2\left\lceil \frac{2+\varepsilon}{p_{0}}\right\rceil +1+\alpha+s_{1},\,\frac{2}{\min\left\{ p_{0},q_{0}\right\} }+\frac{2}{p_{0}}+\left\lceil \frac{2+\varepsilon}{p_{0}}\right\rceil \right\} , & \text{if }p_{0}\in\left(0,1\right).
\end{cases}
\end{align*}
\end{itemize}
Then there is some $\delta_{0}\in\left(0,1\right]$ such that for
all $0<\delta\leq\delta_{0}$ and all $p,q\in\left(0,\infty\right]$
and $s\in\R$ with $p\geq p_{0}$, $q\geq q_{0}$ and $s_{0}\leq s\leq s_{1}$,
the following is true: The family 
\[
{\rm SH}_{\alpha,\varphi,\psi,\delta}^{\left(\pm1\right)}:=\left(L_{\delta\cdot T_{i}^{-T}k}\gamma^{\left[i\right]}\right)_{i\in I,\,k\in\Z^{2}}\quad\text{ with }\quad\gamma^{\left[i\right]}:=\begin{cases}
\left|\det T_{i}\right|^{1/2}\cdot\left(\psi\circ T_{i}^{T}\right), & \text{if }i\in I_{0},\\
\varphi, & \text{if }i=0
\end{cases}
\]
forms an atomic decomposition for $\mathscr{S}_{\alpha,s}^{p,q}\left(\R^{2}\right)$.
Precisely, this means the following:

\begin{enumerate}[leftmargin=0.6cm]
\item The \textbf{synthesis map}
\[
S^{\left(\delta\right)}:C_{w^{s}}^{p,q}\to\mathscr{S}_{\alpha,s}^{p,q}\left(\R^{2}\right),\left(\smash{c_{k}^{\left(i\right)}}\right)_{i\in I,k\in\Z^{2}}\mapsto\sum_{i\in I}\:\sum_{k\in\Z^{2}}\left(c_{k}^{\left(i\right)}\cdot L_{\delta\cdot T_{i}^{-T}k}\gamma^{\left[i\right]}\right)
\]
is well-defined and bounded for all $\delta\in\left(0,1\right]$,
where the \emph{coefficient space} $C_{w^{s}}^{p,q}$ is as in Definition
\ref{def:CoefficientSpace}. Convergence of the series has to be understood
as described in the remark to Theorem \ref{thm:AtomicDecompositionTheorem}.
\item For $0<\delta\leq\delta_{0}$, there is a bounded linear \textbf{coefficient
map}
\[
C^{\left(\delta\right)}:\mathscr{S}_{\alpha,s}^{p,q}\left(\R^{2}\right)\to C_{w^{s}}^{p,q}
\]
satisfying $S^{(\delta)}\circ C^{\left(\delta\right)}=\identity_{\mathscr{S}_{\alpha,s}^{p,q}\left(\R^{2}\right)}$.

Furthermore, the action of $C^{\left(\delta\right)}$ is \emph{independent}
of the precise choice of $p,q,s$. Precisely, if $p_{1},p_{2}\geq p_{0}$,
$q_{1},q_{2}\geq q_{0}$ and $s^{\left(1\right)},s^{\left(2\right)}\in\left[s_{0},s_{1}\right]$
and if $f\in\mathscr{S}_{\alpha,s^{\left(1\right)}}^{p_{1},q_{1}}\cap\mathscr{S}_{\alpha,s^{\left(2\right)}}^{p_{2},q_{2}}$,
then $C_{1}^{\left(\delta\right)}f=C_{2}^{\left(\delta\right)}f$,
where $C_{i}^{\left(\delta\right)}$ denotes the coefficient operator
for the choices $p=p_{i}$, $q=q_{i}$ and $s=s^{\left(i\right)}$
for $i\in\left\{ 1,2\right\} $.\qedhere

\end{enumerate}
\end{thm}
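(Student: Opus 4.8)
The plan is to deduce the statement from the abstract atomic‑decomposition result, Theorem~\ref{thm:AtomicDecompositionTheorem}, applied to the $\alpha$-shearlet covering $\CalS^{\left(\alpha\right)}$ (which is almost structured by Lemma~\ref{lem:AlphaShearletCoveringIsAlmostStructured}, with $\CalS^{\left(\alpha\right)}$-moderate weight $w^s$ by Lemma~\ref{lem:AlphaShearletWeightIsModerate}), using $n=2$, $Q_0^{\left(1\right)}:=Q$, $Q_0^{\left(2\right)}:=\left(-1,1\right)^2$, $\gamma_1^{\left(0\right)}:=\psi$, $\gamma_2^{\left(0\right)}:=\varphi$, $k_i:=1$ for $i\in I_0$ and $k_0:=2$, exactly as in the proof of Theorem~\ref{thm:NicelySimplifiedAlphaShearletFrameConditions}. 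Among the hypotheses (1)--(8) of Theorem~\ref{thm:AtomicDecompositionTheorem}, the non‑vanishing condition~(6) is literally our assumption on $\widehat\varphi,\widehat\psi$ over $\overline{Q_0^{\left(1\right)}}=\overline{Q}$ and $\overline{Q_0^{\left(2\right)}}=\left[-1,1\right]^2$; and since $\varUpsilon=1+2/\min\left\{1,p\right\}\le1+2/p_0$, both condition~(7) and the finiteness of $\Omega^{\left(p\right)}$ in~(3) will follow, uniformly in $p\ge p_0$, from the assumed bounds $\left\Vert\varphi\right\Vert_{1+2/p_0},\left\Vert\psi\right\Vert_{1+2/p_0}<\infty$ together with the weighted sup‑bounds on the convolution factors produced below. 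So the real work is (a) producing the factorizations $\psi=\psi_1\ast\psi_2$ and $\varphi=\varphi_1\ast\varphi_2$ required in (1),(2),(4),(5), and (b) verifying the summability conditions~(8).

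For (a) we apply Proposition~\ref{prop:ConvolutionFactorization} with $d=2$ and $N:=N_0:=\left\lceil p_0^{-1}\left(2+\varepsilon\right)\right\rceil\ge3$. The extra factor $\left(1+\left|\xi\right|\right)^{-\left(3+\varepsilon\right)}$ built into~\eqref{eq:ShearletAtomicDecompositionFourierDecayCondition} is precisely the decay $\left(1+\left|\xi\right|\right)^{-\left(d+1+\varepsilon\right)}$ needed by that proposition: it gives $\left|\partial^\beta\widehat\psi\left(\xi\right)\right|\lesssim\varrho\left(\xi\right)\left(1+\left|\xi\right|\right)^{-\left(3+\varepsilon\right)}$ and $\left|\partial^\beta\widehat\varphi\left(\xi\right)\right|\lesssim\left(1+\left|\xi\right|\right)^{-\left(\Lambda_0-3-\varepsilon\right)}\left(1+\left|\xi\right|\right)^{-\left(3+\varepsilon\right)}$ for $\left|\beta\right|\le N_0$, with $L^1\left(\R^2\right)$ majorants $\varrho\left(\xi\right)=\min\left\{\left|\xi_1\right|^{\Lambda_1},\left(1+\left|\xi_1\right|\right)^{-\Lambda_2}\right\}\left(1+\left|\xi_2\right|\right)^{-\Lambda_3}$ and $\left(1+\left|\xi\right|\right)^{-\left(\Lambda_0-3-\varepsilon\right)}$; integrability is ensured by $\Lambda_2,\Lambda_3>1$ and $\Lambda_0-3-\varepsilon>2$, which is exactly what the floors ``$\max\left\{\cdot,2\right\}$'' in $\Lambda_0,\Lambda_2,\Lambda_3$ provide. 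The proposition then yields $\psi_1\in C_0\cap L^1$, $\psi_2\in C^1\cap W^{1,1}$ (likewise for $\varphi$), with $\widehat{\psi_1},\widehat{\varphi_1}$ smooth with polynomially bounded derivatives, $\widehat{\psi_2},\widehat{\varphi_2}$ smooth with bounded derivatives, $\Omega^{\left(p\right)}\le$ a $p$-independent constant, and---crucially---$\left|\partial^\alpha\widehat{\psi_1}\left(\xi\right)\right|\lesssim\varrho\left(\xi\right)$ and $\left|\partial^\alpha\widehat{\varphi_1}\left(\xi\right)\right|\lesssim\left(1+\left|\xi\right|\right)^{-\left(\Lambda_0-3-\varepsilon\right)}$ for $\left|\alpha\right|\le N_0$.

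For (b) we reduce the quantities $N_{i,j}$ of Theorem~\ref{thm:AtomicDecompositionTheorem}(8) to the quantities $M_{j,i}^{\left(0\right)}$ controlled by Lemma~\ref{lem:MainShearletLemma}. Using $b_j=0$ and the bounds on $\widehat{\psi_1},\widehat{\varphi_1}$, the only new features compared with the Banach‑frame case are the determinant power $\left(\left|\det T_j\right|/\left|\det T_i\right|\right)^\vartheta$, $\vartheta=\left(p^{-1}-1\right)_+$, and the reversed weight ratio. Since $\left|\det T_i\right|=w_i^{1+\alpha}$ for every $i\in I$ (with $w_i=2^n$ for $i=\left(n,m,\varepsilon,\delta\right)\in I_0$ and $w_0=1$), one has $\frac{w_i^s}{w_j^s}\left(\frac{\left|\det T_j\right|}{\left|\det T_i\right|}\right)^\vartheta=\left(\frac{w_j}{w_i}\right)^{s'}$ with $s':=\left(1+\alpha\right)\vartheta-s$, hence $N_{i,j}\lesssim M_{j,i}^{\left(0\right)}$, where $M_{j,i}^{\left(0\right)}$ is the expression of Lemma~\ref{lem:MainShearletLemma} but with $s$ replaced by $s'$ and $\left(M_1,M_2,K\right):=\left(\Lambda_1,\Lambda_2,\Lambda_3\right)$, $H:=\Lambda_0-3-\varepsilon$. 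It remains to check the hypotheses of that lemma: $\tau:=\min\left\{1,p,q\right\}\ge\min\left\{p_0,q_0\right\}=:\tau_0$; $\sigma/\tau\le\omega$, where $\sigma$ is as in Theorem~\ref{thm:AtomicDecompositionTheorem} and $\omega:=3$ for $p_0=1$, $\omega:=2/p_0+N_0$ for $p_0\in\left(0,1\right)$; and, taking $c=\varepsilon$, the inequalities $\Lambda_1\ge M_1^{\left(0\right)}\left(s'\right)+\varepsilon$, $\Lambda_2\ge M_2^{\left(0\right)}\left(s'\right)+\varepsilon$, $\Lambda_3\ge K_0\left(s'\right)+\varepsilon$ and $\Lambda_0-3-\varepsilon\ge H_0\left(s'\right)+\varepsilon$. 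These follow from $\vartheta\le\left(p_0^{-1}-1\right)_+$, $s_0\le s\le s_1$, and the cancellations $\sigma/\tau-\vartheta$ and $2\left(\sigma/\tau\right)-\left(1+\alpha\right)\vartheta$, which simplify to $p^{-1}+\left\lceil p^{-1}\left(2+\varepsilon\right)\right\rceil+1$ (resp.\@ $3$) and $\left(3-\alpha\right)p^{-1}+2\left\lceil p^{-1}\left(2+\varepsilon\right)\right\rceil+1+\alpha$ (resp.\@ $6$) according as $p<1$ or $p\ge1$; this is precisely the origin of the case splits $p_0=1$ vs.\@ $p_0\in\left(0,1\right)$, of the maxima with $0$, and of the ``$\max\left\{\cdot,2\right\}$'' floors in the definitions of $\Lambda_0,\dots,\Lambda_3$. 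Carrying this through gives $K_1,K_2\le C_0^\tau$ up to a $\left(p,q,s\right)$-independent constant, with $C_0=C_0\bigl(\alpha,\tau_0,\omega,\varepsilon,\Lambda_3,\Lambda_0-3-\varepsilon,\Lambda_1,\Lambda_2\bigr)$ from Lemma~\ref{lem:MainShearletLemma}.

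With all hypotheses of Theorem~\ref{thm:AtomicDecompositionTheorem} in place, that theorem produces $\delta_0\in\left(0,1\right]$ such that ${\rm SH}_{\alpha,\varphi,\psi,\delta}^{\left(\pm1\right)}$ is an atomic decomposition of $\mathscr{S}_{\alpha,s}^{p,q}\left(\R^2\right)$ for $0<\delta\le\delta_0$, with the stated synthesis map $S^{\left(\delta\right)}$ and a bounded coefficient map $C^{\left(\delta\right)}$ satisfying $S^{\left(\delta\right)}\circ C^{\left(\delta\right)}=\identity$; the $\left(p,q,s\right)$-independence of the action of $C^{\left(\delta\right)}$ is the remark following Theorem~\ref{thm:AtomicDecompositionTheorem}. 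Since $\delta_0$ there may be chosen as $\min\bigl\{1,\bigl[K\cdot\Omega^{\left(p\right)}\cdot\bigl(K_1^{1/\tau}+K_2^{1/\tau}\bigr)\bigr]^{-1}\bigr\}$ and we have bounded $\Omega^{\left(p\right)}$, $K_1^{1/\tau}$, $K_2^{1/\tau}$ independently of $p\ge p_0$, $q\ge q_0$, $s\in\left[s_0,s_1\right]$, a single $\delta_0$ serves the whole range. The main obstacle is step (b): keeping the bookkeeping straight when matching the several case distinctions of Lemma~\ref{lem:MainShearletLemma} (on $\alpha=1$ vs.\@ $\alpha<1$, on the sign of $s$, and---through $\sigma$ and $\vartheta$---on $p<1$ vs.\@ $p\ge1$) against the elaborate formulas for $\Lambda_0,\dots,\Lambda_3$, all while respecting the integrability floors required both for Proposition~\ref{prop:ConvolutionFactorization} and for convergence of the low‑pass contribution to $K_1,K_2$.
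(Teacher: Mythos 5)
Your proposal follows the paper's proof exactly: you apply Theorem~\ref{thm:AtomicDecompositionTheorem} with $\CalQ=\CalS^{\left(\alpha\right)}$ and the same choices of $n$, $Q_{0}^{\left(k\right)}$, $k_{i}$, use Proposition~\ref{prop:ConvolutionFactorization} (with $N=N_0$, feeding in the extra $\left(1+\left|\xi\right|\right)^{-\left(3+\varepsilon\right)}$ decay) to obtain the factorizations, reduce $N_{i,j}$ to $M_{j,i}^{\left(0\right)}$ of Lemma~\ref{lem:MainShearletLemma} via the shift $s\mapsto\left(1+\alpha\right)\vartheta-s$ (the paper's $s^{\natural}$), and verify the resulting inequalities against $\Lambda_0,\dots,\Lambda_3$. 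The small imprecision in your description of the relevant cancellations (you omit $\sigma/\tau-\left(1+\alpha\right)\vartheta$, which is what actually governs $H_0$) does not affect the argument, since the explicit numerical expressions you derive are the right ones.
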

\begin{proof}
Later in the proof, we will apply Theorem \ref{thm:AtomicDecompositionTheorem}
to the decomposition space $\mathscr{S}_{\alpha,s}^{p,q}\left(\smash{\R^{2}}\right)=\DecompSp{\CalS^{\left(\alpha\right)}}p{\ell_{w^{s}}^{q}}{}$
with $w$ and $w^{s}$ as in Lemma \ref{lem:AlphaShearletWeightIsModerate},
while Theorem \ref{thm:AtomicDecompositionTheorem} itself considers
the decomposition space $\DecompSp{\CalQ}p{\ell_{w}^{q}}{}$. To avoid
confusion between these two different choices of the weight $w$,
we will write $v$ for the weight defined in Lemma \ref{lem:AlphaShearletWeightIsModerate},
so that we get $\mathscr{S}_{\alpha,s}^{p,q}\left(\smash{\R^{2}}\right)=\DecompSp{\CalS^{\left(\alpha\right)}}p{\ell_{v^{s}}^{q}}{}$.
For the application of Theorem \ref{thm:AtomicDecompositionTheorem},
we will thus choose $\CalQ=\CalS^{\left(\alpha\right)}$ and $w=v^{s}$.

Our assumptions on $\varphi$ show that there is a constant $C_{1}>0$
satisfying $\left|\partial^{\beta}\widehat{\varphi}\left(\xi\right)\right|\leq C_{1}\cdot\left(1+\left|\xi\right|\right)^{-\Lambda_{0}}$
for all $\beta\in\N_{0}^{2}$ with $\left|\beta\right|\leq N_{0}:=\left\lceil p_{0}^{-1}\cdot\left(2+\varepsilon\right)\right\rceil $.
We first apply Proposition \ref{prop:ConvolutionFactorization} (with
$N=N_{0}\geq\left\lceil 2+\varepsilon\right\rceil =3=\dimension+1$,
with $\gamma=\varphi$ and with $\varrho=\varrho_{1}$ for $\varrho_{1}\left(\xi\right):=C_{1}\cdot\left(1+\left|\xi\right|\right)^{3+\varepsilon-\Lambda_{0}}$,
where we note $\Lambda_{0}-3-\varepsilon\geq2+\varepsilon$, so that
$\varrho_{1}\in L^{1}\left(\R^{2}\right)$). We indeed have $\left|\partial^{\beta}\widehat{\varphi}\left(\xi\right)\right|\leq C_{1}\cdot\left(1+\left|\xi\right|\right)^{-\Lambda_{0}}=\varrho_{1}\left(\xi\right)\cdot\left(1+\left|\xi\right|\right)^{-\left(\dimension+1+\varepsilon\right)}$
for all $\left|\beta\right|\leq N_{0}$, since we are working in $\R^{\dimension}=\R^{2}$.
Consequently, Proposition \ref{prop:ConvolutionFactorization} provides
functions $\varphi_{1}\in C_{0}\left(\R^{2}\right)\cap L^{1}\left(\R^{2}\right)$
and $\varphi_{2}\in C^{1}\left(\R^{2}\right)\cap W^{1,1}\left(\R^{2}\right)$
with $\varphi=\varphi_{1}\ast\varphi_{2}$ and with the following
additional properties:

\begin{enumerate}
\item We have $\left\Vert \varphi_{2}\right\Vert _{\Lambda}<\infty$ and
$\left\Vert \nabla\varphi_{2}\right\Vert _{\Lambda}<\infty$ for all
$\Lambda\in\N_{0}$.
\item We have $\widehat{\varphi_{2}}\in C^{\infty}\left(\R^{2}\right)$,
where all partial derivatives of $\widehat{\varphi_{2}}$ are polynomially
bounded.
\item We have $\widehat{\varphi_{1}}\in C^{\infty}\left(\R^{2}\right)$,
where all partial derivatives of $\widehat{\varphi_{1}}$ are polynomially
bounded. This uses that $\widehat{\varphi}\in C^{\infty}\left(\R^{2}\right)$
with all partial derivatives being polynomially bounded.
\item We have 
\begin{equation}
\left|\partial^{\beta}\widehat{\varphi_{1}}\left(\xi\right)\right|\leq\frac{C_{2}}{C_{1}}\cdot\varrho_{1}\left(\xi\right)=C_{2}\cdot\left(1+\left|\xi\right|\right)^{3+\varepsilon-\Lambda_{0}}\quad\forall\xi\in\R^{2}\text{ and }\beta\in\N_{0}^{2}\text{ with }\left|\beta\right|\leq N_{0}.\label{eq:AlphaShearletAtomicDecompositionPhiFactorizationEstimate}
\end{equation}
Here, $C_{2}$ is given by $C_{2}:=C_{1}\cdot2^{3+4N_{0}}\cdot N_{0}!\cdot3^{N_{0}}$.
\end{enumerate}
Likewise, our assumptions on $\psi$ show that there is a constant
$C_{3}>0$ satisfying 
\[
\left|\partial^{\beta}\widehat{\psi}\left(\xi\right)\right|\leq C_{3}\cdot\min\left\{ \left|\xi_{1}\right|^{\Lambda_{1}},\left(1+\left|\xi_{1}\right|\right)^{-\Lambda_{2}}\right\} \cdot\left(1+\left|\xi_{2}\right|\right)^{-\Lambda_{3}}\cdot\left(1+\left|\xi\right|\right)^{-\left(3+\varepsilon\right)}\quad\forall\xi\in\R^{2}\:\forall\beta\in\N_{0}^{2}\text{ with }\left|\beta\right|\leq N_{0}.
\]
Now, we again apply Proposition \ref{prop:ConvolutionFactorization},
but this time with $N=N_{0}\geq\dimension+1$, with $\gamma=\psi$
and with $\varrho=\varrho_{2}$ for $\varrho_{2}\left(\xi\right):=C_{3}\cdot\min\left\{ \left|\xi_{1}\right|^{\Lambda_{1}},\left(1+\left|\xi_{1}\right|\right)^{-\Lambda_{2}}\right\} \cdot\left(1+\left|\xi_{2}\right|\right)^{-\Lambda_{3}}$,
where we note that $\Lambda_{2}\geq2+\varepsilon$ and $\Lambda_{3}\geq3\geq2+\varepsilon$,
so that 
\begin{align*}
\varrho_{2}\left(\xi\right) & \leq C_{3}\cdot\left(1+\left|\xi_{1}\right|\right)^{-\left(2+\varepsilon\right)}\cdot\left(1+\left|\xi_{2}\right|\right)^{-\left(2+\varepsilon\right)}\\
 & \leq C_{3}\cdot\left[\max\left\{ 1+\left|\xi_{1}\right|,\,1+\left|\xi_{2}\right|\right\} \right]^{-\left(2+\varepsilon\right)}\\
 & \leq C_{3}\cdot\left(1+\left\Vert \xi\right\Vert _{\infty}\right)^{-\left(2+\varepsilon\right)}\in L^{1}\left(\smash{\R^{2}}\right).
\end{align*}
As we just saw, we indeed have $\left|\partial^{\beta}\widehat{\psi}\left(\xi\right)\right|\leq\varrho_{2}\left(\xi\right)\cdot\left(1+\left|\xi\right|\right)^{-\left(\dimension+1+\varepsilon\right)}$
for all $\left|\beta\right|\leq N_{0}$, since we are working in $\R^{\dimension}=\R^{2}$.
Consequently, Proposition \ref{prop:ConvolutionFactorization} provides
functions $\psi_{1}\in C_{0}\left(\R^{2}\right)\cap L^{1}\left(\R^{2}\right)$
and $\psi_{2}\in C^{1}\left(\R^{2}\right)\cap W^{1,1}\left(\R^{2}\right)$
with $\psi=\psi_{1}\ast\psi_{2}$ and with the following additional
properties:

\begin{enumerate}
\item We have $\left\Vert \psi_{2}\right\Vert _{\Lambda}<\infty$ and $\left\Vert \nabla\psi_{2}\right\Vert _{\Lambda}<\infty$
for all $\Lambda\in\N_{0}$.
\item We have $\widehat{\psi_{2}}\in C^{\infty}\left(\R^{2}\right)$, where
all partial derivatives of $\widehat{\psi_{2}}$ are polynomially
bounded.
\item We have $\widehat{\psi_{1}}\in C^{\infty}\left(\R^{2}\right)$, where
all partial derivatives of $\widehat{\psi_{1}}$ are polynomially
bounded. This uses that $\widehat{\psi}\in C^{\infty}\left(\R^{2}\right)$
with all partial derivatives being polynomially bounded.
\item We have
\begin{equation}
\begin{split}\quad\qquad\left|\partial^{\beta}\,\smash{\widehat{\psi_{1}}}\,\left(\xi\right)\right| & \!\leq\!\frac{C_{4}}{C_{3}}\cdot\varrho_{2}\left(\xi\right)\\
 & \!=\!C_{4}\cdot\min\left\{ \left|\xi_{1}\right|^{\Lambda_{1}},\left(1+\left|\xi_{1}\right|\right)^{-\Lambda_{2}}\right\} \cdot\left(1+\left|\xi_{2}\right|\right)^{-\Lambda_{3}}\quad\forall\xi\in\R^{2}\text{ and }\beta\in\N_{0}^{2}\text{ with }\left|\beta\right|\leq N_{0}.
\end{split}
\label{eq:AlphaShearletAtomicDecompositionPsiFactorizationEstimate}
\end{equation}
 Here, $C_{4}$ is given by $C_{4}:=C_{3}\cdot2^{3+4N_{0}}\cdot N_{0}!\cdot3^{N_{0}}$.
\end{enumerate}
In summary, if we define $M_{1}:=\Lambda_{1}$, $M_{2}:=\Lambda_{2}$
and $K:=\Lambda_{3}$, as well as $H:=\Lambda_{0}-3-\varepsilon$,
then we have $M_{1},M_{2},K,H\geq0$ and
\begin{equation}
\begin{split}\max_{\left|\beta\right|\leq N_{0}}\left|\partial^{\beta}\widehat{\psi_{1}}\left(\xi\right)\right| & \leq C_{5}\cdot\min\left\{ \left|\xi_{1}\right|^{M_{1}},\,\left(1+\left|\xi_{1}\right|\right)^{-M_{2}}\right\} \cdot\left(1+\left|\xi_{2}\right|\right)^{-K}=:C_{5}\cdot\varrho\left(\xi\right),\\
\max_{\left|\beta\right|\leq N_{0}}\left|\partial^{\beta}\widehat{\varphi_{1}}\left(\xi\right)\right| & \leq C_{5}\cdot\left(1+\left|\xi\right|\right)^{-H}=:C_{5}\cdot\varrho_{0}\left(\xi\right),
\end{split}
\label{eq:AlphaShearletAtomicDecompositionGeneratorsMainEstimate}
\end{equation}
where we defined $C_{5}:=\max\left\{ C_{2},C_{4}\right\} $ for brevity.
For consistency with Lemma \ref{lem:MainShearletLemma}, we define
$\varrho_{j}:=\varrho$ for arbitrary $j\in I_{0}$.

Now, define $n:=2$, $\gamma_{1}^{\left(0\right)}:=\psi$ and $\gamma_{2}^{\left(0\right)}:=\varphi$,
as well as $\gamma_{1}^{\left(0,j\right)}:=\psi_{j}$ and $\gamma_{2}^{\left(0,j\right)}:=\varphi_{j}$
for $j\in\left\{ 1,2\right\} $. We want to verify the assumptions
of Theorem \ref{thm:AtomicDecompositionTheorem} for these choices
and for $\mathscr{S}_{\alpha,s}^{p,q}\left(\R^{2}\right)=\DecompSp{\CalS^{\left(\alpha\right)}}p{\ell_{v^{s}}^{q}}{}=\DecompSp{\CalQ}p{\ell_{w}^{q}}{}$.
To this end, we recall from Definition \ref{def:AlphaShearletCovering}
that $\CalQ:=\CalS^{\left(\alpha\right)}=\left(T_{i}Q_{i}'+b_{i}\right)_{i\in I}$,
with $Q_{i}'=U_{\left(-1,1\right)}^{\left(3^{-1},3\right)}=Q=:Q_{0}^{\left(1\right)}=Q_{0}^{\left(k_{i}\right)}$
for all $i\in I_{0}$, where $k_{i}:=1$ for $i\in I_{0}$ and with
$Q_{0}'=\left(-1,1\right)^{2}=:Q_{0}^{\left(2\right)}=Q_{0}^{\left(k_{0}\right)}$,
where $k_{0}:=2$ and $n:=2$, cf.\@ Assumption \ref{assu:CrashCourseStandingAssumptions}.

Now, let us verify the list of prerequisites of Theorem \ref{thm:AtomicDecompositionTheorem}:

\begin{enumerate}
\item We have $\gamma_{k}^{\left(0,1\right)}\in\left\{ \varphi_{1},\psi_{1}\right\} \subset L^{1}\left(\R^{2}\right)$
for $k\in\left\{ 1,2\right\} $ by the properties of $\varphi_{1},\psi_{1}$
from above.
\item Likewise, we have $\gamma_{k}^{\left(0,2\right)}\in\left\{ \varphi_{2},\psi_{2}\right\} \subset C^{1}\left(\R^{2}\right)$
by the properties of $\varphi_{2},\psi_{2}$ from above.
\item Next, with $\varUpsilon=1+\frac{\dimension}{\min\left\{ 1,p\right\} }$
as in Theorem \ref{thm:AtomicDecompositionTheorem}, we have $\varUpsilon\leq1+\frac{2}{p_{0}}=:\varUpsilon_{0}$
and thus, with $\Omega^{\left(p\right)}$ as in Theorem \ref{thm:AtomicDecompositionTheorem},
\begin{equation}
\begin{split}\Omega^{\left(p\right)} & =\max_{k\in\underline{n}}\left\Vert \gamma_{k}^{\left(0,2\right)}\right\Vert _{\varUpsilon}+\max_{k\in\underline{n}}\left\Vert \nabla\gamma_{k}^{\left(0,2\right)}\right\Vert _{\varUpsilon}\\
 & =\max\left\{ \left\Vert \varphi_{2}\right\Vert _{\varUpsilon},\left\Vert \psi_{2}\right\Vert _{\varUpsilon}\right\} +\max\left\{ \left\Vert \nabla\varphi_{2}\right\Vert _{\varUpsilon},\left\Vert \nabla\psi_{2}\right\Vert _{\varUpsilon}\right\} \\
 & \leq\max\left\{ \left\Vert \varphi_{2}\right\Vert _{\left\lceil \varUpsilon_{0}\right\rceil },\left\Vert \psi_{2}\right\Vert _{\left\lceil \varUpsilon_{0}\right\rceil }\right\} +\max\left\{ \left\Vert \nabla\varphi_{2}\right\Vert _{\left\lceil \varUpsilon_{0}\right\rceil },\left\Vert \nabla\psi_{2}\right\Vert _{\left\lceil \varUpsilon_{0}\right\rceil }\right\} =:C_{6}<\infty
\end{split}
\label{eq:AtomicDecompositionSecondConvolutionFactorEstimate}
\end{equation}
 by the properties of $\varphi_{2},\psi_{2}$ from above.
\item We have $\Fourier\gamma_{k}^{\left(0,j\right)}\in\left\{ \widehat{\varphi_{1}},\widehat{\psi_{1}},\widehat{\varphi_{2}},\widehat{\psi_{2}}\right\} \subset C^{\infty}\left(\R^{2}\right)$
and all partial derivatives of these functions are polynomially bounded.
\item We have $\gamma_{1}^{\left(0\right)}=\psi=\psi_{1}\ast\psi_{2}=\gamma_{1}^{\left(0,1\right)}\ast\gamma_{1}^{\left(0,2\right)}$
and $\gamma_{2}^{\left(0\right)}=\varphi=\varphi_{1}\ast\varphi_{2}=\gamma_{2}^{\left(0,1\right)}\ast\gamma_{2}^{\left(0,2\right)}$.
\item By assumption, we have $\Fourier\gamma_{1}^{\left(0\right)}\left(\xi\right)=\widehat{\psi}\left(\xi\right)\neq0$
for all $\xi\in\overline{Q}=\overline{Q_{0}^{\left(1\right)}}$. Likewise,
we have $\Fourier\gamma_{2}^{\left(0\right)}\left(\xi\right)=\widehat{\varphi}\left(\xi\right)\neq0$
for all $\xi\in\left[-1,1\right]^{2}=\overline{\left(-1,1\right)^{2}}=\overline{Q_{0}^{\left(2\right)}}$.
\item We have $\left\Vert \smash{\gamma_{1}^{\left(0\right)}}\right\Vert _{\varUpsilon}=\left\Vert \psi\right\Vert _{\varUpsilon}\leq\left\Vert \psi\right\Vert _{\varUpsilon_{0}}=\left\Vert \psi\right\Vert _{1+\frac{2}{p_{0}}}<\infty$
and $\left\Vert \smash{\gamma_{2}^{\left(0\right)}}\right\Vert _{\varUpsilon}=\left\Vert \varphi\right\Vert _{\varUpsilon}\leq\left\Vert \varphi\right\Vert _{1+\frac{2}{p_{0}}}<\infty$,
thanks to our assumptions on $\varphi,\psi$.
\end{enumerate}
Thus, as the last prerequisite of Theorem \ref{thm:AtomicDecompositionTheorem},
we have to verify
\[
K_{1}:=\sup_{i\in I}\:\sum_{j\in I}N_{i,j}<\infty\qquad\text{ and }\qquad K_{2}:=\sup_{j\in I}\:\sum_{i\in I}N_{i,j}<\infty,
\]
where $\gamma_{j,1}:=\gamma_{k_{j}}^{\left(0,1\right)}$ for $j\in I$
(i.e., $\gamma_{0,1}=\gamma_{2}^{\left(0,1\right)}=\varphi_{1}$ and
$\gamma_{j,1}=\gamma_{1}^{\left(0,1\right)}=\psi_{1}$ for $j\in I_{0}$)
and
\begin{align*}
N_{i,j} & :=\left(\frac{w_{i}}{w_{j}}\cdot\left[\frac{\left|\det T_{j}\right|}{\left|\det T_{i}\right|}\right]^{\vartheta}\right)^{\tau}\!\!\cdot\left(1\!+\!\left\Vert T_{j}^{-1}T_{i}\right\Vert \right)^{\sigma}\!\cdot\left(\left|\det T_{i}\right|^{-1}\!\cdot\int_{Q_{i}}\:\max_{\left|\beta\right|\leq N}\left|\left[\partial^{\beta}\widehat{\gamma_{j,1}}\right]\left(T_{j}^{-1}\left(\xi\!-\!b_{j}\right)\right)\right|\d\xi\right)^{\tau}\\
\left({\scriptstyle \text{since }b_{j}=0\text{ for all }j\in I}\right) & \overset{\left(\ast\right)}{\leq}\left(\frac{v_{j}^{\left(1+\alpha\right)\vartheta-s}}{v_{i}^{\left(1+\alpha\right)\vartheta-s}}\right)^{\tau}\cdot\left(1\!+\!\left\Vert T_{j}^{-1}T_{i}\right\Vert \right)^{\sigma}\!\cdot\left(\left|\det T_{i}\right|^{-1}\!\cdot\int_{S_{i}^{\left(\alpha\right)}}\:\max_{\left|\beta\right|\leq N}\left|\left[\partial^{\beta}\widehat{\gamma_{j,1}}\right]\left(T_{j}^{-1}\xi\right)\right|\d\xi\right)^{\tau}\\
\left({\scriptstyle \text{eq. }\eqref{eq:AlphaShearletAtomicDecompositionGeneratorsMainEstimate}\text{ and }N\leq N_{0}}\right) & \leq C_{5}^{\tau}\cdot M_{j,i}^{\left(0\right)},
\end{align*}
where the quantity $M_{j,i}^{\left(0\right)}$ is defined as in Lemma
\ref{lem:MainShearletLemma}, but with $s^{\natural}:=\left(1+\alpha\right)\vartheta-s$
instead of $s$. At the step marked with $\left(\ast\right)$, we
used that we have $w=v^{s}$ and $\left|\det T_{i}\right|=v_{i}^{1+\alpha}$
for all $i\in I$.

To be precise, we recall from Theorem \ref{thm:AtomicDecompositionTheorem}
that the quantities $N,\tau,\sigma,\vartheta$ from above are given
(because of $\dimension=2$) by $\vartheta=\left(p^{-1}-1\right)_{+}$,
\[
\tau=\min\left\{ 1,p,q\right\} \geq\min\left\{ p_{0},q_{0}\right\} =:\tau_{0}\qquad\text{ and }\qquad N=\left\lceil \left(\dimension+\varepsilon\right)\big/\min\left\{ 1,p\right\} \right\rceil \leq\left\lceil p_{0}^{-1}\cdot\left(2+\varepsilon\right)\right\rceil =N_{0},
\]
as well as
\[
\sigma=\begin{cases}
\tau\cdot\left(\dimension+1\right)=3\cdot\tau, & \text{if }p\in\left[1,\infty\right],\\
\tau\cdot\left(\frac{\dimension}{p}+\left\lceil \frac{\dimension+\varepsilon}{p}\right\rceil \right)=\tau\cdot\left(\frac{2}{p}+N\right)\leq\tau\cdot\left(\frac{2}{p_{0}}+N_{0}\right), & \text{if }p\in\left(0,1\right).
\end{cases}
\]
In particular, we have $\frac{\sigma}{\tau}\leq\frac{2}{p_{0}}+N_{0}=:\omega$,
even in case of $p\in\left[1,\infty\right]$, since $\frac{2}{p_{0}}+N_{0}\geq N_{0}\geq\left\lceil 2+\varepsilon\right\rceil \geq3$.

Now, Lemma \ref{lem:MainShearletLemma} (with $c=\varepsilon$) yields
a constant 
\[
C_{0}=C_{0}\left(\alpha,\tau_{0},\omega,\varepsilon,K,H,M_{1},M_{2}\right)=C_{0}\left(\alpha,p_{0},q_{0},\varepsilon,\Lambda_{0},\Lambda_{1},\Lambda_{2},\Lambda_{3}\right)>0
\]
satisfying $\max\left\{ K_{1},K_{2}\right\} \leq C_{5}^{\tau}C_{0}^{\tau}$,
provided that we can show $H\geq H_{0}+\varepsilon$, $K\geq K_{0}+\varepsilon$
and $M_{\ell}\geq M_{\ell}^{\left(0\right)}+\varepsilon$ for $\ell\in\left\{ 1,2\right\} $,
where
\begin{align*}
K_{0} & :=\begin{cases}
\max\left\{ \frac{\sigma}{\tau}-s^{\natural},\,\frac{2+\sigma}{\tau}\right\} , & \text{if }\alpha=1,\\
\max\left\{ \frac{1-\alpha}{\tau}+2\frac{\sigma}{\tau}-s^{\natural},\,\frac{2+\sigma}{\tau}\right\} , & \text{if }\alpha\in\left[0,1\right),
\end{cases}\\
M_{1}^{(0)} & :=\begin{cases}
\frac{1}{\tau}+s^{\natural}, & \text{if }\alpha=1,\\
\frac{1}{\tau}+\max\left\{ s^{\natural},\,0\right\} , & \text{if }\alpha\in\left[0,1\right),
\end{cases}\\
M_{2}^{(0)} & :=\left(1+\alpha\right)\frac{\sigma}{\tau}-s^{\natural},\\
H_{0} & :=\frac{1-\alpha}{\tau}+\frac{\sigma}{\tau}-s^{\natural}.
\end{align*}
But we have
\begin{align*}
H_{0} & =\begin{cases}
\frac{1-\alpha}{\tau}+3+s, & \text{if }p\in\left[1,\infty\right],\\
\frac{1-\alpha}{\tau}+\frac{2}{p}+N-\left[\left(1+\alpha\right)\left(\frac{1}{p}-1\right)-s\right], & \text{if }p\in\left(0,1\right)
\end{cases}\\
 & =\begin{cases}
\frac{1-\alpha}{\tau}+3+s, & \text{if }p\in\left[1,\infty\right],\\
\frac{1-\alpha}{\tau}+\frac{1-\alpha}{p}+1+\alpha+\left\lceil \frac{2+\varepsilon}{p}\right\rceil +s, & \text{if }p\in\left(0,1\right)
\end{cases}\\
 & \leq\begin{cases}
\frac{1-\alpha}{\tau_{0}}+3+s_{1}, & \text{if }p\in\left[1,\infty\right],\\
\frac{1-\alpha}{\tau_{0}}+\frac{1-\alpha}{p_{0}}+1+\alpha+\left\lceil \frac{2+\varepsilon}{p_{0}}\right\rceil +s_{1}, & \text{if }p\in\left(0,1\right)
\end{cases}\\
 & \leq\Lambda_{0}-3-2\varepsilon=H-\varepsilon,
\end{align*}
as an easy case distinction (using $\left\lceil p_{0}^{-1}\cdot\left(2+\varepsilon\right)\right\rceil \geq\left\lceil 2+\varepsilon\right\rceil \geq3$
and the observation that $p\in\left(0,1\right)$ entails $p_{0}\in\left(0,1\right)$)
shows.

Furthermore,
\begin{align*}
M_{2}^{(0)} & =\begin{cases}
3\cdot\left(1+\alpha\right)+s, & \text{if }p\in\left[1,\infty\right],\\
\left(1+\alpha\right)\left(\frac{2}{p}+N\right)-\left[\left(1+\alpha\right)\left(\frac{1}{p}-1\right)-s\right], & \text{if }p\in\left(0,1\right)
\end{cases}\\
 & =\begin{cases}
3\cdot\left(1+\alpha\right)+s, & \text{if }p\in\left[1,\infty\right],\\
\left(1+\alpha\right)\left(1+\frac{1}{p}+N\right)+s, & \text{if }p\in\left(0,1\right)
\end{cases}\\
 & \leq\begin{cases}
3\cdot\left(1+\alpha\right)+s_{1}, & \text{if }p\in\left[1,\infty\right],\\
\left(1+\alpha\right)\left(1+\frac{1}{p_{0}}+\left\lceil \frac{2+\varepsilon}{p_{0}}\right\rceil \right)+s_{1}, & \text{if }p\in\left(0,1\right)
\end{cases}\\
 & \leq\Lambda_{2}-\varepsilon=M_{2}-\varepsilon,
\end{align*}
as one can see again using an easy case distinction, since $\left\lceil p_{0}^{-1}\cdot\left(2+\varepsilon\right)\right\rceil \geq\left\lceil 2+\varepsilon\right\rceil \geq3$.

Likewise,
\begin{align*}
M_{1}^{\left(0\right)}\leq\frac{1}{\tau}+\max\left\{ s^{\natural},\,0\right\}  & \leq\frac{1}{\tau_{0}}+\max\left\{ 0,\,\left(1+\alpha\right)\left(\frac{1}{p}-1\right)_{+}-s\right\} \\
 & \leq\frac{1}{\tau_{0}}+\max\left\{ 0,\,\left(1+\alpha\right)\left(\frac{1}{p_{0}}-1\right)-s_{0}\right\} \\
 & =\Lambda_{1}-\varepsilon=M_{1}-\varepsilon.
\end{align*}

Finally, we also have
\begin{align*}
K_{0} & \leq\max\left\{ \frac{1-\alpha}{\tau}+2\frac{\sigma}{\tau}-s^{\natural},\,\frac{2+\sigma}{\tau}\right\} \\
 & =\begin{cases}
\max\left\{ \frac{1-\alpha}{\tau}+6+s,\,\frac{2}{\tau}+3\right\} , & \text{if }p\in\left[1,\infty\right],\\
\max\left\{ \frac{1-\alpha}{\tau}+2\left(\frac{2}{p}+N\right)-\left[\left(1+\alpha\right)\left(\frac{1}{p}-1\right)-s\right],\,\frac{2}{\tau}+\left(\frac{2}{p}+N\right)\right\} , & \text{if }p\in\left(0,1\right)
\end{cases}\\
 & =\begin{cases}
\max\left\{ \frac{1-\alpha}{\tau}+6+s,\,\frac{2}{\tau}+3\right\} , & \text{if }p\in\left[1,\infty\right],\\
\max\left\{ \frac{1-\alpha}{\tau}+\frac{3-\alpha}{p}+2N+1+\alpha+s,\,\frac{2}{\tau}+\frac{2}{p}+N\right\} , & \text{if }p\in\left(0,1\right)
\end{cases}\\
 & \leq\begin{cases}
\max\left\{ \frac{1-\alpha}{\tau_{0}}+6+s_{1},\,\frac{2}{\tau_{0}}+3\right\} , & \text{if }p\in\left[1,\infty\right],\\
\max\left\{ \frac{1-\alpha}{\tau_{0}}+\frac{3-\alpha}{p_{0}}+2\left\lceil \frac{2+\varepsilon}{p_{0}}\right\rceil +1+\alpha+s_{1},\,\frac{2}{\tau_{0}}+\frac{2}{p_{0}}+\left\lceil \frac{2+\varepsilon}{p_{0}}\right\rceil \right\} , & \text{if }p\in\left(0,1\right)
\end{cases}\\
 & \leq\Lambda_{3}-\varepsilon=K-\varepsilon,
\end{align*}
as one can see again using an easy case distinction and the estimate
$\left\lceil p_{0}^{-1}\cdot\left(2+\varepsilon\right)\right\rceil \geq\left\lceil 2+\varepsilon\right\rceil \geq3$.

Consequently, Lemma \ref{lem:MainShearletLemma} is indeed applicable
and yields $\max\left\{ K_{1},K_{2}\right\} \leq C_{5}^{\tau}C_{0}^{\tau}$.
We have thus verified all assumptions of Theorem \ref{thm:AtomicDecompositionTheorem},
which yields a constant
\[
K=K\left(p_{0},q_{0},\varepsilon,\dimension,\CalQ,\Phi,\gamma_{1}^{\left(0\right)},\dots,\gamma_{n}^{\left(0\right)}\right)=K\left(p_{0},q_{0},\varepsilon,\alpha,\varphi,\psi\right)>0
\]
such that the family ${\rm SH}_{\alpha,\varphi,\psi,\delta}^{\left(\pm1\right)}$
from the statement of the current theorem yields an atomic decomposition
of the $\alpha$-shearlet smoothness space $\mathscr{S}_{\alpha,s}^{p,q}\left(\R^{2}\right)=\DecompSp{\CalQ}p{\ell_{v^{s}}^{q}}{}$,
as soon as
\[
0<\delta\leq\delta_{00}:=\min\left\{ 1,\,\left[K\cdot\Omega^{\left(p\right)}\cdot\left(K_{1}^{1/\tau}+K_{2}^{1/\tau}\right)\right]^{-1}\right\} .
\]
But in equation \eqref{eq:AtomicDecompositionSecondConvolutionFactorEstimate}
we saw $\Omega^{\left(p\right)}\leq C_{6}$ independently of $p\geq p_{0}$,
$q\geq q_{0}$ and of $s\in\left[s_{0},s_{1}\right]$, so that 
\[
\delta_{00}\geq\delta_{0}:=\min\left\{ 1,\,\left[2K\cdot C_{0}C_{5}C_{6}\right]^{-1}\right\} ,
\]
where $\delta_{0}>0$ is independent of the precise choice of $p,q,s$,
as long as $p\geq p_{0}$, $q\geq q_{0}$ and $s\in\left[s_{0},s_{1}\right]$.
The claims concerning the notion of convergence for the series defining
$S^{\left(\delta\right)}$ and concerning the independence of the
action of $C^{\left(\delta\right)}$ from the choice of $p,q,s$ are
consequences of the remark after Theorem \ref{thm:AtomicDecompositionTheorem}.
\end{proof}
If $\varphi,\psi$ are compactly supported and if the mother shearlet
$\psi$ is a tensor product, the preceding conditions can be simplified
significantly:
\begin{cor}
\label{cor:ReallyNiceAlphaShearletTensorAtomicDecompositionConditions}Let
$\alpha\in\left[0,1\right]$, $\varepsilon,p_{0},q_{0}\in\left(0,1\right]$
and $s_{0},s_{1}\in\R$ with $s_{0}\leq s_{1}$. Let $\Lambda_{0},\dots,\Lambda_{3}$
as in Theorem \ref{thm:ReallyNiceShearletAtomicDecompositionConditions}
and set $N_{0}:=\left\lceil p_{0}^{-1}\cdot\left(2+\varepsilon\right)\right\rceil $.

Assume that the mother shearlet $\psi$ can be written as $\psi=\psi_{1}\otimes\psi_{2}$
and that $\varphi,\psi_{1},\psi_{2}$ satisfy the following:

\begin{enumerate}[leftmargin=0.6cm]
\item We have $\varphi\in C_{c}^{\left\lceil \Lambda_{0}\right\rceil }\left(\R^{2}\right)$,
$\psi_{1}\in C_{c}^{\left\lceil \Lambda_{2}+3+\varepsilon\right\rceil }\left(\R\right)$,
and $\psi_{2}\in C_{c}^{\left\lceil \Lambda_{3}+3+\varepsilon\right\rceil }\left(\R\right)$.
\item We have $\frac{\d^{\ell}}{\d\xi^{\ell}}\widehat{\psi_{1}}\left(0\right)=0$
for $\ell=0,\dots,N_{0}+\left\lceil \Lambda_{1}\right\rceil -1$.
\item We have $\widehat{\varphi}\left(\xi\right)\neq0$ for all $\xi\in\left[-1,1\right]^{2}$.
\item We have $\widehat{\psi_{1}}\left(\xi\right)\neq0$ for all $\xi\in\left[3^{-1},3\right]$
and $\widehat{\psi_{2}}\left(\xi\right)\neq0$ for all $\xi\in\left[-3,3\right]$.
\end{enumerate}
Then, $\varphi,\psi$ satisfy all assumptions of Theorem \ref{thm:ReallyNiceShearletAtomicDecompositionConditions}.
\end{cor}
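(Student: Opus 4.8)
The plan is to verify, one at a time, the full list of hypotheses imposed on $\varphi$ and $\psi=\psi_1\otimes\psi_2$ in Theorem~\ref{thm:ReallyNiceShearletAtomicDecompositionConditions}; there is nothing deep, only a careful bookkeeping of exponents. The elementary conditions come first. Since $\varphi\in C_c^{\lceil\Lambda_0\rceil}(\R^2)$ is continuous with compact support, $\varphi\in L^1(\R^2)$ and, for every $\Lambda\geq0$, the map $x\mapsto(1+|x|)^\Lambda\varphi(x)$ is continuous with compact support, hence bounded; thus $\|\varphi\|_\Lambda<\infty$ for all $\Lambda\geq0$, in particular $\|\varphi\|_{1+2/p_0}<\infty$. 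The same applies to $\psi_1,\psi_2$, and since $\psi=\psi_1\otimes\psi_2$ satisfies $|\psi(x)|=|\psi_1(x_1)|\,|\psi_2(x_2)|$ and is continuous with compact support in $\R^2$, we obtain $\psi\in L^1(\R^2)$ and $\|\psi\|_{1+2/p_0}<\infty$. Compact support of $\varphi,\psi_1,\psi_2$ makes $\widehat\varphi$ and $\widehat{\psi_j}$ real-analytic, hence $C^\infty$; moreover $\partial^\beta\widehat\varphi=\Fourier\bigl[(-2\pi i\bullet)^\beta\varphi\bigr]$ and likewise $\partial^{\beta_j}\widehat{\psi_j}=\Fourier\bigl[(-2\pi ix)^{\beta_j}\psi_j\bigr]$, where each of the functions $(-2\pi i\bullet)^\beta\varphi$, $(-2\pi ix)^{\beta_j}\psi_j$ is again continuous with compact support, so all partial derivatives of $\widehat\varphi$ and $\widehat{\psi_j}$ — hence of $\widehat\psi=\widehat{\psi_1}\otimes\widehat{\psi_2}$ — are bounded, in particular polynomially bounded. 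Nonvanishing of $\widehat\varphi$ on $[-1,1]^2$ is assumed outright; if $\xi=(\xi_1,\xi_2)$ with $\xi_1\in[3^{-1},3]$ and $|\xi_2|\leq|\xi_1|$, then $\xi_1\in[3^{-1},3]$ and $\xi_2\in[-3,3]$, so $\widehat\psi(\xi)=\widehat{\psi_1}(\xi_1)\,\widehat{\psi_2}(\xi_2)\neq0$ by the nonvanishing conditions on $\widehat{\psi_1},\widehat{\psi_2}$.

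It remains to establish the Fourier-decay estimates \eqref{eq:ShearletAtomicDecompositionFourierDecayCondition} for $|\beta|\leq N_0:=\lceil p_0^{-1}(2+\varepsilon)\rceil$. The workhorse is the elementary fact that if $g\in C_c^m(\R)$ or $g\in C_c^m(\R^2)$, then $|\widehat g(\xi)|\lesssim(1+|\xi|)^{-m}$ — indeed $|\xi^\gamma\widehat g(\xi)|=(2\pi)^{-|\gamma|}|\widehat{\partial^\gamma g}(\xi)|\leq(2\pi)^{-|\gamma|}\|\partial^\gamma g\|_{L^1}$ for $|\gamma|\leq m$, and summing over such $\gamma$ controls $(1+|\xi|)^m|\widehat g(\xi)|$. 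Applying this to $g=(-2\pi i\bullet)^\beta\varphi\in C_c^{\lceil\Lambda_0\rceil}(\R^2)$ gives $|\partial^\beta\widehat\varphi(\xi)|\lesssim(1+|\xi|)^{-\lceil\Lambda_0\rceil}\leq(1+|\xi|)^{-\Lambda_0}$ for every $|\beta|\leq N_0$ (a uniform constant, since only finitely many $\beta$ occur), which is the required bound on $\widehat\varphi$. For $\widehat\psi$ we use the tensor structure: $\partial^\beta\widehat\psi(\xi)=(\partial^{\beta_1}\widehat{\psi_1})(\xi_1)\cdot(\partial^{\beta_2}\widehat{\psi_2})(\xi_2)$, and since $(1+|\xi_1|)(1+|\xi_2|)\geq1+|\xi|$, it suffices to prove, for $\beta_1,\beta_2\leq N_0$, the one-dimensional bounds
\[
\bigl|\partial^{\beta_1}\widehat{\psi_1}(\xi_1)\bigr|\lesssim\min\bigl\{|\xi_1|^{\Lambda_1},(1+|\xi_1|)^{-\Lambda_2}\bigr\}\,(1+|\xi_1|)^{-(3+\varepsilon)}\quad\text{and}\quad\bigl|\partial^{\beta_2}\widehat{\psi_2}(\xi_2)\bigr|\lesssim(1+|\xi_2|)^{-\Lambda_3-(3+\varepsilon)},
\]
whose product is then dominated by the right-hand side of the second line of \eqref{eq:ShearletAtomicDecompositionFourierDecayCondition}.

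The bound on $\widehat{\psi_2}$ is the workhorse applied to $(-2\pi ix)^{\beta_2}\psi_2\in C_c^{\lceil\Lambda_3+3+\varepsilon\rceil}(\R)$. For $\widehat{\psi_1}$ I combine two estimates: the workhorse applied to $(-2\pi ix)^{\beta_1}\psi_1\in C_c^{\lceil\Lambda_2+3+\varepsilon\rceil}(\R)$ gives $|\partial^{\beta_1}\widehat{\psi_1}(\xi_1)|\lesssim(1+|\xi_1|)^{-(\Lambda_2+3+\varepsilon)}$, which yields the claimed bound for $|\xi_1|\geq1$, where the minimum equals $(1+|\xi_1|)^{-\Lambda_2}$. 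For $|\xi_1|\leq1$ the moment condition $\tfrac{\d^\ell}{\d\xi^\ell}\widehat{\psi_1}(0)=0$ ($\ell\leq N_0+\lceil\Lambda_1\rceil-1$) forces $\widehat{\psi_1}$ to vanish to order $\geq N_0+\lceil\Lambda_1\rceil$ at the origin, hence $\partial^{\beta_1}\widehat{\psi_1}$ vanishes there to order $\geq N_0+\lceil\Lambda_1\rceil-\beta_1\geq\lceil\Lambda_1\rceil$; since $\widehat{\psi_1}\in C^\infty$ has bounded derivatives, Taylor's theorem gives $|\partial^{\beta_1}\widehat{\psi_1}(\xi_1)|\lesssim|\xi_1|^{\lceil\Lambda_1\rceil}\leq|\xi_1|^{\Lambda_1}$ on $|\xi_1|\leq1$, and there one checks $\min\{|\xi_1|^{\Lambda_1},(1+|\xi_1|)^{-\Lambda_2}\}(1+|\xi_1|)^{-(3+\varepsilon)}\gtrsim|\xi_1|^{\Lambda_1}$, completing the estimate.

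There is no serious obstacle here; the proof is pure bookkeeping. The only points that demand care are matching the ceilings $\lceil\Lambda_0\rceil$, $\lceil\Lambda_2+3+\varepsilon\rceil$, $\lceil\Lambda_3+3+\varepsilon\rceil$ and $N_0+\lceil\Lambda_1\rceil$ appearing in the corollary's smoothness and moment hypotheses against the possibly non-integer exponents $\Lambda_0,\dots,\Lambda_3$ and the extra factor $(1+|\xi|)^{-(3+\varepsilon)}$ in \eqref{eq:ShearletAtomicDecompositionFourierDecayCondition}, and handling the $\min\{\cdot,\cdot\}$ separately in the regimes $|\xi_1|\leq1$ and $|\xi_1|\geq1$ while recombining the tensor factors via $(1+|\xi_1|)(1+|\xi_2|)\geq1+|\xi|$. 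All implied constants depend only on $\varphi,\psi_1,\psi_2$ and on $p_0,q_0,\varepsilon,\alpha,s_0,s_1$, which is harmless since Theorem~\ref{thm:ReallyNiceShearletAtomicDecompositionConditions} allows an arbitrary constant $C$.
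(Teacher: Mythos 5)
Your proposal is correct and follows essentially the same approach as the paper's proof: the same elementary observations about compactly supported functions, the same Paley--Wiener/workhorse estimate $g\in W^{m,1}\Rightarrow|\widehat g(\xi)|\lesssim(1+|\xi|)^{-m}$, the same use of the tensor structure via $(1+|\xi_1|)(1+|\xi_2|)\geq 1+|\xi|$, and the same case distinction $|\xi_1|\gtrless 1$. The only cosmetic divergence is at the origin: the paper factors $\xi_1^{\lceil\Lambda_1\rceil}$ out of the convergent power series of the entire function $\partial^{\beta_1}\widehat{\psi_1}$ (equation \eqref{eq:VanishingFourierDerivativesYieldFourierDecayAtOrigin}), whereas you invoke Taylor's theorem with remainder, using boundedness of $\widehat{\psi_1}^{(\beta_1+\lceil\Lambda_1\rceil)}$ on $[-1,1]$; both arguments are legitimate and are really the same idea in different dress.
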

\begin{proof}
Since $\varphi,\psi\in L^{1}\left(\R^{2}\right)$ are compactly supported,
it is well known that $\widehat{\varphi},\widehat{\psi}\in C^{\infty}\left(\R^{2}\right)$
with all partial derivatives being polynomially bounded (in fact bounded).
Thanks to the compact support and boundedness of $\varphi,\psi$,
we also clearly have $\left\Vert \varphi\right\Vert _{1+\frac{2}{p_{0}}}<\infty$
and $\left\Vert \psi\right\Vert _{1+\frac{2}{p_{0}}}<\infty$.

Next, if $\xi=\left(\xi_{1},\xi_{2}\right)\in\R^{2}$ satisfies $\xi_{1}\in\left[3^{-1},3\right]$
and $\left|\xi_{2}\right|\le\left|\xi_{1}\right|$, then $\left|\xi_{2}\right|\leq\left|\xi_{1}\right|\leq3$,
i.e., $\xi_{2}\in\left[-3,3\right]$. Thus $\widehat{\psi}\left(\xi\right)=\widehat{\psi_{1}}\left(\xi_{1}\right)\cdot\widehat{\psi_{2}}\left(\xi_{2}\right)\neq0$,
as required in Theorem \ref{thm:ReallyNiceShearletAtomicDecompositionConditions}.

Hence, it only remains to verify
\[
\left|\partial^{\beta}\widehat{\varphi}\left(\xi\right)\right|\lesssim\left(1+\left|\xi\right|\right)^{-\Lambda_{0}}\quad\text{ and }\quad\left|\partial^{\beta}\widehat{\psi}\left(\xi\right)\right|\lesssim\min\left\{ \left|\xi_{1}\right|^{\Lambda_{1}},\left(1+\left|\xi_{1}\right|\right)^{-\Lambda_{2}}\right\} \cdot\left(1+\left|\xi_{2}\right|\right)^{-\Lambda_{3}}\cdot\left(1+\left|\xi\right|\right)^{-\left(3+\varepsilon\right)}
\]
for all $\xi\in\R^{2}$ and all $\beta\in\N_{0}^{2}$ with $\left|\beta\right|\leq N_{0}$.
To this end, we first recall that differentiation under the integral
shows for $g\in C_{c}\left(\R^{\dimension}\right)$ that $\widehat{g}\in C^{\infty}\left(\R^{\dimension}\right)$,
where the derivatives are given by
\begin{equation}
\partial^{\beta}\widehat{g}\left(\xi\right)=\int_{\R^{\dimension}}g\left(x\right)\cdot\partial_{\xi}^{\beta}e^{-2\pi i\left\langle x,\xi\right\rangle }\d x=\int_{\R^{\dimension}}\left(-2\pi ix\right)^{\beta}g\left(x\right)\cdot e^{-2\pi i\left\langle x,\xi\right\rangle }\d x=\left(\Fourier\left[x\mapsto\left(-2\pi ix\right)^{\beta}g\left(x\right)\right]\right)\left(\xi\right).\label{eq:DerivativeOfFourierTransform}
\end{equation}
Furthermore, the usual mantra that ``smoothness of $f$ implies decay
of $\widehat{f}$'' shows that every $g\in W^{N,1}\left(\R^{\dimension}\right)$
satisfies $\left|\widehat{g}\left(\xi\right)\right|\lesssim\left(1+\left|\xi\right|\right)^{-N}$,
see e.g.\@ \cite[Lemma 6.3]{StructuredBanachFrames}.

Now, because of $\varphi\in C_{c}^{\left\lceil \Lambda_{0}\right\rceil }\left(\R^{2}\right)$,
we also have $\left[x\mapsto\left(-2\pi ix\right)^{\beta}\varphi\left(x\right)\right]\in C_{c}^{\left\lceil \Lambda_{0}\right\rceil }\left(\R^{2}\right)\hookrightarrow W^{\left\lceil \Lambda_{0}\right\rceil ,1}\left(\R^{2}\right)$
and thus
\[
\left|\partial^{\beta}\widehat{\varphi}\left(\xi\right)\right|=\left|\left(\Fourier\left[x\mapsto\left(-2\pi ix\right)^{\beta}\varphi\left(x\right)\right]\right)\left(\xi\right)\right|\lesssim\left(1+\left|\xi\right|\right)^{-\left\lceil \Lambda_{0}\right\rceil }\leq\left(1+\left|\xi\right|\right)^{-\Lambda_{0}},
\]
as desired.

For the estimate concerning $\widehat{\psi}$, we have to work slightly
harder: With the same arguments as for $\varphi$, we get $\left|\partial^{\beta}\widehat{\psi_{1}}\left(\xi\right)\right|\lesssim\left(1+\left|\xi\right|\right)^{-\left(\Lambda_{2}+3+\varepsilon\right)}$
and $\left|\partial^{\beta}\widehat{\psi_{2}}\left(\xi\right)\right|\lesssim\left(1+\left|\xi\right|\right)^{-\left(\Lambda_{3}+3+\varepsilon\right)}$
for all $\left|\beta\right|\leq N_{0}$. Now, in case of $\left|\xi_{1}\right|\geq1$,
we have $\left|\xi_{1}\right|^{\Lambda_{1}}\geq1\geq\left(1+\left|\xi_{1}\right|\right)^{-\Lambda_{2}}$
and thus
\begin{align*}
\left|\partial^{\beta}\widehat{\psi}\left(\xi\right)\right| & =\left|\left(\partial^{\beta_{1}}\widehat{\psi_{1}}\right)\left(\xi_{1}\right)\cdot\left(\partial^{\beta_{2}}\widehat{\psi_{2}}\right)\left(\xi_{2}\right)\right|\\
 & \lesssim\left(1+\left|\xi_{1}\right|\right)^{-\left(\Lambda_{2}+3+\varepsilon\right)}\cdot\left(1+\left|\xi_{2}\right|\right)^{-\left(\Lambda_{3}+3+\varepsilon\right)}\\
 & =\min\left\{ \left|\xi_{1}\right|^{\Lambda_{1}},\,\left(1+\left|\xi_{1}\right|\right)^{-\Lambda_{2}}\right\} \cdot\left(1+\left|\xi_{2}\right|\right)^{-\Lambda_{3}}\cdot\left[\left(1+\left|\xi_{1}\right|\right)\left(1+\left|\xi_{2}\right|\right)\right]^{-\left(3+\varepsilon\right)}\\
 & \leq\min\left\{ \left|\xi_{1}\right|^{\Lambda_{1}},\,\left(1+\left|\xi_{1}\right|\right)^{-\Lambda_{2}}\right\} \cdot\left(1+\left|\xi_{2}\right|\right)^{-\Lambda_{3}}\cdot\left(1+\left|\xi\right|\right)^{-\left(3+\varepsilon\right)},
\end{align*}
as desired. Here, the last step used that $\left(1+\left|\xi_{1}\right|\right)\left(1+\left|\xi_{2}\right|\right)\geq1+\left|\xi_{1}\right|+\left|\xi_{2}\right|\geq1+\left|\xi\right|$.

It remains to consider the case $\left|\xi_{1}\right|\leq1$. But
for arbitrary $\beta_{1}\in\N_{0}$ with $\beta_{1}\leq N_{0}$, our
assumptions on $\widehat{\psi_{1}}$ ensure $\partial^{\theta}\left[\partial^{\beta_{1}}\widehat{\psi_{1}}\right]\left(0\right)=0$
for all $\theta\in\left\{ 0,\dots,\left\lceil \Lambda_{1}\right\rceil -1\right\} $,
where we note $\Lambda_{1}>0$, so that $\left\lceil \Lambda_{1}\right\rceil -1\geq0$.
But as the Fourier transform of a compactly supported function, $\widehat{\psi_{1}}$
(and thus also $\partial^{\beta_{1}}\widehat{\psi_{1}}$) can be extended
to an entire function on $\Compl$. In particular,
\begin{align}
\partial^{\beta_{1}}\widehat{\psi_{1}}\left(\xi_{1}\right) & =\sum_{\theta=0}^{\infty}\frac{\partial^{\theta}\left[\partial^{\beta_{1}}\widehat{\psi_{1}}\right]\left(0\right)}{\theta!}\cdot\xi_{1}^{\theta}=\sum_{\theta=\left\lceil \Lambda_{1}\right\rceil }^{\infty}\frac{\partial^{\theta}\left[\partial^{\beta_{1}}\widehat{\psi_{1}}\right]\left(0\right)}{\theta!}\cdot\xi_{1}^{\theta}\nonumber \\
\left({\scriptstyle \text{with }\ell=\theta-\left\lceil \Lambda_{1}\right\rceil }\right) & =\xi_{1}^{\left\lceil \Lambda_{1}\right\rceil }\cdot\sum_{\ell=0}^{\infty}\frac{\partial^{\ell+\left\lceil \Lambda_{1}\right\rceil }\left[\partial^{\beta_{1}}\widehat{\psi_{1}}\right]\left(0\right)}{\left(\ell+\left\lceil \Lambda_{1}\right\rceil \right)!}\cdot\xi_{1}^{\ell}\label{eq:VanishingFourierDerivativesYieldFourierDecayAtOrigin}
\end{align}
for all $\xi\in\R$, where the power series in the last line converges
absolutely on all of $\R$. In particular, the (continuous(!))\@
function defined by the power series is bounded on $\left[-1,1\right]$,
so that we get $\left|\partial^{\beta_{1}}\widehat{\psi_{1}}\left(\xi_{1}\right)\right|\lesssim\left|\xi_{1}\right|^{\left\lceil \Lambda_{1}\right\rceil }\leq\left|\xi_{1}\right|^{\Lambda_{1}}$
for $\xi_{1}\in\left[-1,1\right]$. Furthermore, note $\left(1+\left|\xi_{1}\right|\right)^{-\Lambda_{2}}\geq2^{-\Lambda_{2}}\geq2^{-\Lambda_{2}}\cdot\left|\xi_{1}\right|^{\Lambda_{1}}$,
so that
\begin{align*}
\left|\partial^{\beta}\widehat{\psi}\left(\xi\right)\right| & =\left|\left(\partial^{\beta_{1}}\widehat{\psi_{1}}\right)\left(\xi_{1}\right)\cdot\left(\partial^{\beta_{2}}\widehat{\psi_{2}}\right)\left(\xi_{2}\right)\right|\\
 & \lesssim\left|\xi_{1}\right|^{\Lambda_{1}}\cdot\left(1+\left|\xi_{2}\right|\right)^{-\left(\Lambda_{3}+3+\varepsilon\right)}\\
 & \leq2^{\Lambda_{2}}\cdot\min\left\{ \left|\xi_{1}\right|^{\Lambda_{1}},\,\left(1+\left|\xi_{1}\right|\right)^{-\Lambda_{2}}\right\} \cdot\left(1+\left|\xi_{2}\right|\right)^{-\Lambda_{3}}\cdot2^{3+\varepsilon}\cdot\left[\left(1+\left|\xi_{1}\right|\right)\left(1+\left|\xi_{2}\right|\right)\right]^{-\left(3+\varepsilon\right)}\\
 & \leq2^{3+\varepsilon+\Lambda_{2}}\cdot\min\left\{ \left|\xi_{1}\right|^{\Lambda_{1}},\,\left(1+\left|\xi_{1}\right|\right)^{-\Lambda_{2}}\right\} \cdot\left(1+\left|\xi_{2}\right|\right)^{-\Lambda_{3}}\cdot\left(1+\left|\xi\right|\right)^{-\left(3+\varepsilon\right)}.\qedhere
\end{align*}
\end{proof}
Finally, we provide an analogous simplification of the conditions
of Theorem \ref{thm:NicelySimplifiedAlphaShearletFrameConditions}:
\begin{cor}
\label{cor:ReallyNiceAlphaShearletTensorBanachFrameConditions}Let
$\alpha\in\left[0,1\right]$, $\varepsilon,p_{0},q_{0}\in\left(0,1\right]$
and $s_{0},s_{1}\in\R$ with $s_{0}\leq s_{1}$. Let $K,M_{1},M_{2},H$
as in Theorem \ref{thm:NicelySimplifiedAlphaShearletFrameConditions}
and set $N_{0}:=\left\lceil p_{0}^{-1}\cdot\left(2+\varepsilon\right)\right\rceil $.

The functions $\varphi,\psi$ fulfill all assumption of Theorem \ref{thm:NicelySimplifiedAlphaShearletFrameConditions}
if the mother shearlet $\psi$ can be written as $\psi=\psi_{1}\otimes\psi_{2}$,
where $\varphi,\psi_{1},\psi_{2}$ satisfy the following:

\begin{enumerate}[leftmargin=0.6cm]
\item We have $\varphi\in C_{c}^{\left\lceil H+1\right\rceil }\left(\R^{2}\right)$,
$\psi_{1}\in C_{c}^{\left\lceil M_{2}+1\right\rceil }\left(\R\right)$,
and $\psi_{2}\in C_{c}^{\left\lceil K+1\right\rceil }\left(\R\right)$.
\item We have $\frac{\d^{\ell}}{\d\xi^{\ell}}\widehat{\psi_{1}}\left(0\right)=0$
for $\ell=0,\dots,N_{0}+\left\lceil M_{1}\right\rceil -1$.
\item We have $\widehat{\varphi}\left(\xi\right)\neq0$ for all $\xi\in\left[-1,1\right]^{2}$.
\item We have $\widehat{\psi_{1}}\left(\xi\right)\neq0$ for all $\xi\in\left[3^{-1},3\right]$
and $\widehat{\psi_{2}}\left(\xi\right)\neq0$ for all $\xi\in\left[-3,3\right]$.\qedhere
\end{enumerate}
\end{cor}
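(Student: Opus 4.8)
The plan is to mimic the proof of Corollary \ref{cor:ReallyNiceAlphaShearletTensorAtomicDecompositionConditions}, which is in fact slightly \emph{simpler} here, because the decay conditions \eqref{eq:ShearletFrameFourierDecayCondition} of Theorem \ref{thm:NicelySimplifiedAlphaShearletFrameConditions} do not carry the extra factor $\left(1+\left|\xi\right|\right)^{-\left(3+\varepsilon\right)}$ appearing in \eqref{eq:ShearletAtomicDecompositionFourierDecayCondition}; hence no convolution factorization (via Proposition \ref{prop:ConvolutionFactorization}) is needed and one can estimate $\widehat{\varphi}$ and $\widehat{\psi}$ directly. First I would dispose of the regularity and integrability hypotheses of Theorem \ref{thm:NicelySimplifiedAlphaShearletFrameConditions}: since $\varphi$ and $\psi=\psi_1\otimes\psi_2$ are continuous with compact support, they lie in $L^1\left(\R^2\right)$, and their Fourier transforms are in $C^\infty\left(\R^2\right)$ with all partial derivatives bounded (in particular polynomially bounded). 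Moreover $H,M_2,K\geq 0$ force $\left\lceil H+1\right\rceil,\left\lceil M_2+1\right\rceil,\left\lceil K+1\right\rceil\geq 1$, so $\varphi\in C_c^1\left(\R^2\right)$ and $\psi_1\in C_c^1\left(\R\right)$, $\psi_2\in C_c^1\left(\R\right)$; consequently $\psi=\psi_1\otimes\psi_2\in C^1\left(\R^2\right)$, and $\nabla\varphi$, $\nabla\psi$ are continuous with compact support, whence $\nabla\varphi,\nabla\psi\in L^1\left(\R^2\right)\cap L^\infty\left(\R^2\right)$.

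Next I would verify the non-vanishing conditions. The requirement $\widehat{\varphi}\left(\xi\right)\neq 0$ on $\left[-1,1\right]^2$ is hypothesis (3). For $\widehat{\psi}$, if $\xi=\left(\xi_1,\xi_2\right)$ satisfies $\xi_1\in\left[3^{-1},3\right]$ and $\left|\xi_2\right|\leq\left|\xi_1\right|$, then $\left|\xi_2\right|\leq 3$, i.e.\ $\xi_2\in\left[-3,3\right]$, so $\widehat{\psi}\left(\xi\right)=\widehat{\psi_1}\left(\xi_1\right)\cdot\widehat{\psi_2}\left(\xi_2\right)\neq 0$ by hypothesis (4).

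The bulk of the work is the decay estimate \eqref{eq:ShearletFrameFourierDecayCondition}. As in the proof of Corollary \ref{cor:ReallyNiceAlphaShearletTensorAtomicDecompositionConditions}, I would use that $\partial^\beta\widehat{g}=\Fourier\!\left[x\mapsto\left(-2\pi i x\right)^\beta g\left(x\right)\right]$ (cf.\ equation \eqref{eq:DerivativeOfFourierTransform}) together with the fact that $g\in W^{N,1}$ implies $\left|\widehat{g}\left(\xi\right)\right|\lesssim\left(1+\left|\xi\right|\right)^{-N}$ (see \cite[Lemma 6.3]{StructuredBanachFrames}). Applied to $\varphi\in C_c^{\left\lceil H+1\right\rceil}\left(\R^2\right)$ this gives $\left|\partial^\theta\widehat{\varphi}\left(\xi\right)\right|\lesssim\left(1+\left|\xi\right|\right)^{-\left\lceil H+1\right\rceil}\leq\left(1+\left|\xi\right|\right)^{-\left(H+1\right)}$; applied to $\psi_1\in C_c^{\left\lceil M_2+1\right\rceil}\left(\R\right)$ and $\psi_2\in C_c^{\left\lceil K+1\right\rceil}\left(\R\right)$ it gives $\left|\partial^{\theta_1}\widehat{\psi_1}\left(\xi_1\right)\right|\lesssim\left(1+\left|\xi_1\right|\right)^{-\left(M_2+1\right)}$ and $\left|\partial^{\theta_2}\widehat{\psi_2}\left(\xi_2\right)\right|\lesssim\left(1+\left|\xi_2\right|\right)^{-\left(K+1\right)}$, so $\widehat{\psi}=\widehat{\psi_1}\otimes\widehat{\psi_2}$ satisfies the second line of \eqref{eq:ShearletFrameFourierDecayCondition} for all $\left|\theta\right|\leq N_0$. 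For the first line (the case $\left|\xi_1\right|\leq 1$) I would exploit hypothesis (2): since $\left|\theta\right|\leq N_0$ forces $\theta_1\leq N_0$, the assumption $\frac{\d^\ell}{\d\xi^\ell}\widehat{\psi_1}\left(0\right)=0$ for $\ell=0,\dots,N_0+\left\lceil M_1\right\rceil-1$ yields $\partial^{\ell}\!\left[\partial^{\theta_1}\widehat{\psi_1}\right]\!\left(0\right)=0$ for $\ell=0,\dots,\left\lceil M_1\right\rceil-1$; because $\widehat{\psi_1}$ extends to an entire function, a Taylor expansion at the origin (exactly as in \eqref{eq:VanishingFourierDerivativesYieldFourierDecayAtOrigin}) shows $\partial^{\theta_1}\widehat{\psi_1}\left(\xi_1\right)=\xi_1^{\left\lceil M_1\right\rceil}\cdot h\left(\xi_1\right)$ for an entire, hence locally bounded, function $h$, so $\left|\partial^{\theta_1}\widehat{\psi_1}\left(\xi_1\right)\right|\lesssim\left|\xi_1\right|^{\left\lceil M_1\right\rceil}\leq\left|\xi_1\right|^{M_1}$ on $\left[-1,1\right]$. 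Combining this with the bound on $\partial^{\theta_2}\widehat{\psi_2}$ delivers the first line of \eqref{eq:ShearletFrameFourierDecayCondition}. With all hypotheses of Theorem \ref{thm:NicelySimplifiedAlphaShearletFrameConditions} verified, the corollary follows.

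I do not expect a serious obstacle here; the only point requiring care is the bookkeeping ensuring that the prescribed number $N_0+\left\lceil M_1\right\rceil-1$ of vanishing derivatives of $\widehat{\psi_1}$ at the origin is exactly what is needed so that, after applying any $\partial^{\theta_1}$ with $\left|\theta_1\right|\leq N_0$, one still retains $\left\lceil M_1\right\rceil$ vanishing derivatives — this is what converts the moment conditions into the factor $\left|\xi_1\right|^{M_1}$ near the $\xi_2$-axis, and it is the step most prone to an off-by-one slip.
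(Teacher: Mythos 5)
Your proposal is correct and follows essentially the same route as the paper: the paper's proof of Corollary \ref{cor:ReallyNiceAlphaShearletTensorBanachFrameConditions} also reduces the verification to the arguments already spelled out in the proof of Corollary \ref{cor:ReallyNiceAlphaShearletTensorAtomicDecompositionConditions} (compact support plus $C_c^m$ regularity giving polynomial Fourier decay via \cite[Lemma 6.3]{StructuredBanachFrames}, and the vanishing moments of $\widehat{\psi_1}$ yielding $\left|\partial^{\beta_1}\widehat{\psi_1}\left(\xi_1\right)\right|\lesssim\left|\xi_1\right|^{\left\lceil M_1\right\rceil}\leq\left|\xi_1\right|^{M_1}$ near $0$ exactly via the Taylor-expansion argument around equation \eqref{eq:VanishingFourierDerivativesYieldFourierDecayAtOrigin}). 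Your bookkeeping for the number of vanishing moments is exactly right: $\beta_1\leq N_0$ leaves $\left\lceil M_1\right\rceil$ vanishing derivatives of $\partial^{\beta_1}\widehat{\psi_1}$ at the origin, and since $M_1\geq\varepsilon+\frac{1}{\min\left\{p_0,q_0\right\}}>0$ the range of $\ell$ is nonempty, just as the paper silently uses.
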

\begin{proof}
Observe $\varphi,\psi\in C_{c}\left(\R^{2}\right)\subset L^{1}\left(\R^{2}\right)$
and note $\widehat{\varphi},\widehat{\psi}\in C^{\infty}\left(\R^{2}\right)$,
where all partial derivatives of these functions are bounded (and
thus polynomially bounded), since $\varphi,\psi$ are compactly supported.
Next, since $K,H,M_{1},M_{2}\geq0$, our assumptions clearly entail
$\varphi,\psi\in C_{c}^{1}\left(\R^{2}\right)$, so that $\nabla\varphi,\nabla\psi\in L^{1}\left(\R^{2}\right)\cap L^{\infty}\left(\R^{2}\right)$.
Furthermore, we see exactly as in the proof of Corollary \ref{cor:ReallyNiceAlphaShearletTensorAtomicDecompositionConditions}
that $\widehat{\psi}\left(\xi\right)\neq0$ for all $\xi=\left(\xi_{1},\xi_{2}\right)\in\R^{2}$
with $\xi_{1}\in\left[3^{-1},3\right]$ and $\left|\xi_{2}\right|\leq\left|\xi_{1}\right|$.

Thus, it remains to verify that $\widehat{\varphi},\widehat{\psi}$
satisfy the decay conditions in equation \eqref{eq:ShearletFrameFourierDecayCondition}.
But we see exactly as in the proof of Corollary \ref{cor:ReallyNiceAlphaShearletTensorAtomicDecompositionConditions}
(cf.\@ the argument around equation \eqref{eq:DerivativeOfFourierTransform})
that $\left|\partial^{\beta}\widehat{\varphi}\left(\xi\right)\right|\lesssim\left(1+\left|\xi\right|\right)^{-\left\lceil H+1\right\rceil }\leq\left(1+\left|\xi\right|\right)^{-\left(H+1\right)}$,
as well as $\left|\partial^{\beta_{1}}\widehat{\psi_{1}}\left(\xi_{1}\right)\right|\lesssim\left(1+\left|\xi_{1}\right|\right)^{-\left\lceil M_{2}+1\right\rceil }\leq\left(1+\left|\xi_{1}\right|\right)^{-\left(M_{2}+1\right)}$
and $\left|\partial^{\beta_{2}}\widehat{\psi_{2}}\left(\xi_{2}\right)\right|\lesssim\left(1+\left|\xi_{2}\right|\right)^{-\left\lceil K+1\right\rceil }\leq\left(1+\left|\xi_{2}\right|\right)^{-\left(K+1\right)}$
for all $\beta\in\N_{0}^{2}$ and $\beta_{1},\beta_{2}\in\N_{0}$.
This establishes the last two lines of equation \eqref{eq:ShearletFrameFourierDecayCondition}.

For the first line of equation \eqref{eq:ShearletFrameFourierDecayCondition},
we see as in the proof of Corollary \ref{cor:ReallyNiceAlphaShearletTensorAtomicDecompositionConditions}
(cf.\@ the argument around equation \eqref{eq:VanishingFourierDerivativesYieldFourierDecayAtOrigin})
that $\left|\partial^{\beta_{1}}\widehat{\psi_{1}}\left(\xi_{1}\right)\right|\lesssim\left|\xi_{1}\right|^{\left\lceil M_{1}\right\rceil }\leq\left|\xi_{1}\right|^{M_{1}}$
for all $\xi_{1}\in\left[-1,1\right]$. Since we saw above that $\left|\partial^{\beta_{2}}\widehat{\psi_{2}}\left(\xi_{2}\right)\right|\lesssim\left(1+\left|\xi_{2}\right|\right)^{-\left(K+1\right)}$
for all $\xi_{2}\in\R$, we have thus also established the first line
of equation \eqref{eq:ShearletFrameFourierDecayCondition}.
\end{proof}

\section{The unconnected \texorpdfstring{$\alpha$}{α}-shearlet covering}

\label{sec:UnconnectedAlphaShearletCovering}The $\alpha$-shearlet
covering as introduced in Definition \ref{def:AlphaShearletCovering}
divides the frequency space $\R^{2}$ into a low-frequency part and
into \emph{four} different frequency cones: the top, bottom, left
and right cones. But for real-valued functions, the absolute value
of the Fourier transform is symmetric. Consequently, there is no non-zero
real-valued function with Fourier transform essentially supported
in the top (or left, ...) cone.

For this reason, it is customary to divide the frequency plane into
a low-frequency part and \emph{two} different frequency cones: the
horizontal and the vertical frequency cone. In this section, we account
for this slightly different partition of the frequency plane, by introducing
the so-called \textbf{unconnected $\alpha$-shearlet covering}. The
reason for this nomenclature is that the \emph{connected} base set
$Q=U_{\left(-1,1\right)}^{\left(3^{-1},3\right)}$ from Definition
\ref{def:AlphaShearletCovering} is replaced by the \emph{unconnected}
set $Q\cup\left(-Q\right)$. We then show that all results from the
preceding two sections remain true for this modified covering, essentially
since the associated decomposition spaces are identical, cf.\@ Lemma
\ref{lem:UnconnectedAlphaShearletSmoothnessIsBoring}.
\begin{defn}
\label{def:UnconnectedAlphaShearletCovering}Let $\alpha\in\left[0,1\right]$.
The \textbf{unconnected $\alpha$-shearlet covering} $\CalS_{u}^{\left(\alpha\right)}$
is defined as 
\[
\mathcal{\mathcal{\CalS}}_{u}^{(\alpha)}\::=\:\left(\smash{W_{v}^{\left(\alpha\right)}}\right)_{v\in V^{\left(\alpha\right)}}\::=\:\left(\smash{W_{v}}\right)_{v\in V^{\left(\alpha\right)}}\::=\:\left(\smash{B_{v}}W_{v}'\right)_{v\in V^{\left(\alpha\right)}}=\left(B_{v}W_{v}'+b_{v}\right)_{v\in V^{\left(\alpha\right)}}\:,
\]
where:

\begin{itemize}[leftmargin=0.6cm]
\item The \emph{index set} $V^{\left(\alpha\right)}$ is given by $V:=V^{\left(\alpha\right)}:=\left\{ 0\right\} \cup V_{0}$,
where 
\[
\qquad V_{0}:=V_{0}^{(\alpha)}:=\left\{ \left(n,m,\delta\right)\in\N_{0}\times\Z\times\left\{ 0,1\right\} \with\left|m\right|\leq G_{n}\right\} \quad\text{ with }\quad G_{n}:=G_{n}^{\left(\alpha\right)}:=\left\lceil \smash{2^{n\left(1-\alpha\right)}}\right\rceil .
\]
\item The \emph{basic sets} $\left(W_{v}'\right)_{v\in V^{\left(\alpha\right)}}$
are given by $W_{0}':=\left(-1,1\right)^{2}$ and by $W_{v}':=Q_{u}:=U_{\left(-1,1\right)}^{\left(3^{-1},3\right)}\cup\left[-\smash{U_{\left(-1,1\right)}^{\left(3^{-1},3\right)}}\vphantom{U^{\left(3\right)}}\right]$
for $v\in V_{0}^{\left(\alpha\right)}$. The notation $U_{\left(a,b\right)}^{\left(\gamma,\mu\right)}$
used here is as defined in equation \eqref{eq:BasicShearletSet}.
\item The \emph{matrices} $\left(B_{v}\right)_{v\in V^{\left(\alpha\right)}}$
are given by $B_{0}:=B_{0}^{\left(\alpha\right)}:=\identity$ and
by $B_{v}:=B_{v}^{\left(\alpha\right)}:=R^{\delta}\cdot A_{n,m}^{\left(\alpha\right)}$,
where we define $A_{n,m}^{\left(\alpha\right)}:=D_{2^{n}}^{\left(\alpha\right)}\cdot S_{m}^{T}$
for $v=\left(n,m,\delta\right)\in V_{0}$. Here, the matrices $R,S_{x}$
and $D_{b}^{\left(\alpha\right)}$ are as in equation \eqref{eq:StandardMatrices}.
\item The \emph{translations} $\left(b_{v}\right)_{v\in V^{\left(\alpha\right)}}$
are given by $b_{v}:=0$ for all $v\in V^{\left(\alpha\right)}$.
\end{itemize}
Finally, we define the \emph{weight} $u=\left(u_{v}\right)_{v\in V}$
by $u_{0}:=1$ and $u_{n,m,\delta}:=2^{n}$ for $\left(n,m,\delta\right)\in V_{0}$.
\end{defn}
The unconnected $\alpha$-shearlet covering $\CalS_{u}^{\left(\alpha\right)}$
is highly similar to the (connected) $\alpha$-shearlet covering $\CalS^{\left(\alpha\right)}$
from Definition \ref{def:AlphaShearletCovering}. In particular, we
have $Q_{u}=Q\cup\left(-Q\right)$ with $Q=U_{\left(-1,1\right)}^{\left(3^{-1},3\right)}$
as in Definition \ref{def:AlphaShearletCovering}. To further exploit
this connection between the two coverings, we define the \textbf{projection
map}
\[
\pi:I^{\left(\alpha\right)}\to V^{\left(\alpha\right)},i\mapsto\begin{cases}
0, & \text{if }i=0,\\
\left(n,m,\delta\right), & \text{if }i=\left(n,m,\varepsilon,\delta\right)\in I_{0}^{\left(\alpha\right)}.
\end{cases}
\]
Likewise, for $\varepsilon\in\left\{ \pm1\right\} $, we define the
\textbf{$\varepsilon$-injection}
\[
\iota_{\varepsilon}:V^{\left(\alpha\right)}\to I^{\left(\alpha\right)},v\mapsto\begin{cases}
0, & \text{if }v=0,\\
\left(n,m,\varepsilon,\delta\right), & \text{if }v=\left(n,m,\delta\right)\in V_{0}^{\left(\alpha\right)}.
\end{cases}
\]
Note that $B_{v}^{\left(\alpha\right)}=\varepsilon\cdot T_{\iota_{\varepsilon}\left(v\right)}^{\left(\alpha\right)}$
for all $v\in V_{0}^{\left(\alpha\right)}$, so that
\begin{equation}
W_{v}^{\left(\alpha\right)}=S_{\iota_{1}\left(v\right)}^{\left(\alpha\right)}\cup S_{\iota_{-1}\left(v\right)}^{\left(\alpha\right)}=\bigcup_{\varepsilon\in\left\{ \pm1\right\} }S_{\iota_{\varepsilon}\left(v\right)}^{\left(\alpha\right)}\qquad\forall v\in V_{0}^{\left(\alpha\right)},\label{eq:UnconnectedCoveringAsUnionOfConnectedCovering}
\end{equation}
since $B_{v}^{\left(\alpha\right)}\left[-\smash{U_{\left(-1,1\right)}^{\left(3,3^{-1}\right)}}\vphantom{U^{\left(3\right)}}\right]=-B_{v}^{\left(\alpha\right)}\smash{U_{\left(-1,1\right)}^{\left(3,3^{-1}\right)}}=T_{\iota_{-1}\left(v\right)}^{\left(\alpha\right)}Q=S_{\iota_{-1}\left(v\right)}^{\left(\alpha\right)}$.
Because of $W_{0}^{\left(\alpha\right)}=\left(-1,1\right)^{2}=S_{0}^{\left(\alpha\right)}$,
equation \eqref{eq:UnconnectedCoveringAsUnionOfConnectedCovering}
remains valid for $v=0$. Using these observations, we can now prove
the following lemma:
\begin{lem}
\noindent \label{lem:UnconnectedAlphaShearletCoveringIsAlmostStructured}The
unconnected $\alpha$-shearlet covering $\CalS_{u}^{(\alpha)}$ 
is an almost structured covering of $\R^{2}$.
\end{lem}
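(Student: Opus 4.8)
The plan is to deduce the statement directly from Lemma~\ref{lem:AlphaShearletCoveringIsAlmostStructured}, i.e.\ from the fact that the \emph{connected} covering $\CalS^{\left(\alpha\right)}$ is already known to be almost structured, by exploiting that $\CalS_{u}^{\left(\alpha\right)}$ arises from $\CalS^{\left(\alpha\right)}$ by merging each pair of opposing sets. The structural data I would use are the (at most two-to-one) projection $\pi:I^{\left(\alpha\right)}\to V^{\left(\alpha\right)}$, the injections $\iota_{\pm1}:V^{\left(\alpha\right)}\to I^{\left(\alpha\right)}$ with $\pi\circ\iota_{\varepsilon}=\identity$, the identity $B_{v}^{\left(\alpha\right)}=\varepsilon\cdot T_{\iota_{\varepsilon}\left(v\right)}^{\left(\alpha\right)}$ for $v\in V_{0}^{\left(\alpha\right)}$ together with $B_{0}=T_{0}=\identity$, and the relation $W_{v}^{\left(\alpha\right)}=\bigcup_{\varepsilon\in\left\{ \pm1\right\} }S_{\iota_{\varepsilon}\left(v\right)}^{\left(\alpha\right)}$ from equation~\eqref{eq:UnconnectedCoveringAsUnionOfConnectedCovering}. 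Conditions (1) and (2) of Definition~\ref{def:AlmostStructuredCovering} hold by the very definition of $\CalS_{u}^{\left(\alpha\right)}$ (with $b_{v}=0$ and $\CalO=\R^{2}$), and the finiteness of $\left\{ W_{v}'\with v\in V^{\left(\alpha\right)}\right\} $ in condition (5b) is clear since there are only the two basic sets $Q_{u}$ and $\left(-1,1\right)^{2}$.

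For admissibility (condition (3)) I would fix $v\in V^{\left(\alpha\right)}$ and let $w\in v^{\ast}$, i.e.\ $W_{w}\cap W_{v}\neq\emptyset$. By equation~\eqref{eq:UnconnectedCoveringAsUnionOfConnectedCovering} there are $\varepsilon,\varepsilon'\in\left\{ \pm1\right\} $ with $S_{\iota_{\varepsilon}\left(w\right)}^{\left(\alpha\right)}\cap S_{\iota_{\varepsilon'}\left(v\right)}^{\left(\alpha\right)}\neq\emptyset$, that is, $\iota_{\varepsilon}\left(w\right)\in\left(\iota_{\varepsilon'}\left(v\right)\right)^{\ast}$, the cluster now taken with respect to $\CalS^{\left(\alpha\right)}$. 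Choosing such an $\varepsilon=\varepsilon\left(w\right)$ for each $w\in v^{\ast}$, the map $w\mapsto\iota_{\varepsilon\left(w\right)}\left(w\right)$ is injective (because $\pi\circ\iota_{\varepsilon}=\identity$) and takes values in $\left(\iota_{1}\left(v\right)\right)^{\ast}\cup\left(\iota_{-1}\left(v\right)\right)^{\ast}$, which has cardinality at most $2N_{\CalS^{\left(\alpha\right)}}$ by admissibility of $\CalS^{\left(\alpha\right)}$; hence $\left|v^{\ast}\right|\leq2N_{\CalS^{\left(\alpha\right)}}$. For condition (4), with $v,w,\varepsilon,\varepsilon'$ as above and using $B_{v}=\varepsilon'T_{\iota_{\varepsilon'}\left(v\right)}$, $B_{w}=\varepsilon T_{\iota_{\varepsilon}\left(w\right)}$ together with $\left(\pm1\right)^{-1}=\pm1$, one gets $\left\Vert B_{v}^{-1}B_{w}\right\Vert =\left\Vert T_{\iota_{\varepsilon'}\left(v\right)}^{-1}T_{\iota_{\varepsilon}\left(w\right)}\right\Vert \leq C_{\CalS^{\left(\alpha\right)}}$, since $\iota_{\varepsilon}\left(w\right)\in\left(\iota_{\varepsilon'}\left(v\right)\right)^{\ast}$. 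The cases $v=0$ or $w=0$ are covered by the same reasoning, as then the relevant $\iota$-index is $0$ and $T_{0}=B_{0}=\identity$.

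It remains to verify condition (5). Lemma~\ref{lem:AlphaShearletCoveringIsAlmostStructured} provides open sets $\left(P_{i}'\right)_{i\in I^{\left(\alpha\right)}}$ with $\overline{P_{i}'}\subset Q_{i}'$, with $\left\{ P_{i}'\with i\in I^{\left(\alpha\right)}\right\} $ finite, and with $\R^{2}\subset\bigcup_{i\in I^{\left(\alpha\right)}}T_{i}P_{i}'$ (recall $b_{i}=0$). I would set $P_{v}':=P_{\iota_{1}\left(v\right)}'\cup\bigl(-P_{\iota_{-1}\left(v\right)}'\bigr)$ for $v\in V_{0}^{\left(\alpha\right)}$ and keep $P_{0}'$ unchanged. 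Then each $P_{v}'$ is open, the family $\left\{ P_{v}'\with v\in V^{\left(\alpha\right)}\right\} $ is finite (being built from finitely many of the $P_{i}'$), and $\overline{P_{v}'}=\overline{P_{\iota_{1}\left(v\right)}'}\cup\bigl(-\overline{P_{\iota_{-1}\left(v\right)}'}\bigr)\subset Q\cup\left(-Q\right)=Q_{u}=W_{v}'$ for $v\in V_{0}^{\left(\alpha\right)}$, while $\overline{P_{0}'}\subset\left(-1,1\right)^{2}=W_{0}'$. Finally, using $T_{\iota_{\varepsilon}\left(v\right)}P_{\iota_{\varepsilon}\left(v\right)}'=\varepsilon B_{v}P_{\iota_{\varepsilon}\left(v\right)}'=B_{v}\bigl(\varepsilon P_{\iota_{\varepsilon}\left(v\right)}'\bigr)\subseteq B_{v}P_{v}'$ for $v\in V_{0}^{\left(\alpha\right)}$, $\varepsilon\in\left\{ \pm1\right\} $ and $T_{0}P_{0}'=B_{0}P_{0}'$, one obtains $\R^{2}\subset\bigcup_{i\in I^{\left(\alpha\right)}}T_{i}P_{i}'=\bigcup_{v\in V^{\left(\alpha\right)}}\bigcup_{\varepsilon\in\left\{ \pm1\right\} }T_{\iota_{\varepsilon}\left(v\right)}P_{\iota_{\varepsilon}\left(v\right)}'\subseteq\bigcup_{v\in V^{\left(\alpha\right)}}\left(B_{v}P_{v}'+b_{v}\right)$, which is condition (5c), and completes the verification of Definition~\ref{def:AlmostStructuredCovering}.

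I do not expect a serious obstacle here: once the reduction to $\CalS^{\left(\alpha\right)}$ is in place, the argument is pure bookkeeping. The only points that need a little care are keeping track of the signs $\varepsilon,\varepsilon'$ and the fact that the index $0$ has to be treated as a trivial special case, since $B_{0}=\identity$ rather than $\pm T_{0}$. All genuine geometric content — estimates on the sizes and relative positions of the shearlet boxes — has already been absorbed into Lemma~\ref{lem:AlphaShearletCoveringIsAlmostStructured} (and, through it, into Lemma~\ref{lem:AlphaShearletCoveringAuxiliary}), so no new such estimates are required.
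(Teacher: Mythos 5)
Your proposal is correct and follows essentially the same approach as the paper's own proof: defining the inner sets $P_v'$ for $\CalS_u^{(\alpha)}$ by symmetrizing those for $\CalS^{(\alpha)}$, and reducing admissibility and the transition-matrix bound to the corresponding properties of $\CalS^{(\alpha)}$ via $\pi$ and $\iota_{\pm 1}$ (together with the identity $B_v = \varepsilon\,T_{\iota_\varepsilon(v)}$ and the degenerate index $v=0$ handled separately). The paper organizes the admissibility bookkeeping by projecting the clusters of $\CalS^{(\alpha)}$ under $\pi$, whereas you build an explicit injection from $v^*$ into $(\iota_1(v))^*\cup(\iota_{-1}(v))^*$; these are equivalent phrasings of the same counting argument.
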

\begin{proof}
In Lemma \ref{lem:AlphaShearletCoveringIsAlmostStructured}, we showed
that the (connected) $\alpha$-shearlet covering $\CalS^{\left(\alpha\right)}$
is almost structured. Thus, for the proof of the present lemma, we
will frequently refer to the proof of Lemma \ref{lem:AlphaShearletCoveringIsAlmostStructured}. 

First of all, recall from the proof of Lemma \ref{lem:AlphaShearletCoveringIsAlmostStructured}
the notation $P_{(n,m,\varepsilon,\delta)}'=U_{(-3/4,3/4)}^{(1/2,5/2)}$
for arbitrary $\left(n,m,\varepsilon,\delta\right)\in I_{0}$. Then,
for $v=\left(n,m,\delta\right)\in V_{0}$ let us define $R_{(n,m,\delta)}':=P_{(n,m,1,\delta)}'\cup\left(-P_{(n,m,1,\delta)}'\right)$.
Furthermore, set $R_{0}':=P_{0}'$, again with $P_{0}'=\left(-\frac{3}{4},\frac{3}{4}\right)^{2}$
as in the proof of Lemma \ref{lem:AlphaShearletCoveringIsAlmostStructured}.
Then it is not hard to verify $\overline{R_{v}'}\subset W_{v}'$ for
all $v\in V$.

Furthermore, in the proof of Lemma \ref{lem:AlphaShearletCoveringIsAlmostStructured},
we showed $\bigcup_{i\in I}T_{i}P_{i}'=\R^{2}$. But this implies
\begin{align*}
\bigcup_{v\in V}\left(B_{v}R_{v}'+b_{v}\right) & =R_{0}'\cup\bigcup_{\left(n,m,\delta\right)\in V_{0}}B_{\left(n,m,\delta\right)}R_{\left(n,m,\delta\right)}'\\
 & =P_{0}'\cup\bigcup_{\left(n,m,\delta\right)\in V_{0}}\left[B_{\left(n,m,\delta\right)}P_{\left(n,m,1,\delta\right)}'\cup-B_{\left(n,m,\delta\right)}P_{\left(n,m,1,\delta\right)}'\right]\\
\left({\scriptstyle \text{since }P_{\left(n,m,1,\delta\right)}'=P_{\left(n,m,-1,\delta\right)}'}\right) & =P_{0}'\cup\bigcup_{\left(n,m,\delta\right)\in V_{0}}\left[T_{\left(n,m,1,\delta\right)}P_{\left(n,m,1,\delta\right)}'\cup T_{\left(n,m,-1,\delta\right)}P_{\left(n,m,-1,\delta\right)}'\right]\\
 & =\bigcup_{i\in I}\left(T_{i}P_{i}'+b_{i}\right)=\R^{2}.
\end{align*}

Next, if $W_{\left(n,m,\delta\right)}^{\left(\alpha\right)}\cap W_{\left(k,\ell,\gamma\right)}^{\left(\alpha\right)}\neq\emptyset$,
then equation \eqref{eq:UnconnectedCoveringAsUnionOfConnectedCovering}
yields certain $\varepsilon,\beta\in\left\{ \pm1\right\} $ such that
$S_{\left(n,m,\varepsilon,\delta\right)}^{\left(\alpha\right)}\cap S_{\left(k,\ell,\beta,\gamma\right)}^{(\alpha)}\neq\emptyset$.
But this implies $\left(k,\ell,\gamma\right)=\pi\left(\left(k,\ell,\beta,\gamma\right)\right)$,
where $\left(k,\ell,\beta,\gamma\right)\in I_{0}\cap\left(n,m,\varepsilon,\delta\right)^{\ast}$
and where the index cluster is formed with respect to the covering
$\CalS^{\left(\alpha\right)}$. Consequently, we have shown
\begin{equation}
\left(n,m,\delta\right)^{\ast}\subset\left\{ 0\right\} \cup\bigcup_{\varepsilon\in\left\{ \pm1\right\} }\pi\left(I_{0}\cap\left(n,m,\varepsilon,\delta\right)^{\ast}\right).\label{eq:UnconnectedShearletCoveringClusterInclusion}
\end{equation}
But since $\CalS^{\left(\alpha\right)}$ is admissible, the constant
$N:=\sup_{i\in I}\left|i^{\ast}\right|$ is finite. But by what we
just showed, we have $\left|\left(n,m,\delta\right)^{\ast}\right|\leq1+2N$
for all $\left(n,m,\delta\right)\in V_{0}$. Finally, using a very
similar argument one can show 
\[
0^{\ast_{\CalS_{u}^{\left(\alpha\right)}}}\subset\left\{ 0\right\} \cup\pi\left(I_{0}\cap0^{\ast_{\CalS^{\left(\alpha\right)}}}\right),
\]
where the index-cluster is taken with respect to $\CalS_{u}^{\left(\alpha\right)}$
on the left-hand side and with respect to $\CalS^{\left(\alpha\right)}$
on the right-hand side. Thus, $\left|0^{\ast_{\CalS_{u}^{\left(\alpha\right)}}}\right|\leq1+N$,
so that $\sup_{v\in V}\left|v^{\ast}\right|\leq1+2N<\infty$. All
in all, we have thus shown that $\CalS_{u}^{\left(\alpha\right)}$
is an admissible covering of $\R^{2}$.

It remains to verify $\sup_{v\in V}\sup_{r\in v^{\ast}}\left\Vert B_{v}^{-1}B_{r}\right\Vert <\infty$.
To this end, recall that $C:=\sup_{i\in I}\sup_{j\in i^{\ast}}\left\Vert T_{i}^{-1}T_{j}\right\Vert $
is finite, since $\CalS^{\left(\alpha\right)}$ is an almost structured
covering. Now, let $v\in V$ and $r\in v^{\ast}$ be arbitrary. We
distinguish several cases:

\textbf{Case 1}: We have $v=\left(n,m,\delta\right)\in V_{0}$ and
$r=\left(k,\ell,\gamma\right)\in V_{0}$. As above, there are thus
certain $\varepsilon,\beta\in\left\{ \pm1\right\} $ such that $\left(k,\ell,\beta,\gamma\right)\in\left(n,m,\varepsilon,\delta\right)^{\ast}$.
Hence,
\[
\left\Vert B_{v}^{-1}B_{r}\right\Vert =\left\Vert \left(\varepsilon\cdot T_{n,m,\varepsilon,\delta}\right)^{-1}\cdot\beta\cdot T_{k,\ell,\beta,\gamma}\right\Vert =\left\Vert \left(T_{n,m,\varepsilon,\delta}\right)^{-1}\cdot T_{k,\ell,\beta,\gamma}\right\Vert \leq C.
\]

\textbf{Case 2}: We have $v=0$ and $r=\left(k,\ell,\gamma\right)\in V_{0}$.
There is then some $\beta\in\left\{ \pm1\right\} $ satisfying $\left(k,\ell,\beta,\gamma\right)\in0^{\ast}$,
where the index-cluster is taken with respect to $\CalS^{\left(\alpha\right)}$.
Hence, we get again that
\[
\left\Vert B_{v}^{-1}B_{r}\right\Vert =\left\Vert T_{0}^{-1}\cdot\beta\cdot T_{k,\ell,\beta,\gamma}\right\Vert =\left\Vert T_{0}^{-1}\cdot T_{k,\ell,\beta,\gamma}\right\Vert \leq C.
\]

\textbf{Case 3}: We have $v=\left(n,m,\delta\right)\in V_{0}$ and
$r=0$. Hence, $0\in\left(n,m,\varepsilon,\delta\right)^{\ast}$ for
some $\varepsilon\in\left\{ \pm1\right\} $, so that
\[
\left\Vert B_{v}^{-1}B_{r}\right\Vert =\left\Vert \left(\varepsilon\cdot T_{n,m,\varepsilon,\delta}\right)^{-1}\cdot T_{0}\right\Vert =\left\Vert T_{n,m,\varepsilon,\delta}^{-1}\cdot T_{0}\right\Vert \leq C.
\]

\textbf{Case 4}: We have $v=r=0$. In this case, $\left\Vert B_{v}^{-1}B_{r}\right\Vert =1\leq C$.

Hence, we have verified $\sup_{v\in V}\sup_{r\in v^{\ast}}\left\Vert B_{v}^{-1}B_{r}\right\Vert <\infty$.
Since the sets $\left\{ \smash{W_{v}'}\with v\in\smash{V}\right\} $
and $\left\{ \smash{R_{v}'}\with v\in\smash{V}\right\} $ are finite
families of bounded, open sets (in fact, each of these families only
has two elements), we have shown that $\CalS_{u}^{\left(\alpha\right)}$
is an almost structured covering of $\R^{2}$.
\end{proof}
Before we can define the decomposition spaces associated to the unconnected
$\alpha$-shearlet covering $\CalS_{u}^{\left(\alpha\right)}$, we
need to verify that the weights that we want to use are $\CalS_{u}^{\left(\alpha\right)}$-moderate.
\begin{lem}
\label{lem:WeightUnconnectedModerate}Let $u=\left(u_{v}\right)_{v\in V}$
as in Definition \ref{def:UnconnectedAlphaShearletCovering}. Then
$u^{s}=\left(u_{v}^{s}\right)_{v\in V}$ is $\CalS_{u}^{\left(\alpha\right)}$-moderate
with $C_{\CalS_{u}^{\left(\alpha\right)},u^{s}}\leq39^{\left|s\right|}$.
\end{lem}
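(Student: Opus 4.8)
The plan is to transfer the moderateness estimate for $w^{s}$ on the connected covering $\CalS^{\left(\alpha\right)}$ (Lemma \ref{lem:AlphaShearletWeightIsModerate}) to the unconnected covering by means of the decomposition $W_{v}^{\left(\alpha\right)}=\bigcup_{\varepsilon\in\left\{\pm1\right\}}S_{\iota_{\varepsilon}\left(v\right)}^{\left(\alpha\right)}$ from equation \eqref{eq:UnconnectedCoveringAsUnionOfConnectedCovering}. First I would record the elementary observation that the unconnected weight is nothing but the connected weight pulled back along the injections: since $u_{v}$ and $w_{i}$ depend only on the scale index $n$, one has $u_{v}=w_{\iota_{\varepsilon}\left(v\right)}$ for every $v\in V^{\left(\alpha\right)}$ and every $\varepsilon\in\left\{\pm1\right\}$ (recall $\iota_{\varepsilon}\left(0\right)=0$ and $\iota_{\varepsilon}\left(n,m,\delta\right)=\left(n,m,\varepsilon,\delta\right)$), hence also $u_{v}^{s}=w_{\iota_{\varepsilon}\left(v\right)}^{s}$.

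Next, let $v,r\in V^{\left(\alpha\right)}$ with $r\in v^{\ast}$, i.e.\ $W_{v}^{\left(\alpha\right)}\cap W_{r}^{\left(\alpha\right)}\neq\emptyset$, where the index cluster is taken with respect to $\CalS_{u}^{\left(\alpha\right)}$. By equation \eqref{eq:UnconnectedCoveringAsUnionOfConnectedCovering} there are $\varepsilon,\beta\in\left\{\pm1\right\}$ with $S_{\iota_{\varepsilon}\left(v\right)}^{\left(\alpha\right)}\cap S_{\iota_{\beta}\left(r\right)}^{\left(\alpha\right)}\neq\emptyset$; in particular $\iota_{\beta}\left(r\right)$ lies in the index cluster of $\iota_{\varepsilon}\left(v\right)$ with respect to $\CalS^{\left(\alpha\right)}$. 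Applying Lemma \ref{lem:AlphaShearletWeightIsModerate} to this pair of indices and using the identity from the first step yields
\[
u_{r}^{s}=w_{\iota_{\beta}\left(r\right)}^{s}\leq C_{\CalS^{\left(\alpha\right)},w^{s}}\cdot w_{\iota_{\varepsilon}\left(v\right)}^{s}\leq39^{\left|s\right|}\cdot u_{v}^{s}.
\]
Since $v,r$ were arbitrary with $r\in v^{\ast}$, the defining inequality \eqref{eq:ModerateWeightDefinition} of a $\CalS_{u}^{\left(\alpha\right)}$-moderate weight holds with $C_{\CalS_{u}^{\left(\alpha\right)},u^{s}}\leq39^{\left|s\right|}$, as claimed. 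The only points that need (minimal) care are the degenerate cases $v=0$ or $r=0$; these are covered since \eqref{eq:UnconnectedCoveringAsUnionOfConnectedCovering} remains valid for $v=0$ and $\iota_{\varepsilon}\left(0\right)=0$.

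I do not expect any genuine obstacle here: the statement is a routine bookkeeping consequence of the already-established fact that $\CalS_{u}^{\left(\alpha\right)}$ is obtained by merging opposing pieces of $\CalS^{\left(\alpha\right)}$ (exactly the mechanism exploited in the proof of Lemma \ref{lem:UnconnectedAlphaShearletCoveringIsAlmostStructured}). An equivalent, slightly more self-contained route would be to re-derive directly the two-sided estimate $\tfrac{1}{3}u_{v}\leq1+\left|\xi\right|\leq13\,u_{v}$ for all $\xi\in W_{v}^{\left(\alpha\right)}$ — which follows from Lemma \ref{lem:AlphaShearletCoveringAuxiliary} applied to each of the two cones $S_{\iota_{\pm1}\left(v\right)}^{\left(\alpha\right)}$ composing $W_{v}^{\left(\alpha\right)}$ (and from $W_{0}^{\left(\alpha\right)}=\left(-1,1\right)^{2}$ in the low-frequency case) — and then repeat verbatim the short quotient computation from the proof of Lemma \ref{lem:AlphaShearletWeightIsModerate}; this also makes transparent why the constant is again $39^{\left|s\right|}$.
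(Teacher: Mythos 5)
Your proof is correct and follows essentially the same route as the paper: both reduce to the connected covering via the identity $W_{v}^{\left(\alpha\right)}=\bigcup_{\varepsilon\in\left\{ \pm1\right\} }S_{\iota_{\varepsilon}\left(v\right)}^{\left(\alpha\right)}$, note that $u_{v}=w_{\iota_{\varepsilon}\left(v\right)}$, and then invoke Lemma \ref{lem:AlphaShearletWeightIsModerate}. The alternative self-contained route you sketch at the end (re-deriving $\frac{1}{3}u_{v}\leq1+\left|\xi\right|\leq13\,u_{v}$ directly) is also valid but is not what the paper does.
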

\begin{proof}
As seen in equation \eqref{eq:UnconnectedCoveringAsUnionOfConnectedCovering},
we have $W_{v}^{\left(\alpha\right)}=\bigcup_{\varepsilon\in\left\{ \pm1\right\} }S_{\iota_{\varepsilon}\left(v\right)}^{\left(\alpha\right)}$
for arbitrary $v\in V$ (also for $v=0$). Furthermore, it is easy
to see $u_{v}=w_{\iota_{\varepsilon}\left(v\right)}$ for arbitrary
$\varepsilon\in\left\{ \pm1\right\} $ and $v\in V$.

Thus, if $W_{v}^{\left(\alpha\right)}\cap W_{r}^{\left(\alpha\right)}\neq\emptyset$
for certain $v,r\in V$, there are $\varepsilon,\beta\in\left\{ \pm1\right\} $
such that $S_{\iota_{\varepsilon}\left(v\right)}^{\left(\alpha\right)}\cap S_{\iota_{\beta}\left(r\right)}^{\left(\alpha\right)}\neq\emptyset$.
But Lemma \ref{lem:AlphaShearletWeightIsModerate} shows that $w^{s}$
is $\CalS^{\left(\alpha\right)}$-moderate with $C_{\CalS^{\left(\alpha\right)},w^{s}}\leq39^{\left|s\right|}$.
Hence,
\[
u_{v}^{s}/u_{r}^{s}=w_{\iota_{\varepsilon}\left(v\right)}^{s}/w_{\iota_{\beta}\left(r\right)}^{s}\leq39^{\left|s\right|}.\qedhere
\]
\end{proof}
Since we now know that $\CalS_{u}^{\left(\alpha\right)}$ is an almost
structured covering of $\R^{2}$ and since $u^{s}$ is $\CalS_{u}^{\left(\alpha\right)}$-moderate,
we see precisely as in the remark after Definition \ref{def:AlphaShearletSmoothnessSpaces}
that the \emph{unconnected} $\alpha$-shearlet smoothness spaces that
we now define are well-defined Quasi-Banach spaces. We emphasize that
the following definition will only be of transitory relevance, since
we will immediately show that the newly defined \emph{unconnected}
$\alpha$-shearlet smoothness spaces are identical with the previously
defined $\alpha$-shearlet smoothness spaces.
\begin{defn}
\label{def:UnconnectedAlphaShearletSmoothness}For $\alpha\in\left[0,1\right]$,
$p,q\in\left(0,\infty\right]$ and $s\in\R$, we define the \textbf{unconnected
$\alpha$-shearlet smoothness space} $\mathscr{D}_{\alpha,s}^{p,q}\left(\R^{2}\right)$
associated to these parameters as
\[
\mathscr{D}_{\alpha,s}^{p,q}\left(\R^{2}\right):=\DecompSp{\CalS_{u}^{\left(\alpha\right)}}p{\ell_{u^{s}}^{q}}{},
\]
where the covering $\CalS_{u}^{\left(\alpha\right)}$ and the weight
$u^{s}$ are as in Definition \ref{def:UnconnectedAlphaShearletCovering}
and Lemma \ref{lem:WeightUnconnectedModerate}, respectively.
\end{defn}
\begin{lem}
\label{lem:UnconnectedAlphaShearletSmoothnessIsBoring}We have
\[
\mathscr{S}_{\alpha,s}^{p,q}\left(\smash{\R^{2}}\right)=\mathscr{D}_{\alpha,s}^{p,q}\left(\smash{\R^{2}}\right)\qquad\forall\alpha\in\left[0,1\right],\quad p,q\in\left(0,\infty\right]\quad\text{ and }\quad s\in\R,
\]
with equivalent quasi-norms.
\end{lem}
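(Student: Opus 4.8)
The goal is to show that the two families of decomposition spaces coincide, where one is built from the connected $\alpha$-shearlet covering $\CalS^{\left(\alpha\right)}$ and the other from the unconnected covering $\CalS_{u}^{\left(\alpha\right)}$, with the matching weights $w^{s}$ and $u^{s}$. The natural tool here is the general theory of embeddings between decomposition spaces from \cite{DecompositionEmbedding}; but in fact we can do something more elementary, because the two coverings are \emph{almost identical} — the unconnected covering is obtained by merging each pair of opposing sets $S_{\iota_{1}\left(v\right)}^{\left(\alpha\right)}$ and $S_{\iota_{-1}\left(v\right)}^{\left(\alpha\right)}$ into one set $W_{v}^{\left(\alpha\right)}$, cf.\@ equation \eqref{eq:UnconnectedCoveringAsUnionOfConnectedCovering}. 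The first step is therefore to record that the two coverings are \emph{weakly equivalent} in the sense of \cite{DecompositionEmbedding}: each $W_{v}^{\left(\alpha\right)}$ is a finite union ($\leq 2$ sets) of members of $\CalS^{\left(\alpha\right)}$ via $\pi$, and conversely each $S_{i}^{\left(\alpha\right)}$ is contained in $W_{\pi\left(i\right)}^{\left(\alpha\right)}$; moreover the weights are compatible, $u_{v}=w_{\iota_{\varepsilon}\left(v\right)}$ for both $\varepsilon$, so $u_{v}^{s}\asymp w_{i}^{s}$ whenever $\pi\left(i\right)=v$.

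**Key steps.** First I would establish the inclusion $\mathscr{S}_{\alpha,s}^{p,q}\left(\R^{2}\right)\hookrightarrow\mathscr{D}_{\alpha,s}^{p,q}\left(\R^{2}\right)$. Given a regular partition of unity $\Phi=\left(\varphi_{i}\right)_{i\in I}$ subordinate to $\CalS^{\left(\alpha\right)}$, one constructs a partition of unity $\Psi=\left(\psi_{v}\right)_{v\in V}$ subordinate to $\CalS_{u}^{\left(\alpha\right)}$ by setting $\psi_{v}:=\varphi_{\iota_{1}\left(v\right)}+\varphi_{\iota_{-1}\left(v\right)}$ for $v\in V_{0}$ and $\psi_{0}:=\varphi_{0}$ (using that for $i,j\in I_{0}$ with $\pi\left(i\right)=\pi\left(j\right)$, $i\neq j$, the supports $S_{i}^{\left(\alpha\right)},S_{j}^{\left(\alpha\right)}$ may only meet near the origin, which is actually not an issue since we only need $\supp\psi_{v}\subset W_{v}^{\left(\alpha\right)}$, and this holds by \eqref{eq:UnconnectedCoveringAsUnionOfConnectedCovering}). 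One checks the three conditions of Definition \ref{def:RegularPartitionOfUnity}: $\psi_{v}\in\TestFunctionSpace{\R^2}$ with $\supp\psi_{v}\subset W_{v}^{\left(\alpha\right)}$, $\sum_{v}\psi_{v}=\sum_{i}\varphi_{i}\equiv 1$, and the normalized-derivative bound, for which one uses that $B_{v}^{\left(\alpha\right)}=\pm T_{\iota_{\varepsilon}\left(v\right)}^{\left(\alpha\right)}$ so that $\psi_{v}^{\natural}$ is, up to a sign in the argument, the sum $\varphi_{\iota_{1}\left(v\right)}^{\natural}+\varphi_{\iota_{-1}\left(v\right)}^{\natural}\circ\left(-\identity\right)$, whose derivatives are uniformly bounded because the $\varphi_{i}^{\natural}$ are. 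Then for $g\in\DistributionSpace{\R^2}$ one estimates $\left\Vert \Fourier^{-1}\left(\psi_{v}\cdot g\right)\right\Vert _{L^{p}}\leq 2^{\left(1/p-1\right)_{+}}\sum_{\varepsilon}\left\Vert \Fourier^{-1}\left(\varphi_{\iota_{\varepsilon}\left(v\right)}\cdot g\right)\right\Vert _{L^{p}}$ (quasi-triangle inequality with the $p$-dependent constant, since $p$ may be $<1$), combines this with $u_{v}^{s}\asymp w_{\iota_{\varepsilon}\left(v\right)}^{s}$, and applies the elementary fact that $\ell^{q}$-norms are (quasi-)subadditive under grouping finitely many terms; this gives $\left\Vert g\right\Vert _{\mathscr{D}_{\alpha,s}^{p,q}}\lesssim\left\Vert g\right\Vert _{\mathscr{S}_{\alpha,s}^{p,q}}$, and in particular every $g\in\mathscr{S}_{\alpha,s}^{p,q}\left(\R^2\right)$ lies in $\mathscr{D}_{\alpha,s}^{p,q}\left(\R^2\right)$. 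For the reverse inclusion one runs the symmetric argument: starting from a regular partition of unity $\left(\psi_{v}\right)_{v\in V}$ for $\CalS_{u}^{\left(\alpha\right)}$, build one for $\CalS^{\left(\alpha\right)}$ by multiplying $\psi_{\pi\left(i\right)}$ with a fixed smooth cutoff $\chi_{\varepsilon}$ that equals $1$ on the $\varepsilon$-cone and vanishes on the $\left(-\varepsilon\right)$-cone (such a cutoff exists since the two cones, restricted to the annulus $\tfrac13\cdot2^{n}<\left|\xi\right|<12\cdot 2^{n}$, are separated, by Lemma \ref{lem:AlphaShearletCoveringAuxiliary}), set $\varphi_{i}:=\left(\psi_{\pi\left(i\right)}\cdot\chi_{\varepsilon}\right)$ renormalized, and again verify Definition \ref{def:RegularPartitionOfUnity} and push the $L^{p}$-norms through. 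Because both decomposition spaces are independent of the chosen regular partition of unity (by the remark following Definition \ref{def:FourierSideDecompositionSpaces}, which rests on \cite[Corollary 3.18]{DecompositionEmbedding}), these two norm estimates with the two \emph{specially adapted} partitions of unity suffice to conclude norm equivalence.

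**Main obstacle.** The only genuinely delicate point is the construction of the smooth cutoffs $\chi_{\varepsilon}$ separating the two opposing cones in a \emph{scale-uniform} way, i.e.\@ such that the normalized versions $\chi_{\varepsilon}\circ T_{i}^{\left(\alpha\right)}$ have derivatives bounded independently of $i\in I_{0}$; one wants $\chi_{\varepsilon}$ to depend only on the sign of, say, the first (or, in the vertical cones, second) coordinate, suitably mollified, and one must check that multiplying by it does not spoil the support condition $\supp\varphi_{i}\subset S_{i}^{\left(\alpha\right)}$ — this is where Lemma \ref{lem:AlphaShearletCoveringAuxiliary}, parts (2a)/(2b), is used to see that on $S_{\iota_{\varepsilon}\left(v\right)}^{\left(\alpha\right)}$ the relevant coordinate is bounded away from $0$ with the right sign. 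Everything else — the triangle/quasi-triangle manipulations, the weight comparisons, the invocation of partition-of-unity independence — is routine. I would present the forward inclusion in full detail and then say that the reverse inclusion follows "by an analogous argument, using the cutoffs $\chi_{\pm}$ constructed above", since the two directions are genuinely symmetric once the cutoffs are in hand.
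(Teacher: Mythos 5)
Your approach is genuinely different from the paper's. The paper simply verifies the hypotheses of the general embedding criterion \cite[Lemma~6.11, part~(2)]{DecompositionEmbedding} (weak equivalence of the two coverings, weight compatibility, and---for $p<1$---almost subordinacy of $\CalS^{\left(\alpha\right)}$ to $\CalS_{u}^{\left(\alpha\right)}$ together with the determinant comparison $\left|\det\left(T_{j}^{-1}B_{v}\right)\right|\asymp1$), and lets that black box do the work. You instead construct adapted partitions of unity by hand, which is more elementary and self-contained; the price you pay is having to reprove, in this special case, what Lemma~6.11 packages for you.

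Your forward direction is clean: setting $\psi_{v}=\varphi_{\iota_{1}\left(v\right)}+\varphi_{\iota_{-1}\left(v\right)}$ gives a regular partition of unity for $\CalS_{u}^{\left(\alpha\right)}$ (the normalized-derivative bound follows exactly as you say, from $\psi_{v}^{\natural}\left(\xi\right)=\varphi_{\iota_{1}\left(v\right)}^{\natural}\left(\xi\right)+\varphi_{\iota_{-1}\left(v\right)}^{\natural}\left(-\xi\right)$), and since $\Fourier^{-1}$ commutes with the finite sum, the $L^{p}$ comparison is just the quasi-triangle inequality plus the exact weight identity $u_{v}^{s}=w_{\iota_{\varepsilon}\left(v\right)}^{s}$.

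The reverse direction, however, is \emph{not} symmetric, and your sketch glosses over the one nontrivial point. There $\varphi_{i}=\psi_{\pi\left(i\right)}\cdot\tau_{i}$ is a \emph{product}, not a sum, and $\Fourier^{-1}$ does not commute with products. To obtain
\[
\left\Vert \Fourier^{-1}\left(\varphi_{i}\cdot\widehat{g}\right)\right\Vert _{L^{p}}\lesssim\left\Vert \Fourier^{-1}\left(\psi_{\pi\left(i\right)}\cdot\widehat{g}\right)\right\Vert _{L^{p}}
\]
uniformly in $i$, you need a Fourier-multiplier (Young- / Nikolskii-type convolution) estimate: one first replaces $\tau_{i}$ by a compactly supported cutoff $\tau_{i}'$ agreeing with $\tau_{i}$ on $\supp\psi_{\pi\left(i\right)}$ whose normalized version $\tau_{i}'\circ T_{i}$ is a \emph{fixed} function with uniformly bounded derivatives, then bounds $\left\Vert \Fourier^{-1}\tau_{i}'\right\Vert _{L^{\min\left\{ 1,p\right\} }}$ by a scale-invariant constant, and applies the weighted convolution inequality used to prove partition-of-unity independence in the first place (cf.\@ the $L^{p}$-BAPU machinery in \cite[Section~3]{DecompositionEmbedding}). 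This is precisely the content that the paper delegates to Lemma~6.11. Your phrase ``push the $L^{p}$-norms through'' elides this step; it should be made explicit, at minimum by a citation. A related small point: the cutoff cannot literally be a fixed function of $\xi$---you need $\tau_{i}\left(\xi\right)=\eta\left(\varepsilon\xi_{1}/2^{n}\right)$ (or $\xi_{2}$ for $\delta=1$) so that $\tau_{i}\circ T_{i}=\eta\left(\mybullet_{1}\right)$ is scale-independent; if instead you take a genuinely fixed $\chi_{\varepsilon}$, the derivatives of $\chi_{\varepsilon}\circ T_{i}$ alone blow up like $2^{nk}$, and it is only the product with $\psi_{\pi\left(i\right)}^{\natural}$ (whose support avoids a fixed neighborhood of the separating hyperplane) that rescues the uniform bound---an argument you hint at but do not spell out.
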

\begin{proof}
We will derive the claim from \cite[Lemma 6.11, part (2)]{DecompositionEmbedding},
with the choice $\CalQ:=\CalS_{u}^{\left(\alpha\right)}$ and $\CalP:=\CalS^{\left(\alpha\right)}$,
recalling that $\mathscr{S}_{\alpha,s}^{p,q}\left(\R^{2}\right)=\DecompSp{\CalS^{\left(\alpha\right)}}p{\ell_{w^{s}}^{q}}{}=\Fourier^{-1}\left[\FourierDecompSp{\CalS^{\left(\alpha\right)}}p{\ell_{w^{s}}^{q}}{}\right]$
and likewise $\mathscr{D}_{\alpha,s}^{p,q}\left(\R^{2}\right)=\Fourier^{-1}\left[\FourierDecompSp{\CalS_{u}^{\left(\alpha\right)}}p{\ell_{u^{s}}^{q}}{}\right]$.

To this end, we first have to verify that the coverings $\CalS^{\left(\alpha\right)}$
and $\CalS_{u}^{\left(\alpha\right)}$ are \textbf{weakly equivalent}.
This means that
\[
\sup_{i\in I}\left|\left\{ v\in V\with\smash{W_{v}^{\left(\alpha\right)}}\cap S_{i}^{\left(\alpha\right)}\neq\emptyset\right\} \right|<\infty\qquad\text{ and }\qquad\sup_{v\in V}\left|\left\{ i\in I\with S_{i}^{\left(\alpha\right)}\cap W_{v}^{\left(\alpha\right)}\neq\emptyset\right\} \right|<\infty.
\]
We begin with the first claim and thus let $i\in I$ be arbitrary.
It is easy to see $S_{i}^{\left(\alpha\right)}\subset W_{\pi\left(i\right)}^{\left(\alpha\right)}$.
Consequently, if $v\in V$ satisfies $W_{v}^{\left(\alpha\right)}\cap S_{i}^{\left(\alpha\right)}\neq\emptyset$,
then $\emptyset\subsetneq W_{v}^{\left(\alpha\right)}\cap S_{i}^{\left(\alpha\right)}\subset W_{v}^{\left(\alpha\right)}\cap W_{\pi\left(i\right)}^{\left(\alpha\right)}$
and thus $v\in\left[\pi\left(i\right)\right]^{\ast}$, where the index-cluster
is formed with respect to $\CalS_{u}^{\left(\alpha\right)}$. On the
one hand, this implies 
\begin{equation}
w_{i}^{t}=u_{\pi\left(i\right)}^{t}\:\asymp_{t}\:u_{v}^{t}\qquad\text{ if }S_{i}^{\left(\alpha\right)}\cap W_{v}^{\left(\alpha\right)}\neq\emptyset,\text{ for arbitrary }t\in\R,\label{eq:ConnectedUnconnectedCoveringWeightEquivalence}
\end{equation}
since $u^{t}$ is $\CalS_{u}^{\left(\alpha\right)}$-moderate by Lemma
\ref{lem:WeightUnconnectedModerate}. On the other hand, we get
\[
\sup_{i\in I}\left|\left\{ v\in V\with\smash{W_{v}^{\left(\alpha\right)}}\cap S_{i}^{\left(\alpha\right)}\neq\emptyset\right\} \right|\leq\sup_{i\in I}\left|\left[\pi\left(i\right)\right]^{\ast}\right|\leq\sup_{v\in V}\left|v^{\ast}\right|<\infty,
\]
since we know that $\CalS_{u}^{\left(\alpha\right)}$ is admissible
(cf.\@ Lemma \ref{lem:UnconnectedAlphaShearletCoveringIsAlmostStructured}).

Now, let us verify the second claim. To this end, let $v\in V$ be
arbitrary. For $i\in I$ with $S_{i}^{\left(\alpha\right)}\cap W_{v}^{\left(\alpha\right)}\neq\emptyset$,
equation \eqref{eq:UnconnectedCoveringAsUnionOfConnectedCovering}
shows $\emptyset\neq\bigcup_{\varepsilon\in\left\{ \pm1\right\} }\left(S_{i}^{\left(\alpha\right)}\cap S_{\iota_{\varepsilon}\left(v\right)}^{\left(\alpha\right)}\right)$
and thus $i\in\bigcup_{\varepsilon\in\left\{ \pm1\right\} }\left[\iota_{\varepsilon}\left(v\right)\right]^{\ast}$,
where the index-cluster is formed with respect to $\CalS^{\left(\alpha\right)}$.
As above, this yields
\[
\sup_{v\in V}\left|\left\{ i\in I\with S_{i}^{\left(\alpha\right)}\cap W_{v}^{\left(\alpha\right)}\neq\emptyset\right\} \right|\leq\sup_{v\in V}\left|\left[\iota_{1}\left(v\right)\right]^{\ast}\right|+\left|\left[\iota_{-1}\left(v\right)\right]^{\ast}\right|\leq2\cdot\sup_{i\in I}\left|i^{\ast}\right|<\infty,
\]
since $\CalS^{\left(\alpha\right)}$ is admissible (cf.\@ Lemma \ref{lem:AlphaShearletCoveringIsAlmostStructured}).

\medskip{}

We have thus verified the two main assumptions of \cite[Lemma 6.11]{DecompositionEmbedding},
namely that $\CalQ,\CalP$ are weakly equivalent and that $u_{v}^{s}\asymp w_{i}^{s}$
if $W_{v}^{\left(\alpha\right)}\cap S_{i}^{\left(\alpha\right)}\neq\emptyset$,
thanks to equation \eqref{eq:ConnectedUnconnectedCoveringWeightEquivalence}.
But since we also want to get the claim for $p\in\left(0,1\right)$,
we have to verify the additional condition (2) from \cite[Lemma 6.11]{DecompositionEmbedding},
i.e., that $\CalP=\CalS^{\left(\alpha\right)}=\left(\smash{S_{j}^{\left(\alpha\right)}}\right)_{j\in I}=\left(T_{j}Q_{j}'\right)_{j\in I}$
is almost subordinate to $\CalQ=\CalS_{u}^{\left(\alpha\right)}=\left(W_{v}\right)_{v\in V}=\left(B_{v}W_{v}'\right)_{v\in V}$
and that $\left|\det\left(T_{j}^{-1}B_{v}\right)\right|\lesssim1$
if $W_{v}\cap S_{j}^{\left(\alpha\right)}\neq\emptyset$. But we saw
in equation \eqref{eq:ConnectedUnconnectedCoveringWeightEquivalence}
that if $W_{v}^{\left(\alpha\right)}\cap S_{j}^{\left(\alpha\right)}\neq\emptyset$,
then
\[
\left|\det\left(T_{j}^{-1}B_{v}\right)\right|=\left(w_{j}^{1+\alpha}\right)^{-1}\cdot u_{v}^{1+\alpha}\:\asymp_{\alpha}\:1.
\]
Furthermore, $S_{j}^{\left(\alpha\right)}\subset W_{\pi\left(j\right)}^{\left(\alpha\right)}$
for all $j\in I$, so that $\CalP=\CalS^{\left(\alpha\right)}$ is
subordinate (and thus also almost subordinate, cf.\@ \cite[Definition 2.10]{DecompositionEmbedding})
to $\CalQ=\CalS_{u}^{\left(\alpha\right)}$, as required. The claim
is now an immediate consequence of \cite[Lemma 6.11]{DecompositionEmbedding}.
\end{proof}
In order to allow for a more succinct formulation of our results about
Banach frames and atomic decompositions in the setting of the \emph{unconnected}
$\alpha$-shearlet covering, we now introduce the notion of \textbf{cone-adapted
$\alpha$-shearlet systems}. As we will see in Section \ref{sec:CartoonLikeFunctionsAreBoundedInAlphaShearletSmoothness},
these systems are different, but intimately connected to the \textbf{cone-adapted
$\beta$-shearlet systems} (with $\beta\in\left(1,\infty\right)$)
as introduced in \cite[Definition 3.10]{AlphaMolecules}. There are
three main reasons why we think that the new definition is preferable
to the old one:
\begin{enumerate}
\item With the new definition, a family $\left(L_{\delta k}\,\varphi\right)_{k\in\Z^{2}}\cup\left(\psi_{j,\ell,\delta,k}\right)_{j,\ell,\delta,k}$
of $\alpha$-shearlets has the property that the shearlets $\psi_{j,\ell,\delta,k}$
of scale $j$ have essential frequency support in the dyadic corona
$\left\{ \xi\in\R^{2}\with2^{j-c}<\left|\xi\right|<2^{j+c}\right\} $
for suitable $c>0$. In contrast, for $\beta$-shearlets, the shearlets
of scale $j$ have essential frequency support in $\left\{ \xi\in\R^{2}\with2^{\frac{\beta}{2}\left(j-c\right)}<\left|\xi\right|<2^{\frac{\beta}{2}\left(j+c\right)}\right\} $,
cf.\@ Lemma \ref{lem:ReciprocalShearletCoveringIsAlmostStructuredGeneralized}.
\item With the new definition, a family of cone-adapted $\alpha$-shearlets
is also a family of $\alpha$-molecules, if the generators are chosen
suitably. In contrast, for $\beta$-shearlets, one has the slightly
inconvenient fact that a family of cone-adapted $\beta$-shearlets
is a family of $\beta^{-1}$-molecules, cf.\@ \cite[Proposition 3.11]{AlphaMolecules}.
\item The new definition includes the two boundary values $\alpha\in\left\{ 0,1\right\} $
which correspond to ridgelet-like systems and to wavelet-like systems,
respectively. In contrast, for $\beta$-shearlets, the boundary values
$\beta\in\left\{ 1,\infty\right\} $ are excluded from the definition.
\end{enumerate}
We remark that a very similar definition to the one given here is
already introduced in \cite[Definition 5.1]{MultivariateAlphaMolecules},
even generally in $\R^{\dimension}$ for $\dimension\geq2$.
\begin{defn}
\label{def:AlphaShearletSystem}Let $\alpha\in\left[0,1\right]$.
For generators $\varphi,\psi\in L^{1}\left(\R^{2}\right)+L^{2}\left(\R^{2}\right)$
and a given sampling density $\delta>0$, we define the \textbf{cone-adapted
$\alpha$-shearlet system} with sampling density $\delta$ generated
by $\varphi,\psi$ as
\[
{\rm SH}_{\alpha}\left(\varphi,\psi;\,\delta\right):=\left(\gamma^{\left[v,k\right]}\right)_{v\in V,\,k\in\Z^{2}}:=\left(L_{\delta\cdot B_{v}^{-T}k}\:\gamma^{\left[v\right]}\right)_{v\in V,\,k\in\Z^{2}}\quad\text{ with }\quad\gamma^{\left[v\right]}:=\begin{cases}
\left|\det\smash{B_{v}}\right|^{1/2}\cdot\left(\psi\circ B_{v}^{T}\right), & \text{if }v\in V_{0},\\
\varphi, & \text{if }v=0,
\end{cases}
\]
where $V,V_{0}$ and $B_{v}$ are as in Definition \ref{def:UnconnectedAlphaShearletCovering}.
Note that the notation $\gamma^{\left[v,k\right]}$ suppresses the
sampling density $\delta>0$. If we want to emphasize this sampling
density, we write $\gamma^{\left[v,k,\delta\right]}$ instead of $\gamma^{\left[v,k\right]}$.
\end{defn}
\begin{rem}
\label{rem:AlphaShearletsYieldUsualShearlets}In case of $\alpha=\frac{1}{2}$,
the preceding definition yields special cone-adapted shearlet systems:
As defined in \cite[Definition 1.2]{CompactlySupportedShearletsAreOptimallySparse},
the cone-adapted shearlet system ${\rm SH}\left(\varphi,\psi,\theta;\,\delta\right)$
with sampling density $\delta>0$ generated by $\varphi,\psi,\theta\in L^{2}\left(\R^{2}\right)$
is ${\rm SH}\left(\varphi,\psi,\theta;\,\delta\right)=\Phi\left(\varphi;\,\delta\right)\cup\Psi\left(\psi;\,\delta\right)\cup\Theta\left(\theta;\,\delta\right)$,
where
\begin{align*}
\Phi\left(\varphi;\,\delta\right) & :=\left\{ \varphi_{k}:=\varphi\left(\mybullet-\delta k\right)\with k\in\Z^{2}\right\} ,\\
\Psi\left(\psi;\,\delta\right) & :=\left\{ \psi_{j,\ell,k}:=2^{\frac{3}{4}j}\cdot\psi\left(S_{\ell}A_{2^{j}}\mybullet-\delta k\right)\with j\in\N_{0},\ell\in\Z\text{ with }\left|\ell\right|\leq\left\lceil \smash{2^{j/2}}\right\rceil \text{ and }k\in\Z^{2}\right\} ,\\
\Theta\left(\theta;\,\delta\right) & :=\left\{ \theta_{j,\ell,k}:=2^{\frac{3}{4}j}\cdot\theta\left(S_{\ell}^{T}\widetilde{A}_{2^{j}}\mybullet-\delta k\right)\with j\in\N_{0},\ell\in\Z\text{ with }\left|\ell\right|\leq\left\lceil \smash{2^{j/2}}\right\rceil \text{ and }k\in\Z^{2}\right\} ,
\end{align*}
with $S_{k}=\left(\begin{smallmatrix}1 & k\\
0 & 1
\end{smallmatrix}\right)$, $A_{2^{j}}={\rm diag}\left(2^{j},\,2^{j/2}\right)$ and $\widetilde{A}_{2^{j}}={\rm diag}\left(2^{j/2},\,2^{j}\right)$.

Now, the most common choice for $\theta$ is $\theta=\psi\circ R$
for $R=\left(\begin{smallmatrix}0 & 1\\
1 & 0
\end{smallmatrix}\right)$. With this choice, we observe in the notation of Definitions \ref{def:AlphaShearletSystem}
and \ref{def:UnconnectedAlphaShearletCovering} that
\[
\gamma^{\left[0,k\right]}=L_{\delta\cdot B_{0}^{-T}k}\:\gamma^{\left[0\right]}=L_{\delta k}\:\varphi=\varphi\left(\mybullet-\delta k\right)=\varphi_{k}\qquad\forall k\in\Z^{2}.
\]
Furthermore, we note because of $\alpha=\frac{1}{2}$ that
\[
B_{j,\ell,0}^{T}=\left[\left(\begin{matrix}2^{j} & 0\\
0 & 2^{j/2}
\end{matrix}\right)\cdot\left(\begin{matrix}1 & 0\\
\ell & 1
\end{matrix}\right)\right]^{T}=S_{\ell}\cdot A_{2^{j}}\:,
\]
with $\left|\det\smash{B_{j,\ell,0}}\right|=2^{\frac{3}{2}j}$, so
that
\[
\gamma^{\left[\left(j,\ell,0\right),k\right]}=L_{\delta\cdot\left[S_{\ell}A_{2^{j}}\right]^{-1}k}\:\gamma^{\left[\left(j,\ell,0\right)\right]}=2^{\frac{3}{4}j}\cdot\psi\left(S_{\ell}\cdot A_{2^{j}}\mybullet-\delta k\right)=\psi_{j,\ell,k}\qquad\forall\left(j,\ell,0\right)\in V_{0}\text{ and }k\in\Z^{2}.
\]
Finally, we observe $\theta\left(S_{\ell}^{T}\widetilde{A}_{2^{j}}\mybullet-\delta k\right)=\psi\left(RS_{\ell}^{T}\widetilde{A}_{2^{j}}\mybullet-\delta Rk\right)$,
as well as
\[
R\cdot S_{\ell}^{T}\cdot\widetilde{A}_{2^{j}}=\left(\begin{matrix}0 & 1\\
1 & 0
\end{matrix}\right)\left(\begin{matrix}1 & 0\\
\ell & 1
\end{matrix}\right)\left(\begin{matrix}2^{j/2} & 0\\
0 & 2^{j}
\end{matrix}\right)=\left(\begin{matrix}2^{j/2}\ell & 2^{j}\\
2^{j/2} & 0
\end{matrix}\right)
\]
and
\[
B_{j,\ell,1}^{T}=\left[\left(\begin{matrix}0 & 1\\
1 & 0
\end{matrix}\right)\left(\begin{matrix}2^{j} & 0\\
0 & 2^{j/2}
\end{matrix}\right)\left(\begin{matrix}1 & 0\\
\ell & 1
\end{matrix}\right)\right]^{T}=\left[\left(\begin{matrix}0 & 2^{j/2}\\
2^{j} & 0
\end{matrix}\right)\left(\begin{matrix}1 & 0\\
\ell & 1
\end{matrix}\right)\right]^{T}=\left(\begin{matrix}2^{j/2}\ell & 2^{j/2}\\
2^{j} & 0
\end{matrix}\right)^{T}=R\cdot S_{\ell}^{T}\cdot\widetilde{A}_{2^{j}}.
\]
Consequently, we also get
\[
\gamma^{\left[\left(j,\ell,1\right),k\right]}=L_{\delta\cdot\left[R\cdot S_{\ell}^{T}\cdot\widetilde{A}_{2^{j}}\right]^{-1}k}\:\gamma^{\left[\left(j,\ell,1\right)\right]}=2^{\frac{3}{4}j}\cdot\psi\left(R\cdot S_{\ell}^{T}\cdot\widetilde{A}_{2^{j}}\mybullet-\delta k\right)=2^{\frac{3}{4}j}\cdot\psi\left(R\cdot S_{\ell}^{T}\cdot\widetilde{A}_{2^{j}}\mybullet-\delta RRk\right)=\theta_{j,\ell,Rk}
\]
for arbitrary $\left(j,\ell,1\right)\in V_{0}$ and $k\in\Z^{2}$.
Since $\Z^{2}\to\Z^{2},k\mapsto Rk$ is bijective, this implies 
\[
{\rm SH}\left(\varphi,\psi,\theta;\,\delta\right)={\rm SH}_{1/2}\left(\varphi,\psi;\,\delta\right)\text{ up to a reordering in the translation variable }k\qquad\text{ if }\text{\ensuremath{\theta}}=\psi\circ R.\qedhere
\]
\end{rem}
We now want to transfer Theorems \ref{thm:NicelySimplifiedAlphaShearletFrameConditions}
and \ref{thm:ReallyNiceShearletAtomicDecompositionConditions} to
the setting of the \emph{unconnected} $\alpha$-shearlet covering.
The link between the connected and the unconnected setting is provided
by the following lemma:
\begin{lem}
\label{lem:MEstimate}With $\varrho$, $\varrho_{0}$ as in equation
\eqref{eq:MotherShearletMainEstimate}, set $\widetilde{\varrho}_{0}:=\varrho_{0}$,
as well as $\widetilde{\varrho}_{v}:=\varrho$ for $v\in V_{0}$.
Moreover, set
\[
\widetilde{M}_{r,v}^{(0)}:=\left(\frac{u_{r}^{s}}{u_{v}^{s}}\right)^{\tau}\left(1\!+\!\left\Vert B_{r}^{-1}B_{v}\right\Vert \right)^{\sigma}\left(\left|\det\smash{B_{v}}\right|^{-1}\cdot\int_{W_{v}^{\left(\alpha\right)}}\widetilde{\varrho}_{r}\left(B_{r}^{-1}\xi\right)\d\xi\right)^{\!\tau}
\]
for $v,r\in V$. Then we have
\[
\widetilde{M}_{r,v}^{\left(0\right)}\leq2^{\tau}\cdot M_{\iota_{1}\left(r\right),\iota_{1}\left(v\right)}^{\left(0\right)}
\]
for all $v,r\in V$, where $M_{\iota_{1}\left(r\right),\iota_{1}\left(v\right)}^{\left(0\right)}$
is as in Lemma \ref{lem:MainShearletLemma}.
\end{lem}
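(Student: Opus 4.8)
The plan is to reduce the estimate for $\widetilde{M}_{r,v}^{(0)}$ directly to the estimate for $M_{\iota_1(r),\iota_1(v)}^{(0)}$ by unfolding the definitions of the unconnected covering in terms of the connected one, using the relations $B_v^{(\alpha)} = \varepsilon\cdot T_{\iota_\varepsilon(v)}^{(\alpha)}$, $u_v = w_{\iota_\varepsilon(v)}$, and equation \eqref{eq:UnconnectedCoveringAsUnionOfConnectedCovering}, namely $W_v^{(\alpha)} = S_{\iota_1(v)}^{(\alpha)} \cup S_{\iota_{-1}(v)}^{(\alpha)}$. First I would handle the three scalar factors. The weight quotient is immediate: $u_r^s/u_v^s = w_{\iota_1(r)}^s / w_{\iota_1(v)}^s$ since $u_v = w_{\iota_1(v)}$ by definition. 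For the norm factor, note $B_r^{-1} B_v = (\pm T_{\iota_1(r)})^{-1}(\pm T_{\iota_1(v)}) = \pm\, T_{\iota_1(r)}^{-1} T_{\iota_1(v)}$, so $\|B_r^{-1}B_v\| = \|T_{\iota_1(r)}^{-1} T_{\iota_1(v)}\|$, and likewise $|\det B_v| = |\det T_{\iota_1(v)}|$. Thus both the $(1+\|\cdot\|)^\sigma$ factor and the normalization $|\det B_v|^{-1}$ transfer verbatim with no loss.

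The only genuine point is the integral factor. Here I would split the domain via \eqref{eq:UnconnectedCoveringAsUnionOfConnectedCovering}:
\[
\int_{W_v^{(\alpha)}} \widetilde\varrho_r\left(B_r^{-1}\xi\right)\,\d\xi
\;\leq\; \sum_{\varepsilon\in\{\pm1\}} \int_{S_{\iota_\varepsilon(v)}^{(\alpha)}} \widetilde\varrho_r\left(B_r^{-1}\xi\right)\,\d\xi .
\]
On each piece, I would substitute $B_r^{-1} = \pm\, T_{\iota_1(r)}^{-1}$, so that $B_r^{-1}\xi = \pm\, T_{\iota_1(r)}^{-1}\xi$. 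The crucial observation is that $\widetilde\varrho_r$ is \emph{even} in the relevant sense: for $r=0$, $\widetilde\varrho_0(\xi) = \varrho_0(\xi) = (1+|\xi|)^{-H}$ depends only on $|\xi|$; for $r\in V_0$, $\widetilde\varrho_r(\xi) = \varrho(\xi) = \min\{|\xi_1|^{M_1}, (1+|\xi_1|)^{-M_2}\}\cdot(1+|\xi_2|)^{-K}$ depends only on $|\xi_1|$ and $|\xi_2|$, hence $\varrho(-\xi) = \varrho(\xi)$. Therefore $\widetilde\varrho_r(B_r^{-1}\xi) = \widetilde\varrho_r(T_{\iota_1(r)}^{-1}\xi)$ regardless of the sign. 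Next, on the piece with $\varepsilon=-1$, I would substitute $\xi = -\eta$ with $\eta\in S_{\iota_1(v)}^{(\alpha)}$ (using $S_{\iota_{-1}(v)}^{(\alpha)} = -S_{\iota_1(v)}^{(\alpha)}$, which follows from $T_{n,m,-1,\delta}^{(\alpha)} = -T_{n,m,1,\delta}^{(\alpha)}$ and $Q$ being symmetric) together with evenness of $\widetilde\varrho_r$ once more, so this piece also equals $\int_{S_{\iota_1(v)}^{(\alpha)}} \widetilde\varrho_r(T_{\iota_1(r)}^{-1}\eta)\,\d\eta$. Combining,
\[
\int_{W_v^{(\alpha)}} \widetilde\varrho_r\left(B_r^{-1}\xi\right)\,\d\xi \;\leq\; 2\cdot \int_{S_{\iota_1(v)}^{(\alpha)}} \varrho_{\iota_1(r)}\left(T_{\iota_1(r)}^{-1}\xi\right)\,\d\xi,
\]
where on the right I use that $\varrho_j = \varrho$ for $j\in I_0$ and $\varrho_0 = \varrho_0$, matching the definitions in Lemma \ref{lem:MainShearletLemma}, and that $r=0 \iff \iota_1(r)=0$. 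Raising to the power $\tau$ (and using $(2x)^\tau \le 2^\tau x^\tau$ for $x\ge 0$, $\tau>0$) and assembling the three factors yields $\widetilde M_{r,v}^{(0)} \le 2^\tau\, M_{\iota_1(r),\iota_1(v)}^{(0)}$, as claimed.

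The main obstacle — really the only thing to be careful about — is bookkeeping the sign conventions: one must check that $S_{\iota_{-1}(v)}^{(\alpha)} = -S_{\iota_1(v)}^{(\alpha)}$ and that $B_r^{-1}$ introduces at most an overall sign, and then invoke the symmetry $\varrho(-\xi)=\varrho(\xi)$ and $\varrho_0(-\xi)=\varrho_0(\xi)$ at exactly the right moments so that the change of variables $\xi\mapsto-\xi$ on the $\varepsilon=-1$ chart costs nothing. The case $v=0$ (or $r=0$) should be noted separately but is trivial since then $\iota_\varepsilon(0)=0$ for both signs and $W_0^{(\alpha)}=S_0^{(\alpha)}=(-1,1)^2$, so the "union" has effectively one piece and the factor $2$ is a harmless overestimate. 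Everything else is a direct substitution into the formula for $M_{j,i}^{(0)}$ from Lemma \ref{lem:MainShearletLemma}.
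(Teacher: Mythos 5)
Your proposal is correct and follows essentially the same route as the paper: transfer the scalar factors via $B_v=T_{\iota_1(v)}$, $u_v=w_{\iota_1(v)}$, split the integral over $W_v^{(\alpha)}=S_{\iota_1(v)}^{(\alpha)}\cup S_{\iota_{-1}(v)}^{(\alpha)}$, exploit the evenness $\varrho(-\xi)=\varrho(\xi)$, $\varrho_0(-\xi)=\varrho_0(\xi)$ to merge the two pieces, and collect the factor $2^\tau$. One small slip in your stated justification: $Q=U_{(-1,1)}^{(3^{-1},3)}$ is \emph{not} symmetric (it lies in $\left\{\xi_1>0\right\}$); the identity $S_{\iota_{-1}(v)}^{(\alpha)}=-S_{\iota_1(v)}^{(\alpha)}$ already follows from $T_{n,m,-1,\delta}^{(\alpha)}=-T_{n,m,1,\delta}^{(\alpha)}$ alone, with no symmetry of $Q$ needed.
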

\begin{proof}
First of all, recall 
\[
W_{v}'=\begin{cases}
\vphantom{\sum_{j}}U_{\left(-1,1\right)}^{\left(3^{-1},3\right)}\cup\left[\vphantom{U^{\left(\gamma\right)}}-\smash{U_{\left(-1,1\right)}^{\left(3^{-1},3\right)}}\right]=Q_{\iota_{1}\left(v\right)}'\cup\left[\vphantom{U^{\left(\gamma\right)}}-\smash{Q_{\iota_{1}\left(v\right)}'}\right], & \text{if }v\in V_{0},\\
\left(-1,1\right)^{2}=\left(-1,1\right)^{2}\cup\left[-\left(-1,1\right)^{2}\right]=Q_{\iota_{1}\left(v\right)}'\cup\left[\vphantom{U^{\left(\gamma\right)}}-\smash{Q_{\iota_{1}\left(v\right)}'}\right], & \text{if }v=0
\end{cases}
\]
and $B_{v}=T_{\iota_{1}\left(v\right)}$, as well as $u_{v}=w_{\iota_{1}\left(v\right)}$
and $\widetilde{\varrho}_{v}=\varrho_{\iota_{1}\left(v\right)}$ for
all $v\in V$. Thus,
\begin{align*}
\widetilde{M}_{r,v}^{\left(0\right)} & =\left(\frac{u_{r}^{s}}{u_{v}^{s}}\right)^{\tau}\cdot\left(1+\left\Vert B_{r}^{-1}B_{v}\right\Vert \right)^{\sigma}\cdot\left(\left|\det\smash{B_{v}}\right|^{-1}\cdot\int_{W_{v}^{\left(\alpha\right)}}\widetilde{\varrho}_{r}\left(B_{r}^{-1}\xi\right)\d\xi\right)^{\tau}\\
\left({\scriptstyle \text{with }\zeta=B_{v}^{-1}\xi}\right) & =\left(\frac{w_{\iota_{1}\left(r\right)}^{s}}{w_{\iota_{1}\left(v\right)}^{s}}\right)^{\tau}\cdot\left(1+\left\Vert T_{\iota_{1}\left(r\right)}^{-1}T_{\iota_{1}\left(v\right)}\right\Vert \right)^{\sigma}\cdot\left(\int_{W_{v}'}\widetilde{\varrho}_{r}\left(B_{r}^{-1}B_{v}\zeta\right)\d\zeta\right)^{\tau}\\
 & =\left(\frac{w_{\iota_{1}\left(r\right)}^{s}}{w_{\iota_{1}\left(v\right)}^{s}}\right)^{\tau}\cdot\left(1+\left\Vert T_{\iota_{1}\left(r\right)}^{-1}T_{\iota_{1}\left(v\right)}\right\Vert \right)^{\sigma}\cdot\left(\int_{Q_{\iota_{1}\left(v\right)}'\cup\left[-Q_{\iota_{1}\left(v\right)}'\right]}\varrho_{\iota_{1}\left(r\right)}\left(T_{\iota_{1}\left(r\right)}^{-1}T_{\iota_{1}\left(v\right)}\zeta\right)\d\zeta\right)^{\tau}\\
 & \leq\left(\frac{w_{\iota_{1}\left(r\right)}^{s}}{w_{\iota_{1}\left(v\right)}^{s}}\right)^{\tau}\cdot\left(1+\left\Vert T_{\iota_{1}\left(r\right)}^{-1}T_{\iota_{1}\left(v\right)}\right\Vert \right)^{\sigma}\\
 & \phantom{\leq}\qquad\cdot\left(\int_{Q_{\iota_{1}\left(v\right)}'}\varrho_{\iota_{1}\left(r\right)}\left(T_{\iota_{1}\left(r\right)}^{-1}T_{\iota_{1}\left(v\right)}\zeta\right)\d\zeta+\int_{-Q_{\iota_{1}\left(v\right)}'}\varrho_{\iota_{1}\left(r\right)}\left(T_{\iota_{1}\left(r\right)}^{-1}T_{\iota_{1}\left(v\right)}\zeta\right)\d\zeta\right)^{\tau}\\
\left({\scriptstyle \text{since }\varrho_{\iota_{1}\left(r\right)}\left(-\xi\right)=\varrho_{\iota_{1}\left(r\right)}\left(\xi\right)}\right) & =\left(\frac{w_{\iota_{1}\left(r\right)}^{s}}{w_{\iota_{1}\left(v\right)}^{s}}\right)^{\tau}\cdot\left(1+\left\Vert T_{\iota_{1}\left(r\right)}^{-1}T_{\iota_{1}\left(v\right)}\right\Vert \right)^{\sigma}\cdot\left(2\cdot\int_{Q_{\iota_{1}\left(v\right)}'}\varrho_{\iota_{1}\left(r\right)}\left(T_{\iota_{1}\left(r\right)}^{-1}T_{\iota_{1}\left(v\right)}\zeta\right)\d\zeta\right)^{\tau}\\
\left({\scriptstyle \text{with }\xi=T_{\iota_{1}\left(v\right)}\zeta}\right) & =2^{\tau}\cdot\left(\frac{w_{\iota_{1}\left(r\right)}^{s}}{w_{\iota_{1}\left(v\right)}^{s}}\right)^{\tau}\cdot\left(1+\left\Vert T_{\iota_{1}\left(r\right)}^{-1}T_{\iota_{1}\left(v\right)}\right\Vert \right)^{\sigma}\cdot\left(\left|\det T_{\iota_{1}\left(v\right)}\right|^{-1}\cdot\int_{S_{\iota_{1}\left(v\right)}^{\left(\alpha\right)}}\varrho_{\iota_{1}\left(r\right)}\left(T_{\iota_{1}\left(r\right)}^{-1}\xi\right)\d\xi\right)^{\tau}\\
 & =2^{\tau}\cdot M_{\iota_{1}\left(r\right),\iota_{1}\left(v\right)}^{\left(0\right)}.\qedhere
\end{align*}
\end{proof}
Since the map $\iota_{1}:V\to I$ is injective, Lemma \ref{lem:MEstimate}
implies
\[
\max\left\{ \left(\sup_{v\in V}\,\sum_{r\in V}\widetilde{M}_{r,v}^{\left(0\right)}\right)^{1/\tau},\,\left(\sup_{r\in V}\,\sum_{v\in V}\widetilde{M}_{r,v}^{\left(0\right)}\right)^{1/\tau}\right\} \leq2\cdot\max\left\{ \sup_{i\in I}\,\sum_{j\in I}M_{j,i}^{\left(0\right)},\,\sup_{j\in I}\,\sum_{i\in I}M_{j,i}^{\left(0\right)}\right\} .
\]
Then, recalling Lemma \ref{lem:UnconnectedAlphaShearletSmoothnessIsBoring}
and using \emph{precisely} the same arguments as for proving Theorems
\ref{thm:NicelySimplifiedAlphaShearletFrameConditions} and \ref{thm:ReallyNiceShearletAtomicDecompositionConditions},
one can prove the following two theorems:
\begin{thm}
\label{thm:NicelySimplifiedUnconnectedAlphaShearletFrameConditions}Theorem
\ref{thm:NicelySimplifiedAlphaShearletFrameConditions} remains essentially
valid if the family $\widetilde{{\rm SH}}_{\alpha,\varphi,\psi,\delta}^{\left(\pm1\right)}$
is replaced by the $\alpha$-shearlet system
\[
{\rm SH}_{\alpha}\left(\smash{\widetilde{\varphi},\widetilde{\psi}};\,\delta\right)=\left(L_{\delta\cdot B_{v}^{-T}k}\:\widetilde{\gamma^{\left[v\right]}}\right)_{v\in V,\,k\in\Z^{2}}\quad\text{ with }\quad\gamma^{\left[v\right]}:=\begin{cases}
\left|\det\smash{B_{v}}\right|^{1/2}\cdot\left(\psi\circ B_{v}^{T}\right), & \text{if }v\in V_{0},\\
\varphi, & \text{if }v=0,
\end{cases}
\]
where $\widetilde{\varphi}\left(x\right)=\varphi\left(-x\right)$
and $\widetilde{\psi}\left(x\right)=\psi\left(-x\right)$. The only
two necessary changes are the following:

\begin{enumerate}
\item The assumption $\widehat{\psi}\left(\xi\right)\neq0$ for $\xi=\left(\xi_{1},\xi_{2}\right)\in\R^{2}$
with $\xi_{1}\in\left[3^{-1},3\right]$ and $\left|\xi_{2}\right|\leq\left|\xi_{1}\right|$
has to be replaced by
\[
\widehat{\psi}\left(\xi\right)\neq0\text{ for }\xi=\left(\xi_{1},\xi_{2}\right)\in\R^{2}\text{ with }\frac{1}{3}\leq\left|\xi_{1}\right|\leq3\text{ and }\left|\xi_{2}\right|\leq\left|\xi_{1}\right|.
\]
\item For the definition of the analysis operator $A^{\left(\delta\right)}$,
the convolution $\gamma^{\left[v\right]}\ast f$ has to be defined
as in equation \eqref{eq:SpecialConvolutionDefinition}, but using
a regular partition of unity $\left(\varphi_{v}\right)_{v\in V}$
for $\CalS_{u}^{\left(\alpha\right)}$, i.e.,
\[
\left(\gamma^{\left[v\right]}\ast f\right)\left(x\right)=\sum_{\ell\in V}\Fourier^{-1}\left(\widehat{\gamma^{\left[v\right]}}\cdot\varphi_{\ell}\cdot\widehat{f}\:\right)\left(x\right)\qquad\forall x\in\R^{\dimension},
\]
where the series converges normally in $L^{\infty}\left(\R^{2}\right)$
and thus absolutely and uniformly, for all $f\in\mathscr{S}_{\alpha,s}^{p,q}\left(\R^{2}\right)$.
For a more convenient expression for this convolution—at least for
$f\in L^{2}\left(\R^{2}\right)$—see Lemma \ref{lem:SpecialConvolutionClarification}
below.\qedhere
\end{enumerate}
\end{thm}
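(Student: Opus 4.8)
The plan is to run the proof of Theorem~\ref{thm:NicelySimplifiedAlphaShearletFrameConditions} essentially verbatim, with the covering $\CalS^{(\alpha)}$ and its weight replaced throughout by the unconnected covering $\CalS_u^{(\alpha)}$ and the weight $u^s$. First I would use Lemma~\ref{lem:UnconnectedAlphaShearletSmoothnessIsBoring} to write $\mathscr{S}_{\alpha,s}^{p,q}(\R^2)=\mathscr{D}_{\alpha,s}^{p,q}(\R^2)=\DecompSp{\CalS_u^{(\alpha)}}p{\ell_{u^s}^q}{}$ with equivalent quasi-norms, so that it suffices to construct a Banach frame for the right-hand space via Theorem~\ref{thm:BanachFrameTheorem}; this is legitimate because $\CalS_u^{(\alpha)}$ is an almost structured covering (Lemma~\ref{lem:UnconnectedAlphaShearletCoveringIsAlmostStructured}) and $u^s$ is $\CalS_u^{(\alpha)}$-moderate (Lemma~\ref{lem:WeightUnconnectedModerate}). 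In the notation of Assumption~\ref{assu:CrashCourseStandingAssumptions} applied to $\CalS_u^{(\alpha)}$ one takes $n=2$, $Q_0^{(1)}:=Q_u$, $Q_0^{(2)}:=(-1,1)^2$, $k_v:=1$ for $v\in V_0$ and $k_0:=2$, and sets $\gamma_1^{(0)}:=\psi$, $\gamma_2^{(0)}:=\varphi$. Since $b_v=0$ and $\gamma^{[v]}=|\det B_v|^{1/2}(\psi\circ B_v^T)$ for $v\in V_0$ while $\gamma^{[0]}=\varphi$, the family $(L_{\delta\cdot B_v^{-T}k}\,\widetilde{\gamma^{[v]}})_{v\in V,k\in\Z^2}$ produced by Theorem~\ref{thm:BanachFrameTheorem} is exactly ${\rm SH}_\alpha(\widetilde{\varphi},\widetilde{\psi};\,\delta)$.

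Next I would verify the hypotheses of Theorem~\ref{thm:BanachFrameTheorem}. Conditions~(1) and~(2) --- $\varphi,\psi\in L^1$, $\widehat\varphi,\widehat\psi\in C^\infty$ with polynomially bounded derivatives, and $\nabla\varphi,\nabla\psi\in L^1\cap L^\infty$ --- are inherited unchanged from the hypotheses on $\varphi,\psi$. Condition~(3) requires $\Fourier\gamma_k^{(0)}\neq0$ on $\overline{Q_0^{(k)}}$: for $k=2$ this is $\widehat\varphi\neq0$ on $[-1,1]^2$ as before, while for $k=1$ it is $\widehat\psi\neq0$ on $\overline{Q_u}=\overline Q\cup(-\overline Q)$, i.e.\@ on all $(\xi_1,\xi_2)$ with $\tfrac{1}{3}\le|\xi_1|\le3$ and $|\xi_2|\le|\xi_1|$ --- which is precisely the reason the non-vanishing hypothesis on $\widehat\psi$ has to be enlarged as in modification~(1). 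The last remaining prerequisite is the summability $C_1=\sup_{v\in V}\sum_{r\in V}M_{r,v}<\infty$ and $C_2=\sup_{r\in V}\sum_{v\in V}M_{r,v}<\infty$, with $M_{r,v}$ now formed from the data of $\CalS_u^{(\alpha)}$.

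This summability is where Lemma~\ref{lem:MEstimate} does the work. Exactly as in the proof of Theorem~\ref{thm:NicelySimplifiedAlphaShearletFrameConditions}, the elementary Leibniz-rule estimates leading to eq.~\eqref{eq:NicelySimplifiedAlphaShearletFrameConditionTargetEstimate} and eq.~\eqref{eq:NiceSimplifiedAlphaShearletFrameConditionTargetEstimateLowPass} (these concern only $\varphi,\psi$, hence are unchanged) give $M_{r,v}\le C_\diamondsuit^\tau\cdot\widetilde{M}_{r,v}^{(0)}$ with $C_\diamondsuit=2^{2+M_2+N_0}\pi C$ and $\widetilde{M}_{r,v}^{(0)}$ as in Lemma~\ref{lem:MEstimate}; that lemma then yields $\widetilde{M}_{r,v}^{(0)}\le2^\tau\cdot M_{\iota_1(r),\iota_1(v)}^{(0)}$, and, since $\iota_1:V\to I$ is injective,
\[
\max\left\{\sup_{v\in V}\sum_{r\in V}\widetilde{M}_{r,v}^{(0)},\ \sup_{r\in V}\sum_{v\in V}\widetilde{M}_{r,v}^{(0)}\right\}\le2^\tau\cdot\max\left\{\sup_{i\in I}\sum_{j\in I}M_{j,i}^{(0)},\ \sup_{j\in I}\sum_{i\in I}M_{j,i}^{(0)}\right\}.
\]
The right-hand side is finite by Lemma~\ref{lem:MainShearletLemma}, applied with the very same choices of $K,M_1,M_2,H$ (in terms of $p_0,q_0,\varepsilon,s_0,s_1$) and the same check that $K\ge K_0+\varepsilon$, $M_\ell\ge M_\ell^{(0)}+\varepsilon$ and $H\ge H_0+\varepsilon$ as in the original argument --- none of which used connectedness of the covering. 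Theorem~\ref{thm:BanachFrameTheorem} then delivers the analysis operator, the reconstruction operator and the consistency statement; the only genuine change is that $\gamma^{[v]}\ast f$ must now be interpreted via eq.~\eqref{eq:SpecialConvolutionDefinition} using a regular partition of unity subordinate to $\CalS_u^{(\alpha)}$, which is modification~(2). Uniformity of $\delta_0$ over $p\ge p_0$, $q\ge q_0$, $s\in[s_0,s_1]$ follows as before, using $C_{\CalS_u^{(\alpha)},u^s}\le39^{|s|}\le39^{\max\{|s_0|,|s_1|\}}$ from Lemma~\ref{lem:WeightUnconnectedModerate}. I expect no serious obstacle beyond the bookkeeping of transporting the matrix sums along $\iota_1$; the one step that truly exploits the structure of the unconnected covering --- the harmless factor-$2$ loss from integrating over $Q'\cup(-Q')$ instead of $Q'$, which works only because the envelopes $\varrho,\varrho_0$ are even --- is already isolated in Lemma~\ref{lem:MEstimate}.
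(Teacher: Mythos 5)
Your proposal is correct and follows essentially the same route the paper takes: the paper's own justification is the paragraph immediately after Lemma~\ref{lem:MEstimate}, which records the $\iota_1$-injectivity transfer of the Schur bound and then simply declares that the theorem follows by repeating the proof of Theorem~\ref{thm:NicelySimplifiedAlphaShearletFrameConditions} with $\CalS_u^{(\alpha)}$, $u^s$ and Lemma~\ref{lem:UnconnectedAlphaShearletSmoothnessIsBoring} in place. You have filled in exactly those details --- the choice of base sets $Q_u$, $(-1,1)^2$, the enlarged non-vanishing condition on $\widehat\psi$ over $\overline{Q_u}$, the factor $2^\tau$ from $\widetilde M^{(0)}_{r,v}\le 2^\tau M^{(0)}_{\iota_1(r),\iota_1(v)}$, the modified convolution, and the $\delta_0$-uniformity via $C_{\CalS_u^{(\alpha)},u^s}\le 39^{|s|}$ --- so the proposal matches the paper's intent precisely.
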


\begin{thm}
\label{thm:ReallyNiceUnconnectedShearletAtomicDecompositionConditions}Theorem
\ref{thm:ReallyNiceShearletAtomicDecompositionConditions} remains
essentially valid if the family ${\rm SH}_{\varphi,\psi,\delta}^{\left(\pm1\right)}$
is replaced by the $\alpha$-shearlet system
\[
{\rm SH}_{\alpha}\left(\varphi,\psi;\,\delta\right)=\left(L_{\delta\cdot B_{v}^{-T}k}\:\gamma^{\left[v\right]}\right)_{v\in V,\,k\in\Z^{2}}\quad\text{ with }\quad\gamma^{\left[v\right]}:=\begin{cases}
\left|\det\smash{B_{v}}\right|^{1/2}\cdot\left(\psi\circ B_{v}^{T}\right), & \text{if }v\in V_{0},\\
\varphi, & \text{if }v=0.
\end{cases}
\]
The only necessary change is that the assumption $\widehat{\psi}\left(\xi\right)\neq0$
for $\xi=\left(\xi_{1},\xi_{2}\right)\in\R^{2}$ with $\xi_{1}\in\left[3^{-1},3\right]$
and $\left|\xi_{2}\right|\leq\left|\xi_{1}\right|$ has to be replaced
by
\[
\widehat{\psi}\left(\xi\right)\neq0\text{ for }\xi=\left(\xi_{1},\xi_{2}\right)\in\R^{2}\text{ with }\frac{1}{3}\leq\left|\xi_{1}\right|\leq3\text{ and }\left|\xi_{2}\right|\leq\left|\xi_{1}\right|.\qedhere
\]
\end{thm}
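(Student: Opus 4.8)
The plan is to re-run the proof of Theorem~\ref{thm:ReallyNiceShearletAtomicDecompositionConditions} verbatim, with the \emph{connected} covering $\CalS^{\left(\alpha\right)}$ replaced throughout by the \emph{unconnected} covering $\CalS_{u}^{\left(\alpha\right)}$ of Definition~\ref{def:UnconnectedAlphaShearletCovering}. Concretely, I would apply Theorem~\ref{thm:AtomicDecompositionTheorem} with $\CalQ:=\CalS_{u}^{\left(\alpha\right)}$ (which is almost structured by Lemma~\ref{lem:UnconnectedAlphaShearletCoveringIsAlmostStructured}), with the $\CalS_{u}^{\left(\alpha\right)}$-moderate weight $w:=u^{s}$ (Lemma~\ref{lem:WeightUnconnectedModerate}), and with the same data as before: $n:=2$, $\gamma_{1}^{\left(0\right)}:=\psi$, $\gamma_{2}^{\left(0\right)}:=\varphi$, base sets $W_{0}^{\left(1\right)}:=Q_{u}$ and $W_{0}^{\left(2\right)}:=\left(-1,1\right)^{2}$, and $k_{v}:=1$ for $v\in V_{0}$, $k_{0}:=2$. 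The convolution factorizations $\psi=\psi_{1}\ast\psi_{2}$ and $\varphi=\varphi_{1}\ast\varphi_{2}$ are produced exactly as in the original proof by feeding the Fourier-decay estimate \eqref{eq:ShearletAtomicDecompositionFourierDecayCondition} into Proposition~\ref{prop:ConvolutionFactorization}; this step involves only $\varphi,\psi$ and is therefore untouched by the change of covering, so one again obtains $\max_{\left|\beta\right|\leq N_{0}}\bigl|\partial^{\beta}\widehat{\psi_{1}}\left(\xi\right)\bigr|\leq C_{5}\cdot\varrho\left(\xi\right)$ and $\max_{\left|\beta\right|\leq N_{0}}\bigl|\partial^{\beta}\widehat{\varphi_{1}}\left(\xi\right)\bigr|\leq C_{5}\cdot\varrho_{0}\left(\xi\right)$ with $\varrho,\varrho_{0}$ as in \eqref{eq:MotherShearletMainEstimate} and $M_{1}:=\Lambda_{1}$, $M_{2}:=\Lambda_{2}$, $K:=\Lambda_{3}$, $H:=\Lambda_{0}-3-\varepsilon$.

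Next I would verify the hypotheses (1)--(8) of Theorem~\ref{thm:AtomicDecompositionTheorem}. Conditions (1)--(5) and (7) refer only to $\varphi_{1},\psi_{1},\varphi_{2},\psi_{2}$ and to the (covering-independent) exponent $\varUpsilon\leq1+2/p_{0}$, hence hold exactly as in the proof of Theorem~\ref{thm:ReallyNiceShearletAtomicDecompositionConditions}. Condition (6) is the \emph{only} point where a modification is forced: it requires $\widehat{\psi}\left(\xi\right)\neq0$ on $\overline{W_{0}^{\left(1\right)}}=\overline{Q_{u}}=\overline{Q}\cup\left(-\overline{Q}\right)$ and $\widehat{\varphi}\left(\xi\right)\neq0$ on $\overline{W_{0}^{\left(2\right)}}=\left[-1,1\right]^{2}$. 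Since $\overline{Q}=\bigl\{\left(\xi_{1},\xi_{2}\right):\xi_{1}\in\left[3^{-1},3\right],\,\left|\xi_{2}\right|\leq\left|\xi_{1}\right|\bigr\}$, we have $\overline{Q_{u}}=\bigl\{\left(\xi_{1},\xi_{2}\right):3^{-1}\leq\left|\xi_{1}\right|\leq3,\,\left|\xi_{2}\right|\leq\left|\xi_{1}\right|\bigr\}$, which is precisely the set in the modified non-vanishing assumption, and the $\widehat{\varphi}$-requirement is unchanged. For condition (8) I would bound, exactly as before and using $\left|\det B_{v}\right|=u_{v}^{1+\alpha}$ together with $w=u^{s}$,
\[
N_{v,r}\leq C_{5}^{\tau}\cdot\widetilde{M}_{r,v}^{\left(0\right)},
\]
where $\widetilde{M}_{r,v}^{\left(0\right)}$ is the quantity of Lemma~\ref{lem:MEstimate} with $s$ replaced by $s^{\natural}:=\left(1+\alpha\right)\vartheta-s$; here I use $N\leq N_{0}$ and $\max_{\left|\beta\right|\leq N_{0}}\bigl|\partial^{\beta}\widehat{\gamma_{r,1}}\bigr|\leq C_{5}\cdot\widetilde{\varrho}_{r}$. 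Lemma~\ref{lem:MEstimate} then gives $\widetilde{M}_{r,v}^{\left(0\right)}\leq2^{\tau}\,M_{\iota_{1}\left(r\right),\iota_{1}\left(v\right)}^{\left(0\right)}$, and since $\iota_{1}:V\to I$ is injective the two supremum-sums over $V$ are dominated by $2^{\tau}$ times the corresponding sums over $I$, which are finite by Lemma~\ref{lem:MainShearletLemma} under the same case-by-case verification of $H\geq H_{0}+\varepsilon$, $K\geq K_{0}+\varepsilon$, $M_{\ell}\geq M_{\ell}^{\left(0\right)}+\varepsilon$ as in the original proof (now carried out with $s^{\natural}$ in place of $s$). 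This yields $\max\left\{K_{1},K_{2}\right\}\leq2^{\tau}C_{5}^{\tau}C_{0}^{\tau}$ with $C_{0}$ independent of $p,q,s$.

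Finally, Theorem~\ref{thm:AtomicDecompositionTheorem} produces a threshold $\delta_{0}$ which, since $\Omega^{\left(p\right)}\leq C_{6}$ and $C_{0},C_{5}$ are uniform, is independent of $p\geq p_{0}$, $q\geq q_{0}$, $s\in\left[s_{0},s_{1}\right]$, such that $\bigl(L_{\delta\cdot B_{v}^{-T}k}\,\gamma^{\left[v\right]}\bigr)_{v\in V,k\in\Z^{2}}$ — which by Definition~\ref{def:AlphaShearletSystem} is exactly ${\rm SH}_{\alpha}\left(\varphi,\psi;\delta\right)$ — forms an atomic decomposition of $\DecompSp{\CalS_{u}^{\left(\alpha\right)}}p{\ell_{u^{s}}^{q}}{}=\mathscr{D}_{\alpha,s}^{p,q}\left(\R^{2}\right)$; the coefficient space is the one of Definition~\ref{def:CoefficientSpace} attached to $\CalS_{u}^{\left(\alpha\right)},u^{s}$, which under $\iota_{1}$ is just a relabeling of $C_{w^{s}}^{p,q}$ because $\left|\det B_{v}\right|=\left|\det T_{\iota_{1}\left(v\right)}\right|$ and $u_{v}=w_{\iota_{1}\left(v\right)}$. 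By Lemma~\ref{lem:UnconnectedAlphaShearletSmoothnessIsBoring}, $\mathscr{D}_{\alpha,s}^{p,q}\left(\R^{2}\right)=\mathscr{S}_{\alpha,s}^{p,q}\left(\R^{2}\right)$ with equivalent quasi-norms, so the atomic decomposition transfers to $\mathscr{S}_{\alpha,s}^{p,q}\left(\R^{2}\right)$; the convergence of $S^{\left(\delta\right)}$ and the independence of $C^{\left(\delta\right)}$ from $p,q,s$ come from the remark following Theorem~\ref{thm:AtomicDecompositionTheorem}. The proof of Theorem~\ref{thm:NicelySimplifiedUnconnectedAlphaShearletFrameConditions} is entirely analogous, using Theorem~\ref{thm:BanachFrameTheorem} in place of Theorem~\ref{thm:AtomicDecompositionTheorem}, the dual family $\bigl(L_{\delta\cdot B_{v}^{-T}k}\widetilde{\gamma^{\left[v\right]}}\bigr)$, and the convolution $\gamma^{\left[v\right]}\ast f$ defined as in \eqref{eq:SpecialConvolutionDefinition} with a regular partition of unity for $\CalS_{u}^{\left(\alpha\right)}$. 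The work here is almost purely bookkeeping, since Lemma~\ref{lem:MEstimate} does the decisive transfer; the only genuinely new — and most delicate — point is getting condition (6) right, which is exactly the reason the single modification to the non-vanishing hypothesis is necessary.
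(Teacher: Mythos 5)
Your proposal is correct and matches the paper's route: the paper likewise reduces condition (8) for the unconnected covering to Lemma \ref{lem:MainShearletLemma} via Lemma \ref{lem:MEstimate} and injectivity of $\iota_{1}$, then identifies the resulting decomposition spaces with $\mathscr{S}_{\alpha,s}^{p,q}$ via Lemma \ref{lem:UnconnectedAlphaShearletSmoothnessIsBoring}, and notes that the only substantive change forced is the non-vanishing hypothesis on $\widehat{\psi}$ over $\overline{Q_u}$ instead of $\overline{Q}$. Your remark that the coefficient space for $\CalS_{u}^{(\alpha)}$ is "just a relabeling of $C_{w^{s}}^{p,q}$" is a harmless overstatement (the index sets $V\times\Z^{2}$ and $I\times\Z^{2}$ differ since $\iota_{1}$ is not surjective), but this does not affect the argument since the atomic decomposition is formulated intrinsically for the unconnected covering.
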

\begin{rem}
\label{rem:NiceTensorConditionsForUnconnectedCovering}With the exact
same reasoning, one can also show that Corollaries \ref{cor:ReallyNiceAlphaShearletTensorAtomicDecompositionConditions}
and \ref{cor:ReallyNiceAlphaShearletTensorBanachFrameConditions}
remain valid with the obvious changes. Again, one now has to require
\[
\widehat{\psi_{1}}\left(\xi\right)\neq0\text{ for }\frac{1}{3}\leq\left|\xi\right|\leq3.
\]
instead of $\widehat{\psi_{1}}\left(\xi\right)\neq0$ for $\xi\in\left[3^{-1},3\right]$.
\end{rem}
The one remaining limitation of Theorems \ref{thm:NicelySimplifiedAlphaShearletFrameConditions}
and \ref{thm:NicelySimplifiedUnconnectedAlphaShearletFrameConditions}
is their somewhat strange definition of the convolution $\left(\gamma^{\left[i\right]}\ast f\right)\left(x\right)$.
The following lemma makes this definition more concrete, under the
\emph{assumption} that we already know $f\in L^{2}\left(\R^{2}\right)$.
For general $f\in\mathscr{S}_{\alpha,s}^{p,q}\left(\R^{2}\right)$,
this need not be the case, but for suitable values of $p,q,s$, we
have $\mathscr{S}_{\alpha,s}^{p,q}\left(\R^{2}\right)\hookrightarrow L^{2}\left(\R^{2}\right)$,
as we will see in Theorem \ref{thm:AnalysisAndSynthesisSparsityAreEquivalent}.
\begin{lem}
\label{lem:SpecialConvolutionClarification}Let $\left(\varphi_{i}\right)_{i\in I}$
be a regular partition of unity subordinate to some almost structured
covering $\CalQ=\left(Q_{i}\right)_{i\in I}$ of $\R^{\dimension}$.
Assume that $\gamma\in L^{1}\left(\R^{\dimension}\right)\cap L^{2}\left(\R^{\dimension}\right)$
with $\widehat{\gamma}\in C^{\infty}\left(\R^{\dimension}\right)$,
where all partial derivatives of $\widehat{\gamma}$ are polynomially
bounded. Let $f\in L^{2}\left(\R^{\dimension}\right)\hookrightarrow\Schwartz'\left(\R^{\dimension}\right)\hookrightarrow Z'\left(\R^{\dimension}\right)$
be arbitrary. Then we have
\[
\sum_{\ell\in I}\Fourier^{-1}\left(\widehat{\gamma}\cdot\varphi_{\ell}\cdot\widehat{f}\right)\left(x\right)=\left\langle f,\,L_{x}\widetilde{\gamma}\right\rangle \qquad\forall x\in\R^{\dimension},
\]
where $\widetilde{\gamma}\left(x\right)=\gamma\left(-x\right)$ and
where $\left\langle f,g\right\rangle =\int_{\R^{\dimension}}f\left(x\right)\cdot g\left(x\right)\d x$.
\end{lem}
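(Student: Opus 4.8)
The plan is to reduce the identity to a statement about Fourier transforms and then to verify it by a direct computation using Parseval's formula. First I would observe that, since $\gamma\in L^1\cap L^2$ and all $\varphi_\ell\in\TestFunctionSpace{\R^\dimension}$, each summand $\Fourier^{-1}\bigl(\widehat{\gamma}\cdot\varphi_\ell\cdot\widehat{f}\,\bigr)$ is a well-defined function (in fact $\widehat{\gamma}\cdot\varphi_\ell\cdot\widehat{f}\in L^1\cap L^2$, because $\varphi_\ell$ is compactly supported and smooth, $\widehat{f}\in L^2$, and $\widehat{\gamma}$ is smooth with polynomially bounded derivatives, hence bounded on the compact set $\supp\varphi_\ell$). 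The key structural point is that $\sum_{\ell\in I}\varphi_\ell\equiv1$ on $\R^\dimension$ with the sum being locally finite, so that $\sum_\ell \varphi_\ell\cdot\widehat{f}=\widehat{f}$ pointwise, and therefore one morally expects $\sum_\ell \Fourier^{-1}(\widehat{\gamma}\cdot\varphi_\ell\cdot\widehat{f})=\Fourier^{-1}(\widehat{\gamma}\cdot\widehat{f})$. Making this rigorous requires justifying that the pointwise sum over $\ell$ and the inverse Fourier transform may be interchanged; this is where the admissibility of $\CalQ$ enters, since it bounds the overlap of the supports.

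Second, I would handle the interchange by the dominated convergence argument built into the theory: by Theorem~\ref{thm:BanachFrameTheorem} (or rather the remark surrounding equation~\eqref{eq:SpecialConvolutionDefinition}) the series $\sum_{\ell\in I}\Fourier^{-1}(\widehat{\gamma^{[i]}}\cdot\varphi_\ell\cdot\widehat{f})$ converges normally in $L^\infty$; the same reasoning applies here with $\gamma^{[i]}$ replaced by $\gamma$, so the series converges absolutely and uniformly in $x$. Granting this, for fixed $x$ I can write
\[
\sum_{\ell\in I}\Fourier^{-1}\bigl(\widehat{\gamma}\cdot\varphi_\ell\cdot\widehat{f}\,\bigr)(x)
=\sum_{\ell\in I}\int_{\R^\dimension}\widehat{\gamma}(\xi)\,\varphi_\ell(\xi)\,\widehat{f}(\xi)\,e^{2\pi i\langle x,\xi\rangle}\,\d\xi,
\]
and since the integrands are dominated by $|\widehat{f}|\cdot\sum_\ell\varphi_\ell\cdot\|\widehat{\gamma}\|_{L^\infty(\supp\varphi_\ell)}$—which, using admissibility and the polynomial bound on $\widehat{\gamma}$, is controlled by an $L^1$ function times finitely many overlapping terms—I may pull the sum inside the integral to get $\int_{\R^\dimension}\widehat{\gamma}(\xi)\bigl(\sum_\ell\varphi_\ell(\xi)\bigr)\widehat{f}(\xi)\,e^{2\pi i\langle x,\xi\rangle}\d\xi=\int_{\R^\dimension}\widehat{\gamma}(\xi)\,\widehat{f}(\xi)\,e^{2\pi i\langle x,\xi\rangle}\,\d\xi$. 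Alternatively, and perhaps more cleanly, I would note that $\widehat{\gamma}\cdot\widehat{f}\in L^1$ (product of $L^2$ and $L^2$), so $\Fourier^{-1}(\widehat{\gamma}\cdot\widehat{f})$ is a bona fide continuous function, and the partial sums $\sum_{\ell\in F}\widehat{\gamma}\cdot\varphi_\ell\cdot\widehat{f}$ converge to $\widehat{\gamma}\cdot\widehat{f}$ in $L^1$ as $F\uparrow I$ (again by admissibility and dominated convergence in $L^1$), whence the inverse Fourier transforms converge uniformly.

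Third, I would identify $\Fourier^{-1}(\widehat{\gamma}\cdot\widehat{f})(x)$ with $\langle f,\,L_x\widetilde{\gamma}\rangle$. Since $\gamma\in L^2$ and $f\in L^2$, I have $L_x\widetilde{\gamma}\in L^2$ and can compute via Parseval: $\langle f,\,L_x\widetilde{\gamma}\rangle=\int f\,\overline{\overline{L_x\widetilde{\gamma}}}=\langle \widehat{f},\,\widehat{\overline{L_x\widetilde{\gamma}}}\,\rangle$—being careful that the dual pairing here is bilinear, not sesquilinear, as emphasized in the notation section. A short computation gives $\Fourier(L_x\widetilde{\gamma})(\xi)=e^{-2\pi i\langle x,\xi\rangle}\,\widehat{\widetilde\gamma}(\xi)=e^{-2\pi i\langle x,\xi\rangle}\,\widehat{\gamma}(-\xi)$ — but for the bilinear pairing the cleanest route is $\langle f,g\rangle=\int \widehat f\cdot\Fourier g\cdot(\text{sign conventions})$; concretely, using $\int f\,g=\int\widehat f\,\widehat g$ when one writes $\widehat g$ with the opposite-sign transform, one finds $\langle f,L_x\widetilde\gamma\rangle=\int_{\R^\dimension}\widehat f(\xi)\,\widehat\gamma(\xi)\,e^{2\pi i\langle x,\xi\rangle}\,\d\xi=\Fourier^{-1}(\widehat\gamma\cdot\widehat f)(x)$. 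I expect the main obstacle to be purely bookkeeping: getting the sign conventions and the (bi)linear pairing exactly right so that no stray complex conjugates or sign flips appear, and making the interchange-of-sum-and-integral step airtight by invoking admissibility of $\CalQ$ together with the polynomial bound on $\widehat\gamma$ to produce the needed $L^1$ dominating function. None of these steps is deep, but the statement is used later (e.g.\ in Theorem~\ref{thm:AnalysisAndSynthesisSparsityAreEquivalent}) precisely to replace the awkward series definition by the transparent inner-product expression, so the proof should be written carefully enough to be quotable.
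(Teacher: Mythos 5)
Your proposal is correct and follows essentially the same route as the paper: first interchange the sum over $\ell$ with the inverse Fourier transform via dominated convergence, using the admissibility bound $\sum_\ell|\varphi_\ell|\leq N_{\CalQ}\sup_\ell\|\varphi_\ell\|_{\sup}$ together with $\widehat\gamma\,\widehat f\in L^1$, to reduce to $\Fourier^{-1}(\widehat\gamma\,\widehat f)(x)$, and then identify this with $\langle f,L_x\widetilde\gamma\rangle$. One small caveat: your first candidate dominating function $|\widehat f|\cdot\sum_\ell\varphi_\ell\cdot\|\widehat\gamma\|_{L^\infty(\supp\varphi_\ell)}$ does not actually work as written, since the $\ell$-dependent factor $\|\widehat\gamma\|_{L^\infty(\supp\varphi_\ell)}$ grows polynomially in $\ell$ and cannot be pulled out; but the alternative you immediately offer (using $\widehat\gamma\,\widehat f\in L^1$ directly, with the uniform bound on $\sum_\ell\varphi_\ell$) is exactly the paper's argument. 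For the final identification, the paper uses Young's inequality plus the convolution theorem to show $\gamma*f=\Fourier^{-1}(\widehat\gamma\,\widehat f)$ a.e., then invokes continuity of both sides to upgrade to everywhere, whereas you reach the same conclusion by bilinear Parseval; both are routine and equivalent here, though the paper's route sidesteps the sign-convention bookkeeping you flag as the main risk.
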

\begin{proof}
In the expression $\Fourier^{-1}\left(\widehat{\gamma}\cdot\varphi_{\ell}\cdot\widehat{f}\right)\left(x\right)$,
the inverse Fourier transform is the inverse Fourier transform of
the compactly supported, tempered distribution $\widehat{\gamma}\cdot\varphi_{\ell}\cdot\widehat{f}\in\Schwartz'\left(\R^{\dimension}\right)$.
But by the Paley-Wiener theorem (see e.g.\@ \cite[Theorem 7.23]{RudinFA}),
the tempered distribution $\Fourier^{-1}\left(\widehat{\gamma}\cdot\varphi_{\ell}\cdot\widehat{f}\right)$
is given by (integration against) a (uniquely determined) smooth function,
whose value at $x\in\R^{\dimension}$ we denote by $\Fourier^{-1}\left(\widehat{\gamma}\cdot\varphi_{\ell}\cdot\widehat{f}\right)\left(x\right)$.
Precisely, we have
\[
\Fourier^{-1}\left(\widehat{\gamma}\cdot\varphi_{\ell}\cdot\widehat{f}\right)\left(x\right)=\left\langle \widehat{\gamma}\cdot\widehat{f},\,\varphi_{\ell}\cdot e^{2\pi i\left\langle x,\mybullet\right\rangle }\right\rangle _{\DistributionSpace{\R^{\dimension}},\TestFunctionSpace{\R^{\dimension}}}=\int_{\R^{\dimension}}\widehat{\gamma}\left(\xi\right)\cdot\widehat{f}\left(\xi\right)\cdot e^{2\pi i\left\langle x,\xi\right\rangle }\cdot\varphi_{\ell}\left(\xi\right)\d\xi.
\]
But since $\CalQ$ is an admissible covering of $\R^{\dimension}$
and since $\left(\varphi_{\ell}\right)_{\ell\in I}$ is a regular
partition of unity subordinate to $\CalQ$, we have
\begin{align*}
\sum_{\ell\in I}\left|\widehat{\gamma}\left(\xi\right)\cdot\widehat{f}\left(\xi\right)\cdot e^{2\pi i\left\langle x,\xi\right\rangle }\cdot\varphi_{\ell}\left(\xi\right)\right| & \leq\left|\widehat{\gamma}\left(\xi\right)\cdot\smash{\widehat{f}}\left(\xi\right)\right|\cdot\sum_{\ell\in I}\left|\varphi_{\ell}\left(\xi\right)\right|\\
 & \leq\sup_{\ell\in I}\left\Vert \varphi_{\ell}\right\Vert _{\sup}\cdot\left|\widehat{\gamma}\left(\xi\right)\cdot\smash{\widehat{f}}\left(\xi\right)\right|\cdot\sum_{\ell\in I}\Indicator_{Q_{\ell}}\left(\xi\right)\\
 & \leq N_{\CalQ}\cdot\sup_{\ell\in I}\left\Vert \varphi_{\ell}\right\Vert _{\sup}\cdot\left|\widehat{\gamma}\left(\xi\right)\cdot\smash{\widehat{f}}\left(\xi\right)\right|\in L^{1}\left(\R^{\dimension}\right),
\end{align*}
since $\widehat{\gamma},\widehat{f}\in L^{2}\left(\R^{\dimension}\right)$.
Since we also have $\sum_{\ell\in I}\varphi_{\ell}\equiv1$ on $\R^{\dimension}$,
we get by the dominated convergence theorem that
\begin{align*}
\sum_{\ell\in I}\Fourier^{-1}\left(\widehat{\gamma}\cdot\varphi_{\ell}\cdot\widehat{f}\right)\left(x\right) & =\int_{\R^{\dimension}}\widehat{\gamma}\left(\xi\right)\cdot\widehat{f}\left(\xi\right)\cdot e^{2\pi i\left\langle x,\xi\right\rangle }\cdot\sum_{\ell\in I}\varphi_{\ell}\left(\xi\right)\d\xi\\
 & =\int_{\R^{\dimension}}\widehat{\gamma}\left(\xi\right)\cdot\widehat{f}\left(\xi\right)\cdot e^{2\pi i\left\langle x,\xi\right\rangle }\d\xi=\Fourier^{-1}\left(\smash{\widehat{\gamma}\cdot\widehat{f}}\right)\left(x\right),
\end{align*}
where $\Fourier^{-1}\left(\smash{\widehat{\gamma}\cdot\widehat{f}}\right)\in L^{2}\left(\R^{\dimension}\right)\cap C_{0}\left(\R^{\dimension}\right)$
by the Riemann-Lebesgue Lemma and Plancherel's theorem, because of
$\widehat{\gamma}\cdot\widehat{f}\in L^{1}\left(\R^{\dimension}\right)\cap L^{2}\left(\R^{\dimension}\right)$.
But Young's inequality shows $\gamma\ast f\in L^{2}\left(\R^{\dimension}\right)$,
while the convolution theorem yields $\Fourier\left[\gamma\ast f\right]=\widehat{\gamma}\cdot\widehat{f}$.
Hence, $\gamma\ast f=\Fourier^{-1}\left(\smash{\widehat{\gamma}\cdot\widehat{f}}\right)$
almost everywhere. But both sides of the identity are continuous functions,
since the convolution of two $L^{2}$ functions is continuous. Thus,
the equality holds everywhere, so that we finally get
\[
\sum_{\ell\in I}\Fourier^{-1}\left(\widehat{\gamma}\cdot\varphi_{\ell}\cdot\widehat{f}\right)\left(x\right)=\Fourier^{-1}\left(\smash{\widehat{\gamma}\cdot\widehat{f}}\right)\left(x\right)=\left(\gamma\ast f\right)\left(x\right)=\int_{\R^{\dimension}}f\left(y\right)\cdot\gamma\left(x-y\right)\d y=\left\langle f,\,L_{x}\widetilde{\gamma}\right\rangle .\qedhere
\]
\end{proof}
We close this section with a theorem that justifies the title of the
paper: It formally encodes the fact that \emph{analysis sparsity is
equivalent to synthesis sparsity} for (suitable) $\alpha$-shearlet
systems.
\begin{thm}
\label{thm:AnalysisAndSynthesisSparsityAreEquivalent}Let $\alpha\in\left[0,1\right]$,
$\varepsilon,p_{0}\in\left(0,1\right]$ and $s^{\left(0\right)}\geq0$
be arbitrary. Assume that $\varphi,\psi\in L^{1}\left(\R^{2}\right)$
satisfy the assumptions of Theorems \ref{thm:NicelySimplifiedUnconnectedAlphaShearletFrameConditions}
and \ref{thm:ReallyNiceUnconnectedShearletAtomicDecompositionConditions}
with $q_{0}=p_{0}$ and $s_{0}=0$, as well as $s_{1}=s^{\left(0\right)}+\left(1+\alpha\right)\left(p_{0}^{-1}-2^{-1}\right)$.
For $\delta>0$, denote by ${\rm SH}_{\alpha}\left(\varphi,\psi;\delta\right)=\left(\gamma^{\left[v,k,\delta\right]}\right)_{v\in V,\,k\in\Z^{2}}$
the $\alpha$-shearlet system generated by $\varphi,\psi$, as in
Definition \ref{def:AlphaShearletSystem}.

Then there is some $\delta_{0}\in\left(0,1\right]$ with the following
property: For all $p\in\left[p_{0},2\right]$ and all $s\in\left[0,s^{\left(0\right)}\right]$,
we have
\begin{align*}
\mathscr{S}_{\alpha,s+\left(1+\alpha\right)\left(p^{-1}-2^{-1}\right)}^{p,p}\left(\R^{2}\right) & =\left\{ f\in L^{2}\left(\R^{2}\right)\with\left(u_{v}^{s}\cdot\left\langle f,\,\smash{\gamma^{\left[v,k,\delta\right]}}\right\rangle _{L^{2}}\vphantom{\gamma^{\left[v,k\right]}}\right)_{v\in V,\,k\in\Z^{2}}\in\ell^{p}\left(V\times\Z^{2}\right)\right\} \\
 & =\left\{ \sum_{\left(v,k\right)\in V\times\Z^{2}}c_{k}^{\left(v\right)}\cdot\gamma^{\left[v,k,\delta\right]}\with\left(u_{v}^{s}\cdot\smash{c_{k}^{\left(v\right)}}\right)_{v\in V,\,k\in\Z^{2}}\in\ell^{p}\left(V\times\Z^{2}\right)\right\} ,
\end{align*}
as long as $0<\delta\leq\delta_{0}$. Here, the weight $u=\left(u_{v}\right)_{v\in V}$
is as in Definition \ref{def:UnconnectedAlphaShearletCovering}, i.e.,
$u_{n,m,\delta}=2^{n}$ and $u_{0}=1$.

In fact, for $f\in\mathscr{S}_{\alpha,s+\left(1+\alpha\right)\left(p^{-1}-2^{-1}\right)}^{p,p}\left(\R^{2}\right)$,
we even have a (quasi)-norm equivalence
\begin{align*}
\left\Vert f\right\Vert _{\mathscr{S}_{\alpha,s+\left(1+\alpha\right)\left(p^{-1}-2^{-1}\right)}^{p,p}} & \asymp\left\Vert \left(u_{v}^{s}\cdot\left\langle f,\,\smash{\gamma^{\left[v,k,\delta\right]}}\right\rangle _{L^{2}}\vphantom{\gamma^{\left[v,k,\delta\right]}}\right)_{v\in V,\,k\in\Z^{2}}\right\Vert _{\ell^{p}}\\
 & \asymp\inf\left\{ \!\left\Vert \left(u_{v}^{s}\cdot\smash{c_{k}^{\left(v\right)}}\right)_{v\in V,\,k\in\Z^{2}}\right\Vert _{\ell^{p}}\with f=\!\!\!\sum_{\left(v,k\right)\in V\times\Z^{2}}\!c_{k}^{\left(v\right)}\cdot\gamma^{\left[v,k,\delta\right]}\text{ with uncond. conv. in }L^{2}\left(\R^{2}\right)\right\} \!.
\end{align*}
In particular, $\mathscr{S}_{\alpha,s+\left(1+\alpha\right)\left(p^{-1}-2^{-1}\right)}^{p,p}\left(\R^{2}\right)\hookrightarrow L^{2}\left(\R^{2}\right)$
and ${\rm SH}_{\alpha}\left(\varphi,\psi;\delta\right)$ is a frame
for $L^{2}\left(\R^{2}\right)$.
\end{thm}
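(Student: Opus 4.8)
The plan is to combine the Banach frame property (Theorem~\ref{thm:NicelySimplifiedUnconnectedAlphaShearletFrameConditions}) and the atomic decomposition property (Theorem~\ref{thm:ReallyNiceUnconnectedShearletAtomicDecompositionConditions}) for the $\alpha$-shearlet system ${\rm SH}_\alpha(\varphi,\psi;\delta)$, after first identifying the relevant coefficient space with a weighted $\ell^p$-space. Concretely, I would set $\tilde s := s + (1+\alpha)(p^{-1}-2^{-1})$ and observe that, since $\left|\det B_v\right| = u_v^{1+\alpha}$, the weight $w^{\tilde s}$ used in $\mathscr S^{p,q}_{\alpha,\tilde s}$ satisfies $\left|\det B_v\right|^{\frac12-\frac1p}\cdot u_v^{\tilde s} = u_v^{(1+\alpha)(\frac12-\frac1p)}\cdot u_v^{s+(1+\alpha)(p^{-1}-2^{-1})} = u_v^{s}$. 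Hence, by the remark following Definition~\ref{def:CoefficientSpace} (applied with the weight $u^{s}$ in place of the bare determinant weight, and with $p=q$), the coefficient space $C^{p,p}_{w^{\tilde s}}$ coincides \emph{isometrically} with the weighted sequence space $\ell^p_{(u_v^{s})_{v\in V}}(V\times\Z^2)$. This is the key bookkeeping step that converts the ``abstract'' coefficient space into the concrete $\ell^p$-sparsity statement in the theorem.

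Next I would check that the hypotheses line up: by assumption $\varphi,\psi$ satisfy the prerequisites of both Theorem~\ref{thm:NicelySimplifiedUnconnectedAlphaShearletFrameConditions} and Theorem~\ref{thm:ReallyNiceUnconnectedShearletAtomicDecompositionConditions} with $q_0=p_0$, $s_0=0$, and $s_1 = s^{(0)}+(1+\alpha)(p_0^{-1}-2^{-1})$. For any $p\in[p_0,2]$ and $s\in[0,s^{(0)}]$ the shifted smoothness parameter $\tilde s = s+(1+\alpha)(p^{-1}-2^{-1})$ lies in $[0,s_1]$: indeed $\tilde s\ge s\ge0$ since $p\le 2$, and $\tilde s\le s^{(0)}+(1+\alpha)(p_0^{-1}-2^{-1})=s_1$ since $s\le s^{(0)}$ and $p\ge p_0$. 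So both theorems apply with this $(p,q,s)=(p,p,\tilde s)$, and we may take $\delta_0$ to be the minimum of the two $\delta_0$'s they provide; crucially, both of those are independent of the particular $p,q,s$ in the allowed ranges, so a single $\delta_0$ works uniformly. For $0<\delta\le\delta_0$: the analysis operator $A^{(\delta)}: \mathscr S^{p,p}_{\alpha,\tilde s}\to C^{p,p}_{w^{\tilde s}}$ is bounded with a bounded left inverse $R^{(\delta)}$, and the synthesis operator $S^{(\delta)}:C^{p,p}_{w^{\tilde s}}\to\mathscr S^{p,p}_{\alpha,\tilde s}$ is bounded with a bounded left inverse $C^{(\delta)}$.

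From here the two set equalities follow. For the first: boundedness and left-invertibility of $A^{(\delta)}$ give $\|f\|_{\mathscr S^{p,p}_{\alpha,\tilde s}}\asymp\|A^{(\delta)}f\|_{C^{p,p}_{w^{\tilde s}}}=\|(u_v^s\langle f,\gamma^{[v,k,\delta]}\rangle)_{v,k}\|_{\ell^p}$, once I identify the value $(\gamma^{[v]}\ast f)(\delta B_v^{-T}k)$ with the $L^2$-inner product $\langle f,\gamma^{[v,k,\delta]}\rangle$ — this is precisely Lemma~\ref{lem:SpecialConvolutionClarification}, valid because $f\in L^2(\R^2)$ (which itself follows, e.g., from the embedding $\mathscr S^{2,2}_{\alpha,(1+\alpha)(2^{-1}-2^{-1})}=\mathscr S^{2,2}_{\alpha,0}=L^2$, obtained by noting the covering weight and $L^2$-norms reduce via Plancherel; alternatively one invokes the $L^2$-membership directly once the norm equivalence is in place for $p=2$, then propagates downward). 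For the second equality: if $(u_v^s c_k^{(v)})\in\ell^p$, then $(c_k^{(v)})\in C^{p,p}_{w^{\tilde s}}$, so $S^{(\delta)}$ applied to it lies in $\mathscr S^{p,p}_{\alpha,\tilde s}$ and equals $\sum c_k^{(v)}\gamma^{[v,k,\delta]}$; conversely, given $f\in\mathscr S^{p,p}_{\alpha,\tilde s}$ put $c=C^{(\delta)}f\in C^{p,p}_{w^{\tilde s}}=\ell^p_{(u_v^s)}$, so $f=S^{(\delta)}C^{(\delta)}f=\sum c_k^{(v)}\gamma^{[v,k,\delta]}$ exhibits the required sparse synthesis expansion. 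Combining gives the displayed $\asymp$ for the infimum over synthesis coefficients: the lower bound from $\|f\|\le\|S^{(\delta)}\|\cdot\|c\|$ for any admissible $c$, the upper from the particular choice $c=C^{(\delta)}f$. The unconditional $L^2$-convergence of the expansions, and the final claims $\mathscr S^{p,p}_{\alpha,\tilde s}\hookrightarrow L^2$ and that ${\rm SH}_\alpha(\varphi,\psi;\delta)$ is an $L^2$-frame, then drop out by specializing to $p=2$, $s=0$ (where $\tilde s=0$, $u_v^0\equiv1$, and the equivalence reads $\|f\|_{\mathscr S^{2,2}_{\alpha,0}}\asymp\|(\langle f,\gamma^{[v,k,\delta]}\rangle)\|_{\ell^2}$), together with $\mathscr S^{2,2}_{\alpha,0}=L^2(\R^2)$. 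The main obstacle I anticipate is the careful verification that $\mathscr S^{2,2}_{\alpha,0}(\R^2)$ genuinely coincides with $L^2(\R^2)$ (not merely embeds), since the decomposition space is a priori a space of $Z'(\R^2)$-distributions; this requires using the $L^2$-BAPU / Plancherel argument for the $\alpha$-shearlet covering of $\R^2$ together with Lemma~\ref{lem:AlphaShearletIntoTemperedDistributions} to pin down the reservoir, and then matching the $\ell^2$-aggregated $L^2$-pieces back to $\|f\|_{L^2}$ via a square-function / Littlewood--Paley-type identity adapted to the (admissible, hence finitely overlapping) covering.
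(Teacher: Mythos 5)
Your overall strategy coincides with the paper's: combine the Banach frame theorem (analysis side) with the atomic decomposition theorem (synthesis side), identify the coefficient space $C^{p,p}_{u^{\tilde s}}$ with $\ell^p_{(u_v^s)}(V\times\Z^2)$ via $\left|\det B_v\right|^{\frac12-\frac1p}u_v^{\tilde s}=u_v^s$, verify that $\tilde s\in[0,s_1]$ so a single uniform $\delta_0$ works, and then specialize to $p=2$, $s=0$ together with $\mathscr S^{2,2}_{\alpha,0}=L^2$ for the final embedding and frame claims. The coefficient-space bookkeeping and the uniformity of $\delta_0$ over the parameter range are handled correctly.

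There is, however, a concrete gap in the step where you assert that the analysis coefficients produced by Theorem~\ref{thm:NicelySimplifiedUnconnectedAlphaShearletFrameConditions} are $\left\langle f,\gamma^{[v,k,\delta]}\right\rangle_{L^2}$ and cite Lemma~\ref{lem:SpecialConvolutionClarification} as the justification. That lemma yields, for the generators $\varphi,\psi$ fed into the Banach frame theorem, the \emph{bilinear} pairing $\left\langle f,\,L_{\delta B_v^{-T}k}\,\widetilde{\gamma^{[v]}}\right\rangle=\int f(x)\,\gamma^{[v]}\!\left(\delta B_v^{-T}k-x\right)\,\mathrm dx$, whereas the statement of the theorem requires the \emph{sesquilinear} pairing $\left\langle f,\,\gamma^{[v,k,\delta]}\right\rangle_{L^2}=\int f(x)\,\overline{\gamma^{[v]}\!\left(x-\delta B_v^{-T}k\right)}\,\mathrm dx$. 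These do not coincide for general complex-valued, non-symmetric $\varphi,\psi$. The paper resolves this by applying Theorem~\ref{thm:NicelySimplifiedUnconnectedAlphaShearletFrameConditions} not to $\varphi,\psi$ but to $\varphi^\ast,\psi^\ast$ with $g^\ast(x):=\overline{g(-x)}$; one checks that $\varphi^\ast,\psi^\ast$ satisfy the same Fourier-decay and nonvanishing hypotheses (since $\left|\partial^\theta\widehat{g^\ast}\right|=\left|\partial^\theta\widehat g\right|$), and then $\widetilde{\varrho^{[v]}}=\overline{\gamma^{[v]}}$ forces the bilinear pairing of Lemma~\ref{lem:SpecialConvolutionClarification} to become precisely $\left\langle f,\gamma^{[v,k,\delta]}\right\rangle_{L^2}$. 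Without this substitution the ``analysis'' characterization in your proof is for a different (reflected, unconjugated) system, not for ${\rm SH}_\alpha(\varphi,\psi;\delta)$ itself. A secondary, smaller issue is that the unconditional $L^2$-convergence entering the infimum must be established before one can run the synthesis-side argument: the paper does so by first proving $C^{p,p}_{u^{s^\natural}}\hookrightarrow\ell^2=C^{2,2}_{u^0}$ and $S^{(\delta)}:\ell^2\to\mathscr S^{2,2}_{\alpha,0}=L^2$ bounded (with convergence inherited from $\ell^2$-unconditionality), whereas you defer this to ``drop out by specializing to $p=2$''; that ordering should be reversed, since the $p=2$ statement is used to justify the definition of the infimum for $p<2$.
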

\begin{rem*}
As one advantage of the decomposition space point of view, we observe
that $\mathscr{S}_{\alpha,s}^{p,q}\left(\R^{2}\right)$ is \emph{easily}
seen to be translation invariant, while this is not so easy to see
in the characterization via analysis or synthesis sparsity in terms
of a discrete $\alpha$-shearlet system.
\end{rem*}
\begin{proof}
We start with a few preparatory definitions and observations. For
brevity, we set
\begin{equation}
\left\Vert f\right\Vert _{\ast,p,s,\delta}:=\inf\left\{ \left\Vert \left(u_{v}^{s}\cdot\smash{c_{k}^{\left(v\right)}}\right)_{v\in V,\,k\in\Z^{2}}\right\Vert _{\ell^{p}}\with f=\!\!\sum_{\left(v,k\right)\in V\times\Z^{2}}c_{k}^{\left(v\right)}\cdot\gamma^{\left[v,k,\delta\right]}\text{ with uncond. conv. in }L^{2}\left(\R^{2}\right)\right\} \label{eq:AnalysisSynthesisSparsitySpecialNormDefinition}
\end{equation}
for $f\in\mathscr{S}_{\alpha,s+\left(1+\alpha\right)\left(p^{-1}-2^{-1}\right)}^{p,p}\left(\R^{2}\right)$
and $s\in\left[0,s^{\left(0\right)}\right]$, as well as $p\in\left[p_{0},2\right]$.

Next, our assumptions entail that $\varphi,\psi$ satisfy the assumptions
of Theorem \ref{thm:NicelySimplifiedUnconnectedAlphaShearletFrameConditions}
(and thus equation \eqref{eq:ShearletFrameFourierDecayCondition})
for $s_{0}=0$ and $s_{1}=s^{\left(0\right)}+\left(1+\alpha\right)\left(p_{0}^{-1}-2^{-1}\right)\geq0$.
But this implies (in the notation of Theorem \ref{thm:NicelySimplifiedAlphaShearletFrameConditions})
that $K,H,M_{2}\geq2+\varepsilon$. Hence,
\[
\left(1+\left|\xi_{1}\right|\right)^{-\left(M_{2}+1\right)}\left(1+\left|\xi_{2}\right|\right)^{-\left(K+1\right)}\leq\left[\left(1+\left|\xi_{1}\right|\right)\left(1+\left|\xi_{2}\right|\right)\right]^{-\left(2+\varepsilon\right)}\leq\left(1+\left|\xi\right|\right)^{-\left(2+\varepsilon\right)}\in L^{1}\left(\R^{2}\right).
\]
Therefore, equation \eqref{eq:ShearletFrameFourierDecayCondition}
entails $\widehat{\varphi},\widehat{\psi}\in L^{1}\left(\R^{2}\right)$,
so that Fourier inversion yields $\varphi,\psi\in L^{1}\left(\R^{2}\right)\cap C_{0}\left(\R^{2}\right)\hookrightarrow L^{2}\left(\R^{2}\right)$.
Consequently, $\gamma^{\left[v\right]}\in L^{1}\left(\R^{2}\right)\cap L^{2}\left(\R^{2}\right)$
for all $v\in V$, which will be important for our application of
Lemma \ref{lem:SpecialConvolutionClarification} later in the proof.

Finally, for $g:\R^{2}\to\Compl$, set $g^{\ast}:\R^{2}\to\Compl,x\mapsto\overline{g\left(-x\right)}$.
For $g\in L^{1}\left(\R^{2}\right)$, we then have $\widehat{g^{\ast}}\left(\xi\right)=\overline{\widehat{g}\left(\xi\right)}$
for all $\xi\in\R^{2}$. Therefore, in case of $g\in C^{1}\left(\R^{2}\right)$
with $g,\nabla g\in L^{1}\left(\R^{2}\right)\cap L^{\infty}\left(\R^{2}\right)$
and with $\widehat{g}\in C^{\infty}\left(\R^{2}\right)$, this implies
that $g^{\ast}$ satisfies the same properties and that $\left|\partial^{\theta}\widehat{g^{\ast}}\right|=\left|\partial^{\theta}\widehat{g}\right|$
for all $\theta\in\N_{0}^{2}$. These considerations easily show that
since $\varphi,\psi$ satisfy the assumptions of Theorem \ref{thm:NicelySimplifiedUnconnectedAlphaShearletFrameConditions}
(with $q_{0}=p_{0}$ and $s_{0}=0$, as well as $s_{1}=s^{\left(0\right)}+\left(1+\alpha\right)\left(p_{0}^{-1}-2^{-1}\right)$),
so do $\varphi^{\ast},\psi^{\ast}$.

Thus, Theorem \ref{thm:NicelySimplifiedUnconnectedAlphaShearletFrameConditions}
yields a constant $\delta_{1}\in\left(0,1\right]$ such that the $\alpha$-shearlet
system ${\rm SH}_{\alpha}\left(\overline{\varphi},\overline{\psi};\delta\right)={\rm SH}_{\alpha}\left(\smash{\widetilde{\varphi^{\ast}}},\smash{\widetilde{\psi^{\ast}}};\delta\right)$
forms a Banach frame for $\mathscr{S}_{\alpha,s}^{p,q}\left(\R^{2}\right)$,
for all $p,q\in\left[p_{0},\infty\right]$ and all $s\in\R$ with
$0\leq s\leq s^{\left(0\right)}+\left(1+\alpha\right)\left(p_{0}^{-1}-2^{-1}\right)$,
as long as $0<\delta\leq\delta_{1}$. Likewise, Theorem \ref{thm:ReallyNiceUnconnectedShearletAtomicDecompositionConditions}
yields a constant $\delta_{2}\in\left(0,1\right]$ such that ${\rm SH}_{\alpha}\left(\varphi,\psi;\delta\right)$
yields an atomic decomposition of $\mathscr{S}_{\alpha,s}^{p,q}\left(\R^{2}\right)$
for the same range of parameters, as long as $0<\delta\leq\delta_{2}$.
Now, let us set $\delta_{0}:=\min\left\{ \delta_{1},\delta_{2}\right\} \in\left(0,1\right]$.

\medskip{}

Let $p\in\left[p_{0},2\right]$ and $s\in\left[0,s^{\left(0\right)}\right]$
be arbitrary and set $s^{\natural}:=s+\left(1+\alpha\right)\left(p^{-1}-2^{-1}\right)$.
It is not hard to see directly from Definition \ref{def:CoefficientSpace}—and
because of $\left|\det B_{v}\right|=u_{v}^{1+\alpha}$ for all $v\in V$—that
the quasi-norm of the coefficient space $C_{u^{s^{\natural}}}^{p,p}$
satisfies
\[
\left\Vert \left(\smash{c_{k}^{\left(v\right)}}\right)_{v\in V,k\in\Z^{2}}\right\Vert _{C_{u^{s^{\natural}}}^{p,p}}=\left\Vert \left(\left|\det B_{v}\right|^{\frac{1}{2}-\frac{1}{p}}\cdot u_{v}^{s^{\natural}}\cdot\left\Vert \left(\smash{c_{k}^{\left(v\right)}}\right)_{k\in\Z^{2}}\right\Vert _{\ell^{p}}\right)_{v\in V}\right\Vert _{\ell^{p}}=\left\Vert \left(u_{v}^{s}\cdot\smash{c_{k}^{\left(v\right)}}\right)_{v\in V,\,k\in\Z^{2}}\right\Vert _{\ell^{p}}\in\left[0,\infty\right]
\]
for arbitrary sequences $\left(\smash{c_{k}^{\left(v\right)}}\right)_{v\in V,k\in\Z^{2}}$,
and $C_{u^{s^{\natural}}}^{p,p}$ contains exactly those sequences
for which this (quasi)-norm is finite. Now, note because of $s\geq0$
and $p\leq2$ that $C_{u^{s^{\natural}}}^{p,p}\hookrightarrow\ell^{2}\left(V\times\Z^{2}\right)$,
since $u_{v}\geq1$ for all $v\in V$ and since $\ell^{p}\hookrightarrow\ell^{2}$. 

Next, note that we have 
\[
s_{0}=0\leq s\leq s^{\natural}\leq s^{\left(0\right)}+\left(1+\alpha\right)\left(p_{0}^{-1}-2^{-1}\right)=s_{1},
\]
so that ${\rm SH}_{\alpha}\left(\varphi,\psi;\delta\right)$ forms
an atomic decomposition of $\mathscr{S}_{\alpha,s^{\natural}}^{p,p}\left(\R^{2}\right)$
for all $0<\delta\leq\delta_{0}$. This means that the synthesis operator
\[
S^{\left(\delta\right)}:C_{u^{s^{\natural}}}^{p,p}\to\mathscr{S}_{\alpha,s^{\natural}}^{p,p}\left(\R^{2}\right),\left(\smash{c_{k}^{\left(v\right)}}\right)_{v\in V,\,k\in\Z^{2}}\mapsto\sum_{\left(v,k\right)\in V\times\Z^{2}}c_{k}^{\left(v\right)}\cdot\gamma^{\left[v,k,\delta\right]}
\]
is well-defined and bounded with unconditional convergence of the
series in $\mathscr{S}_{\alpha,s^{\natural}}^{p,p}\left(\R^{2}\right)$.
This implicitly uses that the synthesis operator $S^{\left(\delta\right)}$
as defined in Theorem \ref{thm:ReallyNiceShearletAtomicDecompositionConditions}
is bounded and satisfies $S^{\left(\delta\right)}\left(\delta_{v,k}\right)=\gamma^{\left[v,k,\delta\right]}$
for all $\left(v,k\right)\in V\times\Z^{2}$ and that we have $c=\left(\smash{c_{k}^{\left(v\right)}}\right)_{v\in V,\,k\in\Z^{2}}=\sum_{\left(v,k\right)\in V\times\Z^{2}}c_{k}^{\left(v\right)}\cdot\delta_{v,k}$
for all $c\in C_{u^{s^{\natural}}}^{p,p}$, with unconditional convergence
in $C_{u^{s^{\natural}}}^{p,p}$, since $p\leq2<\infty$. This immediately
yields
\begin{equation}
\Omega_{1}:=\left\{ \sum_{\left(v,k\right)\in V\times\Z^{2}}c_{k}^{\left(v\right)}\cdot\gamma^{\left[v,k,\delta\right]}\with\left(u_{v}^{s}\cdot\smash{c_{k}^{\left(v\right)}}\right)_{v\in V,\,k\in\Z^{2}}\in\ell^{p}\left(V\times\Z^{2}\right)\right\} ={\rm range}\left(\smash{S^{\left(\delta\right)}}\right)\subset\mathscr{S}_{\alpha,s^{\natural}}^{p,p}\left(\R^{2}\right).\label{eq:AnalysisSynthesisSparsityEquivalentOmegaDefinition}
\end{equation}
Further, if $f\in\mathscr{S}_{\alpha,s^{\natural}}^{p,p}\left(\R^{2}\right)$
and if $c=\left(\smash{c_{k}^{\left(v\right)}}\right)_{v\in V,\,k\in\Z^{2}}$
is an arbitrary sequence satisfying $f=\sum_{\left(v,k\right)\in V\times\Z^{2}}c_{k}^{\left(v\right)}\cdot\gamma^{\left[v,k,\delta\right]}$
with unconditional convergence in $L^{2}\left(\R^{2}\right)$, there
are two cases:

\begin{casenv}
\item We have $\left\Vert \left(u_{v}^{s}\cdot\smash{c_{k}^{\left(v\right)}}\right)_{v\in V,\,k\in\Z^{2}}\right\Vert _{\ell^{p}}=\infty$.
In this case, $\left\Vert f\right\Vert _{\mathscr{S}_{\alpha,s^{\natural}}^{p,p}\left(\R^{2}\right)}\leq\vertiii{\smash{S^{\left(\delta\right)}}}\cdot\left\Vert \left(u_{v}^{s}\cdot\smash{c_{k}^{\left(v\right)}}\right)_{v\in V,\,k\in\Z^{2}}\right\Vert _{\ell^{p}}$
is trivial.
\item We have $\left\Vert \left(u_{v}^{s}\cdot\smash{c_{k}^{\left(v\right)}}\right)_{v\in V,\,k\in\Z^{2}}\right\Vert _{\ell^{p}}<\infty$.
In this case, we get $c\in C_{u^{s^{\natural}}}^{p,p}$ and $f=S^{\left(\delta\right)}c$.
Therefore, we see $\left\Vert f\right\Vert _{\mathscr{S}_{\alpha,s^{\natural}}^{p,p}\left(\R^{2}\right)}\leq\vertiii{\smash{S^{\left(\delta\right)}}}\cdot\left\Vert c\right\Vert _{C_{u^{s^{\natural}}}^{p,p}}=\vertiii{\smash{S^{\left(\delta\right)}}}\cdot\left\Vert \left(u_{v}^{s}\cdot\smash{c_{k}^{\left(v\right)}}\right)_{v\in V,\,k\in\Z^{2}}\right\Vert _{\ell^{p}}$.
\end{casenv}
All in all, we have thus established

\[
\left\Vert f\right\Vert _{\mathscr{S}_{\alpha,s^{\natural}}^{p,p}\left(\R^{2}\right)}\leq\vertiii{\smash{S^{\left(\delta\right)}}}\cdot\left\Vert f\right\Vert _{\ast,p,s,\delta}\qquad\forall f\in\mathscr{S}_{\alpha,s^{\natural}}^{p,p}\left(\R^{2}\right).
\]

Next, note that the considerations from the preceding paragraph with
the choice $p=2$ and $s=0$ also show that $S^{\left(\delta\right)}:\ell^{2}\left(V\times\Z^{2}\right)\to\mathscr{S}_{\alpha,0}^{2,2}\left(\R^{2}\right)$
is well-defined and bounded. But \cite[Lemma 6.10]{DecompositionEmbedding}
yields $\mathscr{S}_{\alpha,0}^{2,2}\left(\R^{2}\right)=L^{2}\left(\R^{2}\right)$
with equivalent norms. Since we saw above that $C_{u^{s^{\natural}}}^{p,p}\hookrightarrow\ell^{2}\left(V\times\Z^{2}\right)$
for all $p\leq2$ and $s\geq0$, this implies in particular that the
series defining $S^{\left(\delta\right)}c$ converges unconditionally
in $L^{2}\left(\R^{2}\right)$ for arbitrary $c\in C_{u^{s^{\natural}}}^{p,p}$,
for arbitrary $s\in\left[0,s^{\left(0\right)}\right]$ and $p\in\left[p_{0},2\right]$.

But from the atomic decomposition property of ${\rm SH}_{\alpha}\left(\varphi,\psi;\delta\right)$,
we also know that there is a bounded coefficient operator $C^{\left(\delta\right)}:\mathscr{S}_{\alpha,s^{\natural}}^{p,p}\left(\R^{2}\right)\to C_{u^{s^{\natural}}}^{p,p}$
satisfying $S^{\left(\delta\right)}\circ C^{\left(\delta\right)}=\identity_{\mathscr{S}_{\alpha,s^{\natural}}^{p,p}}$.
Thus, for arbitrary $f\in\mathscr{S}_{\alpha,s^{\natural}}^{p,p}\left(\R^{2}\right)$
and $e=\left(e_{v,k}\right)_{v\in V,k\in\Z^{2}}:=C^{\left(\delta\right)}f\in C_{u^{s^{\natural}}}^{p,p}$,
we have $f=S^{\left(\delta\right)}e=\sum_{\left(v,k\right)\in V\times\Z^{2}}e_{k}^{\left(v\right)}\cdot\gamma^{\left[v,k,\delta\right]}\in\Omega_{1}$,
where the series converges unconditionally in $L^{2}\left(\R^{2}\right)$
(and in $\mathscr{S}_{\alpha,s^{\natural}}^{p,p}\left(\R^{2}\right)$).
In particular, we get
\[
\left\Vert f\right\Vert _{\ast,p,s,\delta}\leq\left\Vert \left(u_{v}^{s}\cdot\smash{e_{k}^{\left(v\right)}}\right)_{v\in V,\,k\in\Z^{2}}\right\Vert _{\ell^{p}}=\left\Vert e\right\Vert _{C_{u^{s^{\natural}}}^{p,p}}\leq\vertiii{\smash{C^{\left(\delta\right)}}}\cdot\left\Vert f\right\Vert _{\mathscr{S}_{\alpha,s^{\natural}}^{p,p}}<\infty,
\]
as well as 
\begin{align*}
\left\Vert f\right\Vert _{L^{2}\left(\R^{2}\right)} & \lesssim\left\Vert f\right\Vert _{\mathscr{S}_{\alpha,0}^{2,2}}\leq\vertiii{\smash{S^{\left(\delta\right)}}}_{\ell^{2}\to\mathscr{S}_{\alpha,0}^{2,2}}\cdot\left\Vert e\right\Vert _{C_{u^{0}}^{2,2}}=\vertiii{\smash{S^{\left(\delta\right)}}}_{\ell^{2}\to\mathscr{S}_{\alpha,0}^{2,2}}\cdot\left\Vert e\right\Vert _{\ell^{2}}\\
 & \leq\vertiii{\smash{S^{\left(\delta\right)}}}_{\ell^{2}\to\mathscr{S}_{\alpha,0}^{2,2}}\cdot\left\Vert e\right\Vert _{C_{u^{s^{\natural}}}^{p,p}}\leq\vertiii{\smash{S^{\left(\delta\right)}}}_{\ell^{2}\to\mathscr{S}_{\alpha,0}^{2,2}}\cdot\vertiii{\smash{C^{\left(\delta\right)}}}\cdot\left\Vert f\right\Vert _{\mathscr{S}_{\alpha,s^{\natural}}^{p,p}}<\infty
\end{align*}
for all $f\in\mathscr{S}_{\alpha,s^{\natural}}^{p,p}\left(\R^{2}\right)$.
Up to now, we have thus shown $\mathscr{S}_{\alpha,s^{\natural}}^{p,p}\left(\R^{2}\right)=\Omega_{1}$
(with $\Omega_{1}$ as in equation \eqref{eq:AnalysisSynthesisSparsityEquivalentOmegaDefinition})
and $\left\Vert f\right\Vert _{\ast,p,s,\delta}\asymp\left\Vert f\right\Vert _{\mathscr{S}_{\alpha,s^{\natural}}^{p,p}}$
for all $f\in\mathscr{S}_{\alpha,s^{\natural}}^{p,p}\left(\R^{2}\right)$,
with $\left\Vert f\right\Vert _{\ast,p,s,\delta}$ as in equation
\eqref{eq:AnalysisSynthesisSparsitySpecialNormDefinition}. Finally,
we have also shown $\mathscr{S}_{\alpha,s^{\natural}}^{p,p}\left(\R^{2}\right)\hookrightarrow L^{2}\left(\R^{2}\right)$.

\medskip{}

Thus, it remains to show
\[
\Omega_{2}:=\left\{ f\in L^{2}\left(\R^{2}\right)\with\left(u_{v}^{s}\cdot\left\langle f,\,\smash{\gamma^{\left[v,k,\delta\right]}}\right\rangle _{L^{2}}\vphantom{\gamma^{\left[v,k,\delta\right]}}\right)_{v\in V,\,k\in\Z^{2}}\in\ell^{p}\left(V\times\Z^{2}\right)\right\} \overset{!}{=}\mathscr{S}_{\alpha,s^{\natural}}^{p,p}\left(\R^{2}\right),
\]
as well as $\left\Vert f\right\Vert _{\mathscr{S}_{\alpha,s^{\natural}}^{p,p}}\asymp\left\Vert \left(u_{v}^{s}\cdot\left\langle f,\,\smash{\gamma^{\left[v,k,\delta\right]}}\right\rangle _{L^{2}}\vphantom{\gamma^{\left[v,k,\delta\right]}}\right)_{v\in V,\,k\in\Z^{2}}\right\Vert _{\ell^{p}}$
for $f\in\mathscr{S}_{\alpha,s^{\natural}}^{p,p}\left(\R^{2}\right)$.
But Theorem \ref{thm:NicelySimplifiedUnconnectedAlphaShearletFrameConditions}
(applied with $\varphi^{\ast},\psi^{\ast}$ instead of $\varphi,\psi$,
see above) shows that the analysis operator
\[
A^{\left(\delta\right)}:\mathscr{S}_{\alpha,s^{\natural}}^{p,p}\left(\R^{2}\right)\to C_{u^{s^{\natural}}}^{p,p},f\mapsto\left[\left(\smash{\varrho^{\left[v\right]}}\ast f\right)\left(\delta\cdot B_{v}^{-T}k\right)\right]_{v\in V,\,k\in\Z^{2}}
\]
is well-defined and bounded, where (cf.\@ Theorem \ref{thm:NicelySimplifiedAlphaShearletFrameConditions}),
the family $\left(\varrho^{\left[v\right]}\right)_{v\in V}$ is given
by $\varrho^{\left[v\right]}=\left|\det B_{v}\right|^{1/2}\cdot\left(\psi^{\ast}\circ B_{v}^{T}\right)$
for $v\in V_{0}$ and by $\varrho^{\left[0\right]}=\varphi^{\ast}$.
Note that this yields $\widetilde{\varrho^{\left[v\right]}}=\overline{\gamma^{\left[v\right]}}$,
where the family $\left(\gamma^{\left[v\right]}\right)_{v\in V}$
is as in Definition \ref{def:AlphaShearletSystem}.

Now, since we already showed $\mathscr{S}_{\alpha,s^{\natural}}^{p,p}\left(\R^{2}\right)\hookrightarrow L^{2}\left(\R^{2}\right)$
and since $\varrho^{\left[v\right]}\in L^{1}\left(\R^{2}\right)\cap L^{2}\left(\R^{2}\right)$
for all $v\in V$, as we saw at the start of the proof, Lemma \ref{lem:SpecialConvolutionClarification}
yields
\[
\left(\smash{\varrho^{\left[v\right]}\ast f}\right)\left(\delta\cdot B_{v}^{-T}k\right)=\left\langle f,\,L_{\delta\cdot B_{v}^{-T}k}\:\widetilde{\varrho^{\left[v\right]}}\right\rangle =\left\langle f,\,L_{\delta\cdot B_{v}^{-T}k}\:\overline{\gamma^{\left[v\right]}}\right\rangle =\left\langle f,\,L_{\delta\cdot B_{v}^{-T}k}\:\gamma^{\left[v\right]}\right\rangle _{L^{2}}=\left\langle f,\,\gamma^{\left[v,k,\delta\right]}\right\rangle _{L^{2}}
\]
for all $f\in\mathscr{S}_{\alpha,s^{\natural}}^{p,p}\left(\R^{2}\right)$
and $\left(v,k\right)\in V\times\Z^{2}$. We thus see $\mathscr{S}_{\alpha,s^{\natural}}^{p,p}\left(\R^{2}\right)\subset\Omega_{2}$
and 
\[
\left\Vert \left(u_{v}^{s}\cdot\left\langle f,\,\smash{\gamma^{\left[v,k,\delta\right]}}\right\rangle _{L^{2}}\vphantom{\gamma^{\left[v,k,\delta\right]}}\right)_{v\in V,\,k\in\Z^{2}}\right\Vert _{\ell^{p}}=\left\Vert \smash{A^{\left(\delta\right)}}f\right\Vert _{C_{u^{s^{\natural}}}^{p,p}}\leq\vertiii{\smash{A^{\left(\delta\right)}}}\cdot\left\Vert f\right\Vert _{\mathscr{S}_{\alpha,s^{\natural}}^{p,p}}\qquad\forall f\in\mathscr{S}_{\alpha,s^{\natural}}^{p,p}\left(\R^{2}\right).
\]

Conversely, let $f\in\Omega_{2}$ be arbitrary, i.e., $f\in L^{2}\left(\R^{2}\right)$
with $\left(u_{v}^{s}\cdot\left\langle f,\,\smash{\gamma^{\left[v,k,\delta\right]}}\right\rangle _{L^{2}}\vphantom{\gamma^{\left[v,k,\delta\right]}}\right)_{v\in V,\,k\in\Z^{2}}\in\ell^{p}\left(V\times\Z^{2}\right)$.
This means $f\in L^{2}\left(\R^{2}\right)=\mathscr{S}_{\alpha,0}^{2,2}\left(\R^{2}\right)$
and $\left[\left(\varrho^{\left[v\right]}\ast f\right)\left(\delta\cdot B_{v}^{-T}k\right)\right]_{v\in V,\,k\in\Z^{2}}\in C_{u^{s^{\natural}}}^{p,p}$,
again by Lemma \ref{lem:SpecialConvolutionClarification}. Thus, the
consistency statement of Theorem \ref{thm:NicelySimplifiedAlphaShearletFrameConditions}
shows $f\in\mathscr{S}_{\alpha,s^{\natural}}^{p,p}\left(\R^{2}\right)$.
Therefore, $f=R^{\left(\delta\right)}A^{\left(\delta\right)}f$ for
the reconstruction operator $R^{\left(\delta\right)}:C_{u^{s^{\natural}}}^{p,p}\to\mathscr{S}_{\alpha,s^{\natural}}^{p,p}\left(\R^{2}\right)$
that is provided by Theorem \ref{thm:NicelySimplifiedUnconnectedAlphaShearletFrameConditions}
(applied with $\varphi^{\ast},\psi^{\ast}$ instead of $\varphi,\psi$).
Thus,
\[
\left\Vert f\right\Vert _{\mathscr{S}_{\alpha,s^{\natural}}^{p,p}}\leq\vertiii{\smash{R^{\left(\delta\right)}}}\cdot\left\Vert \smash{A^{\left(\delta\right)}}f\right\Vert _{C_{u^{s^{\natural}}}^{p,p}}=\vertiii{\smash{R^{\left(\delta\right)}}}\cdot\left\Vert \left(u_{v}^{s}\cdot\left\langle f,\,\smash{\gamma^{\left[v,k,\delta\right]}}\right\rangle _{L^{2}}\vphantom{\gamma^{\left[v,k,\delta\right]}}\right)_{v\in V,\,k\in\Z^{2}}\right\Vert _{\ell^{p}}.
\]

If we apply the preceding considerations for $s=0$ and $p=2$, we
in particular get
\[
\left\Vert f\right\Vert _{L^{2}}\asymp\left\Vert f\right\Vert _{\mathscr{S}_{\alpha,0}^{2,2}}\asymp\left\Vert \left(\left\langle f,\,\smash{\gamma^{\left[v,k,\delta\right]}}\right\rangle _{L^{2}}\vphantom{\gamma^{\left[v,k,\delta\right]}}\right)_{v\in V,\,k\in\Z^{2}}\right\Vert _{\ell^{2}}\qquad\forall f\in L^{2}\left(\R^{2}\right)=\mathscr{S}_{\alpha,0}^{2,2}\left(\R^{2}\right),
\]
which implies that the $\alpha$-shearlet system ${\rm SH}_{\alpha}\left(\varphi,\psi;\delta\right)=\left(\gamma^{\left[v,k,\delta\right]}\right)_{v\in V,\,k\in\Z^{2}}$
is a frame for $L^{2}\left(\R^{2}\right)$.
\end{proof}

\section{Approximation of cartoon-like functions using \texorpdfstring{$\alpha$}{α}-shearlets}

\label{sec:CartoonLikeApproximation}One of the most celebrated properties
of shearlet systems is that they provide (almost) optimal \emph{approximation
rates} for the model class $\mathcal{E}^{2}\left(\R^{2};\nu\right)$
of \textbf{cartoon-like functions}, which we introduce formally in
Definition \ref{def:CartoonLikeFunction} below. More precisely, this
means (cf.\@ \cite[Theorem 1.3]{CompactlySupportedShearletsAreOptimallySparse}
for the case of compactly supported shearlets) that 
\begin{equation}
\left\Vert f-\smash{f^{\left(N\right)}}\right\Vert _{L^{2}}\leq C\cdot N^{-1}\cdot\left(1+\log N\right)^{3/2}\qquad\forall N\in\N\text{ and }f\in\mathcal{E}^{2}\left(\R^{2};\nu\right),\label{eq:UsualShearletApproximation}
\end{equation}
where $f^{\left(N\right)}$ is the so-called \textbf{$N$-term approximation
of $f$}.

The exact interpretation of this $N$-term approximation, however,
requires some explanation, as was briefly discussed in the introduction:
In general, given a dictionary $\Psi=\left(\psi_{i}\right)_{i\in I}$
in a Hilbert space $\mathcal{H}$ (which is assumed to satisfy $\overline{{\rm span}\left\{ \psi_{i}\with i\in I\right\} }=\mathcal{H}$),
we let
\begin{equation}
\mathcal{H}_{\Psi}^{\left(N\right)}:=\left\{ \sum_{i\in J}\alpha_{i}\psi_{i}\with J\subset I\text{ with }\left|J\right|\leq N\text{ and }\left(\alpha_{i}\right)_{i\in J}\in\Compl^{J}\right\} \label{eq:NElementsLinearCombinationSpaceDefinition}
\end{equation}
denote the sub\emph{set} (which is in general \emph{not} a subspace)
of $\mathcal{H}$ consisting of linear combinations of (at most) $N$
elements of $\Psi$. The usual definition of a (in general non-unique)
best \textbf{$N$-term approximation} to $f\in\mathcal{H}$ is any
$f_{\Psi}^{\left(N\right)}\in\mathcal{H}_{\Psi}^{\left(N\right)}$
satisfying
\[
\left\Vert f-\smash{f_{\Psi}^{\left(N\right)}}\right\Vert =\inf_{g\in\mathcal{H}_{\Psi}^{\left(N\right)}}\left\Vert f-g\right\Vert .
\]
This definition is given for example in \cite[Section 3.1]{ShearletsAndOptimallySparseApproximation}.
Note, however, that in general, it is not clear whether such a best
$N$-term approximation exists. But regardless of whether a best $N$-term
approximation exists or not, we can always define the \textbf{$N$-term
approximation error} as
\begin{equation}
\alpha_{\Psi}^{\left(N\right)}\left(f\right):=\inf_{g\in\mathcal{H}_{\Psi}^{\left(N\right)}}\left\Vert f-g\right\Vert .\label{eq:GeneralNTermApproximationErrorDefinition}
\end{equation}

All in all, the goal of (nonlinear) $N$-term approximations is to
approximate an element $f\in\mathcal{H}$ using only a fixed number
of elements \emph{from the dictionary $\Psi$}. Thus, when one reads
the usual statement that \emph{shearlets provide (almost) optimal
$N$-term approximation rates for cartoon-like functions}, one could
be tempted to think that equation \eqref{eq:UsualShearletApproximation}
has to be understood as
\begin{equation}
\alpha_{\Psi}^{\left(N\right)}\left(f\right)\leq C\cdot N^{-1}\cdot\left(1+\log N\right)^{3/2}\qquad\forall N\in\N\text{ and }f\in\mathcal{E}^{2}\left(\R^{2};\nu\right),\label{eq:UsualShearletApproximationFormal}
\end{equation}
where the dictionary \emph{$\Psi$ is a (suitable) shearlet system}.
This, however, is \emph{not} what is shown e.g.\@ in \cite{ShearletsAndOptimallySparseApproximation}.
What is shown there, instead, is that if $\widetilde{\Psi}=\left(\smash{\widetilde{\psi_{i}}}\right)_{i\in I}$
denotes \emph{the (canonical) dual frame} (in fact, any dual frame
will do) of a suitable shearlet system $\Psi$, then we have
\[
\alpha_{\widetilde{\Psi}}^{\left(N\right)}\left(f\right)\leq C\cdot N^{-1}\cdot\left(1+\log N\right)^{3/2}\qquad\forall N\in\N\text{ and }f\in\mathcal{E}^{2}\left(\R^{2};\nu\right).
\]
This approximation rate \emph{using the dual frame} $\widetilde{\Psi}$
is not completely satisfactory, since for non-tight shearlet systems
$\Psi$, the properties of $\widetilde{\Psi}$ (like smoothness, decay,
etc) are largely unknown. Note that there is no known construction
of a \emph{tight, compactly supported} cone-adapted shearlet frame.
Furthermore, to our knowledge, there is—up to now—nothing nontrivial\footnote{Of course, one knows $\alpha_{\Psi}^{\left(N\right)}\left(f\right)\to0$
as $N\to\infty$, but this holds for every $f\in L^{2}\left(\R^{2}\right)$
and every frame $\Psi$ of $L^{2}\left(\R^{2}\right)$.} known about $\alpha_{\Psi}^{\left(N\right)}\left(f\right)$ for $f\in\mathcal{E}^{2}\left(\R^{2}\right)$
in the case that $\Psi$ is itself a shearlet system, unless $\Psi$
is a \emph{tight} shearlet frame.

\medskip{}

This difference between approximation using the primal and the dual
frame is essentially a difference between analysis and synthesis sparsity:
The usual proof strategy to obtain the approximation rate with respect
to the \emph{dual} frame is to show that the analysis coefficients
$\left(\left\langle f,\,\psi_{i}\right\rangle \right)_{i\in I}$ are
\emph{sparse} in the sense that they lie in some (weak) $\ell^{p}$
space. Then one uses the reconstruction formula
\[
f=\sum_{i\in I}\left\langle f,\,\psi_{i}\right\rangle \widetilde{\psi_{i}}\qquad\text{ using the dual frame }\widetilde{\Psi}=\left(\smash{\widetilde{\psi_{i}}}\right)_{i\in I}
\]
and truncates this series to the $N$ terms with the largest coefficients
$\left|\left\langle f,\,\psi_{i}\right\rangle \right|$. Using the
sparsity of the coefficients, one then obtains the claim. In other
words, since the analysis coefficients with respect to $\Psi=\left(\psi_{i}\right)_{i\in I}$
are the synthesis coefficients with respect to $\widetilde{\Psi}$,
analysis sparsity with respect to $\Psi$ yields synthesis sparsity
with respect to $\widetilde{\Psi}$. Conversely, analysis sparsity
with respect to $\widetilde{\Psi}$ yields synthesis sparsity with
respect to $\Psi$ itself. But since only limited knowledge about
$\widetilde{\Psi}$ is available, this fact is essentially impossible
to apply.

\medskip{}

But our preceding results concerning Banach frames and atomic decompositions
for ($\alpha$)-shearlet smoothness spaces show that \emph{analysis
sparsity is equivalent to synthesis sparsity} (cf.\@ Theorem \ref{thm:AnalysisAndSynthesisSparsityAreEquivalent})
for sufficiently nice and sufficiently densely sampled $\alpha$-shearlet
frames. Using this fact, we will show in this section that we indeed
have
\[
\alpha_{\Psi}^{\left(N\right)}\left(f\right)\leq C_{\varepsilon}\cdot N^{-\left(1-\varepsilon\right)}\qquad\forall N\in\N\text{ and }f\in\mathcal{E}^{2}\left(\R^{2};\nu\right),
\]
where $\varepsilon\in\left(0,1\right)$ can be chosen arbitrarily
and where $\Psi$ is a (suitable) shearlet frame. In fact, we will
also obtain a corresponding statement for $\alpha$-shearlet frames.
Note though that the approximation rate $N^{-\left(1-\varepsilon\right)}$
is slightly inferior to the rate of decay in equation \eqref{eq:UsualShearletApproximationFormal}.
Nevertheless—to the best of our knowledge—this is still the best result
on approximating cartoon-like functions \emph{by shearlets} (instead
of using the \emph{dual frame} of a shearlet frame) which is known.

Our proof strategy is straightforward: The known \emph{analysis-sparsity}
results, in conjunction with our results about Banach frames for shearlet
smoothness spaces, show that $\mathcal{E}^{2}\left(\R^{2};\nu\right)$
is a bounded subset of a certain range of shearlet smoothness spaces.
Thus, using our results about atomic decompositions for these shearlet
smoothness spaces, we get \emph{synthesis sparsity} with respect to
the (primal(!))\@ shearlet frame. We then truncate this (quickly
decaying) series to obtain a good $N$-term approximation.

\medskip{}

We begin our considerations by recalling the notion of $C^{\beta}$-cartoon-like
functions, which were originally introduced (in a preliminary form)
in \cite{DonohoSparseComponentsOfImages}.\pagebreak{}
\begin{defn}
\label{def:CartoonLikeFunction}Fix parameters $0<\varrho_{0}<\varrho_{1}<1$
once and for all.

\begin{itemize}[leftmargin=0.6cm]
\item For $\nu>0$ and $\beta\in\left(1,2\right]$, the set ${\rm STAR}^{\beta}\left(\nu\right)$
is the family of all subsets $\mathcal{B}\subset\left[0,1\right]^{2}$
for which there is some $x_{0}\in\R^{2}$ and a $2\pi$-periodic function
$\varrho:\R\to\left[\varrho_{0},\varrho_{1}\right]$ with $\varrho\in C^{\beta}\left(\R\right)$
such that
\[
\mathcal{B}-x_{0}=\left\{ r\cdot\left(\begin{matrix}\cos\phi\\
\sin\phi
\end{matrix}\right)\with\phi\in\left[0,2\pi\right]\text{ and }0\leq r\leq\varrho\left(\phi\right)\right\} 
\]
and such that the $\beta-1$ \textbf{Hölder semi-norm} $\left[\varrho'\right]_{\beta-1}=\sup_{\phi,\varphi\in\R,\phi\neq\varphi}\smash{\frac{\left|\varrho'\left(\phi\right)-\varrho'\left(\varphi\right)\right|}{\left|\phi-\varphi\right|^{\beta-1}}}\vphantom{\sum^{m}}$
satisfies $\left[\varrho'\right]_{\beta-1}\leq\nu$.
\item For $\nu>0$ and $\beta\in\left(1,2\right]$, the class $\mathcal{E}^{\beta}\left(\R^{2};\nu\right)$
of \textbf{cartoon-like functions with regularity $\beta$} is defined
as
\[
\qquad\mathcal{E}^{\beta}\left(\R^{2};\nu\right):=\left\{ f_{1}+\Indicator_{\mathcal{B}}\cdot f_{2}\with\vphantom{C^{\gamma}}\mathcal{B}\in\smash{{\rm STAR}^{\beta}\left(\nu\right)}\text{ and }f_{i}\in\smash{C_{c}^{\beta}}\left(\smash{\left[0,1\right]^{2}}\right)\text{ with }\left\Vert f_{i}\right\Vert _{C^{\beta}}\leq\min\left\{ 1,\nu\right\} \text{ for }i\in\underline{2}\right\} ,
\]
where $\left\Vert f\right\Vert _{C^{\beta}}=\left\Vert f\right\Vert _{\sup}+\left\Vert \nabla f\right\Vert _{\sup}+\left[\nabla f\right]_{\beta-1}$
and $\left[g\right]_{\beta-1}=\sup_{x,y\in\R^{2},x\neq y}\frac{\left|g\left(x\right)-g\left(y\right)\right|}{\left|x-y\right|^{\beta-1}}$
for $g:\R^{2}\to\Compl^{\ell}$, as well as
\[
C_{c}^{\beta}\left(\smash{\left[0,1\right]^{2}}\right)=\left\{ f\in\smash{C^{\left\lfloor \beta\right\rfloor }}\left(\R^{2}\right)\with\supp f\subset\left[0,1\right]^{2}\text{ and }\left\Vert f\right\Vert _{C^{\beta}}<\infty\right\} .
\]
Finally, we set $\mathcal{E}^{\beta}\left(\R^{2}\right):=\bigcup_{\nu>0}\mathcal{E}^{\beta}\left(\R^{2};\nu\right)$.\qedhere
\end{itemize}
\end{defn}
\begin{rem*}
The definition of ${\rm STAR}^{\beta}\left(\nu\right)$ given here
is slightly more conservative than in \cite[Definition 2.5]{CartoonApproximationWithAlphaCurvelets},
where it is only assumed that $\varrho:\R\to\left[0,\varrho_{1}\right]$
with $0<\varrho_{1}<1$, instead of $\varrho:\R\to\left[\varrho_{0},\varrho_{1}\right]$.
We also note that $\left[\varrho'\right]_{\beta-1}=\left\Vert \varrho''\right\Vert _{\sup}$
in case of $\beta=2$. This is a simple consequence of the definition
of the derivative and of the mean-value theorem. Hence, in case of
$\beta=2$, the definition given here is consistent with (in fact,
slightly stronger than) the one used in \cite[Definition 1.1]{ShearletsAndOptimallySparseApproximation}.

Further, we note that in \cite[Definition 5.9]{AlphaMolecules}, the
class $\mathcal{E}^{\beta}\left(\R^{2}\right)$ is simply defined
as
\[
\left\{ f_{1}+\Indicator_{B}\cdot f_{2}\with f_{1},f_{2}\in C_{c}^{\beta}\left(\smash{\left[0,1\right]^{2}}\right),\:B\subset\left[0,1\right]^{2}\text{ Jordan dom. with regular closed piecewise }C^{\beta}\text{ boundary curve}\right\} .
\]
Even for this—much more general—definition, the authors of \cite{AlphaMolecules}
then invoke the results which are derived in \cite{CartoonApproximationWithAlphaCurvelets}
under the more restrictive assumptions.

This is somewhat unpleasant, but does not need to concern us: In fact,
in the following, we will frequently use the notation $\mathcal{E}^{\beta}\left(\R^{2};\nu\right)$,
but the precise definition of this space is not really used; all that
we need to know is that if $\varphi,\psi$ are suitable shearlet generators,
then the $\beta$-shearlet coefficients $c=\left(c_{j,k,\varepsilon,m}\right)_{j,k,\varepsilon,m}$
of $f\in\mathcal{E}^{\beta}\left(\R^{2};\nu\right)$ satisfy $c\in\ell^{\frac{2}{1+\beta}+\varepsilon}$
for all $\varepsilon>0$, with $\left\Vert f\right\Vert _{\ell^{\frac{2}{1+\beta}+\varepsilon}}\leq C_{\varepsilon,\nu,\beta,\varphi,\psi}$.
Below, we will derive this by combining \cite[Theorem 4.2]{CartoonApproximationWithAlphaCurvelets}
with \cite[Theorem 5.6]{AlphaMolecules}, where \cite[Theorem 5.6]{AlphaMolecules}
does not use the notion of cartoon-like functions at all.
\end{rem*}
As our first main technical result in this section, we show that the
$C^{\beta}$-cartoon-like functions are bounded subsets of suitably
chosen $\alpha$-shearlet smoothness spaces. Once we have developed
this property, we obtain the claimed approximation rate by invoking
the atomic decomposition results from Theorem \ref{thm:ReallyNiceUnconnectedShearletAtomicDecompositionConditions}.
\begin{prop}
\label{prop:CartoonLikeFunctionsBoundedInAlphaShearletSmoothness}Let
$\nu>0$ and $\beta\in\left(1,2\right]$ be arbitrary and let $p\in\left(2/\left(1+\beta\right),\:2\right]$.
Then 
\[
\mathcal{E}^{\beta}\left(\R^{2};\nu\right)\quad\text{ is a bounded subset of }\quad\mathscr{S}_{\beta^{-1},\left(1+\beta^{-1}\right)\left(p^{-1}-2^{-1}\right)}^{p,p}\left(\R^{2}\right).\qedhere
\]
\end{prop}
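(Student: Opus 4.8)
The plan is to reduce the statement to a known analysis-sparsity result for $\beta$-shearlets (specifically \cite[Theorem 4.2]{CartoonApproximationWithAlphaCurvelets} combined with \cite[Theorem 5.6]{AlphaMolecules}) and then transfer this information, first from $\beta$-shearlets to $\alpha$-shearlets (with $\alpha = \beta^{-1}$), and second from a statement about coefficient decay to a statement about membership in the $\alpha$-shearlet smoothness space. The first transfer is necessary because the known sparsity results are formulated for shearlet systems using the scaling matrices ${\rm diag}(2^{\beta j/2}, 2^{j/2})$, whereas our $\alpha$-shearlet systems use ${\rm diag}(2^j, 2^{\alpha j})$ with $\alpha = \beta^{-1}$; the two families are related by a reparametrization of the scale index, which changes the relevant $\ell^p$-index from $\frac{2}{1+\beta}$ to something expressed in terms of $\alpha$. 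This bookkeeping is exactly what Section \ref{sec:CartoonLikeFunctionsAreBoundedInAlphaShearletSmoothness} is set up to handle, so I would invoke the results proved there (in particular the almost-structuredness and equivalence of the reciprocal/$\beta$-shearlet covering with $\CalS^{(\alpha)}$ via Lemma \ref{lem:ReciprocalShearletCoveringIsAlmostStructuredGeneralized}) rather than redoing it.

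Concretely, first I would fix suitable generators $\varphi, \psi$ (compactly supported, sufficiently smooth, with enough vanishing moments, e.g.\ tensor-product type as in Corollary \ref{cor:ReallyNiceAlphaShearletTensorAtomicDecompositionConditions} / Remark \ref{rem:NiceTensorConditionsForUnconnectedCovering}) so that the hypotheses of Theorem \ref{thm:NicelySimplifiedUnconnectedAlphaShearletFrameConditions} are met for the parameter range we need, namely $p_0 := 2/(1+\beta) < p' < p$ for some fixed auxiliary $p'$, $q_0 = p_0$, and the appropriate smoothness window $[s_0, s_1]$ containing $s = (1+\beta^{-1})(p^{-1} - 2^{-1})$. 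Then, for $f \in \mathcal{E}^\beta(\R^2; \nu)$, the cited analysis-sparsity theorem gives that the $\beta$-shearlet analysis coefficients of $f$ lie in $\ell^{r}$ for every $r > \frac{2}{1+\beta}$, with $\ell^r$-norm bounded uniformly in $f \in \mathcal{E}^\beta(\R^2;\nu)$. Translating this into the language of the $\alpha$-shearlet covering (using the reparametrization and the weight $w^s = (2^{ns})_{(n,m,\varepsilon,\delta)}$), this says precisely that the analysis coefficients of $f$ with respect to a suitable $\alpha$-shearlet Banach frame lie in the coefficient space $C^{p,p}_{w^{s'}}$ for the appropriate $s'$ and all $p > p_0$, again with uniformly bounded norm. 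By the consistency statement (item (3)) of Theorem \ref{thm:NicelySimplifiedUnconnectedAlphaShearletFrameConditions}—or equivalently by the Banach-frame characterization—membership of the analysis coefficients in $C^{p,p}_{w^{s'}}$ is equivalent to $f \in \mathscr{S}^{p,p}_{\alpha, s'}(\R^2)$, with comparable norms. Unwinding $s'$ and checking that it equals $(1+\beta^{-1})(p^{-1} - 2^{-1})$ then yields the claim, and the uniform bound on the $\ell^p$-norm of the coefficients (valid for all $f \in \mathcal{E}^\beta(\R^2;\nu)$) gives the uniform bound on $\|f\|_{\mathscr{S}^{p,p}_{\alpha, (1+\beta^{-1})(p^{-1}-2^{-1})}}$, i.e.\ boundedness of the subset $\mathcal{E}^\beta(\R^2;\nu)$.

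The main obstacle I anticipate is twofold. First, one must verify that the known analysis-sparsity result for cartoon-like functions, which is stated in \cite{CartoonApproximationWithAlphaCurvelets, AlphaMolecules} for a particular normalization of $\beta$-shearlets (or $\alpha$-molecules), can be legitimately imported for the specific $\alpha$-shearlet systems arising from our decomposition-space machinery; this requires recognizing our systems as $\alpha$-molecules with sufficient order (using \cite[Theorem 5.6]{AlphaMolecules}, which is stated purely in terms of molecule parameters and does not mention cartoon functions), and tracking the normalization constants $2^{\frac{3}{4}j}$ versus $|\det B_v|^{1/2}$ through the identification. Second—and more delicate—is getting the exponent bookkeeping exactly right: the passage from the $\beta$-shearlet scale index to the $\alpha$-shearlet scale index rescales the weight exponent, and one must confirm that the threshold $r > \frac{2}{1+\beta}$ translates into the condition $p > 2/(1+\beta)$ in the statement and that the resulting smoothness parameter is precisely $(1+\beta^{-1})(p^{-1}-2^{-1})$ rather than something off by a factor involving $1+\alpha = 1+\beta^{-1}$. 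I would carry out this exponent computation carefully, most likely in the deferred Section \ref{sec:CartoonLikeFunctionsAreBoundedInAlphaShearletSmoothness}, and here only state the result and reference it.
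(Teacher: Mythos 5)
Your high-level plan---known curvelet sparsity, $\alpha$-molecule transfer, Banach frame characterization, with the bookkeeping deferred to the appendix---is the right strategy, but the concrete route for $\beta<2$ differs from the paper's in a way worth flagging. The paper's appendix proof \emph{never translates coefficients from a $\beta$-shearlet system to an $\alpha$-shearlet system}. Instead it introduces the reciprocal $\beta$-shearlet covering $\CalS^{(\beta)}$ and proves the space equality $\mathscr{S}_{\beta,s}^{p,q}=\mathscr{S}_{\beta^{-1},s}^{p,q}$ (Lemma \ref{lem:ReciprocalShearletSmoothnessSpacesAreBoring}); it then shows that a \emph{bandlimited} $\beta$-shearlet system (generators $\varphi,\psi\in\Schwartz(\R^2)$ with $\widehat\varphi,\widehat\psi\in\TestFunctionSpace{\R^2}$) is a Banach frame for $\mathscr{S}_{\beta,s}^{p,q}$ (Proposition \ref{prop:BanachFrameForReciprocalShearletSmoothnessSpace}), so the cartoon bound is produced entirely inside $\mathscr{S}_{\beta,s}^{p,p}$ and membership in $\mathscr{S}_{\beta^{-1},s}^{p,p}$ then follows from the set-equality, not from any reindexing of coefficients. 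The bandlimited choice is not cosmetic: it is what lets \cite[Proposition 3.11]{AlphaMolecules} give $\alpha$-molecules of \emph{infinite} order and \cite[Theorem 5.7]{AlphaMolecules} give $(\alpha,k)$-consistency with the curvelet parametrization for all $k>2$, so that \cite[Theorem 5.6]{AlphaMolecules} applies for every $p>2/(1+\beta)$ at once. Your plan instead proposes compactly supported tensor-product generators and a direct translation to the $\alpha$-shearlet indexing ${\rm diag}(2^n,2^{\alpha n})$; that would require showing both that these generators yield $\alpha$-molecules of order high enough for the chosen $p$, and that the resulting parametrization is $(\alpha,k)$-consistent with the curvelet one---neither of which is a black-box citation, since \cite{AlphaMolecules} phrases its shearlet results for the ${\rm diag}(2^{\beta j/2},2^{j/2})$ scaling, not for ${\rm diag}(2^n,2^{\alpha n})$. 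Your approach is viable in principle, but it is genuinely different from the appendix argument you invoke and would require you to supply those molecule-order and consistency lemmas yourself. For $\beta=2$ the two routes collapse: there the paper does exactly what you describe, using compactly supported shearlets and the direct sparsity result of \cite{CompactlySupportedShearletsAreOptimallySparse}.
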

\begin{proof}
Here, we only give the proof for the case $\beta=2$. For $\beta\in\left(1,2\right)$,
the proof is more involved and thus postponed to the appendix (Section
\ref{sec:CartoonLikeFunctionsAreBoundedInAlphaShearletSmoothness}).
The main reason for the additional complications in case of $\beta\in\left(1,2\right)$
is that our proof essentially requires that we already know that there
is some sufficiently nice, cone-adapted $\alpha$-shearlet system
with respect to which the $C^{\beta}$-cartoon-like functions are
analysis sparse (in a suitable ``almost $\ell^{2/\left(1+\beta\right)}$''
sense). In case of $\beta=2$, this is known, since we then have $\alpha=\beta^{-1}=\frac{1}{2}$,
so that the $\alpha$-shearlet systems from Definition \ref{def:AlphaShearletSystem}
coincide with the usual cone-adapted shearlets, cf.\@ Remark \ref{rem:AlphaShearletsYieldUsualShearlets}.
But in case of $\beta\in\left(1,2\right)$, it is only known (cf.\@
\cite[Theorem 5.6]{AlphaMolecules}) that $C^{\beta}$-cartoon-like
functions are analysis sparse with respect to suitable \textbf{$\beta$-shearlet
systems} (cf.\@ Definition \ref{def:BetaShearletSystem} and note
$\beta\notin\left[0,1\right]$, so that the notion of $\beta$-shearlets
does not collide with our notion of $\alpha$-shearlets for $\alpha\in\left[0,1\right]$)
which are different, but closely related to the $\beta^{-1}$-shearlet
systems from Definition \ref{def:AlphaShearletSystem}. Making this
close connection precise is what mainly makes the proof in case of
$\beta\in\left(1,2\right)$ more involved, cf.\@ Section \ref{sec:CartoonLikeFunctionsAreBoundedInAlphaShearletSmoothness}.

Thus, let us consider the case $\beta=2$. Choose $\phi_{0}\in\TestFunctionSpace{\R}$
with $\phi_{0}\geq0$ and $\phi_{0}\not\equiv0$, so that $\widehat{\phi_{0}}\left(0\right)=\left\Vert \phi_{0}\right\Vert _{L^{1}}>0$.
By continuity of $\widehat{\phi_{0}}$, there is thus some $\nu>0$
with $\widehat{\phi_{0}}\left(\xi\right)\neq0$ on $\left[-\nu,\nu\right]$.
Now, define $\phi_{1}:=\phi_{0}\left(3\mybullet/\nu\right)$ and note
that $\phi_{1}\in\TestFunctionSpace{\R}$ with $\widehat{\phi_{1}}\left(\xi\right)=\frac{\nu}{3}\cdot\widehat{\phi_{0}}\left(\nu\xi/3\right)\neq0$
for $\xi\in\left[-3,3\right]$.

Now, set $\varphi:=\phi_{1}\otimes\phi_{1}\in\TestFunctionSpace{\R^{2}}$
and $\psi_{2}:=\phi_{1}$, as well as $\psi_{1}:=\phi_{1}^{\left(8\right)}$,
the $8$-th derivative of $\phi_{1}$. By differentiating under the
integral and by performing partial integration, we get for $0\leq k\leq7$
that
\begin{equation}
\frac{\d^{k}}{\d\xi^{k}}\bigg|_{\xi=0}\widehat{\psi_{1}}=\frac{\d^{k}}{\d\xi^{k}}\bigg|_{\xi=0}\widehat{\phi_{1}^{\left(8\right)}}=\int_{\R}\phi_{1}^{\left(8\right)}\left(x\right)\cdot\left(-2\pi ix\right)^{k}\d x=\left(-1\right)^{8}\cdot\int_{\R}\phi_{1}\left(x\right)\cdot\frac{\d^{8}\left(-2\pi ix\right)^{k}}{\d x^{8}}\d x=0,\label{eq:CartoonLikeFunctionsBoundedVanishingMoments}
\end{equation}
since $\frac{\d^{8}\left(-2\pi ix\right)^{k}}{\d x^{8}}\equiv0$ for
$0\leq k\leq7$. Next, observe $\widehat{\varphi}\left(\xi\right)=\widehat{\phi_{1}}\left(\xi_{1}\right)\cdot\widehat{\phi_{1}}\left(\xi_{2}\right)\neq0$
for $\xi\in\left[-3,3\right]^{2}\supset\left[-1,1\right]^{2}$, as
well as $\widehat{\psi_{2}}\left(\xi\right)\neq0$ for $\xi\in\left[-3,3\right]$
and finally 
\[
\widehat{\psi_{1}}\left(\xi\right)=\left(2\pi i\xi\right)^{8}\cdot\widehat{\phi_{1}}\left(\xi\right)=\left(2\pi\right)^{8}\cdot\xi^{8}\cdot\widehat{\phi_{1}}\left(\xi\right)\neq0\text{ for }\xi\in\left[-3,3\right]\setminus\left\{ 0\right\} ,
\]
which in particular implies $\widehat{\psi_{1}}\left(\xi\right)\neq0$
for $\frac{1}{3}\leq\left|\xi\right|\leq3$.

Now, setting $\psi:=\psi_{1}\otimes\psi_{2}$, we want to verify that
$\varphi,\psi$ satisfy the assumptions of Theorem \ref{thm:AnalysisAndSynthesisSparsityAreEquivalent}
with the choices $\varepsilon=\frac{1}{4}$, $p_{0}=\frac{2}{3}$,
$s^{\left(0\right)}=0$ and $\alpha=\frac{1}{2}$. Since we have $\varphi\in\TestFunctionSpace{\R^{2}}$
and $\psi_{1},\psi_{2}\in\TestFunctionSpace{\R}$ and since $\widehat{\psi_{2}}\left(\xi\right)\neq0$
for $\xi\in\left[-3,3\right]$ and $\widehat{\psi_{1}}\left(\xi\right)\neq0$
for $\frac{1}{3}\leq\left|\xi\right|\leq3$ and since finally $\widehat{\varphi}\left(\xi\right)\neq0$
for $\xi\in\left[-1,1\right]^{2}$, Remark \ref{rem:NiceTensorConditionsForUnconnectedCovering}
and Corollaries \ref{cor:ReallyNiceAlphaShearletTensorAtomicDecompositionConditions}
and \ref{cor:ReallyNiceAlphaShearletTensorBanachFrameConditions}
show that all we need to check is $\frac{\d^{\ell}}{\d\xi^{\ell}}\big|_{\xi=0}\widehat{\psi_{1}}=0$
for all $\ell=0,\dots,N_{0}+\left\lceil \Lambda_{1}\right\rceil -1$
and all $\ell=0,\dots,N_{0}+\left\lceil M_{1}\right\rceil -1$, where
$N_{0}=\left\lceil p_{0}^{-1}\cdot\left(2+\varepsilon\right)\right\rceil =\left\lceil 27/8\right\rceil =4$,
\begin{align*}
\Lambda_{1} & =\varepsilon+p_{0}^{-1}+\max\left\{ 0,\,\left(1+\alpha\right)\left(p_{0}^{-1}-1\right)\right\} =\frac{5}{2}\leq3,\\
\text{and }M_{1} & =\varepsilon+p_{0}^{-1}+\max\left\{ 0,\,\left(1+\alpha\right)\left(p_{0}^{-1}-2^{-1}\right)\right\} =\frac{1}{4}+3\leq4,
\end{align*}
cf.\@ Theorems \ref{thm:AnalysisAndSynthesisSparsityAreEquivalent},
\ref{thm:NicelySimplifiedAlphaShearletFrameConditions}, and \ref{thm:ReallyNiceShearletAtomicDecompositionConditions}.
Hence, $N_{0}+\left\lceil \Lambda_{1}\right\rceil -1\leq7$ and $N_{0}+\left\lceil M_{1}\right\rceil -1\leq7$,
so that equation \eqref{eq:CartoonLikeFunctionsBoundedVanishingMoments}
shows that $\varphi,\psi$ indeed satisfy the assumptions of Theorem
\ref{thm:AnalysisAndSynthesisSparsityAreEquivalent}. That theorem
yields because of $\alpha=\frac{1}{2}$ some $\delta_{0}\in\left(0,1\right]$
such that the following hold for all $0<\delta\leq\delta_{0}$:

\begin{itemize}[leftmargin=0.6cm]
\item The shearlet system ${\rm SH}_{1/2}\left(\varphi,\psi;\delta\right)=\left(\gamma^{\left[v,k,\delta\right]}\right)_{v\in V,k\in\Z^{2}}$
is a frame for $L^{2}\left(\R^{2}\right)$.
\item Since $p\in\left(2/\left(1+\beta\right),\:2\right]\subset\left[\frac{2}{3},2\right]=\left[p_{0},2\right]$,
we have 
\[
\qquad\mathscr{S}_{\beta^{-1},\,\left(1+\beta^{-1}\right)\left(\frac{1}{p}-\frac{1}{2}\right)}^{p,p}\left(\R^{2}\right)=\mathscr{S}_{\alpha,\left(1+\alpha\right)\left(\frac{1}{p}-\frac{1}{2}\right)}^{p,p}\left(\R^{2}\right)=\left\{ f\in L^{2}\left(\R^{2}\right)\with\left(\left\langle f,\,\smash{\gamma^{\left[v,k,\delta\right]}}\right\rangle _{L^{2}}\vphantom{\gamma^{\left[v,k,\delta\right]}}\right)_{v\in V,\,k\in\Z^{2}}\in\ell^{p}\left(V\times\Z^{2}\right)\right\} 
\]
and there is a constant $C_{p}=C_{p}\left(\varphi,\psi,\delta\right)>0$
such that
\[
\left\Vert f\right\Vert _{\mathscr{S}_{\beta^{-1},\,\left(1+\beta^{-1}\right)\left(p^{-1}-2^{-1}\right)}^{p,p}}\leq C_{p}\cdot\left\Vert \left(\left\langle f,\,\smash{\gamma^{\left[v,k,\delta\right]}}\right\rangle _{L^{2}}\vphantom{\gamma^{\left[v,k,\delta\right]}}\right)_{v\in V,\,k\in\Z^{2}}\right\Vert _{\ell^{p}}\qquad\forall f\in\mathscr{S}_{\beta^{-1},\,\left(1+\beta^{-1}\right)\left(p^{-1}-2^{-1}\right)}^{p,p}\left(\R^{2}\right).
\]
\end{itemize}
Thus, since we clearly have $\mathcal{E}^{2}\left(\R^{2};\nu\right)\subset L^{2}\left(\R^{2}\right)$,
it suffices to show that there is a constant $C=C\left(p,\nu,\delta,\varphi,\psi\right)>0$
such that $\left\Vert A^{\left(\delta\right)}f\right\Vert _{\ell^{p}}\leq C<\infty$
for all $f\in\mathcal{E}^{2}\left(\R^{2};\nu\right)$, where $A^{\left(\delta\right)}f:=\left(\left\langle f,\,\gamma^{\left[v,k,\delta\right]}\right\rangle _{L^{2}}\right)_{v\in V,\,k\in\Z^{2}}$.
Here, we note that the sequence $A^{\left(\delta\right)}f$ just consists
of the shearlet coefficients of $f$ (up to a trivial reordering in
the translation variable $k$) with respect to the shearlet frame
${\rm SH}_{\frac{1}{2}}\left(\varphi,\psi;\,\delta\right)={\rm SH}\left(\varphi,\psi,\smash{\theta};\,\delta\right)$
with $\theta\left(x,y\right)=\psi\left(y,x\right)$, cf.\@ Remark
\ref{rem:AlphaShearletsYieldUsualShearlets}. Hence, there is hope
to derive the estimate $\left\Vert A^{\left(\delta\right)}f\right\Vert _{\ell^{p}}\leq C$
as a consequence of \cite[equation (3)]{CompactlySupportedShearletsAreOptimallySparse},
which states that
\begin{equation}
\sum_{n>N}\left|\lambda\left(f\right)\right|_{n}^{2}\leq C\cdot N^{-2}\cdot\left(1+\log N\right)^{3}\qquad\forall N\in\N\text{ and }f\in\mathcal{E}^{2}\left(\R^{2};\nu\right),\label{eq:ShearletCoefficientDecay}
\end{equation}
where $\left(\left|\lambda\left(f\right)\right|_{n}\right)_{n\in\N}$
are the absolute values of the shearlet coefficients of $f$ with
respect to the shearlet frame ${\rm SH}\left(\varphi,\psi,\theta;\,\delta\right)$,
ordered nonincreasingly. In particular, $\left\Vert A^{\left(\delta\right)}f\right\Vert _{\ell^{p}}=\left\Vert \left[\left|\lambda\left(f\right)\right|_{n}\right]_{n\in\N}\right\Vert _{\ell^{p}}$.

\medskip{}

Note though that in order for \cite[equation (3)]{CompactlySupportedShearletsAreOptimallySparse}
to be applicable, we need to verify that $\varphi,\psi,\theta$ satisfy
the assumptions of \cite[Theorem 1.3]{CompactlySupportedShearletsAreOptimallySparse},
i.e., $\varphi,\psi,\theta$ need to be compactly supported (which
is satisfied) and

\begin{enumerate}
\item $\left|\widehat{\psi}\left(\xi\right)\right|\lesssim\min\left\{ 1,\left|\xi_{1}\right|^{\sigma}\right\} \cdot\min\left\{ 1,\left|\xi_{1}\right|^{-\tau}\right\} \cdot\min\left\{ 1,\left|\xi_{2}\right|^{-\tau}\right\} $
and
\item $\left|\frac{\partial}{\partial\xi_{2}}\widehat{\psi}\left(\xi\right)\right|\leq\left|h\left(\xi_{1}\right)\right|\cdot\left(1+\frac{\left|\xi_{2}\right|}{\left|\xi_{1}\right|}\right)^{-\tau}$
for some $h\in L^{1}\left(\R\right)$
\end{enumerate}
for certain (arbitrary) $\sigma>5$ and $\tau\geq4$. Furthermore,
$\theta$ needs to satisfy the same estimate with interchanged roles
of $\xi_{1},\xi_{2}$. But in view of $\theta\left(x,y\right)=\psi\left(y,x\right)$,
it suffices to establish the estimates for $\psi$. To this end, recall
from above that $\widehat{\psi_{1}}\in C^{\infty}\left(\R\right)$
is analytic with $\frac{\d^{k}}{\d\xi^{k}}\bigg|_{\xi=0}\widehat{\psi_{1}}=0$
for $0\leq k\leq7$. This easily implies $\left|\widehat{\psi_{1}}\left(\xi\right)\right|\lesssim\left|\xi\right|^{8}$
for $\left|\xi\right|\leq1$, see e.g.\@ the proof of Corollary \ref{cor:ReallyNiceAlphaShearletTensorAtomicDecompositionConditions},
in particular equation \eqref{eq:VanishingFourierDerivativesYieldFourierDecayAtOrigin}.
Furthermore, since $\psi_{1},\psi_{2}\in\TestFunctionSpace{\R}$,
we get for arbitrary $K\in\N$ that $\left|\widehat{\psi_{i}}\left(\xi\right)\right|\lesssim\left(1+\left|\xi\right|\right)^{-K}$
for $i\in\left\{ 1,2\right\} $. Altogether, we conclude $\left|\widehat{\psi_{1}}\left(\xi\right)\right|\lesssim\min\left\{ 1,\left|\xi\right|^{8}\right\} \cdot\left(1+\left|\xi\right|\right)^{-8}$
and likewise $\left|\widehat{\psi_{2}}\left(\xi\right)\right|\lesssim\left(1+\left|\xi\right|\right)^{-8}$
for all $\xi\in\R$, so that the first estimate is fulfilled for $\sigma:=8>5$
and $\tau:=8\geq4$.

Next, we observe for $\xi\in\R^{2}$ with $\xi_{1}\neq0$ that
\[
1+\frac{\left|\xi_{2}\right|}{\left|\xi_{1}\right|}\leq\left(1+\left|\xi_{2}\right|\right)\cdot\left(1+\left|\xi_{1}\right|^{-1}\right)\leq2\cdot\left(1+\left|\xi_{2}\right|\right)\cdot\max\left\{ 1,\,\left|\xi_{1}\right|^{-1}\right\} 
\]
and thus
\[
\left(1+\left|\xi_{2}\right|/\left|\xi_{1}\right|\right)^{-8}\geq2^{-8}\cdot\left(1+\left|\xi_{2}\right|\right)^{-8}\cdot\min\left\{ 1,\,\left|\xi_{1}\right|^{8}\right\} .
\]
But since we have $\widehat{\psi_{2}}\in\Schwartz\left(\R\right)$
and thus $\left|\widehat{\psi_{2}}'\left(\xi\right)\right|\lesssim\left(1+\left|\xi\right|\right)^{-8}$,
this implies
\begin{align*}
\left|\frac{\partial}{\partial\xi_{2}}\widehat{\psi}\left(\xi\right)\right|=\left|\widehat{\psi_{1}}\left(\xi_{1}\right)\right|\cdot\left|\widehat{\psi_{2}}'\left(\xi_{2}\right)\right| & \lesssim\left(1+\left|\xi_{1}\right|\right)^{-8}\cdot\left(1+\left|\xi_{2}\right|\right)^{-8}\cdot\min\left\{ 1,\left|\xi_{1}\right|^{8}\right\} \\
 & \lesssim\left(1+\left|\xi_{1}\right|\right)^{-8}\cdot\left(1+\left|\xi_{2}\right|/\left|\xi_{1}\right|\right)^{-8},
\end{align*}
so that the second condition from above is satisfied for our choice
$\tau=8$, with $h\left(\xi_{1}\right)=\left(1+\left|\xi_{1}\right|\right)^{-8}$.

\medskip{}

Consequently, we conclude from \cite[equation (3)]{CompactlySupportedShearletsAreOptimallySparse}
that equation \eqref{eq:ShearletCoefficientDecay} is satisfied. Now,
for arbitrary $M\in\N_{\geq4}$, we apply equation \eqref{eq:ShearletCoefficientDecay}
with $N=\left\lceil \frac{M}{2}\right\rceil \geq2$, noting that $\left\lceil \frac{M}{2}\right\rceil \leq\frac{M}{2}+1\leq\frac{M}{2}+\frac{M}{4}=\frac{3}{4}M\leq M$
to deduce
\begin{align*}
\frac{1}{4}M\cdot\left|\lambda\left(f\right)\right|_{M}^{2}\leq\left|\lambda\left(f\right)\right|_{M}^{2}\cdot\left(M-\left\lceil M/2\right\rceil \right) & \leq\sum_{\left\lceil M/2\right\rceil <n\leq M}\left|\lambda\left(f\right)\right|_{n}^{2}\leq\sum_{n>\left\lceil M/2\right\rceil }\left|\lambda\left(f\right)\right|_{n}^{2}\\
 & \leq C\cdot\left\lceil M/2\right\rceil ^{-2}\cdot\left(1+\log\left\lceil M/2\right\rceil \right)^{3}\\
 & \leq4C\cdot M^{-2}\cdot\left(1+\log M\right)^{3},
\end{align*}
which implies $\left|\lambda\left(f\right)\right|_{M}\leq\sqrt{16C}\cdot\left[M^{-1}\cdot\left(1+\log M\right)\right]^{3/2}$
for $M\in\N_{\geq4}$. But since $\mathcal{E}^{2}\left(\R^{2};\nu\right)\subset L^{2}\left(\R^{2}\right)$
is bounded and since the elements of the shearlet frame ${\rm SH}\left(\varphi,\psi,\smash{\theta};\,\delta\right)$
are $L^{2}$-bounded, we have $\left\Vert \left[\left|\lambda\left(f\right)\right|_{n}\right]_{n\in\N}\right\Vert _{\ell^{\infty}}\lesssim1$,
so that we get $\left|\lambda\left(f\right)\right|_{M}\lesssim\left[M^{-1}\cdot\left(1+\log M\right)\right]^{3/2}$
for all $M\in\N$ and all $f\in\mathcal{E}^{2}\left(\R^{2};\nu\right)$,
where the implied constant is independent of the precise choice of
$f$. But this easily yields $\left\Vert \left[\left|\lambda\left(f\right)\right|_{M}\right]_{M\in\N}\right\Vert _{\ell^{p}}\lesssim1$,
since $p\in\left(\frac{2}{3},2\right]=\left(2/\left(1+\beta\right),\,2\right]$.
Here, the implied constant might depend on $\varphi,\psi,\delta,p,\nu$,
but not on $f\in\mathcal{E}^{2}\left(\R^{2};\nu\right)$.
\end{proof}
We can now easily derive the claimed statement about the approximation
rate of functions $f\in\mathcal{E}^{\beta}\left(\R^{2};\nu\right)$
with respect to $\beta^{-1}$-shearlet systems.
\begin{thm}
\label{thm:CartoonApproximationWithAlphaShearlets}Let $\beta\in\left(1,2\right]$
be arbitrary. Assume that $\varphi,\psi\in L^{1}\left(\R^{2}\right)$
satisfy the conditions of Theorem \ref{thm:ReallyNiceUnconnectedShearletAtomicDecompositionConditions}
for $\alpha=\beta^{-1}$, $p_{0}=q_{0}=\frac{2}{1+\beta}$, $s_{0}=0$
and $s_{1}=\frac{1}{2}\left(1+\beta\right)$ and some $\varepsilon\in\left(0,1\right]$
(see Remark \ref{rem:CartoonApproximationConstantSimplification}
for simplified conditions which ensure that these assumptions are
satisfied).

Then there is some $\delta_{0}=\delta_{0}\left(\varepsilon,\beta,\varphi,\psi\right)>0$
such that for all $0<\delta\leq\delta_{0}$ and arbitrary $f\in\mathcal{E}^{\beta}\left(\R^{2}\right)$
and $N\in\N$, there is a function $f^{\left(N\right)}\in L^{2}\left(\R^{2}\right)$
which is a linear combination of $N$ elements of the $\beta^{-1}$-shearlet
frame $\Psi={\rm SH}_{\beta^{-1}}\left(\varphi,\psi;\delta\right)=\left(\gamma^{\left[v,k,\delta\right]}\right)_{v\in V,k\in\Z^{2}}$
such that the following holds:

For arbitrary $\sigma,\nu>0$, there is a constant $C=C\left(\beta,\delta,\nu,\sigma,\varphi,\psi\right)>0$
satisfying
\[
\left\Vert f-\smash{f^{\left(N\right)}}\right\Vert _{L^{2}}\leq C\cdot N^{-\left(\frac{\beta}{2}-\sigma\right)}\qquad\forall f\in\mathcal{E}^{\beta}\left(\R^{2};\nu\right)\text{ and }N\in\N.\qedhere
\]
\end{thm}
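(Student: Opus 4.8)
The plan is to combine the ``synthesis sparsity'' half of Theorem~\ref{thm:AnalysisAndSynthesisSparsityAreEquivalent} (equivalently, the atomic decomposition provided by Theorem~\ref{thm:ReallyNiceUnconnectedShearletAtomicDecompositionConditions}) with the boundedness of $\mathcal{E}^{\beta}\left(\R^{2};\nu\right)$ in a suitable $\alpha$-shearlet smoothness space established in Proposition~\ref{prop:CartoonLikeFunctionsBoundedInAlphaShearletSmoothness}. First I would fix $\alpha:=\beta^{-1}\in\left[\tfrac12,1\right)$ and choose the free sparsity exponent $p$ to make the target approximation rate come out right: given $\sigma>0$, pick $p\in\left(\tfrac{2}{1+\beta},\,2\right]$ close enough to $\tfrac{2}{1+\beta}$ that $p^{-1}-2^{-1}\geq\tfrac{\beta}{2}-\sigma$. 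Set $s^{\natural}:=\left(1+\alpha\right)\left(p^{-1}-2^{-1}\right)$. By Proposition~\ref{prop:CartoonLikeFunctionsBoundedInAlphaShearletSmoothness}, $\mathcal{E}^{\beta}\left(\R^{2};\nu\right)$ is a bounded subset of $\mathscr{S}_{\alpha,s^{\natural}}^{p,p}\left(\R^{2}\right)$; call the bound $C_{\nu,\beta,p}$.

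Next I would invoke Theorem~\ref{thm:ReallyNiceUnconnectedShearletAtomicDecompositionConditions} with $p_{0}=q_{0}=\tfrac{2}{1+\beta}$, $s_{0}=0$ and $s_{1}=\tfrac12\left(1+\beta\right)$ (noting $s^{\natural}\in\left[s_{0},s_{1}\right]$ for the chosen range of $p$), which yields $\delta_{0}>0$ such that for $0<\delta\leq\delta_{0}$ the system ${\rm SH}_{\beta^{-1}}\left(\varphi,\psi;\delta\right)=\left(\gamma^{\left[v,k,\delta\right]}\right)_{v\in V,k\in\Z^{2}}$ is an atomic decomposition of $\mathscr{S}_{\alpha,s^{\natural}}^{p,p}\left(\R^{2}\right)$, with a bounded coefficient operator $C^{\left(\delta\right)}$ and synthesis operator $S^{\left(\delta\right)}$ satisfying $S^{\left(\delta\right)}\circ C^{\left(\delta\right)}=\identity$. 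For $f\in\mathcal{E}^{\beta}\left(\R^{2};\nu\right)$ set $c=\left(c_{k}^{\left(v\right)}\right)=C^{\left(\delta\right)}f$, so that $f=\sum_{v,k}c_{k}^{\left(v\right)}\gamma^{\left[v,k,\delta\right]}$ with unconditional $L^2$-convergence (as in the proof of Theorem~\ref{thm:AnalysisAndSynthesisSparsityAreEquivalent}, using $\mathscr{S}_{\alpha,0}^{2,2}=L^2$ and the embedding $C_{u^{s^{\natural}}}^{p,p}\hookrightarrow\ell^2$), and with $\left\Vert c\right\Vert_{C_{u^{s^{\natural}}}^{p,p}}\leq\vertiii{C^{\left(\delta\right)}}\cdot C_{\nu,\beta,p}$. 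Since $u_v\geq1$ and $s^{\natural}\geq0$, this gives in particular $\left\Vert\left(c_{k}^{\left(v\right)}\right)\right\Vert_{\ell^p}\leq\vertiii{C^{\left(\delta\right)}}\cdot C_{\nu,\beta,p}=:D$.

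Then I would perform the standard truncation argument. Enumerate the coefficient sequence as $\left(c_n\right)_{n\in\N}$ and let $\left(c_n^{\ast}\right)_{n\in\N}$ be its nonincreasing rearrangement; let $f^{\left(N\right)}$ be the partial sum over the indices of the $N$ largest $\left|c_k^{\left(v\right)}\right|$, which is a linear combination of $N$ elements of $\Psi$. Using boundedness of $S^{\left(\delta\right)}:\ell^2\left(V\times\Z^2\right)\to\mathscr{S}_{\alpha,0}^{2,2}\left(\R^2\right)\cong L^2\left(\R^2\right)$ and $f-f^{\left(N\right)}=S^{\left(\delta\right)}\left(c\cdot\Indicator_{\text{discarded}}\right)$, one gets $\left\Vert f-f^{\left(N\right)}\right\Vert_{L^2}\lesssim\left(\sum_{n>N}\left(c_n^{\ast}\right)^2\right)^{1/2}$. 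The elementary Stechkin-type estimate $c_n^{\ast}\leq D\,n^{-1/p}$ (from $n\left(c_n^{\ast}\right)^p\leq\sum_{m\leq n}\left(c_m^{\ast}\right)^p\leq D^p$) then yields $\sum_{n>N}\left(c_n^{\ast}\right)^2\leq D^2\sum_{n>N}n^{-2/p}\lesssim D^2\,N^{1-2/p}$ since $2/p>1$, hence $\left\Vert f-f^{\left(N\right)}\right\Vert_{L^2}\lesssim D\cdot N^{-\left(p^{-1}-2^{-1}\right)}\leq D\cdot N^{-\left(\frac{\beta}{2}-\sigma\right)}$, with all implied constants depending only on $\beta,\delta,\nu,\sigma,\varphi,\psi$. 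This proves the claim.

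I do not expect any serious obstacle here: the hard analytic work (verifying the conditions of the atomic decomposition theorem via Lemma~\ref{lem:MainShearletLemma}, and establishing the cartoon boundedness in the $\beta^{-1}$-shearlet smoothness space) has already been done in the earlier results we are permitted to cite. The only points requiring care are purely bookkeeping: checking that the chosen $p$ indeed lies in the admissible window $\left(\tfrac{2}{1+\beta},2\right]$ and that $s^{\natural}\in\left[0,\tfrac12(1+\beta)\right]$ so that the atomic decomposition from Theorem~\ref{thm:ReallyNiceUnconnectedShearletAtomicDecompositionConditions} applies with a $\delta_0$ independent of $p$ in that range; and making sure the unconditional $L^2$-convergence of the atomic expansion is genuinely available (it is, by the same reasoning as in the proof of Theorem~\ref{thm:AnalysisAndSynthesisSparsityAreEquivalent}), so that truncation in $\ell^2$ is legitimate. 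The mild loss $N^{-(\beta/2-\sigma)}$ instead of the optimal $N^{-\beta/2}$ up to log factors is exactly the price of passing through $\ell^p$ for $p$ slightly above the critical exponent $\tfrac{2}{1+\beta}$.
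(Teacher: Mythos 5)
Your proposal is correct and follows essentially the same route as the paper's own proof: fix $\alpha=\beta^{-1}$, choose $p\in\bigl(\tfrac{2}{1+\beta},2\bigr]$ with $p^{-1}-2^{-1}\geq\tfrac{\beta}{2}-\sigma$ so that $s^{\natural}=(1+\alpha)(p^{-1}-2^{-1})\in[s_0,s_1]$, invoke the atomic decomposition from Theorem~\ref{thm:ReallyNiceUnconnectedShearletAtomicDecompositionConditions}, bound the coefficient sequence in $\ell^p$ via Proposition~\ref{prop:CartoonLikeFunctionsBoundedInAlphaShearletSmoothness}, and truncate to the $N$ largest coefficients with a Stechkin-type tail estimate. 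The only differences are cosmetic: the paper cites the Stechkin lemma directly where you re-derive the bound $c_n^{\ast}\lesssim n^{-1/p}$ and sum the tail, and the paper observes the exact equality $C_{u^{s^{\natural}}}^{p,p}=\ell^p$ (from the remark after Definition~\ref{def:CoefficientSpace}) rather than your one-sided embedding argument via $u_v\geq 1$; both are correct. You might also note explicitly, as the paper does, that a set of $N$ largest coefficients genuinely exists because $c^{(f)}\in\ell^2$.
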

\begin{rem*}
It was shown in \cite[Theorem 2.8]{CartoonApproximationWithAlphaCurvelets}
that \emph{no} dictionary $\Phi$ can achieve an error $\alpha_{\Phi}^{\left(N\right)}\left(f\right)\leq C\cdot N^{-\theta}$
for all $N\in\N$ and $f\in\mathcal{E}^{\beta}\left(\R^{2};\nu\right)$
with $\theta>\frac{\beta}{2}$, as long as one insists on a \emph{polynomial
depth restriction} for forming the $N$-term approximation. In this
sense, the resulting approximation rate is almost optimal. We remark,
however, that it is \emph{not immediately clear} whether the $N$-term
approximation whose existence is claimed by the theorem above can
be chosen to satisfy the polynomial depth search restriction. There
is a long-standing tradition\cite{CandesDonohoCurvelets,CartoonApproximationWithAlphaCurvelets,OptimallySparseMultidimensionalRepresentationUsingShearlets,AlphaMolecules,CompactlySupportedShearletsAreOptimallySparse}
to omit further considerations concerning this question; therefore,
we deferred to Section \ref{sec:PolynomialSearchDepth} the proof
that the above approximation rate can also be achieved using a polynomially
restricted search depth.

For more details on the technical assumption of polynomial depth restriction
in $N$-term approximations, we refer to \cite[Section 2.1.1]{CartoonApproximationWithAlphaCurvelets}.
\end{rem*}
\begin{proof}
Set $\alpha:=\beta^{-1}$. Under the given assumptions, Theorem \ref{thm:ReallyNiceUnconnectedShearletAtomicDecompositionConditions}
ensures that ${\rm SH}_{\alpha}\left(\varphi,\psi;\,\delta\right)$
forms an atomic decomposition for $\mathscr{S}_{\alpha,s}^{p,q}\left(\R^{2}\right)$
for all $p\geq p_{0}$, $q\geq q_{0}$ and $s_{0}\leq s\leq s_{1}$,
for arbitrary $0<\delta\leq\delta_{0}$, where the constant $\delta_{0}=\delta_{0}\left(\alpha,\varepsilon,p_{0},q_{0},s_{0},s_{1},\varphi,\psi\right)=\delta_{0}\left(\varepsilon,\beta,\varphi,\psi\right)>0$
is provided by Theorem \ref{thm:ReallyNiceUnconnectedShearletAtomicDecompositionConditions}.
Fix some $0<\delta\leq\delta_{0}$.

Let $S^{\left(\delta\right)}:C_{u^{0}}^{2,2}\to\mathscr{S}_{\alpha,0}^{2,2}\left(\R^{2}\right)$
and $C^{\left(\delta\right)}:\mathscr{S}_{\alpha,0}^{2,2}\left(\R^{2}\right)\to C_{u^{0}}^{2,2}$
be the synthesis map and the coefficient map whose existence and boundedness
is guaranteed by Theorem \ref{thm:ReallyNiceUnconnectedShearletAtomicDecompositionConditions},
since $2\geq p_{0}=q_{0}$ and since $s_{0}\leq0\leq s_{1}$. Note
directly from Definition \ref{def:CoefficientSpace} that $C_{u^{0}}^{2,2}=\ell^{2}\left(V\times\Z^{2}\right)$
and that $\mathscr{S}_{\alpha,0}^{2,2}\left(\R^{2}\right)=L^{2}\left(\R^{2}\right)$
(cf.\@ \cite[Lemma 6.10]{DecompositionEmbedding}). Now, for arbitrary
$f\in\mathcal{E}^{\beta}\left(\R^{2}\right)\subset L^{2}\left(\R^{2}\right)$,
let 
\[
\left(\smash{c_{j}^{\left(f\right)}}\right)_{j\in V\times\Z^{2}}:=c^{\left(f\right)}:=C^{\left(\delta\right)}f\in\ell^{2}\left(V\times\Z^{2}\right).
\]
Furthermore, for $f\in\mathcal{E}^{\beta}\left(\R^{2}\right)$ and
$N\in\N$, choose a set $J_{N}^{\left(f\right)}\subset V\times\Z^{2}$
with $\left|\smash{J_{N}^{\left(f\right)}}\vphantom{J_{N}}\right|=N$
and such that $\left|\smash{c_{j}^{\left(f\right)}}\right|\geq\left|\smash{c_{i}^{\left(f\right)}}\right|$
for all $j\in J_{N}^{\left(f\right)}$ and all $i\in\left(V\times\Z^{2}\right)\setminus J_{N}^{\left(f\right)}$.
For a general sequence, such a set need not exist, but since we have
$c^{\left(f\right)}\in\ell^{2}$, a moment's thought shows that it
does, since for each $\varepsilon>0$, there are only finitely many
indices $i\in V\times\Z^{2}$ satisfying $\left|\smash{c_{i}^{\left(f\right)}}\right|\geq\varepsilon$.

Finally, set $f^{\left(N\right)}:=S^{\left(\delta\right)}\left[c^{\left(f\right)}\cdot\Indicator_{J_{N}^{\left(f\right)}}\right]\in L^{2}\left(\R^{2}\right)$
and note that $f^{\left(N\right)}$ is indeed a linear combination
of (at most) $N$ elements of ${\rm SH}_{\beta^{-1}}\left(\varphi,\psi;\delta\right)=\left(\gamma^{\left[v,k,\delta\right]}\right)_{v\in V,k\in\Z^{2}}$,
by definition of $S^{\left(\delta\right)}$. Moreover, note that the
so-called \textbf{Stechkin lemma} (see e.g.\@ \cite[Lemma 3.3]{StechkinLemma})
shows
\begin{equation}
\left\Vert c^{\left(f\right)}-\Indicator_{J_{N}^{\left(f\right)}}\cdot c^{\left(f\right)}\right\Vert _{\ell^{2}}\leq N^{-\left(\frac{1}{p}-\frac{1}{2}\right)}\cdot\left\Vert \smash{c^{\left(f\right)}}\right\Vert _{\ell^{p}}\qquad\forall N\in\N\text{ and }p\in\left(0,2\right]\text{ for which }\left\Vert \smash{c^{\left(f\right)}}\right\Vert _{\ell^{p}}<\infty.\label{eq:StechkinEstimate}
\end{equation}

It remains to verify that the $f^{\left(N\right)}$ satisfy the stated
approximation rate. To show this, let $\sigma,\nu>0$ be arbitrary.
Because of $\frac{1}{p}-\frac{1}{2}\to\frac{\beta}{2}$ as $p\downarrow2/\left(1+\beta\right)$,
there is some $p\in\left(2/\left(1+\beta\right),\,2\right)$ satisfying
$\frac{1}{p}-\frac{1}{2}\geq\frac{\beta}{2}-\sigma$. Set $s:=\left(1+\alpha\right)\left(p^{-1}-2^{-1}\right)$.
Observe that $p\geq p_{0}=q_{0}$, as well as 
\[
s_{0}=0\leq s=\left(1+\alpha\right)\left(p^{-1}-2^{-1}\right)\leq\left(1+\alpha\right)\left(\frac{1+\beta}{2}-\frac{1}{2}\right)=\frac{\beta}{2}\left(1+\beta^{-1}\right)=\frac{1}{2}\left(1+\beta\right)=s_{1}.
\]
Now, observe $\vphantom{B_{v}^{\left(\alpha\right)}}\left|\det\smash{B_{v}^{\left(\alpha\right)}}\right|=u_{v}^{1+\alpha}$
for all $v\in V$, so that the remark after Definition \ref{def:CoefficientSpace}
shows that the coefficient space $C_{u^{s}}^{p,p}$ satisfies $C_{u^{s}}^{p,p}=\ell^{p}\left(V\times\Z^{2}\right)\hookrightarrow\ell^{2}\left(V\times\Z^{2}\right)=C_{u^{0}}^{2,2}$.
Therefore, Theorem \ref{thm:ReallyNiceUnconnectedShearletAtomicDecompositionConditions}
and the associated remark (and the inclusion $\mathscr{S}_{\alpha,s}^{p,p}\left(\R^{2}\right)\hookrightarrow L^{2}\left(\R^{2}\right)$
from Theorem \ref{thm:AnalysisAndSynthesisSparsityAreEquivalent})
show that the synthesis map and the coefficient map from above restrict
to bounded linear operators
\[
S^{\left(\delta\right)}:\ell^{p}\left(V\times\Z^{2}\right)\to\mathscr{S}_{\alpha,s}^{p,p}\left(\R^{2}\right)\quad\text{ and }\quad C^{\left(\delta\right)}:\mathscr{S}_{\alpha,s}^{p,p}\left(\R^{2}\right)\to\ell^{p}\left(V\times\Z^{2}\right).
\]

Next, Proposition \ref{prop:CartoonLikeFunctionsBoundedInAlphaShearletSmoothness}
shows $\mathcal{E}^{\beta}\left(\R^{2}\right)\subset\mathscr{S}_{\alpha,s}^{p,p}\left(\R^{2}\right)$
and even yields a constant $C_{1}=C_{1}\left(\beta,\nu,p\right)>0$
satisfying $\left\Vert f\right\Vert _{\mathscr{S}_{\alpha,s}^{p,p}}\leq C_{1}$
for all $f\in\mathcal{E}^{\beta}\left(\R^{2};\nu\right)$.  This
implies 
\begin{equation}
\left\Vert \smash{c^{\left(f\right)}}\right\Vert _{\ell^{p}}=\left\Vert \smash{C^{\left(\delta\right)}}f\right\Vert _{\ell^{p}}\leq\vertiii{\smash{C^{\left(\delta\right)}}}_{\mathscr{S}_{\alpha,s}^{p,p}\to\ell^{p}}\cdot\left\Vert f\right\Vert _{\mathscr{S}_{\alpha,s}^{p,p}}\leq C_{1}\cdot\vertiii{\smash{C^{\left(\delta\right)}}}_{\mathscr{S}_{\alpha,s}^{p,p}\to\ell^{p}}<\infty\qquad\forall f\in\mathcal{E}^{\beta}\left(\R^{2};\nu\right).\label{eq:NTermApproximationellPEstimate}
\end{equation}
By putting everything together and recalling $S^{\left(\delta\right)}\circ C^{\left(\delta\right)}=\identity_{\mathscr{S}_{\alpha,0}^{2,2}}=\identity_{L^{2}}$,
we finally arrive at
\begin{align*}
\left\Vert f-\smash{f^{\left(N\right)}}\right\Vert _{L^{2}} & =\left\Vert S^{\left(\delta\right)}C^{\left(\delta\right)}f-S^{\left(\delta\right)}\left[\Indicator_{J_{N}^{\left(f\right)}}\cdot c^{\left(f\right)}\right]\right\Vert _{L^{2}}\\
\left({\scriptstyle \text{since }\mathscr{S}_{\alpha,0}^{2,2}\left(\smash{\R^{2}}\right)=L^{2}\left(\smash{\R^{2}}\right)\text{ with equivalent norms}}\right) & \asymp\left\Vert S^{\left(\delta\right)}\left[c^{\left(f\right)}-\Indicator_{J_{N}^{\left(f\right)}}\cdot c^{\left(f\right)}\right]\right\Vert _{\mathscr{S}_{\alpha,0}^{2,2}}\\
 & \leq\vertiii{\smash{S^{\left(\delta\right)}}}_{C_{u^{0}}^{2,2}\to\mathscr{S}_{\alpha,0}^{2,2}}\cdot\left\Vert c^{\left(f\right)}-\Indicator_{J_{N}^{\left(f\right)}}\cdot c^{\left(f\right)}\right\Vert _{\ell^{2}}\\
\left({\scriptstyle \text{eq. }\eqref{eq:StechkinEstimate}}\right) & \leq\vertiii{\smash{S^{\left(\delta\right)}}}_{C_{u^{0}}^{2,2}\to\mathscr{S}_{\alpha,0}^{2,2}}\cdot\left\Vert \smash{c^{\left(f\right)}}\right\Vert _{\ell^{p}}\cdot N^{-\left(\frac{1}{p}-\frac{1}{2}\right)}\\
\left({\scriptstyle \text{eq. }\eqref{eq:NTermApproximationellPEstimate}}\right) & \leq C_{1}\cdot\vertiii{\smash{C^{\left(\delta\right)}}}_{\mathscr{S}_{\alpha,s}^{p,p}\to\ell^{p}}\cdot\vertiii{\smash{S^{\left(\delta\right)}}}_{C_{u^{0}}^{2,2}\to\mathscr{S}_{\alpha,0}^{2,2}}\cdot N^{-\left(\frac{1}{p}-\frac{1}{2}\right)}\\
\left({\scriptstyle \text{since }\frac{1}{p}-\frac{1}{2}\geq\frac{\beta}{2}-\sigma}\right) & \leq C_{1}\cdot\vertiii{\smash{C^{\left(\delta\right)}}}_{\mathscr{S}_{\alpha,s}^{p,p}\to\ell^{p}}\cdot\vertiii{\smash{S^{\left(\delta\right)}}}_{C_{u^{0}}^{2,2}\to\mathscr{S}_{\alpha,0}^{2,2}}\cdot N^{-\left(\frac{\beta}{2}-\sigma\right)}
\end{align*}
for all $N\in\N$ and $f\in\mathcal{E}^{\beta}\left(\R^{2};\nu\right)$.
Since $p$ only depends on $\sigma,\beta$, this easily yields the
desired claim.
\end{proof}
We close this section by making the assumptions of Theorem \ref{thm:CartoonApproximationWithAlphaShearlets}
more transparent:
\begin{rem}
\label{rem:CartoonApproximationConstantSimplification}With the choices
of $\alpha,p_{0},q_{0},s_{0},s_{1}$ from Theorem \ref{thm:CartoonApproximationWithAlphaShearlets},
one can choose $\varepsilon=\varepsilon\left(\beta\right)\in\left(0,1\right]$
such that the constants $\left\lceil p_{0}^{-1}\cdot\left(2+\varepsilon\right)\right\rceil $
and $\Lambda_{0},\dots,\Lambda_{3}$ from Theorem \ref{thm:ReallyNiceShearletAtomicDecompositionConditions}
satisfy $\Lambda_{1}\leq3$, as well as
\[
\left\lceil \frac{2+\varepsilon}{p_{0}}\right\rceil =\begin{cases}
3, & \text{if }\beta<2,\\
4, & \text{if }\beta=2,
\end{cases}\quad\Lambda_{0}\leq\begin{cases}
11, & \text{if }\beta<2,\\
12, & \text{if }\beta=2,
\end{cases}\quad\Lambda_{2}<\begin{cases}
11, & \text{if }\beta<2,\\
12, & \text{if }\beta=2
\end{cases}\quad\text{ and }\quad\Lambda_{3}<\begin{cases}
14, & \text{if }\beta<2,\\
16, & \text{if }\beta=2.
\end{cases}
\]
Thus, in view of Remark \ref{rem:NiceTensorConditionsForUnconnectedCovering}
(which refers to Corollary \ref{cor:ReallyNiceAlphaShearletTensorAtomicDecompositionConditions}),
it suffices in \emph{every} case to have $\varphi\in C_{c}^{12}\left(\R^{2}\right)$
and $\psi=\psi_{1}\otimes\psi_{2}$ with $\psi_{1}\in C_{c}^{15}\left(\R\right)$
and $\psi_{2}\in C_{c}^{19}\left(\R\right)$ and with the following
additional properties:

\begin{enumerate}[leftmargin=0.6cm]
\item $\widehat{\varphi}\left(\xi\right)\neq0$ for all $\xi\in\left[-1,1\right]^{2}$,
\item $\widehat{\psi_{1}}\left(\xi\right)\neq0$ for $\frac{1}{3}\leq\left|\xi\right|\leq3$
and $\widehat{\psi_{2}}\left(\xi\right)\neq0$ for all $\xi\in\left[-3,3\right]$,
\item We have $\frac{\d^{\ell}}{\d\xi^{\ell}}\big|_{\xi=0}\widehat{\psi_{1}}=0$
for $0\leq\ell\leq6$. In case of $\beta<2$, it even suffices to
have this for $0\leq\ell\leq5$.\qedhere
\end{enumerate}
\end{rem}
\begin{proof}
We have $p_{0}^{-1}=\frac{1+\beta}{2}$ and thus $\frac{2}{p_{0}}=1+\beta\in\left(2,3\right)$
in case of $\beta<2$. Hence, $\frac{2+\varepsilon}{p_{0}}\in\left(2,3\right)$
for $\varepsilon=\varepsilon\left(\beta\right)$ sufficiently small.
In case of $\beta=2$, we get $\frac{2+\varepsilon}{p_{0}}=3+\frac{\varepsilon}{p_{0}}\in\left(3,4\right)$
for $\varepsilon>0$ sufficiently small. This establishes the claimed
identity for $N_{0}:=\left\lceil p_{0}^{-1}\cdot\left(2+\varepsilon\right)\right\rceil $.
For the remainder of the proof, we always assume that $\varepsilon$
is chosen small enough for this identity to hold.

Next, the constant $\Lambda_{1}$ from Theorem \ref{thm:ReallyNiceShearletAtomicDecompositionConditions}
satisfies because of $\alpha=\beta^{-1}$ that
\begin{align*}
\Lambda_{1} & =\varepsilon+\frac{1}{\min\left\{ p_{0},q_{0}\right\} }+\max\left\{ 0,\,\left(1+\alpha\right)\left(\frac{1}{p_{0}}-1\right)-s_{0}\right\} \\
 & =\varepsilon+\frac{1+\beta}{2}+\left(1+\beta^{-1}\right)\left(\frac{1+\beta}{2}-1\right)\\
 & =\varepsilon+\frac{1}{2}+\beta-\frac{\beta^{-1}}{2},
\end{align*}
which is strictly increasing with respect to $\beta>0$. Therefore,
we always have $\Lambda_{1}\leq\varepsilon+\frac{1}{2}+2-\frac{2^{-1}}{2}=2+\frac{1}{4}+\varepsilon\leq3$
for $\varepsilon\leq\frac{3}{4}$.

Furthermore, the constant $\Lambda_{0}$ from Theorem \ref{thm:ReallyNiceShearletAtomicDecompositionConditions}
is—because of $p_{0}=\frac{2}{1+\beta}<1$—given by
\begin{align*}
\Lambda_{0} & =2\varepsilon+3+\max\left\{ \frac{1-\alpha}{\min\left\{ p_{0},q_{0}\right\} }+\frac{1-\alpha}{p_{0}}+1+\alpha+\left\lceil \frac{2+\varepsilon}{p_{0}}\right\rceil +s_{1},\,2\right\} \\
 & =2\varepsilon+3+\max\left\{ \frac{\left(1-\beta^{-1}\right)\left(1+\beta\right)}{2}+\frac{\left(1-\beta^{-1}\right)\left(1+\beta\right)}{2}+1+\beta^{-1}+N_{0}+\frac{1}{2}\left(1+\beta\right),\,2\right\} \\
 & =2\varepsilon+3+\max\left\{ \frac{3}{2}+\frac{3}{2}\beta+N_{0},\,2\right\} \\
 & \leq7+N_{0}+\frac{1}{2}+2\varepsilon.
\end{align*}
Since $N_{0}=3$ for $\beta<2$, this easily yields $\Lambda_{0}\leq11$
for $\varepsilon\leq\frac{1}{4}$. Similarly, we get $\Lambda_{0}\leq12$
for $\beta=2$ and $\varepsilon\leq\frac{1}{4}$.

Likewise, the constant $\Lambda_{2}$ from Theorem \ref{thm:ReallyNiceShearletAtomicDecompositionConditions}
satisfies
\begin{align*}
\Lambda_{2} & =\varepsilon+\max\left\{ 2,\,\left(1+\alpha\right)\left(1+\frac{1}{p_{0}}+\left\lceil \frac{2+\varepsilon}{p_{0}}\right\rceil \right)+s_{1}\right\} \\
 & =\varepsilon+\max\left\{ 2,\,\left(1+\beta^{-1}\right)\left(1+\frac{1+\beta}{2}+N_{0}\right)+\frac{1+\beta}{2}\right\} \\
 & =\varepsilon+\max\left\{ 2,\,\frac{5}{2}+\beta+N_{0}+\frac{3}{2}\beta^{-1}+\beta^{-1}N_{0}\right\} \\
 & =\varepsilon+\frac{5}{2}+\beta+N_{0}+\frac{3}{2}\beta^{-1}+\beta^{-1}N_{0}.
\end{align*}
Hence, in case of $\beta<2$, we thus get $\Lambda_{2}=\varepsilon+\frac{11}{2}+\beta+\frac{9}{2}\beta^{-1}=:\varepsilon+g\left(\beta\right)$,
where $g:\left(0,\infty\right)\to\R$ is \emph{strictly} convex with
$g\left(1\right)=11$ and $g\left(2\right)=\frac{39}{4}<11$, so that
$g\left(\beta\right)<11$ for all $\beta\in\left(1,2\right)$. Thus,
$\Lambda_{2}<11$ for sufficiently small $\varepsilon=\varepsilon\left(\beta\right)>0$.
Finally, for $\beta=2$, we get $\Lambda_{2}=11+\frac{1}{4}+\varepsilon<12$
for $0<\varepsilon<\frac{3}{4}$.

As the final constant, we consider
\begin{align*}
\Lambda_{3} & =\varepsilon+\max\left\{ \frac{1-\alpha}{\min\left\{ p_{0},q_{0}\right\} }+\frac{3-\alpha}{p_{0}}+2\left\lceil \frac{2+\varepsilon}{p_{0}}\right\rceil +1+\alpha+s_{1},\,\frac{2}{\min\left\{ p_{0},q_{0}\right\} }+\frac{2}{p_{0}}+\left\lceil \frac{2+\varepsilon}{p_{0}}\right\rceil \right\} \\
 & =\varepsilon+\max\left\{ \frac{\left(1-\beta^{-1}\right)\left(1+\beta\right)}{2}+\frac{\left(3-\beta^{-1}\right)\left(1+\beta\right)}{2}+2N_{0}+1+\beta^{-1}+\frac{1+\beta}{2},\,1+\beta+1+\beta+N_{0}\right\} \\
 & =\varepsilon+\max\left\{ \frac{5}{2}\beta+\frac{5}{2}+2N_{0},\,2+2\beta+N_{0}\right\} \\
 & =\varepsilon+\frac{5}{2}\beta+\frac{5}{2}+2N_{0}.
\end{align*}
In case of $\beta=2$, this means $\Lambda_{3}=\varepsilon+15+\frac{1}{2}<16$
for $0<\varepsilon<\frac{1}{2}$. Finally, for $\beta<2$, we get
$\Lambda_{3}<13+\frac{1}{2}+\varepsilon<14$ for $0<\varepsilon<\frac{1}{2}$.
\end{proof}

\section{Embeddings between \texorpdfstring{$\alpha$}{α}-shearlet smoothness
spaces}

\label{sec:EmbeddingsBetweenAlphaShearletSmoothness}In the preceding
sections, we saw that the $\alpha$-shearlet smoothness spaces $\mathscr{S}_{\alpha,s}^{p,q}\left(\R^{2}\right)$
simultaneously characterize analysis and synthesis sparsity with respect
to (sufficiently nice) $\alpha$-shearlet systems; see in particular
Theorem \ref{thm:AnalysisAndSynthesisSparsityAreEquivalent}. Since
we have a whole family of $\alpha$-shearlet systems, parametrized
by $\alpha\in\left[0,1\right]$, it is natural to ask if the different
systems are related in some way, e.g.\@ if $\ell^{p}$-sparsity,
$p\in\left(0,2\right)$, with respect to $\alpha_{1}$-shearlet systems
implies $\ell^{q}$-sparsity with respect to $\alpha_{2}$-shearlet
systems, for some $q\in\left(0,2\right)$.

In view of Theorem \ref{thm:AnalysisAndSynthesisSparsityAreEquivalent},
this is equivalent to asking whether there is an \textbf{embedding}
\begin{equation}
\mathscr{S}_{\alpha_{1},\left(1+\alpha_{1}\right)\left(p^{-1}-2^{-1}\right)}^{p,p}\left(\R^{2}\right)\hookrightarrow\mathscr{S}_{\alpha_{2},\left(1+\alpha_{2}\right)\left(q^{-1}-2^{-1}\right)}^{q,q}\left(\R^{2}\right).\label{eq:EmbeddingSectionSparsityEmbedding}
\end{equation}
Note, however, that equation \eqref{eq:EmbeddingSectionSparsityEmbedding}
is equivalent to asking whether one can deduce $\ell^{q}$-sparsity
with respect to $\alpha_{2}$-shearlets from $\ell^{p}$-sparsity
with respect to $\alpha_{1}$-shearlets \emph{without any additional
information}. If one \emph{does} have additional information, e.g.,
if one is only interested in functions $f$ with $\supp f\subset\Omega$,
where $\Omega\subset\R^{2}$ is fixed and bounded, then the embedding
in equation \eqref{eq:EmbeddingSectionSparsityEmbedding} is a sufficient,
but in general \emph{not} a necessary criterion for guaranteeing that
$f$ is $\ell^{q}$-sparse with respect to $\alpha_{2}$-shearlets
if it is $\ell^{p}$-sparse with respect to $\alpha_{1}$-shearlets.

More general than equation \eqref{eq:EmbeddingSectionSparsityEmbedding},
we will \emph{completely characterize} the existence of the embedding
\begin{equation}
\mathscr{S}_{\alpha_{1},s_{1}}^{p_{1},q_{1}}\left(\R^{2}\right)\hookrightarrow\mathscr{S}_{\alpha_{2},s_{2}}^{p_{2},q_{2}}\left(\R^{2}\right)\label{eq:EmbeddingSectionGeneralEmbedding}
\end{equation}
for arbitrary $p_{1},p_{2},q_{1},q_{2}\in\left(0,\infty\right]$,
$\alpha_{1},\alpha_{2}\in\left[0,1\right]$ and $s_{1},s_{2}\in\R$.
As an application, we will then see that the embedding \eqref{eq:EmbeddingSectionSparsityEmbedding}
is \emph{never} fulfilled for $p,q\in\left(0,2\right)$, but that
if one replaces the left-hand side of the embedding \eqref{eq:EmbeddingSectionSparsityEmbedding}
by $\mathscr{S}_{\alpha_{1},\varepsilon+\left(1+\alpha_{1}\right)\left(p^{-1}-2^{-1}\right)}^{p,p}\left(\R^{2}\right)$
for some $\varepsilon>0$, then the embedding holds for suitable $p,q\in\left(0,2\right)$.
Thus, \emph{without further information}, $\ell^{p}$-sparsity with
respect to $\alpha_{1}$-shearlets \emph{never} implies nontrivial
$\ell^{q}$-sparsity with respect to $\alpha_{2}$-shearlets; but
one can still transfer sparsity in some sense if one has $\ell^{p}$-sparsity
with respect to $\alpha_{1}$-shearlets, together with a certain \emph{decay
of the $\alpha_{1}$-shearlet coefficients with the scale}.

We remark that the results in this section can be seen as a continuation
of the work in \cite{AlphaMolecules}: In that paper, the authors
develop the framework of \textbf{$\alpha$-molecules} which allows
one to transfer (analysis) sparsity results between different systems
that employ $\alpha$-parabolic scaling; for example between $\alpha$-shearlets
and $\alpha$-curvelets. Before \cite[Theorem 4.2]{AlphaMolecules},
the authors note that ``\emph{it might though be very interesting
for future research to also let $\alpha$-molecules for different
$\alpha$'s interact.}'' In a way, this is precisely what we are
doing in this section, although we focus on the special case of $\alpha$-shearlets
instead of (more general) $\alpha$-molecules.

\medskip{}

In order to characterize the embedding \eqref{eq:EmbeddingSectionGeneralEmbedding},
we will invoke the embedding theory for decomposition spaces\cite{DecompositionEmbedding}
that was developed by one of the authors; this will greatly simplify
the proof, since we do not need to start from scratch. In order for
the theory in \cite{DecompositionEmbedding} to be applicable to an
embedding $\DecompSp{\CalQ}{p_{1}}{\ell_{w}^{q_{1}}}{}\hookrightarrow\DecompSp{\CalP}{p_{2}}{\ell_{v}^{q_{2}}}{}$,
the two coverings $\CalQ=\left(Q_{i}\right)_{i\in I}$ and $\CalP=\left(P_{j}\right)_{j\in J}$
need to be \emph{compatible} in a certain sense. For this, it suffices
if $\CalQ$ is \textbf{almost subordinate} to $\CalP$ (or vice versa);
roughly speaking, this means that the covering $\CalQ$ is \emph{finer}
than $\CalP$. Precisely, it means that each set $Q_{i}$ is contained
in $P_{j_{i}}^{n\ast}$ for some $j_{i}\in J$, where $n\in\N$ is
fixed and where $P_{j_{i}}^{n\ast}=\bigcup_{\ell\in j_{i}^{n\ast}}P_{\ell}$.
Here, the sets $j^{n\ast}$ are defined inductively, via $L^{\ast}:=\bigcup_{\ell\in L}\ell^{\ast}$
(with $\ell^{\ast}$ as in Definition \ref{def:AlmostStructuredCovering})
and with $L^{\left(n+1\right)\ast}:=\left(L^{n\ast}\right)^{\ast}$
for $L\subset J$. The following lemma establishes this \emph{compatibility}
between different $\alpha$-shearlet coverings.
\begin{lem}
\label{lem:AlphaShearletCoveringSubordinateness}Let $0\leq\alpha_{1}\leq\alpha_{2}\leq1$.
Then $\CalS^{\left(\alpha_{1}\right)}=\left(\smash{S_{i}^{\left(\alpha_{1}\right)}}\right)_{i\in I^{\left(\alpha_{1}\right)}}$
is almost subordinate to $\CalS^{\left(\alpha_{2}\right)}=\left(\smash{S_{j}^{\left(\alpha_{2}\right)}}\right)_{j\in I^{\left(\alpha_{2}\right)}}$.
\end{lem}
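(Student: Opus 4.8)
The plan is to establish almost-subordinateness directly from the explicit description of the sets $S_i^{(\alpha)}$ that is already available in Lemma \ref{lem:AlphaShearletCoveringAuxiliary}. Recall that for $i = (n,m,\varepsilon,\delta) \in I_0^{(\alpha)}$ the set $S_i^{(\alpha)}$ is (up to the reflection $R^\delta$ and sign $\varepsilon$) the ``trapezoid'' $U_{(2^{n(\alpha-1)}(m-1),\,2^{n(\alpha-1)}(m+1))}^{(2^n/3,\,3\cdot 2^n)}$, and that every $\xi \in S_i^{(\alpha)}$ satisfies $\tfrac{2^n}{3} < |\xi| < 12\cdot 2^n$ together with the cone condition $|\eta| < 3|\xi|$ (resp.\ $|\xi|<3|\eta|$). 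So I would fix $i = (n,m,\varepsilon,\delta) \in I_0^{(\alpha_1)}$ and look for an index $j = (n',m',\varepsilon,\delta) \in I_0^{(\alpha_2)}$ with the \emph{same} $\varepsilon,\delta$ and with $n'$ essentially equal to $n$ (say $n' = n$, since both coverings use the same radial scaling $2^n$), such that $S_i^{(\alpha_1)} \subset S_j^{(\alpha_2)}$ or at least $S_i^{(\alpha_1)}$ meets only boundedly many $S_j^{(\alpha_2)}$. The radial localization is automatic because both $S_i^{(\alpha_1)}$ and $S_j^{(\alpha_2)}$ sit in the annulus $\{2^n/3 < |\xi| < 12\cdot 2^n\}$, so the only real issue is matching the \emph{angular} (shearing) parameter.

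The key computation is the angular one. For $\delta = 0$ a point $\left(\begin{smallmatrix}\xi\\\eta\end{smallmatrix}\right) \in S_i^{(\alpha_1)}$ has slope $\eta/\xi \in (2^{n(\alpha_1-1)}(m-1),\,2^{n(\alpha_1-1)}(m+1))$, an interval of length $2\cdot 2^{n(\alpha_1-1)}$ centered near $2^{n(\alpha_1-1)}m$; a set $S_j^{(\alpha_2)}$ with $j = (n,m',\varepsilon,0)$ covers slopes in an interval of length $2\cdot 2^{n(\alpha_2-1)}$ centered near $2^{n(\alpha_2-1)}m'$. Since $\alpha_1 \le \alpha_2$ we have $2^{n(\alpha_2-1)} \ge 2^{n(\alpha_1-1)}$, so the $\alpha_2$-intervals are \emph{wider}; given the slope interval of $S_i^{(\alpha_1)}$, which lives in the range $|{\rm slope}| \lesssim 2^{n(\alpha_1-1)}(|m|+1) \le 3$, I can choose $m'$ to be (roughly) the nearest integer to $2^{n(1-\alpha_2)}\cdot 2^{n(\alpha_1-1)} m = 2^{n(\alpha_1-\alpha_2)} m$, and then the slope interval of $S_i^{(\alpha_1)}$ is contained in a bounded number of neighbouring $\alpha_2$-slope intervals. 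One checks that $|m'| \le G_n^{(\alpha_2)} = \lceil 2^{n(1-\alpha_2)}\rceil$, so $j$ is a legitimate index, using that the relevant slopes are bounded by $3$ in absolute value. The $\delta = 1$ case follows by applying $R$ and invoking the $\delta = 0$ case, exactly as in part (2b) of Lemma \ref{lem:AlphaShearletCoveringAuxiliary}. The low-frequency indices $i = 0$ or $j = 0$ are trivial: $S_0^{(\alpha_1)} = (-1,1)^2 = S_0^{(\alpha_2)}$, and it is contained in a bounded neighbourhood of $S_0^{(\alpha_2)}$ in $\CalS^{(\alpha_2)}$.

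Having produced for each $i$ an index $j_i$ with $S_i^{(\alpha_1)} \subset S_{j_i}^{(\alpha_2)}$ (or contained in a bounded union of neighbours), I would then invoke the admissibility of $\CalS^{(\alpha_2)}$ (Lemma \ref{lem:AlphaShearletCoveringIsAlmostStructured}): since $S_i^{(\alpha_1)}$ is contained in an annulus $\{2^n/3 < |\xi| < 12\cdot 2^n\}$ and has the right angular width, each neighbouring $S_\ell^{(\alpha_2)}$ that it meets has comparable scale ($2^{n'} \asymp 2^n$) and, by the admissibility constant $N_{\CalS^{(\alpha_2)}}$, there are at most boundedly many of them. Packaging this with the precise definition of ``almost subordinate'' from \cite{DecompositionEmbedding} — namely that there is a fixed $k \in \N$ with $S_i^{(\alpha_1)} \subset \bigcup_{\ell \in j_i^{k\ast}} S_\ell^{(\alpha_2)}$ for suitable $j_i$ — completes the argument; in fact the containment $S_i^{(\alpha_1)} \subset S_{j_i}^{(\alpha_2)}$ would give subordinateness outright, which is even stronger.

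I expect the main obstacle to be the bookkeeping in the angular matching step: one must verify carefully that the chosen $m'$ stays within the admissible range $|m'| \le G_n^{(\alpha_2)}$ \emph{and} that the slope interval of $S_i^{(\alpha_1)}$ really is swallowed by finitely many consecutive $\alpha_2$-slope intervals, uniformly in $n$ and $m$ — this requires keeping track of the ``$+1$'' slop in $G_n = \lceil 2^{n(1-\alpha)}\rceil$ versus $2^{n(1-\alpha)}$ and of the factor-$3$ constants in the radial/angular estimates of Lemma \ref{lem:AlphaShearletCoveringAuxiliary}. The boundary cases $\alpha_1 = \alpha_2$ (trivial), $\alpha_2 = 1$ (wavelet-like, $G_n \asymp 2^n$), and $\alpha_1 = 0$ (ridgelet-like, single shear per scale) are good sanity checks but introduce no new difficulty. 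The radial part and the reduction of the $\delta=1$ case are routine.
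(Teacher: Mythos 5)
Your plan — produce, for each $i$, a small family of $\CalS^{(\alpha_2)}$-sets that swallows $S_i^{(\alpha_1)}$ — takes a genuinely different route from the paper's and has a gap the paper's route avoids. The paper reduces, via \cite[Corollary 2.13]{DecompositionEmbedding} (both coverings consist of open, path-connected sets covering $\R^2$), to proving \emph{weak} subordinateness: a uniform bound on $\left|\left\{ j \in I^{(\alpha_2)} : S_j^{(\alpha_2)} \cap S_i^{(\alpha_1)} \neq \emptyset\right\}\right|$. That is a counting statement, not a containment, and the count necessarily lets $j$ range over both $\delta = 0$ and $\delta = 1$; the upgrade to almost subordinateness is done abstractly.

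The gap in your proposal is the restriction to $j$ with the same $\varepsilon,\delta$ and $n' = n$. Near the cone boundary this fails: for $i=(n,m,\varepsilon,0)\in I_0^{(\alpha_1)}$ with $m$ near $\pm G_n^{(\alpha_1)}$, the slope interval of $S_i^{(\alpha_1)}$ extends almost to $1+2\cdot 2^{n(\alpha_1-1)}$, while the $\delta=0$ sets of $\CalS^{(\alpha_2)}$ at scale $n$ only cover slopes up to $2^{n(\alpha_2-1)}\bigl(G_n^{(\alpha_2)}+1\bigr)$, which can be strictly smaller. Concretely, take $\alpha_1=0.4$, $\alpha_2=0.5$, $n=2$, $m=3=G_2^{(0.4)}$: the slope interval of $S_i^{(\alpha_1)}$ is $\bigl(2\cdot 2^{-1.2},\,4\cdot 2^{-1.2}\bigr)\approx(0.87,1.74)$, but the $\delta=0$ sets of $\CalS^{(0.5)}$ at scale $2$ top out at slope $2^{-1}\cdot 3 = 1.5$. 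The portion of $S_i^{(\alpha_1)}$ with slope in $(1.5,1.74)$ and $\xi\in(6,12)$ is covered by no $\delta=0$ set of $\CalS^{(0.5)}$ at \emph{any} scale — only by $\delta=1$ sets. So the dominating family must include cross-$\delta$ sets, and you would then have to verify that these live at comparable scale and within a uniformly bounded cluster $j_i^{k\ast}$; that is exactly the substantial Case~2 analysis in the paper's proof of weak subordinateness, not the routine bookkeeping you anticipate. (Your remark that ``the $\delta=1$ case follows by applying $R$'' does not help here: that symmetry exchanges which cone $i$ lives in, not the role of $j$, and the problematic crossing is $i$ horizontal, $j$ vertical.)
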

\begin{proof}
Since we have $\bigcup_{i\in I^{\left(\alpha_{1}\right)}}S_{i}^{\left(\alpha_{1}\right)}=\R^{2}=\bigcup_{j\in I^{\left(\alpha_{2}\right)}}S_{j}^{\left(\alpha_{2}\right)}$
and since all of the sets $S_{i}^{\left(\alpha_{1}\right)}$ and $S_{j}^{\left(\alpha_{2}\right)}$
are open and path-connected, \cite[Corollary 2.13]{DecompositionEmbedding}
shows that it suffices to show that $\CalS^{\left(\alpha_{1}\right)}$
is \textbf{weakly subordinate} to $\CalS^{\left(\alpha_{2}\right)}$.
This means that we have $\sup_{i\in I^{\left(\alpha_{1}\right)}}\left|L_{i}\right|<\infty$,
with
\[
L_{i}:=\left\{ j\in I^{\left(\alpha_{2}\right)}\with S_{j}^{\left(\alpha_{2}\right)}\cap S_{i}^{\left(\alpha_{1}\right)}\neq\emptyset\right\} \qquad\text{ for }i\in I^{\left(\alpha_{1}\right)}.
\]

To show this, we first consider only the case $i=\left(n,m,\varepsilon,0\right)\in I_{0}^{\left(\alpha_{1}\right)}$
and let $j\in L_{i}$ be arbitrary. We now distinguish several cases
regarding $j$:

\textbf{Case 1}: We have $j=\left(k,\ell,\beta,0\right)\in I_{0}^{\left(\alpha_{2}\right)}$.
Let $\left(\xi,\eta\right)\in S_{i}^{\left(\alpha_{1}\right)}\cap S_{j}^{\left(\alpha_{2}\right)}$.
In view of equation \eqref{eq:deltazeroset}, this implies $\xi\in\varepsilon\left(2^{n}/3,3\cdot2^{n}\right)\cap\beta\left(2^{k}/3,3\cdot2^{k}\right)$,
so that in particular $\varepsilon=\beta$. Furthermore, we see $2^{k}/3<\left|\xi\right|<3\cdot2^{n}$,
which yields $2^{n-k}>\frac{1}{9}>2^{-4}$. Analogously, we get $2^{n}/3<\left|\xi\right|<3\cdot2^{k}$
and thus $2^{n-k}<9<2^{4}$. Together, these considerations imply
$\left|n-k\right|\leq3$.

Furthermore, since $\left(\xi,\eta\right)\in S_{i}^{\left(\alpha_{1}\right)}\cap S_{j}^{\left(\alpha_{2}\right)}$,
equation \eqref{eq:deltazeroset} also shows
\[
\frac{\eta}{\xi}=\frac{\varepsilon\eta}{\varepsilon\xi}=\frac{\beta\eta}{\beta\xi}\in2^{n\left(\alpha_{1}-1\right)}\left(m-1,m+1\right)\cap2^{k\left(\alpha_{2}-1\right)}\left(\ell-1,\ell+1\right).
\]
Hence, we get the two inequalities
\[
2^{n\left(\alpha_{1}-1\right)}\left(m+1\right)>2^{k\left(\alpha_{2}-1\right)}\left(\ell-1\right)\qquad\text{ and }\qquad2^{n\left(\alpha_{1}-1\right)}\left(m-1\right)<2^{k\left(\alpha_{2}-1\right)}\left(\ell+1\right)
\]
and thus
\[
\ell<\left(m+1\right)2^{n\alpha_{1}-k\alpha_{2}+k-n}+1\qquad\text{ and }\qquad\ell>\left(m-1\right)2^{n\alpha_{1}-k\alpha_{2}+k-n}-1.
\]
In other words,
\[
\ell\in\left(\left(m-1\right)2^{n\alpha_{1}-k\alpha_{2}+k-n}-1,\left(m+1\right)2^{n\alpha_{1}-k\alpha_{2}+k-n}+1\right)\cap\Z=:s_{m}^{\left(n,k\right)}.
\]
But since any interval $I=\left(A,B\right)$ with $A\leq B$ satisfies
$\left|I\cap\Z\right|\leq B-A+1$, the cardinality of $s_{m}^{\left(n,k\right)}$
can be estimated by
\[
\begin{aligned}\left|s_{m}^{\left(n,k\right)}\right| & \leq\left(m+1\right)2^{n\alpha_{1}-k\alpha_{2}+k-n}+1-\left(m-1\right)2^{n\alpha_{1}-k\alpha_{2}+k-n}+1+1\\
 & =3+2\cdot2^{n\alpha_{1}-k\alpha_{2}+k-n}\\
\left({\scriptstyle \text{since }\left|n-k\right|\leq3}\right) & \leq3+2\cdot2^{n\alpha_{1}-(n-3)\alpha_{2}+3}\\
 & =3+2^{4}\cdot2^{n(\alpha_{1}-\alpha_{2})}\cdot2^{3\alpha_{2}}\\
\left({\scriptstyle \text{since }\alpha_{1}-\alpha_{2}\leq0\text{ and }\alpha_{2}\leq1}\right) & \leq3+2^{7}=131.
\end{aligned}
\]
Thus,
\[
L_{i}^{\left(0\right)}:=\left\{ j=\left(k,\ell,\beta,0\right)\in I_{0}^{\left(\alpha_{2}\right)}\left|S_{j}^{\left(\alpha_{2}\right)}\cap S_{i}^{\left(\alpha_{1}\right)}\neq\emptyset\right.\right\} \subset\bigcup_{t=n-3}^{n+3}\left(\left\{ t\right\} \times s_{m}^{\left(n,t\right)}\times\left\{ \pm1\right\} \times\left\{ 0\right\} \right),
\]
which is a finite set, with at most $7\cdot131\cdot2=1834$ elements.

\medskip{}

\textbf{Case 2}: We have $j=(k,\ell,\beta,1)\in I_{0}^{\left(\alpha_{2}\right)}$.
Let $\left(\xi,\eta\right)\in S_{i}^{\left(\alpha_{1}\right)}\cap S_{j}^{\left(\alpha_{2}\right)}$.
With similar arguments as in the previous case, this implies $\xi\in\varepsilon\left(2^{n}/3,3\cdot2^{n}\right)$,
$\eta\in\beta\left(2^{k}/3,3\cdot2^{k}\right)$ and $\frac{\eta}{\xi}\in2^{n\left(\alpha_{1}-1\right)}\left(m-1,m+1\right)$,
as well as $\frac{\xi}{\eta}\in2^{k\left(\alpha_{2}-1\right)}\left(\ell-1,\ell+1\right)$.
Furthermore since $\left(\xi,\eta\right)\in S_{n,m,\varepsilon,0}^{\left(\alpha_{1}\right)}$
and $\left(\xi,\eta\right)\in S_{k,\ell,\beta,1}^{\left(\alpha_{2}\right)}$,
we know from Lemma \ref{lem:AlphaShearletCoveringAuxiliary} that
$\left|\eta\right|<3\left|\xi\right|$ and $\left|\xi\right|<3\left|\eta\right|$.

Thus, $2^{k}/3<\left|\eta\right|<3\cdot\left|\xi\right|<3\cdot3\cdot2^{n}$
and hence $2^{k-n}<27<2^{5}$. Likewise, $2^{n}/3<\left|\xi\right|<3\cdot\left|\eta\right|<3\cdot3\cdot2^{k}$
and hence $2^{n-k}<2^{5}$, so that we get $\left|n-k\right|\leq4$.
Now, we distinguish two subcases regarding $\left|\eta/\xi\right|$:

\begin{enumerate}
\item We have $\left|\eta/\xi\right|>1$. Because of $\left|m\right|\leq\left\lceil 2^{n\left(1-\alpha_{1}\right)}\right\rceil \leq1+2^{n\left(1-\alpha_{1}\right)}$,
this implies
\[
1<\left|\frac{\eta}{\xi}\right|<2^{n\left(\alpha_{1}-1\right)}\left(\left|m\right|+1\right)\leq2^{n\left(\alpha_{1}-1\right)}\left(2^{n\left(1-\alpha_{1}\right)}+1+1\right)=1+2\cdot2^{n\left(\alpha_{1}-1\right)}
\]
and hence 
\[
\frac{1}{1+2\cdot2^{n\left(\alpha_{1}-1\right)}}<\left|\frac{\xi}{\eta}\right|<1.
\]
Furthermore, we know $\left|\xi/\eta\right|<2^{k\left(\alpha_{2}-1\right)}\left(\left|\ell\right|+1\right)$,
so that we get
\[
\frac{1}{1+2\cdot2^{n\left(\alpha_{1}-1\right)}}<\left|\frac{\xi}{\eta}\right|<2^{k\left(\alpha_{2}-1\right)}\left(\left|\ell\right|+1\right)\qquad\text{ and hence }\qquad\left|\ell\right|>\frac{2^{k\left(1-\alpha_{2}\right)}}{1+2\cdot2^{n\left(\alpha_{1}-1\right)}}-1.
\]
Thus, we have
\[
\left|\ell\right|\in\Z\cap\left(\frac{2^{k\left(1-\alpha_{2}\right)}}{1+2\cdot2^{n\left(\alpha_{1}-1\right)}}-1,\left\lceil \smash{2^{k\left(1-\alpha_{2}\right)}}\right\rceil \right]\subset\Z\cap\left(\frac{2^{k\left(1-\alpha_{2}\right)}}{1+2\cdot2^{n\left(\alpha_{1}-1\right)}}-1,2^{k\left(1-\alpha_{2}\right)}+1\right)=:s^{\left(n,k\right)},
\]
where as above 
\[
\begin{aligned}\left|s^{\left(n,k\right)}\right|\leq2^{k\left(1-\alpha_{2}\right)}+1-\frac{2^{k\left(1-\alpha_{2}\right)}}{1+2\cdot2^{n\left(\alpha_{1}-1\right)}}+1+1 & =3+2^{k\left(1-\alpha_{2}\right)}\left(1-\frac{1}{1+2\cdot2^{n\left(\alpha_{1}-1\right)}}\right)\\
 & =3+2^{k\left(1-\alpha_{2}\right)}\frac{2\cdot2^{n\left(\alpha_{1}-1\right)}}{1+2\cdot2^{n\left(\alpha_{1}-1\right)}}\\
 & \leq3+2\cdot2^{k\left(1-\alpha_{2}\right)-n\left(1-\alpha_{1}\right)}\\
\left({\scriptstyle \text{since }1-\alpha_{2}\geq0\text{ and }\left|n-k\right|\leq4}\right) & \leq3+2\cdot2^{\left(n+4\right)\left(1-\alpha_{2}\right)-n\left(1-\alpha_{1}\right)}\\
 & =3+2\cdot2^{4\left(1-\alpha_{2}\right)}2^{n\left(\alpha_{1}-\alpha_{2}\right)}\\
\left({\scriptstyle \text{since }\alpha_{1}-\alpha_{2}\leq0\text{ and }\alpha_{2}\geq0}\right) & \leq3+2\cdot2^{4}=35.
\end{aligned}
\]
Finally, note that $\left|\ell\right|\in s^{\left(n,k\right)}$ implies
$\ell\in\pm s^{\left(n,k\right)}$, with $\left|\pm s^{\left(n,k\right)}\right|\leq70$.
\item We have $\left|\eta/\xi\right|\leq1$. This yields $1\leq\left|\xi/\eta\right|<2^{k\left(\alpha_{2}-1\right)}\left(\left|\ell\right|+1\right)$
and hence $\left|\ell\right|>2^{k\left(1-\alpha_{2}\right)}-1$. Thus,
we have 
\[
\left|\ell\right|\in\Z\cap\left(2^{k\left(1-\alpha_{2}\right)}-1,\left\lceil \smash{2^{k\left(1-\alpha_{2}\right)}}\right\rceil \right]\subset\Z\cap\left(2^{k\left(1-\alpha_{2}\right)}-1,2^{k\left(1-\alpha_{2}\right)}+1\right)=:\tilde{s}^{\left(n,k\right)},
\]
where one easily sees $\left|\tilde{s}^{\left(n,k\right)}\right|\leq3$
and then $\ell\in\pm\tilde{s}^{\left(n,k\right)}$ with $\left|\pm\tilde{s}^{\left(n,k\right)}\right|\leq6$.
\end{enumerate}
All in all, we see 
\[
L_{i}^{\left(1\right)}:=\left\{ j=\left(k,\ell,\beta,1\right)\in I_{0}^{\left(\alpha_{2}\right)}\left|S_{j}^{\left(\alpha_{2}\right)}\cap S_{i}^{\left(\alpha_{1}\right)}\neq\emptyset\right.\right\} \subset\bigcup_{t=n-4}^{n+4}\left[\left\{ t\right\} \times\left(\left[\pm\smash{s^{\left(n,t\right)}}\right]\cup\left[\pm\smash{\tilde{s}^{\left(n,t\right)}}\right]\right)\times\left\{ \pm1\right\} \times\left\{ 1\right\} \right]
\]
and hence $\left|\smash{L_{i}^{(1)}}\right|\leq9\cdot\left(70+6\right)\cdot2=1368$. 

In total, Cases 1 and 2 show because of $L_{i}\subset L_{i}^{(0)}\cup L_{i}^{(1)}\cup\left\{ 0\right\} $
that $\left|L_{i}\right|\leq\left|\smash{L_{i}^{(0)}}\right|+\left|\smash{L_{i}^{(1)}}\right|+\left|\left\{ 0\right\} \right|\leq3203$
for all $i=\left(n,m,\varepsilon,0\right)\in I_{0}^{\left(\alpha_{1}\right)}$.

\medskip{}

But in case of $i=\left(n,m,\varepsilon,1\right)\in I_{0}^{\left(\alpha_{1}\right)}$,
we get the same result. Indeed, if we set $\tilde{\gamma}:=1-\gamma$
for $\gamma\in\left\{ 0,1\right\} $, then
\[
\begin{aligned}I_{0}^{\left(\alpha_{2}\right)}\cap L_{\left(n,m,\varepsilon,1\right)} & =\left\{ \left(k,\ell,\beta,\gamma\right)\in I_{0}^{\left(\alpha_{2}\right)}\left|S_{k,\ell,\beta,\gamma}^{\left(\alpha_{2}\right)}\cap S_{n,m,\varepsilon,1}^{\left(\alpha_{1}\right)}\neq\emptyset\right.\right\} \\
 & =\left\{ \left(k,\ell,\beta,\gamma\right)\in I_{0}^{\left(\alpha_{2}\right)}\left|RS_{k,\ell,\beta,\tilde{\gamma}}^{\left(\alpha_{2}\right)}\cap RS_{n,m,\varepsilon,0}^{\left(\alpha_{1}\right)}\neq\emptyset\right.\right\} \\
 & =\left\{ \left(k,\ell,\beta,\tilde{\gamma}\right)\in I_{0}^{\left(\alpha_{2}\right)}\left|S_{k,\ell,\beta,\gamma}^{\left(\alpha_{2}\right)}\cap S_{n,m,\varepsilon,0}^{\left(\alpha_{1}\right)}\neq\emptyset\right.\right\} \\
 & =\left\{ \left(k,\ell,\beta,\tilde{\gamma}\right)\with\left(k,\ell,\beta,\gamma\right)\in I_{0}^{\left(\alpha_{2}\right)}\cap L_{\left(n,m,\varepsilon,0\right)}\right\} ,
\end{aligned}
\]
and thus $\left|\smash{I_{0}^{\left(\alpha_{2}\right)}}\cap L_{\left(n,m,\varepsilon,1\right)}\right|=\left|\smash{I_{0}^{\left(\alpha_{2}\right)}}\cap L_{\left(n,m,\varepsilon,0\right)}\right|\leq3202$,
so that $\left|L_{\left(n,m,\varepsilon,1\right)}\right|\leq3203$.

\medskip{}

It remains to consider the case $i=0$. But for $\xi\in S_{0}^{\left(\alpha_{1}\right)}=\left(-1,1\right)^{2}$,
we have $1+\left|\xi\right|\leq3$. Conversely, Lemma \ref{lem:AlphaShearletWeightIsModerate}
shows $1+\left|\xi\right|\geq\frac{1}{3}\cdot w_{j}=2^{k}/3$ for
all $\xi\in S_{j}^{\left(\alpha_{2}\right)}$ and all $j=\left(k,\ell,\beta,\gamma\right)\in I_{0}^{\left(\alpha_{2}\right)}$.
Hence, $j\in L_{0}$ can only hold if $2^{k}/3\leq3$, i.e., if $k\leq3$.
Since we also have $\left|\ell\right|\leq\left\lceil 2^{k\left(1-\alpha_{2}\right)}\right\rceil \leq2^{k}\leq2^{3}=8$,
this implies
\[
L_{0}\subset\left\{ 0\right\} \cup\left[\left\{ 0,1,2,3\right\} \times\left\{ -8,\dots,8\right\} \times\left\{ \pm1\right\} \times\left\{ 0,1\right\} \right]
\]
and hence $\left|L_{0}\right|\leq1+4\cdot17\cdot2\cdot2=273\leq3203$.

\medskip{}

In total, we have shown $\sup_{i\in I^{\left(\alpha_{1}\right)}}\left|L_{i}\right|\leq3203<\infty$,
so that $\CalS^{\left(\alpha_{1}\right)}$ is weakly subordinate to
$\CalS^{\left(\alpha_{2}\right)}$. As seen at the beginning of the
proof, this suffices.
\end{proof}
Now that we have seen that $\CalS^{\left(\alpha_{1}\right)}$ is almost
subordinate to $\CalS^{\left(\alpha_{2}\right)}$ for $\alpha_{1}\leq\alpha_{2}$,
the theory from \cite{DecompositionEmbedding} is applicable. But
the resulting conditions simplify greatly, if in addition to the coverings,
also the employed weights are compatible in a certain sense. Precisely,
for two coverings $\CalQ=\left(Q_{i}\right)_{i\in I}$ and $\CalP=\left(P_{j}\right)_{j\in J}$
and for a weight $w=\left(w_{i}\right)_{i\in I}$ on the index set
of $\CalQ$, we say that $w$ is \textbf{relatively $\CalP$-moderate},
if there is a constant $C>0$ with
\[
w_{i}\leq C\cdot w_{\ell}\qquad\text{for all }i,\ell\in I\text{ with }Q_{i}\cap P_{j}\neq\emptyset\neq Q_{\ell}\cap P_{j}\text{ for some }j\in J.
\]
Likewise, the covering $\CalQ=\left(T_{i}Q_{i}'+b_{i}\right)_{i\in I}$
is called relatively $\CalP$-moderate, if the weight $\left(\left|\det T_{i}\right|\right)_{i\in I}$
is relatively $\CalP$-moderate. Our next lemma shows that these two
conditions are satisfied if $\CalQ$ and $\CalP$ are two $\alpha$-shearlet
coverings.
\begin{lem}
\label{lem:AlphaShearletRelativelyModerate}Let $0\leq\alpha_{1}\leq\alpha_{2}\leq1$
and let $\CalS^{\left(\alpha_{1}\right)}$ and $\CalS^{\left(\alpha_{2}\right)}$
be the associated $\alpha$-shearlet coverings. Then the following
hold: 

\begin{enumerate}
\item $\CalS^{\left(\alpha_{1}\right)}$ is relatively $\CalS^{\left(\alpha_{2}\right)}$-moderate. 
\item For arbitrary $s\in\R$, the weight $w^{s}=\left(w_{i}^{s}\right)_{i\in I^{(\alpha_{1})}}$
with $w=\left(w_{i}\right)_{i\in I^{\left(\alpha_{1}\right)}}$ as
in Definition \ref{def:AlphaShearletCovering} (considered as a weight
for $\CalS^{(\alpha_{1})}$) is relatively $\CalS^{\left(\alpha_{2}\right)}$-moderate.
More precisely, we have $39^{-\left|s\right|}\cdot w_{j}^{s}\leq w_{i}^{s}\leq39^{\left|s\right|}\cdot w_{j}^{s}$
for all $i\in I^{(\alpha_{1})}$ and $j\in I^{(\alpha_{2})}$ with
$S_{i}^{(\alpha_{1})}\cap S_{j}^{(\alpha_{2})}\neq\emptyset$.\qedhere
\end{enumerate}
\end{lem}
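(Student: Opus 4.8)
The plan is to prove both parts by a direct calculation using the geometric facts established in Lemma \ref{lem:AlphaShearletCoveringAuxiliary} and Lemma \ref{lem:AlphaShearletWeightIsModerate}, which control the size $\left|\xi\right|$ of points in any set $S_i^{\left(\alpha\right)}$ in terms of the weight $w_i=2^n$. The key observation is that part (2) immediately implies part (1): since $w_i=\left|\det T_i^{\left(\alpha_1\right)}\right|^{1/\left(1+\alpha_1\right)}$ (because $\left|\det T_i^{\left(\alpha_1\right)}\right|=2^{n\left(1+\alpha_1\right)}=w_i^{1+\alpha_1}$), showing that $w^{\left(1+\alpha_1\right)}$ is relatively $\CalS^{\left(\alpha_2\right)}$-moderate is exactly the statement that the weight $\left(\left|\det T_i^{\left(\alpha_1\right)}\right|\right)_i$ is relatively $\CalS^{\left(\alpha_2\right)}$-moderate, which is part (1) by definition. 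So I would first prove part (2) for all $s\in\R$, and then deduce part (1) as the special case $s=1+\alpha_1$.

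For part (2), I would fix $i\in I^{\left(\alpha_1\right)}$ and $j\in I^{\left(\alpha_2\right)}$ with $S_i^{\left(\alpha_1\right)}\cap S_j^{\left(\alpha_2\right)}\neq\emptyset$, and pick $\xi$ in this intersection. The plan is to use the universal two-sided bound from Lemma \ref{lem:AlphaShearletWeightIsModerate}, which holds for \emph{both} coverings (it only depends on $\alpha$ through nothing essential — the constants $\tfrac13$ and $13$ are uniform): namely $\tfrac13 w_i\le 1+\left|\xi\right|\le 13\, w_i$ for $\xi\in S_i^{\left(\alpha_1\right)}$ and likewise $\tfrac13 w_j\le 1+\left|\xi\right|\le 13\, w_j$ for $\xi\in S_j^{\left(\alpha_2\right)}$. (Here I should note that in the statement of Lemma \ref{lem:AlphaShearletWeightIsModerate} the weight $w$ for $\CalS^{\left(\alpha_2\right)}$ is a different sequence than for $\CalS^{\left(\alpha_1\right)}$, but both are denoted $2^n$ on their respective index sets; I would just apply the lemma once to each covering.) Combining the two chains of inequalities at the common point $\xi$ gives
\[
\frac{w_i}{w_j}\le\frac{3\left(1+\left|\xi\right|\right)}{\tfrac{1}{13}\left(1+\left|\xi\right|\right)}=39,
\]
and by the symmetric argument $\tfrac{1}{39}\le\tfrac{w_i}{w_j}\le 39$, whence $\tfrac{w_i^s}{w_j^s}=\left(\tfrac{w_i}{w_j}\right)^s\le 39^{\left|s\right|}$ for every $s\in\R$. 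This is exactly the claimed estimate $39^{-\left|s\right|}w_j^s\le w_i^s\le 39^{\left|s\right|}w_j^s$, and relative $\CalS^{\left(\alpha_2\right)}$-moderateness with constant $39^{\left|s\right|}$ follows directly from the definition (the condition in the definition involves $S_i^{\left(\alpha_1\right)}\cap P_j\neq\emptyset\neq S_\ell^{\left(\alpha_1\right)}\cap P_j$ for $P_j\in\CalS^{\left(\alpha_2\right)}$, but since $39$ bounds the ratio against \emph{any} overlapping $\CalS^{\left(\alpha_2\right)}$-cell, the two-step comparison $w_i^s\le 39^{\left|s\right|}w_j^s\le 39^{2\left|s\right|}w_\ell^s$ gives the required bound — I would state the cleaner one-point version since that is what part (2) asks for, and remark that it trivially yields the two-cell version needed for the definition).

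I do not anticipate a genuine obstacle here: this lemma is a soft consequence of the uniform size estimates already in hand, and the only thing to be careful about is bookkeeping — making sure the constant $13$ vs.\ $\tfrac13$ comparison is applied at the \emph{same} point $\xi$ of the intersection, and being explicit that the bound from Lemma \ref{lem:AlphaShearletWeightIsModerate} applies verbatim to $\CalS^{\left(\alpha_2\right)}$ with its own weight sequence. If anything is mildly delicate, it is confirming the determinant identity $\left|\det T_i^{\left(\alpha_1\right)}\right|=w_i^{1+\alpha_1}$ for $i=0$ as well (where $T_0=\identity$ and $w_0=1$, so both sides equal $1$) so that the reduction of part (1) to part (2) covers the low-frequency index too; this is immediate. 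Thus the whole proof is essentially two applications of Lemma \ref{lem:AlphaShearletWeightIsModerate} plus the definitional unwinding.
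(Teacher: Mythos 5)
Your proof is correct and follows the same route as the paper: reduce part (1) to part (2) via the determinant identity $\left|\det T_{i}^{\left(\alpha_{1}\right)}\right|=w_{i}^{1+\alpha_{1}}$, then prove part (2) by applying the two-sided estimate $\tfrac13 w_i\le 1+\left|\xi\right|\le 13\,w_i$ from Lemma \ref{lem:AlphaShearletWeightIsModerate} at a common point $\xi$ of the overlapping cells, and finally pass to the two-cell form required by the definition of relative moderateness. The paper's proof is the same argument in the same order.
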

\begin{proof}
It is not hard to see $\left|\det\smash{T_{i}^{\left(\alpha_{1}\right)}}\right|=w_{i}^{1+\alpha_{1}}$
for all $i\in I^{\left(\alpha_{1}\right)}$. Thus, the second claim
implies the first one.

To prove the second one, let $i\in I^{\left(\alpha_{1}\right)}$ and
$j\in I^{\left(\alpha_{2}\right)}$ with $S_{i}^{\left(\alpha_{1}\right)}\cap S_{j}^{\left(\alpha_{2}\right)}\neq\emptyset$.
Thus, there is some $\xi\in S_{i}^{\left(\alpha_{1}\right)}\cap S_{j}^{\left(\alpha_{2}\right)}$.
In view of Lemma \ref{lem:AlphaShearletWeightIsModerate}, this implies\vspace{-0.1cm}
\[
\frac{w_{j}}{39}\leq\frac{1+\left|\xi\right|}{13}\leq w_{i}\leq3\cdot\left(1+\left|\xi\right|\right)\leq39\cdot w_{j},
\]
from which it easily follows that $39^{-\left|s\right|}\cdot w_{j}^{s}\leq w_{i}^{s}\leq39^{\left|s\right|}\cdot w_{j}^{s}$.
This establishes the second part of the second claim of the lemma.

But this easily implies that the weight $w^{s}$ is relatively $\CalS^{\left(\alpha_{2}\right)}$-moderate:
Indeed, let $i,\ell\in I^{\left(\alpha_{1}\right)}$ be arbitrary
with $S_{i}^{\left(\alpha_{1}\right)}\cap S_{j}^{\left(\alpha_{2}\right)}\neq\emptyset\neq S_{\ell}^{\left(\alpha_{1}\right)}\cap S_{j}^{\left(\alpha_{2}\right)}$
for some $j\in I^{\left(\alpha_{2}\right)}$. This implies $w_{i}^{s}\leq39^{\left|s\right|}\cdot w_{j}^{s}\leq\left(39^{2}\right)^{\left|s\right|}\cdot w_{\ell}^{s}$,
as desired.
\end{proof}
Now that we have established the strong compatibility between the
$\alpha$-shearlet coverings $\CalS^{\left(\alpha_{1}\right)}$ and
$\CalS^{\left(\alpha_{2}\right)}$ and of the associated weights,
we can easily characterize the existence of embeddings between the
$\alpha$-shearlet smoothness.
\begin{thm}
\noindent \label{thm:EmbeddingBetweenAlphaShearlets}Let $\alpha_{1},\alpha_{2}\in\left[0,1\right]$
with $\alpha_{1}\leq\alpha_{2}$. For $s,r\in\R$ and $p_{1},p_{2},q_{1},q_{2}\in\left(0,\infty\right]$,
the map 
\[
\mathscr{S}_{\alpha_{2},r}^{p_{1},q_{1}}\left(\R^{2}\right)\to\mathscr{S}_{\alpha_{1},s}^{p_{2},q_{2}}\left(\R^{2}\right),f\mapsto f
\]
is well-defined and bounded if and only if we have $p_{1}\leq p_{2}$
and
\[
\begin{cases}
r>s+\left(1+\alpha_{1}\right)\left(\frac{1}{p_{1}}-\frac{1}{p_{2}}\right)+\left(\alpha_{2}-\alpha_{1}\right)\left(\frac{1}{q_{2}}-\frac{1}{p_{1}^{\pm\triangle}}\right)_{+}+\left(1-\alpha_{2}\right)\left(\frac{1}{q_{2}}-\frac{1}{q_{1}}\right), & \text{if }q_{2}<q_{1},\\
r\geq s+\left(1+\alpha_{1}\right)\left(\frac{1}{p_{1}}-\frac{1}{p_{2}}\right)+\left(\alpha_{2}-\alpha_{1}\right)\left(\frac{1}{q_{2}}-\frac{1}{p_{1}^{\pm\triangle}}\right)_{+}, & \text{if }q_{2}\geq q_{1}.
\end{cases}
\]

\noindent Likewise, the map\vspace{-0.05cm}
\[
\mathscr{S}_{\alpha_{1},s}^{p_{1},q_{1}}\left(\R^{2}\right)\rightarrow\mathscr{S}_{\alpha_{2},r}^{p_{2},q_{2}}\left(\R^{2}\right),f\mapsto f
\]
is well-defined and bounded if and only if we have $p_{1}\leq p_{2}$
and 
\[
\begin{cases}
s>r+\left(1+\alpha_{1}\right)\left(\frac{1}{p_{1}}-\frac{1}{p_{2}}\right)+\left(\alpha_{2}-\alpha_{1}\right)\left(\frac{1}{p_{2}^{\triangledown}}-\frac{1}{q_{1}}\right)_{+}+\left(1-\alpha_{2}\right)\left(\frac{1}{q_{2}}-\frac{1}{q_{1}}\right), & \text{if }q_{2}<q_{1},\\
s\geq r+\left(1+\alpha_{1}\right)\left(\frac{1}{p_{1}}-\frac{1}{p_{2}}\right)+\left(\alpha_{2}-\alpha_{1}\right)\left(\frac{1}{p_{2}^{\triangledown}}-\frac{1}{q_{1}}\right)_{+}, & \text{if }q_{2}\geq q_{1}.
\end{cases}
\]
Here, we used the notations\vspace{-0.15cm}
\[
p^{\triangledown}:=\min\left\{ p,p'\right\} ,\qquad\text{ and }\qquad\frac{1}{p^{\pm\triangle}}:=\min\left\{ \frac{1}{p},1-\frac{1}{p}\right\} ,
\]
where the \textbf{conjugate exponent} $p'$ is defined as usual for
$p\in\left[1,\infty\right]$ and as $p':=\infty$ for $p\in\left(0,1\right)$.
\end{thm}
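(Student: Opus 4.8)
The plan is to deduce this theorem from the general decomposition space embedding theory in \cite{DecompositionEmbedding}, using the strong compatibility between $\alpha$-shearlet coverings that we just established in Lemmas \ref{lem:AlphaShearletCoveringSubordinateness} and \ref{lem:AlphaShearletRelativelyModerate}. First I would recall that $\mathscr{S}_{\alpha,s}^{p,q}\left(\R^{2}\right)=\DecompSp{\CalS^{\left(\alpha\right)}}p{\ell_{w^{s}}^{q}}{}$, so the asserted embedding is an embedding between two decomposition spaces whose coverings are $\CalS^{\left(\alpha_{1}\right)}$ and $\CalS^{\left(\alpha_{2}\right)}$. Since $\alpha_{1}\leq\alpha_{2}$, Lemma \ref{lem:AlphaShearletCoveringSubordinateness} tells us $\CalS^{\left(\alpha_{1}\right)}$ is almost subordinate to $\CalS^{\left(\alpha_{2}\right)}$ (i.e., the finer covering is the smaller-$\alpha$ one), and Lemma \ref{lem:AlphaShearletRelativelyModerate} tells us that the relevant weights are relatively $\CalS^{\left(\alpha_{2}\right)}$-moderate and that the Jacobian weight $\left(\left|\det T_{i}^{\left(\alpha_{1}\right)}\right|\right)_{i}=\left(w_{i}^{1+\alpha_{1}}\right)_{i}$ is likewise relatively moderate. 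These are exactly the hypotheses under which the general embedding characterization in \cite{DecompositionEmbedding} (the main embedding theorem there, in the ``relatively moderate'' case) becomes a clean, checkable condition rather than an abstract one.

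The core computation is then to translate the abstract embedding condition of \cite{DecompositionEmbedding} into the explicit inequalities stated above. This amounts to the following bookkeeping: one must compute, for a fixed ``coarse'' index $j\in I^{\left(\alpha_{2}\right)}$, the number and the relative sizes of the ``fine'' sets $S_{i}^{\left(\alpha_{1}\right)}$ meeting $S_{j}^{\left(\alpha_{2}\right)}$. From Lemma \ref{lem:AlphaShearletCoveringAuxiliary} one reads off that at scale $n$ the $\alpha_{1}$-shearlet covering has $\asymp 2^{n\left(1-\alpha_{1}\right)}$ angular pieces, whereas a single $\alpha_{2}$-piece at scale $n$ subsumes $\asymp 2^{n\left(\alpha_{2}-\alpha_{1}\right)}$ of them (consistent with the cardinality bounds in the proof of Lemma \ref{lem:AlphaShearletCoveringSubordinateness}), while $\left|\det T_{i}^{\left(\alpha_{1}\right)}\right|\asymp\left|\det T_{j}^{\left(\alpha_{2}\right)}\right|\asymp 2^{n\left(1+\alpha_{1}\right)}$ versus $2^{n\left(1+\alpha_{2}\right)}$. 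Feeding these counts and determinant ratios into the clustering/summation conditions from \cite{DecompositionEmbedding}, the requirement $p_{1}\leq p_{2}$ emerges from the ``local'' part (comparing $L^{p_{1}}$ and $L^{p_{2}}$ on overlapping pieces, which forces an $\left|\det\right|^{1/p_{1}-1/p_{2}}$ loss, giving the $\left(1+\alpha_{1}\right)\left(p_{1}^{-1}-p_{2}^{-1}\right)$ term), the $\left(\alpha_{2}-\alpha_{1}\right)\left(\cdot\right)_{+}$ term comes from summing over the $\asymp 2^{n\left(\alpha_{2}-\alpha_{1}\right)}$ angular pieces inside one coarse piece (the exponent $p_{1}^{\pm\triangle}$ or $p_{2}^{\triangledown}$ and the $\left(\cdot\right)_{+}$ reflecting the usual $\ell^{p}\hookrightarrow\ell^{q}$-type trade-off and its ``dual'' counterpart, depending on the direction of the embedding), and the $\left(1-\alpha_{2}\right)\left(q_{2}^{-1}-q_{1}^{-1}\right)$ term together with the strict-versus-non-strict dichotomy in $q$ comes from the outer $\ell^{q}$-summation over scales, where the scale weight is $2^{n}=w_{i}$ and a $2^{n\left(1-\alpha_{2}\right)}$-type factor appears because the number of scales of $\CalS^{\left(\alpha_{2}\right)}$ overlapping a given one is bounded but the finer covering has $\asymp 2^{n\left(1-\alpha_{1}\right)}$ pieces per scale. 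The second embedding (from small $\alpha$ to large $\alpha$) is handled by the same machinery but now the coarser covering sits on the target side, so the roles of the ``summing'' and ``embedding'' exponents are swapped, which is precisely why $p_{1}^{\pm\triangle}$ is replaced by $p_{2}^{\triangledown}$ and why the role of $q_{1}$ and $q_{2}$ shifts accordingly.

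For necessity I would argue exactly as the necessity half of the embedding theorem in \cite{DecompositionEmbedding}: one tests the putative embedding against explicit ``atoms'' — functions $f$ whose Fourier transforms are smooth bumps supported in a single set $S_{i}^{\left(\alpha_{1}\right)}$ (or in a union of such sets over one scale, or over several scales with a prescribed $\ell^{q}$-profile) — and compares the two decomposition quasi-norms. By choosing the support on a single piece one forces $p_{1}\leq p_{2}$; by spreading mass over all $\asymp 2^{n\left(\alpha_{2}-\alpha_{1}\right)}$ angular pieces within one coarse piece one forces the $\left(\alpha_{2}-\alpha_{1}\right)$-term with the correct exponent; and by letting $n\to\infty$ with a geometric/boundary $\ell^{q}$-profile across scales one forces the scale-term and pins down the strict inequality when $q_{2}<q_{1}$. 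The main obstacle I expect is not conceptual but organizational: matching our concrete combinatorial data (the angular counts $G_{n}^{\left(\alpha\right)}=\left\lceil 2^{n\left(1-\alpha\right)}\right\rceil$, the overlap bounds from Lemma \ref{lem:AlphaShearletCoveringSubordinateness}, the determinant identity $\left|\det T_{i}^{\left(\alpha\right)}\right|=w_{i}^{1+\alpha}$) against the precise hypotheses and the precise form of the quantities (the $p^{\pm\triangle}$ and $p^{\triangledown}$ exponents, the various $\left(\cdot\right)_{+}$'s) in the general theorem of \cite{DecompositionEmbedding}, and carefully checking that all side conditions of that theorem (e.g. the subordinateness, the relative moderateness of the weight \emph{and} of the Jacobian, regularity of the partitions of unity) are in force; once that dictionary is set up, both the sufficiency and the necessity reduce to routine bookkeeping.
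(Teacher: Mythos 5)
Your proposal matches the paper's proof in both structure and substance: invoke the embedding characterization from \cite{DecompositionEmbedding} in its ``relatively moderate'' form (the paper uses \cite[Theorem~7.2, part~(4)]{DecompositionEmbedding} for one direction and \cite[Theorem~7.4, part~(4)]{DecompositionEmbedding} for the other), after verifying subordinateness and relative moderateness via Lemmas~\ref{lem:AlphaShearletCoveringSubordinateness} and~\ref{lem:AlphaShearletRelativelyModerate} and checking \cite[Assumption~7.1]{DecompositionEmbedding}, then translate the abstract $\ell^{q_{2}\cdot(q_{1}/q_{2})'}$-norm criterion into the stated explicit inequalities by summing a geometric series over scales. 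One small simplification over what you wrote: there is no need to redo the necessity direction with hand-built atoms, since the cited results are already stated as if-and-only-if characterizations, so finiteness of the single quantity $K$ (resp.\ $C$) handles both directions at once.
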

\begin{proof}
For the first part, we want to invoke part (4) of \cite[Theorem 7.2]{DecompositionEmbedding},
with $\CalQ=\CalS^{\left(\alpha_{2}\right)}=\left(\smash{T_{i}^{\left(\alpha_{2}\right)}}Q_{i}'\right)_{i\in I^{\left(\alpha_{2}\right)}}$
and $\CalP=\CalS^{\left(\alpha_{1}\right)}=\left(\smash{T_{i}^{\left(\alpha_{1}\right)}}Q_{i}'\right)_{i\in I^{\left(\alpha_{1}\right)}}$
and with $w=\left(w_{i}^{r}\right)_{i\in I^{\left(\alpha_{2}\right)}}$
and $v=\left(w_{i}^{s}\right)_{i\in I^{\left(\alpha_{1}\right)}}$.
To this end, we first have to verify that $\CalQ,\CalP,w,v$ satisfy
\cite[Assumption 7.1]{DecompositionEmbedding}. But we saw in Lemma
\ref{lem:AlphaShearletWeightIsModerate} that $w$ and $v$ are $\CalQ$-moderate
and $\CalP$-moderate, respectively. Furthermore, $\CalQ,\CalP$ are
almost structured coverings (cf.\@ Lemma \ref{lem:AlphaShearletCoveringIsAlmostStructured})
and thus also \textbf{semi-structured coverings} (cf.\@ \cite[Definition 2.5]{DecompositionEmbedding})
of $\CalO=\CalO'=\R^{2}$. Furthermore, since $\left\{ Q_{i}'\with i\in I^{\left(\alpha\right)}\right\} $
is a finite family of nonempty open sets (for arbitrary $\alpha\in\left[0,1\right]$),
it is not hard to see that $\CalS^{\left(\alpha\right)}$ is an open
covering of $\R^{2}$ and that there is some $\varepsilon>0$ and
for each $i\in I^{\left(\alpha\right)}$ some $\eta_{i}\in\R^{2}$
with $B_{\varepsilon}\left(\eta_{i}\right)\subset Q_{i}'$. Thus,
$\CalS^{\left(\alpha\right)}$ is a \textbf{tight}, open semi-structured
covering of $\R^{2}$ for all $\alpha\in\left[0,1\right]$. Hence,
so are $\CalQ,\CalP$. Finally, \cite[Corollary 2.7]{DecompositionIntoSobolev}
shows that if $\Phi=\left(\varphi_{i}\right)_{i\in I^{\left(\alpha_{2}\right)}}$
and $\Psi=\left(\psi_{j}\right)_{j\in I^{\left(\alpha_{1}\right)}}$
are regular partitions of unity for $\CalQ,\CalP$, respectively,
then $\Phi,\Psi$ are \textbf{$L^{p}$-BAPUs} (cf.\@ \cite[Definitions 3.5 and 3.6]{DecompositionEmbedding})
for $\CalQ,\CalP$, simultaneously for all $p\in\left(0,\infty\right]$.
Hence, all assumptions of \cite[Assumption 7.1]{DecompositionEmbedding}
are satisfied.

Next, Lemma \ref{lem:AlphaShearletCoveringSubordinateness} shows
that $\CalP=\CalS^{\left(\alpha_{1}\right)}$ is almost subordinate
to $\CalQ=\CalS^{\left(\alpha_{2}\right)}$ and Lemma \ref{lem:AlphaShearletRelativelyModerate}
shows that $\CalP$ and $v$ are relatively $\CalQ$-moderate, so
that all assumptions of \cite[Theorem 7.2, part (4)]{DecompositionEmbedding}
are satisfied.

Now, let us choose, for each $j\in I^{(\alpha_{2})}$, an arbitrary
index $i_{j}\in I^{(\alpha_{1})}$ with $S_{i_{j}}^{(\alpha_{1})}\cap S_{j}^{(\alpha_{2})}\neq\emptyset$.
Then \cite[Theorem 7.2, part (4)]{DecompositionEmbedding} shows that
the embedding $\mathscr{S}_{\alpha_{2},r}^{p_{1},q_{1}}\left(\R^{2}\right)\hookrightarrow\mathscr{S}_{\alpha_{1},s}^{p_{2},q_{2}}\left(\R^{2}\right)$
holds if and only if we have $p_{1}\leq p_{2}$ and if furthermore,
the following expression (then a constant) is finite: 
\[
\begin{aligned}K & :=\left\Vert \left(\frac{w_{i_{j}}^{s}}{w_{j}^{r}}\cdot\left|\det T_{j}^{(\alpha_{2})}\right|^{\left(\frac{1}{q_{2}}-\frac{1}{p_{1}^{\pm\triangle}}\right)_{+}}\;\cdot\;\left|\det T_{i_{j}}^{(\alpha_{1})}\right|^{\frac{1}{p_{1}}-\frac{1}{p_{2}}-\left(\frac{1}{q_{2}}-\frac{1}{p_{1}^{\pm\triangle}}\right)_{+}}\right)_{j\in I_{0}^{(\alpha_{2})}}\right\Vert _{\ell^{q_{2}\cdot(q_{1}/q_{2})'}}\\
\left({\scriptstyle \text{Lemma }\ref{lem:AlphaShearletRelativelyModerate}}\right) & \asymp\left\Vert \left(\frac{2^{ks}}{2^{kr}}\,\cdot\,2^{k\left(1+\alpha_{2}\right)\left(\frac{1}{q_{2}}-\frac{1}{p_{1}^{\pm\triangle}}\right)_{+}}\;\cdot\;2^{k\left(1+\alpha_{1}\right)\left[\frac{1}{p_{1}}-\frac{1}{p_{2}}-\left(\frac{1}{q_{2}}-\frac{1}{p_{1}^{\pm\triangle}}\right)_{+}\right]}\right)_{(k,\ell,\beta,\gamma)\in I_{0}^{(\alpha_{2})}}\right\Vert _{\ell^{q_{2}\cdot(q_{1}/q_{2})'}}\\
 & =\left\Vert \left(\raisebox{-0.2cm}{\ensuremath{2^{k\left(\left(s-r\right)+\left(1+\alpha_{2}\right)\left(\frac{1}{q_{2}}-\frac{1}{p_{1}^{\pm\triangle}}\right)_{+}+\left(1+\alpha_{1}\right)\left[\frac{1}{p_{1}}-\frac{1}{p_{2}}-\left(\frac{1}{q_{2}}-\frac{1}{p_{1}^{\pm\triangle}}\right)_{+}\right]\right)}}}\right)_{(k,\ell,\beta,\gamma)\in I_{0}^{(\alpha_{2})}}\right\Vert _{\ell^{q_{2}\cdot(q_{1}/q_{2})'}}\\
 & =\left\Vert \left(\raisebox{-0.2cm}{\ensuremath{2^{k\left(s-r+\left(\alpha_{2}-\alpha_{1}\right)\left(\frac{1}{q_{2}}-\frac{1}{p_{1}^{\pm\triangle}}\right)_{+}+\left(1+\alpha_{1}\right)\left(\frac{1}{p_{1}}-\frac{1}{p_{2}}\right)\right)}}}\right)_{(k,\ell,\beta,\gamma)\in I_{0}^{(\alpha_{2})}}\right\Vert _{\ell^{q_{2}\cdot(q_{1}/q_{2})'}}.
\end{aligned}
\]
Note that we only took the norm of the sequence with $j\in I_{0}^{\left(\alpha_{2}\right)}$,
omitting the term for $j=0$, in contrast to the definition of $K$
in \cite[Theorem 7.2]{DecompositionEmbedding}. This is justified,
since we are only interested in finiteness of the norm, for which
the single (finite(!))\@ term for $j=0$ is irrelevant.

Now, we distinguish two different cases regarding $q_{1}$ and $q_{2}$:

\textbf{Case 1}: We have $q_{2}<q_{1}$. This implies $\varrho:=q_{2}\cdot\left(q_{1}/q_{2}\right)'<\infty$,
cf.\@ \cite[Equation (4.3)]{DecompositionEmbedding}. For brevity,
let us define $\Theta:=s-r+\left(\alpha_{2}-\alpha_{1}\right)\left(\frac{1}{q_{2}}-\frac{1}{p_{1}^{\pm\triangle}}\right)_{+}+\left(1+\alpha_{1}\right)\left(\frac{1}{p_{1}}-\frac{1}{p_{2}}\right)$.
Then, we get
\[
\begin{aligned}K^{\varrho} & \asymp\left\Vert \left(2^{k\Theta}\right)_{(k,\ell,\beta,\gamma)\in I_{0}^{(\alpha_{2})}}\right\Vert _{\ell^{\varrho}}^{\varrho}=\sum_{(k,\ell,\beta,\gamma)\in I_{0}^{(\alpha_{2})}}2^{k\cdot\varrho\cdot\Theta}=\sum_{k=0}^{\infty}2^{k\cdot\varrho\cdot\Theta}\sum_{\left|\ell\right|\leq\left\lceil 2^{k\left(1-\alpha_{2}\right)}\right\rceil }\sum_{\beta\in\left\{ \pm1\right\} }\sum_{\gamma\in\left\{ 0,1\right\} }1\\
 & =4\cdot\sum_{k=0}^{\infty}2^{k\cdot\varrho\cdot\Theta}\left(1+2\cdot\left\lceil \smash{2^{k\left(1-\alpha_{2}\right)}}\right\rceil \right)\asymp\sum_{k=0}^{\infty}2^{k\left(\varrho\cdot\Theta+1-\alpha_{2}\right)}.
\end{aligned}
\]
Now, note from the remark to \cite[Lemma 4.8]{DecompositionEmbedding}
that $\frac{1}{p\cdot\left(q/p\right)'}=\left(\frac{1}{p}-\frac{1}{q}\right)_{+}$
for arbitrary $p,q\in\left(0,\infty\right]$. Hence, in the present
case, we have $\varrho^{-1}=\left(q_{2}^{-1}-q_{1}^{-1}\right)_{+}=q_{2}^{-1}-q_{1}^{-1}$.
Therefore, we see that the last sum from above—and therefore $K$—is
finite if and only if $\varrho\cdot\Theta+1-\alpha_{2}<0$. But this
is equivalent to
\[
s-r+\left(\alpha_{2}-\alpha_{1}\right)\left(\frac{1}{q_{2}}-\frac{1}{p_{1}^{\pm\triangle}}\right)_{+}+\left(1+\alpha_{1}\right)\left(\frac{1}{p_{1}}-\frac{1}{p_{2}}\right)=\Theta\overset{!}{<}\left(\alpha_{2}-1\right)\cdot\left(q_{2}^{-1}-q_{1}^{-1}\right),
\]
from which it easily follows that the claimed equivalence from the
first part of the theorem holds in case of $q_{2}<q_{1}$.

\medskip{}

\textbf{Case 2}: We have $q_{2}\geq q_{1}$. This implies $q_{2}\cdot\left(q_{1}/q_{2}\right)'=\infty$,
cf.\@ \cite[Equation (4.3)]{DecompositionEmbedding}. Thus, with
$\Theta$ as in the previous case, we have
\[
K\asymp\sup_{(k,\ell,\beta,\gamma)\in I_{0}^{(\alpha_{2})}}2^{k\Theta},
\]
so that $K$ is finite if and only if $\Theta\leq0$, which is equivalent
to
\[
r\geq s+(\alpha_{2}-\alpha_{1})\left(\frac{1}{q_{2}}-\frac{1}{p_{1}^{\pm\triangle}}\right)_{+}+\left(1+\alpha_{1}\right)\left(\frac{1}{p_{1}}-\frac{1}{p_{2}}\right).
\]
As in the previous case, this shows for $q_{2}\geq q_{1}$ that the
claimed equivalence from the first part of the theorem holds.

\medskip{}

For the second part of the theorem, we make use of part (4) of \cite[Theorem 7.4]{DecompositionEmbedding},
with $\CalQ=\CalS^{\left(\alpha_{1}\right)}=\left(\smash{T_{i}^{\left(\alpha_{1}\right)}}Q_{i}'\right)_{i\in I^{\left(\alpha_{1}\right)}}$
and $\CalP=\CalS^{\left(\alpha_{2}\right)}=\left(\smash{T_{i}^{\left(\alpha_{2}\right)}}Q_{i}'\right)_{i\in I^{\left(\alpha_{2}\right)}}$
and with $w=\left(w_{i}^{s}\right)_{i\in I^{\left(\alpha_{1}\right)}}$
and $v=\left(v_{i}^{r}\right)_{i\in I^{\left(\alpha_{2}\right)}}$.
As above, one sees that the corresponding assumptions are fulfilled.

Thus, \cite[Theorem 7.4, part (4)]{DecompositionEmbedding} shows
that the embedding $\mathscr{S}_{\alpha_{1},s}^{p_{1},q_{1}}\left(\R^{2}\right)\hookrightarrow\mathscr{S}_{\alpha_{2},r}^{p_{2},q_{2}}\left(\R^{2}\right)$
holds if and only if we have $p_{1}\leq p_{2}$ and if furthermore
the following expression (then a constant) is finite:
\[
C:=\left\Vert \left(\frac{w_{j}^{r}}{w_{i_{j}}^{s}}\cdot\left|\det T_{j}^{(\alpha_{2})}\right|{}^{\left(\frac{1}{p_{2}^{\triangledown}}-\frac{1}{q_{1}}\right)_{+}}\cdot\left|\det T_{i_{j}}^{(\alpha_{1})}\right|{}^{\frac{1}{p_{1}}-\frac{1}{p_{2}}-\left(\frac{1}{p_{2}^{\triangledown}}-\frac{1}{q_{1}}\right)_{+}}\right)_{j\in I^{(\alpha_{2})}}\right\Vert _{\ell^{q_{2}\cdot(q_{1}/q_{2})'}},
\]
where for each $j\in I^{(\alpha_{2})}$ an arbitrary index $i_{j}\in I^{(\alpha_{1})}$
with $S_{i_{j}}^{(\alpha_{1})}\cap S_{j}^{(\alpha_{2})}\neq\emptyset$
is chosen.

But in view of Lemma \ref{lem:AlphaShearletRelativelyModerate}, it
is not hard to see that $C$ satisfies
\[
\begin{aligned}C & \asymp\left\Vert \left(\frac{2^{kr}}{2^{ks}}\cdot2^{k\left(1+\alpha_{2}\right)\left(\frac{1}{p_{2}^{\triangledown}}-\frac{1}{q_{1}}\right)_{+}}\;\cdot\;2^{k\left(1+\alpha_{1}\right)\left[\frac{1}{p_{1}}-\frac{1}{p_{2}}-\left(\frac{1}{p_{2}^{\triangledown}}-\frac{1}{q_{1}}\right)_{+}\right]}\right)_{(k,\ell,\beta,\gamma)\in I_{0}^{(\alpha_{2})}}\right\Vert _{\ell^{q_{2}\cdot(q_{1}/q_{2})'}}\\
 & =\left\Vert \left(\raisebox{-0.2cm}{\ensuremath{2^{k\left(\left(1+\alpha_{1}\right)\left[\frac{1}{p_{1}}-\frac{1}{p_{2}}-\left(\frac{1}{p_{2}^{\triangledown}}-\frac{1}{q_{1}}\right)_{+}\right]+\left(1+\alpha_{2}\right)\left(\frac{1}{p_{2}^{\triangledown}}-\frac{1}{q_{1}}\right)_{+}-s+r\right)}}}\right)_{(k,\ell,\beta,\gamma)\in I_{0}^{(\alpha_{2})}}\right\Vert _{\ell^{q_{2}\cdot(q_{1}/q_{2})'}}\\
 & =\left\Vert \left(\raisebox{-0.2cm}{\ensuremath{2^{k\left(\left(1+\alpha_{1}\right)\left(\frac{1}{p_{1}}-\frac{1}{p_{2}}\right)+\left(\alpha_{2}-\alpha_{1}\right)\left(\frac{1}{p_{2}^{\triangledown}}-\frac{1}{q_{1}}\right)_{+}-s+r\right)}}}\right)_{(k,\ell,\beta,\gamma)\in I_{0}^{(\alpha_{2})}}\right\Vert _{\ell^{q_{2}\cdot(q_{1}/q_{2})'}}.
\end{aligned}
\]
As above, we distinguish two cases regarding $q_{1}$ and $q_{2}$:

\textbf{Case 1}: We have $q_{2}<q_{1}$, so that $\varrho:=q_{2}\cdot\left(q_{1}/q_{2}\right)'<\infty$.
But setting
\[
\Gamma:=\left(1+\alpha_{1}\right)\left(\frac{1}{p_{1}}-\frac{1}{p_{2}}\right)+\left(\alpha_{2}-\alpha_{1}\right)\left(\frac{1}{p_{2}^{\triangledown}}-\frac{1}{q_{1}}\right)_{+}-s+r,
\]
we have
\[
\begin{aligned}C^{\varrho} & \asymp\left\Vert \left(2^{k\Gamma}\right)_{(k,\ell,\beta,\gamma)\in I_{0}^{(\alpha_{2})}}\right\Vert _{\ell^{\varrho}}^{\varrho}=\sum_{(k,\ell,\beta,\text{\ensuremath{\gamma}})\in I_{0}^{(\alpha_{2})}}2^{k\cdot\varrho\cdot\Gamma}\\
 & =\sum_{k=0}^{\infty}2^{k\cdot\varrho\cdot\Gamma}\sum_{\left|\ell\right|\leq\left\lceil 2^{k\left(1-\alpha_{2}\right)}\right\rceil }\sum_{\beta\in\left\{ \pm1\right\} }\sum_{\gamma\in\left\{ 0,1\right\} }1\asymp\sum_{k=0}^{\infty}2^{k\left(\varrho\cdot\Gamma+1-\alpha_{2}\right)}.
\end{aligned}
\]
As above, we have $\varrho^{-1}=\left(q_{2}^{-1}-q_{1}^{-1}\right)_{+}=q_{2}^{-1}-q_{1}^{-1}$
and we see that the last sum—and thus $C$—is finite if and only if
we have $\varrho\cdot\Gamma+1-\alpha_{2}<0$, which is equivalent
to
\[
\left(1+\alpha_{1}\right)\left(\frac{1}{p_{1}}-\frac{1}{p_{2}}\right)+\left(\alpha_{2}-\alpha_{1}\right)\left(\frac{1}{p_{2}^{\triangledown}}-\frac{1}{q_{1}}\right)_{+}-s+r=\Gamma\overset{!}{<}\left(\alpha_{2}-1\right)\cdot\left(q_{2}^{-1}-q_{1}^{-1}\right).
\]
Based on this, it is not hard to see that the equivalence stated in
the second part of the theorem is valid for $q_{2}<q_{1}$.

\medskip{}

\textbf{Case 2}: We have $q_{2}\geq q_{1}$, so that $q_{2}\cdot\left(q_{1}/q_{2}\right)'=\infty$.
In this case, we have—with $\Gamma$ as above—that
\[
C\asymp\sup_{(k,\ell,\beta,\gamma)\in I_{0}^{(\alpha_{2})}}2^{k\Gamma},
\]
which is finite if and only if $\Gamma\leq0$, which is equivalent
to
\[
s\geq r+(1+\alpha_{1})\left(\frac{1}{p_{1}}-\frac{1}{p_{2}}\right)+(\alpha_{2}-\alpha_{1})\left(\frac{1}{p_{2}^{\triangledown}}-\frac{1}{q_{1}}\right)_{+}.
\]
This easily shows that the claimed equivalence from the second part
of the theorem also holds for $q_{2}\geq q_{1}$.
\end{proof}
With Theorem \ref{thm:EmbeddingBetweenAlphaShearlets}, we have established
the characterization of the general embedding from equation \eqref{eq:EmbeddingSectionGeneralEmbedding}.
Our main application, however, was to determine under which conditions
$\ell^{p}$-sparsity of $f$ with respect to $\alpha_{1}$-shearlet
systems implies $\ell^{q}$-sparsity of $f$ with respect to $\alpha_{2}$-shearlet
systems, \emph{if one has no additional information}. As discussed
around equation \eqref{eq:EmbeddingSectionSparsityEmbedding}, this
amounts to an embedding $\mathscr{S}_{\alpha_{1},\left(1+\alpha_{1}\right)\left(p^{-1}-2^{-1}\right)}^{p,p}\left(\R^{2}\right)\hookrightarrow\mathscr{S}_{\alpha_{2},\left(1+\alpha_{2}\right)\left(q^{-1}-2^{-1}\right)}^{q,q}\left(\R^{2}\right)$.
Since we are only interested in \emph{nontrivial} sparsity, and since
arbitrary $L^{2}$ functions have $\alpha$-shearlet coefficients
in $\ell^{2}$, the only interesting case is for $p,q\leq2$. This
setting is considered in our next lemma:
\begin{lem}
Let $\alpha_{1},\alpha_{2}\in\left[0,1\right]$ with $\alpha_{1}\neq\alpha_{2}$,
let $p,q\in\left(0,2\right]$ and let $\varepsilon\in\left[0,\infty\right)$.
The embedding
\[
\mathscr{S}_{\alpha_{1},\varepsilon+\left(1+\alpha_{1}\right)\left(p^{-1}-2^{-1}\right)}^{p,p}\left(\R^{2}\right)\hookrightarrow\mathscr{S}_{\alpha_{2},\left(1+\alpha_{2}\right)\left(q^{-1}-2^{-1}\right)}^{q,q}\left(\R^{2}\right)
\]
holds if and only if we have $p\leq q$ and $q\geq\left(\frac{1}{2}+\frac{\varepsilon}{\left|\alpha_{1}-\alpha_{2}\right|}\right)^{-1}$.
\end{lem}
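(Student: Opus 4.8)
The plan is to derive this lemma directly from Theorem \ref{thm:EmbeddingBetweenAlphaShearlets} by a careful case analysis on which of $\alpha_1$, $\alpha_2$ is larger, and by plugging in the specific weight exponents $s = \varepsilon + (1+\alpha_1)(p^{-1}-2^{-1})$ and $r = (1+\alpha_2)(q^{-1}-2^{-1})$, together with the exponent triples $(p_1,q_1) = (p,p)$ and $(p_2,q_2) = (q,q)$. Since $p_1 = q_1 = p$ and $p_2 = q_2 = q$, the ``$q_2 < q_1$'' versus ``$q_2 \geq q_1$'' dichotomy in the theorem collapses to $q < p$ versus $q \geq p$, and in the first case we will immediately see (as in the theorem's proof) that the embedding fails because the $(1-\alpha_2)(q^{-1}-q^{-1}) = 0$ correction term disappears but the strict inequality $r > s + \dots$ cannot be met — actually we must check this against the derived condition $q \geq p$ that comes out of the $p_1 \leq p_2$ requirement, so the genuinely interesting regime is $q \geq p$, and I will organize the argument so that the necessity of $p \leq q$ is isolated first and then the remaining strict/non-strict smoothness inequality is analyzed.

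First I would record that $p_1 \leq p_2$ here means exactly $p \leq q$, giving one of the two stated conditions. Next, assuming $p \leq q$ (hence $q \geq p$, so we are in the ``$q_2 \geq q_1$'' branch of whichever half of Theorem \ref{thm:EmbeddingBetweenAlphaShearlets} applies), I would split into the two cases $\alpha_1 < \alpha_2$ and $\alpha_1 > \alpha_2$. In the case $\alpha_1 < \alpha_2$ the relevant statement is the second half of the theorem (embedding $\mathscr{S}_{\alpha_1,s}^{p_1,q_1} \hookrightarrow \mathscr{S}_{\alpha_2,r}^{p_2,q_2}$), whose non-strict condition reads $s \geq r + (1+\alpha_1)(p^{-1}-q^{-1}) + (\alpha_2-\alpha_1)(q_2^{\triangledown -1} - q_1^{-1})_+$. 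Here $q_2^{\triangledown} = \min\{q,q'\}$ and since $q \leq 2$ we have $q' \geq 2 \geq q$, so $q_2^{\triangledown} = q$, and $(q^{-1} - p^{-1})_+ = 0$ because $q \geq p$; thus the correction term vanishes and the condition becomes simply $s \geq r + (1+\alpha_1)(p^{-1} - q^{-1})$. Substituting $s$ and $r$, the terms $(1+\alpha_1)(p^{-1}-2^{-1})$ cancel against $-(1+\alpha_1)p^{-1}$ etc., and after collecting I expect to arrive at $\varepsilon \geq (1+\alpha_2)(q^{-1}-2^{-1}) - (1+\alpha_1)(q^{-1}-2^{-1}) = (\alpha_2 - \alpha_1)(q^{-1} - 2^{-1})$, i.e. $q^{-1} \leq \tfrac12 + \varepsilon/(\alpha_2-\alpha_1)$, which rearranges to $q \geq (\tfrac12 + \varepsilon/|\alpha_1 - \alpha_2|)^{-1}$. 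The case $\alpha_1 > \alpha_2$ is handled symmetrically using the first half of the theorem (embedding $\mathscr{S}_{\alpha_2,r}^{p_1,q_1} \hookrightarrow \mathscr{S}_{\alpha_1,s}^{p_2,q_2}$ with the roles of the smoothness and exponent parameters suitably matched): there the relevant quantity is $(q_2^{-1} - p_1^{\pm\triangle -1})_+$, and since $p \leq 2$ we have $p^{\pm\triangle} = p$ hence $(q^{-1} - p^{-1})_+ = 0$ again, and the same algebra produces the same threshold with $|\alpha_1 - \alpha_2|$ in place of $\alpha_2 - \alpha_1$.

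I would then note that the strict-versus-non-strict subtlety does not actually arise: in both cases we landed in the ``$q_2 \geq q_1$'' branch, which gives a non-strict inequality on the smoothness parameters, so the threshold condition $q \geq (\tfrac12 + \varepsilon/|\alpha_1-\alpha_2|)^{-1}$ is itself non-strict, matching the statement. One should double-check the edge case $\varepsilon = 0$: then the threshold is $q \geq 2$, which combined with $q \leq 2$ forces $q = 2$, and then $p \leq q = 2$; this is consistent with the intuition that without an extra $\varepsilon$ of smoothness one can only transfer down to plain $\ell^2$ (no nontrivial sparsity), which is the remark the authors make just before the lemma. The main obstacle, if any, will be purely bookkeeping: making sure the parameter identifications when invoking the two halves of Theorem \ref{thm:EmbeddingBetweenAlphaShearlets} are done correctly (which covering plays the role of $\CalQ$ and which of $\CalP$, and correspondingly whether the smoothness condition is $r \geq s + \cdots$ or $s \geq r + \cdots$), and confirming that $p^{\pm\triangle} = p$ and $q^{\triangledown} = q$ throughout the range $p,q \in (0,2]$ so that all the $(\cdot)_+$ correction terms genuinely vanish. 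Once those identifications are pinned down, the computation is a short linear rearrangement and the ``if and only if'' is immediate from the theorem's own iff.
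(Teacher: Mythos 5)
Your overall plan is right and aligns with the paper's own proof — you invoke Theorem \ref{thm:EmbeddingBetweenAlphaShearlets} with $\left(p_1,q_1\right)=\left(p,p\right)$, $\left(p_2,q_2\right)=\left(q,q\right)$, observe that $p\leq q$ forces the non-strict branch, and then split on the order of $\alpha_1,\alpha_2$. The $\alpha_1<\alpha_2$ computation is correct: there the relevant correction term involves $\bigl(\tfrac{1}{p_2^{\triangledown}}-\tfrac{1}{q_1}\bigr)_+=\bigl(\tfrac{1}{q}-\tfrac{1}{p}\bigr)_+$, which vanishes since $p\leq q$, and the linear rearrangement gives the claimed threshold.

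The $\alpha_1>\alpha_2$ case has a genuine error. You claim that $p\leq2$ gives $p^{\pm\triangle}=p$, hence $\bigl(\tfrac{1}{q}-\tfrac{1}{p^{\pm\triangle}}\bigr)_+=\bigl(\tfrac{1}{q}-\tfrac{1}{p}\bigr)_+=0$. But by definition $\tfrac{1}{p^{\pm\triangle}}=\min\bigl\{\tfrac{1}{p},\,1-\tfrac{1}{p}\bigr\}$; for $p\leq2$ we have $\tfrac{1}{p}\geq\tfrac12\geq1-\tfrac{1}{p}$, so $\tfrac{1}{p^{\pm\triangle}}=1-\tfrac{1}{p}$, not $\tfrac{1}{p}$ (the identity $p^{\pm\triangle}=p$ holds only for $p\geq2$). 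Consequently $\tfrac{1}{q}-\tfrac{1}{p^{\pm\triangle}}=\tfrac{1}{q}+\tfrac{1}{p}-1\geq0$ (here one uses $p,q\leq2$), so the $\left(\cdot\right)_+$ term does \emph{not} vanish; it contributes $\left(\alpha_1-\alpha_2\right)\bigl(\tfrac{1}{q}+\tfrac{1}{p}-1\bigr)$. If you drop it, as you propose, the resulting condition reduces to $\varepsilon\geq\left(\alpha_2-\alpha_1\right)\bigl(\tfrac{1}{p}-\tfrac12\bigr)$, which for $\alpha_2<\alpha_1$ is automatically true for all $\varepsilon\geq0$ — i.e.\@ you would conclude that the embedding always holds in this case, contradicting the stated threshold. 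The correct conclusion only emerges after carrying the extra term through: as in the paper's proof, the $p$-dependent contributions cancel precisely in the sum $\left(\alpha_1-\alpha_2\right)\bigl(\tfrac12-\tfrac{1}{p}+\tfrac{1}{q}-1+\tfrac{1}{p}\bigr)=\left(\alpha_1-\alpha_2\right)\bigl(\tfrac{1}{q}-\tfrac12\bigr)$, which is a nontrivial cancellation your "same algebra" claim does not account for. So the final answer happens to match, but your justification for the $\alpha_1>\alpha_2$ case is incorrect and would need to be replaced with the explicit computation.
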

\begin{rem*}
The case $\varepsilon=0$ corresponds to the embedding which is considered
in equation \eqref{eq:EmbeddingSectionSparsityEmbedding}. Here, the
preceding lemma shows that the embedding can only hold if $q\geq2$.
Since the $\alpha_{2}$-shearlet coefficients of every $L^{2}$ function
are $\ell^{2}$-sparse, we see that $\ell^{p}$-sparsity with respect
to $\alpha_{1}$-shearlets does not imply any nontrivial $\ell^{q}$-sparsity
with respect to $\alpha_{2}$-shearlets for $\alpha_{1}\neq\alpha_{2}$,
\emph{if no additional information than the $\ell^{p}$-sparsity with
respect to $\alpha_{1}$-shearlets is given}.

But in conjunction with Theorem \ref{thm:AnalysisAndSynthesisSparsityAreEquivalent},
we see that if the $\alpha_{1}$-shearlet coefficients $\left(\left\langle f,\psi^{\left[\left(j,\ell,\iota\right),k\right]}\right\rangle _{L^{2}}\right)_{\left(j,\ell,\iota\right)\in I^{\left(\alpha_{1}\right)},k\in\Z^{2}}$
satisfy 
\begin{equation}
\left\Vert \left(2^{\varepsilon j}\cdot\left\langle f,\psi^{\left[\left(j,\ell,\iota\right),k\right]}\right\rangle _{L^{2}}\right)_{\left(j,\ell,\iota\right)\in I^{\left(\alpha_{1}\right)},\,k\in\Z^{2}}\right\Vert _{\ell^{p}}<\infty\label{eq:SparsityTransferGoodCondition}
\end{equation}
for some $\varepsilon>0$, then one can derive $\ell^{q}$-sparsity
with respect to $\alpha_{2}$-shearlets for $q\geq\max\left\{ p,\,\left(\frac{1}{2}+\frac{\varepsilon}{\left|\alpha_{1}-\alpha_{2}\right|}\right)^{-1}\right\} $.
Observe that equation \eqref{eq:SparsityTransferGoodCondition} combines
an $\ell^{p}$-estimate with a decay of the coefficients with the
scale parameter $j\in\N_{0}$.
\end{rem*}
\begin{proof}
Theorem \ref{thm:EmbeddingBetweenAlphaShearlets} shows that the embedding
can only hold if $p\leq q$. Thus, we only need to show for $0<p\leq q\leq2$
that the stated embedding holds if and only if we have $q\geq\left(\frac{1}{2}+\frac{\varepsilon}{\left|\alpha_{1}-\alpha_{2}\right|}\right)^{-1}$.

For brevity, let $s:=\varepsilon+\left(1+\alpha_{1}\right)\left(p^{-1}-2^{-1}\right)$
and $r:=\left(1+\alpha_{2}\right)\left(q^{-1}-2^{-1}\right)$. We
start with a few auxiliary observations: Because of $p\leq q\leq2$,
we have $q^{\triangledown}=\min\left\{ q,q'\right\} =q$ and $\frac{1}{p^{\pm\triangle}}=\min\left\{ \frac{1}{p},1-\frac{1}{p}\right\} =1-\frac{1}{p}$,
as well as $\frac{1}{q^{\triangledown}}-\frac{1}{p}=\frac{1}{q}-\frac{1}{p}\leq0$
and $\frac{1}{p}+\frac{1}{q}\geq1$, so that $\frac{1}{q}-\frac{1}{p^{\pm\triangle}}=\frac{1}{q}-1+\frac{1}{p}\geq0$.

Now, let us first consider the case $\alpha_{1}<\alpha_{2}$. Since
we assume $p\leq q$, Theorem \ref{thm:EmbeddingBetweenAlphaShearlets}
shows that the embedding holds if and only if

\[
\begin{aligned} & s\overset{!}{\geq}r+(1+\alpha_{1})\left(\frac{1}{p}-\frac{1}{q}\right)+(\alpha_{2}-\alpha_{1})\left(\frac{1}{q^{\triangledown}}-\frac{1}{p}\right)_{+}\\
\Longleftrightarrow & \left(1+\alpha_{1}\right)\left(p^{-1}-2^{-1}\right)+\varepsilon\overset{!}{\geq}\left(1+\alpha_{2}\right)\left(q^{-1}-2^{-1}\right)+\left(1+\alpha_{1}\right)\left(p^{-1}-q^{-1}\right)\\
\Longleftrightarrow & \varepsilon\overset{!}{\geq}\left(1+\alpha_{2}\right)\left(q^{-1}-2^{-1}\right)+\left(1+\alpha_{1}\right)\left(2^{-1}-q^{-1}\right)=\left(\alpha_{2}-\alpha_{1}\right)\left(q^{-1}-2^{-1}\right)\\
\left({\scriptstyle \text{since }\alpha_{2}-\alpha_{1}>0}\right)\Longleftrightarrow & \frac{\varepsilon}{\alpha_{2}-\alpha_{1}}+\frac{1}{2}\overset{!}{\geq}\frac{1}{q}\\
\Longleftrightarrow & q\overset{!}{\geq}\left(\frac{1}{2}+\frac{\varepsilon}{\alpha_{2}-\alpha_{1}}\right)^{-1}=\left(\frac{1}{2}+\frac{\varepsilon}{\left|\alpha_{2}-\alpha_{1}\right|}\right)^{-1}.
\end{aligned}
\]
Finally, we consider the case $\alpha_{1}>\alpha_{2}$. Again, since
$p\leq q$, Theorem \ref{thm:EmbeddingBetweenAlphaShearlets} (with
interchanged roles of $\alpha_{1},\alpha_{2}$ and $r,s$) shows that
the desired embedding holds if and only if

\[
\begin{aligned} & s\overset{!}{\geq}r+\left(1+\alpha_{2}\right)\left(\frac{1}{p}-\frac{1}{q}\right)+\left(\alpha_{1}-\alpha_{2}\right)\left(\frac{1}{q}-\frac{1}{p^{\pm\triangle}}\right)_{+}\\
\left({\scriptstyle \text{since }q^{-1}-1+p^{-1}\geq0}\right)\Longleftrightarrow & \left(1+\alpha_{1}\right)\left(\frac{1}{p}-\frac{1}{2}\right)+\varepsilon\overset{!}{\geq}\left(1+\alpha_{2}\right)\left(\frac{1}{q}-\frac{1}{2}\right)+\left(1+\alpha_{2}\right)\left(\frac{1}{p}-\frac{1}{q}\right)+\left(\alpha_{1}-\alpha_{2}\right)\left(\frac{1}{q}-1+\frac{1}{p}\right)\\
\Longleftrightarrow & \varepsilon\overset{!}{\geq}\left(1+\alpha_{2}\right)\left(p^{-1}-2^{-1}\right)+\left(1+\alpha_{1}\right)\left(2^{-1}-p^{-1}\right)+\left(\alpha_{1}-\alpha_{2}\right)\left(q^{-1}-1+p^{-1}\right)\\
\Longleftrightarrow & \varepsilon\overset{!}{\geq}\left(\alpha_{1}-\alpha_{2}\right)\left(2^{-1}-p^{-1}+q^{-1}-1+p^{-1}\right)=\left(\alpha_{1}-\alpha_{2}\right)\left(q^{-1}-2^{-1}\right)\\
\left({\scriptstyle \text{since }\alpha_{1}-\alpha_{2}>0}\right)\Longleftrightarrow & \frac{\varepsilon}{\alpha_{1}-\alpha_{2}}+\frac{1}{2}\overset{!}{\geq}\frac{1}{q}\\
\Longleftrightarrow & q\overset{!}{\geq}\left(\frac{1}{2}+\frac{\varepsilon}{\alpha_{1}-\alpha_{2}}\right)^{-1}=\left(\frac{1}{2}+\frac{\varepsilon}{\left|\alpha_{1}-\alpha_{2}\right|}\right)^{-1}.
\end{aligned}
\]
This completes the proof.
\end{proof}

\section*{Acknowledgments}

We would like to thank Gitta Kutyniok for pushing us to improve the
statement and the proof of Lemma \ref{lem:MainShearletLemma} and
thus also of Theorems \ref{thm:NicelySimplifiedAlphaShearletFrameConditions},
\ref{thm:ReallyNiceShearletAtomicDecompositionConditions} and Remark
\ref{rem:CartoonApproximationConstantSimplification}. Without her
positive insistence, the proof of Lemma \ref{lem:MainShearletLemma}
would be about $5$ pages longer and Theorem \ref{thm:CartoonApproximationWithAlphaShearlets}
concerning the approximation of $C^{2}$-cartoon-like functions with
shearlets would require $\approx40$ vanishing moments and generators
in $C_{c}^{M}\left(\R^{2}\right)$ with $M\approx150$, while our
new improved conditions only require $7$ vanishing moments and generators
in $C_{c}^{19}\left(\R^{2}\right)$, cf.\@ Remark \ref{rem:CartoonApproximationConstantSimplification}.

FV would like to express warm thanks to Hartmut Führ for several fruitful
discussions and suggestions related to the present paper, in particular
for suggesting the title ``\emph{analysis vs.\@ synthesis sparsity
for shearlets}'' which we adopted nearly unchanged. FV would also
like to thank Philipp Petersen for useful discussions related to the
topics in this paper and for suggesting some changes in the notation.

Both authors would like to thank Jackie Ma for raising the question
whether membership in shearlet smoothness spaces can also be characterized
using compactly supported shearlets. We also thank Martin Schäfer
for checking parts of the introduction related to the paper \cite{RoleOfAlphaScaling}
for correctness.

Both authors acknowledge support from the European Commission through
DEDALE (contract no.\@ 665044) within the H2020 Framework Program.
AP also acknowledges partial support by the Lichtenberg Professorship
Grant of the Volkswagen Stiftung awarded to Christian Kuehn.

\newpage{}

\appendix

\section{Nonequivalence of analysis and synthesis sparsity for general frames}

\label{sec:AnalysisSynthesisSparsityNotEquivalentInGeneral}In this
section, we present two examples which show that for general frames,
neither does analysis sparsity imply synthesis sparsity, nor vice
versa. We begin with the (easier) case that synthesis sparsity does
not imply analysis sparsity:
\begin{example}
\label{exa:SynthesisSparsityDoesNOTImplyAnalysisSparsity}We consider
the Hilbert space $\ell^{2}\left(\N\right)$ with the standard orthonormal
basis given by $\left(\delta_{n}\right)_{n\in\N}$. The family $\Psi:=\left(\psi_{n}\right)_{n\in\N_{0}}$
given by $\psi_{n}:=\delta_{n}$ for $n\in\N$ and by $\psi_{0}:=\left(\frac{1}{\ell}\right)_{\ell\in\N}$
clearly forms a frame in $\ell^{2}\left(\N\right)$.

Furthermore, $f:=\psi_{0}$ is clearly $\ell^{p}$-synthesis sparse
with respect to $\Psi$ for arbitrary $p\in\left(0,2\right)$, since
we have $f=\sum_{n\in\N_{0}}c_{n}\psi_{n}$ with $\left(c_{n}\right)_{n\in\N}=\delta_{0}\in\ell^{p}\left(\N_{0}\right)$
for all $p\in\left(0,2\right)$. But the analysis coefficients are
given by $A_{\Psi}f=\left(\left\langle f,\psi_{n}\right\rangle \right)_{n\in\N_{0}}$
with $\left\langle f,\psi_{n}\right\rangle =\frac{1}{n}$ for $n\in\N$.
Hence, $A_{\Psi}f\notin\ell^{p}\left(\N_{0}\right)$ for $p\in\left(0,1\right]$.

Thus, for general frames, it is \emph{not} true that $\ell^{p}$-synthesis
sparsity implies $\ell^{p}$-analysis sparsity.
\end{example}
Finally, we give a counterexample to the reverse implication. We remark
that the counterexample constructed below is in fact a Riesz basis,
not simply a frame.
\begin{example}
\label{exa:AnalysisSparsityDoesNOTImplySynthesisSparsity}We again
consider the Hilbert space $\ell^{2}\left(\N\right)$ with the standard
orthonormal basis given by $\left(\delta_{n}\right)_{n\in\N}$. 

Choose some $N\in\N$ with $N>\sum_{n=1}^{\infty}\frac{1}{n^{2}}$
(i.e., $N>\frac{\pi^{2}}{6}\approx1.6$) and set
\[
\psi_{n}:=\delta_{n}-\frac{1}{N\cdot n^{2}}\cdot\sum_{\ell=1}^{N\cdot n^{2}}\delta_{2n+\ell}\qquad\text{ for }n\in\N.
\]
Note that $\psi_{n}\in\ell^{1}\left(\N\right)\hookrightarrow\ell^{2}\left(\N\right)$
with $\left\Vert \psi_{n}\right\Vert _{\ell^{1}}\leq1+\frac{1}{N\cdot n^{2}}\sum_{\ell=1}^{N\cdot n^{2}}1=2$.
We now want to show that the analysis operator $A_{\Psi}:\ell^{2}\left(\N\right)\to\ell^{2}\left(\N\right),x\mapsto\left(\left\langle x,\psi_{n}\right\rangle \right)_{n\in\N}$
associated to the family $\Psi=\left(\psi_{n}\right)_{n\in\N}$ is
well-defined, bounded and invertible. For this, it suffices by a Neumann
series argument to show $\sup_{\left\Vert x\right\Vert _{\ell^{2}}\leq1}\left\Vert x-A_{\Psi}x\right\Vert _{\ell^{2}}<1$.

But for arbitrary $x=\left(x_{n}\right)_{n\in\N}\in\ell^{2}\left(\N\right)$,
we have
\begin{align*}
\left\Vert x-A_{\Psi}x\right\Vert _{\ell^{2}}^{2}=\left\Vert \left(x_{n}\right)_{n\in\N}-\left(\left\langle x,\psi_{n}\right\rangle \right)_{n\in\N}\right\Vert _{\ell^{2}}^{2} & =\sum_{n=1}^{\infty}\left|\frac{1}{N\cdot n^{2}}\sum_{\ell=1}^{N\cdot n^{2}}x_{2n+\ell}\right|^{2}\\
 & \leq\sum_{n=1}^{\infty}\left(\frac{1}{N\cdot n^{2}}\sum_{\ell=1}^{N\cdot n^{2}}\left|x_{2n+\ell}\right|\right)^{2}\\
\left({\scriptstyle \text{Cauchy-Schwarz}}\right) & \leq\sum_{n=1}^{\infty}\left(\frac{1}{N\cdot n^{2}}\sqrt{\sum_{\ell=1}^{N\cdot n^{2}}\left|x_{2n+\ell}\right|^{2}}\cdot\sqrt{\sum_{\ell=1}^{N\cdot n^{2}}1^{2}}\right)^{2}\\
 & =\sum_{n=1}^{\infty}\left[\frac{1}{N\cdot n^{2}}\sum_{\ell=1}^{N\cdot n^{2}}\left|x_{2n+\ell}\right|^{2}\right]\\
 & \leq\sum_{m=1}^{\infty}\left|x_{m}\right|^{2}\cdot\frac{1}{N}\cdot\sum_{n=1}^{\infty}\frac{1}{n^{2}},
\end{align*}
so that we get $\sup_{\left\Vert x\right\Vert _{\ell^{2}}\leq1}\left\Vert x-A_{\Psi}x\right\Vert _{\ell^{2}}\leq\sqrt{\frac{1}{N}\cdot\sum_{n=1}^{\infty}n^{-2}}<1$,
as desired.

As seen above, this implies that $A_{\Psi}:\ell^{2}\left(\N\right)\to\ell^{2}\left(\N\right)$
is well-defined, bounded and boundedly invertible. Hence, so is the
synthesis operator $S_{\Psi}:\ell^{2}\left(\N\right)\to\ell^{2}\left(\N\right),\left(c_{n}\right)_{n\in\N}\mapsto\sum_{n\in\N}c_{n}\psi_{n}$,
since $S_{\Psi}=A_{\Psi}^{\ast}$. Therefore, the family $\Psi=\left(\psi_{n}\right)_{n\in\N}=\left(S_{\Psi}\delta_{n}\right)_{n\in\N}$
is the image of an orthonormal basis under an invertible linear operator,
so that $\Psi$ is a \textbf{Riesz-basis} and in particular a frame
for $\ell^{2}\left(\N\right)$, see \cite[Definition 3.6.1, Proposition 3.6.4 and Theorem 3.6.6]{ChristensenIntroductionToFramesAndRieszBases}.

Now, set $f:=\delta_{1}\in\ell^{2}\left(\N\right)$ and note $\supp\psi_{n}\subset\left\{ n,n+1,\dots\right\} $
for every $n\in\N$, so that $\left\langle f,\psi_{n}\right\rangle =0$
for all $n\geq2$. Hence, $A_{\Psi}f=\delta_{1}\in\ell^{p}\left(\N\right)$
for all $p\in\left(0,2\right)$, so that $f$ is analysis sparse with
respect to $\Psi$.

But $f$ is \emph{not} $\ell^{p}$-synthesis sparse with respect to
$\Psi$ for $p\leq1$: If $f=S_{\Psi}c$ for $c=\left(c_{n}\right)_{n\in\N}\in\ell^{p}\left(\N\right)\hookrightarrow\ell^{1}\left(\N\right)$
with $p\leq1$, then the uniform boundedness $\left\Vert \psi_{n}\right\Vert _{\ell^{1}}\leq2$
ensures that the series $f=\sum_{n\in\N}c_{n}\psi_{n}$ converges
unconditionally in $\ell^{1}\left(\N\right)$. In particular, with
the continuous linear functional $\varphi:\ell^{1}\left(\N\right)\to\Compl,\left(x_{n}\right)_{n\in\N}\mapsto\sum_{n\in\N}x_{n}$,
we would have $1=\varphi\left(f\right)=\sum_{n\in\N}c_{n}\varphi\left(\psi_{n}\right)=0$,
since $\varphi\left(\psi_{n}\right)=0$ for all $n\in\N$.

This contradiction shows that $f$ is \emph{not} $\ell^{p}$-synthesis
sparse with respect to $\Psi$ for $p\leq1$, even though $f$ is
$\ell^{p}$-analysis sparse.
\end{example}

\section{The \texorpdfstring{$\alpha$}{α}-shearlet covering is almost
structured}

\label{sec:AlphaShearletCoveringAlmostStructured}In this section,
we provide the proof of Lemma \ref{lem:AlphaShearletCoveringIsAlmostStructured},
whose statement we repeat here for the sake of convenience:
\begin{lem*}
\noindent The $\alpha$-shearlet covering $\CalS^{(\alpha)}$ from
Definition \ref{def:AlphaShearletCovering} is an almost structured
covering of $\R^{2}$.
\end{lem*}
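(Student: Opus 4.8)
The plan is to verify the five defining conditions of Definition~\ref{def:AlmostStructuredCovering} directly, using the explicit description of the sets $S_i^{(\alpha)}$ supplied by Lemma~\ref{lem:AlphaShearletCoveringAuxiliary}. Conditions (1), (2), and (5b) are essentially built into Definition~\ref{def:AlphaShearletCovering}: we have $S_i^{(\alpha)} = T_i Q_i' + b_i$ with $b_i = 0$ by construction, all these sets obviously lie in $\CalO = \R^2$, and the family $\{Q_i' : i \in I\}$ has exactly two elements, namely $Q = U_{(-1,1)}^{(3^{-1},3)}$ and $(-1,1)^2$. So the real work concerns admissibility (condition (3)), the bounded-distortion estimate on the $T_i$ (condition (4)), and the existence of a shrunken covering $(T_i P_i' + b_i)_{i \in I}$ with $\overline{P_i'} \subset Q_i'$ still covering $\R^2$ (conditions (5a) and (5c)).

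First I would choose the shrunken base sets: set $P_0' := (-\tfrac34,\tfrac34)^2$ and $P_{(n,m,\varepsilon,\delta)}' := U_{(-3/4,3/4)}^{(1/2,5/2)}$ for the conic indices, so that clearly $\overline{P_i'}\subset Q_i'$. To prove $\R^2 = \bigcup_i T_i P_i'$, I would argue geometrically: a point $\xi$ with $|\xi|\le 2$ lies in $S_0^{(\alpha)}=(-1,1)^2$ — actually in the shrunken version after a mild adjustment of the radius, or one simply enlarges $Q_0'$ slightly; for $|\xi|$ large, one of the two coordinates dominates, say $|\xi_1|\ge|\xi_2|$, so $|\xi_2/\xi_1|\le 1$, and then choosing $n$ with $\frac13 2^n \le |\xi_1| \le 3\cdot 2^n$ (wait: one needs $\xi_1 \in 2^n(1/2,5/2)$ for the shrunken set) and $m$ the nearest integer to $2^{n(1-\alpha)}\,\xi_2/\xi_1$ places $\xi$ in $T_{(n,m,\varepsilon,0)}^{(\alpha)}P'_{(n,m,\varepsilon,0)}$, using the transformation rules from Lemma~\ref{lem:AlphaShearletCoveringAuxiliary}(1) and the fact that $|m|\le G_n$. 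The case $|\xi_2|>|\xi_1|$ is handled symmetrically via the reflection $R$ (the $\delta=1$ sheets). I would record the elementary estimate $\bigl|2^{n(1-\alpha)}\xi_2/\xi_1 - m\bigr|\le \tfrac12$ together with $|\xi_2/\xi_1|\le 1$ to see the sheared ratio lands in $(-3/4,3/4)$.

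For admissibility (condition (3)): by Lemma~\ref{lem:AlphaShearletCoveringAuxiliary}(2c), any $\xi \in S_i^{(\alpha)}$ with $i=(n,m,\varepsilon,\delta)$ satisfies $2^{n-2} < |\xi| < 2^{n+4}$, so if $S_i^{(\alpha)}\cap S_j^{(\alpha)}\ne\emptyset$ with $j=(\nu,\mu,e,d)$, then $|n-\nu|\le 6$; this pins down the scale up to finitely many values. Within a fixed pair of scales, the constraint $S_i^{(\alpha)}\cap S_j^{(\alpha)}\ne\emptyset$ forces the shear parameters $m,\mu$ to lie within a bounded distance of each other — this is exactly the kind of computation carried out in the proof of Lemma~\ref{lem:AlphaShearletCoveringSubordinateness} (Case 1 there, with $\alpha_1=\alpha_2=\alpha$, gives $|s_m^{(n,k)}|\le 131$), and the same bookkeeping with the interval-of-ratios argument shows $|\mu - m \cdot 2^{(n-\nu)\alpha}|$ is bounded, hence only $O(1)$ choices of $\mu$; the signs $\varepsilon,e$ and the cone-flags $\delta,d$ contribute at most a constant factor, and the index $0$ contributes one more. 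This yields a uniform bound $N_{\CalQ} := \sup_i |i^\ast| < \infty$.

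Condition (4) — the bound $\|T_i^{-1}T_j\| \le C_{\CalQ}$ for $j \in i^\ast$ — is the step I expect to be the main obstacle, since one must control a product of a diagonal anisotropic dilation, a shearing, and possibly a reflection. The strategy is to write $T_i^{-1}T_j$ out in the four cases determined by whether $\delta$ and $d$ agree, reducing via $R^2 = \mathrm{id}$ either to $(D_{2^n}^{(\alpha)}S_m^T)^{-1}(D_{2^\nu}^{(\alpha)}S_\mu^T) = S_{-m}^T D_{2^{\nu-n}}^{(\alpha)} S_\mu^T$ (same cone) or to an extra factor of $R$ (opposite cones). Since admissibility already gives $|n-\nu|\le 6$, the diagonal part $D_{2^{\nu-n}}^{(\alpha)}$ has operator norm at most $2^6$; the delicate part is that $S_{-m}^T D_{2^{\nu-n}}^{(\alpha)} S_\mu^T$ has entries involving $-m + \mu\,2^{(\nu-n)\alpha}$ and $m\mu$, so one needs the admissibility estimate $|\mu - m\,2^{(\nu-n)\alpha}|\le C$ from step three to cancel the large terms and conclude the off-diagonal entry is $O(1)$; a short explicit $2\times2$ matrix computation then bounds the norm. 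The opposite-cone case requires the analogous estimate with $R$ inserted, handled by the same ratio argument since on the overlap both $|\eta|\asymp|\xi|$ by Lemma~\ref{lem:AlphaShearletCoveringAuxiliary}(2). Assembling these, $C_{\CalQ}$ is a universal constant independent of $\alpha$, and all five conditions hold, completing the proof.
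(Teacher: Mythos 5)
Your overall plan mirrors the paper's proof: the same choice of shrunken base sets $P_0'=(-\tfrac34,\tfrac34)^2$ and $P_i'=U_{(-3/4,3/4)}^{(1/2,5/2)}$, the same coverage argument, and the same broad strategy of controlling scale and shear separately for admissibility and then computing $T_i^{-1}T_j$ entrywise. However, there is a genuine gap in your treatment of condition (4) in the opposite-cone case, and this is precisely where the paper's proof spends most of its effort.

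First, a small slip: in the same-cone case the transition matrix $S_{-m}^T D_{2^{\nu-n}}^{(\alpha)} S_\mu^T$ has off-diagonal entry $-m\,2^{\nu-n}+\mu\,2^{\alpha(\nu-n)}$ and contains \emph{no} product term $m\mu$; that term only arises when a reflection $R$ is interposed. More seriously, in the opposite-cone case ($\delta\ne d$) a direct computation gives
\[
T_i^{-1}T_j=\pm\left(\begin{array}{c|c}
2^{k\alpha-n}\ell & 2^{k\alpha-n}\\
2^{k-n\alpha}-2^{k\alpha-n}\ell m & -2^{k\alpha-n}m
\end{array}\right),
\]
and the dangerous entry is $2^{k-n\alpha}-2^{k\alpha-n}\ell m$. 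For $|n-k|$ bounded, the summand $2^{k-n\alpha}$ is of size roughly $2^{(1-\alpha)n}$, which is unbounded for $\alpha<1$. Your proposal asserts this is "handled by the same ratio argument," but the one-sided estimate coming from weak equivalence (that $\ell$ lies in a bounded window, or that $|\ell m|\lesssim 2^{(n+k)(1-\alpha)}$) is \emph{not} enough: you must show the product $\ell m$ is within a bounded \emph{additive} distance of $2^{(n+k)(1-\alpha)}$, which requires simultaneous two-sided control, namely $m\ge 2^{n(1-\alpha)}-C$ and $\ell\ge 2^{k(1-\alpha)}-C$ (or the symmetric lower bounds with a sign flip), and then an exact cancellation. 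This is what the paper achieves with the case split on $n$ large versus $n$ small (so small scales can be brushed aside by finiteness), and, for $n$ large, with a contradiction argument using the intersection of the two angular corridors $\eta/\xi\in 2^{-n(1-\alpha)}(m-1,m+1)$ and $\xi/\eta\in 2^{-k(1-\alpha)}(\ell-1,\ell+1)$ together with the inequalities $1/3<|\eta/\xi|<3$ on the overlap. Without this two-sided pinching the product term does not cancel and the matrix norm is not bounded, so a key idea is missing.
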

\begin{proof}
\noindent First of all, we define the family $(T_{i}P_{i}'+b_{i})_{i\in I}$,
with $P_{i}':=U_{(-3/4,3/4)}^{(1/2,5/2)}$ for $i\in I_{0}$ and $P_{0}':=\left(-\frac{3}{4},\frac{3}{4}\right)^{2}$;
all being open sets. It is not hard to see $\overline{P_{i}'}\subset Q_{i}'$
for all $i\in I$. We now show that $\left(T_{i}P_{i}'+b_{i}\right)_{i\in I}$
covers $\R^{2}$. First, we note
\begin{align*}
\bigcup_{m=-\lceil2^{n(1-\alpha)}\rceil}^{\lceil2^{n(1-\alpha)}\rceil}\left(2^{n(\alpha-1)}\left(m-\frac{3}{4}\right),2^{n(\alpha-1)}\left(m+\frac{3}{4}\right)\right) & =2^{n(\alpha-1)}\bigcup_{m=-\lceil2^{n(1-\alpha)}\rceil}^{\lceil2^{n(1-\alpha)}\rceil}\left(m-\frac{3}{4},m+\frac{3}{4}\right)\\
 & =2^{n(\alpha-1)}\left(-\left\lceil \smash{2^{n(1-\alpha)}}\right\rceil -\frac{3}{4},\left\lceil \smash{2^{n(1-\alpha)}}\right\rceil +\frac{3}{4}\right)\\
 & \supset2^{n(\alpha-1)}\left(-2^{n(1-\alpha)}-\frac{3}{4},2^{n(1-\alpha)}+\frac{3}{4}\right)\\
 & =\left(-1-\frac{3}{4}\cdot2^{n(\alpha-1)},1+\frac{3}{4}\cdot2^{n(\alpha-1)}\right)\\
 & \supset\left[-1,1\right].
\end{align*}
Using this inclusion, as well as equation \eqref{eq:deltazeroset},
and recalling $G_{n}=\left\lceil 2^{n\left(1-\alpha\right)}\right\rceil $,
we conclude
\begin{align*}
\bigcup_{n=0}^{\infty}\bigcup_{m=-G_{n}}^{G_{n}}T_{n,m,1,0}^{\left(\alpha\right)}P_{n,m,1,0}' & =\bigcup_{n=0}^{\infty}\:\bigcup_{m=-\lceil2^{n(1-\alpha)}\rceil}^{\lceil2^{n(1-\alpha)}\rceil}U_{\left(2^{n\left(\alpha-1\right)}\left(m-3/4\right),2^{n\left(\alpha-1\right)}\left(m+3/4\right)\right)}^{\left(\frac{2^{n}}{2},\frac{5}{2}\cdot2^{n}\right)}\\
 & \supset\bigcup_{n=0}^{\infty}\left\{ \begin{pmatrix}\xi\\
\eta
\end{pmatrix}\in\left(\frac{2^{n}}{2},\frac{5}{2}\cdot2^{n}\right)\times\R\left|\frac{\eta}{\xi}\in\left[-1,1\right]\right.\right\} \\
 & \supset\left\{ \left.\begin{pmatrix}\xi\\
\eta
\end{pmatrix}\in\left(\frac{1}{2},\infty\right)\times\R\right||\eta|\leq|\xi|\right\} .
\end{align*}
Furthermore, since $T_{j,\ell,-1,0}^{\left(\alpha\right)}=-T_{j,\ell,1,0}^{\left(\alpha\right)}$,
we have 
\[
\bigcup_{n=0}^{\infty}\:\bigcup_{m=-G_{n}}^{G_{n}}\:\bigcup_{\varepsilon\in\{\pm1\}}T_{n,m,\varepsilon,0}^{\left(\alpha\right)}P_{n,m,\text{\ensuremath{\varepsilon}},0}'\supset\left\{ \left.\begin{pmatrix}\xi\\
\eta
\end{pmatrix}\in\R^{2}\right|\left|\xi\right|>\frac{1}{2}\text{ and }\left|\eta\right|\leq\left|\xi\right|\right\} ,
\]
and since $R\left(\begin{smallmatrix}\xi\\
\eta
\end{smallmatrix}\right)=\left(\begin{smallmatrix}\eta\\
\xi
\end{smallmatrix}\right)$, we finally get 
\begin{align*}
\bigcup_{i\in I_{0}}T_{i}P_{i}'\supset & \left\{ \left.\begin{pmatrix}\xi\\
\eta
\end{pmatrix}\in\R^{2}\right|\left|\xi\right|>\frac{1}{2}\text{ and }\left|\eta\right|\leq\left|\xi\right|\right\} \cup\left\{ \left.\begin{pmatrix}\xi\\
\eta
\end{pmatrix}\in\R^{2}\right|\left|\eta\right|>\frac{1}{2}\text{ and }\left|\xi\right|\leq\left|\eta\right|\right\} =:M.
\end{align*}

Since we clearly have $T_{0}P_{0}'+b_{0}=\left(-\frac{3}{4},\frac{3}{4}\right)^{2}\supset\left[-\frac{1}{2},\frac{1}{2}\right]^{2}$,
it suffices to show that each $\left(\begin{smallmatrix}\xi\\
\eta
\end{smallmatrix}\right)\in\R^{2}\backslash\left[-\frac{1}{2},\frac{1}{2}\right]^{2}$ satisfies $\left(\begin{smallmatrix}\xi\\
\eta
\end{smallmatrix}\right)\in M$, in order to prove that $\left(T_{i}P_{i}'+b_{i}\right)_{i\in I}$
covers all of $\R^{2}$. To see this, we distinguish two cases for
$\left(\begin{smallmatrix}\xi\\
\eta
\end{smallmatrix}\right)\in\R^{2}\setminus\left[-\frac{1}{2},\frac{1}{2}\right]^{2}$:

\begin{casenv}
\item $\left|\eta\right|\geq\left|\xi\right|$. Then $\left|\eta\right|>\frac{1}{2}$,
since otherwise we would have $\left|\xi\right|\leq\left|\eta\right|\leq\frac{1}{2}$,
contradicting $\left(\begin{smallmatrix}\xi\\
\eta
\end{smallmatrix}\right)\in\R^{2}\backslash\left[-\frac{1}{2},\frac{1}{2}\right]^{2}$. Hence, $\left(\begin{smallmatrix}\xi\\
\eta
\end{smallmatrix}\right)\in M$.
\item $\left|\eta\right|\leq\left|\xi\right|$. Then $\left|\xi\right|>\frac{1}{2}$,
since otherwise we would have $\left|\eta\right|\leq\left|\xi\right|\leq\frac{1}{2}$
contradicting $\left(\begin{smallmatrix}\xi\\
\eta
\end{smallmatrix}\right)\in\R^{2}\backslash\left[-\frac{1}{2},\frac{1}{2}\right]^{2}$. Hence, $\left(\begin{smallmatrix}\xi\\
\eta
\end{smallmatrix}\right)\in M$.
\end{casenv}
All in all, we have shown that $\left(T_{i}P_{i}'+b_{i}\right)_{i\in I}$
is a covering of $\R^{2}$; because of $Q_{i}=T_{i}Q_{i}'+b_{i}\supset T_{i}P_{i}'+b_{i}$
for all $i\in I$, we also see that $\CalS^{\left(\alpha\right)}$
covers all of $\R^{2}$. Moreover, the sets $\left\{ \left.P_{i}'\right|i\in I\right\} $
and $\left\{ \left.Q_{i}'\right|i\in I\right\} $ are finite; in fact,
each of these sets only has two elements. Furthermore, we clearly
have $Q_{i}=T_{i}Q_{i}'+b_{i}\subset\R^{2}$ for all $i\in I$.

Thus, to verify that $\CalS^{\left(\alpha\right)}$ is an almost structured
covering of $\R^{2}$, we only have to verify that $\CalS^{\left(\alpha\right)}$
is admissible and that $\sup_{i\in I}\sup_{j\in i^{\ast}}\left\Vert T_{i}^{-1}T_{j}\right\Vert $
is finite, cf.\@ Definition \ref{def:AlmostStructuredCovering}.
To this end, we define
\[
M_{i}:=i^{*}\cap I_{0}\quad\text{ and }\quad M_{i}^{(\nu)}:=\left\{ \left(k,\ell,\beta,\gamma\right)\in M_{i}|\gamma=\nu\right\} ,\quad\text{ as well as }\quad C_{i}^{(\nu)}:=\sup_{j\in\vphantom{M_{i}^{N}}\smash{M_{i}^{(\nu)}}}\left\Vert T_{i}^{-1}T_{j}\right\Vert 
\]
for $i\in I_{0}$ and $\nu\in\left\{ 0,1\right\} $. Note $M_{i}=M_{i}^{(0)}\uplus M_{i}^{(1)}$.
Next, for $\left(k,\ell,\beta,\gamma\right)\in I_{0}$, we define
\begin{equation}
\left(k,\ell,\beta,\gamma\right)':=\begin{cases}
\left(k,\ell,\beta,1\right), & \text{if }\gamma=0,\\
\left(k,\ell,\beta,0\right), & \text{if }\gamma=1.
\end{cases}\label{eq:AlphaCoveringAdmissibleMirroredIndex}
\end{equation}

It is not hard to see $T_{i'}=RT_{i}$ and $Q_{i'}'=Q_{i}'=Q$ for
all $i\in I_{0}$. Hence, we have the following equivalence for $i,j\in I_{0}$:
\[
\begin{aligned}\emptyset\neq S_{i}^{(\alpha)}\cap S_{j}^{(\alpha)} & \Longleftrightarrow\emptyset\neq T_{i}Q\cap T_{j}Q\\
 & \Longleftrightarrow\emptyset\neq R\left[T_{i}Q\cap T_{j}Q\right]=T_{i'}Q\cap T_{j'}Q\\
 & \Longleftrightarrow\emptyset\neq S_{i'}^{(\alpha)}\cap S_{j'}^{(\alpha)}.
\end{aligned}
\]
Furthermore, 
\[
T_{i}^{-1}T_{j}=\left(RT_{i}\right)^{-1}RT_{j}=T_{i'}^{-1}\cdot T_{j'}.
\]
Hence, $M_{i'}=\left\{ j'\left|j\in M_{i}\right.\right\} $ and $C_{i}^{(\nu)}=C_{i'}^{(\nu)}$
for $\nu\in\{0,1\}$ and all $i\in I_{0}$, so that it suffices to
consider the case $i=\left(n,m,\varepsilon,0\right)\in I_{0}$ from
now on. We distinguish two cases regarding $j\in M_{i}$:

\textbf{Case 1}: $j=\left(k,\ell,\beta,0\right)\in M_{i}^{(0)}$.
We have $\emptyset\neq S_{i}^{\left(\alpha\right)}\cap S_{j}^{\left(\alpha\right)}$.
Since $S_{i}^{\left(\alpha\right)}\subset\varepsilon\left(0,\infty\right)\times\R$
and $S_{j}^{\left(\alpha\right)}\subset\beta\left(0,\infty\right)\times\R$,
this implies $\varepsilon=\beta$, so that equation \eqref{eq:deltazeroset}
yields
\[
\emptyset\neq\varepsilon\cdot\left(S_{i}^{\left(\alpha\right)}\cap S_{j}^{\left(\alpha\right)}\right)=S_{n,m,1,0}^{\left(\alpha\right)}\cap S_{k,\ell,1,0}^{\left(\alpha\right)}=U_{\left(2^{n\left(\alpha-1\right)}\left(m-1\right),2^{n\left(\alpha-1\right)}\left(m+1\right)\right)}^{\left(2^{n}/3,\,3\cdot2^{n}\right)}\cap U_{\left(2^{k\left(\alpha-1\right)}\left(\ell-1\right),2^{k\left(\alpha-1\right)}\left(\ell+1\right)\right)}^{\left(2^{k}/3,\,3\cdot2^{k}\right)}\subset\left(0,\infty\right)\times\R.
\]
Now, we consider the diffeomorphism $\Phi:\left(0,\infty\right)\times\R\to\left(0,\infty\right)\times\R,\left(\xi,\eta\right)\mapsto\left(\xi,\frac{\eta}{\xi}\right)$
and observe the easily verifiable identity $\Phi\left(U_{\left(a,b\right)}^{\left(\gamma,\mu\right)}\right)=\left(\gamma,\mu\right)\times\left(a,b\right)$.
Consequently, we get
\[
\emptyset\neq\left[\left(\frac{2^{n}}{3},\,3\cdot2^{n}\right)\cap\left(\frac{2^{k}}{3},\,3\cdot2^{k}\right)\right]\times\left[\left(2^{n\left(\alpha-1\right)}\left(m-1\right),2^{n\left(\alpha-1\right)}\left(m+1\right)\right)\cap\left(2^{k\left(\alpha-1\right)}\left(\ell-1\right),2^{k\left(\alpha-1\right)}\left(\ell+1\right)\right)\right].
\]
In particular, $\frac{2^{k}}{3}<3\cdot2^{n}$ and $\frac{2^{n}}{3}<3\cdot2^{k}$,
which yields $2^{k-n}<9<2^{4}$ and $2^{n-k}<9<2^{4}$. Thus, $\left|k-n\right|<4$
and hence $\left|k-n\right|\leq3$, since $k-n\in\Z$.

Furthermore, we get
\[
2^{k\left(\alpha-1\right)}\left(\ell-1\right)<2^{n\left(\alpha-1\right)}\left(m+1\right)\qquad\text{ and }\qquad2^{n\left(\alpha-1\right)}\left(m-1\right)<2^{k\left(\alpha-1\right)}\left(\ell+1\right),
\]
which implies
\[
\ell-1<2^{\left(n-k\right)\left(\alpha-1\right)}\left(m+1\right)\qquad\text{ and }\qquad\ell+1>2^{\left(n-k\right)\left(\alpha-1\right)}\left(m-1\right).
\]
Because of $0\leq1-\alpha\leq1$ and $\left|k-n\right|\leq3$, we
have $2^{\left(n-k\right)\left(\alpha-1\right)}=2^{\left(1-\alpha\right)\left(k-n\right)}\leq2^{3}$
and thus
\[
2^{\left(1-\alpha\right)\left(k-n\right)}m-9\leq-1-2^{\left(1-\alpha\right)\left(k-n\right)}+2^{\left(1-\alpha\right)\left(k-n\right)}m<\ell<1+2^{\left(k-n\right)\left(1-\alpha\right)}+2^{\left(1-\alpha\right)\left(k-n\right)}m\leq2^{\left(1-\alpha\right)\left(k-n\right)}m+9.
\]
Thus, with $M_{n,m,\lambda}:=\Z\cap\left[2^{\left(1-\alpha\right)\left(\lambda-n\right)}m-9,\,2^{\left(1-\alpha\right)\left(\lambda-n\right)}m+9\right]$,
we have shown
\[
j=\left(k,\ell,\beta,0\right)\in\bigcup_{\lambda=n-3}^{n+3}\left[\left\{ \lambda\right\} \times M_{n,m,\lambda}\times\left\{ \varepsilon\right\} \times\left\{ 0\right\} \right].
\]
Because of $\left|M_{n,m,\lambda}\right|\leq19$, the set on the right-hand
side has at most $7\cdot19=133$ elements, so that we get $\left|M_{i}^{\left(0\right)}\right|\leq133=:N$.

Finally, we note
\[
\left\Vert T_{i}^{-1}T_{j}\right\Vert =\left\Vert \left(\begin{matrix}1 & 0\\
-m & 1
\end{matrix}\right)\left(\begin{matrix}2^{-n} & 0\\
0 & 2^{-n\alpha}
\end{matrix}\right)\left(\begin{matrix}2^{k} & 0\\
0 & 2^{k\alpha}
\end{matrix}\right)\left(\begin{matrix}1 & 0\\
\ell & 1
\end{matrix}\right)\right\Vert =\left\Vert \left(\begin{array}{c|c}
2^{k-n} & 0\\
2^{\alpha\left(k-n\right)}\ell-2^{k-n}m & 2^{\alpha\left(k-n\right)}
\end{array}\right)\right\Vert .
\]
Now, since $\left|k-n\right|\leq3$, we have $0\leq2^{k-n}\leq2^{3}$
and $0\leq2^{\alpha\left(k-n\right)}\leq2^{3\alpha}\leq2^{3}$. Furthermore,
we saw above that $\left|\ell-2^{\left(1-\alpha\right)\left(k-n\right)}m\right|\leq9$,
so that we get
\[
\left|2^{\alpha\left(k-n\right)}\ell-2^{k-n}m\right|=2^{\alpha\left(k-n\right)}\cdot\left|\ell-2^{\left(1-\alpha\right)\left(k-n\right)}m\right|\leq9\cdot2^{\alpha\left(k-n\right)}\leq9\cdot2^{3}.
\]
All in all, this implies $\left\Vert T_{i}^{-1}T_{j}\right\Vert \leq11\cdot2^{3}\leq2^{7}=128$.
Since $j\in M_{i}^{\left(0\right)}$ was arbitrary, we conclude $C_{i}^{\left(0\right)}\leq128=:K$.

\medskip{}

\textbf{Case 2}: $j=\left(k,\ell,\beta,1\right)\in M_{i}^{(1)}$.
By definition of $M_{i}$, there is some $\left(\begin{smallmatrix}\xi\\
\eta
\end{smallmatrix}\right)\in S_{i}^{(\alpha)}\cap S_{j}^{(\alpha)}$. Lemma \ref{lem:AlphaShearletCoveringAuxiliary} implies $2^{n-2}<\left|\left(\begin{smallmatrix}\xi\\
\eta
\end{smallmatrix}\right)\right|<2^{n+4}$, as well as $2^{k-2}<\left|\left(\begin{smallmatrix}\xi\\
\eta
\end{smallmatrix}\right)\right|<2^{k+4}$ and thus $2^{n-2}<2^{k+4}$ as well as $2^{k-2}<2^{n+4}$. Consequently,
$\left|n-k\right|<6$ and thus $\left|n-k\right|\leq5$, since $n-k\in\Z$.

Next, we explicitly compute the transition matrix $T_{i}^{-1}T_{j}$:
\begin{align}
T_{i}^{-1}T_{j}=\left(A_{n,m,\varepsilon}^{(\alpha)}\right)^{-1}RA_{k,\ell,\beta}^{(\alpha)} & =\varepsilon\beta\left(\begin{matrix}1 & 0\\
-m & 1
\end{matrix}\right)\left(\begin{matrix}2^{-n} & 0\\
0 & 2^{-\alpha n}
\end{matrix}\right)\begin{pmatrix}0 & 1\\
1 & 0
\end{pmatrix}\begin{pmatrix}2^{k} & 0\\
2^{k\alpha}\ell & 2^{k\alpha}
\end{pmatrix}\nonumber \\
 & =\varepsilon\beta\left(\begin{array}{c|c}
2^{-n} & 0\\
-2^{-n}m & 2^{-n\alpha}
\end{array}\right)\left(\begin{array}{c|c}
\ell2^{k\alpha} & 2^{k\alpha}\\
2^{k} & 0
\end{array}\right)\nonumber \\
 & =\varepsilon\beta\left(\begin{array}{c|c}
2^{k\alpha-n}\ell & 2^{k\alpha-n}\\
2^{k-n\alpha}-2^{k\alpha-n}\ell m & -2^{k\alpha-n}m
\end{array}\right).\label{eq:matrixentries}
\end{align}
Now we distinguish three different subcases regarding $\alpha\in\left[0,1\right]$
and $n\in\N_{0}$:

\textbf{Case 2(a)}: $\alpha\neq1$ and $n<\frac{12}{1-\alpha}$. Since
$|n-k|\leq5$ this implies $k\leq5+n<5+\frac{12}{1-\alpha}=\frac{17-5\alpha}{1-\alpha}\leq\frac{17}{1-\alpha}$.
We thus have 
\[
M_{i}^{(1)}\subset\bigcup_{k=0}^{\left\lceil 17/\left(1-\alpha\right)\right\rceil }\left[\left\{ k\right\} \times\left\{ -\left\lceil \smash{2^{k(1-\alpha)}}\right\rceil ,\dots,\left\lceil \smash{2^{k(1-\alpha)}}\right\rceil \right\} \times\left\{ \pm1\right\} \times\left\{ 0,1\right\} \right]=:M,
\]
and hence $\left|\smash{M_{i}^{(1)}}\right|\leq|M|\leq N_{0}$, for
some absolute constant $N_{0}=N_{0}\left(\alpha\right)\in\N$, since
$M$ is a finite set. Note also that $i\in M$, since $n<\frac{12}{1-\alpha}\leq\frac{17}{1-\alpha}$.
Consequently, 
\[
C_{i}^{(1)}=\sup_{j\in M_{i}^{(1)}}\left\Vert T_{i}^{-1}T_{j}\right\Vert \leq\max_{\gamma,\lambda\in M}\left\Vert T_{\lambda}^{-1}T_{\gamma}\right\Vert =:K_{0}.
\]

\medskip{}

\textbf{Case 2(b)}: $\alpha\neq1$ and $n\geq\frac{12}{1-\alpha}$.
Since $|n-k|\leq5$ this implies $k\geq n-5\geq\frac{12}{1-\alpha}-5=\frac{7+5\alpha}{1-\alpha}$.
We know from Lemma \ref{lem:AlphaShearletCoveringAuxiliary} that
$0<\left|\xi\right|<3\left|\eta\right|$ and $0<\left|\eta\right|<3\left|\xi\right|$,
i.e., $\frac{1}{3}<\left|\frac{\eta}{\xi}\right|<3$.

Now, we claim $\left|m\right|\geq\frac{64}{3}-1$. To see this, assume
towards a contradiction that $|m|<\frac{64}{3}-1$. This implies because
of $n\geq\frac{12}{1-\alpha}$, because of equation \eqref{eq:deltazeroset}
and because of $\left(\begin{smallmatrix}\xi\\
\eta
\end{smallmatrix}\right)\in S_{i}^{\left(\alpha\right)}=S_{n,m,\varepsilon,0}^{\left(\alpha\right)}$ that
\begin{align*}
\frac{\eta}{\xi}\in\left(2^{-n\left(1-\alpha\right)}\left(m-1\right),2^{-n\left(1-\alpha\right)}\left(m+1\right)\right)\subset\Big(-\frac{64/3}{2^{n\left(1-\alpha\right)}},\frac{64/3}{2^{n\left(1-\alpha\right)}}\Big)\subset\left(-\frac{64/3}{2^{12}},\frac{64/3}{2^{12}}\right) & \subset\left(-\frac{1}{3},\frac{1}{3}\right),
\end{align*}
in contradiction to $\left|\frac{\eta}{\xi}\right|>\frac{1}{3}$.
Thus we must have $|m|\geq\frac{64}{3}-1$.

Likewise, we have $\left|\ell\right|\geq\frac{64}{3}-1$. Indeed,
since we have $k\geq\frac{7+5\alpha}{1-\alpha}$ and $\frac{1}{3}<\left|\frac{\xi}{\eta}\right|<3$,
the assumption $|\ell|<\frac{64}{3}-1$ yields the contradiction
\begin{align*}
\frac{\xi}{\eta}\in\left(2^{-k(1-\alpha)}\left(\ell-1\right),2^{-k(1-\alpha)}\left(\ell+1\right)\right)\subset\Big(-\frac{64/3}{2^{7+5\alpha}},\frac{64/3}{2^{7+5\alpha}}\Big) & \subset\Big(-\frac{64/3}{2^{7}},\frac{64/3}{2^{7}}\Big)\subset\left(-\frac{1}{3},\frac{1}{3}\right).
\end{align*}
Consequently, we must have $|\ell|\geq\frac{64}{3}-1$.

Now, since $\left|m\right|\geq\frac{64}{3}-1$, we either have $m\geq\frac{64}{3}-1>0$
or $m\leq1-\frac{64}{3}<0$. Let us distinguish these two cases: 

\textbf{Case 2(b)(i}): $m\geq\frac{64}{3}-1$. Since $\left(\begin{smallmatrix}\xi\\
\eta
\end{smallmatrix}\right)\in S_{n,m,\varepsilon,0}^{\left(\alpha\right)}\cap S_{k,\ell,\beta,1}^{\left(\alpha\right)}=S_{n,m,\varepsilon,0}^{\left(\alpha\right)}\cap RS_{k,\ell,\beta,0}^{\left(\alpha\right)}$ and using equation \eqref{eq:deltazeroset}, we see $\frac{\eta}{\xi}>2^{-n\left(1-\alpha\right)}\left(m-1\right)>0$
and $0<\frac{\xi}{\eta}<2^{-k\left(1-\alpha\right)}\left(\ell+1\right)$.
Hence, $\ell>-1$ and since $|\ell|\geq\frac{64}{3}-1$, we have $\ell\geq\frac{64}{3}-1$.

First, we want to show $m\geq2^{n\left(1-\alpha\right)}-65$. Thus,
assume towards a contradiction that $m<2^{n(1-\alpha)}-2^{6}-1$ and
note that $2^{n\left(1-\alpha\right)}-2^{6}-1=2^{n(1-\alpha)}-65\geq2^{12}-65>0$,
since $n\geq\frac{12}{1-\alpha}$. Now, we get 
\begin{align*}
\frac{\xi}{\eta}<2^{-k(1-\alpha)}(\ell+1)\leq2^{-k(1-\alpha)}\left(\left\lceil \smash{2^{k(1-\alpha)}}\right\rceil +1\right) & <2^{-k(1-\alpha)}\left(2^{k(1-\alpha)}+1+1\right)=1+2^{-k(1-\alpha)+1}
\end{align*}
and 
\begin{align*}
\frac{\xi}{\eta}=\left(\frac{\eta}{\xi}\right)^{-1}>\left(2^{-n(1-\alpha)}(m+1)\right)^{-1}=\frac{2^{n(1-\alpha)}}{m+1}>\frac{2^{n(1-\alpha)}}{2^{n(1-\alpha)}-2^{6}} & =1+\frac{2^{6}}{2^{n(1-\alpha)}-2^{6}}>1+\frac{2^{6}}{2^{n(1-\alpha)}}.
\end{align*}
Thus $2^{-k(1-\alpha)+1}>\frac{2^{6}}{2^{n(1-\alpha)}}$ and hence
$2^{(n-k)(1-\alpha)}>2^{5}$ in contradiction to $2^{(n-k)(1-\alpha)}\leq2^{|n-k|(1-\alpha)}\leq2^{|n-k|}\leq2^{5}$.
Thus, $m\geq2^{n(1-\alpha)}-65$.

Next, we similarly show $\ell\geq2^{k\left(1-\alpha\right)}-65$.
Again, we assume towards a contradiction that $\ell<2^{k(1-\alpha)}-2^{6}-1$
and note $2^{k(1-\alpha)}-2^{6}-1\geq2^{7+5\alpha}-2^{6}-1>0$. Now,
on the one hand we get
\[
\left(\frac{\xi}{\eta}\right)^{-1}>\left(2^{-k\left(1-\alpha\right)}\left(\ell+1\right)\right)^{-1}=\frac{2^{k(1-\alpha)}}{\ell+1}\geq\frac{2^{k(1-\alpha)}}{2^{k(1-\alpha)}-2^{6}}=1+\frac{2^{6}}{2^{k(1-\alpha)}-2^{6}}>1+\frac{2^{6}}{2^{k(1-\alpha)}},
\]
but on the other hand
\[
\left(\frac{\xi}{\eta}\right)^{-1}=\frac{\eta}{\xi}<2^{-n\left(1-\alpha\right)}\left(m+1\right)\leq2^{-n\left(1-\alpha\right)}\left(\left\lceil \smash{2^{n(1-\alpha)}}\right\rceil +1\right)<2^{-n\left(1-\alpha\right)}\left(2^{n(1-\alpha)}+2\right)=1+2^{-n(1-\alpha)+1},
\]
i.e., $2^{(k-n)(1-\alpha)}>2^{5}$ in contradiction to $\left|n-k\right|\leq5$.
Thus, $\ell\geq2^{k(1-\alpha)}-2^{6}-1=2^{k(1-\alpha)}-65$.

Using these estimates for $m$ and $\ell$, we can now bound the entries
of $T_{i}^{-1}T_{j}$ (cf.\@ eq.\@ \eqref{eq:matrixentries}): We
have 
\begin{align*}
\left|2^{k\alpha-n}\ell\right|\leq2^{k\alpha-n}\left\lceil \smash{2^{k(1-\alpha)}}\right\rceil <2^{k\alpha-n}\left(2^{k(1-\alpha)}+1\right)=2^{k-n}+2^{k\alpha-n}\leq2^{5}+2^{k-n} & \leq2\cdot2^{5}
\end{align*}
and furthermore $\left|2^{k\alpha-n}\right|\leq2^{k-n}\leq2^{5}$,
as well as 
\begin{align*}
\left|-2^{k\alpha-n}m\right|=2^{k\alpha-n}\left|m\right|\leq2^{k\alpha-n}\left\lceil \smash{2^{n(1-\alpha)}}\right\rceil <2^{k\alpha-n}\left(2^{n(1-\alpha)}+1\right)=2^{(k-n)\alpha}+2^{k\alpha-n} & \leq2^{5\alpha}+2^{k-n}\leq2^{5}+2^{5}.
\end{align*}
Finally, having in mind 
\[
0\leq\ell m\leq\left(2^{k(1-\alpha)}+1\right)\left(2^{n(1-\alpha)}+1\right)=2^{n(1-\alpha)}2^{k(1-\alpha)}+2^{k(1-\alpha)}+2^{n(1-\alpha)}+1,
\]
as well as $\ell\geq2^{k(1-\alpha)}-65>0$ and $m\geq2^{n(1-\alpha)}-65>0$,
we get
\begin{align*}
\left|2^{k-n\alpha}-2^{k\alpha-n}\ell m\right| & =2^{k\alpha-n}\cdot\left|2^{n(1-\alpha)+k(1-\alpha)}-\ell m\right|\\
 & \leq2^{k\alpha-n}\cdot\left(\left|2^{n(1-\alpha)+k(1-\alpha)}+2^{k(1-\alpha)}+2^{n(1-\alpha)}+1-\ell m\right|+\left|-2^{k(1-\alpha)}-2^{n(1-\alpha)}-1\right|\right)\\
 & =2^{k\alpha-n}\cdot\left[\left(2^{n(1-\alpha)+k(1-\alpha)}+2^{k(1-\alpha)}+2^{n(1-\alpha)}+1-\ell m\right)+(2^{k(1-\alpha)}+2^{n(1-\alpha)}+1)\right]\\
 & \leq2^{k\alpha-n}\cdot\left[2^{n(1-\alpha)+k(1-\alpha)}-\left(2^{k(1-\alpha)}-65\right)\left(2^{n(1-\alpha)}-65\right)+2\cdot\left(2^{k(1-\alpha)}+2^{n(1-\alpha)}+1\right)\right]\\
 & =2^{k\alpha-n}\cdot\left[65\cdot2^{k(1-\alpha)}+65\cdot2^{n(1-\alpha)}-65^{2}+2\cdot\left(2^{k(1-\alpha)}+2^{n(1-\alpha)}+1\right)\right]\\
 & \leq2^{k\alpha-n}\cdot\left(67\cdot2^{k(1-\alpha)}+67\cdot2^{n(1-\alpha)}+2\right)\\
 & =2^{k-n}\left(67+67\cdot2^{(n-k)(1-\alpha)}+2\cdot2^{-k(1-\alpha)}\right)\\
 & \leq2^{5}\left(67+67\cdot2^{5}+2\right)=70\,816.
\end{align*}
Thus, we have $\left\Vert T_{i}^{-1}T_{j}\right\Vert \leq2^{5}+2^{6}+2^{6}+70\,816=70\,976=:K_{1}$
for all $j\in M_{i}^{\left(1\right)}$, as long as $\alpha\neq1$
and $i=\left(n,m,\varepsilon,0\right)\in I_{0}$ with $n\geq\frac{12}{1-\alpha}$
and $m\geq\frac{64}{3}-1$.

\medskip{}

\textbf{Case 2(b)(ii)}: $m\leq-\frac{64}{3}+1$. Then we have $\frac{\eta}{\xi}<2^{-n\left(1-\alpha\right)}\left(m+1\right)<0$
and $2^{-k\left(1-\alpha\right)}\left(\ell-1\right)<\frac{\xi}{\eta}<0$.
Hence, $\ell<1$ and since $|\ell|\geq\frac{64}{3}-1$, we have $\ell\leq-\frac{64}{3}+1$.
Setting $\tilde{m}:=-m$ and $\tilde{\ell}:=-\ell$ and using $-\frac{\eta}{\xi},-\frac{\xi}{\eta}$
instead of $\frac{\eta}{\xi},\frac{\xi}{\eta}$ we get, with the same
arguments as in the previous case, that $\tilde{m}\geq2^{n(1-\alpha)}-65$
and $\tilde{\ell}\geq2^{k(1-\alpha)}-65$, i.e. $m\leq-2^{n(1-\alpha)}+65$
and $\ell\leq-2^{k(1-\alpha)}+65$. Consequently, since $m\ell=\tilde{m}\tilde{\ell}$
and $|m|=|\tilde{m}|$, as well as $|\ell|=|\tilde{\ell}|$, we get
the same bounds for the matrix entries as in the previous case. Thus,
$\left\Vert T_{i}^{-1}T_{j}\right\Vert \leq K_{1}$.

\medskip{}

All in all, since the cases 2(b)(i) and 2(b)(ii) are the only ones
possible—assuming that we are in case 2(b)—we get $C_{i}^{\left(1\right)}\leq K_{1}$
if $\alpha\neq1$ and if $i=\left(n,m,\varepsilon,0\right)$ satisfies
$n\geq\frac{12}{1-\alpha}$. Finally, in both of the cases from above,
we saw that $\ell\leq-2^{k\left(1-\alpha\right)}+65\leq-\left\lceil 2^{k\left(1-\alpha\right)}\right\rceil +66$
or that $\ell\geq2^{k\left(1-\alpha\right)}-65\geq\left\lceil 2^{k\left(1-\alpha\right)}\right\rceil -66$.
Consequently, we get for the whole case 2(b) that $M_{i}^{(1)}\subset\widetilde{M}$
with
\[
\widetilde{M}:=\bigcup_{\lambda=n-5}^{n+5}\left[\left\{ \lambda\right\} \times\left(\left\{ \left\lceil \smash{2^{\lambda(1-\alpha)}}\right\rceil -66,\dots,\left\lceil \smash{2^{\lambda(1-\alpha)}}\right\rceil \right\} \cup\left\{ -\left\lceil \smash{2^{\lambda(1-\alpha)}}\right\rceil ,\dots,-\left\lceil \smash{2^{\lambda(1-\alpha)}}\right\rceil +66\right\} \right)\times\left\{ \pm1\right\} \times\left\{ 1\right\} \right]
\]
and thus $\left|\smash{M_{i}^{(1)}}\right|\leq\left|\smash{\widetilde{M}}\right|\leq11\cdot2\cdot67\cdot2=2948=:N_{1}$,
independent of $i=\left(n,m,\varepsilon,0\right)\in I_{0}$, as long
as $\alpha\neq1$ and $n\geq\frac{12}{1-\alpha}$.

\medskip{}

\textbf{Case 2(c)}: $\alpha=1$. In this case, the matrix $T_{i}^{-1}T_{j}$
from equation \eqref{eq:matrixentries} reduces to 
\[
T_{i}^{-1}T_{j}=\varepsilon\beta\left(\begin{array}{c|c}
2^{k-n}\ell & 2^{k-n}\\
2^{k-n}-2^{k-n}\ell m & -2^{k-n}m
\end{array}\right)
\]
and we have $\left|m\right|\leq G_{n}=1$, as well as $\left|\ell\right|\leq G_{k}=1$.
Thus, recalling $\left|n-k\right|\leq5$, we can easily bound all
matrix elements uniformly: We have $\left|2^{k-n}\ell\right|=2^{k-n}\left|\ell\right|\leq2^{5}$
and $\left|2^{k-n}\right|\leq2^{5}$, as well as $\left|-2^{k-n}m\right|\leq2^{k-n}\leq2^{5}$
and finally
\[
\left|2^{k-n}-2^{k-n}\ell m\right|=2^{k-n}\left|1-\ell m\right|\leq2^{k-n}\left(1+\left|\ell\right|\cdot\left|m\right|\right)\leq2^{5}\cdot2
\]
and thus $\left\Vert T_{i}^{-1}T_{j}\right\Vert \leq2^{5}+2^{5}+2^{5}+2\cdot2^{5}=160=:K_{2}$,
independent of $i=\left(n,m,\varepsilon,0\right)\in I_{0}$, as long
as $\alpha=1$.

Furthermore, since we saw above that $\left|k-n\right|\leq5$ for
$j=\left(k,\ell,\beta,1\right)\in M_{i}^{\left(1\right)}$, we get
\[
M_{i}^{(1)}\subset\bigcup_{\lambda=n-5}^{n+5}\left[\left\{ \lambda\right\} \times\left\{ -1,0,1\right\} \times\left\{ \pm1\right\} \times\left\{ 1\right\} \right]
\]
and thus $\left|\smash{M_{i}^{\left(1\right)}}\right|\leq11\cdot3\cdot2=66=:N_{2}$.

\medskip{}

All in all, the cases 2(a), 2(b) and 2(c) entail for $i=\left(n,m,\varepsilon,0\right)\in I_{0}$
that 
\[
C_{i}^{(1)}\leq K_{3}:=\begin{cases}
K_{2}, & \text{if }\alpha=1,\\
\max\left\{ K_{0},K_{1}\right\} , & \text{if }\alpha\neq1
\end{cases}\qquad\text{ and also }\qquad\left|\smash{M_{i}^{\left(1\right)}}\right|\leq N_{3}:=\begin{cases}
N_{2}, & \text{if }\alpha=1,\\
\max\left\{ N_{0},N_{1}\right\} , & \text{if }\alpha\neq1.
\end{cases}
\]
Furthermore, putting cases 1 and 2 together yields for arbitrary $i=\left(n,m,\varepsilon,0\right)\in I_{0}$
that 
\[
C_{i}:=\sup_{j\in M_{i}}\left\Vert T_{i}^{-1}T_{j}\right\Vert =\max\left\{ \smash{C_{i}^{\left(0\right)}},\smash{C_{i}^{\left(1\right)}}\right\} \leq\max\left\{ K,K_{3}\right\} =:K_{4}
\]
and 
\[
\left|M_{i}\right|=\left|\smash{M_{i}^{\left(0\right)}}\cup\smash{M_{i}^{\left(1\right)}}\right|\leq\left|\smash{M_{i}^{\left(0\right)}}\right|+\left|\smash{M_{i}^{\left(1\right)}}\right|\leq N+N_{3}=:N_{4}.
\]
As we saw above, this even holds for arbitrary $i\in I_{0}$ (i.e.,
without assuming that the last component of $i$ is $0$), since $M_{i'}=\left\{ j'\with j\in M_{i}\right\} $
and since $C_{i'}=C_{i}$, cf.\@ equation \eqref{eq:AlphaCoveringAdmissibleMirroredIndex}
and the ensuing paragraph.

\medskip{}

Next, we show that $0^{\ast}$ is finite: For $i=\left(n,m,\varepsilon,\delta\right)\in I_{0}$
and $\left(\begin{smallmatrix}\xi\\
\eta
\end{smallmatrix}\right)\in S_{i}^{\left(\alpha\right)}$, we saw in Lemma \ref{lem:AlphaShearletCoveringAuxiliary} that $\left|\left(\xi,\eta\right)\right|>2^{n-2}$.
Since we clearly have $\left|\left(\xi,\eta\right)\right|<2$ for
$\left(\begin{smallmatrix}\xi\\
\eta
\end{smallmatrix}\right)\in S_{0}^{\left(\alpha\right)}=\left(-1,1\right)^{2}$, this implies that $S_{i}^{\left(\alpha\right)}\cap S_{0}^{\left(\alpha\right)}\neq\emptyset$
can only hold if $2^{n}<2^{3}$, i.e., if $n\leq2$. This implies
\begin{align*}
0^{\ast} & \subset\left\{ 0\right\} \cup\left\{ \left(n,m,\varepsilon,\delta\right)\in I_{0}\with n\leq2\right\} \\
 & \subset\left\{ 0\right\} \cup\bigcup_{n=0}^{2}\left[\left\{ n\right\} \times\left\{ -\left\lceil \smash{2^{n\left(1-\alpha\right)}}\right\rceil ,\dots,\left\lceil \smash{2^{n\left(1-\alpha\right)}}\right\rceil \right\} \times\left\{ \pm1\right\} \times\left\{ 0,1\right\} \right],
\end{align*}
which is clearly a finite set. In fact, since $\left\lceil 2^{n\left(1-\alpha\right)}\right\rceil \leq2^{n}\leq4$,
we get $\left|0^{\ast}\right|\leq1+3\cdot2\cdot2\cdot4=49$.

Now, for $i\in I_{0}$, we have $i^{\ast}\subset M_{i}\cup\left\{ 0\right\} $
and thus $\left|i^{\ast}\right|\leq1+N_{4}$. Furthermore, for an
arbitrary $i\in I=I_{0}\cup\left\{ 0\right\} $ we have $\left|i^{\ast}\right|\leq\max\left\{ 1+N_{4},\left|0^{\ast}\right|\right\} $
and thus $N_{\CalS^{\left(\alpha\right)}}<\infty$, i.e., $\CalS^{\left(\alpha\right)}$
is admissible.

Moreover, for $i\in I_{0}\setminus0^{\ast}$, we have $0\notin i^{\ast}$
and thus 
\[
\sup_{j\in i^{*}}\left\Vert T_{i}^{-1}T_{j}\right\Vert =\sup_{j\in M_{i}}\left\Vert T_{i}^{-1}T_{j}\right\Vert \leq K_{4}.
\]
Next, for $i\in0^{\ast}\cap I_{0}$, we have 
\[
\sup_{j\in i^{*}}\left\Vert T_{i}^{-1}T_{j}\right\Vert \leq\sup_{\lambda\in I_{0}\cap0^{\ast}}\left[\max\left\{ \left\Vert T_{\lambda}^{-1}T_{0}\right\Vert ,\:C_{\lambda}\right\} \right]\leq K_{5},
\]
for some fixed constant $K_{5}$, since $0^{\ast}$ is finite. Finally,
again by finiteness of $0^{\ast}$, we also get $\sup_{j\in0^{\ast}}\left\Vert T_{0}^{-1}T_{j}\right\Vert \leq K_{6}$
for a fixed constant $K_{6}$. Thus, in total we get 
\[
C_{\CalS^{\left(\alpha\right)}}=\sup_{i\in I}\:\sup_{j\in i^{*}}\left\Vert T_{i}^{-1}T_{j}\right\Vert \leq\max\left\{ K_{4},K_{5},K_{6}\right\} <\infty.
\]
All in all, we have shown that $\CalS^{\left(\alpha\right)}$ is an
almost structured covering of $\R^{2}$, as claimed.
\end{proof}

\section{The proof of Lemma \ref{lem:MainShearletLemma}}

\label{sec:MegaProof}In this section, we provide the (highly technical
and lengthy) proof of Lemma \ref{lem:MainShearletLemma}. For this
proof, the following lemma will turn out to be extremely useful.
\begin{lem}
\label{lem:WeightedSumOfShiftedIntegrals}For $f:\R^{\dimension}\to\Compl$
and $\theta\in\left[0,\infty\right)$, define $\left\Vert f\right\Vert _{\theta}:=\sup_{x\in\R^{\dimension}}\left(1+\left|x\right|\right)^{\theta}\left|f\left(x\right)\right|\in\left[0,\infty\right]$.

Then, for each $N\in\left[0,\infty\right)$ and $p\in\left(0,\infty\right)$,
arbitrary $\beta,L>0$ and $M\in\R$ and all measurable $f:\R^{\dimension}\to\Compl$
we have
\[
\sum_{k\in\Z}\left|\beta k+M\right|^{N}\left(\int_{\beta k+M-L}^{\beta k+M+L}\left|f\left(x\right)\right|\d x\right)^{p}\leq2^{1+p}\cdot10^{N+3}\cdot\left\Vert f\right\Vert _{\frac{1}{p}\left(N+2\right)}^{p}\cdot L^{p}\cdot\left(1+L^{N}\right)\cdot\left(1+\frac{L+1}{\beta}\right).\qedhere
\]
\end{lem}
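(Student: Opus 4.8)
The statement to prove is Lemma~\ref{lem:WeightedSumOfShiftedIntegrals}, a weighted summation estimate for shifted local integrals of a function~$f$ controlled by the weighted sup-norm $\left\Vert f\right\Vert_{\theta}$. The plan is the following. First, without loss of generality assume $\left\Vert f\right\Vert_{\frac{1}{p}(N+2)} < \infty$, since otherwise the right-hand side is infinite and there is nothing to prove. Writing $\theta := \frac{1}{p}(N+2)$, for each $x$ with $\left|x\right| \ge 0$ we have the pointwise bound $\left|f(x)\right| \le \left\Vert f\right\Vert_{\theta}\cdot(1+\left|x\right|)^{-\theta}$. The strategy is to estimate the inner integral over the interval $I_k := (\beta k + M - L,\ \beta k + M + L)$ by pulling out $\left\Vert f\right\Vert_{\theta}$ and bounding $\int_{I_k}(1+\left|x\right|)^{-\theta}\,dx$ in terms of the distance of the interval $I_k$ from the origin, then summing the resulting decaying series in $k$ against the polynomial weight $\left|\beta k + M\right|^N$.

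Second, I would carry out the estimate of the inner integral. Set $c_k := \beta k + M$, the center of $I_k$, so $I_k = (c_k - L, c_k + L)$ and $\left|I_k\right| = 2L$. For $x \in I_k$ one has $\left|x\right| \ge \left|c_k\right| - L$, hence $1 + \left|x\right| \ge \max\{1,\ \left|c_k\right| - L\} \ge \tfrac12(1 + \left|c_k\right| - L)$ whenever $\left|c_k\right| \ge L$; a separate, cruder bound $1+\left|x\right|\ge 1$ handles the finitely many $k$ with $\left|c_k\right| < L$ (there are at most $O(1 + L/\beta)$ such $k$). Therefore
\[
\int_{I_k}(1+\left|x\right|)^{-\theta}\,dx \le 2L\cdot\big(1 + \left(\left|c_k\right|-L\right)_{+}\big)^{-\theta}\cdot C,
\]
with an absolute constant, and raising to the $p$-th power (recall $\theta p = N+2$) gives
\[
\left(\int_{I_k}\left|f\right|\,dx\right)^{p} \le \left\Vert f\right\Vert_{\theta}^{p}\cdot C^{p}\cdot(2L)^{p}\cdot\big(1 + \left(\left|c_k\right|-L\right)_{+}\big)^{-(N+2)}.
\]

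Third, I would insert the polynomial weight and sum. We have $\left|c_k\right|^{N} \le \big(1 + (\left|c_k\right|-L)_{+} + L\big)^{N} \le 2^{N}\big(1+(\left|c_k\right|-L)_{+}\big)^{N}\cdot(1+L^{N})$ by the elementary inequality $(a+b)^N \le 2^N(a^N+b^N)\le 2^N(1+a^N)(1+b^N)$ for $a,b\ge 0$. Multiplying, the weight times decay factor becomes at most $2^N(1+L^N)\cdot\big(1+(\left|c_k\right|-L)_{+}\big)^{-2}$. Now substituting $d_k := (\left|c_k\right|-L)_{+}$, the series $\sum_k (1+d_k)^{-2}$ must be controlled: since the points $c_k = \beta k + M$ are spaced $\beta$ apart, each unit-length interval of the real line contains at most $1 + 1/\beta$ of them, and the collapsed quantities $d_k$ take each value-range finitely often — more precisely, a change-of-variables / comparison-to-integral argument yields $\sum_k (1+d_k)^{-2} \le C\cdot(1 + \tfrac{L+1}{\beta})$. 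Combining all the constants (and being a little generous so that the final numerical constants $2^{1+p}\cdot 10^{N+3}$ work out) completes the proof.

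The main technical obstacle is the bookkeeping in the third step: carefully counting how many of the shifted centers $c_k$ fall near the origin (where the decay estimate degrades and one only has the trivial bound $1+\left|x\right|\ge1$), and extracting the clean factor $(1 + \tfrac{L+1}{\beta})$ rather than a weaker dependence on $\beta$ and $L$. This requires splitting the sum into $\{k : \left|c_k\right| < L\}$ and $\{k : \left|c_k\right| \ge L\}$, bounding the first sum by its cardinality times the worst-case term $(2L)^p\cdot(2L)^N$ (using $\left|c_k\right|^N \le (2L)^N$ there), and for the second sum comparing $\sum (1+\left|c_k\right|-L)^{-2}$ with $\int (1+\left|t\right|)^{-2}\,dt$ after accounting for the spacing $\beta$; the case $\beta \ge 1$ versus $\beta < 1$ should be distinguished since the density of the lattice $\beta\Z + M$ behaves differently. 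Everything else is routine application of $(a+b)^N \le 2^N(1+a^N)(1+b^N)$ and the monotonicity of $(1+\left|x\right|)^{-\theta}$.
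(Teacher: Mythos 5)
Your proposal is correct and follows essentially the same strategy as the paper's proof: split the lattice $\Z$ into the finitely many $k$ with $\beta k+M$ close to the origin (bounded crudely via $\|f\|_{L^\infty}$ and a count of order $1+L/\beta$) and the remaining $k$ (where the weighted decay $(1+|x|)^{-(N+2)/p}$ is exploited and the resulting series $\sum_k (1+|\beta k+M|)^{-2}$ is summed via a spacing/comparison argument). The only cosmetic differences are the cutoff ($L$ versus the paper's $10L$), your unified $(|c_k|-L)_+$ bookkeeping in place of the paper's explicit three-case split, and the paper's slicker device of a $1$-periodic function $g(x)=\sum_k(1+|\beta(k+x)|)^{-2}$ bounded uniformly by $2+10/\beta$ where you appeal directly to comparison with $\int(1+\beta t)^{-2}\,dt$.
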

\begin{rem*}
Note that $\left(1+\theta\right)^{N}\leq\left[2\cdot\max\left\{ 1,\theta\right\} \right]^{N}\leq2^{N}\cdot\max\left\{ 1,\,\theta^{N}\right\} \leq2^{N}\cdot\left(1+\theta^{N}\right)$
for arbitrary $\theta\geq0$, so that an application of the preceding
lemma with $N=\lambda$ for $\lambda\in\left\{ 0,N\right\} $ yields
\begin{equation}
\begin{split} & \sum_{k\in\Z}\left(1+\left|\beta k+M\right|\right)^{N}\left(\int_{\beta k+M-L}^{\beta k+M+L}\left|f\left(x\right)\right|\d x\right)^{p}\\
 & \leq2^{N}\cdot\sum_{\lambda\in\left\{ 0,N\right\} }\:\sum_{k\in\Z}\left|\beta k+M\right|^{\lambda}\left(\int_{\beta k+M-L}^{\beta k+M+L}\left|f\left(x\right)\right|\d x\right)^{p}\\
 & \leq2^{N}\cdot\sum_{\lambda\in\left\{ 0,N\right\} }2^{1+p}\cdot10^{\lambda+3}\cdot\left\Vert f\right\Vert _{\frac{1}{p}\left(\lambda+2\right)}^{p}\cdot L^{p}\cdot\left(1+L^{\lambda}\right)\cdot\left(1+\frac{L+1}{\beta}\right)\\
 & \leq2^{3+p+N}\cdot10^{N+3}\cdot\left\Vert f\right\Vert _{\frac{1}{p}\left(N+2\right)}^{p}\cdot L^{p}\cdot\left(1+L^{N}\right)\cdot\left(1+\frac{L+1}{\beta}\right).
\end{split}
\label{eq:WeightedSumOfShiftedIntegralsImproved}
\end{equation}
Here, the last step used that we have $\lambda\leq N$ and hence $\left\Vert f\right\Vert _{\frac{1}{p}\left(\lambda+2\right)}^{p}\leq\left\Vert f\right\Vert _{\frac{1}{p}\left(N+2\right)}^{p}$
for $\lambda\in\left\{ 0,N\right\} $ and furthermore that $1+L^{\lambda}=2\leq2\cdot\left(1+L^{N}\right)$
for $\lambda=0$ and trivially $1+L^{\lambda}\leq2\cdot\left(1+L^{N}\right)$
for $\lambda=N$.
\end{rem*}
\begin{proof}
Since otherwise the claim is trivial, we can assume $\left\Vert f\right\Vert _{\frac{1}{p}\left(N+2\right)}<\infty$.
We distinguish three cases for $k\in\Z$:

\textbf{Case 1}: We have $\beta k+M\geq10\cdot L>0$. This implies
$x\geq\beta k+M-L\geq\frac{9}{10}\left(\beta k+M\right)>0$ for arbitrary
$x\in\left[\beta k+M-L,\beta k+M+L\right]$ and hence
\begin{align*}
\left|f\left(x\right)\right| & \leq\left\Vert f\right\Vert _{\frac{1}{p}\left(N+2\right)}\cdot\left(1+\left|x\right|\right)^{-\frac{1}{p}\left(N+2\right)}\\
 & \leq\left\Vert f\right\Vert _{\frac{1}{p}\left(N+2\right)}\cdot\left(1+\frac{9}{10}\left(\beta k+M\right)\right)^{-\frac{1}{p}\left(N+2\right)}\\
 & \leq\left(\frac{10}{9}\right)^{\frac{1}{p}\left(N+2\right)}\cdot\left\Vert f\right\Vert _{\frac{1}{p}\left(N+2\right)}\cdot\left(1+\left|\beta k+M\right|\right)^{-\frac{1}{p}\left(N+2\right)}.
\end{align*}
This yields
\begin{equation}
\left|\beta k+M\right|^{N}\left(\int_{\beta k+M-L}^{\beta k+M+L}\left|f\left(x\right)\right|\d x\right)^{p}\leq\left(\frac{10}{9}\right)^{N+2}\left\Vert f\right\Vert _{\frac{1}{p}\left(N+2\right)}^{p}\cdot\left(2L\right)^{p}\cdot\left(1+\left|\beta k+M\right|\right)^{-2}.\label{eq:IntegralSumLemmaLargeKEstimate}
\end{equation}

\textbf{Case 2}: We have $\beta k+M\leq-10\cdot L<0$. This implies
$x\leq\beta k+M+L\leq\frac{9}{10}\left(\beta k+M\right)<0$ and hence
$\left|x\right|\geq\frac{9}{10}\left|\beta k+M\right|$ for arbitrary
$x\in\left[\beta k+M-L,\beta k+M+L\right]$. This easily implies that
estimate \eqref{eq:IntegralSumLemmaLargeKEstimate} also holds in
this case.

\textbf{Case 3}: We have $\left|\beta k+M\right|\leq10\cdot L$. In
this case, we have $-10\cdot L\leq\beta k+M\leq10\cdot L$ and hence
\[
\frac{-10L-M}{\beta}\leq k\leq\frac{10L-M}{\beta},
\]
which implies $k\in\Z\cap\left[-\frac{M}{\beta}-\frac{10L}{\beta},-\frac{M}{\beta}+\frac{10L}{\beta}\right]$.
But every (closed) interval $I$ of length $R\geq0$ satisfies $\left|I\cap\Z\right|\leq1+R$,
so that there are at most $1+\frac{20L}{\beta}$ possible values of
$k$ for which the present case is satisfied. Hence,
\begin{align*}
\sum_{\substack{k\in\Z\\
\left|\beta k+M\right|\leq10L
}
}\underbrace{\left|\beta k+M\right|^{N}}_{\leq\left(10L\right)^{N}}\left(\int_{\beta k+M-L}^{\beta k+M+L}\left|f\left(x\right)\right|\d x\right)^{p} & \leq10^{N}\left\Vert f\right\Vert _{L^{\infty}}^{p}\cdot\left(1+20\frac{L}{\beta}\right)\cdot L^{N}\cdot\left(2L\right)^{p}\\
 & \leq2^{p}\cdot10^{N}\cdot\left\Vert f\right\Vert _{\frac{1}{p}\left(N+2\right)}^{p}\cdot L^{N+p}\cdot\left(1+20\frac{L}{\beta}\right).
\end{align*}

All in all, we arrive at
\begin{equation}
\begin{split} & \sum_{k\in\Z}\left|\beta k+M\right|^{N}\left(\int_{\beta k+M-L}^{\beta k+M+L}\left|f\left(x\right)\right|\d x\right)^{p}\\
 & \leq2^{p}10^{N}\left\Vert f\right\Vert _{\frac{1}{p}\left(N+2\right)}^{p}L^{N+p}\cdot\left(1+20\frac{L}{\beta}\right)+2^{p}\left(\frac{10}{9}\right)^{N+2}\left\Vert f\right\Vert _{\frac{1}{p}\left(N+2\right)}^{p}L^{p}\cdot\!\!\sum_{\substack{k\in\Z\\
\left|\beta k+M\right|\geq10L
}
}\!\!\left(1+\left|\beta k+M\right|\right)^{-2}.
\end{split}
\label{eq:WeightedSumOfShiftedIntegralsMainEstimate}
\end{equation}

Now, define $g:\R\to\left[0,\infty\right],x\mapsto\sum_{k\in\Z}\left(1+\left|\beta\left(k+x\right)\right|\right)^{-2}$
and note that $g$ is $1$-periodic and also that
\[
\sum_{\substack{k\in\Z\\
\left|\beta k+M\right|\geq10L
}
}\left(1+\left|\beta k+M\right|\right)^{-2}\leq\sum_{k\in\Z}\left(1+\left|\beta k+M\right|\right)^{-2}=\sum_{k\in\Z}\left(1+\left|\beta\left(k+\frac{M}{\beta}\right)\right|\right)^{-2}=g\left(M/\beta\right).
\]
Our next goal is to show $g\left(x\right)\leq2+\frac{10}{\beta}$
for all $x\in\R$. Since $g$ is $1$-periodic, it suffices to consider
$x\in\left[0,1\right]$. Now, we again distinguish three cases regarding
$k\in\Z$:

\textbf{Case 1}: We have $k\geq\frac{1}{\beta}$ and hence $\beta\left(k+x\right)\geq\beta k\geq1$.
This implies
\[
\sum_{k\geq1/\beta}\left(1+\left|\beta\left(k+x\right)\right|\right)^{-2}\leq\sum_{k\geq1/\beta}\left(\beta k\right)^{-2}=\beta^{-2}\cdot\sum_{k\geq1/\beta}k^{-2}.
\]
Now, note for arbitrary $y>0$ that for $n\in\Z_{\geq y}$, we have
$n\geq y>0$ and hence $n\geq1$, which implies $n+1\leq2n$, so that
we get for $z\in\left[n,n+1\right]$ the estimate $z^{-2}\geq\left(n+1\right)^{-2}\geq\left(2n\right)^{-2}=n^{-2}/4$
and hence
\[
\sum_{n\in\Z_{\geq y}}n^{-2}=\sum_{n\geq y}\int_{n}^{n+1}n^{-2}\d z\leq4\sum_{n\geq y}\int_{n}^{n+1}z^{-2}\d z\leq4\cdot\int_{y}^{\infty}z^{-2}\d z=4\cdot\frac{z^{-1}}{-1}\bigg|_{z=y}^{\infty}=\frac{4}{y}.
\]
 Thus, $\sum_{k\geq1/\beta}\left(1+\left|\beta\left(k+x\right)\right|\right)^{-2}\leq\beta^{-2}\cdot\sum_{k\geq1/\beta}k^{-2}\leq\beta^{-2}\cdot\frac{4}{1/\beta}=\frac{4}{\beta}$.

\textbf{Case 2}: We have $k\leq-\frac{1}{\beta}-1$, which entails
$-\left(k+1\right)\geq\frac{1}{\beta}$. For $x\in\left[0,1\right]$,
this implies 
\[
\beta\left(k+x\right)\leq\beta\left(k+1\right)\leq\beta\cdot\left(-\frac{1}{\beta}\right)=-1<0\quad\text{ and hence }\quad\left|\beta\left(k+x\right)\right|=-\beta\left(k+x\right)\geq-\beta\left(k+1\right)>0,
\]
so that we get
\begin{align*}
\sum_{k\in\Z_{\leq-\frac{1}{\beta}-1}}\left(1+\left|\beta\left(k+x\right)\right|\right)^{-2} & \leq\sum_{k\in\Z_{\leq-\frac{1}{\beta}-1}}\left(-\beta\left(k+1\right)\right)^{-2}\\
\left({\scriptstyle \text{with }\ell=-\left(k+1\right)}\right) & =\sum_{\ell\in\Z_{\geq1/\beta}}\left(\beta\ell\right)^{-2}\\
\left({\scriptstyle \text{as above}}\right) & \leq\beta^{-2}\cdot\frac{4}{1/\beta}=\frac{4}{\beta}.
\end{align*}

\textbf{Case 3}: We have $-\frac{1}{\beta}-1\leq k\leq\frac{1}{\beta}$
and hence $k\in\Z\cap\left[-\frac{1}{\beta}-1,\frac{1}{\beta}\right]$,
so that there are at most $2+\frac{2}{\beta}$ possible values of
$k$ for which this case holds. Hence,
\[
\sum_{\substack{k\in\Z\\
-\frac{1}{\beta}-1\leq k\leq\frac{1}{\beta}
}
}\left(1+\left|\beta\left(k+x\right)\right|\right)^{-2}\leq2\left(1+\frac{1}{\beta}\right).
\]
Summarizing all three cases, we easily see $g\left(x\right)\leq\frac{4}{\beta}+\frac{4}{\beta}+2\left(1+\frac{1}{\beta}\right)=2+\frac{10}{\beta}$
for all $x\in\R$, as claimed.

Returning to the proof of the claim of the lemma, we recall from equation
\eqref{eq:WeightedSumOfShiftedIntegralsMainEstimate} (and the displayed
equation after that) that we have
\begin{align*}
 & \sum_{k\in\Z}\left|\beta k+M\right|^{N}\left(\int_{\beta k+M-L}^{\beta k+M+L}\left|f\left(x\right)\right|\d x\right)^{p}\\
 & \leq2^{p}\cdot10^{N}\left\Vert f\right\Vert _{\frac{1}{p}\left(N+2\right)}^{p}\cdot L^{N+p}\cdot\left(1+20\frac{L}{\beta}\right)+2^{p}\left(\frac{10}{9}\right)^{N+2}\!\!\left\Vert f\right\Vert _{\frac{1}{p}\left(N+2\right)}^{p}\cdot L^{p}\cdot g\left(\frac{M}{\beta}\right)\\
 & \leq2^{p}\cdot10^{N+2}\cdot\left\Vert f\right\Vert _{\frac{1}{p}\left(N+2\right)}^{p}\cdot\left[L^{N+p}\cdot\left(1+20\frac{L}{\beta}\right)+L^{p}\cdot g\left(M/\beta\right)\right]\\
 & \leq2^{p}\cdot10^{N+2}\cdot\left\Vert f\right\Vert _{\frac{1}{p}\left(N+2\right)}^{p}\cdot L^{p}\cdot\left[L^{N}\cdot\left(1+20\frac{L}{\beta}\right)+\left(2+\frac{10}{\beta}\right)\right]\\
 & \leq2^{1+p}\cdot10^{N+3}\cdot\left\Vert f\right\Vert _{\frac{1}{p}\left(N+2\right)}^{p}\cdot L^{p}\cdot\left[L^{N}\cdot\left(1+\frac{L}{\beta}\right)+\left(1+\frac{1}{\beta}\right)\right]\\
 & \leq2^{1+p}\cdot10^{N+3}\cdot\left\Vert f\right\Vert _{\frac{1}{p}\left(N+2\right)}^{p}\cdot L^{p}\cdot\left(1+L^{N}\right)\cdot\max\left\{ 1+\frac{L}{\beta},\,1+\frac{1}{\beta}\right\} \\
 & \leq2^{1+p}\cdot10^{N+3}\cdot\left\Vert f\right\Vert _{\frac{1}{p}\left(N+2\right)}^{p}\cdot L^{p}\cdot\left(1+L^{N}\right)\cdot\left(1+\frac{L+1}{\beta}\right),
\end{align*}
which completes the proof.
\end{proof}
The proof of Lemma \ref{lem:MainShearletLemma} will occupy the whole
remainder of this section. In fact, we divide the remainder of this
section into several subsections, each of which handles a certain
subset of the whole set of pairs $\left(i,j\right)\in I^{2}$. Precisely,
we define for $\left(e,d\right)\in\left\{ \pm1\right\} \times\left\{ 0,1\right\} $
the set
\[
I^{\left(e,d\right)}:=\left\{ \left(n,m,\varepsilon,\delta\right)\in I_{0}\with\varepsilon=e\text{ and }\delta=d\right\} .
\]
Furthermore, we set $I^{\left(0\right)}:=\left\{ 0\right\} $ and
$L:=\left\{ 0\right\} \cup\left(\left\{ \pm1\right\} \times\left\{ 0,1\right\} \right)$.
Then $I=\biguplus_{\ell\in L}I^{\left(\ell\right)}$, so that
\begin{equation}
\sup_{i\in I}\:\sum_{j\in I}M_{j,i}^{\left(0\right)}\leq\sum_{\ell_{1}\in L}\:\sup_{i\in I^{\left(\ell_{1}\right)}}\:\sum_{\ell_{2}\in L}\:\sum_{j\in I^{\left(\ell_{2}\right)}}M_{j,i}^{\left(0\right)}\leq\sum_{\ell_{1},\ell_{2}\in L}\:\sup_{i\in I^{\left(\ell_{1}\right)}}\:\sum_{j\in I^{\left(\ell_{2}\right)}}M_{j,i}^{\left(0\right)}\label{eq:ShearletSchurTestSubdivision1}
\end{equation}
and likewise
\begin{equation}
\sup_{j\in I}\:\sum_{i\in I}M_{j,i}^{\left(0\right)}\leq\sum_{\ell_{1},\ell_{2}\in L}\:\sup_{j\in I^{\left(\ell_{2}\right)}}\:\sum_{i\in I^{\left(\ell_{1}\right)}}M_{j,i}^{\left(0\right)}.\label{eq:ShearletSchurTestSubdivision2}
\end{equation}

Now, each of the subsections of this section handles a specific choice
of $\ell_{1},\ell_{2}\in L$, which in principle are $25$ cases.
Luckily, it will turn out that many of these cases can be handled
completely analogously, so that the actual number of subsections is
smaller.

We first only consider the case $\ell_{1},\ell_{2}\in\left\{ \pm1\right\} \times\left\{ 0,1\right\} $.
Then, $I^{\left(\ell_{1}\right)},I^{\left(\ell_{2}\right)}\subset I_{0}$,
so that $\varrho_{j}=\varrho_{i}=\varrho$ and so that $i\in I^{\left(\ell_{1}\right)}$
and $j\in I^{\left(\ell_{2}\right)}$ are of the form $i=\left(n,m,\varepsilon,\delta\right)$
and $j=\left(\nu,\mu,e,d\right)$ for certain $n,\nu\in\N_{0}$, $m,\mu\in\Z$
with $\left|m\right|\leq G_{n}$ and $\left|\mu\right|\leq G_{\nu}$
and certain $\varepsilon,e\in\left\{ \pm1\right\} $ and $\delta,d\in\left\{ 0,1\right\} $.
We will keep this convention throughout the section, without mentioning
it explicitly.

In the remainder of the proof, the notation $x_{+}:=\left(x\right)_{+}:=\max\left\{ 0,x\right\} $
for $x\in\R$ will be frequently useful. We immediately observe $2^{x_{+}}=\max\left\{ 1,2^{x}\right\} $
and $\min\left\{ 1,2^{x}\right\} =2^{-\left(-x\right)_{+}}$.

Next, we collect two estimates concerning $\theta_{1},\theta_{2}$
that will frequently be useful: First, if $C^{-1}\leq\eta\leq C$
for some $C\geq1$, then $1+\left|\eta\xi\right|\geq1+C^{-1}\left|\xi\right|\geq C^{-1}\cdot\left(1+\left|\xi\right|\right)$
and thus
\begin{equation}
\begin{split}\theta_{1}\left(\eta\xi\right) & =\min\left\{ \left|\eta\xi\right|^{M_{1}},\left(1+\left|\eta\xi\right|\right)^{-M_{2}}\right\} \\
 & \leq\min\left\{ C^{M_{1}}\cdot\left|\xi\right|^{M_{1}},\,C^{M_{2}}\cdot\left(1+\left|\xi\right|\right)^{-M_{2}}\right\} \leq C^{M_{3}}\cdot\theta_{1}\left(\xi\right)
\end{split}
\label{eq:Theta1Rescaling}
\end{equation}
for arbitrary $\xi\in\R$ and $M_{3}:=\max\left\{ M_{1},M_{2}\right\} $.

Finally, if $\eta\geq C$ for some $C\in\left(0,1\right]$, then $1+\left|\eta\xi\right|\geq1+C\cdot\left|\xi\right|\geq C\cdot\left(1+\left|\xi\right|\right)$,
so that
\begin{equation}
\theta_{2}\left(\eta\xi\right)=\left(1+\left|\eta\xi\right|\right)^{-K}\leq C^{-K}\cdot\left(1+\left|\xi\right|\right)^{-K}=C^{-K}\cdot\theta_{2}\left(\xi\right)\qquad\forall\xi\in\R.\label{eq:Theta2Rescaling}
\end{equation}
Now, we properly start the proof of Lemma \ref{lem:MainShearletLemma}
by distinguishing the different values of $\ell_{1},\ell_{2}\in L$.

\subsection{We have \texorpdfstring{$\ell_{1}=\ell_{2}=\left(1,0\right)$}{ℓ₁=ℓ₂=(1,0)}}

\label{subsec:BothRightCone}For brevity, let $\ell:=\left(1,0\right)$.
Geometrically, the present case means that $i,j\in I^{\left(\ell\right)}$
both belong to the right cone, i.e., $\varepsilon=e=1$ and $\delta=d=0$.
Thus, we have
\begin{align*}
T_{j}^{-1}T_{i} & =\left(\begin{matrix}1 & 0\\
-\mu & 1
\end{matrix}\right)\left(\begin{matrix}2^{-\nu} & 0\\
0 & 2^{-\nu\alpha}
\end{matrix}\right)\left(\begin{matrix}2^{n} & 0\\
0 & 2^{n\alpha}
\end{matrix}\right)\left(\begin{matrix}1 & 0\\
m & 1
\end{matrix}\right)\\
 & =\left(\begin{matrix}1 & 0\\
-\mu & 1
\end{matrix}\right)\left(\begin{matrix}2^{n-\nu} & 0\\
0 & 2^{\alpha\left(n-\nu\right)}
\end{matrix}\right)\left(\begin{matrix}1 & 0\\
m & 1
\end{matrix}\right)\\
 & =\left(\begin{matrix}1 & 0\\
-\mu & 1
\end{matrix}\right)\left(\begin{matrix}2^{n-\nu} & 0\\
2^{\alpha\left(n-\nu\right)}m & 2^{\alpha\left(n-\nu\right)}
\end{matrix}\right)\\
 & =\left(\begin{array}{c|c}
2^{n-\nu} & 0\\
2^{\alpha\left(n-\nu\right)}m-2^{n-\nu}\mu & 2^{\alpha\left(n-\nu\right)}
\end{array}\right)
\end{align*}
and hence, since $2^{\alpha(n-\nu)}\leq2^{\alpha(n-\nu)_{+}}\leq2^{(n-\nu)_{+}}$
and $2^{n-\nu}\leq2^{(n-\nu)_{+}}$, 
\[
\left\Vert T_{j}^{-1}T_{i}\right\Vert \leq2\cdot\left(2^{\left(n-\nu\right)_{+}}+\omega_{n,m,\nu,\mu}\right)\leq2\cdot2^{\left(n-\nu\right)_{+}}\cdot\left(1+\omega_{n,m,\nu,\mu}\right)\quad\text{ for }\quad\omega_{n,m,\nu,\mu}:=\left|2^{\alpha\left(n-\nu\right)}m-2^{n-\nu}\mu\right|,
\]
which finally yields
\begin{equation}
\left(1+\left\Vert T_{j}^{-1}T_{i}\right\Vert \right)^{\sigma}\leq3^{\sigma}\cdot2^{\sigma\cdot\left(n-\nu\right)_{+}}\cdot\left(1+\omega_{n,m,\nu,\mu}\right)^{\sigma}.\label{eq:BothRightConeCoordinateChangeNormEstimate}
\end{equation}

On the other hand, with $\varrho,\theta_{1},\theta_{2}$ as in equation
\eqref{eq:MotherShearletMainEstimate}, we have because of $\varrho_{j}=\varrho$
that
\begin{align}
\left|\det T_{i}\right|^{-1}\!\cdot\!\int_{S_{i}^{\left(\alpha\right)}}\varrho_{j}\left(T_{j}^{-1}\xi\right)\d\xi & =\left|\det T_{i}\right|^{-1}\cdot\int_{T_{i}Q}\varrho\left(T_{j}^{-1}\xi\right)\d\xi\nonumber \\
\left({\scriptstyle \xi=T_{i}\eta}\right) & =\int_{Q}\varrho\left(T_{j}^{-1}T_{i}\eta\right)\d\eta\nonumber \\
\left({\scriptstyle \text{def. of }Q,\text{ cf. Def. }\ref{def:AlphaShearletCovering}}\right) & =\int_{1/3}^{3}\int_{\R}\Indicator_{\left(-1,1\right)}\left(\frac{\eta_{2}}{\eta_{1}}\right)\cdot\varrho\left(\begin{matrix}2^{n-\nu}\eta_{1}\\
\left(2^{\alpha\left(n-\nu\right)}m-2^{n-\nu}\mu\right)\eta_{1}+2^{\alpha\left(n-\nu\right)}\eta_{2}
\end{matrix}\right)\d\eta_{2}\d\eta_{1}\nonumber \\
\left({\scriptstyle \xi=\frac{\eta_{2}}{\eta_{1}}\text{ in inner integral}}\right) & =\!\int_{\frac{1}{3}}^{3}\!\eta_{1}\!\int_{\R}\!\Indicator_{\left(-1,1\right)}\left(\xi\right)\cdot\theta_{1}\left(2^{n-\nu}\eta_{1}\right)\cdot\theta_{2}\left(\!\left(2^{\alpha\left(n-\nu\right)}m\!-\!2^{n-\nu}\mu\right)\eta_{1}\!+\!2^{\alpha\left(n-\nu\right)}\xi\eta_{1}\right)\d\xi\d\eta_{1}\nonumber \\
 & \leq3\cdot\int_{1/3}^{3}\theta_{1}\left(2^{n-\nu}\eta_{1}\right)\cdot\int_{-1}^{1}\left(1+\eta_{1}\cdot\left|\left(2^{\alpha\left(n-\nu\right)}m-2^{n-\nu}\mu\right)+2^{\alpha\left(n-\nu\right)}\xi\right|\right)^{-K}\d\xi\d\eta_{1}\nonumber \\
\left({\scriptstyle \text{eq. }\eqref{eq:Theta2Rescaling}}\right) & \overset{}{\leq}3^{K+1}\cdot\int_{1/3}^{3}\theta_{1}\left(2^{n-\nu}\eta_{1}\right)\d\eta_{1}\cdot\int_{-1}^{1}\left(1+\left|\left(2^{\alpha\left(n-\nu\right)}m-2^{n-\nu}\mu\right)+2^{\alpha\left(n-\nu\right)}\xi\right|\right)^{-K}\d\xi\nonumber \\
\left({\scriptstyle \eta_{2}=2^{\alpha\left(n-\nu\right)}m-2^{n-\nu}\mu+2^{\alpha\left(n-\nu\right)}\xi}\right) & =3^{K+1}\cdot2^{\alpha\left(\nu-n\right)}\cdot\int_{1/3}^{3}\theta_{1}\left(2^{n-\nu}\eta_{1}\right)\d\eta_{1}\cdot\int_{2^{\alpha\left(n-\nu\right)}m-2^{n-\nu}\mu-2^{\alpha\left(n-\nu\right)}}^{2^{\alpha\left(n-\nu\right)}m-2^{n-\nu}\mu+2^{\alpha\left(n-\nu\right)}}\left(1+\left|\eta_{2}\right|\right)^{-K}\d\eta_{2}\nonumber \\
\left({\scriptstyle \text{eq. }\eqref{eq:Theta1Rescaling}}\right) & \leq3^{2+K+M_{3}}\cdot2^{\alpha\left(\nu-n\right)}\cdot\theta_{1}\left(2^{n-\nu}\right)\cdot\int_{2^{\alpha\left(n-\nu\right)}m-2^{n-\nu}\mu-2^{\alpha\left(n-\nu\right)}}^{2^{\alpha\left(n-\nu\right)}m-2^{n-\nu}\mu+2^{\alpha\left(n-\nu\right)}}\left(1+\left|\eta_{2}\right|\right)^{-K}\d\eta_{2}.\label{eq:BothRightConeIntegralCalculation}
\end{align}

\medskip{}

Now, since the assumptions of Lemma \ref{lem:MainShearletLemma} ensure
$K\geq\frac{1}{\tau}\left(\sigma+2\right)$, an application of Lemma
\ref{lem:WeightedSumOfShiftedIntegrals} and of the associated remark
(with $p=\tau\in\left(0,\infty\right)$, $\beta=2^{\alpha\left(n-\nu\right)}>0$,
$N=\sigma\geq0$, $M=-2^{n-\nu}\mu\in\R$ and $L=2^{\alpha\left(n-\nu\right)}>0$)
yields
\begin{align}
 & \sum_{m\in\Z}\!\left(1\!+\negthinspace\left|2^{\alpha\left(n-\nu\right)}m\!+\!\left(-2^{n-\nu}\mu\right)\right|\right)^{\sigma}\!\left[\!\int_{2^{\alpha\left(n-\nu\right)}m+\left(-2^{n-\nu}\mu\right)-2^{\alpha\left(n-\nu\right)}}^{2^{\alpha\left(n-\nu\right)}m+\left(-2^{n-\nu}\mu\right)+2^{\alpha\left(n-\nu\right)}}\!\!\!\left(1\!+\!\left|\eta_{2}\right|\right)^{-K}\d\eta_{2}\right]^{\!\tau}\nonumber \\
 & \leq2^{3+p+N}10^{N+3}\cdot\left\Vert \left(1+\left|\mybullet\right|\right)^{-K}\right\Vert _{\frac{1}{p}\left(N+2\right)}^{p}\cdot L^{p}\cdot\left(1+L^{N}\right)\cdot\left(1+\frac{L+1}{\beta}\right)\nonumber \\
\left({\scriptstyle \left\Vert \left(1+\left|\mybullet\right|\right)^{-K}\right\Vert _{\frac{1}{p}\left(N+2\right)}^{p}\leq1\text{ since }K\geq\frac{2+\sigma}{\tau}}\right) & \leq2^{3+\tau+\sigma}\cdot10^{\sigma+3}\cdot2^{\alpha\tau\left(n-\nu\right)}\cdot\left(1+2^{\alpha\left(n-\nu\right)\sigma}\right)\cdot\left(1+\frac{1+2^{\alpha\left(n-\nu\right)}}{2^{\alpha\left(n-\nu\right)}}\right)\nonumber \\
 & \leq2^{5+\tau+\sigma}\cdot10^{\sigma+3}\cdot2^{\alpha\tau\left(n-\nu\right)}\cdot2^{\alpha\sigma\cdot\left(n-\nu\right)_{+}}\cdot\left(1+2^{\alpha\left(\nu-n\right)}\right)\nonumber \\
 & \leq2^{6+\tau+\sigma}\cdot10^{\sigma+3}\cdot2^{\alpha\tau\left(n-\nu\right)+\alpha\sigma\left(n-\nu\right)_{+}}\cdot2^{\alpha\cdot\left(\nu-n\right)_{+}}\nonumber \\
 & \leq2^{18+\tau+5\sigma}\cdot2^{\alpha\tau\left(n-\nu\right)+\alpha\sigma\left(n-\nu\right)_{+}+\alpha\cdot\left(\nu-n\right)_{+}}.\label{eq:BothRightConeSpecialIntegralSumOverM}
\end{align}
Consequently, we get for arbitrary $j=\left(\nu,\mu,1,0\right)\in I^{\left(\ell\right)}$
the estimate
\begin{align*}
 & \sum_{i\in I^{\left(\ell\right)}}\!\left[\!\left(\frac{w_{j}^{s}}{w_{i}^{s}}\right)^{\tau}\!\!\left(1+\left\Vert T_{j}^{-1}T_{i}\right\Vert \right)^{\sigma}\left(\left|\det T_{i}\right|^{-1}\int_{S_{i}^{\left(\alpha\right)}}\varrho_{j}\left(T_{j}^{-1}\xi\right)\d\xi\right)^{\!\!\tau}\,\right]\\
\left({\scriptstyle \text{eqs. }\eqref{eq:BothRightConeCoordinateChangeNormEstimate},\,\eqref{eq:BothRightConeIntegralCalculation}}\right) & \leq\sum_{n\in\N_{0}}\left(3^{\sigma}\cdot3^{\tau\left(2+K+M_{3}\right)}\cdot2^{\left(\tau s+\tau\alpha\right)\left(\nu-n\right)+\sigma\left(n-\nu\right)_{+}}\cdot\left[\theta_{1}\left(2^{n-\nu}\right)\right]^{\tau}\cdot\vphantom{\sum_{m\in\Z}}\right.\\
 & \phantom{\lesssim\sum_{n\in\N_{0}}\bigg(}\left.\sum_{m\in\Z}\!\left[\!\left(1\!+\!\left|2^{\alpha\left(n-\nu\right)}m-2^{n-\nu}\mu\right|\right)^{\sigma}\!\cdot\!\left(\!\int_{2^{\alpha\left(n-\nu\right)}m-2^{n-\nu}\mu-2^{\alpha\left(n-\nu\right)}}^{2^{\alpha\left(n-\nu\right)}m-2^{n-\nu}\mu+2^{\alpha\left(n-\nu\right)}}\!\!\!\left(1\!+\!\left|\eta_{2}\right|\right)^{-K}\!\d\eta_{2}\!\right)^{\!\!\tau}\right]\!\right)\\
\left({\scriptstyle \text{eq. }\eqref{eq:BothRightConeSpecialIntegralSumOverM}}\right) & \leq2^{18+\tau+7\sigma}3^{\tau\left(2+K+M_{3}\right)}\cdot\sum_{n\in\N_{0}}\!\left(2^{\left(\tau s+\tau\alpha\right)\left(\nu-n\right)+\sigma\left(n-\nu\right)_{+}}\cdot\left[\theta_{1}\left(2^{n-\nu}\right)\right]^{\tau}\cdot2^{\alpha\tau\left(n-\nu\right)+\alpha\sigma\left(n-\nu\right)_{+}+\alpha\cdot\left(\nu-n\right)_{+}}\!\right)\\
 & \leq2^{18+7\sigma+\tau\left(5+2K+2M_{3}\right)}\cdot\sum_{n\in\N_{0}}\left(2^{\tau s\left(\nu-n\right)+\alpha\cdot\left(\nu-n\right)_{+}+\sigma\left(1+\alpha\right)\left(n-\nu\right)_{+}}\cdot\left[\theta_{1}\left(2^{n-\nu}\right)\right]^{\tau}\right).
\end{align*}
Now, observe $\theta_{1}\left(2^{n-\nu}\right)=\min\left\{ 2^{\left(n-\nu\right)M_{1}},\left(1+2^{n-\nu}\right)^{-M_{2}}\right\} \leq\min\left\{ 2^{M_{1}\left(n-\nu\right)},2^{-M_{2}\left(n-\nu\right)}\right\} $
and hence
\[
2^{\tau s\left(\nu-n\right)+\alpha\cdot\left(\nu-n\right)_{+}+\sigma\left(1+\alpha\right)\left(n-\nu\right)_{+}}\cdot\left[\theta_{1}\left(2^{n-\nu}\right)\right]^{\tau}\leq\begin{cases}
2^{-\left|\nu-n\right|\left(\tau M_{1}-\tau s-\alpha\right)}\leq2^{-\tau c\left|\nu-n\right|}, & \text{if }\nu\geq n,\\
2^{-\left|\nu-n\right|\left(\tau M_{2}+\tau s-\sigma\left(1+\alpha\right)\right)}\leq2^{-\tau c\left|\nu-n\right|}, & \text{if }\nu\leq n.
\end{cases}
\]
Here, we used that $M_{2}\geq M_{2}^{(0)}+c\geq\left(1+\alpha\right)\frac{\sigma}{\tau}-s+c$,
as well as $M_{1}\geq M_{1}^{(0)}+c\geq s+\frac{\alpha}{\tau}+c$
by the assumptions of Lemma \ref{lem:MainShearletLemma}. Thus, all
in all, we arrive at
\begin{align*}
 & \sum_{i\in I^{\left(\ell\right)}}\!\left[\left(\frac{w_{j}^{s}}{w_{i}^{s}}\right)^{\tau}\cdot\left(1+\left\Vert T_{j}^{-1}T_{i}\right\Vert \right)^{\sigma}\cdot\left(\left|\det T_{i}\right|^{-1}\int_{S_{i}^{\left(\alpha\right)}}\varrho_{j}\left(T_{j}^{-1}\xi\right)\d\xi\right)^{\!\!\tau}\,\right]\\
 & \leq2^{18+7\sigma+\tau\left(5+2K+2M_{3}\right)}\cdot\sum_{n\in\N_{0}}2^{-\tau c\left|\nu-n\right|}\\
 & \leq2^{18+7\sigma+\tau\left(5+2K+2M_{3}\right)}\cdot\sum_{\ell\in\Z}2^{-\tau c\left|\ell\right|}\leq2^{19+7\sigma+\tau\left(5+2K+2M_{3}\right)}/\left(1-2^{-\tau c}\right).
\end{align*}

\medskip{}

Likewise, for the summation over $j$ instead of $i$, we apply Lemma
\ref{lem:WeightedSumOfShiftedIntegrals} and the associated remark
(using the choices $p=\tau\in\left(0,\infty\right)$, $\beta=2^{n-\nu}>0$,
$N=\sigma\geq0$, $M=2^{\alpha\left(n-\nu\right)}m\in\R$ and $L=2^{\alpha\left(n-\nu\right)}>0$)
to get
\begin{align}
 & \sum_{\mu\in\Z}\left(1+\left|2^{\alpha\left(n-\nu\right)}m+\left(-2^{n-\nu}\mu\right)\right|\right)^{\sigma}\left(\int_{2^{\alpha\left(n-\nu\right)}m+\left(-2^{n-\nu}\mu\right)-2^{\alpha\left(n-\nu\right)}}^{2^{\alpha\left(n-\nu\right)}m+\left(-2^{n-\nu}\mu\right)+2^{\alpha\left(n-\nu\right)}}\left(1+\left|\eta_{2}\right|\right)^{-K}\d\eta_{2}\right)^{\tau}\nonumber \\
\left({\scriptstyle \zeta=-\mu}\right) & =\sum_{\zeta\in\Z}\left(1+\left|2^{n-\nu}\zeta+2^{\alpha\left(n-\nu\right)}m\right|\right)^{\sigma}\left(\int_{2^{n-\nu}\zeta+2^{\alpha\left(n-\nu\right)}m-2^{\alpha\left(n-\nu\right)}}^{2^{n-\nu}\zeta+2^{\alpha\left(n-\nu\right)}m+2^{\alpha\left(n-\nu\right)}}\left(1+\left|\eta_{2}\right|\right)^{-K}\d\eta_{2}\right)^{\tau}\nonumber \\
 & \leq2^{3+p+N}\cdot10^{N+3}\cdot\left\Vert \left(1+\left|\mybullet\right|\right)^{-K}\right\Vert _{\frac{1}{p}\left(N+2\right)}^{p}\cdot L^{p}\cdot\left(1+L^{N}\right)\cdot\left(1+\frac{L+1}{\beta}\right)\nonumber \\
\left({\scriptstyle \text{since }K\geq\frac{\sigma+2}{\tau}}\right) & \leq2^{3+\tau+\sigma}\cdot10^{\sigma+3}\cdot2^{\alpha\tau\left(n-\nu\right)}\cdot\left(1+2^{\sigma\alpha\left(n-\nu\right)}\right)\cdot\left(1+2^{\left(1-\alpha\right)\left(\nu-n\right)}+2^{\nu-n}\right)\nonumber \\
 & \leq2^{18+\tau+5\sigma}\cdot2^{\alpha\tau\left(n-\nu\right)}\cdot2^{\sigma\alpha\cdot\left(n-\nu\right)_{+}}\cdot2^{\left(\nu-n\right)_{+}},\label{eq:BothRightConeMuSeriesEstimate}
\end{align}
where $\left\Vert \left(1+\left|\mybullet\right|\right)^{-K}\right\Vert _{\frac{1}{p}\left(N+2\right)}^{p}\leq1$,
since $K\geq\frac{2+\sigma}{\tau}$ by the assumptions of Lemma \ref{lem:MainShearletLemma}.

Now we get as above for arbitrary $i=\left(n,m,1,0\right)\in I^{\left(\ell\right)}$
that
\begin{align*}
 & \sum_{j\in I^{\left(\ell\right)}}\left[\left(\frac{w_{j}^{s}}{w_{i}^{s}}\right)^{\tau}\left(1+\left\Vert T_{j}^{-1}T_{i}\right\Vert \right)^{\sigma}\left(\left|\det T_{i}\right|^{-1}\int_{S_{i}^{\left(\alpha\right)}}\varrho_{j}\left(T_{j}^{-1}\xi\right)\d\xi\right)^{\tau}\,\right]\\
 & \overset{\left(\ast\right)}{\leq}2^{18+\tau+7\sigma}\!\cdot\!3^{\tau\left(2+K+M_{3}\right)}\cdot\sum_{\nu\in\N_{0}}2^{\tau s\left(\nu-n\right)+\sigma\cdot\left(n-\nu\right)_{+}+\tau\alpha\left(\nu-n\right)+\alpha\tau\left(n-\nu\right)}\cdot\left[\theta_{1}\left(2^{n-\nu}\right)\right]^{\tau}\cdot2^{\sigma\alpha\cdot\left(n-\nu\right)_{+}}\cdot2^{\left(\nu-n\right)_{+}}\\
 & \leq2^{18+7\sigma+\tau\left(5+2K+2M_{3}\right)}\cdot\sum_{\nu\in\N_{0}}2^{\tau s\left(\nu-n\right)+\sigma\left(1+\alpha\right)\cdot\left(n-\nu\right)_{+}+\left(\nu-n\right)_{+}}\cdot\left[\theta_{1}\left(2^{n-\nu}\right)\right]^{\tau}.
\end{align*}
Here, the step marked with $\left(\ast\right)$ is justified by equations
\eqref{eq:BothRightConeCoordinateChangeNormEstimate}, \eqref{eq:BothRightConeIntegralCalculation},
and \eqref{eq:BothRightConeMuSeriesEstimate}.

As above, we observe
\[
2^{\tau s\left(\nu-n\right)+\sigma\left(1+\alpha\right)\cdot\left(n-\nu\right)_{+}+\left(\nu-n\right)_{+}}\cdot\left[\theta_{1}\left(2^{n-\nu}\right)\right]^{\tau}\leq\begin{cases}
2^{-\left|\nu-n\right|\left(\tau M_{1}-\tau s-1\right)}\leq2^{-\tau c\left|\nu-n\right|}, & \text{if }\nu\geq n,\\
2^{-\left|\nu-n\right|\left(\tau M_{2}+\tau s-\sigma\left(1+\alpha\right)\right)}\leq2^{-\tau c\left|\nu-n\right|}, & \text{if }\nu\leq n,
\end{cases}
\]
where we used that we have $M_{1}\geq M_{1}^{(0)}+c\geq\frac{1}{\tau}+s+c$
and $M_{2}\geq M_{2}^{(0)}+c\geq\left(1+\alpha\right)\frac{\sigma}{\tau}-s+c$
by the assumptions of Lemma \ref{lem:MainShearletLemma}. Consequently,
we conclude
\begin{align*}
 & \sum_{j\in I^{\left(\ell\right)}}\left[\left(\frac{w_{j}^{s}}{w_{i}^{s}}\right)^{\tau}\left(1+\left\Vert T_{j}^{-1}T_{i}\right\Vert \right)^{\sigma}\left(\left|\det T_{i}\right|^{-1}\int_{S_{i}^{\left(\alpha\right)}}\varrho_{j}\left(T_{j}^{-1}\xi\right)\d\xi\right)^{\tau}\,\right]\\
 & \leq2^{18+7\sigma+\tau\left(5+2K+2M_{3}\right)}\cdot\sum_{\nu\in\N_{0}}2^{-\tau c\left|\nu-n\right|}\\
 & \leq2^{18+7\sigma+\tau\left(5+2K+2M_{3}\right)}\cdot\sum_{\ell\in\Z}2^{-\tau c\left|\ell\right|}\leq2^{19+7\sigma+\tau\left(5+2K+2M_{3}\right)}/\left(1-2^{-\tau c}\right).
\end{align*}

In summary, in this subsection, we have shown for $C_{0}^{\left(1\right)}:=2^{19+7\sigma+\tau\left(5+2K+2M_{3}\right)}/\left(1\!-\!2^{-\tau c}\right)$
that
\[
\sup_{i\in I^{\left(1,0\right)}}\:\sum_{j\in I^{\left(1,0\right)}}\!\!M_{j,i}^{\left(0\right)}\leq C_{0}^{\left(1\right)}\quad\text{and}\quad\sup_{j\in I^{\left(1,0\right)}}\,\sum_{i\in I^{\left(1,0\right)}}\!\!M_{j,i}^{\left(0\right)}\leq C_{0}^{\left(1\right)}.
\]

\subsection{We have \texorpdfstring{$\ell_{1}=\left(1,1\right)$}{ℓ₁=(1,1)}
and \texorpdfstring{$\ell_{2}=\left(1,0\right)$}{ℓ₂=(1,0)}}

\label{subsec:UpperConeRightCone}Geometrically, the present case
means that $i$ belongs to the top cone, while $j$ belongs to the
right cone, i.e., $e=\varepsilon=1$, $\delta=1$ and $d=0$. In this
case, we have
\begin{align}
T_{j}^{-1}T_{i} & =\left(\begin{matrix}1 & 0\\
-\mu & 1
\end{matrix}\right)\left(\begin{matrix}2^{-\nu} & 0\\
0 & 2^{-\nu\alpha}
\end{matrix}\right)\left(\begin{matrix}0 & 1\\
1 & 0
\end{matrix}\right)\left(\begin{matrix}2^{n} & 0\\
0 & 2^{n\alpha}
\end{matrix}\right)\left(\begin{matrix}1 & 0\\
m & 1
\end{matrix}\right)\nonumber \\
 & =\left(\begin{matrix}1 & 0\\
-\mu & 1
\end{matrix}\right)\left(\begin{matrix}2^{-\nu} & 0\\
0 & 2^{-\nu\alpha}
\end{matrix}\right)\left(\begin{matrix}0 & 2^{n\alpha}\\
2^{n} & 0
\end{matrix}\right)\left(\begin{matrix}1 & 0\\
m & 1
\end{matrix}\right)\nonumber \\
 & =\left(\begin{matrix}2^{-\nu} & 0\\
-2^{-\nu}\mu & 2^{-\nu\alpha}
\end{matrix}\right)\left(\begin{matrix}2^{n\alpha}m & 2^{n\alpha}\\
2^{n} & 0
\end{matrix}\right)\nonumber \\
 & =\left(\begin{array}{c|c}
2^{n\alpha-\nu}m & 2^{n\alpha-\nu}\\
2^{n-\nu\alpha}-2^{n\alpha-\nu}\mu m & -2^{n\alpha-\nu}\mu
\end{array}\right).\label{eq:UpperConeRightConeTransitionMatrix}
\end{align}
As our first step, we want to obtain an estimate for $\left\Vert T_{j}^{-1}T_{i}\right\Vert $.

To this end, recall $\left|m\right|\leq G_{n}=\left\lceil 2^{n\left(1-\alpha\right)}\right\rceil \leq1+2^{n\left(1-\alpha\right)}$;
hence $\left|2^{n\alpha-\nu}m\right|\leq2^{n\alpha-\nu}+2^{n-\nu}\leq2\cdot2^{n-\nu}\leq2\cdot2^{\left(n-\nu\right)_{+}}$.
Likewise, $\left|2^{n\alpha-\nu}\mu\right|\leq2^{n\alpha-\nu}+2^{n\alpha-\nu\alpha}\leq2\cdot2^{\alpha\left(n-\nu\right)}\leq2\cdot2^{\alpha\left(n-\nu\right)_{+}}\leq2\cdot2^{\left(n-\nu\right)_{+}}$
and $2^{n\alpha-\nu}\leq2^{n-\nu}\leq2^{\left(n-\nu\right)_{+}}$.
Finally, setting
\begin{equation}
\kappa:=\frac{\mu}{2^{\nu\left(1-\alpha\right)}}\qquad\text{ and }\qquad\iota:=\frac{m}{2^{n\left(1-\alpha\right)}},\label{eq:KappaIotaDefinition}
\end{equation}
we have
\[
2^{n-\nu\alpha}-2^{n\alpha-\nu}\mu m=2^{n-\nu\alpha}\cdot\left(1-\frac{\mu}{2^{\nu\left(1-\alpha\right)}}\frac{m}{2^{n\left(1-\alpha\right)}}\right)=2^{n-\nu\alpha}\cdot\left(1-\kappa\iota\right)=:2^{n-\nu\alpha}\cdot\lambda_{n,m,\nu,\mu},
\]
and also
\[
\left|\kappa\right|=\frac{\left|\mu\right|}{2^{\nu\left(1-\alpha\right)}}\leq\frac{1+2^{\nu\left(1-\alpha\right)}}{2^{\nu\left(1-\alpha\right)}}=1+2^{-\nu\left(1-\alpha\right)}\leq2\qquad\text{ and }\qquad\left|\iota\right|=\frac{\left|m\right|}{2^{n\left(1-\alpha\right)}}\leq1+2^{-n\left(1-\alpha\right)}\leq2.
\]
All in all, we have shown
\begin{equation}
\begin{split}\left(1+\left\Vert T_{j}^{-1}T_{i}\right\Vert \right)^{\sigma} & \leq\left(1+5\cdot2^{\left(n-\nu\right)_{+}}+2^{n-\nu\alpha}\cdot\left|\lambda_{n,m,\nu,\mu}\right|\right)^{\sigma}\\
 & \leq\left(1+5\cdot2^{\left(n-\nu\right)_{+}}\right)^{\sigma}\cdot\left(1+2^{n-\nu\alpha}\cdot\left|\lambda_{n,m,\nu,\mu}\right|\right)^{\sigma}\\
 & \leq6^{\sigma}\cdot2^{\sigma\cdot\left(n-\nu\right)_{+}}\cdot\left(1+2^{n-\nu\alpha}\cdot\left|\lambda_{n,m,\nu,\mu}\right|\right)^{\sigma}.
\end{split}
\label{eq:UpperConeRightConeMatrixChangeNorm}
\end{equation}

Next, we consider the integral term occurring in $M_{j,i}^{\left(0\right)}$.
Precisely, with $\varrho$ and $\theta_{1}$ as in equation \eqref{eq:MotherShearletMainEstimate},
we observe
\begin{align}
\left|\det T_{i}\right|^{-1}\int_{S_{i}^{\left(\alpha\right)}}\varrho_{j}\left(T_{j}^{-1}\xi\right)\d\xi & =\int_{Q}\varrho\left(T_{j}^{-1}T_{i}\eta\right)\d\eta\nonumber \\
 & =\int_{1/3}^{3}\int_{-\eta_{1}}^{\eta_{1}}\varrho\left(\begin{matrix}2^{n\alpha-\nu}\left(m\eta_{1}+\eta_{2}\right)\\
2^{n-\nu\alpha}\cdot\lambda_{n,m,\nu,\mu}\cdot\eta_{1}-2^{n\alpha-\nu}\mu\eta_{2}
\end{matrix}\right)\d\eta_{2}\d\eta_{1}\nonumber \\
\left({\scriptstyle \text{with }\xi=\eta_{2}/\eta_{1}}\right) & =\int_{1/3}^{3}\eta_{1}\int_{-1}^{1}\varrho\left(\begin{matrix}2^{n\alpha-\nu}\left(m+\xi\right)\eta_{1}\\
\eta_{1}\cdot\left(2^{n-\nu\alpha}\lambda_{n,m,\nu,\mu}-2^{n\alpha-\nu}\mu\xi\right)
\end{matrix}\right)\d\xi\d\eta_{1}\nonumber \\
\left({\scriptstyle \text{since }\eta_{1}\leq3}\right) & \leq3\cdot\int_{1/3}^{3}\int_{-1}^{1}\theta_{1}\left(2^{n\alpha-\nu}\left(m\!+\!\xi\right)\eta_{1}\right)\cdot\left(1\!+\!\left|\eta_{1}\cdot\left(2^{n-\nu\alpha}\lambda_{n,m,\nu,\mu}\!-\!2^{n\alpha-\nu}\mu\xi\right)\right|\right)^{-K}\d\xi\d\eta_{1}\nonumber \\
\left({\scriptstyle \frac{1}{3}\leq\eta_{1}\leq3,\text{ cf. eqs. }\eqref{eq:Theta1Rescaling},\eqref{eq:Theta2Rescaling}}\right) & \leq3^{1+K+M_{3}}\cdot\int_{1/3}^{3}\int_{-1}^{1}\theta_{1}\left(2^{n\alpha-\nu}\left(m\!+\!\xi\right)\right)\cdot\left(1\!+\!\left|2^{n-\nu\alpha}\lambda_{n,m,\nu,\mu}\!-\!2^{n\alpha-\nu}\mu\xi\right|\right)^{-K}\d\xi\d\eta_{1}\nonumber \\
\left({\scriptstyle \text{eq. }\eqref{eq:KappaIotaDefinition}}\right) & \leq3^{2+K+M_{3}}\cdot\int_{-1}^{1}\theta_{1}\left(2^{n\alpha-\nu}\left(m\!+\!\xi\right)\right)\cdot\left(1\!+\!\left|2^{n-\nu\alpha}\left(1\!-\!\kappa\iota\right)\!-\!2^{\alpha\left(n-\nu\right)}\kappa\xi\right|\right)^{-K}\d\xi.\label{eq:UpperConeRightConeIntegralRewritten}
\end{align}
As our next step, we derive several basic estimates for the quantities
appearing in equation \eqref{eq:UpperConeRightConeIntegralRewritten}:
\begin{enumerate}
\item We have
\begin{equation}
\theta_{1}\left(2^{n\alpha-\nu}\cdot\left(m+\xi\right)\right)\leq4^{M_{3}}\cdot\min\left\{ 1,\,2^{M_{1}\left(n-\nu\right)}\right\} =4^{M_{3}}\cdot2^{-M_{1}\cdot\left(\nu-n\right)_{+}}\qquad\forall\xi\in\left[-1,1\right].\label{eq:Theta1StandardEstimate}
\end{equation}
To see this, we consider the cases $\left|m\right|\leq1$ and $\left|m\right|\geq2$.
In case of $\left|m\right|\leq1$, we have $\left|m+\xi\right|\leq\left|m\right|+\left|\xi\right|\leq2$
and thus
\begin{align*}
\theta_{1}\left(2^{n\alpha-\nu}\cdot\left(m+\xi\right)\right) & \leq\min\left\{ 1,\,\left|2^{n\alpha-\nu}\cdot\left(m+\xi\right)\right|^{M_{1}}\right\} \\
 & \leq2^{M_{1}}\cdot\min\left\{ 1,\,2^{M_{1}\left(n\alpha-\nu\right)}\right\} \\
\left({\scriptstyle \text{since }n\alpha\leq n\text{ and }M_{1}\geq0\text{, as well as }M_{1}\leq M_{3}}\right) & \leq2^{M_{3}}\cdot\min\left\{ 1,\,2^{M_{1}\left(n-\nu\right)}\right\} ,
\end{align*}
which is even slightly better than the estimate \eqref{eq:Theta1StandardEstimate}.
Next, in case of $\left|m\right|\geq2$, we have
\begin{equation}
\frac{\left|m\right|}{2}\leq\left|m\right|-1\leq\left|m\right|-\left|\xi\right|\leq\left|m+\xi\right|\leq\left|m\right|+\left|\xi\right|\leq1+\left|m\right|\leq2\left|m\right|\qquad\forall\xi\in\left[-1,1\right],\label{eq:AddingXiDoesNotMatterForLargeM}
\end{equation}
so that equation \eqref{eq:Theta1Rescaling} yields
\begin{align*}
\theta_{1}\left(2^{n\alpha-\nu}\cdot\left(m+\xi\right)\right) & \leq2^{M_{3}}\cdot\theta_{1}\left(2^{n\alpha-\nu}\cdot m\right)\\
\left({\scriptstyle \text{cf. eq. }\eqref{eq:KappaIotaDefinition}}\right) & =2^{M_{3}}\cdot\theta_{1}\left(2^{n-\nu}\cdot\iota\right)\\
 & \leq2^{M_{3}}\cdot\min\left\{ 1,\,\left|2^{n-\nu}\cdot\iota\right|^{M_{1}}\right\} \\
\left({\scriptstyle \text{since }\left|\iota\right|\leq2}\right) & \leq2^{M_{3}}\cdot\min\left\{ 1,2^{M_{1}}\cdot2^{M_{1}\left(n-\nu\right)}\right\} \\
\left({\scriptstyle \text{since }M_{1}\leq M_{3}}\right) & \leq4^{M_{3}}\cdot\min\left\{ 1,\,2^{M_{1}\left(n-\nu\right)}\right\} \qquad\forall\xi\in\left[-1,1\right].
\end{align*}
We have thus established equation \eqref{eq:Theta1StandardEstimate}
in both cases.
\item Next, in case of $\left|\kappa\right|\leq\frac{1}{4}$, we have
\begin{align}
\left|2^{n-\nu\alpha}\lambda_{n,m,\nu,\mu}-2^{n\alpha-\nu}\mu\xi\right| & =\left|2^{n-\nu\alpha}\left(1-\kappa\iota\right)-2^{\alpha\left(n-\nu\right)}\kappa\xi\right|\nonumber \\
 & =2^{n-\nu\alpha}\cdot\left|1-\kappa\iota-2^{-n\left(1-\alpha\right)}\kappa\xi\right|\nonumber \\
 & \geq2^{n-\nu\alpha}\cdot\left(1-\left|\kappa\iota\right|-2^{-n\left(1-\alpha\right)}\cdot\left|\kappa\xi\right|\right)\nonumber \\
\left({\scriptstyle \text{since }2^{-n\left(1-\alpha\right)}\leq1\text{ and }\left|\kappa\right|\leq\frac{1}{4},\text{ as well as }\left|\iota\right|\leq2\text{ and }\left|\xi\right|\leq1}\right) & \geq2^{n-\nu\alpha}\cdot\left(1-\frac{1}{2}-\frac{1}{4}\right)\nonumber \\
 & =\frac{2^{n-\nu\alpha}}{4}\qquad\forall\xi\in\left[-1,1\right].\label{eq:Theta2EstimateForSmallKappa}
\end{align}
\item Finally, we want to obtain an estimate similar to equation \eqref{eq:Theta2EstimateForSmallKappa}
also if $\left|\iota\right|\leq\frac{1}{4}$. To this end, we additionally
assume $\alpha<1$ and $n\geq\frac{3}{1-\alpha}$, since this ensures
$-n\left(1-\alpha\right)\leq-3$ and thus $2^{-n\left(1-\alpha\right)}\leq\frac{1}{8}$.
Consequently,
\begin{align}
\left|2^{n-\nu\alpha}\lambda_{n,m,\nu,\mu}-2^{n\alpha-\nu}\mu\xi\right| & =\left|2^{n-\nu\alpha}\left(1-\kappa\iota\right)-2^{\alpha\left(n-\nu\right)}\kappa\xi\right|\nonumber \\
 & \geq2^{n-\nu\alpha}\cdot\left(1-\left|\kappa\iota\right|-2^{-n\left(1-\alpha\right)}\cdot\left|\kappa\xi\right|\right)\nonumber \\
\left({\scriptstyle \text{since }\left|\iota\right|\leq\frac{1}{4}\text{ and }2^{-n\left(1-\alpha\right)}\leq\frac{1}{8},\text{ as well as }\left|\xi\right|\leq1\text{ and }\left|\kappa\right|\leq2}\right) & \geq2^{n-\nu\alpha}\cdot\left(1-\frac{1}{2}-\frac{1}{8}\cdot2\right)\nonumber \\
 & =\frac{2^{n-\nu\alpha}}{4}\qquad\forall\xi\in\left[-1,1\right].\label{eq:Theta2EstimateForSmallIota}
\end{align}
\end{enumerate}
For the last estimate above, we needed to assume $\alpha<1$. To avoid
cumbersome case distinctions later on, we now consider the special
case $\alpha=1$, so that we can then assume $\alpha<1$ for the remainder
of the subsection.

\subsubsection{The special case $\alpha=1$}

\label{subsec:UpperConeRIghtConeAlphaIs1}Because of $\alpha=1$,
we simply have $\kappa=\mu$ and $\iota=m$. Further, $G_{n}=\left\lceil 2^{n\left(1-\alpha\right)}\right\rceil =1$
for all $n\in\N_{0}$, i.e., $m,\mu\in\left\{ -1,0,1\right\} $. Consequently,
we also get $\lambda_{n,m,\nu,\mu}=1-m\mu\in\left\{ 0,1,2\right\} $
and estimate \eqref{eq:UpperConeRightConeIntegralRewritten} takes
the form
\begin{equation}
\left|\det T_{i}\right|^{-1}\cdot\int_{S_{i}^{\left(\alpha\right)}}\varrho_{j}\left(T_{j}^{-1}\xi\right)\d\xi\leq3^{2+K+M_{3}}\cdot\int_{-1}^{1}\theta_{1}\left(2^{n-\nu}\left(m+\xi\right)\right)\cdot\left(1+\left|2^{n-\nu}\left[1-\mu\left(m+\xi\right)\right]\right|\right)^{-K}\d\xi.\label{eq:UpperConeRightConeIntegralRewrittenForAlpha1}
\end{equation}
Finally, we get because of $\alpha=1$ and $\lambda_{n,m,\nu,\mu}\in\left\{ 0,1,2\right\} $
from equation \eqref{eq:UpperConeRightConeMatrixChangeNorm} that
\begin{align}
\left(1+\left\Vert T_{j}^{-1}T_{i}\right\Vert \right)^{\sigma} & \leq\left(1+5\cdot2^{\left(n-\nu\right)_{+}}+2^{n-\nu\alpha}\cdot\left|\lambda_{n,m,\nu,\mu}\right|\right)^{\sigma}\nonumber \\
 & \leq\left(1+5\cdot2^{\left(n-\nu\right)_{+}}+2\cdot2^{n-\nu}\right)^{\sigma}\nonumber \\
 & \leq8^{\sigma}\cdot2^{\sigma\cdot\left(n-\nu\right)_{+}}.\label{eq:UpperConeRIghtConeMatrixChangeNormAlpha1}
\end{align}

Next, we distinguish two subcases:
\begin{enumerate}
\item If $n-\nu\leq0$, then $\left|2^{n-\nu}\left(m+\xi\right)\right|\leq2\cdot2^{n-\nu}$
since $|m|\leq1$ and $\left|\xi\right|\leq1$. Hence 
\[
\theta_{1}\left(2^{n-\nu}\left(m+\xi\right)\right)\leq\left|2^{n-\nu}\left(m+\xi\right)\right|^{M_{1}}\leq2^{M_{1}}\cdot2^{M_{1}\left(n-\nu\right)}=2^{M_{1}}\cdot2^{-M_{1}\left|n-\nu\right|}\leq2^{M_{3}}\cdot2^{-M_{1}\left|n-\nu\right|}.
\]
\item Otherwise, $n-\nu\geq0$, so that there are again two subcases:

\begin{enumerate}
\item If $\left|m+\xi\right|\geq\frac{1}{2}$, then $\frac{1}{2}\leq\left|m+\xi\right|\leq2$,
so that equation \eqref{eq:Theta1Rescaling} yields
\[
\theta_{1}\left(2^{n-\nu}\left(m+\xi\right)\right)\leq2^{M_{3}}\cdot\theta_{1}\left(2^{n-\nu}\right)\leq2^{M_{3}}\cdot\left(1+\left|2^{n-\nu}\right|\right)^{-M_{2}}\leq2^{M_{3}}\cdot2^{-M_{2}\left|n-\nu\right|}.
\]
\item Otherwise, $\left|m+\xi\right|\leq\frac{1}{2}$ and hence $\left|1-\mu\left(m+\xi\right)\right|\geq1-\left|\mu\left(m+\xi\right)\right|\geq1-\left|m+\xi\right|\geq\frac{1}{2}$,
which implies
\[
\left(1+\left|2^{n-\nu}\left[1-\mu\left(m+\xi\right)\right]\right|\right)^{-K}\leq\left(\frac{1}{2}\cdot2^{n-\nu}\right)^{-K}\leq2^{K}\cdot2^{-K\left|n-\nu\right|}.
\]
\end{enumerate}
\end{enumerate}
All in all, we have for all $\xi\in\left[-1,1\right]$ that 
\[
\begin{aligned}\theta_{1}\left(2^{n-\nu}\left(m+\xi\right)\right)\cdot\left(1+\left|2^{n-\nu}\left[1-\mu\left(m+\xi\right)\right]\right|\right)^{-K} & \leq\begin{cases}
2^{M_{3}+K}\cdot2^{-M_{1}\left|n-\nu\right|}, & \text{if }n\leq\nu\\
2^{M_{3}+K}\cdot2^{-\min\left\{ M_{2},K\right\} \left|n-\nu\right|}, & \text{if }n\geq\nu
\end{cases}\\
 & =2^{M_{3}+K}\cdot2^{-M_{1}(\nu-n)_{+}}\cdot2^{-\min\left\{ M_{2},K\right\} \left(n-\nu\right)_{+}}
\end{aligned}
\]
and thus
\begin{align*}
M_{j,i}^{\left(0\right)} & =\left(\frac{w_{j}^{s}}{w_{i}^{s}}\right)^{\tau}\cdot\left(1+\left\Vert T_{j}^{-1}T_{i}\right\Vert \right)^{\sigma}\cdot\left(\left|\det T_{i}\right|^{-1}\cdot\int_{S_{i}^{\left(\alpha\right)}}\varrho_{j}\left(T_{j}^{-1}\xi\right)\d\xi\right)^{\tau}\\
\left({\scriptstyle \text{eqs. }\eqref{eq:UpperConeRightConeIntegralRewrittenForAlpha1}\text{ and }\eqref{eq:UpperConeRIghtConeMatrixChangeNormAlpha1}}\right) & \leq2^{s\tau\left(\nu-n\right)}\cdot8^{\sigma}\cdot2^{\sigma\cdot\left(n-\nu\right)_{+}}\cdot\left[3^{2+K+M_{3}}\cdot2^{1+M_{3}+K}\cdot2^{-M_{1}(\nu-n)_{+}}\cdot2^{-\min\left\{ M_{2},K\right\} \left(n-\nu\right)_{+}}\right]^{\tau}\\
 & \leq8^{\sigma}\cdot6^{\tau\left(2+K+M_{3}\right)}\cdot\begin{cases}
2^{-\left|n-\nu\right|\left[\tau\min\left\{ M_{2},K\right\} +s\tau-\sigma\right]}, & \text{if }n-\nu\geq0,\\
2^{-\left|n-\nu\right|\left[\tau M_{1}-s\tau\right]}, & \text{if }n-\nu\leq0.
\end{cases}
\end{align*}
But the assumptions of Lemma \ref{lem:MainShearletLemma} ensure that
$M_{1}\geq M_{1}^{(0)}+c\geq s+c$ and $M_{2},K\geq\frac{\sigma}{\tau}-s+c$,
which entails $\tau\min\left\{ M_{2},K\right\} +s\tau-\sigma\geq\tau c$,
as well as $\tau M_{1}-s\tau\geq\tau c$, so that $M_{j,i}^{\left(0\right)}\leq8^{\sigma}\cdot6^{\tau\left(2+K+M_{3}\right)}\cdot2^{-\tau c\left|n-\nu\right|}$
for all $i\in I^{\left(\ell_{1}\right)}$ and $j\in I^{\left(\ell_{2}\right)}$.
Consequently, we get because of $G_{n}=G_{\nu}=1$ for all $n,\nu\in\N_{0}$
that
\begin{equation}
\begin{split}\sum_{i\in I^{\left(\ell_{1}\right)}}M_{j,i}^{\left(0\right)} & \leq8^{\sigma}\cdot6^{\tau\left(2+K+M_{3}\right)}\cdot\sum_{n=0}^{\infty}\left[2^{-\tau c\left|n-\nu\right|}\cdot3\cdot G_{n}\right]\\
 & \leq3\cdot8^{\sigma}\cdot6^{\tau\left(2+K+M_{3}\right)}\cdot\sum_{\ell\in\Z}2^{-\tau c\left|\ell\right|}\\
 & \leq3\cdot8^{\sigma}\cdot6^{\tau\left(2+K+M_{3}\right)}\cdot\frac{2}{1-2^{-\tau c}}=:C_{1}\quad\text{if }\alpha=1
\end{split}
\label{eq:UpperConeRightConeAlpha1FinalEstimate}
\end{equation}
for arbitrary $j=\left(\nu,\mu,e,d\right)\in I^{\left(\ell_{2}\right)}$.
Exactly the same estimate also yields $\sum_{j\in I^{\left(\ell_{2}\right)}}M_{j,i}^{\left(0\right)}\leq C_{1}$
for arbitrary $i=\left(n,m,\varepsilon,\delta\right)\in I^{\left(\ell_{1}\right)}$,
as long as $\alpha=1$.

\subsubsection{The general case $\alpha\in\left[0,1\right)$}

\label{subsec:UpperConeRightConeGeneralAlpha}In this subsection,
we first consider two special cases and then the remaining general
case.

\textbf{Case 1}: $n\leq\frac{3}{1-\alpha}$. In this case, equation
\eqref{eq:Theta1StandardEstimate} yields
\[
\theta_{1}\left(2^{n\alpha-\nu}\cdot\left(m+\xi\right)\right)\leq4^{M_{3}}\cdot\min\left\{ 1,\,2^{M_{1}\left(n-\nu\right)}\right\} \leq4^{M_{3}}\cdot2^{\frac{3M_{1}}{1-\alpha}}\cdot2^{-M_{1}\nu}\qquad\forall\xi\in\left[-1,1\right].
\]
Furthermore, equation \eqref{eq:UpperConeRightConeMatrixChangeNorm}
entails, because of $\left|\lambda_{n,m,\nu,\mu}\right|=\left|1-\kappa\iota\right|\leq5$,
that 
\begin{align*}
\left(1+\left\Vert T_{j}^{-1}T_{i}\right\Vert \right)^{\sigma} & \leq6^{\sigma}\cdot2^{\sigma\cdot\left(n-\nu\right)_{+}}\cdot\left(1+5\cdot2^{n-\nu\alpha}\right)^{\sigma}\\
 & \leq6^{\sigma}\cdot2^{\sigma\cdot\left(n-\nu\right)_{+}}\cdot\left(1+5\cdot2^{\frac{3}{1-\alpha}}\right)^{\sigma}\\
\left({\scriptstyle \text{since }\left(n-\nu\right)_{+}=n-\nu\leq n\leq\frac{3}{1-\alpha}\text{ for }n-\nu\geq0\text{ and }\left(n-\nu\right)_{+}=0\leq\frac{3}{1-\alpha}\text{ otherwise}}\right) & \leq6^{\sigma}\cdot2^{\frac{3\sigma}{1-\alpha}}\cdot\left(1+5\cdot2^{\frac{3}{1-\alpha}}\right)^{\sigma}\\
 & \leq6^{2\sigma}\cdot2^{\frac{6\sigma}{1-\alpha}}=:C_{2}.
\end{align*}
In combination with equation \eqref{eq:UpperConeRightConeIntegralRewritten},
we conclude
\begin{align*}
M_{j,i}^{\left(0\right)} & =\left(\frac{w_{j}^{s}}{w_{i}^{s}}\right)^{\tau}\cdot\left(1+\left\Vert T_{j}^{-1}T_{i}\right\Vert \right)^{\sigma}\cdot\left(\left|\det T_{i}\right|^{-1}\cdot\int_{S_{i}^{\left(\alpha\right)}}\varrho_{j}\left(T_{j}^{-1}\xi\right)\d\xi\right)^{\tau}\\
 & \leq C_{2}\cdot2^{s\tau\left(\nu-n\right)}\cdot\left[3^{2+K+M_{3}}\cdot\int_{-1}^{1}\theta_{1}\left(2^{n\alpha-\nu}\left(m+\xi\right)\right)\cdot\left(1+\left|2^{n-\nu\alpha}\lambda_{n,m,\nu,\mu}-2^{n\alpha-\nu}\mu\xi\right|\right)^{-K}\d\xi\right]^{\tau}\\
 & \leq C_{2}\cdot2^{s\tau\nu}\cdot\left[2\cdot3^{2+K+M_{3}}\cdot4^{M_{3}}\cdot2^{\frac{3M_{1}}{1-\alpha}}\cdot2^{-M_{1}\nu}\right]^{\tau}\\
 & =C_{2}\cdot2^{\tau\nu\left(s-M_{1}\right)}\cdot\left[2\cdot3^{2+K+M_{3}}\cdot4^{M_{3}}\cdot2^{\frac{3M_{1}}{1-\alpha}}\right]^{\tau}\\
 & =:2^{\tau\nu\left(s-M_{1}\right)}\cdot C_{3}.
\end{align*}
Since our assumptions imply $M_{1}\geq M_{1}^{(0)}+c\geq s+\frac{1}{\tau}+c\geq s+\frac{1-\alpha}{\tau}+c$,
we get $1-\alpha+\tau s-\tau M_{1}\leq-\tau c$ and hence
\begin{equation}
\begin{split}\sup_{\substack{i=\left(n,m,\varepsilon,\delta\right)\in I^{\left(\ell_{1}\right)}\\
\text{with }n\leq3/\left(1-\alpha\right)
}
}\:\sum_{j\in I^{\left(\ell_{2}\right)}}M_{j,i}^{\left(0\right)} & \leq C_{3}\cdot\sum_{\nu=0}^{\infty}\:\sum_{\left|\mu\right|\leq G_{\nu}}2^{\tau\nu\left(s-M_{1}\right)}\\
\left({\scriptstyle \text{since }G_{\nu}=\left\lceil 2^{\nu\left(1-\alpha\right)}\right\rceil \leq1+2^{\nu\left(1-\alpha\right)}\leq2\cdot2^{\nu\left(1-\alpha\right)}}\right) & \leq6C_{3}\cdot\sum_{\nu=0}^{\infty}2^{\nu\left(1-\alpha+\tau s-\tau M_{1}\right)}\\
 & \leq6C_{3}\cdot\sum_{\nu=0}^{\infty}2^{-\tau c\nu}=\frac{6C_{3}}{1-2^{-\tau c}}.
\end{split}
\label{eq:UpperConeRightConeSmallNSumOverJ}
\end{equation}
Furthermore, since $\tau\left(s-M_{1}\right)\leq1-\alpha+\tau s-\tau M_{1}\leq-\tau c<0$,
we also have
\begin{equation}
\begin{split}\sup_{j\in I^{\left(\ell_{2}\right)}}\:\sum_{\substack{i=\left(n,m,\varepsilon,\delta\right)\in I^{\left(\ell_{1}\right)}\\
\text{with }n\leq3/\left(1-\alpha\right)
}
}M_{j,i}^{\left(0\right)} & \leq C_{3}\cdot\sup_{\nu\in\N_{0}}2^{\tau\nu\left(s-M_{1}\right)}\cdot\sum_{n\leq\frac{3}{1-\alpha}}\:\sum_{\left|m\right|\leq G_{n}}1\\
\left({\scriptstyle \text{since }G_{n}=\left\lceil 2^{\left(1-\alpha\right)n}\right\rceil \leq\left\lceil 2^{3}\right\rceil =8}\right) & \leq C_{3}\cdot\left(1+\frac{3}{1-\alpha}\right)\cdot\left(1+2\cdot8\right)\leq\frac{68\cdot C_{3}}{1-\alpha}.
\end{split}
\label{eq:UpperConeRightConeSmallNSumOverI}
\end{equation}
This completes our considerations for the special case $n\leq\frac{3}{1-\alpha}$.
In the remainder of this subsection, we can (and will) thus assume
$n\geq\frac{3}{1-\alpha}$.

\medskip{}

\textbf{Case 2}: We have $\left[\left|\kappa\right|\geq\frac{1}{4}\right]\wedge\left[\left|m\right|\geq2\right]\wedge\left[\left(n\leq\nu\right)\vee\left(\left|\iota\right|\geq\frac{1}{4}\right)\right]$,
as well as $n\geq\frac{3}{1-\alpha}$. We first show that these conditions
imply
\begin{equation}
\begin{split}\theta_{1}\left(2^{n\alpha-\nu}\cdot\left(m+\xi\right)\right) & \leq2^{5M_{3}}\cdot\left|\iota\right|^{M_{1}}\cdot2^{-M_{1}\left(\nu-n\right)_{+}}\cdot2^{-M_{2}\left(n-\nu\right)_{+}}\\
 & =\begin{cases}
2^{5M_{3}}\cdot\left|\iota\right|^{M_{1}}\cdot2^{-M_{1}\left|n-\nu\right|}, & \text{if }\nu\geq n\\
2^{5M_{3}}\cdot\left|\iota\right|^{M_{1}}\cdot2^{-M_{2}\left|n-\nu\right|}, & \text{if }n>\nu
\end{cases}\qquad\forall\xi\in\left[-1,1\right].
\end{split}
\label{eq:UpperConeRightConeSummationCaseTheta1Estimate}
\end{equation}
To establish equation \eqref{eq:UpperConeRightConeSummationCaseTheta1Estimate},
we first note $\frac{\left|m\right|}{2}\leq\left|m+\xi\right|\leq2\left|m\right|$,
(cf.\@ equation \eqref{eq:AddingXiDoesNotMatterForLargeM}) since
$\left|m\right|\geq2$. Hence, equations \eqref{eq:Theta1Rescaling}
and \eqref{eq:KappaIotaDefinition} yield 
\[
\theta_{1}\left(2^{n\alpha-\nu}\left(m+\xi\right)\right)\leq2^{M_{3}}\cdot\theta_{1}\left(2^{n\alpha-\nu}\left|m\right|\right)=2^{M_{3}}\cdot\theta_{1}\left(2^{n-\nu}\left|\iota\right|\right).
\]
Now, we distinguish the two cases that are suggested by equation \eqref{eq:UpperConeRightConeSummationCaseTheta1Estimate}:
\begin{enumerate}
\item In case of $n\leq\nu$, we get $n-\nu=-\left|n-\nu\right|$ and thus
\begin{align*}
\theta_{1}\left(2^{n\alpha-\nu}\left(m+\xi\right)\right) & \leq2^{M_{3}}\cdot\theta_{1}\left(2^{n-\nu}\left|\iota\right|\right)\leq2^{M_{3}}\cdot\left(2^{n-\nu}\cdot\left|\iota\right|\right)^{M_{1}}\\
 & =2^{M_{3}}\cdot\left|\iota\right|^{M_{1}}\cdot2^{-M_{1}\left|n-\nu\right|}=2^{M_{3}}\cdot\left|\iota\right|^{M_{1}}\cdot2^{-M_{1}(\nu-n)_{+}},
\end{align*}
which is even slightly better than equation \eqref{eq:UpperConeRightConeSummationCaseTheta1Estimate}.
\item In case of $n>\nu$, we have $\left|\iota\right|\geq\frac{1}{4}$,
since we assume $\left(n\leq\nu\right)\vee\left(\left|\iota\right|\geq\frac{1}{4}\right)$.
Consequently, $\frac{1}{4}\leq\left|\iota\right|\leq2\leq4$, so that
equation \eqref{eq:Theta1Rescaling} yields
\begin{align*}
\theta_{1}\left(2^{n\alpha-\nu}\left(m+\xi\right)\right) & \leq2^{M_{3}}\cdot\theta_{1}\left(2^{n-\nu}\left|\iota\right|\right)\leq2^{M_{3}}4^{M_{3}}\cdot\theta_{1}\left(2^{n-\nu}\right)\\
 & \leq8^{M_{3}}\cdot\left(1+2^{n-\nu}\right)^{-M_{2}}\\
 & \leq8^{M_{3}}\cdot2^{-M_{2}\left|n-\nu\right|}\\
\left({\scriptstyle \text{since }\left|\iota\right|\geq\frac{1}{4}}\right) & \leq8^{M_{3}}\cdot4^{M_{1}}\cdot\left|\iota\right|^{M_{1}}\cdot2^{-M_{2}\left|n-\nu\right|}\\
 & \leq2^{5M_{3}}\cdot\left|\iota\right|^{M_{1}}\cdot2^{-M_{2}\left|n-\nu\right|}=2^{5M_{3}}\cdot\left|\iota\right|^{M_{1}}\cdot2^{-M_{2}(n-\nu)_{+}}\,,
\end{align*}
which establishes equation \eqref{eq:UpperConeRightConeSummationCaseTheta1Estimate}
also in this case.
\end{enumerate}
We now properly start the proof: First, note that $\left|\iota\right|^{M_{1}}\leq2^{M_{1}}\leq2^{M_{3}}$,
so that equation \eqref{eq:UpperConeRightConeSummationCaseTheta1Estimate}
yields the estimate $\theta_{1}\left(2^{n\alpha-\nu}(m+\xi)\right)\leq2^{6M_{3}}\cdot2^{-M_{1}(\nu-n)_{+}}\cdot2^{-M_{2}(n-\nu)_{+}}$
for all $\xi\in\left[-1,1\right]$. In combination with equation \eqref{eq:UpperConeRightConeIntegralRewritten},
we conclude
\begin{align*}
 & \left|\det T_{i}\right|^{-1}\int_{S_{i}^{\left(\alpha\right)}}\varrho_{j}\left(T_{j}^{-1}\xi\right)\d\xi\\
 & \leq3^{2+K+M_{3}}\cdot\int_{-1}^{1}\theta_{1}\left(2^{n\alpha-\nu}\left(m+\xi\right)\right)\cdot\left(1+\left|2^{n-\nu\alpha}\left(1-\kappa\iota\right)-2^{\alpha\left(n-\nu\right)}\kappa\xi\right|\right)^{-K}\d\xi\\
\left({\scriptstyle \text{with }\eta=2^{\alpha\left(n-\nu\right)}\kappa\xi}\right) & \leq3^{2+K+5M_{3}}\,2^{-M_{1}(\nu-n)_{+}-M_{2}\left(n-\nu\right)_{+}}\cdot\frac{2^{\alpha\left(\nu-n\right)}}{\left|\kappa\right|}\,\int_{-2^{\alpha\left(n-\nu\right)}\left|\kappa\right|}^{2^{\alpha\left(n-\nu\right)}\left|\kappa\right|}\!\left(1\!+\!\left|2^{n-\nu\alpha}\!-\!2^{n-\nu\alpha}\kappa\iota\!-\!\eta\right|\right)^{\!-K}\!\d\eta\\
\left({\scriptstyle \text{since }\iota=m/2^{n\left(1-\alpha\right)}\text{ and }\left|\kappa\right|\geq\frac{1}{4}}\right) & \leq3^{4+K+5M_{3}}\,2^{\alpha\left(\nu-n\right)-M_{1}(\nu-n)_{+}-M_{2}\left(n-\nu\right)_{+}}\:\int_{-2^{\alpha\left(n-\nu\right)}\left|\kappa\right|}^{2^{\alpha\left(n-\nu\right)}\left|\kappa\right|}\!\left(1\!+\!\left|2^{n-\nu\alpha}\!-\!2^{\alpha\left(n-\nu\right)}\kappa m\!-\!\eta\right|\right)^{\!-K}\!\d\eta\\
\left({\scriptstyle \text{with }\xi=2^{n-\nu\alpha}-2^{\alpha\left(n-\nu\right)}\kappa m-\eta}\right) & =3^{4+K+5M_{3}}\,2^{\alpha\left(\nu-n\right)-M_{1}(\nu-n)_{+}-M_{2}\left(n-\nu\right)_{+}}\:\int_{2^{n-\nu\alpha}-2^{\alpha\left(n-\nu\right)}\kappa m-2^{\alpha\left(n-\nu\right)}\left|\kappa\right|}^{2^{n-\nu\alpha}-2^{\alpha\left(n-\nu\right)}\kappa m+2^{\alpha\left(n-\nu\right)}\left|\kappa\right|}\left(1\!+\!\left|\xi\right|\right)^{-K}\d\xi.
\end{align*}
For brevity, let us set $L:=2^{\alpha\left(n-\nu\right)}\left|\kappa\right|$
(which is independent of $m$) and $C_{4}:=6^{\sigma}\cdot3^{\tau\left(4+K+5M_{3}\right)}$,
as well as
\begin{equation}
\begin{split}\Lambda_{n,m,\nu,\mu} & :=\left(1+2^{n-\alpha\nu}\left|\lambda_{n,m,\nu,\mu}\right|\right)^{\sigma}=\left(1+2^{n-\alpha\nu}\left|1-\kappa\iota\right|\right)^{\sigma}\\
\left({\scriptstyle \text{eq. }\eqref{eq:KappaIotaDefinition}}\right) & =\left(1+2^{n-\alpha\nu}\left|1-2^{n\left(\alpha-1\right)}\kappa m\right|\right)^{\sigma}\\
 & =\left(1+\left|2^{n-\alpha\nu}-2^{\alpha\left(n-\nu\right)}\kappa m\right|\right)^{\sigma}.
\end{split}
\label{eq:CapitalLambdaDefinition}
\end{equation}
 In combination with equation \eqref{eq:UpperConeRightConeMatrixChangeNorm},
the preceding estimate yields
\begin{align*}
\sum_{\substack{\left|m\right|\leq G_{n}\\
\text{s.t. Case 2 holds}
}
}M_{j,i}^{\left(0\right)} & =\sum_{\substack{\left|m\right|\leq G_{n}\\
\text{s.t. Case 2 holds}
}
}\left(\frac{w_{j}^{s}}{w_{i}^{s}}\right)^{\tau}\cdot\left(1+\left\Vert T_{j}^{-1}T_{i}\right\Vert \right)^{\sigma}\cdot\left(\left|\det T_{i}\right|^{-1}\cdot\int_{S_{i}^{\left(\alpha\right)}}\varrho_{j}\left(T_{j}^{-1}\xi\right)\d\xi\right)^{\tau}\\
\left({\scriptstyle \text{eq. }\eqref{eq:UpperConeRightConeMatrixChangeNorm}}\right) & \leq C_{4}\cdot2^{\tau s\left(\nu-n\right)+\sigma\left(n-\nu\right)_{+}}\cdot\left[2^{-M_{1}(\nu-n)_{+}}\cdot2^{-M_{2}(n-\nu)_{+}}\cdot2^{\alpha\left(\nu-n\right)}\right]^{\tau}\\
 & \phantom{=C_{2}\cdot}\cdot\sum_{m\in\Z}\Lambda_{n,m,\nu,\mu}\left(\int_{2^{n-\nu\alpha}-2^{\alpha\left(n-\nu\right)}\kappa m-L}^{2^{n-\nu\alpha}-2^{\alpha\left(n-\nu\right)}\kappa m+L}\left(1+\left|\xi\right|\right)^{-K}\d\xi\right)^{\tau}\\
\left({\scriptstyle \ell=-{\rm sign}\left(\kappa\right)\cdot m\text{ and eq. }\eqref{eq:CapitalLambdaDefinition}}\right) & =C_{4}\cdot2^{\tau\left[\left(s+\alpha\right)\left(\nu-n\right)+\frac{\sigma}{\tau}\left(n-\nu\right)_{+}-M_{1}(\nu-n)_{+}-M_{2}(n-\nu)_{+}\right]}\\
 & \phantom{=C_{2}\cdot}\cdot\sum_{\ell\in\Z}\left(1+\left|2^{n-\nu\alpha}+2^{\alpha\left(n-\nu\right)}\left|\kappa\right|\ell\right|\right)^{\sigma}\left(\int_{2^{\alpha\left(n-\nu\right)}\left|\kappa\right|\ell+2^{n-\nu\alpha}-L}^{2^{\alpha\left(n-\nu\right)}\left|\kappa\right|\ell+2^{n-\nu\alpha}+L}\left(1+\left|\xi\right|\right)^{-K}\d\xi\right)^{\tau}.
\end{align*}
Now, an application of Lemma \ref{lem:WeightedSumOfShiftedIntegrals}
and of the associated remark (with $p=\tau$, $N=\sigma$, $\beta=2^{\alpha\left(n-\nu\right)}\left|\kappa\right|>0$
and $L=2^{\alpha\left(n-\nu\right)}\left|\kappa\right|$, as well
as $M=2^{n-\nu\alpha}$) yields
\begin{align*}
 & \sum_{\ell\in\Z}\left(1+\left|2^{n-\nu\alpha}+2^{\alpha\left(n-\nu\right)}\left|\kappa\right|\ell\right|\right)^{\sigma}\left(\int_{2^{\alpha\left(n-\nu\right)}\left|\kappa\right|\ell+2^{n-\nu\alpha}-L}^{2^{\alpha\left(n-\nu\right)}\left|\kappa\right|\ell+2^{n-\nu\alpha}+L}\left(1+\left|\xi\right|\right)^{-K}\d\xi\right)^{\tau}\\
 & \leq2^{3+\tau+\sigma}\cdot10^{3+\sigma}\cdot\left\Vert \left(1\!+\!\left|\mybullet\right|\right)^{-K}\right\Vert _{\frac{2+\sigma}{\tau}}^{\tau}\!\cdot\!\left(2^{\alpha\left(n-\nu\right)}\left|\kappa\right|\right)^{\tau}\!\cdot\!\left(1\!+\!\left[2^{\alpha\left(n-\nu\right)}\left|\kappa\right|\right]^{\sigma}\right)\!\cdot\!\left(1\!+\!1\!+\!\frac{2^{\alpha\left(\nu-n\right)}}{\left|\kappa\right|}\right)\\
\left({\scriptstyle \text{since }K\geq\frac{2+\sigma}{\tau}\text{ and }\frac{1}{4}\leq\left|\kappa\right|\leq2}\right) & \leq2^{3+2\sigma+2\tau}\cdot10^{3+\sigma}\cdot2^{\alpha\tau\left(n-\nu\right)}\cdot\left(1+2^{\alpha\sigma\left(n-\nu\right)}\right)\cdot\left(2+4\cdot2^{\alpha\left(\nu-n\right)}\right)\\
 & \leq2^{7+2\sigma+2\tau}\cdot10^{3+\sigma}\cdot2^{\alpha\tau\left(n-\nu\right)}\cdot2^{\alpha\sigma\cdot\left(n-\nu\right)_{+}}\cdot2^{\alpha\cdot\left(\nu-n\right)_{+}}.
\end{align*}
All in all, we get for $C_{5}:=C_{4}\cdot2^{7+2\sigma+2\tau}\cdot10^{3+\sigma}$
that
\begin{align*}
\smash{\sum_{\substack{\left|m\right|\leq G_{n}\\
\text{s.t. Case 2 holds}
}
}}M_{j,i}^{\left(0\right)} & \leq C_{5}\cdot2^{\alpha\tau\left(n-\nu\right)}\cdot2^{\alpha\sigma\cdot\left(n-\nu\right)_{+}}\cdot2^{\alpha\cdot\left(\nu-n\right)_{+}}\cdot2^{\tau\left[\left(s+\alpha\right)\left(\nu-n\right)+\frac{\sigma}{\tau}\left(n-\nu\right)_{+}-M_{1}(\nu-n)_{+}-M_{2}(n-\nu)_{+}\right]}\\
 & =C_{5}\cdot2^{\tau s\left(\nu-n\right)+\alpha\left(\nu-n\right)_{+}+\left(1+\alpha\right)\sigma\left(n-\nu\right)_{+}-\tau M_{1}(\nu-n)_{+}-\tau M_{2}(n-\nu)_{+}}\\
 & =C_{5}\cdot\begin{cases}
2^{-\tau\left|\nu-n\right|\left[M_{2}-\left(1+\alpha\right)\frac{\sigma}{\tau}+s\right]}, & \text{if }n\geq\nu,\\
2^{-\tau\left|\nu-n\right|\left[-s-\frac{\alpha}{\tau}+M_{1}\right]}, & \text{if }n\leq\nu
\end{cases}\\
\left({\scriptstyle \text{since }M_{1}\geq M_{1}^{(0)}+c\text{ and }M_{2}\geq M_{2}^{(0)}+c}\right) & \leq C_{5}\cdot2^{-\tau c\left|\nu-n\right|}.
\end{align*}
As usual, this implies
\begin{equation}
\sup_{j\in I^{\left(\ell_{2}\right)}}\:\sum_{\substack{i=\left(n,m,\varepsilon,\delta\right)\in I^{\left(\ell_{1}\right)}\\
\text{s.t. Case 2 holds}
}
}M_{j,i}^{\left(0\right)}\leq C_{5}\cdot\sum_{\ell\in\Z}2^{-\tau c\left|\ell\right|}\leq\frac{2C_{5}}{1-2^{-\tau c}}.\label{eq:UpperConeRightConeSummationCaseSumOverI}
\end{equation}

In addition to the preceding inequality, we also need to estimate
the corresponding expression where the sum is taken over $j$ instead
of over $i$. To this end, we set $L:=2^{1+\alpha\left(n-\nu\right)}$
for brevity and estimate similar to the preceding case
\begin{align*}
\int_{-1}^{1}\left(1+\left|2^{n-\nu\alpha}\left(1-\kappa\iota\right)-2^{\alpha\left(n-\nu\right)}\kappa\xi\right|\right)^{-K}\d\xi & =\frac{2^{\alpha\left(\nu-n\right)}}{\left|\kappa\right|}\cdot\int_{2^{n-\nu\alpha}\left(1-\kappa\iota\right)-2^{\alpha\left(n-\nu\right)}\left|\kappa\right|}^{2^{n-\nu\alpha}\left(1-\kappa\iota\right)+2^{\alpha\left(n-\nu\right)}\left|\kappa\right|}\left(1+\left|\zeta\right|\right)^{-K}\d\zeta\\
\left({\scriptstyle \text{since }\frac{1}{4}\leq\left|\kappa\right|\leq2}\right) & \leq4\cdot2^{\alpha\left(\nu-n\right)}\cdot\int_{-2^{n-\nu\alpha}\kappa\iota+2^{n-\nu\alpha}-2^{1+\alpha\left(n-\nu\right)}}^{-2^{n-\nu\alpha}\kappa\iota+2^{n-\nu\alpha}+2^{1+\alpha\left(n-\nu\right)}}\left(1+\left|\zeta\right|\right)^{-K}\d\zeta\\
\left({\scriptstyle \text{since }\kappa=2^{\nu\left(\alpha-1\right)}\mu}\right) & =4\cdot2^{\alpha\left(\nu-n\right)}\cdot\int_{-2^{n-\nu}\mu\iota+2^{n-\nu\alpha}-L}^{-2^{n-\nu}\mu\iota+2^{n-\nu\alpha}+L}\left(1+\left|\zeta\right|\right)^{-K}\d\zeta.
\end{align*}
Now, a combination of equations \eqref{eq:UpperConeRightConeIntegralRewritten}
and \eqref{eq:UpperConeRightConeSummationCaseTheta1Estimate} yields
\begin{align*}
\left|\det T_{i}\right|^{-1}\int_{S_{i}^{\left(\alpha\right)}}\varrho_{j}\left(T_{j}^{-1}\xi\right)\d\xi & \leq3^{2+K+5M_{3}}\cdot\left|\iota\right|^{M_{1}}\cdot2^{-M_{1}(\nu-n)_{+}-M_{2}(n-\nu)_{+}}\:\int_{-1}^{1}\!\left(1\!+\!\left|2^{n-\nu\alpha}\left(1\!-\!\kappa\iota\right)\!-\!2^{\alpha\left(n-\nu\right)}\kappa\xi\right|\right)^{-K}\!\d\xi\\
 & \leq3^{4+K+5M_{3}}\cdot\left|\iota\right|^{M_{1}}\cdot2^{\alpha\left(\nu-n\right)-M_{1}(\nu-n)_{+}-M_{2}(n-\nu)_{+}}\cdot\int_{-2^{n-\nu}\mu\iota+2^{n-\nu\alpha}-L}^{-2^{n-\nu}\mu\iota+2^{n-\nu\alpha}+L}\left(1+\left|\zeta\right|\right)^{-K}\d\zeta.
\end{align*}
In conjunction with equations \eqref{eq:UpperConeRightConeMatrixChangeNorm}
and \eqref{eq:CapitalLambdaDefinition}, this entails
\begin{align*}
M_{j,i}^{\left(0\right)} & =\left(\frac{w_{j}^{s}}{w_{i}^{s}}\right)^{\tau}\cdot\left(1+\left\Vert T_{j}^{-1}T_{i}\right\Vert \right)^{\sigma}\cdot\left(\left|\det T_{i}\right|^{-1}\cdot\int_{S_{i}^{\left(\alpha\right)}}\varrho_{j}\left(T_{j}^{-1}\xi\right)\d\xi\right)^{\tau}\\
 & \leq6^{\sigma}\cdot3^{\tau\left(4+K+5M_{3}\right)}\cdot2^{\tau\left(s+\alpha\right)\left(\nu-n\right)+\sigma\left(n-\nu\right)_{+}-\tau M_{1}(\nu-n)_{+}-\tau M_{2}(n-\nu)_{+}}\cdot\left|\iota\right|^{\tau M_{1}}\cdot\\
 & \phantom{\leq\cdot}\Lambda_{n,m,\nu,\mu}\cdot\left[\int_{-2^{n-\nu}\mu\iota+2^{n-\nu\alpha}-L}^{-2^{n-\nu}\mu\iota+2^{n-\nu\alpha}+L}\!\left(1\!+\!\left|\zeta\right|\right)^{-K}\d\zeta\right]^{\tau}\!\!.
\end{align*}

For brevity, set $C_{6}:=6^{\sigma}\cdot3^{\tau\left(4+K+5M_{3}\right)}$
and $C_{7}:=2^{5+2\tau+2\sigma}\cdot10^{3+\sigma}\cdot C_{6}$ and
recall from equations \eqref{eq:CapitalLambdaDefinition} and \eqref{eq:KappaIotaDefinition}
that
\begin{equation}
\Lambda_{n,m,\nu,\mu}=\left(1+2^{n-\alpha\nu}\left|1-\kappa\iota\right|\right)^{\sigma}=\left(1+\left|2^{n-\alpha\nu}-2^{n-\nu}\mu\iota\right|\right)^{\sigma}=\left(1+\left|2^{n-\alpha\nu}-2^{n-\nu}\left|\iota\right|{\rm sign}\left(\iota\right)\mu\right|\right)^{\sigma}.\label{eq:UpperConeRightConeCapitalLambdaRewritten}
\end{equation}
We now invoke Lemma \ref{lem:WeightedSumOfShiftedIntegrals} and
the associated remark (with $L=2^{1+\alpha\left(n-\nu\right)}$, $N=\sigma$,
$p=\tau$, $M=2^{n-\nu\alpha}$ and $\beta=2^{n-\nu}\left|\iota\right|$)
to justify the following estimate:
\begin{align*}
\smash{\sum_{\substack{\left|\mu\right|\leq G_{\nu}\\
\text{s.t. Case 2 holds}
}
}}M_{j,i}^{\left(0\right)} & \leq C_{6}\cdot2^{\tau\left(s+\alpha\right)\left(\nu-n\right)+\sigma\left(n-\nu\right)_{+}-\tau M_{1}(\nu-n)_{+}-\tau M_{2}(n-\nu)_{+}}\cdot\left|\iota\right|^{\tau M_{1}}\cdot\\
 & \phantom{\leq C_{5}\cdot}\sum_{\mu\in\Z}\left(\!\Lambda_{n,m,\nu,\mu}\!\cdot\!\left[\int_{-2^{n-\nu}\iota\mu+2^{n-\nu\alpha}-L}^{-2^{n-\nu}\iota\mu+2^{n-\nu\alpha}+L}\left(1\!+\!\left|\zeta\right|\right)^{-K}\d\zeta\right]^{\tau}\right)\\
\left({\scriptstyle \text{eq. }\eqref{eq:UpperConeRightConeCapitalLambdaRewritten}\text{ and }\ell=-{\rm sign}\left(\iota\right)\mu}\right) & =C_{6}\cdot2^{\tau\left(s+\alpha\right)\left(\nu-n\right)+\sigma\left(n-\nu\right)_{+}-\tau M_{1}(\nu-n)_{+}-\tau M_{2}(n-\nu)_{+}}\cdot\left|\iota\right|^{\tau M_{1}}\cdot\\
 & \phantom{=C_{5}\cdot}\sum_{\ell\in\Z}\left(\!\left(1+\left|2^{n-\alpha\nu}+2^{n-\nu}\left|\iota\right|\cdot\ell\right|\right)^{\sigma}\!\cdot\!\left[\int_{2^{n-\nu}\left|\iota\right|\ell+2^{n-\nu\alpha}-L}^{2^{n-\nu}\left|\iota\right|\ell+2^{n-\nu\alpha}+L}\left(1\!+\!\left|\zeta\right|\right)^{-K}\d\zeta\right]^{\tau}\right)\\
\left({\scriptstyle \text{Lem. }\ref{lem:WeightedSumOfShiftedIntegrals}\text{ and remark}}\right) & \leq2^{3+\tau+\sigma}\cdot10^{\sigma+3}\cdot C_{6}\cdot2^{\tau\left(s+\alpha\right)\left(\nu-n\right)+\sigma\left(n-\nu\right)_{+}-\tau M_{1}(\nu-n)_{+}-\tau M_{2}(n-\nu)_{+}}\cdot\left|\iota\right|^{\tau M_{1}}\cdot\\
 & \phantom{\leq\cdot}\left\Vert \left(1+\left|\mybullet\right|\right)^{-K}\right\Vert _{\frac{2+\sigma}{\tau}}^{\tau}\cdot2^{\tau\left[1+\alpha\left(n-\nu\right)\right]}\cdot\left(1+2^{\sigma\left[1+\alpha\left(n-\nu\right)\right]}\right)\cdot\left(1+\frac{2^{1+\alpha\left(n-\nu\right)}}{2^{n-\nu}\left|\iota\right|}+\frac{2^{\nu-n}}{\left|\iota\right|}\right)\\
\left({\scriptstyle \text{since }K\geq\frac{2+\sigma}{\tau}}\right) & \leq C_{7}\cdot2^{\tau s\left(\nu-n\right)+\left(1+\alpha\right)\sigma\left(n-\nu\right)_{+}-\tau M_{1}(\nu-n)_{+}-\tau M_{2}(n-\nu)_{+}}\cdot\left|\iota\right|^{\tau M_{1}}\cdot\left(1\!+\!\frac{2^{\left(1-\alpha\right)\left(\nu-n\right)}}{\left|\iota\right|}\!+\!\frac{2^{\nu-n}}{\left|\iota\right|}\right)\!.
\end{align*}
Here, we note that we indeed have $\beta>0$, since $\left|m\right|\geq2>0$,
so that $\iota\neq0$. Now, recall $\left|\iota\right|\leq2$ and
$M_{1}\geq\frac{1}{\tau}$, so that $\left|\iota\right|^{\tau M_{1}}\leq2^{\tau M_{1}}\leq2^{\tau M_{3}}$
and furthermore 
\[
\frac{\left|\iota\right|^{\tau M_{1}}}{\left|\iota\right|}=\left|\iota\right|^{\tau M_{1}-1}\leq2^{\tau M_{1}-1}\leq2^{\tau M_{3}}.
\]
Hence, we can continue the estimate from above as follows:
\begin{align*}
\smash{\sum_{\substack{\left|\mu\right|\leq G_{\nu}\\
\text{s.t. Case 2 holds}
}
}}M_{j,i}^{\left(0\right)} & \leq2^{\tau M_{3}}C_{7}\cdot2^{\tau s\left(\nu-n\right)+\left(1+\alpha\right)\sigma\left(n-\nu\right)_{+}-\tau M_{1}(\nu-n)_{+}-\tau M_{2}(n-\nu)_{+}}\cdot\left(1\!+\!2^{\left(1-\alpha\right)\left(\nu-n\right)}\!+\!2^{\nu-n}\right)\\
 & \leq2^{2+\tau M_{3}}C_{7}\cdot2^{\tau s\left(\nu-n\right)+\left(1+\alpha\right)\sigma\left(n-\nu\right)_{+}-\tau M_{1}(\nu-n)_{+}-\tau M_{2}(n-\nu)_{+}}\cdot2^{\left(\nu-n\right)_{+}}.
\end{align*}
Now, set $C_{8}:=2^{2+\tau M_{3}}C_{7}$ and observe
\begin{align*}
 & 2^{\tau s\left(\nu-n\right)+\left(1+\alpha\right)\sigma\left(n-\nu\right)_{+}-\tau M_{1}(\nu-n)_{+}-\tau M_{2}(n-\nu)_{+}}\cdot2^{\left(\nu-n\right)_{+}}\\
 & =\begin{cases}
2^{\tau s\left(\nu-n\right)-\tau M_{1}\left|n-\nu\right|}\cdot2^{\nu-n}=2^{-\left|\nu-n\right|\left(\tau M_{1}-\tau s-1\right)}, & \text{if }n\leq\nu,\\
2^{\tau s\left(\nu-n\right)+\left(1+\alpha\right)\sigma\left(n-\nu\right)-\tau M_{2}\left|n-\nu\right|}=2^{-\left|\nu-n\right|\left(\tau s-\left(1+\alpha\right)\sigma+\tau M_{2}\right)}, & \text{if }n\geq\nu
\end{cases}\\
 & \leq2^{-\tau c\left|\nu-n\right|},
\end{align*}
since the assumptions of Lemma \ref{lem:MainShearletLemma} ensure
$M_{1}\geq M_{1}^{(0)}+c\geq s+\frac{1}{\tau}+c$, as well as $M_{2}\geq M_{2}^{(0)}+c\geq\left(1+\alpha\right)\frac{\sigma}{\tau}-s+c$.

All in all, we finally conclude
\begin{equation}
\sup_{i=\left(n,m,\varepsilon,\delta\right)\in I^{\left(\ell_{1}\right)}}\,\sum_{\substack{j=\left(\nu,\mu,e,d\right)\in I^{\left(\ell_{2}\right)}\\
\text{s.t. Case 2 holds}
}
}M_{j,i}^{\left(0\right)}\leq\sup_{n\in\N_{0}}C_{8}\cdot\sum_{\nu=0}^{\infty}2^{-\tau c\left|\nu-n\right|}\leq C_{8}\cdot\sum_{\ell\in\Z}2^{-\tau c\left|\ell\right|}\leq\frac{2C_{8}}{1-2^{-\tau c}},\label{eq:UpperConeRightConeSummationCaseSumOverJ}
\end{equation}
which completes our considerations in the present case.

\medskip{}

\textbf{Case 3}: The remaining case, i.e., $\left[\left|\kappa\right|<\frac{1}{4}\right]\vee\left[\left|m\right|\leq1\right]\vee\left[\left(n>\nu\right)\wedge\left(\left|\iota\right|<\frac{1}{4}\right)\right]$,
as well as $n\geq\frac{3}{1-\alpha}$. Our first step is to show
\begin{equation}
\begin{split}M_{j,i}^{\left(0\right)} & \leq C_{9}\cdot2^{\tau s\left(\nu-n\right)}\cdot2^{\sigma\cdot\left(n-\nu\right)_{+}}\cdot\left(1+2^{n-\nu\alpha}\left|\lambda_{n,m,\nu,\mu}\right|\right)^{\sigma}\cdot\min\left\{ 1,\,2^{\tau M_{1}\left(n-\nu\right)}\right\} \cdot\min\left\{ 1,\,2^{\tau K\left(\nu\alpha-n\right)}\right\} \\
 & \overset{\left(\ast\right)}{\leq}C_{10}\cdot2^{\tau s\left(\nu-n\right)}\cdot2^{\sigma\cdot\left(n-\nu\right)_{+}}\cdot2^{-\tau M_{1}\left(\nu-n\right)_{+}}\cdot2^{\left(\sigma-\tau K\right)\left(n-\nu\alpha\right)_{+}}
\end{split}
\label{eq:UpperConeRightConeStandardEstimate}
\end{equation}
for $C_{9}:=6^{\sigma}\cdot\left(3^{3+K+3M_{3}}\cdot4^{K}\right)^{\tau}$
and $C_{10}:=6^{\sigma}\cdot C_{9}$. Furthermore, as an intermediate
result of independent interest, we also show
\begin{equation}
\left|2^{n-\nu\alpha}\lambda_{n,m,\nu,\mu}-2^{n\alpha-\nu}\mu\xi\right|\geq\frac{2^{n-\nu\alpha}}{4}\qquad\forall\xi\in\left[-1,1\right].\label{eq:UpperConeRightConeStandardIntegrandEstimate}
\end{equation}
Here, the step marked with $\left(\ast\right)$ in equation \eqref{eq:UpperConeRightConeStandardEstimate}
used that $\left|\lambda_{n,m,\nu,\mu}\right|\leq5$, so that 
\begin{equation}
\Lambda_{n,m,\nu,\mu}=\left(1+2^{n-\nu\alpha}\left|\lambda_{n,m,\nu,\mu}\right|\right)^{\sigma}\leq6^{\sigma}\cdot2^{\sigma\left(n-\nu\alpha\right)_{+}}.\label{eq:UpperConeRightConeLastCaseSpecialFactorBrutalEstimate}
\end{equation}

To prove equations \eqref{eq:UpperConeRightConeStandardEstimate}
and \eqref{eq:UpperConeRightConeStandardIntegrandEstimate}, we distinguish
three subcases:
\begin{enumerate}
\item We have $\left|m\right|\leq1$. Because of $n\geq\frac{3}{1-\alpha}$,
this implies
\[
\left|\iota\right|=2^{-\left(1-\alpha\right)n}\left|m\right|\leq2^{-\left(1-\alpha\right)n}\leq2^{-3}=\frac{1}{8}\leq\frac{1}{4},
\]
so that equation \eqref{eq:Theta2EstimateForSmallIota} yields $\left|2^{n-\nu\alpha}\lambda_{n,m,\nu,\mu}-2^{n\alpha-\nu}\mu\xi\right|\geq\frac{2^{n-\nu\alpha}}{4}$
for all $\xi\in\left[-1,1\right]$, i.e., equation \eqref{eq:UpperConeRightConeStandardIntegrandEstimate}
holds. Hence, a combination of equations \eqref{eq:UpperConeRightConeMatrixChangeNorm},
\eqref{eq:UpperConeRightConeIntegralRewritten}, \eqref{eq:UpperConeRightConeStandardIntegrandEstimate},
and \eqref{eq:Theta1StandardEstimate} yields
\begin{align*}
\;\quad\qquad M_{j,i}^{\left(0\right)} & =\left(\frac{w_{j}^{s}}{w_{i}^{s}}\right)^{\tau}\cdot\left(1+\left\Vert T_{j}^{-1}T_{i}\right\Vert \right)^{\sigma}\cdot\left(\left|\det T_{i}\right|^{-1}\cdot\int_{S_{i}^{\left(\alpha\right)}}\varrho_{j}\left(T_{j}^{-1}\xi\right)\d\xi\right)^{\tau}\\
 & \leq6^{\sigma}\!\cdot\!2^{\tau s\left(\nu-n\right)}\cdot2^{\sigma\cdot\left(n-\nu\right)_{+}}\cdot\left(1\!+\!2^{n-\nu\alpha}\cdot\left|\lambda_{n,m,\nu,\mu}\right|\right)^{\sigma}\cdot\left[3^{3+K+3M_{3}}\!\cdot\!\min\left\{ 1,\,2^{M_{1}\left(n-\nu\right)}\right\} \!\cdot\!\left(1\!+\!\frac{2^{n-\nu\alpha}}{4}\right)^{-K}\right]^{\tau}\\
 & \leq C_{9}\!\cdot2^{\tau s\left(\nu-n\right)}\cdot2^{\sigma\cdot\left(n-\nu\right)_{+}}\cdot\left(1\!+\!2^{n-\nu\alpha}\!\cdot\left|\lambda_{n,m,\nu,\mu}\right|\right)^{\sigma}\!\cdot\!\min\!\left\{ 1,\,2^{\tau M_{1}\left(n-\nu\right)}\right\} \!\cdot\!\min\!\left\{ 1,2^{-\tau K\left(n-\nu\alpha\right)}\right\} \!.
\end{align*}
Thus, equations \eqref{eq:UpperConeRightConeStandardEstimate} and
\eqref{eq:UpperConeRightConeStandardIntegrandEstimate} are valid
in this case.
\item We have $\left|\kappa\right|<\frac{1}{4}$. In this case, equation
\eqref{eq:Theta2EstimateForSmallKappa} yields $\left|2^{n-\nu\alpha}\lambda_{n,m,\nu,\mu}-2^{n\alpha-\nu}\mu\xi\right|\geq\frac{2^{n-\nu\alpha}}{4}$
for all $\xi\in\left[-1,1\right]$. Then, validity of equations \eqref{eq:UpperConeRightConeStandardEstimate}
and \eqref{eq:UpperConeRightConeStandardIntegrandEstimate} follows
just as in the previous case.
\item The remaining case, i.e., $\left|\kappa\right|\geq\frac{1}{4}$ and
$\left|m\right|\geq2$. Since we are in Case 3, this entails $n>\nu$
\textbf{and} $\left|\iota\right|<\frac{1}{4}$. Since we also have
$\alpha<1$ and $n\geq\frac{3}{1-\alpha}$, equation \eqref{eq:Theta2EstimateForSmallIota}
yields $\left|2^{n-\nu\alpha}\lambda_{n,m,\nu,\mu}-2^{n\alpha-\nu}\mu\xi\right|\geq\frac{2^{n-\nu\alpha}}{4}$
for all $\xi\in\left[-1,1\right]$, so that the desired estimates
follow just as in the previous two cases.
\end{enumerate}
Now, we observe that $n\geq\nu$ implies $\left(n-\nu\right)_{+}=n-\nu$,
as well as $\left(\nu-n\right)_{+}=0$ and finally $\left(n-\nu\alpha\right)_{+}=n-\nu\alpha$,
since $n\geq\nu\geq\nu\alpha$. Consequently, equation \eqref{eq:UpperConeRightConeStandardEstimate}
yields
\begin{equation}
\begin{split}\sum_{\substack{i=\left(n,m,\varepsilon,\delta\right)\in I^{\left(\ell_{1}\right)}\\
\text{s.t. }n\geq\nu\text{ and Case 3 holds}
}
}M_{j,i}^{\left(0\right)} & \leq C_{10}\cdot\sum_{n=\nu}^{\infty}\ \sum_{\left|m\right|\leq G_{n}}\left[2^{\tau s\left(\nu-n\right)}\cdot2^{\sigma\left(n-\nu\right)}\cdot2^{\left(\sigma-\tau K\right)\left(n-\nu\alpha\right)}\right]\\
\left({\scriptstyle \text{since }G_{n}=\left\lceil 2^{n\left(1-\alpha\right)}\right\rceil \leq1+2^{n\left(1-\alpha\right)}\leq2\cdot2^{n\left(1-\alpha\right)}}\right) & \leq6C_{10}\cdot\sum_{n=\nu}^{\infty}2^{n\left(1-\alpha\right)}\cdot2^{\tau s\left(\nu-n\right)}\cdot2^{\sigma\left(n-\nu\right)}\cdot2^{\left(\sigma-\tau K\right)\left(n-\nu\alpha\right)}\\
 & =6C_{10}\cdot2^{\nu\left(\tau s-\sigma+\tau\alpha K-\alpha\sigma\right)}\cdot\sum_{n=\nu}^{\infty}2^{n\left(1-\alpha-\tau s+2\sigma-\tau K\right)}\\
\left({\scriptstyle \text{eq. }\eqref{eqGeometricSumNegativeExponent}}\right) & \overset{\left(\ast\right)}{\leq}\frac{6C_{10}}{1-2^{1-\alpha-\tau s+2\sigma-\tau K}}\cdot2^{\nu\left(\tau s-\sigma+\tau\alpha K-\alpha\sigma\right)}\cdot2^{\nu\left(1-\alpha-\tau s+2\sigma-\tau K\right)}\\
 & \leq\frac{6C_{10}}{1-2^{-\tau c}}\cdot2^{\nu\left(1-\alpha\right)\left(1+\sigma-\tau K\right)}\leq\frac{6C_{10}}{1-2^{-\tau c}}.
\end{split}
\label{eq:UpperConeRightConeLastCaseNLargerThanNuSum}
\end{equation}
Here, the last step used that $K\geq\frac{2+\sigma}{\tau}\geq\frac{1+\sigma}{\tau}$
by the assumptions of Lemma \ref{lem:MainShearletLemma}, so that
$1+\sigma-\tau K\leq0$. Furthermore, the step marked with $\left(\ast\right)$
used that the assumptions of Lemma \ref{lem:MainShearletLemma} imply
$K\geq K_{0}+c\geq\frac{1-\alpha}{\tau}+2\frac{\sigma}{\tau}-s+c$
and thus $1-\alpha-\tau s+2\sigma-\tau K\leq-\tau c<0$ and finally
that
\begin{equation}
\sum_{n=\nu}^{\infty}2^{n\phi}=2^{\nu\phi}\cdot\sum_{\ell=0}^{\infty}2^{\ell\phi}=\frac{2^{\nu\phi}}{1-2^{\phi}}\qquad\text{for arbitrary }\phi\in\left(-\infty,0\right).\label{eqGeometricSumNegativeExponent}
\end{equation}

To estimate the sum over $j$ instead of over $i$, we observe again
that $n\geq\nu$ implies $n\geq\nu\geq\alpha\nu$. In combination
with equation \eqref{eq:UpperConeRightConeStandardEstimate}, this
implies
\begin{align}
\sum_{\substack{j=\left(\nu,\mu,e,d\right)\in I^{\left(\ell_{2}\right)}\\
\text{s.t. }\nu\leq n\text{ and Case 3 holds}
}
}M_{j,i}^{\left(0\right)} & \leq C_{10}\cdot\sum_{\nu=0}^{n}\:\sum_{\left|\mu\right|\leq G_{\nu}}2^{\left(\sigma-\tau s\right)\left(n-\nu\right)}\cdot2^{\left(\sigma-\tau K\right)\left(n-\nu\alpha\right)}\nonumber \\
\left({\scriptstyle \text{since }G_{\nu}=\left\lceil 2^{\nu\left(1-\alpha\right)}\right\rceil \leq1+2^{\nu\left(1-\alpha\right)}\leq2\cdot2^{\nu\left(1-\alpha\right)}}\right) & \leq6C_{10}\cdot2^{n\left(2\sigma-\tau s-\tau K\right)}\cdot\sum_{\nu=0}^{n}2^{\nu\left(1-\alpha+\tau s-\left(1+\alpha\right)\sigma+\tau\alpha K\right)}.\label{eq:UpperConeRightConeLastCaseNuSmallerThanNSumPrep}
\end{align}
To further estimate the right-hand side of this expression, we first
observe that $g:\left[0,\infty\right)\to\left[0,\infty\right),x\mapsto x\cdot2^{-x}$
is differentiable with derivative $g'\left(x\right)=2^{-x}\cdot\left(1-x\cdot\ln2\right)$.
Hence, $g'\left(x\right)>0$ for $0\leq x<\frac{1}{\ln2}$ and $g'\left(x\right)<0$
for $x>\frac{1}{\ln2}$. Consequently, $g$ attains its unique global
maximum at $x=\frac{1}{\ln2}$. But we have $\ln2=\frac{1}{2}\ln2^{2}\geq\frac{1}{2}\ln e=\frac{1}{2}$
and thus $g\left(x\right)\leq g\left(\frac{1}{\ln2}\right)=\frac{1}{\ln2}\cdot2^{-\frac{1}{\ln2}}\leq\frac{2}{e}\leq1$
for all $x\in\left[0,\infty\right)$. For arbitrary $n\in\N_{0}$
and $\phi>0$, this implies $n\cdot2^{-\phi n}=\frac{1}{\phi}\cdot\left(\phi n\cdot2^{-\phi n}\right)=\frac{1}{\phi}\cdot g\left(\phi n\right)\leq\frac{1}{\phi}$
and thus 
\[
\left(n+1\right)\cdot2^{-\phi n}\leq1+n\cdot2^{-\phi n}\leq1+\frac{1}{\phi}\qquad\forall n\in\N_{0}\text{ and }\phi\in\left(0,\infty\right).
\]
Now, set $\beta:=1-\alpha+\tau s-\left(1+\alpha\right)\sigma+\tau\alpha K$
for brevity and note
\begin{align*}
2^{n\left(2\sigma-\tau s-\tau K\right)}\cdot\sum_{\nu=0}^{n}2^{\beta\nu} & \leq\begin{cases}
2^{n\left(2\sigma-\tau s-\tau K\right)}\cdot\left(n+1\right)\cdot2^{\beta n}=\left(n+1\right)\cdot2^{n\left(1-\alpha\right)\left(\sigma+1-\tau K\right)}, & \text{if }\beta\geq0,\\
2^{n\left(2\sigma-\tau s-\tau K\right)}\cdot\left(n+1\right), & \text{if }\beta<0
\end{cases}\\
 & \overset{\left(\ast\right)}{\leq}\begin{cases}
\left(n+1\right)\cdot2^{-n\left(1-\alpha\right)\tau c}, & \text{if }\beta\geq0,\\
\left(n+1\right)\cdot2^{-n\tau c}, & \text{if }\beta<0
\end{cases}\\
\left({\scriptstyle \text{since }\left(n+1\right)\cdot2^{-\phi n}\leq1+\frac{1}{\phi}}\right) & \leq\begin{cases}
1+\frac{1}{\left(1-\alpha\right)\tau c}, & \text{if }\beta\geq0,\\
1+\frac{1}{\tau c}, & \text{if }\beta<0
\end{cases}\\
 & \leq1+\frac{1}{\left(1-\alpha\right)\tau c},
\end{align*}
where we recall that we assume $\alpha<1$ in the present case. Furthermore,
the step marked with $\left(\ast\right)$ used that the assumptions
of Lemma \ref{lem:MainShearletLemma} ensure $K\geq K_{0}+c\geq\frac{1-\alpha}{\tau}+2\frac{\sigma}{\tau}-s+c\geq2\frac{\sigma}{\tau}-s+c$
and thus $2\sigma-\tau s-\tau K\leq-\tau c$, as well as $K\geq K_{0}+c\geq\frac{1+\sigma}{\tau}+c$,
so that $\sigma+1-\tau K\leq-\tau c$.

By plugging this into equation \eqref{eq:UpperConeRightConeLastCaseNuSmallerThanNSumPrep},
we obtain
\begin{equation}
\sum_{\substack{j=\left(\nu,\mu,e,d\right)\in I^{\left(\ell_{2}\right)}\\
\text{s.t. }\nu\leq n\text{ and Case 3 holds}
}
}M_{j,i}^{\left(0\right)}\leq6C_{10}\cdot\left(1+\frac{1}{\left(1-\alpha\right)\tau c}\right)\qquad\forall i=\left(n,m,\varepsilon,\delta\right)\in I^{\left(\ell_{1}\right)}.\label{eq:UpperConeRightConeLastCaseNuSmallerThanNSum}
\end{equation}

Together, equations \eqref{eq:UpperConeRightConeLastCaseNLargerThanNuSum}
and \eqref{eq:UpperConeRightConeLastCaseNuSmallerThanNSum} take
care of the case $\nu\leq n$, under the general assumptions of the
current case. Hence, we only need to further consider the case $\nu>n$,
which we now do.

\medskip{}

Using estimate \eqref{eq:UpperConeRightConeStandardEstimate}, we
get
\begin{align*}
\sum_{\substack{i=\left(n,m,\varepsilon,\delta\right)\in I^{\left(\ell_{1}\right)}\\
\text{s.t. }n<\nu\text{ and Case 3 holds}
}
}M_{j,i}^{\left(0\right)} & \leq C_{10}\cdot\sum_{n=0}^{\nu}\:\sum_{\left|m\right|\leq G_{n}}2^{\tau s\left(\nu-n\right)}\cdot2^{-\tau M_{1}\left(\nu-n\right)}\cdot2^{\left(\sigma-\tau K\right)\left(n-\nu\alpha\right)_{+}}\\
\left({\scriptstyle \text{since }G_{n}=\left\lceil 2^{\left(1-\alpha\right)n}\right\rceil \leq1+2^{\left(1-\alpha\right)n}\leq2\cdot2^{\left(1-\alpha\right)n}}\right) & \leq6C_{10}\cdot\sum_{n=0}^{\nu}2^{\left(\tau s-\tau M_{1}\right)\left(\nu-n\right)}2^{\left(\sigma-\tau K\right)\cdot\left(n-\nu\alpha\right)_{+}}\cdot2^{n\left(1-\alpha\right)}.
\end{align*}
We now divide the sum into the two parts were we know the sign of
$n-\nu\alpha$. First, we observe that the assumptions of Lemma \ref{lem:MainShearletLemma}
entail $\sigma-\tau K\leq0$, so that $2^{\left(\sigma-\tau K\right)\cdot\left(n-\nu\alpha\right)_{+}}\leq1$.
Consequently,
\begin{align*}
\sum_{0\leq n\leq\nu\alpha}2^{\left(\tau s-\tau M_{1}\right)\left(\nu-n\right)}2^{\left(\sigma-\tau K\right)\cdot\left(n-\nu\alpha\right)_{+}}\cdot2^{n\left(1-\alpha\right)} & \leq2^{\nu\left(\tau s-\tau M_{1}\right)}\cdot\sum_{n=0}^{\left\lfloor \nu\alpha\right\rfloor }2^{\left(\tau M_{1}-\tau s+1-\alpha\right)n}\\
\left({\scriptstyle \text{eq. }\eqref{eq:GeometricSumPositiveExponent}\text{ and }\tau M_{1}-\tau s+1-\alpha\geq\tau M_{1}-\tau s>0}\right) & \overset{\left(\ast\right)}{\leq}\frac{2^{\tau M_{1}-\tau s+1-\alpha}}{2^{\tau M_{1}-\tau s+1-\alpha}-1}\cdot2^{\nu\left(\tau s-\tau M_{1}\right)}\cdot2^{\left\lfloor \nu\alpha\right\rfloor \left(\tau M_{1}-\tau s+1-\alpha\right)}\\
\left({\scriptstyle \text{since }\tau M_{1}-\tau s+1-\alpha>0\text{ and }\left\lfloor \nu\alpha\right\rfloor \leq\nu\alpha}\right) & \leq\frac{2^{\tau M_{1}-\tau s+1-\alpha}}{2^{\tau M_{1}-\tau s+1-\alpha}-1}\cdot2^{\nu\left(\tau s-\tau M_{1}\right)}\cdot2^{\nu\alpha\left(\tau M_{1}-\tau s+1-\alpha\right)}\\
 & \leq\frac{1}{1-2^{-(\tau M_{1}-\tau s+1-\alpha)}}2^{\nu(1-\alpha)(\alpha+\tau s-\tau M_{1})}\\
\left({\scriptstyle \text{since }\alpha+\tau s-\tau M_{1}\leq0\text{ and }\tau M_{1}-\tau s+1-\alpha\geq\tau M_{1}-\tau s\geq1}\right) & \leq\frac{1}{1-2^{-1}}=2.
\end{align*}
Here, the step marked with $\left(\ast\right)$ used that the geometric
sum formula shows
\begin{equation}
\sum_{\ell=0}^{n}2^{\phi\ell}=\frac{2^{\left(n+1\right)\phi}-1}{2^{\phi}-1}\leq\frac{2^{\left(n+1\right)\phi}}{2^{\phi}-1}=\frac{2^{\phi}}{2^{\phi}-1}\cdot2^{n\phi}=\frac{1}{1-2^{-\phi}}\cdot2^{n\phi}=:C^{\left(\phi\right)}\cdot2^{n\phi}\text{ for arbitrary }\phi>0.\label{eq:GeometricSumPositiveExponent}
\end{equation}

Now, we consider the remaining part of the sum. To this end, we first
observe that the assumptions of Lemma \ref{lem:MainShearletLemma}
entail $M_{1}\geq M_{0}:=s+\frac{\alpha}{\tau}$. In conjunction with
$n\leq\nu$, this implies $-\tau M_{1}\left(\nu-n\right)\leq-\tau M_{0}\left(\nu-n\right)$
and thus $2^{-\tau M_{1}\left(\nu-n\right)}\leq2^{-\tau M_{0}\left(\nu-n\right)}$.
Consequently,
\begin{align*}
\sum_{\nu\alpha<n\leq\nu}2^{\left(\tau s-\tau M_{1}\right)\left(\nu-n\right)}2^{\left(\sigma-\tau K\right)\cdot\left(n-\nu\alpha\right)_{+}}\cdot2^{n\left(1-\alpha\right)} & =\sum_{n=1+\left\lfloor \nu\alpha\right\rfloor }^{\nu}2^{\left(\tau s-\tau M_{1}\right)\left(\nu-n\right)}2^{\left(\sigma-\tau K\right)\cdot\left(n-\nu\alpha\right)}\cdot2^{n\left(1-\alpha\right)}\\
 & \leq\sum_{n=1+\left\lfloor \nu\alpha\right\rfloor }^{\nu}2^{\left(\tau s-\tau M_{0}\right)\left(\nu-n\right)}2^{\left(\sigma-\tau K\right)\cdot\left(n-\nu\alpha\right)}\cdot2^{n\left(1-\alpha\right)}\\
 & =2^{\nu\left(\tau s-\tau M_{0}-\alpha\left(\sigma-\tau K\right)\right)}\cdot\sum_{n=1+\left\lfloor \nu\alpha\right\rfloor }^{\nu}2^{n\left(1-\alpha+\tau M_{0}-\tau s+\sigma-\tau K\right)}\\
 & \leq2^{\nu\left(-\alpha-\alpha\left(\sigma-\tau K\right)\right)}\cdot\sum_{n=1+\left\lfloor \nu\alpha\right\rfloor }^{\infty}2^{n\left(1-\alpha+\tau M_{0}-\tau s+\sigma-\tau K\right)}\\
\left({\scriptstyle \text{eq. }\eqref{eqGeometricSumNegativeExponent}\text{ and }1-\alpha+\tau M_{0}-\tau s+\sigma-\tau K=1+\sigma-\tau K<0}\right) & \leq\frac{1}{1-2^{1+\sigma-\tau K}}\cdot2^{\alpha\nu\left(\tau K-\sigma-1\right)}\cdot2^{\left(1+\left\lfloor \nu\alpha\right\rfloor \right)\cdot\left(1+\sigma-\tau K\right)}\\
\left({\scriptstyle \text{since }1+\left\lfloor \nu\alpha\right\rfloor \geq\nu\alpha\text{ and }1+\sigma-\tau K<0}\right) & \leq\frac{1}{1-2^{1+\sigma-\tau K}}\cdot2^{\alpha\nu\left(\tau K-\sigma-1\right)}\cdot2^{\nu\alpha\left(1+\sigma-\tau K\right)}\\
\left({\scriptstyle \text{since }1+\sigma-\tau K\leq-1}\right) & =\frac{1}{1-2^{1+\sigma-\tau K}}\leq\frac{1}{1-2^{-1}}=2.
\end{align*}
Altogether, the preceding four displayed equations show
\begin{equation}
\sup_{j=\left(\nu,\mu,e,d\right)\in I^{\left(\ell_{2}\right)}}\:\sum_{\substack{i=\left(n,m,\varepsilon,\delta\right)\in I^{\left(\ell_{1}\right)}\\
\text{s.t. }n<\nu\text{ and Case 3 holds}
}
}M_{j,i}^{\left(0\right)}\leq6C_{10}\cdot\left(2+2\right)=24\cdot C_{10}.\label{eq:UpperConeRightConeCase3.2NSum}
\end{equation}

It remains to consider the sum over $j\in I^{\left(\ell_{2}\right)}$
instead of over $i\in I^{\left(\ell_{1}\right)}$. To this end, we
first consider the special case $\alpha=0$. In this case we have
$G_{\nu}=\left\lceil 2^{\left(1-\alpha\right)\nu}\right\rceil =2^{\nu}$
for all $\nu\in\N_{0}$, as well as $\left(n-\nu\alpha\right)_{+}=n_{+}=n$,
so that equation \eqref{eq:UpperConeRightConeStandardEstimate} implies
\begin{equation}
\begin{split}\smash{\sum_{\substack{j=\left(\nu,\mu,e,d\right)\in I^{\left(\ell_{2}\right)}\\
\text{s.t. }n<\nu\text{ and Case 3 holds}
}
}}M_{j,i}^{\left(0\right)} & \leq C_{10}\cdot\sum_{\nu=n}^{\infty}\:\sum_{\left|\mu\right|\leq G_{\nu}}2^{\left(\tau s-\tau M_{1}\right)\left(\nu-n\right)}\cdot2^{\left(\sigma-\tau K\right)\left(n-\nu\alpha\right)_{+}}\\
 & \leq3C_{10}\cdot2^{n\left(\sigma-\tau K+\tau M_{1}-\tau s\right)}\cdot\sum_{\nu=n}^{\infty}2^{\nu\left(1+\tau s-\tau M_{1}\right)}\\
\left({\scriptstyle \text{eq. }\eqref{eqGeometricSumNegativeExponent}\text{ and }1+\tau s-\tau M_{1}<0\text{ by the assump. of Lem. }\ref{lem:MainShearletLemma}}\right) & \leq\frac{3C_{10}}{1-2^{1+\tau s-\tau M_{1}}}\cdot2^{n\left(\sigma-\tau K+\tau M_{1}-\tau s+1+\tau s-\tau M_{1}\right)}\\
\left({\scriptstyle \text{since }1+\tau s-\tau M_{1}\leq-\tau c\text{ by the assump. of Lem. }\ref{lem:MainShearletLemma}}\right) & \leq\frac{3C_{10}}{1-2^{-\tau c}}\cdot2^{n\left(1+\sigma-\tau K\right)}\\
\left({\scriptstyle \text{since }1+\sigma-\tau K\leq0\text{ by the assump. of Lem. }\ref{lem:MainShearletLemma}}\right) & \leq\frac{3C_{10}}{1-2^{-\tau c}}\quad\text{ in case of }\alpha=0.
\end{split}
\label{eq:UpperConeRightConeCase3.2NuSumAlpha0}
\end{equation}

Having taken care of the case $\alpha=0$, we can now assume $\alpha>0$.
With another application of equation \eqref{eq:UpperConeRightConeStandardEstimate},
we conclude
\begin{align*}
\sum_{\substack{j=\left(\nu,\mu,e,d\right)\in I^{\left(\ell_{2}\right)}\\
\text{s.t. }n<\nu\text{ and Case 3 holds}
}
}M_{j,i}^{\left(0\right)} & \leq C_{10}\cdot\sum_{\nu=n}^{\infty}\:\sum_{\left|\mu\right|\leq G_{\nu}}2^{\left(\tau s-\tau M_{1}\right)\left(\nu-n\right)}\cdot2^{\left(\sigma-\tau K\right)\left(n-\nu\alpha\right)_{+}}\\
\left({\scriptstyle \text{since }G_{\nu}=\left\lceil 2^{\nu\left(1-\alpha\right)}\right\rceil \leq1+2^{\nu\left(1-\alpha\right)}\leq2\cdot2^{\nu\left(1-\alpha\right)}}\right) & \leq6C_{10}\cdot\sum_{\nu=n}^{\infty}\left(2^{\nu\left(1-\alpha\right)}\cdot2^{\left(\tau s-\tau M_{1}\right)\left(\nu-n\right)}\cdot2^{\left(\sigma-\tau K\right)\left(n-\nu\alpha\right)_{+}}\right).
\end{align*}
As in the previous case, we now split the series into two parts, according
to the sign of $n-\nu\alpha$. But first, we observe by the assumptions
of Lemma \ref{lem:MainShearletLemma} that $K\geq K_{00}:=\frac{1+\sigma}{\tau}$
and hence $2^{\left(\sigma-\tau K\right)\left(n-\nu\alpha\right)_{+}}\leq2^{\left(\sigma-\tau K_{00}\right)\left(n-\nu\alpha\right)_{+}}=2^{-\left(n-\nu\alpha\right)_{+}}$.
Now, for $n\leq\nu\leq\left\lfloor \frac{n}{\alpha}\right\rfloor \leq\frac{n}{\alpha}$,
we have $n-\nu\alpha\geq0$ and thus
\begin{align*}
\sum_{n\leq\nu\leq\left\lfloor n/\alpha\right\rfloor }\left(2^{\nu\left(1-\alpha\right)}\cdot2^{\left(\tau s-\tau M_{1}\right)\left(\nu-n\right)}\cdot2^{\left(\sigma-\tau K\right)\left(n-\nu\alpha\right)_{+}}\right) & \leq\sum_{n\leq\nu\leq\left\lfloor n/\alpha\right\rfloor }\left(2^{\nu\left(1-\alpha\right)}\cdot2^{\left(\tau s-\tau M_{1}\right)\left(\nu-n\right)}\cdot2^{-\left(n-\nu\alpha\right)}\right)\\
 & =2^{n\left(\tau M_{1}-\tau s-1\right)}\cdot\sum_{n\leq\nu\leq\left\lfloor n/\alpha\right\rfloor }2^{\nu\left(1+\tau s-\tau M_{1}\right)}\\
 & \leq2^{n\left(\tau M_{1}-\tau s-1\right)}\cdot\sum_{\nu=n}^{\infty}2^{\nu\left(1+\tau s-\tau M_{1}\right)}\\
\left({\scriptstyle \text{eq. }\eqref{eqGeometricSumNegativeExponent}\text{ and }1+\tau s-\tau M_{1}<0\text{ by the assump. of Lem. }\ref{lem:MainShearletLemma}}\right) & \leq\frac{1}{1-2^{1+\tau s-\tau M_{1}}}\cdot2^{n\left(\tau M_{1}-\tau s-1\right)}\cdot2^{n\left(1+\tau s-\tau M_{1}\right)}\\
\left({\scriptstyle \text{since }1+\tau s-\tau M_{1}\leq-c\tau\text{ by the assump. of Lem. }\ref{lem:MainShearletLemma}}\right) & \leq\frac{1}{1-2^{-c\tau}}.
\end{align*}
Finally, for the second part of the series, we have
\begin{align*}
\sum_{\nu>\left\lfloor n/\alpha\right\rfloor }\left(2^{\nu\left(1-\alpha\right)}\cdot2^{\left(\tau s-\tau M_{1}\right)\left(\nu-n\right)}\cdot2^{\left(\sigma-\tau K\right)\left(n-\nu\alpha\right)_{+}}\right) & \leq2^{n\left(\tau M_{1}-\tau s\right)}\cdot\sum_{\nu=1+\left\lfloor n/\alpha\right\rfloor }^{\infty}2^{\nu\left(1-\alpha+\tau s-\tau M_{1}\right)}\\
\left({\scriptstyle \text{eq. }\eqref{eqGeometricSumNegativeExponent}\text{ and }1-\alpha+\tau s-\tau M_{1}<0\text{ by the assump. of Lem. }\ref{lem:MainShearletLemma}}\right) & \leq\frac{1}{1-2^{1-\alpha+\tau s-\tau M_{1}}}\cdot2^{n\left(\tau M_{1}-\tau s\right)}\cdot2^{\left(1+\left\lfloor n/\alpha\right\rfloor \right)\cdot\left(1-\alpha+\tau s-\tau M_{1}\right)}\\
\left({\scriptstyle \text{since }1+\left\lfloor \frac{n}{\alpha}\right\rfloor \geq\frac{n}{\alpha}\text{ and }1-\alpha+\tau s-\tau M_{1}\leq-c\tau<0\text{ by assump. of Lem. }\ref{lem:MainShearletLemma}}\right) & \leq\frac{1}{1-2^{-c\tau}}\cdot2^{n\left(\tau M_{1}-\tau s\right)}\cdot2^{\frac{n}{\alpha}\cdot\left(1-\alpha+\tau s-\tau M_{1}\right)}\\
 & =\frac{1}{1-2^{-c\tau}}\cdot2^{n\cdot\frac{1-\alpha}{\alpha}\cdot\left(1+\tau s-\tau M_{1}\right)}\\
\left({\scriptstyle \text{since }1+\tau s-\tau M_{1}\leq0\text{ by the assump. of Lem. }\ref{lem:MainShearletLemma}}\right) & \leq\frac{1}{1-2^{-c\tau}}.
\end{align*}
All in all, the preceding three displayed equations show for $\alpha>0$
that 
\begin{equation}
\sup_{i=\left(n,m,\varepsilon,\delta\right)\in I^{\left(\ell_{1}\right)}}\:\sum_{\substack{j=\left(\nu,\mu,e,d\right)\in I^{\left(\ell_{2}\right)}\\
\text{s.t. }n<\nu\text{ and Case 3 holds}
}
}M_{j,i}^{\left(0\right)}\leq\frac{12\cdot C_{10}}{1-2^{-c\tau}},\label{eq:UpperConeRightConeCase3.2NuSum}
\end{equation}
and in view of equation \eqref{eq:UpperConeRightConeCase3.2NuSumAlpha0},
this estimate also holds in case of $\alpha=0$.

\medskip{}

Overall, our considerations in this subsection have established the
bound 
\[
\sup_{i\in I^{\left(\ell_{1}\right)}}\,\sum_{j\in I^{\left(\ell_{2}\right)}}M_{j,i}^{\left(0\right)}\leq C_{1}=:C_{0}^{\left(2\right)}\quad\text{ if }\alpha=1,
\]
cf.\@ equation \eqref{eq:UpperConeRightConeAlpha1FinalEstimate}.
Furthermore, in case of $\alpha\in\left[0,1\right)$, we have shown
\begin{align*}
\sup_{i\in I^{\left(\ell_{1}\right)}}\sum_{j\in I^{\left(\ell_{2}\right)}}M_{j,i}^{\left(0\right)} & \leq\sup_{i\in I^{\left(\ell_{1}\right)}}\left[\sum_{\substack{j\in I^{\left(\ell_{2}\right)}\\
\text{s.t. Case 1 holds}
}
}\!\!\!M_{j,i}^{\left(0\right)}+\sum_{\substack{j\in I^{\left(\ell_{2}\right)}\\
\text{s.t. Case 2 holds}
}
}\!\!\!M_{j,i}^{\left(0\right)}+\sum_{\substack{j\in I^{\left(\ell_{2}\right)}\\
\text{s.t. Case 3 holds}
}
}\!\!\!M_{j,i}^{\left(0\right)}\right]\\
\left({\scriptstyle \text{eqs. }\eqref{eq:UpperConeRightConeSmallNSumOverJ},\eqref{eq:UpperConeRightConeSummationCaseSumOverJ},\eqref{eq:UpperConeRightConeLastCaseNuSmallerThanNSum},\eqref{eq:UpperConeRightConeCase3.2NuSum}}\right) & \leq\frac{6C_{3}}{1-2^{-\tau c}}+\frac{2C_{8}}{1-2^{-\tau c}}+\left(6C_{10}\cdot\left(1+\frac{1}{\left(1-\alpha\right)\tau c}\right)+\frac{12\cdot C_{10}}{1-2^{-c\tau}}\right)\\
 & =:C_{0}^{\left(2\right)}.
\end{align*}
Note that the constant $C_{0}^{\left(2\right)}$ has a different value
depending on whether $\alpha=1$ or $\alpha<1$.

Likewise, we have shown
\[
\sup_{j\in I^{\left(\ell_{2}\right)}}\,\sum_{i\in I^{\left(\ell_{1}\right)}}M_{j,i}^{\left(0\right)}\leq C_{1}=:C_{0}^{\left(3\right)}\quad\text{if }\alpha=1,
\]
see again equation \eqref{eq:UpperConeRightConeAlpha1FinalEstimate}.
In case of $\alpha\in\left[0,1\right)$, we have also shown
\begin{align*}
\sup_{j\in I^{\left(\ell_{2}\right)}}\sum_{i\in I^{\left(\ell_{1}\right)}}M_{j,i}^{\left(0\right)} & \leq\sup_{j\in I^{\left(\ell_{2}\right)}}\left[\sum_{\substack{i\in I^{\left(\ell_{1}\right)}\\
\text{s.t. Case 1 holds}
}
}\!\!\!M_{j,i}^{\left(0\right)}+\sum_{\substack{i\in I^{\left(\ell_{1}\right)}\\
\text{s.t. Case 2 holds}
}
}\!\!\!M_{j,i}^{\left(0\right)}+\sum_{\substack{i\in I^{\left(\ell_{1}\right)}\\
\text{s.t. Case 3 holds}
}
}\!\!\!M_{j,i}^{\left(0\right)}\right]\\
\left({\scriptstyle \text{eqs. }\eqref{eq:UpperConeRightConeSmallNSumOverI},\eqref{eq:UpperConeRightConeSummationCaseSumOverI},\eqref{eq:UpperConeRightConeCase3.2NSum},\eqref{eq:UpperConeRightConeLastCaseNLargerThanNuSum}}\right) & \leq\frac{68\cdot C_{3}}{1-\alpha}+\frac{2C_{5}}{1-2^{-\tau c}}+\left(24\cdot C_{10}+\frac{6C_{10}}{1-2^{-\tau c}}\right)\\
 & =:C_{0}^{\left(3\right)}.
\end{align*}
Note as above that $C_{0}^{\left(3\right)}$ has a different value
depending on whether $\alpha=1$ or $\alpha<1$.

Finally, we observe that $\left(C_{0}^{\left(2\right)}\right)^{1/\tau}$
and $\left(C_{0}^{\left(3\right)}\right)^{1/\tau}$ can be estimated
solely in terms of $\alpha,\tau_{0},\omega,c,K,H,M_{1},M_{2}$: Indeed,
for arbitrary $C\geq0$, we have because of $\tau\geq\tau_{0}$ that
\[
C^{1/\tau}\leq\left[\max\left\{ 1,C\right\} \right]^{1/\tau}\leq\left[\max\left\{ 1,C\right\} \right]^{1/\tau_{0}}=\max\left\{ 1,C^{1/\tau_{0}}\right\} .
\]
Thus, if a constant $C\geq0$ can be bounded only in terms of $\alpha,\tau_{0},\omega,c,K,H,M_{1},M_{2}$,
then so can $C^{1/\tau}$. In particular, $\left(\frac{1}{\tau c}\right)^{1/\tau}\leq\max\left\{ 1,\left(\tau c\right)^{-1/\tau_{0}}\right\} \leq\max\left\{ 1,\left(\tau_{0}c\right)^{-1/\tau_{0}}\right\} =:\Omega_{1}$.
Furthermore, using again that $\tau\geq\tau_{0}$, we get $\ell^{\tau_{0}}\hookrightarrow\ell^{\tau}$,
where the embedding does not increase the norm. Hence,
\[
\left(\frac{1}{1-2^{-\tau c}}\right)^{1/\tau}=\left(\sum_{n=0}^{\infty}2^{-\tau cn}\right)^{1/\tau}\leq\left(\sum_{n=0}^{\infty}2^{-\tau_{0}cn}\right)^{1/\tau_{0}}=\left(\frac{1}{1-2^{-\tau_{0}c}}\right)^{1/\tau_{0}}=:\Omega_{2}.
\]
Similarly, since $\frac{1}{1-\alpha}\geq1$ for $\alpha<1$, we have
$\left(\frac{1}{1-\alpha}\right)^{1/\tau}\leq\left(\frac{1}{1-\alpha}\right)^{1/\tau_{0}}=:\Omega_{3}$.

Finally, using once more that $\tau\geq\tau_{0}$, we see 
\[
\left(\sum_{i=1}^{n}a_{i}\right)^{1/\tau}\leq\left(n\cdot\max\left\{ a_{1},\dots,a_{n}\right\} \right)^{1/\tau}\leq n^{1/\tau_{0}}\cdot\max\left\{ a_{1}^{1/\tau},\dots,a_{n}^{1/\tau}\right\} 
\]
for arbitrary $a_{1},\dots,a_{n}\geq0$. Thus, if $a_{1}^{1/\tau},\dots,a_{n}^{1/\tau}$
can be estimated only in terms of $\alpha,\tau_{0},\omega,c,K,H,M_{1},M_{2}$,
then so can $\left(\sum_{i=1}^{n}a_{i}\right)^{1/\tau}$. All in all,
we have shown that the set of all expressions/constants $C\geq0$
for which $C^{1/\tau}$ can be estimated only in terms of $\alpha,\tau_{0},\omega,c,K,H,M_{1},M_{2}$
is closed under multiplication and addition. Hence, it suffices to
show $C_{i}^{1/\tau}\leq L_{i}$ for $i\in\underline{10}$, where
$L_{i}$ only depends on $\alpha,\tau_{0},\omega,c,K,H,M_{1},M_{2}$.

To this end, recall that $\frac{\sigma}{\tau}\leq\omega$ and that
$M_{3}=\max\left\{ M_{1},M_{2}\right\} $ only depends on $M_{1},M_{2}$.
Hence, recalling that the constants $C_{2},\dots,C_{10}$ are only
needed in case of $\alpha\in\left[0,1\right)$, we get
\begin{align*}
C_{1}^{1/\tau} & =\left[3\cdot8^{\sigma}\cdot6^{\tau\left(2+K+M_{3}\right)}\cdot\frac{2}{1-2^{-\tau c}}\right]^{1/\tau}\leq6^{1/\tau_{0}}\cdot\Omega_{2}\cdot8^{\frac{\sigma}{\tau}}\cdot6^{2+K+M_{3}}\leq6^{1/\tau_{0}}\cdot\Omega_{2}\cdot8^{\omega}\cdot6^{2+K+M_{3}}=:L_{1},\\
C_{2}^{1/\tau} & =6^{2\sigma/\tau}\cdot2^{\frac{6\sigma/\tau}{1-\alpha}}\leq6^{2\omega}\cdot2^{\frac{6\omega}{1-\alpha}}=:L_{2},\\
C_{3}^{1/\tau} & =C_{2}^{1/\tau}\cdot2\cdot3^{2+K+M_{3}}\cdot4^{M_{3}}\cdot2^{\frac{3M_{1}}{1-\alpha}}\leq L_{2}\cdot2\cdot3^{2+K+M_{3}}\cdot4^{M_{3}}\cdot2^{\frac{3M_{1}}{1-\alpha}}=:L_{3},\\
C_{4}^{1/\tau} & =6^{\sigma/\tau}\cdot3^{4+K+5M_{3}}\leq6^{\omega}\cdot3^{4+K+5M_{3}}=:L_{4},\\
C_{5}^{1/\tau} & =C_{4}^{1/\tau}\cdot2^{\frac{7}{\tau}+2\frac{\sigma}{\tau}+2}\cdot10^{\frac{3}{\tau}+\frac{\sigma}{\tau}}\leq L_{4}\cdot2^{\frac{7}{\tau_{0}}+2\omega+2}\cdot10^{\frac{3}{\tau_{0}}+\omega}=:L_{5},\\
C_{6}^{1/\tau} & =6^{\sigma/\tau}\cdot3^{4+K+5M_{3}}\leq6^{\omega}\cdot3^{4+K+5M_{3}}=:L_{6},\\
C_{7}^{1/\tau} & =C_{6}^{1/\tau}\cdot2^{\frac{5}{\tau}+2+2\frac{\sigma}{\tau}}\cdot10^{\frac{3}{\tau}+\frac{\sigma}{\tau}}\leq L_{6}\cdot2^{\frac{5}{\tau_{0}}+2+2\omega}\cdot10^{\frac{3}{\tau_{0}}+\omega}=:L_{7},\\
C_{8}^{1/\tau} & =2^{\frac{2}{\tau}+M_{3}}C_{7}^{1/\tau}\leq L_{7}\cdot2^{\frac{2}{\tau_{0}}+M_{3}}=:L_{8},\\
C_{9}^{1/\tau} & =6^{\sigma/\tau}\cdot3^{3+K+3M_{3}}\cdot4^{K}\leq6^{\omega}\cdot3^{3+K+3M_{3}}\cdot4^{K}=:L_{9},\\
C_{10}^{1/\tau} & =6^{\sigma/\tau}\cdot C_{9}^{1/\tau}\leq6^{\omega}\cdot L_{9}=:L_{10},
\end{align*}
where the constants $L_{1},\dots,L_{10}$ only depend on $\alpha,\tau_{0},\omega,c,K,H,M_{1},M_{2}$.
Taken together, these considerations easily imply $C_{0}^{\left(2\right)}\leq\left[C_{00}^{\left(2\right)}\right]^{\tau}$
and $C_{0}^{\left(3\right)}\leq\left[C_{00}^{\left(3\right)}\right]^{\tau}$,
where $C_{00}^{\left(2\right)}$ and $C_{00}^{\left(3\right)}$ only
depend on $\alpha,\tau_{0},\omega,c,K,H,M_{1},M_{2}$.

Likewise, the constant $C_{0}^{\left(1\right)}=2^{19+7\sigma+\tau\left(5+2K+2M_{3}\right)}/\left(1\!-\!2^{-\tau c}\right)$
from Subsection \ref{subsec:BothRightCone} can be estimated by
\[
\left[C_{0}^{\left(1\right)}\right]^{1/\tau}\leq\Omega_{2}\cdot2^{\frac{19}{\tau}+7\frac{\sigma}{\tau}+5+2K+2M_{3}}\leq\Omega_{2}\cdot2^{\frac{19}{\tau_{0}}+7\omega+5+2K+2M_{3}}=:C_{00}^{\left(1\right)},
\]
where $C_{00}^{\left(1\right)}$ only depends on $\alpha,\tau_{0},\omega,c,K,H,M_{1},M_{2}$.

\subsection{We have \texorpdfstring{$\ell_{1}=\left(1,0\right)$}{ℓ₁=(1,0)}
and \texorpdfstring{$\ell_{2}=\left(1,1\right)$}{ℓ₂=(1,1)}}

\label{subsec:RightConeUpperCone}Geometrically, this case means that
$i$ belongs to the right cone, while $j$ belongs to the upper cone.
In this case, we have $i=\left(n,m,1,0\right)$ and $j=\left(\nu,\mu,1,1\right)$
and hence—because of $R=R^{-1}$—that
\[
T_{\nu,\mu,1,1}^{-1}T_{n,m,1,0}=\left(R\cdot A_{\nu,\mu,1}^{\left(\alpha\right)}\right)^{-1}A_{n,m,1}^{\left(\alpha\right)}=\left(A_{\nu,\mu,1}^{\left(\alpha\right)}\right)^{-1}\cdot R^{-1}\cdot A_{n,m,1}^{\left(\alpha\right)}=\left(A_{\nu,\mu,1}^{\left(\alpha\right)}\right)^{-1}\cdot\left(RA_{n,m,1}^{\left(\alpha\right)}\right)=T_{\nu,\mu,1,0}^{-1}T_{n,m,1,1}.
\]
Since furthermore $\varrho_{(\nu,\mu,1,1)}=\varrho_{(\nu,\mu,1,0)}=\varrho$
and since the weight $w_{\left(n,m,\varepsilon,\delta\right)}=2^{n}$
is independent of $\varepsilon,\delta$, we get
\begin{align*}
 & M_{\left(\nu,\mu,1,1\right),\left(n,m,1,0\right)}^{\left(0\right)}\\
 & =\left(\frac{w_{\nu,\mu,1,1}^{s}}{w_{n,m,1,0}^{s}}\right)^{\tau}\left(1\!+\!\left\Vert T_{\nu,\mu,1,1}^{-1}T_{n,m,1,0}\right\Vert \right)^{\sigma}\left(\left|\det T_{n,m,1,0}\right|^{-1}\int_{S_{\left(n,m,1,0\right)}^{\left(\alpha\right)}}\varrho_{(\nu,\mu,1,1)}\left(T_{\nu,\mu,1,1}^{-1}\xi\right)\d\xi\right)^{\!\tau}\\
\left({\scriptstyle \zeta=T_{n,m,1,0}^{-1}\xi}\right) & =\left(\frac{w_{\nu,\mu,1,0}^{s}}{w_{n,m,1,1}^{s}}\right)^{\tau}\left(1\!+\!\left\Vert T_{\nu,\mu,1,0}^{-1}T_{n,m,1,1}\right\Vert \right)^{\sigma}\left(\int_{Q}\varrho_{(\nu,\mu,1,1)}\left(T_{\left(\nu,\mu,1,1\right)}^{-1}T_{\left(n,m,1,0\right)}\zeta\right)\d\zeta\right)^{\tau}\\
 & =\left(\frac{w_{\nu,\mu,1,0}^{s}}{w_{n,m,1,1}^{s}}\right)^{\tau}\left(1\!+\!\left\Vert T_{\nu,\mu,1,0}^{-1}T_{n,m,1,1}\right\Vert \right)^{\sigma}\left(\int_{Q}\varrho_{(\nu,\mu,1,0)}\left(T_{\left(\nu,\mu,1,0\right)}^{-1}T_{\left(n,m,1,1\right)}\zeta\right)\d\zeta\right)^{\tau}\\
\left({\scriptstyle \xi=T_{n,m,1,1}\zeta}\right) & =\left(\frac{w_{\nu,\mu,1,0}^{s}}{w_{n,m,1,1}^{s}}\right)^{\tau}\left(1\!+\!\left\Vert T_{\nu,\mu,1,0}^{-1}T_{n,m,1,1}\right\Vert \right)^{\sigma}\left(\left|\det T_{n,m,1,1}\right|^{-1}\int_{S_{\left(n,m,1,1\right)}^{\left(\alpha\right)}}\varrho_{(\nu,\mu,1,0)}\left(T_{\nu,\mu,1,0}^{-1}\xi\right)\d\xi\right)^{\!\tau}\\
 & =M_{\left(\nu,\mu,1,0\right),\left(n,m,1,1\right)}^{\left(0\right)}.
\end{align*}

But Subsection \ref{subsec:UpperConeRightCone} shows under the assumptions
of Lemma \ref{lem:MainShearletLemma} that
\begin{align*}
\sup_{i\in I^{\left(1,0\right)}}\,\sum_{j\in I^{\left(1,1\right)}}M_{j,i}^{\left(0\right)} & =\sup_{n\in\N_{0}}\sup_{\left|m\right|\leq G_{n}}\sum_{\nu\in\N_{0}}\sum_{\left|\mu\right|\leq G_{\nu}}M_{\left(\nu,\mu,1,1\right),\left(n,m,1,0\right)}^{\left(0\right)}\\
 & =\sup_{n\in\N_{0}}\sup_{\left|m\right|\leq G_{n}}\sum_{\nu\in\N_{0}}\sum_{\left|\mu\right|\leq G_{\nu}}M_{\left(\nu,\mu,1,0\right),\left(n,m,1,1\right)}^{\left(0\right)}=\sup_{i\in I^{\left(1,1\right)}}\,\sum_{j\in I^{\left(1,0\right)}}M_{j,i}^{\left(0\right)}\leq C_{0}^{\left(2\right)}\leq\left[C_{00}^{\left(2\right)}\right]^{\tau}
\end{align*}
and
\begin{align*}
\sup_{j\in I^{\left(1,1\right)}}\,\sum_{i\in I^{\left(1,0\right)}}M_{j,i}^{\left(0\right)} & =\sup_{\nu\in\N_{0}}\sup_{\left|\mu\right|\leq G_{\nu}}\sum_{n\in\N_{0}}\sum_{\left|m\right|\leq G_{n}}M_{\left(\nu,\mu,1,1\right),\left(n,m,1,0\right)}^{\left(0\right)}\\
 & =\sup_{\nu\in\N_{0}}\sup_{\left|\mu\right|\leq G_{\nu}}\sum_{n\in\N_{0}}\sum_{\left|m\right|\leq G_{n}}M_{\left(\nu,\mu,1,0\right),\left(n,m,1,1\right)}^{\left(0\right)}=\sup_{j\in I^{\left(1,0\right)}}\,\sum_{i\in I^{\left(1,1\right)}}M_{j,i}^{\left(0\right)}\leq C_{0}^{\left(3\right)}\leq\left[C_{00}^{\left(3\right)}\right]^{\tau}.
\end{align*}

\subsection{We have \texorpdfstring{$\ell_{1}=\ell_{2}=\left(1,1\right)$}{ℓ₁=ℓ₂=(1,1)}}

\label{subsec:BothUpperCone}Geometrically, this case means that both
$i$ and $j$ belong to the upper cone. In this case, we have $i=\left(n,m,1,1\right)$
and $j=\left(\nu,\mu,1,1\right)$ and hence
\[
T_{\nu,\mu,1,1}^{-1}T_{n,m,1,1}=\left(R\cdot A_{\nu,\mu,1}^{\left(\alpha\right)}\right)^{-1}\cdot\left(R\cdot A_{n,m,1}^{\left(\alpha\right)}\right)=\left(A_{\nu,\mu,1}^{\left(\alpha\right)}\right)^{-1}\cdot A_{n,m,1}^{\left(\alpha\right)}=T_{\nu,\mu,1,0}^{-1}T_{n,m,1,0},
\]
as well as $\varrho_{\nu,\mu,1,1}=\varrho_{\nu,\mu,1,0}=\varrho$.
This implies precisely as in the preceding subsection that
\[
M_{\left(\nu,\mu,1,1\right),\left(n,m,1,1\right)}^{\left(0\right)}=M_{\left(\nu,\mu,1,0\right),\left(n,m,1,0\right)}^{\left(0\right)}.
\]

Then, we use that Subsection \ref{subsec:BothRightCone} shows under
the assumptions of Lemma \ref{lem:MainShearletLemma} that 
\[
\sup_{i\in I^{\left(1,1\right)}}\,\sum_{j\in I^{\left(1,1\right)}}M_{j,i}^{\left(0\right)}=\sup_{i\in I^{\left(1,0\right)}}\,\sum_{j\in I^{\left(1,0\right)}}M_{j,i}^{\left(0\right)}\leq C_{0}^{\left(1\right)}\leq\left[C_{00}^{\left(1\right)}\right]^{\tau}
\]
and
\[
\sup_{j\in I^{\left(1,1\right)}}\,\sum_{i\in I^{\left(1,1\right)}}M_{j,i}^{\left(0\right)}=\sup_{j\in I^{\left(1,0\right)}}\,\sum_{i\in I^{\left(1,0\right)}}M_{j,i}^{\left(0\right)}\leq C_{0}^{\left(1\right)}\leq\left[C_{00}^{\left(1\right)}\right]^{\tau}.
\]

\subsection{We have \texorpdfstring{$\ell_{1},\ell_{2}\in\left\{ -1\right\} \times\left\{ 0,1\right\} $}{ℓ₁,ℓ₂∈\{-1\}x\{0,1\}}}

This case comprises all the cases considered in Subsections \ref{subsec:BothRightCone}–\ref{subsec:BothUpperCone},
with the only difference that geometrically the lower and left cones
are considered instead of the upper and right cones. In this case,
we have $i=\left(n,m,-1,\delta\right)$ and $j=\left(\nu,\mu,-1,d\right)$
and hence 
\[
T_{\nu,\mu,-1,d}^{-1}T_{n,m,-1,\delta}=\left(-1\right)\cdot\left(-1\right)\cdot T_{\nu,\mu,1,d}^{-1}T_{n,m,1,\delta}=T_{\nu,\mu,1,d}^{-1}T_{n,m,1,\delta},
\]
as well as $\varrho_{\nu,\mu,-1,d}=\varrho_{\nu,\mu,1,d}=\varrho$.
As in Subsection \ref{subsec:RightConeUpperCone}, this implies that
\[
M_{\left(\nu,\mu,-1,d\right),\left(n,m,-1,\delta\right)}^{\left(0\right)}=M_{\left(\nu,\mu,1,d\right),\left(n,m,1,\delta\right)}^{\left(0\right)}.
\]
Hence, depending on $\delta$ and $d$ we get the same estimates as
in Subsections \ref{subsec:BothRightCone}–\ref{subsec:BothUpperCone}. 

\subsection{We have \texorpdfstring{$\ell_{1}\in\left\{ -1\right\} \times\left\{ 0,1\right\} $
and $\ell_{2}\in\left\{ 1\right\} \times\left\{ 0,1\right\} $}{ℓ₁∈\{-1\}x\{0,1\} and ℓ₂∈\{1\}x\{0,1\}}}

Geometrically this means that $i$ belongs to the left or lower cone
and $j$ belongs to the right or upper cone. In this case, we have
$i=\left(n,m,-1,\delta\right)$ and $j=\left(\nu,\mu,1,d\right)$
and hence 
\[
T_{\nu,\mu,1,d}^{-1}T_{n,m,-1,\delta}=(-1)\cdot T_{\nu,\mu,1,d}^{-1}T_{n,m,1,\delta}.
\]
Consequently, we get $\left\Vert T_{\nu,\mu,1,d}^{-1}T_{n,m,-1,\delta}\right\Vert =\left\Vert T_{\nu,\mu,1,d}^{-1}T_{n,m,1,\delta}\right\Vert $.
Now, since we have $\varrho\left(-\xi\right)=\varrho\left(\xi\right)$
for all $\xi\in\R^{2}$ and $\varrho_{\left(\nu,\mu,1,d\right)}=\varrho$,
we finally see
\[
\int_{Q}\varrho_{\nu,\mu,1,d}\left(T_{\nu,\mu,1,d}^{-1}T_{n,m,-1,\delta}\zeta\right)\d\zeta=\int_{Q}\varrho_{\nu,\mu,1,d}\left(-T_{\nu,\mu,1,d}^{-1}T_{n,m,1,\delta}\zeta\right)\d\zeta=\int_{Q}\varrho_{\nu,\mu,1,d}\left(T_{\nu,\mu,1,d}^{-1}T_{n,m,1,\delta}\zeta\right)\d\zeta.
\]
As before this implies $M_{\left(\nu,\mu,1,d\right),\left(n,m,-1,\delta\right)}^{\left(0\right)}=M_{\left(\nu,\mu,1,d\right),\left(n,m,1,\delta\right)}^{\left(0\right)}$
and depending on $\delta$ and $d$ we get the same estimates as in
Subsections \ref{subsec:BothRightCone}–\ref{subsec:BothUpperCone}.

\subsection{We have \texorpdfstring{$\ell_{1}\in\left\{ 1\right\} \times\left\{ 0,1\right\} $
and $\ell_{2}\in\left\{ -1\right\} \times\left\{ 0,1\right\} $}{ℓ₁∈\{1\}x\{0,1\} and ℓ₂∈\{-1\}x\{0,1\}}}

Geometrically this means that $i$ belongs to the right or upper cone
and $j$ belongs to the left or lower cone. In this case, we have
$i=\left(n,m,1,\delta\right)$ and $j=\left(\nu,\mu,-1,d\right)$
and hence 
\[
T_{\nu,\mu,-1,d}^{-1}T_{n,m,1,\delta}=\left(-1\right)\cdot T_{\nu,\mu,1,d}^{-1}T_{n,m,1,\delta}.
\]
Consequently, $\left\Vert T_{\nu,\mu,-1,d}^{-1}T_{n,m,1,\delta}\right\Vert =\left\Vert T_{\nu,\mu,1,d}^{-1}T_{n,m,1,\delta}\right\Vert $.
Now, since $\varrho_{\nu,\mu,-1,d}=\varrho_{\nu,\mu,1,d}=\varrho$
and since $\varrho\left(-\xi\right)=\varrho\left(\xi\right)$ for
all $\xi\in\R^{2}$, we get
\[
\int_{Q}\varrho_{\nu,\mu,-1,d)}\left(T_{\nu,\mu,-1,d}^{-1}T_{n,m,1,\delta}\zeta\right)\d\zeta=\int_{Q}\varrho_{\nu,\mu,1,d}\left(-T_{\nu,\mu,1,d}^{-1}T_{n,m,1,\delta}\zeta\right)\d\zeta=\int_{Q}\varrho_{\nu,\mu,1,d}\left(T_{\nu,\mu,1,d}^{-1}T_{n,m,1,\delta}\zeta\right)\d\zeta.
\]
As before this implies $M_{\left(\nu,\mu,-1,d\right),\left(n,m,1,\delta\right)}^{\left(0\right)}=M_{\left(\nu,\mu,1,d\right),\left(n,m,1,\delta\right)}^{\left(0\right)}$
and depending on $\delta$ and $d$ we get the same estimates as in
Subsections \ref{subsec:BothRightCone}–\ref{subsec:BothUpperCone}. 

\subsection{We have \texorpdfstring{$\ell_{1}=0$ and $\ell_{2}\in\left\{ \pm1\right\} \times\left\{ 0,1\right\} $}{ℓ₁=0 and ℓ₂∈\{1,-1\}x\{0,1\}}}

\label{subsec:LowPassAndRemainingStuff}In this case, we have for
$j=\left(\nu,\mu,e,d\right)\in I^{\left(\ell_{2}\right)}\subset I_{0}$
and $i\in I^{\left(\ell_{1}\right)}=I^{\left(0\right)}=\left\{ 0\right\} $
that
\[
\left\Vert T_{j}^{-1}T_{i}\right\Vert =\left\Vert T_{j}^{-1}\right\Vert =\left\Vert e\cdot\left(A_{\nu,\mu,1}^{\left(\alpha\right)}\right)^{-1}\cdot R^{-d}\right\Vert =\left\Vert \left(A_{\nu,\mu,1}^{\left(\alpha\right)}\right)^{-1}\right\Vert =\left\Vert \left(\begin{matrix}2^{-\nu} & 0\\
-2^{-\nu}\mu & 2^{-\alpha\nu}
\end{matrix}\right)\right\Vert \leq3\cdot\max\left\{ 1,2^{-\nu}\left|\mu\right|\right\} .
\]
But because of $\left|\mu\right|\leq G_{\nu}=\left\lceil \smash{2^{\left(1-\alpha\right)\nu}}\right\rceil \leq\left\lceil \smash{2^{\nu}}\right\rceil =2^{\nu}$,
we have $2^{-\nu}\left|\mu\right|\leq1$, which yields $\left\Vert T_{j}^{-1}T_{i}\right\Vert \leq3$.

Next, recall that $S_{0}^{\left(\alpha\right)}=Q_{0}'=\left(-1,1\right)^{2}$.
Because of $-\left(-1,1\right)^{2}=\left(-1,1\right)^{2}$, this implies
in case of $d=0$ that
\begin{align*}
T_{j}^{-1}S_{0}^{\left(\alpha\right)}=\left(A_{\nu,\mu,1}^{\left(\alpha\right)}\right)^{-1}\left(-1,1\right)^{2} & =\left\{ \left(\begin{matrix}2^{-\nu} & 0\\
-2^{-\nu}\mu & 2^{-\alpha\nu}
\end{matrix}\right)\left(\begin{matrix}\xi_{1}\\
\xi_{2}
\end{matrix}\right)\with\xi_{1},\xi_{2}\in\left(-1,1\right)\right\} \\
 & =\left\{ \left(\eta_{1},\eta_{2}\right)\in\R^{2}\with\eta_{1}\in\left(-2^{-\nu},2^{-\nu}\right),\,\eta_{2}\in\left(-2^{-\alpha\nu},2^{-\alpha\nu}\right)-\mu\eta_{1}\right\} .
\end{align*}
Likewise, since $R=R^{-1}$ and since $R\left(-1,1\right)^{2}=\left(-1,1\right)^{2}$,
we also get in case of $d=1$ that
\begin{align*}
T_{j}^{-1}S_{0}^{\left(\alpha\right)} & =\left(A_{\nu,\mu,1}^{\left(\alpha\right)}\right)^{-1}R\left(-1,1\right)^{2}=\left(A_{\nu,\mu,1}^{\left(\alpha\right)}\right)^{-1}\left(-1,1\right)^{2}\\
 & =\left\{ \left(\eta_{1},\eta_{2}\right)\in\R^{2}\with\eta_{1}\in\left(-2^{-\nu},2^{-\nu}\right),\,\eta_{2}\in\left(-2^{-\alpha\nu},2^{-\alpha\nu}\right)-\mu\eta_{1}\right\} .
\end{align*}

Consequently, we get in all cases that 
\begin{align*}
\left|\det T_{0}\right|^{-1}\cdot\int_{S_{0}^{\left(\alpha\right)}}\varrho_{j}\left(T_{j}^{-1}\xi\right)\d\xi & =\left|\det T_{j}\right|\cdot\int_{T_{j}^{-1}S_{0}^{\left(\alpha\right)}}\varrho\left(\eta\right)\d\eta\\
 & \leq2^{\left(1+\alpha\right)\nu}\cdot\int_{-2^{-\nu}}^{2^{-\nu}}\theta_{1}\left(\eta_{1}\right)\cdot\int_{-\mu\eta_{1}-2^{-\alpha\nu}}^{-\mu\eta_{1}+2^{-\alpha\nu}}\left(1+\left|\eta_{2}\right|\right)^{-K}\d\eta_{2}\d\eta_{1}.
\end{align*}
But for $\left|\eta_{1}\right|\leq2^{-\nu}$, we have $\theta_{1}\left(\eta_{1}\right)\leq\left|\eta_{1}\right|^{M_{1}}\leq2^{-M_{1}\nu}$,
so that
\[
\left|\det T_{0}\right|^{-1}\cdot\int_{S_{0}^{\left(\alpha\right)}}\varrho\left(T_{j}^{-1}\xi\right)\d\xi\leq2^{-M_{1}\nu}\cdot2^{\left(1+\alpha\right)\nu}\cdot\int_{-2^{-\nu}}^{2^{-\nu}}\int_{-\mu\eta_{1}-2^{-\alpha\nu}}^{-\mu\eta_{1}+2^{-\alpha\nu}}\d\eta_{2}\d\eta_{1}\leq4\cdot2^{-M_{1}\nu}.
\]
All in all, this implies
\[
M_{j,0}^{\left(0\right)}=\left(\frac{w_{j}^{s}}{w_{0}^{s}}\right)^{\tau}\cdot\left(1+\left\Vert T_{j}^{-1}T_{0}\right\Vert \right)^{\sigma}\cdot\left(\left|\det T_{0}\right|^{-1}\int_{S_{0}^{\left(\alpha\right)}}\varrho_{j}\left(T_{j}^{-1}\xi\right)\d\xi\right)^{\tau}\leq4^{\sigma}\cdot2^{\tau s\nu}\cdot4^{\tau}\cdot2^{-M_{1}\tau\nu},
\]
which yields
\begin{align*}
\sum_{j\in I^{\left(\ell_{2}\right)}}M_{j,0}^{\left(0\right)}= & \leq4^{\tau+\sigma}\cdot\sum_{\nu=0}^{\infty}\:\sum_{\left|\mu\right|\leq G_{\nu}}2^{\tau\nu\left(s-M_{1}\right)}\\
\left({\scriptstyle \text{since }G_{\nu}=\left\lceil \smash{2^{\left(1-\alpha\right)\nu}}\right\rceil \leq1+2^{\left(1-\alpha\right)\nu}\leq2\cdot2^{\left(1-\alpha\right)\nu}}\right) & \leq2^{3}4^{\tau+\sigma}\cdot\sum_{\nu=0}^{\infty}2^{\nu\left[\tau\left(s-M_{1}\right)+\left(1-\alpha\right)\right]}\\
 & \leq2^{3}4^{\tau+\sigma}\cdot\sum_{\nu=0}^{\infty}2^{-\nu\tau c}\\
 & \leq\frac{2^{3+2\tau+2\sigma}}{1-2^{-\tau c}}=:C_{0}^{\left(4\right)},
\end{align*}
since the assumptions of Lemma \ref{lem:MainShearletLemma} entail
$M_{1}\geq M_{1}^{\left(0\right)}+c\geq s+\frac{1}{\tau}+c\geq s+\frac{1-\alpha}{\tau}+c$.

Likewise, we get
\[
\sup_{j\in I^{\left(\ell_{2}\right)}}\,\sum_{i\in I^{\left(0\right)}}M_{j,i}^{\left(0\right)}=\sup_{j\in I^{\left(\ell_{2}\right)}}M_{j,0}^{\left(0\right)}\leq\sum_{j\in I^{\left(\ell_{2}\right)}}M_{j,0}^{\left(0\right)}\leq C_{0}^{\left(4\right)}.
\]
Finally, we see as at the end of Subsection \ref{subsec:UpperConeRightCone}
that 
\[
\left[C_{0}^{\left(4\right)}\right]^{1/\tau}\leq\left(\frac{1}{1-2^{-\tau_{0}c}}\right)^{1/\tau_{0}}\cdot2^{\frac{3}{\tau}+2+2\frac{\sigma}{\tau}}\leq\left(\frac{1}{1-2^{-\tau_{0}c}}\right)^{1/\tau_{0}}\cdot2^{\frac{3}{\tau_{0}}+2+2\omega}=:C_{00}^{\left(4\right)},
\]
where $C_{00}^{\left(4\right)}$ only depends on $\alpha,\tau_{0},\omega,c,K,H,M_{1},M_{2}$.

\subsection{We have \texorpdfstring{$\ell_{2}=0$ and $\ell_{1}\in\left\{ \pm1\right\} \times\left\{ 0,1\right\} $}{ℓ₂=0 and ℓ₁∈\{1,-1\}x\{0,1\}}}

\label{subsec:RemainingStuffAndLowPass}In this case, we have for
$i=\left(n,m,\varepsilon,\delta\right)\in I^{\left(\ell_{1}\right)}\subset I_{0}$
and $j\in I^{\left(\ell_{2}\right)}=I^{\left(0\right)}=\left\{ 0\right\} $
that
\[
1+\left\Vert T_{j}^{-1}T_{i}\right\Vert =1+\left\Vert T_{i}\right\Vert =1+\left\Vert \left(\begin{matrix}2^{n} & 0\\
2^{n\alpha}m & 2^{n\alpha}
\end{matrix}\right)\right\Vert \leq5\cdot2^{n},
\]
since $\left|2^{n\alpha}m\right|\leq2^{n\alpha}G_{n}\leq2^{n\alpha}\left(2^{n\left(1-\alpha\right)}+1\right)\leq2\cdot2^{n}$.

Furthermore, we note $\lambda_{\dimension}\left(Q\right)\leq18$,
since $Q=Q_{i}'=U_{\left(-1,1\right)}^{\left(3^{-1},3\right)}\subset\left(\frac{1}{3},3\right)\times\left(-3,3\right)$.
Thus,
\begin{equation}
\left|\det T_{i}\right|^{-1}\cdot\int_{S_{i}^{\left(\alpha\right)}}\varrho_{j}\left(T_{j}^{-1}\xi\right)\d\xi=\int_{Q}\varrho_{0}\left(T_{i}\eta\right)\d\eta\leq18\cdot\sup_{\eta\in Q}\varrho_{0}\left(T_{i}\eta\right).\label{eq:RemainingStuffAndLowPassIntegralEstimatedBySupremum}
\end{equation}
Now, we distinguish the cases $\delta=0$ and $\delta=1$:
\begin{enumerate}
\item For $\delta=0$, we have
\[
T_{i}\eta=\left(\begin{matrix}\varepsilon\cdot2^{n}\eta_{1}\\
\varepsilon\cdot\left(2^{n\alpha}m\eta_{1}+2^{n\alpha}\eta_{2}\right)
\end{matrix}\right)\quad\text{ for }\eta=\left(\eta_{1},\eta_{2}\right)\in\R^{2}.
\]
But for $\eta\in Q$, we have $\frac{1}{3}<\eta_{1}<3$ and hence
$2^{n}\eta_{1}\geq2^{n}/3$, so that we get
\[
\varrho_{0}\left(T_{i}\eta\right)\leq\left(1+\left|\varepsilon\cdot2^{n}\eta_{1}\right|\right)^{-H}\leq3^{H}\cdot2^{-Hn},
\]
which yields $\left|\det T_{i}\right|^{-1}\cdot\int_{S_{i}^{\left(\alpha\right)}}\varrho_{j}\left(T_{j}^{-1}\xi\right)\d\xi\leq18\cdot3^{H}\cdot2^{-Hn}$
by virtue of equation \eqref{eq:RemainingStuffAndLowPassIntegralEstimatedBySupremum}.
\item For $\delta=1$, we have
\[
T_{i}\eta=\left(\begin{matrix}\varepsilon\cdot\left(2^{n\alpha}m\eta_{1}+2^{n\alpha}\eta_{2}\right)\\
\varepsilon\cdot2^{n}\eta_{1}
\end{matrix}\right)\quad\text{ for }\eta=\left(\eta_{1},\eta_{2}\right)\in\R^{2}.
\]
Again, for $\eta\in Q$, we have $2^{n}\eta_{1}\geq2^{n}/3$ and hence
\[
\varrho_{0}\left(T_{i}\eta\right)\leq\left(1+\left|\varepsilon\cdot2^{n}\eta_{1}\right|\right)^{-H}\leq3^{H}\cdot2^{-Hn},
\]
which as above yields $\left|\det T_{i}\right|^{-1}\cdot\int_{S_{i}^{\left(\alpha\right)}}\varrho_{j}\left(T_{j}^{-1}\xi\right)\d\xi\leq18\cdot3^{H}\cdot2^{-Hn}$.
\end{enumerate}
In total, we get for each case the estimate
\begin{align*}
M_{0,i}^{\left(0\right)} & =\left(\frac{w_{0}^{s}}{w_{i}^{s}}\right)^{\tau}\cdot\left(1+\left\Vert T_{0}^{-1}T_{i}\right\Vert \right)^{\sigma}\cdot\left(\left|\det T_{i}\right|^{-1}\cdot\int_{S_{i}^{\left(\alpha\right)}}\varrho_{0}\left(T_{0}^{-1}\xi\right)\d\xi\right)^{\tau}\\
 & \leq2^{-s\tau n}\cdot5^{\sigma}\cdot2^{n\sigma}\cdot18^{\tau}\cdot3^{H\tau}\cdot2^{-\tau Hn}\\
 & \leq2^{3\sigma+5\tau+2H\tau}\cdot2^{n\tau\left(\frac{\sigma}{\tau}-s-H\right)}.
\end{align*}
Thus, we get on the one hand
\begin{align*}
\sum_{i\in I^{\left(\ell_{1}\right)}}M_{0,i}^{\left(0\right)} & \leq2^{3\sigma+5\tau+2H\tau}\cdot\sum_{n=0}^{\infty}\:\sum_{\left|m\right|\leq G_{n}}2^{n\tau\left(\frac{\sigma}{\tau}-s-H\right)}\\
\left({\scriptstyle \text{since }G_{n}=\left\lceil 2^{n\left(1-\alpha\right)}\right\rceil \leq1+2^{n\left(1-\alpha\right)}\leq2\cdot2^{n\left(1-\alpha\right)}}\right) & \leq2^{3+3\sigma+5\tau+2H\tau}\cdot\sum_{n=0}^{\infty}2^{n(1-\alpha)}2^{n\tau\left(\frac{\sigma}{\tau}-s-H\right)}\\
 & \leq2^{3+3\sigma+5\tau+2H\tau}\cdot\frac{1}{1-2^{-c\tau}}=:C_{0}^{\left(5\right)},
\end{align*}
since the assumptions of Lemma \ref{lem:MainShearletLemma} imply
$H\geq H_{0}+c=\frac{1-\alpha}{\tau}+\frac{\sigma}{\tau}-s+c$.

Likewise, the summation over $j$ yields
\[
\sup_{i\in I^{\left(\ell_{1}\right)}}\,\sum_{j\in I^{\left(0\right)}}M_{j,i}^{\left(0\right)}=\sup_{i\in I^{\left(\ell_{1}\right)}}M_{0,i}^{\left(0\right)}\leq\sum_{i\in I^{(\ell_{1})}}M_{0,i}^{\left(0\right)}\leq C_{0}^{\left(5\right)}.
\]
Finally, we get as at the end of Subsection \ref{subsec:UpperConeRightCone}
that 
\[
\left[C_{0}^{\left(5\right)}\right]^{1/\tau}\leq\left(\frac{1}{1-2^{-c\tau_{0}}}\right)^{1/\tau_{0}}\cdot2^{\frac{3}{\tau}+3\frac{\sigma}{\tau}+5+2H}\leq\left(\frac{1}{1-2^{-c\tau_{0}}}\right)^{1/\tau_{0}}\cdot2^{\frac{3}{\tau_{0}}+3\omega+5+2H}=:C_{00}^{\left(5\right)},
\]
where $C_{00}^{\left(5\right)}$ only depends on $\alpha,\tau_{0},\omega,c,K,H,M_{1},M_{2}$.

\subsection{We have \texorpdfstring{$\ell_{1}=\ell_{2}=0$}{ℓ₁=ℓ₂=0}}

\label{subsec:LowPassLowPass}Here, the sum and the supremum reduce
to a single term, namely to
\begin{align*}
M_{0,0}^{\left(0\right)} & =\left(\frac{w_{0}^{s}}{w_{0}^{s}}\right)^{\tau}\cdot\left(1+\left\Vert T_{0}^{-1}T_{0}\right\Vert \right)^{\sigma}\cdot\left(\left|\det T_{0}\right|^{-1}\int_{S_{0}^{\left(\alpha\right)}}\varrho_{0}\left(T_{0}^{-1}\xi\right)\d\xi\right)^{\tau}\\
\left({\scriptstyle \text{since }Q_{0}'=\left(-1,1\right)^{2}}\right) & \leq2^{\sigma}\cdot\left(\int_{Q_{0}'}\left(1+\left|\xi\right|\right)^{-H}\d\xi\right)^{\tau}\leq2^{\sigma}\cdot\left[\lambda_{\dimension}\left(Q_{0}'\right)\right]^{\tau}\leq2^{\sigma}\cdot4^{\tau}=:C_{0}^{\left(6\right)},
\end{align*}
where $\left[\smash{C_{0}^{\left(6\right)}}\right]^{1/\tau}\leq2^{\sigma/\tau}\cdot4\leq4\cdot2^{\omega}=:C_{00}^{\left(6\right)}$.

\subsection{Completing the proof of Lemma \ref{lem:MainShearletLemma}}

By recalling equations \eqref{eq:ShearletSchurTestSubdivision1} and
\eqref{eq:ShearletSchurTestSubdivision2} and by collecting our results
from Subsections \ref{subsec:BothRightCone}–\ref{subsec:LowPassLowPass},
we finally conclude that
\[
\max\left\{ \sup_{j\in I}\,\sum_{i\in I}M_{j,i}^{\left(0\right)},\,\sup_{i\in I}\,\sum_{j\in I}M_{j,i}^{\left(0\right)}\right\} \leq25\cdot\left(\max\left\{ C_{00}^{\left(1\right)},C_{00}^{\left(2\right)},C_{00}^{\left(3\right)},C_{00}^{\left(4\right)},C_{00}^{\left(5\right)},C_{00}^{\left(6\right)}\right\} \right)^{\tau},
\]
given that the assumptions of Lemma \ref{lem:MainShearletLemma} are
fulfilled. This easily yields the claim of Lemma \ref{lem:MainShearletLemma}.\hfill $\square$

\section{The proof of Proposition \ref{prop:CartoonLikeFunctionsBoundedInAlphaShearletSmoothness}
in the general case}

\label{sec:CartoonLikeFunctionsAreBoundedInAlphaShearletSmoothness}Recall
that the parameter $\alpha$ for the definition of the $\alpha$-shearlet
smoothness spaces $\mathscr{S}_{\alpha,s}^{p,q}\left(\R^{2}\right)$
satisfies $\alpha\in\left[0,1\right]$, as for the theory of $\alpha$-molecules
developed in \cite{AlphaMolecules} or as for $\alpha$-curvelets\cite{CartoonApproximationWithAlphaCurvelets}.
In contrast, there is a definition of cone-adapted $\beta$-shearlets
(cf.\@ \cite[Definition 3.10]{AlphaMolecules}) for $\beta\in\left(1,\infty\right)$.

In this section we introduce so-called \textbf{\emph{reciprocal}}\textbf{
$\beta$-shearlet smoothness spaces} $\mathscr{S}_{\beta,s}^{p,q}\left(\R^{2}\right)$
which will turn out to be the smoothness spaces associated to $\beta$-shearlets.
Our main goal is to show $\mathscr{S}_{\beta,s}^{p,q}\left(\R^{2}\right)=\mathscr{S}_{\beta^{-1},s}^{p,q}\left(\R^{2}\right)$
for $\beta\in\left(1,\infty\right)$, i.e., the reciprocal $\beta$-shearlet
smoothness spaces coincide with the usual $\alpha$-shearlet smoothness
spaces for $\alpha=\beta^{-1}$. This will allow us to transfer approximation
results that are known for $\beta$-shearlets to approximation results
for $\alpha$-shearlets, which is not entirely trivial, since the
two definitions differ quite heavily for $\beta\neq2$, see also the
discussion before Definition \ref{def:AlphaShearletSystem}. Once
this property from $\beta$-shearlets to $\alpha$-shearlets is established,
we use it to prove Proposition \ref{prop:CartoonLikeFunctionsBoundedInAlphaShearletSmoothness}
for $\beta\in\left(1,2\right)$.

We begin with the definition of the reciprocal $\beta$-shearlet covering:
\begin{defn}
\label{def:ReciprocalBetaShearletCovering}For $\beta\in\left(1,\infty\right)$,
define
\[
J_{0}:=J_{0}^{\left(\beta\right)}:=\left\{ \left(j,\ell,\delta\right)\in\N_{0}\times\Z\times\left\{ 0,1\right\} \with\left|\ell\right|\leq H_{j}\right\} \quad\text{ with }\quad H_{j}:=H_{j}^{\left(\beta\right)}:=\left\lceil \smash{2^{\frac{j}{2}\left(\beta-1\right)}}\right\rceil .
\]
Furthermore, recall the matrices $S_{x},D_{b}^{\left(\alpha\right)}$
and $R$ from equation \eqref{eq:StandardMatrices}, and define
\[
Y_{j,\ell,\delta}:=Y_{j,\ell,\delta}^{\left(\beta\right)}:=R^{\delta}\cdot D_{\left(2^{\beta j/2}\right)}^{\left(1/\beta\right)}\cdot S_{\ell}^{T}\quad\text{ for }\left(j,\ell,\delta\right)\in J_{0}
\]
and $P_{j}':=P:=U_{\left(-3,3\right)}^{\left(\mu_{0}^{-1},\mu_{0}\right)}\cup\left(\vphantom{U^{\left(\mu\right)}}-\smash{U_{\left(-3,3\right)}^{\left(\mu_{0}^{-1},\mu_{0}\right)}}\right)$
for $j\in J_{0}$ with $U_{\left(a,b\right)}^{\left(\gamma,\mu\right)}$
as in equation \eqref{eq:BasicShearletSet} and with $\mu_{0}:=\mu_{0}^{\left(\beta\right)}:=3\cdot2^{\beta/2}$.

Finally, define $J:=J^{\left(\beta\right)}:=\left\{ 0\right\} \uplus J_{0}$,
set $c_{j}:=0$ for all $j\in J$ and $Y_{0}:=Y_{0}^{\left(\beta\right)}:=\identity$,
as well as $P_{0}':=\left(-1,1\right)^{2}$. Then, the \textbf{reciprocal
$\beta$-shearlet covering} is defined as
\[
\CalS^{\left(\beta\right)}:=\left(\smash{S_{j}^{\left(\beta\right)}}\right)_{j\in J}:=\left(\smash{Y_{j}^{\left(\beta\right)}}P_{j}'\right)_{j\in J}=\left(\smash{Y_{j}^{\left(\beta\right)}}P_{j}'+c_{j}\right)_{j\in J}.\qedhere
\]
\end{defn}
\begin{rem*}
The notation $\CalS^{\left(\beta\right)}$ for the reciprocal $\beta$-shearlet
covering might appear to be ambiguous with the notation $\CalS^{\left(\alpha\right)}$
for the $\alpha$-shearlet covering introduced in Definition \ref{def:AlphaShearletCovering},
but this is no real ambiguity: The parameter $\beta$ in the preceding
definition always satisfies $\beta\in\left(1,\infty\right)$, while
the parameter $\alpha$ from Definition \ref{def:AlphaShearletCovering}
satisfies $\alpha\in\left[0,1\right]$, so that no ambiguity is possible.
\end{rem*}
As for the usual $\alpha$-shearlet covering, our first goal is to
show that $\CalS^{\left(\beta\right)}$ is an almost structured covering
of $\R^{2}$. In this case, however, it will turn out to be useful
to show the following slightly more general result:
\begin{lem}
\label{lem:ReciprocalShearletCoveringIsAlmostStructuredGeneralized}Let
$\beta\in\left(1,\infty\right)$, $a,b\in\R$ and $\gamma,\mu,A\in\left(0,\infty\right)$
be arbitrary and let $U:=U_{\left(a,b\right)}^{\left(\gamma,\mu\right)}\cup\left(\vphantom{U^{\left(\gamma\right)}}-\smash{U_{\left(a,b\right)}^{\left(\gamma,\mu\right)}}\right)$,
as well as $U_{0}':=\left(-A,A\right)^{2}$. Define $U_{j}':=U$ for
$j\in J_{0}$ and consider the family
\[
\mathcal{U}:=\left(U_{j}\right)_{j\in J}:=\left(\smash{Y_{j}^{\left(\beta\right)}}\,U_{j}'\right)_{j\in J}.
\]
Then there are constants $N\in\N$ and $C,L\geq1$ (depending on $\beta,a,b,\gamma,\mu,A$)
such that the following are true:

\begin{enumerate}
\item We have $L^{-1}\cdot2^{\frac{\beta}{2}n}\leq\left|\xi\right|\leq L\cdot2^{\frac{\beta}{2}n}$
for all $\xi\in U_{n,m,\varepsilon}$ and arbitrary $\left(n,m,\varepsilon\right)\in J_{0}$.
\item We have $\left|i^{\ast}\right|\leq N$ for all $i\in J$ and $i^{\ast}:=\left\{ j\in J\with U_{j}\cap U_{i}\neq\emptyset\right\} $.
\item We have $\left\Vert Y_{i}^{-1}Y_{j}\right\Vert \leq C$ for all $i\in J$
and $j\in i^{\ast}$.\qedhere
\end{enumerate}
\end{lem}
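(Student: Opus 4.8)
The plan is to prove Lemma \ref{lem:ReciprocalShearletCoveringIsAlmostStructuredGeneralized} by mimicking the structure of the proof of Lemma \ref{lem:AlphaShearletCoveringIsAlmostStructured}, but working directly in terms of the matrices $Y_j^{(\beta)} = R^\delta \cdot D_{(2^{\beta j/2})}^{(1/\beta)} \cdot S_\ell^T$. The key observation is that $D_{(2^{\beta j/2})}^{(1/\beta)} = \mathrm{diag}(2^{\beta j/2}, 2^{j/2})$, so these matrices are essentially $\alpha$-parabolic dilations with $\alpha = 1/\beta$ but in ``$2^{j/2}$'' units rather than ``$2^j$'' units. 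Thus I expect to recover exactly the combinatorics of the earlier proof after a reparametrization.

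First I would establish part (1). Using the transformation rules for the sets $U_{(a,b)}^{(\gamma,\mu)}$ from Lemma \ref{lem:AlphaShearletCoveringAuxiliary} (equation \eqref{eq:BaseSetTransformationRules}), I would compute that $D_{(2^{\beta n/2})}^{(1/\beta)} S_m^T U_{(a,b)}^{(\gamma,\mu)} = U_{(2^{n(1-\beta)/2}(m+a),\, 2^{n(1-\beta)/2}(m+b))}^{(2^{\beta n/2}\gamma,\, 2^{\beta n/2}\mu)}$. From this one reads off that the first coordinate of any $\xi \in U_{n,m,\varepsilon}$ lies in $\pm 2^{\beta n/2}(\gamma,\mu)$, while the constraint $|m| \le H_n = \lceil 2^{\frac{n}{2}(\beta-1)} \rceil \le 1 + 2^{\frac n2(\beta-1)}$ forces $|\eta/\xi| \lesssim 1$ (exactly as in part (2) of Lemma \ref{lem:AlphaShearletCoveringAuxiliary}), so $|\xi| \asymp 2^{\beta n/2}$ with constants depending on $a,b,\gamma,\mu,\beta$; the $\delta = 1$ case follows by applying $R$, which only swaps coordinates. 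This gives part (1) with an explicit $L$.

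Next I would handle parts (2) and (3) together, via a Schur-type case distinction on the ``cone type'' $\delta, d \in \{0,1\}$ of the indices $i = (n,m,\varepsilon,\delta)$ and $j = (k,\ell,\gamma,d)$, just as in the proof of Lemma \ref{lem:AlphaShearletCoveringIsAlmostStructured}. For two indices with overlapping supports, part (1) immediately gives $2^{\beta n/2} \asymp 2^{\beta k/2}$, hence $|n-k| \le C_1$ for an absolute constant; and using the transformed-set formula above together with the diffeomorphism $(\xi,\eta) \mapsto (\xi, \eta/\xi)$ one gets that $\ell$ is confined to an interval of bounded length around $2^{\frac{k-n}{2}(\beta-1)} m$ (or, in the mixed-cone case $\delta \ne d$, one argues via $|\eta| < 3|\xi|$ and $|\xi| < 3|\eta|$ exactly as in Cases 2(a)/(b)/(c) of the earlier proof, distinguishing whether $n$ is small or large and whether $|\kappa|, |\iota|$ are small). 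This yields $|i^\ast| \le N$. For part (3) one explicitly writes out $Y_i^{-1} Y_j$ — which, as in equation \eqref{eq:matrixentries}, has entries that are products of $2^{(k-n)/2}$, $2^{\beta(k-n)/2}$, the shears $m,\ell$, and $1 - (\text{shear products})$ — and bounds each entry using $|n-k| \le C_1$, $|m| \le H_n$, $|\ell| \le H_k$, and the confinement of $\ell$ near $2^{\frac{k-n}{2}(\beta-1)}m$. Finally the low-frequency index $i = 0$ (or $j = 0$) is handled separately: part (1) shows $U_{n,m,\varepsilon}$ can only meet $U_0' = (-A,A)^2$ for bounded $n$, giving finitely many neighbours and a bounded norm.

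The main obstacle I anticipate is the mixed-cone case ($\delta \ne d$) in part (3): as in Case 2 of the proof of Lemma \ref{lem:AlphaShearletCoveringIsAlmostStructured}, controlling $\|Y_i^{-1}Y_j\|$ uniformly requires showing that when $i$ belongs to the ``horizontal'' cones and $j$ to the ``vertical'' ones (or vice versa) and their supports intersect, the shear parameters $m$ and $\ell$ are simultaneously large and in fact nearly reciprocal, which forces the dangerous off-diagonal entry $2^{k-n\beta/2 \cdots} - (\text{shear product})$ to be bounded. This is the lengthy $\varepsilon$-bookkeeping argument (the ``$|m| \ge \tfrac{64}{3} - 1$'' type estimates) and is where most of the work lies; since the excerpt does not need fully explicit constants here, I would cite the structure of that earlier argument and carry out the analogous computation with $2^{\frac{j}{2}}$ in place of $2^j$ and $\alpha = 1/\beta$, noting only the places where $\beta \in (1,\infty)$ (so $\beta - 1 > 0$ but $\beta$ may be arbitrarily large) requires the constants to depend on $\beta$.
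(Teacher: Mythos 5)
Your plan coincides with the paper's own proof: both reduce to the transformation rule from Lemma \ref{lem:AlphaShearletCoveringAuxiliary} (after noting $D_{(2^{\beta n/2})}^{(1/\beta)} = \operatorname{diag}(2^{\beta n/2},2^{n/2})$) to get part (1), then run the same four-fold case distinction on the ``cone type'' pair for parts (2) and (3), with the mixed-cone case handled exactly by the ``$m$ and $\ell$ nearly reciprocal'' argument you identify, and the low-frequency index treated separately. One small notational slip: the index set $J_0^{(\beta)}$ from Definition \ref{def:ReciprocalBetaShearletCovering} has only three components $(j,\ell,\delta)\in\N_0\times\Z\times\{0,1\}$ — there is no sign coordinate $\varepsilon\in\{\pm1\}$, since $P = P_u$ is already symmetric — so your indices should be written $(n,m,\delta)$, $(k,\ell,d)$; this does not affect the substance of the argument.
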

\begin{proof}
The proof uses the same ideas as that of Lemma \ref{lem:AlphaShearletCoveringIsAlmostStructured}
and is only provided here for completeness.

Set $c:=\max\left\{ \left|a\right|,\left|b\right|\right\} $ and note
$U_{\left(a,b\right)}^{\left(\gamma,\mu\right)}\subset U_{\left(-c,c\right)}^{\left(\gamma,\mu\right)}$,
so that we can assume $a=-c$ and $b=c$, since the claim of the lemma
is stronger the larger the set $U_{\left(a,b\right)}^{\left(\gamma,\mu\right)}$
is. By even further enlarging this set, we can also assume $c\geq1$.
With the same reasoning, we can assume $A\geq1$.

Next, note with $U_{\left(B,C\right)}^{\left(\kappa,\lambda\right)}$
as in equation \eqref{eq:BasicShearletSet} that
\begin{equation}
V_{\left(B,C\right)}^{\left(\kappa,\lambda\right)}:=U_{\left(B,C\right)}^{\left(\kappa,\lambda\right)}\cup\left(-U_{\left(B,C\right)}^{\left(\kappa,\lambda\right)}\right)=\left\{ \left(\begin{matrix}\xi\\
\eta
\end{matrix}\right)\in\R^{\ast}\times\R\with\left|\xi\right|\in\left(\kappa,\lambda\right)\text{ and }\frac{\eta}{\xi}\in\left(B,C\right)\right\} \label{eq:UnconnectedShearletBaseSetDefinition}
\end{equation}
for arbitrary $B,C\in\R$ and $\kappa,\lambda>0$. It is now an easy
consequence of equation \eqref{eq:BaseSetTransformationRules} and
of $a=-c$ and $b=c$ that
\begin{equation}
U_{n,m,0}=V_{\left(2^{n\left(1-\beta\right)/2}\left(m-c\right),2^{n\left(1-\beta\right)/2}\left(m+c\right)\right)}^{\left(2^{\beta n/2}\gamma,2^{\beta n/2}\mu\right)}\qquad\forall\left(n,m,0\right)\in J_{0}.\label{eq:ReciprocalShearletCoveringNiceExpression}
\end{equation}
Now, since we have $m+c\leq\left|m\right|+c$ and $m-c\geq-\left|m\right|-c=-\left(\left|m\right|+c\right)$,
we get for arbitrary $\left(\begin{smallmatrix}\xi\\
\eta
\end{smallmatrix}\right)\in U_{\left(n,m,0\right)}$ because of $\left|m\right|\leq\left\lceil 2^{\frac{n}{2}\left(\beta-1\right)}\right\rceil \leq2^{\frac{n}{2}\left(\beta-1\right)}+1$
that
\begin{equation}
\left|\frac{\eta}{\xi}\right|<2^{\frac{n}{2}\left(1-\beta\right)}\left(\left|m\right|+c\right)\leq2^{\frac{n}{2}\left(1-\beta\right)}\left(2^{\frac{n}{2}\left(\beta-1\right)}+1+c\right)\leq c+2\leq3c.\label{eq:ReciprocalShearletCoveringConeLikeCorridor}
\end{equation}
Here, we used that $2^{\frac{n}{2}\left(1-\beta\right)}\leq1$, since
$\beta>1$. Consequently, we get
\[
\gamma\cdot2^{\frac{\beta}{2}n}\leq\left|\xi\right|\leq\left|\left(\begin{matrix}\xi\\
\eta
\end{matrix}\right)\right|\leq\left|\xi\right|+\left|\eta\right|\leq\left(1+3c\right)\cdot\left|\xi\right|<2^{\frac{\beta}{2}n}\cdot4\mu c.
\]
This establishes the first part of the lemma for $L:=\max\left\{ \gamma^{-1},\,4\mu c,\,1\right\} $,
since we have $U_{n,m,1}=R\cdot U_{n,m,0}$ and $\left|R\xi\right|=\left|\xi\right|$
for all $\xi\in\R^{2}$.

\medskip{}

Now, let $i=\left(n,m,\delta\right)\in J_{0}$ be fixed and let $\left(j,\ell,\varepsilon\right)\in J_{0}$
such that there is some $\left(\begin{smallmatrix}\xi\\
\eta
\end{smallmatrix}\right)\in U_{n,m,\delta}\cap U_{j,\ell,\varepsilon}\neq\emptyset$. In the following, we want to derive conditions on $\left(j,\ell,\varepsilon\right)$
which allow us to estimate the set $i^{\ast}$, as well as the norm
$\left\Vert Y_{i}^{-1}Y_{j}\right\Vert $.

First of all, set $M:=\left\lceil \frac{2}{\beta}\cdot\log_{2}\left(L^{2}\right)\right\rceil \in\N_{0}$,
so that $2^{M}\geq2^{\frac{2}{\beta}\cdot\log_{2}\left(L^{2}\right)}$
and thus $2^{\frac{\beta}{2}M}\geq2^{\log_{2}\left(L^{2}\right)}=L^{2}$.
Consequently, the first part of the lemma implies $L^{-1}\cdot2^{\frac{\beta}{2}j}\leq\left|\left(\begin{smallmatrix}\xi\\
\eta
\end{smallmatrix}\right)\right|\leq L\cdot2^{\frac{\beta}{2}n}$ and thus $2^{\frac{\beta}{2}\left(j-n\right)}\leq L^{2}\leq2^{\frac{\beta}{2}M}$,
which entails $j-n\leq M$. By symmetry, we in fact get $\left|j-n\right|\leq M$
and thus $j\in\left\{ n-M,\dots,n+M\right\} $.

In order to establish further conditions on $\left(j,\ell,\varepsilon\right)$,
we distinguish several cases depending on $\varepsilon,\delta$:

\textbf{Case 1}: We have $\varepsilon=\delta=0$. In this case, equation
\eqref{eq:ReciprocalShearletCoveringNiceExpression} shows
\[
2^{\frac{1-\beta}{2}n}\left(m-c\right)<\frac{\eta}{\xi}<2^{\frac{1-\beta}{2}n}\left(m+c\right)\quad\text{ and }\quad2^{\frac{1-\beta}{2}j}\left(\ell-c\right)<\frac{\eta}{\xi}<2^{\frac{1-\beta}{2}j}\left(\ell+c\right).
\]
By rearranging, this implies for $C_{1}:=\left(2^{\frac{\beta-1}{2}M}+1\right)\cdot c$
that 
\[
\ell<2^{\frac{\beta-1}{2}\left(j-n\right)}\left(m+c\right)+c\leq2^{\frac{\beta-1}{2}\left(j-n\right)}m+C_{1},\quad\text{ as well as }\quad\ell>2^{\frac{\beta-1}{2}\left(j-n\right)}\left(m-c\right)-c\geq2^{\frac{\beta-1}{2}\left(j-n\right)}m-C_{1}.
\]
Consequently, with
\[
\Gamma_{n,m,t}:=\Z\cap\left[2^{\frac{\beta-1}{2}\left(t-n\right)}m-C_{1},2^{\frac{\beta-1}{2}\left(t-n\right)}m+C_{1}\right],
\]
we have established $\left(j,\ell,\varepsilon\right)\in\bigcup_{t=n-M}^{n+M}\left[\left\{ t\right\} \times\Gamma_{n,m,t}\times\left\{ 0\right\} \right]$.
But since every (closed) interval $I=\left[B,D\right]$ satisfies
$\left|I\cap\Z\right|\leq1+D-B$, we have $\left|\Gamma_{n,m,t}\right|\leq1+2C_{1}$
and thus
\begin{equation}
\left|\left\{ \left(j,\ell,0\right)\in J_{0}\with U_{j,\ell,0}\cap U_{n,m,0}\neq\emptyset\right\} \right|\leq\sum_{t=n-M}^{n+M}\left|\left\{ t\right\} \times\Gamma_{n,m,t}\times\left\{ 0\right\} \right|\leq\left(1+2M\right)\cdot\left(1+2C_{1}\right).\label{eq:ReciprocalCoveringAdmissibilityBothHorizontal}
\end{equation}

Furthermore, a direct computation shows
\[
Y_{n,m,0}^{-1}Y_{j,\ell,0}=\left(\begin{array}{c|c}
2^{\frac{\beta}{2}\left(j-n\right)} & 0\\
2^{\frac{j-n}{2}}\ell-2^{\frac{\beta}{2}\left(j-n\right)}m & 2^{\frac{j-n}{2}}
\end{array}\right).
\]
But thanks to $\left|j-n\right|\leq M$, we have $0\leq2^{\frac{\beta}{2}\left(j-n\right)}\leq2^{\frac{\beta}{2}M}$
and $0\leq2^{\frac{j-n}{2}}\leq2^{\frac{M}{2}}$. Finally, we saw
above that $\left|\ell-2^{\frac{\beta-1}{2}\left(j-n\right)}m\right|\leq C_{1}$,
so that
\[
\left|2^{\frac{j-n}{2}}\ell-2^{\frac{\beta}{2}\left(j-n\right)}m\right|=2^{\frac{j-n}{2}}\left|\ell-2^{\frac{\beta-1}{2}\left(j-n\right)}m\right|\leq2^{\frac{M}{2}}C_{1}.
\]
All in all, this implies $\left\Vert Y_{n,m,0}^{-1}\cdot Y_{j,\ell,0}\right\Vert \leq2^{\frac{\beta}{2}M}+2^{\frac{M}{2}}+2^{\frac{M}{2}}C_{1}$
and thus concludes our considerations for the present case.

\medskip{}

\textbf{Case 2}: We have $\varepsilon=1$ and $\delta=0$. In this
case, a direct calculation shows
\begin{equation}
Y_{n,m,0}^{-1}Y_{j,\ell,1}=\left(\begin{array}{c|c}
2^{\frac{1}{2}\left(j-\beta n\right)}\ell & 2^{\frac{1}{2}\left(j-\beta n\right)}\\
2^{\frac{1}{2}\left(\beta j-n\right)}-2^{\frac{1}{2}\left(j-\beta n\right)}m\ell & -2^{\frac{1}{2}\left(j-\beta n\right)}m
\end{array}\right).\label{eq:ReciprocalCoveringTransitionMatrixHorizontalVertical}
\end{equation}
We immediately recall that $\left|m\right|\leq\left\lceil 2^{\frac{n}{2}\left(\beta-1\right)}\right\rceil \leq1+2^{\frac{n}{2}\left(\beta-1\right)}\leq2\cdot2^{\frac{n}{2}\left(\beta-1\right)}$
and likewise $\left|\ell\right|\leq2\cdot2^{\frac{j}{2}\left(\beta-1\right)}$.
In conjunction with $\left|n-j\right|\leq M$ and $\beta>1$, this
implies
\begin{equation}
\begin{split}\left|2^{\frac{1}{2}\left(j-\beta n\right)}\ell\right| & \leq2\cdot2^{\frac{1}{2}\left(j-\beta n\right)}2^{\frac{j}{2}\left(\beta-1\right)}=2\cdot2^{\frac{\beta}{2}\left(j-n\right)}\leq2\cdot2^{\frac{\beta}{2}M},\\
\left|2^{\frac{1}{2}\left(j-\beta n\right)}\right| & \leq2^{\frac{1}{2}\left(j-n\right)}2^{\frac{n}{2}\left(1-\beta\right)}\leq2^{\frac{1}{2}\left(j-n\right)}\leq2^{\frac{M}{2}},\\
\left|-2^{\frac{1}{2}\left(j-\beta n\right)}m\right| & \leq2\cdot2^{\frac{1}{2}\left(j-\beta n\right)}2^{\frac{n}{2}\left(\beta-1\right)}=2\cdot2^{\frac{1}{2}\left(j-n\right)}\leq2\cdot2^{\frac{M}{2}}.
\end{split}
\label{eq:ReciprocalCoveringTransitionMatrixHorizontalVerticalEasyEstimates}
\end{equation}
In order to estimate the remaining entry of $Y_{n,m,0}^{-1}Y_{j,\ell,1}$
and to obtain an estimate similar to equation \eqref{eq:ReciprocalCoveringAdmissibilityBothHorizontal},
we have to work harder. To this end, define 
\begin{equation}
K:=\min\left\{ \left(12\cdot c^{2}\right)^{-1},\,\left(2^{\beta M}\cdot3c\right)^{-1}\right\} \in\left(0,1\right)\qquad\text{ and }\qquad n_{0}:=\frac{2}{\beta-1}\cdot\log_{2}\left(K^{-1}\right)\in\left(0,\infty\right).\label{eq:ReciprocalCoveringKN0Definition}
\end{equation}
Based on these quantities, we now distinguish two subcases:

\textbf{Case 2(a)}: We have $n\geq M+n_{0}$. First note that this
implies $j\geq n-M\geq n_{0}$. Furthermore, we have $2^{\frac{n_{0}}{2}\left(\beta-1\right)}=2^{\log_{2}\left(K^{-1}\right)}=K^{-1}$
and thus $2^{\frac{n}{2}\left(\beta-1\right)}\geq K^{-1}$ and $2^{\frac{j}{2}\left(\beta-1\right)}\geq K^{-1}$.
Next, note that equation \eqref{eq:ReciprocalShearletCoveringConeLikeCorridor}
implies because of $\left(\begin{smallmatrix}\xi\\
\eta
\end{smallmatrix}\right)\in U_{n,m,0}$ that $\left|\eta/\xi\right|<3c$. Likewise, since 
\begin{equation}
\left(\begin{matrix}\eta\\
\xi
\end{matrix}\right)=R\left(\begin{matrix}\xi\\
\eta
\end{matrix}\right)\in R\cdot U_{j,\ell,1}=RR\cdot U_{j,\ell,0}=U_{j,\ell,0},\label{eq:ReciprocalCoveringAdmissibilitySwitchedXiInUJL}
\end{equation}
another application of equation \eqref{eq:ReciprocalShearletCoveringConeLikeCorridor}
shows $\eta\neq0$ and $\left|\xi/\eta\right|<3c$, so that $\left(3c\right)^{-1}<\left|\eta/\xi\right|<3c$.

We now claim that this implies $\left|m\right|>c$. Indeed, if this
was false, we would get from equation \eqref{eq:ReciprocalShearletCoveringNiceExpression}
because of $2^{\frac{n}{2}\left(1-\beta\right)}\leq K$ that
\[
\left(3c\right)^{-1}<\left|\frac{\eta}{\xi}\right|<2^{\frac{n}{2}\left(1-\beta\right)}\cdot\left(\left|m\right|+c\right)\leq2c\cdot2^{\frac{n}{2}\left(1-\beta\right)}\leq2cK\leq\frac{2c}{12\cdot c^{2}}=\frac{1}{2}\cdot\frac{1}{3c}<\left(3c\right)^{-1},
\]
a contradiction. Because of $\left|m\right|>c$ we either have $m>c$
or $m<-c$. Let us now set $C_{2}:=2^{\beta M}\cdot3c$ and distinguish
these two subcases:

\textbf{Case 2(a)(i)}: We have $m>c$. We first claim that this implies
$m\geq2^{\frac{n}{2}\left(\beta-1\right)}-C_{2}$. To see this, assume
towards a contradiction that $m<2^{\frac{n}{2}\left(\beta-1\right)}-C_{2}$.
But equation \eqref{eq:ReciprocalShearletCoveringNiceExpression}
shows because of $\left(\begin{smallmatrix}\xi\\
\eta
\end{smallmatrix}\right)\in U_{n,m,0}$ that 
\[
0<2^{\frac{n}{2}\left(1-\beta\right)}\cdot\left(m-c\right)<\frac{\eta}{\xi}<2^{\frac{n}{2}\left(1-\beta\right)}\cdot\left(m+c\right)<2^{\frac{n}{2}\left(1-\beta\right)}\cdot\left(2^{\frac{n}{2}\left(\beta-1\right)}-C_{2}+c\right).
\]
By taking reciprocals and by noting $C_{2}\geq3c>c$, we arrive at
\[
\frac{\xi}{\eta}>\frac{2^{\frac{n}{2}\left(\beta-1\right)}}{2^{\frac{n}{2}\left(\beta-1\right)}-C_{2}+c}=1+\frac{C_{2}-c}{2^{\frac{n}{2}\left(\beta-1\right)}-C_{2}+c}>1+\frac{C_{2}-c}{2^{\frac{n}{2}\left(\beta-1\right)}}.
\]
But another application of equations \eqref{eq:ReciprocalShearletCoveringNiceExpression}
and \eqref{eq:ReciprocalCoveringAdmissibilitySwitchedXiInUJL} shows
because of $\left|\ell\right|\leq\left\lceil 2^{j\left(\beta-1\right)/2}\right\rceil \leq1+2^{j\left(\beta-1\right)/2}$
that
\[
\frac{\xi}{\eta}<2^{\frac{j}{2}\left(1-\beta\right)}\left(\ell+c\right)\leq2^{\frac{j}{2}\left(1-\beta\right)}\left(2^{\frac{j}{2}\left(\beta-1\right)}+1+c\right)\leq1+\frac{1+c}{2^{\frac{j}{2}\left(\beta-1\right)}}.
\]
A combination of the last two displayed equations finally yields
\[
\frac{C_{2}-c}{2^{\frac{n}{2}\left(\beta-1\right)}}<\frac{1+c}{2^{\frac{j}{2}\left(\beta-1\right)}}\quad\text{ and thus }\quad C_{2}<c+2^{\frac{\beta-1}{2}\left(n-j\right)}\cdot\left(1+c\right)\leq c+2^{\frac{\beta-1}{2}M}\cdot2c\leq c+2^{\beta M}\cdot2c\leq2^{\beta M}\cdot3c=C_{2},
\]
a contradiction. Here, we used that $\left|n-j\right|\leq M$ and
that $c\geq1$. This contradiction shows $m\geq2^{\frac{n}{2}\left(\beta-1\right)}-C_{2}$.

\medskip{}

Now, we claim similarly that $\ell\geq2^{\frac{j}{2}\left(\beta-1\right)}-C_{2}$.
To see this, assume towards a contradiction that $\ell<2^{\frac{j}{2}\left(\beta-1\right)}-C_{2}$.
Recall from equation \eqref{eq:ReciprocalShearletCoveringNiceExpression}
and because of $m>c$ that $\frac{\eta}{\xi}>2^{\frac{n}{2}\left(1-\beta\right)}\cdot\left(m-c\right)>0$,
so that also $\frac{\xi}{\eta}>0$. Now, an application of equations
\eqref{eq:ReciprocalShearletCoveringNiceExpression} and \eqref{eq:ReciprocalCoveringAdmissibilitySwitchedXiInUJL}
shows
\[
0<\frac{\xi}{\eta}<2^{\frac{j}{2}\left(1-\beta\right)}\cdot\left(\ell+c\right)<2^{\frac{j}{2}\left(1-\beta\right)}\cdot\left(2^{\frac{j}{2}\left(\beta-1\right)}-C_{2}+c\right).
\]
By taking reciprocals, we get as above because of $C_{2}\geq3c>c$
that
\[
\frac{\eta}{\xi}>\frac{2^{\frac{j}{2}\left(\beta-1\right)}}{2^{\frac{j}{2}\left(\beta-1\right)}-C_{2}+c}=1+\frac{C_{2}-c}{2^{\frac{j}{2}\left(\beta-1\right)}-C_{2}+c}>1+\frac{C_{2}-c}{2^{\frac{j}{2}\left(\beta-1\right)}}.
\]
But equation \eqref{eq:ReciprocalShearletCoveringNiceExpression}
shows because of $\left(\begin{smallmatrix}\xi\\
\eta
\end{smallmatrix}\right)\in U_{n,m,0}$ and since $\left|m\right|\leq\left\lceil 2^{n\left(\beta-1\right)/2}\right\rceil \leq1+2^{n\left(\beta-1\right)/2}$
that 
\[
\frac{\eta}{\xi}<2^{\frac{n}{2}\left(1-\beta\right)}\left(m+c\right)\leq2^{\frac{n}{2}\left(1-\beta\right)}\left(2^{\frac{n}{2}\left(\beta-1\right)}+1+c\right)\leq1+\frac{1+c}{2^{\frac{n}{2}\left(\beta-1\right)}}.
\]
Again, by combining the preceding two displayed equations, we obtain
a contradiction.

\medskip{}

We have thus shown $\ell\geq2^{\frac{j}{2}\left(\beta-1\right)}-C_{2}\geq\left\lceil \smash{2^{\frac{j}{2}\left(\beta-1\right)}}\right\rceil -\left(1+C_{2}\right)\geq\left\lceil \smash{2^{\frac{j}{2}\left(\beta-1\right)}}\right\rceil -\left(1+\left\lceil C_{2}\right\rceil \right)$.
Hence, setting $C_{3}:=1+\left\lceil C_{2}\right\rceil $, we have
shown for $n\geq M+n_{0}$ and $m\geq0$ (which entails $m>c$) that
\begin{equation}
\begin{split}\left|\left\{ \left(j,\ell,1\right)\in J_{0}\with U_{j,\ell,1}\cap U_{n,m,0}\neq\emptyset\right\} \right| & \leq\sum_{t=n-M}^{n+M}\left|\left\{ t\right\} \times\left\{ \left\lceil \smash{2^{\frac{t}{2}\left(\beta-1\right)}}\right\rceil -C_{3},\dots,\left\lceil \smash{2^{\frac{t}{2}\left(\beta-1\right)}}\right\rceil \right\} \times\left\{ 1\right\} \right|\\
 & \leq\left(1+2M\right)\cdot\left(1+C_{3}\right).
\end{split}
\label{eq:ReciprocalCoveringAdmissibilityHorizontalVertical-1}
\end{equation}
Now, we can finally also estimate the remaining entry of the transition
matrix $Y_{n,m,0}^{-1}Y_{j,\ell,1}$ (cf.\@ equation \eqref{eq:ReciprocalCoveringTransitionMatrixHorizontalVertical}):
Recall from the beginning of Case 2(a) and from equation \eqref{eq:ReciprocalCoveringKN0Definition}
that $2^{\frac{j}{2}\left(\beta-1\right)}\geq K^{-1}\geq2^{\beta M}\cdot3c=C_{2}$
and likewise that $2^{\frac{n}{2}\left(\beta-1\right)}\geq C_{2}$.
Hence, $\ell\geq2^{\frac{j}{2}\left(\beta-1\right)}-C_{2}\geq0$ and
similarly $m\geq0$, so that
\[
0\leq\ell m\leq\left\lceil \smash{2^{\frac{j}{2}\left(\beta-1\right)}}\right\rceil \cdot\left\lceil \smash{2^{\frac{n}{2}\left(\beta-1\right)}}\right\rceil \leq\left(1+2^{\frac{j}{2}\left(\beta-1\right)}\right)\cdot\left(1+2^{\frac{n}{2}\left(\beta-1\right)}\right)=2^{\frac{j}{2}\left(\beta-1\right)}2^{\frac{n}{2}\left(\beta-1\right)}+2^{\frac{n}{2}\left(\beta-1\right)}+2^{\frac{j}{2}\left(\beta-1\right)}+1.
\]
Consequently, we get because of $m\geq2^{\frac{n}{2}\left(\beta-1\right)}-C_{2}\geq0$
and $\ell\geq2^{\frac{j}{2}\left(\beta-1\right)}-C_{2}\geq0$ that
\begin{align}
\left|2^{\frac{1}{2}\left(\beta j-n\right)}\!-\!2^{\frac{1}{2}\left(j-\beta n\right)}m\ell\right| & =2^{\frac{1}{2}\left(j-\beta n\right)}\left|2^{\frac{n}{2}\left(\beta-1\right)}2^{\frac{j}{2}\left(\beta-1\right)}-m\ell\right|\nonumber \\
 & \leq2^{\frac{1}{2}\left(j-\beta n\right)}\!\cdot\!\left(\left|2^{\frac{n}{2}\left(\beta-1\right)}2^{\frac{j}{2}\left(\beta-1\right)}+2^{\frac{n}{2}\left(\beta-1\right)}+2^{\frac{j}{2}\left(\beta-1\right)}+1-m\ell\right|+\left|2^{\frac{n}{2}\left(\beta-1\right)}+2^{\frac{j}{2}\left(\beta-1\right)}+1\right|\right)\nonumber \\
 & =2^{\frac{1}{2}\left(j-\beta n\right)}\!\cdot\!\left(2^{\frac{n}{2}\left(\beta-1\right)}2^{\frac{j}{2}\left(\beta-1\right)}+2\cdot2^{\frac{n}{2}\left(\beta-1\right)}+2\cdot2^{\frac{j}{2}\left(\beta-1\right)}+2-m\ell\right)\nonumber \\
 & \leq2^{\frac{1}{2}\left(j-\beta n\right)}\!\cdot\!\left(2^{\frac{n}{2}\left(\beta-1\right)}2^{\frac{j}{2}\left(\beta-1\right)}\!+\!2\cdot2^{\frac{n}{2}\left(\beta-1\right)}\!+\!2\cdot2^{\frac{j}{2}\left(\beta-1\right)}\!+\!2\!-\!\left(2^{\frac{n}{2}\left(\beta-1\right)}\!-\!C_{2}\right)\left(2^{\frac{j}{2}\left(\beta-1\right)}\!-\!C_{2}\right)\right)\nonumber \\
 & =2^{\frac{1}{2}\left(j-\beta n\right)}\!\cdot\!\left(\left(2+C_{2}\right)\cdot2^{\frac{n}{2}\left(\beta-1\right)}+\left(2+C_{2}\right)\cdot2^{\frac{j}{2}\left(\beta-1\right)}+2-C_{2}^{2}\right)\nonumber \\
 & \leq2^{\frac{\beta}{2}\left(j-n\right)}\cdot2^{\frac{j}{2}\left(1-\beta\right)}\cdot\left(\left(2+C_{2}\right)\cdot2^{\frac{n}{2}\left(\beta-1\right)}+\left(2+C_{2}\right)\cdot2^{\frac{j}{2}\left(\beta-1\right)}+2\right)\nonumber \\
 & =2^{\frac{\beta}{2}\left(j-n\right)}\cdot\left(\left(2+C_{2}\right)\cdot2^{\left(n-j\right)\frac{\beta-1}{2}}+2+C_{2}+2\cdot2^{\frac{j}{2}\left(1-\beta\right)}\right)\nonumber \\
\left({\scriptstyle \text{since }\left|j-n\right|\leq M\text{ and }\beta>1}\right) & \leq2^{\frac{\beta}{2}M}\cdot\left(\left(2+C_{2}\right)\cdot2^{M\frac{\beta-1}{2}}+2+C_{2}+2\right)=:C_{4}.\label{eq:ReciprocalCoveringHorizontalVerticalDifficultMatrixEntry}
\end{align}
In conjunction with equation \eqref{eq:ReciprocalCoveringTransitionMatrixHorizontalVerticalEasyEstimates},
this implies $\left\Vert Y_{n,m,0}^{-1}Y_{j,\ell,1}\right\Vert \leq2\cdot2^{\frac{\beta}{2}M}+3\cdot2^{\frac{M}{2}}+C_{4}$.

\medskip{}

\textbf{Case 2(a)(ii)}: We have $m<-c$. Here, we set $\widetilde{m}:=-m$
and $\widetilde{\ell}:=-\ell$ and note that
\[
2^{\frac{n}{2}\left(1-\beta\right)}\left(m-c\right)<\frac{\eta}{\xi}<2^{\frac{n}{2}\left(1-\beta\right)}\left(m+c\right)\qquad\text{ implies }\qquad2^{\frac{n}{2}\left(1-\beta\right)}\left(-m-c\right)<\frac{-\eta}{\xi}<2^{\frac{n}{2}\left(1-\beta\right)}\left(-m+c\right),
\]
so that $\left(\xi,\,-\eta\right)\in U_{n,-m,0}=U_{n,\widetilde{m},0}$.
Likewise, it is not hard to see $\left(\xi,\,-\eta\right)\in U_{j,-\ell,1}=U_{j,\widetilde{\ell},1}$,
so that Case 2(a)(i) shows (because of $\widetilde{m}>c$) that $\widetilde{m}\geq2^{\frac{n}{2}\left(\beta-1\right)}-C_{2}$
and $\widetilde{\ell}\geq2^{\frac{j}{2}\left(\beta-1\right)}-C_{2}\geq\left\lceil \smash{2^{\frac{j}{2}\left(\beta-1\right)}}\right\rceil -C_{3}$,
which entails $\ell\leq-\left\lceil \smash{2^{\frac{j}{2}\left(\beta-1\right)}}\right\rceil +C_{3}$.
Hence, we have shown for $n\geq M+n_{0}$ and $m<0$ (which entails
$m<-c$) that
\begin{equation}
\begin{split}\left|\left\{ \left(j,\ell,1\right)\in J_{0}\with U_{j,\ell,1}\cap U_{n,m,0}\neq\emptyset\right\} \right| & \leq\sum_{t=n-M}^{n+M}\left|\left\{ t\right\} \times\left\{ -\left\lceil \smash{2^{\frac{t}{2}\left(\beta-1\right)}}\right\rceil ,\dots,-\left\lceil \smash{2^{\frac{t}{2}\left(\beta-1\right)}}\right\rceil +C_{3}\right\} \times\left\{ 1\right\} \right|\\
 & \leq\left(1+2M\right)\cdot\left(1+C_{3}\right),
\end{split}
\label{eq:ReciprocalCoveringAdmissibilityHorizontalVertical-2}
\end{equation}
as in the preceding case.

Finally, because of $\ell m=\widetilde{\ell}\cdot\widetilde{m}$,
we get $\left|2^{\frac{1}{2}\left(\beta j-n\right)}-2^{\frac{1}{2}\left(j-\beta n\right)}m\ell\right|=\left|2^{\frac{1}{2}\left(\beta j-n\right)}-2^{\frac{1}{2}\left(j-\beta n\right)}\widetilde{m}\widetilde{\ell}\right|\leq C_{4}$
from equation \eqref{eq:ReciprocalCoveringHorizontalVerticalDifficultMatrixEntry}
and thus $\left\Vert Y_{n,m,0}^{-1}Y_{j,\ell,1}\right\Vert \leq2\cdot2^{\frac{\beta}{2}M}+3\cdot2^{\frac{M}{2}}+C_{4}$
as in the previous case.

\medskip{}

\textbf{Case 2(b)}: We have $n\leq n_{0}+M$. This implies $j\leq n_{0}+2M$
and $\left|\ell\right|\leq\left\lceil \smash{2^{\frac{j}{2}\left(\beta-1\right)}}\right\rceil \leq\left\lceil \smash{2^{\frac{\beta}{2}j}}\right\rceil \leq\left\lceil \smash{2^{\beta\left(n_{0}+2M\right)}}\right\rceil $,
because of $\left|n-j\right|\leq M$. On the one hand, this implies
\begin{equation}
\begin{split}\left|\left\{ \left(j,\ell,1\right)\in J_{0}\with U_{j,\ell,1}\cap U_{n,m,0}\neq\emptyset\right\} \right| & \leq\left|\left\{ 0,\dots,n_{0}+2M\right\} \times\left\{ -\left\lceil \smash{2^{\beta\left(n_{0}+2M\right)}}\right\rceil ,\dots,\left\lceil \smash{2^{\beta\left(n_{0}+2M\right)}}\right\rceil \right\} \times\left\{ 1\right\} \right|\\
 & \leq\left(n_{0}+2M+1\right)\cdot\left(1+2\cdot\left\lceil \smash{2^{\beta\left(n_{0}+2M\right)}}\right\rceil \right)
\end{split}
\label{eq:ReciprocalCoveringAdmissibilityHorizontalVertical-3}
\end{equation}
and on the other hand
\[
\left\Vert Y_{n,m,0}^{-1}Y_{j,\ell,1}\right\Vert \leq\max_{n'\leq M+n_{0}}\;\max_{\left|m'\right|\leq\left\lceil 2^{n'\cdot\left(\beta-1\right)/2}\right\rceil }\:\max_{j'\leq n_{0}+2M}\;\max_{\left|\ell'\right|\leq\left\lceil 2^{j'\cdot\left(\beta-1\right)/2}\right\rceil }\left\Vert Y_{n',m',0}^{-1}\cdot Y_{j',\ell',1}\right\Vert =:C_{5}.
\]

\medskip{}

\textbf{Case 3}: We have $\varepsilon=\delta=1$. Here, we observe
that $U_{n,m,1}\cap U_{j,\ell,1}=R\cdot\left(U_{n,m,0}\cap U_{j,\ell,0}\right)$,
so that $U_{n,m,1}\cap U_{j,\ell,1}\neq\emptyset$ if and only if
$U_{n,m,0}\cap U_{j,\ell,0}\neq\emptyset$. Consequently, we get from
Case 1, equation \eqref{eq:ReciprocalCoveringAdmissibilityBothHorizontal}
that
\[
\left|\left\{ \left(j,\ell,1\right)\in J_{0}\with U_{j,\ell,1}\cap U_{n,m,1}\neq\emptyset\right\} \right|=\left|\left\{ \left(j,\ell,0\right)\in J_{0}\with U_{j,\ell,0}\cap U_{n,m,0}\neq\emptyset\right\} \right|\leq\left(1+2M\right)\cdot\left(1+2C_{1}\right).
\]
Likewise, since $Y_{n,m,1}^{-1}\cdot Y_{j,\ell,1}=Y_{n,m,0}^{-1}\cdot R^{-1}R\cdot Y_{j,\ell,0}=Y_{n,m,0}^{-1}\cdot Y_{j,\ell,0}$,
we get in case of $U_{n,m,1}\cap U_{j,\ell,1}\neq\emptyset$ that
\[
\left\Vert Y_{n,m,1}^{-1}\cdot Y_{j,\ell,1}\right\Vert =\left\Vert Y_{n,m,0}^{-1}\cdot Y_{j,\ell,0}\right\Vert \leq2^{\frac{\beta}{2}M}+2^{\frac{M}{2}}+2^{\frac{M}{2}}C_{1},
\]
since $U_{n,m,0}\cap U_{j,\ell,0}\neq\emptyset$, cf.\@ Case 1.

\textbf{Case 4}: We have $\varepsilon=0$ and $\delta=1$. As in the
previous case, we observe $U_{n,m,1}\cap U_{j,\ell,0}=R\cdot\left(U_{n,m,0}\cap U_{j,\ell,1}\right)$,
so that we can reduce the present case to the setting of Case 2, similar
to what was done in Case 3. In view of equations \eqref{eq:ReciprocalCoveringAdmissibilityHorizontalVertical-1},
\eqref{eq:ReciprocalCoveringAdmissibilityHorizontalVertical-2} and
\eqref{eq:ReciprocalCoveringAdmissibilityHorizontalVertical-3}, this
implies
\begin{align*}
\left|\left\{ \left(j,\ell,0\right)\in J_{0}\with U_{j,\ell,0}\cap U_{n,m,1}\neq\emptyset\right\} \right| & =\left|\left\{ \left(j,\ell,1\right)\in J_{0}\with U_{j,\ell,1}\cap U_{n,m,0}\neq\emptyset\right\} \right|\\
 & \leq\max\left\{ \left(1+2M\right)\cdot\left(1+C_{3}\right),\,\left(n_{0}+2M+1\right)\cdot\left(1+2\cdot\left\lceil \smash{2^{\beta\left(n_{0}+2M\right)}}\right\rceil \right)\right\} ,
\end{align*}
as well as
\[
\left\Vert Y_{n,m,1}^{-1}Y_{j,\ell,0}\right\Vert \leq\max\left\{ 2\cdot2^{\frac{\beta}{2}M}+3\cdot2^{\frac{M}{2}}+C_{4},\,C_{5}\right\} ,
\]
provided that $U_{n,m,1}\cap U_{j,\ell,0}\neq\emptyset$.

\medskip{}

It remains to consider the case $i=0$ or $j=0$. Recall from the
first part of the lemma that $\left|\xi\right|\geq L^{-1}\cdot2^{\frac{\beta}{2}n}$
for all $\xi\in U_{n,m,\varepsilon}$. Conversely, for $\xi\in U_{0}=U_{0}'$,
we have $\left|\xi\right|\leq2A$, so that $U_{0}\cap U_{n,m,\varepsilon}\neq\emptyset$
can only hold if $2^{\frac{\beta}{2}n}\leq2AL$, i.e., if $n\leq\left\lfloor \frac{2}{\beta}\cdot\log_{2}\left(2AL\right)\right\rfloor =:n_{1}\in\N_{0}$.
On the one hand, this implies because of $\left|\ell\right|\leq\left\lceil \smash{2^{\frac{j}{2}\left(\beta-1\right)}}\right\rceil \leq\left\lceil 2^{\beta j}\right\rceil $
for $\left(j,\ell,\varepsilon\right)\in J_{0}$ that
\[
\left|\left\{ j\in J\with U_{j}\cap U_{0}\neq\emptyset\right\} \right|\leq\left|\left\{ 0\right\} \cup\left(\left\{ 0,\dots,n_{1}\right\} \times\left\{ -\left\lceil \smash{2^{\beta n_{1}}}\right\rceil ,\dots,\left\lceil \smash{2^{\beta n_{1}}}\right\rceil \right\} \times\left\{ \pm1\right\} \right)\right|\leq1+2\cdot\left(1+n_{1}\right)\cdot\left(1+2\cdot\left\lceil \smash{2^{\beta n_{1}}}\right\rceil \right).
\]
On the other hand, we get in case of $U_{0}\cap U_{n,m,\varepsilon}\neq\emptyset$
for some $\left(n,m,\varepsilon\right)\in J_{0}$ that
\begin{align*}
\left\Vert Y_{0}^{-1}Y_{n,m,\varepsilon}\right\Vert  & =\left\Vert \left(\begin{matrix}2^{\frac{\beta}{2}n} & 0\\
0 & 2^{\frac{n}{2}}
\end{matrix}\right)\cdot\left(\begin{matrix}1 & 0\\
m & 1
\end{matrix}\right)\right\Vert \leq\left\Vert \left(\begin{matrix}2^{\frac{\beta}{2}n} & 0\\
0 & 2^{\frac{n}{2}}
\end{matrix}\right)\right\Vert \cdot\left\Vert \left(\begin{matrix}1 & 0\\
m & 1
\end{matrix}\right)\right\Vert \\
 & \leq\max\left\{ 2^{\frac{n}{2}},\,2^{\frac{\beta}{2}n}\right\} \cdot\left(2+\left|m\right|\right)\leq2^{\frac{\beta}{2}n_{1}}\cdot\left(2+\left\lceil 2^{\beta n_{1}}\right\rceil \right),
\end{align*}
as well as
\[
\left\Vert Y_{n,m,\varepsilon}^{-1}Y_{0}\right\Vert =\left\Vert \left(\begin{matrix}1 & 0\\
-m & 1
\end{matrix}\right)\cdot\left(\begin{matrix}2^{-\frac{\beta}{2}n} & 0\\
0 & 2^{-\frac{n}{2}}
\end{matrix}\right)\right\Vert \leq\left\Vert \left(\begin{matrix}1 & 0\\
-m & 1
\end{matrix}\right)\right\Vert \cdot\left\Vert \left(\begin{matrix}2^{-\frac{\beta}{2}n} & 0\\
0 & 2^{-\frac{n}{2}}
\end{matrix}\right)\right\Vert \leq2+\left|m\right|\leq2+\left\lceil 2^{\beta n_{1}}\right\rceil .
\]
Taken together, the preceding cases easily yield the claim of the
lemma.
\end{proof}
As a corollary of the preceding lemma, we can now easily show that
the reciprocal $\beta$-shearlet covering is indeed an almost structured
covering of $\R^{2}$.
\begin{cor}
\label{cor:ReciprocalShearletCoveringAlmostStructured}For every $\beta\in\left(1,\infty\right)$,
the family $\CalS^{\left(\beta\right)}$ from Definition \ref{def:ReciprocalBetaShearletCovering}
is an almost structured covering of $\R^{2}$.

Furthermore, if we set $v_{n,m,\varepsilon}:=2^{\frac{\beta}{2}n}$
for $\left(n,m,\varepsilon\right)\in J_{0}$ and $v_{0}:=1$, then
the weight $v^{s}=\left(v_{j}^{s}\right)_{j\in J}$ is $\CalS^{\left(\beta\right)}$-moderate
for arbitrary $s\in\R$.

Precisely, we have $C_{\CalS^{\left(\beta\right)},v^{s}}\leq K^{2\left|s\right|}$
for some absolute constant $K=K\left(\beta\right)\geq1$ which also
satisfies
\[
K^{-1}\cdot v_{j}\leq1+\left|\xi\right|\leq K\cdot v_{j}\qquad\forall\xi\in S_{j}^{\left(\beta\right)}\text{ and all }j\in J.\qedhere
\]
\end{cor}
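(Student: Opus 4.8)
The plan is to derive Corollary \ref{cor:ReciprocalShearletCoveringAlmostStructured} as an immediate consequence of Lemma \ref{lem:ReciprocalShearletCoveringIsAlmostStructuredGeneralized}, applied to the specific base sets used in Definition \ref{def:ReciprocalBetaShearletCovering}. Concretely, recall that $\CalS^{\left(\beta\right)} = \left(Y_j^{\left(\beta\right)} P_j'\right)_{j\in J}$ with $P_j' = P = U_{\left(-3,3\right)}^{\left(\mu_0^{-1},\mu_0\right)}\cup\left(-U_{\left(-3,3\right)}^{\left(\mu_0^{-1},\mu_0\right)}\right)$ for $j\in J_0$ and $P_0' = \left(-1,1\right)^2$, where $\mu_0 = 3\cdot2^{\beta/2}$. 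Thus the family $\CalS^{\left(\beta\right)}$ is exactly the family $\mathcal{U}$ from Lemma \ref{lem:ReciprocalShearletCoveringIsAlmostStructuredGeneralized} with the choices $a = -3$, $b = 3$, $\gamma = \mu_0^{-1}$, $\mu = \mu_0$ and $A = 1$. First I would invoke Lemma \ref{lem:ReciprocalShearletCoveringIsAlmostStructuredGeneralized} with these parameters: it directly yields constants $N\in\N$ and $C,L\geq 1$ (depending only on $\beta$) such that $\CalS^{\left(\beta\right)}$ is admissible with $\left|i^\ast\right|\leq N$, satisfies $\left\Vert Y_i^{-1}Y_j\right\Vert\leq C$ for all $i\in J$ and $j\in i^\ast$, and obeys $L^{-1}\cdot2^{\frac{\beta}{2}n}\leq\left|\xi\right|\leq L\cdot2^{\frac{\beta}{2}n}$ for all $\xi\in S_{n,m,\varepsilon}^{\left(\beta\right)}$ and $\left(n,m,\varepsilon\right)\in J_0$.

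Next I would verify the remaining conditions of Definition \ref{def:AlmostStructuredCovering} that are not literally among the three conclusions of the lemma: namely, that $\CalS^{\left(\beta\right)}$ actually \emph{covers} $\R^2$, that the sets $P_j'$ are open and bounded, that the family $\left\{P_j' : j\in J\right\}$ is finite, and that suitable inner sets exist. Openness and boundedness of $P_0' = \left(-1,1\right)^2$ and of $P = U_{\left(-3,3\right)}^{\left(\mu_0^{-1},\mu_0\right)}\cup\left(-U_{\left(-3,3\right)}^{\left(\mu_0^{-1},\mu_0\right)}\right)$ are clear from the defining formula \eqref{eq:BasicShearletSet}, and the family of base sets has only two elements. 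For the covering property and the existence of the inner sets $P_j'\subset\overline{P_j'}\subset Q_j'$, I would argue exactly as in the proof of Lemma \ref{lem:AlphaShearletCoveringIsAlmostStructured} (in Appendix \ref{sec:AlphaShearletCoveringAlmostStructured}): one shrinks $P$ slightly to $\widetilde P = U_{\left(-2,2\right)}^{\left(\mu_1^{-1},\mu_1\right)}\cup\left(-U_{\left(-2,2\right)}^{\left(\mu_1^{-1},\mu_1\right)}\right)$ for a suitable $\mu_1<\mu_0$, uses the transformation rules \eqref{eq:BaseSetTransformationRules} to compute $Y_{j,\ell,\delta}\widetilde P$, and checks that the union over all $\left(j,\ell,\delta\right)\in J_0$ of these sets (together with a shrunk copy of $\left(-1,1\right)^2$) already covers $\R^2$; the $\beta$-dependent cardinality of the shearing index set $\left|\ell\right|\leq H_j = \lceil 2^{\frac{j}{2}\left(\beta-1\right)}\rceil$ is precisely what makes the corridors $U_{n,m,\varepsilon}^{\left(\beta\right)}$ tile the cone $\left|\eta\right|\lesssim\left|\xi\right|$. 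This is routine and essentially identical to the earlier appendix argument, so I would not grind through it.

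For the second half of the corollary—$\CalS^{\left(\beta\right)}$-moderateness of $v^s$—I would follow the pattern of the proof of Lemma \ref{lem:AlphaShearletWeightIsModerate}. From the first conclusion of Lemma \ref{lem:ReciprocalShearletCoveringIsAlmostStructuredGeneralized} we get $L^{-1}\cdot2^{\frac{\beta}{2}n}\leq1+\left|\xi\right|$ is \emph{not} quite immediate (the $+1$ needs a tiny adjustment when $n$ is small), but since $v_{n,m,\varepsilon} = 2^{\frac{\beta}{2}n}\geq1$ and $\left|\xi\right|\leq L\cdot2^{\frac{\beta}{2}n}$, one readily obtains $\frac{1}{L'}\cdot v_j\leq1+\left|\xi\right|\leq L'\cdot v_j$ for all $\xi\in S_j^{\left(\beta\right)}$ and a slightly enlarged constant $L'=L'\left(\beta\right)$; for $j=0$ one has $1\leq1+\left|\xi\right|\leq3$ on $\left(-1,1\right)^2$, which fits the same two-sided bound after enlarging $L'$ once more. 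Setting $K := L'$, this proves the displayed inequality $K^{-1}v_j\leq1+\left|\xi\right|\leq Kv_j$. Moderateness then follows exactly as in Lemma \ref{lem:AlphaShearletWeightIsModerate}: if $S_i^{\left(\beta\right)}\cap S_j^{\left(\beta\right)}\neq\emptyset$, pick $\xi$ in the intersection to get $v_i/v_j\leq K^2$, hence by symmetry $K^{-2}\leq v_i/v_j\leq K^2$, so $v_i^s/v_j^s = \left(v_i/v_j\right)^s\leq K^{2\left|s\right|}$, giving $C_{\CalS^{\left(\beta\right)},v^s}\leq K^{2\left|s\right|}$.

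I do not expect any real obstacle here: the content-heavy work has already been done in Lemma \ref{lem:ReciprocalShearletCoveringIsAlmostStructuredGeneralized}, and the corollary is essentially a packaging step together with a verbatim repeat of the covering/inner-set construction and the moderateness argument from the $\alpha$-shearlet case. The only point requiring minor care is the interplay between the factor $1+\left|\xi\right|$ in the weight estimate and the pure power $2^{\frac{\beta}{2}n}$ coming out of the lemma, which is handled by absorbing constants into $K$; and making sure the low-frequency index $j=0$ is treated consistently in both the admissibility bound (its cluster $0^\ast$ is finite by the last part of the lemma) and the weight bound.
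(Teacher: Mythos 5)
Your proposal is correct and tracks the paper's own proof essentially line for line: invoke Lemma \ref{lem:ReciprocalShearletCoveringIsAlmostStructuredGeneralized} with $a=-3$, $b=3$, $\gamma=\mu_0^{-1}$, $\mu=\mu_0$, $A=1$ for admissibility and the transition-matrix bound, supply shrunk inner sets and verify the covering by the same calculation as for $\CalS^{\left(\alpha\right)}$, and then derive $K^{-1}v_j\leq1+\left|\xi\right|\leq Kv_j$ (with $K$ obtained by absorbing the $+1$ and the $j=0$ case into the constant) to conclude moderateness exactly as in Lemma \ref{lem:AlphaShearletWeightIsModerate}. The only difference is your choice of inner sets $U_{\left(-2,2\right)}^{\left(\mu_1^{-1},\mu_1\right)}$ with an unspecified $\mu_1$ versus the paper's explicit $U_{\left(-1,1\right)}^{\left(2^{-\beta/2},2^{\beta/2}\right)}$, which is a cosmetic matter as long as $\mu_1\geq2^{\beta/2}$ so consecutive dyadic annuli still overlap.
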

\begin{proof}
First of all, note that an application of Lemma \ref{lem:ReciprocalShearletCoveringIsAlmostStructuredGeneralized}
with $a=-3$, $b=3$, $\mu=\mu_{0}^{\left(\beta\right)}=3\cdot2^{\beta/2}$
and $\gamma=\mu^{-1}$, as well as $A=1$ yields constants $L,N,C$
satisfying $L^{-1}\cdot2^{\frac{\beta}{2}n}\leq\left|\xi\right|\leq L\cdot2^{\frac{\beta}{2}n}$
for all $\left(n,m,\varepsilon\right)\in J_{0}^{\left(\beta\right)}$
and all $\xi\in S_{n,m,\varepsilon}^{\left(\beta\right)}$, as well
as $\left|j^{\ast}\right|\leq N$ for all $j\in J^{\left(\beta\right)}$
and finally $\left\Vert Y_{i}^{-1}Y_{j}\right\Vert \leq C$ for all
$j\in J^{\left(\beta\right)}$ and $i\in j^{\ast}$.

Thus, since we have $\CalS^{\left(\beta\right)}=\left(Y_{j}P_{j}'+c_{j}\right)_{j\in J}$
with $\left\{ P_{j}'\with j\in J\right\} $ having only two elements,
in order to establish that $\CalS^{\left(\beta\right)}$ is an almost
structured covering of $\R^{2}$ it suffices to prove $\R^{2}=\bigcup_{j\in J}T_{j}R_{j}'$
for $R_{0}':=\left(-\frac{3}{4},\frac{3}{4}\right)^{2}$ and $R_{j}':=U_{\left(-1,1\right)}^{\left(2^{-\beta/2},2^{\beta/2}\right)}\cup\left[-U_{\left(-1,1\right)}^{\left(2^{-\beta/2},2^{\beta/2}\right)}\right]$,
since clearly each $R_{j}'$ is open with $\overline{R_{j}'}\subset P_{j}'$
and since $\left\{ R_{j}'\with j\in J\right\} $ is finite. But an
analog of equation \eqref{eq:BaseSetTransformationRules} (see equations
\eqref{eq:UnconnectedShearletBaseSetDefinition} and \eqref{eq:ReciprocalShearletCoveringNiceExpression}
for more details) shows
\begin{align*}
Y_{n,m,0}R_{n,m,0}' & =V_{\left(2^{n\left(1-\beta\right)/2}\left(m-1\right),2^{n\left(1-\beta\right)/2}\left(m+1\right)\right)}^{\left(2^{\beta\left(n-1\right)/2},2^{\beta\left(n+1\right)/2}\right)}\\
 & =\left\{ \left(\begin{matrix}\xi\\
\eta
\end{matrix}\right)\in\R^{\ast}\times\R\with\left|\xi\right|\in\left(2^{\frac{\beta}{2}\left(n-1\right)},2^{\frac{\beta}{2}\left(n+1\right)}\right)\text{ and }\frac{\eta}{\xi}\in\left(2^{\frac{n}{2}\left(1-\beta\right)}\left(m-1\right),2^{\frac{n}{2}\left(1-\beta\right)}\left(m+1\right)\right)\right\} 
\end{align*}
for all $\left(n,m,0\right)\in J_{0}^{\left(\beta\right)}$. But recalling
the notation $H_{n}=H_{n}^{\left(\beta\right)}=\left\lceil 2^{n\left(\beta-1\right)/2}\right\rceil $,
we see
\begin{align*}
\bigcup_{m=-H_{n}}^{H_{n}}\left(2^{\frac{n}{2}\left(1-\beta\right)}\left(m-1\right),2^{\frac{n}{2}\left(1-\beta\right)}\left(m+1\right)\right) & =2^{\frac{n}{2}\left(1-\beta\right)}\cdot\bigcup_{m=-H_{n}}^{H_{n}}\left(m-1,\,m+1\right)\\
 & \supset2^{\frac{n}{2}\left(1-\beta\right)}\cdot\left(-\left\lceil \smash{2^{n\left(\beta-1\right)/2}}\right\rceil -1,\,\left\lceil \smash{2^{n\left(\beta-1\right)/2}}\right\rceil +1\right)\\
 & \supset2^{\frac{n}{2}\left(1-\beta\right)}\cdot\left[-2^{n\left(\beta-1\right)/2},\,2^{n\left(\beta-1\right)/2}\right]=\left[-1,1\right]
\end{align*}
and because of $\beta>1$ and since $\left(2^{-1/2}\right)^{2}=\frac{1}{2}<\frac{9}{16}=\left(\frac{3}{4}\right)^{2}$,
we also get
\[
\bigcup_{n=0}^{\infty}\left(2^{\frac{\beta}{2}\left(n-1\right)},2^{\frac{\beta}{2}\left(n+1\right)}\right)\supset\left(2^{-\frac{\beta}{2}},\infty\right)\supset\left(2^{-\frac{1}{2}},\infty\right)\supset\left[3/4,\:\infty\right).
\]
Taken together, this implies
\begin{align*}
\bigcup_{n=0}^{\infty}\;\bigcup_{m=-H_{n}}^{H_{n}}Y_{n,m,0}R_{n,m,0}' & \supset\bigcup_{n=0}^{\infty}\left\{ \left(\begin{matrix}\xi\\
\eta
\end{matrix}\right)\in\R^{\ast}\times\R\with\left|\xi\right|\in\left(2^{\frac{\beta}{2}\left(n-1\right)},2^{\frac{\beta}{2}\left(n+1\right)}\right)\text{ and }\frac{\eta}{\xi}\in\left[-1,1\right]\right\} \\
 & \supset\left\{ \left(\xi,\eta\right)\in\R^{\ast}\times\R\with\left|\xi\right|\in\left[3/4,\:\infty\right)\text{ and }\left|\eta\right|\leq\left|\xi\right|\right\} =:M_{1}
\end{align*}
and therefore also
\begin{align*}
\bigcup_{n=0}^{\infty}\;\bigcup_{m=-H_{n}}^{H_{n}}Y_{n,m,1}R_{n,m,1}' & =R\cdot\left[\bigcup_{n=0}^{\infty}\;\bigcup_{m=-H_{n}}^{H_{n}}Y_{n,m,0}R_{n,m,0}'\right]\\
 & =\left\{ \left(\xi,\eta\right)\in\R^{\ast}\times\R\with\left|\eta\right|\in\left[3/4,\:\infty\right)\text{ and }\left|\xi\right|\leq\left|\eta\right|\right\} =:M_{2}.
\end{align*}

Altogether, we see $\R^{2}=\bigcup_{j\in J}Y_{j}R_{j}'$, since for
$\left(\begin{smallmatrix}\xi\\
\eta
\end{smallmatrix}\right)\in\R^{2}\setminus\left(-\frac{3}{4},\frac{3}{4}\right)^{2}=\R^{2}\setminus\left[Y_{0}R_{0}'\right]$, there are only two cases:

\begin{casenv}
\item We have $\left|\xi\right|\leq\left|\eta\right|$. This implies $\left|\eta\right|\geq\frac{3}{4}$
and thus $\left(\begin{smallmatrix}\xi\\
\eta
\end{smallmatrix}\right)\in M_{2}$, since otherwise $\left(\begin{smallmatrix}\xi\\
\eta
\end{smallmatrix}\right)\in\left(-\frac{3}{4},\frac{3}{4}\right)^{2}$.
\item We have $\left|\eta\right|\leq\left|\xi\right|$. This yields $\left|\xi\right|\geq\frac{3}{4}$
and thus $\left(\begin{smallmatrix}\xi\\
\eta
\end{smallmatrix}\right)\in M_{1}$, since otherwise $\left(\begin{smallmatrix}\xi\\
\eta
\end{smallmatrix}\right)\in\left(-\frac{3}{4},\frac{3}{4}\right)^{2}$.
\end{casenv}
We have thus shown that $\CalS^{\left(\beta\right)}$ is an almost
structured covering of $\R^{2}$, so that it remains to verify the
part of the lemma related to the weight $v$.

But for $j=0$ and $\xi\in S_{0}^{\left(\beta\right)}=\left(-1,1\right)^{2}$,
we simply have $\left(2+L\right)^{-1}\cdot v_{j}\leq v_{j}=1\leq1+\left|\xi\right|\leq3\leq\left(2+L\right)\cdot v_{j}$
since $L\geq1$. Furthermore, for $j=\left(n,m,\varepsilon\right)\in J_{0}^{\left(\beta\right)}$,
we have 
\[
\left(2+L\right)^{-1}\cdot v_{j}\leq L^{-1}\cdot2^{\frac{\beta}{2}n}\leq\left|\xi\right|\leq1+\left|\xi\right|\leq1+L\cdot2^{\frac{\beta}{2}n}\leq\left(1+L\right)\cdot2^{\frac{\beta}{2}n}\leq\left(2+L\right)\cdot v_{j}
\]
for all $\xi\in S_{j}^{\left(\beta\right)}$. Therefore, we have shown
$K^{-1}\cdot v_{j}\leq1+\left|\xi\right|\leq K\cdot v_{j}$ for all
$j\in J^{\left(\beta\right)}$ and $\xi\in S_{j}^{\left(\beta\right)}$
with $K:=2+L$, as claimed in the last part of the lemma.

Finally, assume $S_{j}^{\left(\beta\right)}\cap S_{i}^{\left(\beta\right)}\neq\emptyset$.
For an arbitrary $\xi\in S_{j}^{\left(\beta\right)}\cap S_{i}^{\left(\beta\right)}$,
this implies $v_{i}\leq K\cdot\left(1+\left|\xi\right|\right)\leq K^{2}\cdot v_{j}$
and thus $K^{-2}\leq\frac{v_{i}}{v_{j}}\leq K^{2}$ by symmetry. This
easily yields $\frac{v_{i}^{s}}{v_{j}^{s}}\leq K^{2\left|s\right|}$,
so that $v^{s}$ is $\CalS^{\left(\beta\right)}$-moderate, with $C_{\CalS^{\left(\beta\right)},v^{s}}\leq K^{2\left|s\right|}$,
as claimed.
\end{proof}
Since we now know that $\CalS^{\left(\beta\right)}$ is an almost
structured covering of $\R^{2}$ and that $v^{s}$ is $\CalS^{\left(\beta\right)}$-moderate,
we see precisely as in the remark after Definition \ref{def:AlphaShearletSmoothnessSpaces}
that the reciprocal $\beta$-shearlet smoothness spaces that we now
define are well-defined Quasi-Banach spaces. As for the unconnected
$\alpha$-shearlet smoothness spaces, the following definition will
only be of transitory relevance, since we will immediately show that
the newly defined reciprocal $\beta$-shearlet smoothness spaces are
identical with the previously defined $\alpha$-shearlet smoothness
spaces, for $\alpha=\beta^{-1}$.
\begin{defn}
\label{def:ReciprocalShearletSmoothnessSpaces}For $\beta\in\left(1,\infty\right)$,
$p,q\in\left(0,\infty\right]$ and $s\in\R$, we define the \textbf{reciprocal
$\beta$-shearlet smoothness space} $\mathscr{S}_{\beta,s}^{p,q}\left(\R^{2}\right)$
associated to these parameters as
\[
\mathscr{S}_{\beta,s}^{p,q}\left(\R^{2}\right):=\DecompSp{\smash{\CalS^{\left(\beta\right)}}}p{\ell_{v^{s}}^{q}}{},
\]
where the covering $\CalS^{\left(\beta\right)}$ and the weight $v^{s}$
are defined as in Definition \ref{def:ReciprocalBetaShearletCovering}
and Corollary \ref{cor:ReciprocalShearletCoveringAlmostStructured},
respectively.
\end{defn}
Next, we want to show $\mathscr{S}_{\beta,s}^{p,q}\left(\R^{2}\right)=\mathscr{S}_{\beta^{-1},s}^{p,q}\left(\R^{2}\right)$.
To this end, we will utilize the general theory of embeddings between
decomposition spaces that was developed in \cite{DecompositionEmbedding}.
The main prerequisite for an application of this theory is to have
a certain compatibility between the two relevant coverings. This compatibility
is established in the next lemma:
\begin{lem}
\label{lem:AlphaShearletSubordinateToReciprocalShearlet}Let $\beta\in\left(1,\infty\right)$
and set $\alpha:=\beta^{-1}\in\left(0,1\right)$. Then, for each $i\in I^{\left(\alpha\right)}$,
there is some $j=j_{i}\in J^{\left(\beta\right)}$ satisfying $S_{i}^{\left(\alpha\right)}\subset S_{j}^{\left(\beta\right)}$.
\end{lem}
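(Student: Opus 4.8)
The plan is to show that every set $S_i^{(\alpha)}$ of the $\alpha$-shearlet covering (with $\alpha=\beta^{-1}$) is contained in a single set $S_j^{(\beta)}$ of the reciprocal $\beta$-shearlet covering. The natural strategy is to write down an explicit index map $i\mapsto j_i$, splitting into the low-frequency case $i=0$ and the high-frequency case $i=(n,m,\varepsilon,\delta)\in I_0^{(\alpha)}$. For $i=0$ we have $S_0^{(\alpha)}=(-1,1)^2$; since $S_0^{(\beta)}=(-1,1)^2$ as well, we simply take $j_0:=0$ and there is nothing to prove. For the high-frequency case, the key observation is that the $\alpha$-shearlet scaling $D_{2^n}^{(\alpha)}=\mathrm{diag}(2^n,2^{\alpha n})$ and the reciprocal $\beta$-shearlet scaling $D_{2^{\beta j/2}}^{(1/\beta)}=\mathrm{diag}(2^{\beta j/2},2^{j/2})$ describe the \emph{same} family of parabolic-type dilations, just with a reparametrized scale index: setting $j:=\lfloor 2n/\beta\rfloor$ or a nearby integer makes $2^{\beta j/2}\asymp 2^n$ and $2^{j/2}\asymp 2^{\alpha n}$. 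The shearing index $\ell$ must then be chosen so that the (sheared, dilated) angular corridor of $S_i^{(\alpha)}$ lands inside that of $S_j^{(\beta)}$; since the reciprocal $\beta$-covering uses the wider base set $P=U_{(-3,3)}^{(\mu_0^{-1},\mu_0)}\cup(-U_{(-3,3)}^{(\mu_0^{-1},\mu_0)})$ with $\mu_0=3\cdot 2^{\beta/2}$, there is enough slack in both the radial variable (the factor $\mu_0$ versus the base radius $3$ of $Q$) and the angular variable (the interval $(-3,3)$ versus $(-1,1)$) to absorb the discrepancy coming from the non-integrality of $2n/\beta$.

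Concretely, I would proceed as follows. First, using Lemma~\ref{lem:AlphaShearletCoveringAuxiliary}, record that for $i=(n,m,\varepsilon,0)\in I_0^{(\alpha)}$ one has $S_i^{(\alpha)}=\varepsilon\cdot U_{(2^{n(\alpha-1)}(m-1),\,2^{n(\alpha-1)}(m+1))}^{(2^n/3,\,3\cdot2^n)}$, and similarly for $\delta=1$ by applying $R$. Using the analog of equation~\eqref{eq:BaseSetTransformationRules} for the reciprocal covering (as spelled out inside the proofs of Lemma~\ref{lem:ReciprocalShearletCoveringIsAlmostStructuredGeneralized} and Corollary~\ref{cor:ReciprocalShearletCoveringAlmostStructured}), write $S_{(k,\ell,d)}^{(\beta)}$ explicitly as a set of the form $R^d\cdot V_{(\ldots)}^{(2^{\beta k/2}\mu_0^{-1},\,2^{\beta k/2}\mu_0)}$ where the angular interval is $2^{k(1-\beta)/2}(\ell-3,\ell+3)$. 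Then, given $(n,m,\varepsilon,\delta)$, set $k:=k_n$ to be an integer with $2^{\beta k/2}\le 2^n< 2^{\beta(k+1)/2}$ (so $k=\lfloor 2n/\beta\rfloor$), which guarantees $2^n/3$ and $3\cdot 2^n$ both lie in $(2^{\beta k/2}\mu_0^{-1},\,2^{\beta k/2}\mu_0)$ because $\mu_0=3\cdot 2^{\beta/2}\ge 3\cdot 2^{(n-\beta k/2)}\cdot$(something controlled). Next choose $\ell$ to be an integer closest to $2^{k(\beta-1)/2}\cdot 2^{n(\alpha-1)}m = 2^{k(\beta-1)/2-n(1-1/\beta)}m$; one then checks that the $\alpha$-angular interval $2^{n(\alpha-1)}(m-1,m+1)$, when rescaled by $2^{k(\beta-1)/2}$, is contained in $(\ell-3,\ell+3)$, using that $2^{k(\beta-1)/2-n(1-\alpha)}$ is bounded (between roughly $2^{-(\beta-1)/2}$ and $1$) and $|\ell - 2^{k(\beta-1)/2-n(1-\alpha)}m|\le 1/2$. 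Finally one matches the reflection bits: $\delta=0$ pairs with $d=0$ (using the sign $\varepsilon$ which is harmless since both coverings' base sets are symmetric) and $\delta=1$ with $d=1$, because $S_{i}^{(\alpha)}$ with $\delta=1$ equals $R$ times the $\delta=0$ version and likewise for the $\beta$-covering. Taking $j_i:=(k_n,\ell,d)$ with $d=\delta$ then yields $S_i^{(\alpha)}\subset S_{j_i}^{(\beta)}$, and one only has to verify $|\ell|\le H_{k_n}^{(\beta)}=\lceil 2^{k_n(\beta-1)/2}\rceil$, which follows from $|m|\le G_n^{(\alpha)}=\lceil 2^{n(1-\alpha)}\rceil$ together with the scaling relation between $2^{k_n(\beta-1)/2}$ and $2^{n(1-\alpha)}$.

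The main obstacle I anticipate is not any single hard estimate but rather the careful bookkeeping of constants in the radial and angular containment: one must check that the slack provided by $\mu_0=3\cdot 2^{\beta/2}$ (radially) and by the width-$6$ interval $(-3,3)$ versus the width-$2$ interval $(-1,1)$ (angularly) is genuinely enough to cover the worst case of the rounding error in $k_n$ and $\ell$. In particular the quantity $2^{n - \beta k_n/2}$ ranges over $[1,2^{\beta/2})$, so the radial interval $(2^n/3,3\cdot 2^n)$ sits inside $(2^{\beta k_n/2}/3, \,3\cdot 2^{\beta k_n/2}\cdot 2^{\beta/2}) = (2^{\beta k_n/2}\mu_0^{-1}\cdot\mu_0/3,\,2^{\beta k_n/2}\mu_0)$ — one checks the lower endpoint $2^n/3 \ge 2^{\beta k_n/2}/3 \ge 2^{\beta k_n/2}\mu_0^{-1}$ trivially and the upper endpoint $3\cdot 2^n < 3\cdot 2^{\beta k_n/2}\cdot 2^{\beta/2} = 2^{\beta k_n/2}\mu_0$ — and the angular check is analogous. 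This is precisely the kind of elementary but error-prone interval arithmetic that I would carry out slowly; everything else (the reduction to $\delta=0$, the structure of the explicit sets, the bound on $|\ell|$) is routine once the transformation formulas from Lemma~\ref{lem:AlphaShearletCoveringAuxiliary} and Corollary~\ref{cor:ReciprocalShearletCoveringAlmostStructured} are in hand.
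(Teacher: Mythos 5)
The strategy you outline is in fact the same one the paper uses: you take the scale index $j=\lfloor 2\alpha n\rfloor=\lfloor 2n/\beta\rfloor$ so that $2^{\beta j/2}$ sits just below $2^{n}$, you observe that the base set $P$ has enough radial slack ($\mu_{0}=3\cdot 2^{\beta/2}$ versus $3$) and angular slack ($(-3,3)$ versus $(-1,1)$) to absorb the rounding error, and you reduce the general $(\varepsilon,\delta)$ case to $\varepsilon=1$, $\delta=0$ by symmetry of $P$ and by the $R$-conjugation. Your radial containment check is correct as written. The part where I would push back is your treatment of the shear index and of the constraint $\left|\ell\right|\leq H_{j}^{\left(\beta\right)}$.

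You choose $\ell$ to be the nearest integer to the rescaled center $\gamma m$ where $\gamma=2^{j\left(\beta-1\right)/2+n\left(\alpha-1\right)}\in\left(2^{-\left(\beta-1\right)/2},1\right]$, and then assert that $\left|\ell\right|\leq H_{j}^{\left(\beta\right)}=\lceil 2^{j\left(\beta-1\right)/2}\rceil$ ``follows'' from $\left|m\right|\leq G_{n}^{\left(\alpha\right)}=\lceil 2^{n\left(1-\alpha\right)}\rceil$ and the scaling relation. This step does not actually follow from what you have written. The bounds at hand give only $\left|\gamma m\right|\leq \gamma G_{n}^{\left(\alpha\right)}=2^{j\left(\beta-1\right)/2}+\gamma\left(\lceil 2^{n\left(1-\alpha\right)}\rceil-2^{n\left(1-\alpha\right)}\right)<2^{j\left(\beta-1\right)/2}+1$, so the nearest integer to $\gamma m$ can a priori be as large as $H_{j}^{\left(\beta\right)}+1$, violating the constraint that defines $J_{0}^{\left(\beta\right)}$. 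The paper sidesteps this by taking $\ell_{0}:=\lfloor\gamma\left(m-1\right)\rfloor$, for which the one-sided bound $\ell_{0}\leq 2^{j\left(\beta-1\right)/2}\leq H_{j}^{\left(\beta\right)}$ is immediate, and then handling the remaining boundary case $\ell_{0}<-H_{j}^{\left(\beta\right)}$ explicitly by clamping to $\ell:=-H_{j}^{\left(\beta\right)}$ and re-checking the angular inclusion with a looser estimate. Your version can be repaired in the same spirit: whenever the nearest integer falls outside $\left\{-H_{j}^{\left(\beta\right)},\dots,H_{j}^{\left(\beta\right)}\right\}$, clamp it to $\pm H_{j}^{\left(\beta\right)}$; since the resulting $\ell$ still satisfies $\left|\ell-\gamma m\right|\leq 3/2$ and the angular slack $\left(\ell-3,\ell+3\right)$ absorbs an error of $3/2+\gamma\leq 5/2<3$, the containment survives. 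But this is exactly the delicate piece of ``bookkeeping'' that cannot be waved away; without it the proof is incomplete. You flagged the interval arithmetic for the slack as the risk point, when in fact the real subtlety is the admissibility of $\ell$.
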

\begin{rem*}
The set $P$ in Definition \ref{def:ReciprocalBetaShearletCovering}
is chosen precisely to make the preceding lemma true. In general,
one could have chosen $P$ to be smaller.
\end{rem*}
\begin{proof}
For $i=0$, we clearly have $S_{0}^{\left(\alpha\right)}=\left(-1,1\right)^{2}=S_{0}^{\left(\beta\right)}$,
so that we can assume $i=\left(n,m,\varepsilon,\delta\right)\in I_{0}^{\left(\alpha\right)}$
in the following. Let us first consider the case $\varepsilon=1$
and $\delta=0$. Define $j:=\left\lfloor 2\alpha n\right\rfloor \in\N_{0}$
and observe $2\alpha n-1<j\leq2\alpha n$. Recall the notation $\mu_{0}=3\cdot2^{\beta/2}$
from Definition \ref{def:ReciprocalBetaShearletCovering} and note
for arbitrary $\ell\in\Z$ with $\left|\ell\right|\leq H_{j}$ that
\[
S_{\left(j,\ell,0\right)}^{\left(\beta\right)}\supset{\rm diag}\left(2^{\frac{\beta}{2}j},\,2^{\frac{j}{2}}\right)\cdot\left(\begin{matrix}1 & 0\\
\ell & 1
\end{matrix}\right)U_{\left(-3,3\right)}^{\left(\mu_{0}^{-1},\mu_{0}\right)}=U_{\left(2^{j\frac{1-\beta}{2}}\left(\ell-3\right),\,2^{j\frac{1-\beta}{2}}\left(\ell+3\right)\right)}^{\left(2^{\beta j/2}\cdot\mu_{0}^{-1},\,2^{\beta j/2}\cdot\mu_{0}\right)}\quad\text{and}\quad S_{i}^{\left(\alpha\right)}=U_{\left(2^{n\left(\alpha-1\right)}\left(m-1\right),\,2^{n\left(\alpha-1\right)}\left(m+1\right)\right)}^{\left(2^{n}/3,\,3\cdot2^{n}\right)},
\]
thanks to equation \eqref{eq:BaseSetTransformationRules}. Consequently,
it suffices to show that we have $\left(2^{\beta j/2}\cdot\mu_{0}^{-1},\,2^{\beta j/2}\cdot\mu_{0}\right)\supset\left(2^{n}/3,\,3\cdot2^{n}\right)$
and that one can choose $\ell\in\Z$ with $\left|\ell\right|\leq H_{j}$
such that 
\begin{equation}
\left(2^{j\left(1-\beta\right)/2}\left(\ell-3\right),\,2^{j\left(1-\beta\right)/2}\left(\ell+3\right)\right)\supset\left(2^{n\left(\alpha-1\right)}\left(m-1\right),\,2^{n\left(\alpha-1\right)}\left(m+1\right)\right).\label{eq:AlphaShearletSubordinateToReciprocalShearletTargetInclusion}
\end{equation}

The first of these inclusions is straightforward to verify: We have
$\mu_{0}=3\cdot2^{\beta/2}\geq3$ and $j\leq2\alpha n=\frac{2}{\beta}n$,
so that $2^{\beta j/2}\mu_{0}^{-1}\leq\frac{1}{3}\cdot2^{\beta j/2}\leq\frac{1}{3}\cdot2^{n}$.
Furthermore, since $j>2\alpha n-1$,
\[
2^{\beta j/2}\cdot\mu_{0}\geq2^{\frac{\beta}{2}\left(2\alpha n-1\right)}\cdot\mu_{0}=2^{n}\cdot2^{-\frac{\beta}{2}}\cdot3\cdot2^{\beta/2}=3\cdot2^{n}.
\]
Thus, all that remains is to show that one can choose $\ell$ suitably.
To this end, let $\ell_{0}:=\left\lfloor 2^{n\left(\alpha-1\right)+\left(\beta-1\right)\frac{j}{2}}\left(m-1\right)\right\rfloor \in\Z$
and observe
\begin{align*}
\ell_{0}\leq2^{n\left(\alpha-1\right)+\left(\beta-1\right)\frac{j}{2}}\left(m-1\right) & \leq2^{n\left(\alpha-1\right)+\left(\beta-1\right)\frac{j}{2}}\left(\left|m\right|-1\right)\\
\left({\scriptstyle \text{since }\left|m\right|-1\leq\left\lceil 2^{n\left(1-\alpha\right)}\right\rceil -1<2^{n\left(1-\alpha\right)}}\right) & \leq2^{\left(\beta-1\right)\frac{j}{2}}\leq\left\lceil \smash{2^{\left(\beta-1\right)\frac{j}{2}}}\right\rceil =H_{j}^{\left(\beta\right)}.
\end{align*}
We now distinguish two cases:

\textbf{Case 1}: We have $\ell_{0}\geq-H_{j}^{\left(\beta\right)}$.
In this case, we set $\ell:=\ell_{0}$ and note $\left|\ell\right|\leq H_{j}^{\left(\beta\right)}$.
Furthermore, we observe
\[
2^{\frac{j}{2}\left(1-\beta\right)}\left(\ell-3\right)<2^{\frac{j}{2}\left(1-\beta\right)}\ell\leq2^{n\left(\alpha-1\right)}\left(m-1\right).
\]
Finally, since we have $\ell+1>2^{n\left(\alpha-1\right)+\left(\beta-1\right)\frac{j}{2}}\left(m-1\right)$,
we get
\begin{align*}
2^{\frac{j}{2}\left(1-\beta\right)}\left(\ell+3\right) & >2^{\frac{j}{2}\left(1-\beta\right)}\left[2+2^{n\left(\alpha-1\right)+\left(\beta-1\right)\frac{j}{2}}\left(m-1\right)\right]\\
 & =2\cdot2^{\frac{j}{2}\left(1-\beta\right)}+2^{n\left(\alpha-1\right)}\left(m-1\right)\\
\left({\scriptstyle \text{since }1-\beta<0\text{ and }j\leq2\alpha n}\right) & \geq2\cdot2^{\alpha n\left(1-\beta\right)}+2^{n\left(\alpha-1\right)}\left(m-1\right)\\
 & =2\cdot2^{n\left(\alpha-1\right)}+2^{n\left(\alpha-1\right)}\left(m-1\right)=2^{n\left(\alpha-1\right)}\left(m+1\right).
\end{align*}
The last two displayed equations establish the desired inclusion \eqref{eq:AlphaShearletSubordinateToReciprocalShearletTargetInclusion},
so that indeed $S_{i}^{\left(\alpha\right)}\subset S_{j,\ell,0}^{\left(\beta\right)}$.

\medskip{}

\textbf{Case 2}: We have $\ell_{0}<-H_{j}^{\left(\beta\right)}$.
This implies $2^{n\left(\alpha-1\right)}\left(m-1\right)\leq-1$,
since we would otherwise have 
\[
\ell_{0}=\left\lfloor 2^{n\left(\alpha-1\right)+\left(\beta-1\right)\frac{j}{2}}\left(m-1\right)\right\rfloor \geq\left\lfloor -2^{\left(\beta-1\right)\frac{j}{2}}\right\rfloor =-\left\lceil 2^{\left(\beta-1\right)\frac{j}{2}}\right\rceil =-H_{j}^{\left(\beta\right)}.
\]
Consequently, we get for $\ell:=-H_{j}^{\left(\beta\right)}\in\Z$
that
\begin{align*}
2^{n\left(\alpha-1\right)}\left(m+1\right)=2^{n\left(\alpha-1\right)}\left(m-1\right)+2\cdot2^{n\left(\alpha-1\right)} & \leq-1+2\cdot2^{n\left(\alpha-1\right)}\\
\left({\scriptstyle \text{since }\alpha-1<0\text{ and }n\geq\frac{j}{2\alpha}}\right) & \leq-1+2\cdot2^{\frac{j}{2\alpha}\left(\alpha-1\right)}\\
 & =2^{\frac{j}{2}\left(1-\beta\right)}\left[-2^{\frac{j}{2}\left(\beta-1\right)}+2\right]\\
\left({\scriptstyle \text{since }2^{\frac{j}{2}\left(\beta-1\right)}\geq\left\lceil \smash{2^{\frac{j}{2}\left(\beta-1\right)}}\right\rceil -1=-\ell-1}\right) & \leq2^{\frac{j}{2}\left(1-\beta\right)}\left(\ell+3\right).
\end{align*}
Finally, recall $\left|m\right|\leq\left\lceil 2^{n\left(1-\alpha\right)}\right\rceil \leq1+2^{n\left(1-\alpha\right)}$,
so that
\begin{align*}
2^{n\left(\alpha-1\right)}\left(m-1\right)\geq-2^{n\left(\alpha-1\right)}\left(\left|m\right|+1\right) & \geq-2^{n\left(\alpha-1\right)}\left(2^{n\left(1-\alpha\right)}+2\right)\\
 & =-1-2\cdot2^{n\left(\alpha-1\right)}\\
\left({\scriptstyle \text{since }\alpha-1<0\text{ and }n\geq\frac{j}{2\alpha}}\right) & \geq-1-2\cdot2^{\frac{j}{2\alpha}\left(\alpha-1\right)}\\
 & =2^{\frac{j}{2}\left(1-\beta\right)}\cdot\left(-2^{\frac{j}{2}\left(\beta-1\right)}-2\right)\\
\left({\scriptstyle \text{since }2^{\frac{j}{2}\left(\beta-1\right)}\leq\left\lceil \smash{2^{\frac{j}{2}\left(\beta-1\right)}}\right\rceil =-\ell}\right) & \geq2^{\frac{j}{2}\left(1-\beta\right)}\cdot\left(\ell-2\right)\\
 & \geq2^{\frac{j}{2}\left(1-\beta\right)}\cdot\left(\ell-3\right).
\end{align*}
We have thus again established the inclusion \eqref{eq:AlphaShearletSubordinateToReciprocalShearletTargetInclusion},
so that $S_{i}^{\left(\alpha\right)}\subset S_{\left(j,\ell,0\right)}^{\left(\beta\right)}$.

\medskip{}

Up to now, we have constructed for $i=\left(n,m,\varepsilon,\delta\right)\in I_{0}^{\left(\alpha\right)}$
with $\varepsilon=1$ and $\delta=0$ some $\left(j,\ell,0\right)\in J_{0}^{\left(\beta\right)}$
with $S_{i}^{\left(\alpha\right)}\subset S_{j}^{\left(\beta\right)}$,
so that it remains to consider the general case $\varepsilon\in\left\{ \pm1\right\} $
and $\delta\in\left\{ 0,1\right\} $. But since the base-set $P$
from Definition \ref{def:ReciprocalBetaShearletCovering} satisfies
$P=-P$, we have $S_{j}^{\left(\beta\right)}=-S_{j}^{\left(\beta\right)}$,
so that $S_{n,m,-1,0}^{\left(\alpha\right)}=-S_{n,m,1,0}^{\left(\alpha\right)}\subset-S_{j,\ell,0}^{\left(\beta\right)}=S_{j,\ell,0}^{\left(\beta\right)}$,
assuming $S_{n,m,1,0}^{\left(\alpha\right)}\subset S_{j,\ell,0}^{\left(\beta\right)}$.
Finally, assuming that $S_{n,m,\varepsilon,0}^{\left(\alpha\right)}\subset S_{j,\ell,0}^{\left(\beta\right)}$,
we get
\[
S_{n,m,\varepsilon,1}^{\left(\alpha\right)}=R\cdot S_{n,m,\varepsilon,0}^{\left(\alpha\right)}\subset R\cdot S_{j,\ell,0}^{\left(\beta\right)}=S_{j,\ell,1}^{\left(\beta\right)}.
\]
This completes the proof.
\end{proof}
Now, we can finally show that the reciprocal $\beta$-shearlet smoothness
spaces are identical to the $\alpha$-shearlet smoothness spaces from
Section \ref{sec:AlphaShearletSmoothnessDefinition}.
\begin{lem}
\label{lem:ReciprocalShearletSmoothnessSpacesAreBoring}Let $\beta\in\left(1,\infty\right)$,
$s\in\R$ and $p,q\in\left(0,\infty\right]$. Then
\[
\mathscr{S}_{\beta,s}^{p,q}\left(\R^{2}\right)=\mathscr{S}_{\beta^{-1},s}^{p,q}\left(\R^{2}\right).\qedhere
\]
\end{lem}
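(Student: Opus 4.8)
The plan is to derive the identity from \cite[Lemma 6.11, part (2)]{DecompositionEmbedding} in exactly the way Lemma \ref{lem:UnconnectedAlphaShearletSmoothnessIsBoring} was proved, applied once with $\CalQ := \CalS^{\left(\beta\right)}$ and with the ``finer'' covering $\CalP := \CalS^{\left(\alpha\right)}$, where $\alpha := \beta^{-1} \in \left(0,1\right)$, and with the weights $v^{s}$ from Corollary \ref{cor:ReciprocalShearletCoveringAlmostStructured} and $w^{s}$ from Lemma \ref{lem:AlphaShearletWeightIsModerate}. Recalling that $\mathscr{S}_{\beta,s}^{p,q}\left(\R^{2}\right) = \Fourier^{-1}\bigl[\FourierDecompSp{\CalS^{\left(\beta\right)}}p{\ell_{v^{s}}^{q}}{}\bigr]$ and $\mathscr{S}_{\beta^{-1},s}^{p,q}\left(\R^{2}\right) = \Fourier^{-1}\bigl[\FourierDecompSp{\CalS^{\left(\alpha\right)}}p{\ell_{w^{s}}^{q}}{}\bigr]$, it then only remains to verify the hypotheses of that lemma; since part (2) of \cite[Lemma 6.11]{DecompositionEmbedding} already produces the \emph{equality} of the two decomposition spaces with equivalent quasi-norms, a single application suffices and no separate treatment of the two inclusions is needed.

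Most of the verifications are routine. Both $\CalS^{\left(\alpha\right)}$ and $\CalS^{\left(\beta\right)}$ are almost structured coverings of $\R^{2}$ (Lemma \ref{lem:AlphaShearletCoveringIsAlmostStructured} and Corollary \ref{cor:ReciprocalShearletCoveringAlmostStructured}), hence tight, open semi-structured coverings admitting $L^{p}$-BAPUs for all $p \in \left(0,\infty\right]$ (via \cite[Corollary 2.7]{DecompositionIntoSobolev}), and $w^{s}$, $v^{s}$ are moderate for the respective coverings. Lemma \ref{lem:AlphaShearletWeightIsModerate} and Corollary \ref{cor:ReciprocalShearletCoveringAlmostStructured} give $1+\left|\xi\right| \asymp w_{i}$ on $S_{i}^{\left(\alpha\right)}$ and $1+\left|\xi\right| \asymp v_{j}$ on $S_{j}^{\left(\beta\right)}$, so $w_{i}^{s} \asymp v_{j}^{s}$ whenever $S_{i}^{\left(\alpha\right)} \cap S_{j}^{\left(\beta\right)} \neq \emptyset$, which is the weight-compatibility hypothesis. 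For the extra conditions needed in the quasi-Banach range $p \in \left(0,1\right)$, Lemma \ref{lem:AlphaShearletSubordinateToReciprocalShearlet} shows that $\CalP = \CalS^{\left(\alpha\right)}$ is subordinate, hence (by \cite[Definition 2.10]{DecompositionEmbedding}) almost subordinate, to $\CalQ = \CalS^{\left(\beta\right)}$; and since $\left|\det T_{i}^{\left(\alpha\right)}\right| = w_{i}^{1+\alpha}$ and $\left|\det Y_{j}^{\left(\beta\right)}\right| = 2^{n\left(\beta+1\right)/2} = v_{j}^{1+\alpha}$ are both $\asymp \left(1+\left|\xi\right|\right)^{1+\alpha}$ on overlapping sets, they are mutually comparable there, giving the required bound $\left|\det\left(\left(T_{i}^{\left(\alpha\right)}\right)^{-1} Y_{j}^{\left(\beta\right)}\right)\right| \lesssim 1$ whenever $S_{i}^{\left(\alpha\right)} \cap S_{j}^{\left(\beta\right)} \neq \emptyset$.

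The one genuine obstacle is the \textbf{weak equivalence} of the two coverings, i.e. the finiteness of both $\sup_{i \in I^{\left(\alpha\right)}}\left|\left\{j \in J^{\left(\beta\right)} \with S_{j}^{\left(\beta\right)} \cap S_{i}^{\left(\alpha\right)} \neq \emptyset\right\}\right|$ and $\sup_{j \in J^{\left(\beta\right)}}\left|\left\{i \in I^{\left(\alpha\right)} \with S_{i}^{\left(\alpha\right)} \cap S_{j}^{\left(\beta\right)} \neq \emptyset\right\}\right|$. The first is immediate: if $S_{j}^{\left(\beta\right)} \cap S_{i}^{\left(\alpha\right)} \neq \emptyset$ and $S_{i}^{\left(\alpha\right)} \subset S_{j_{i}}^{\left(\beta\right)}$ (Lemma \ref{lem:AlphaShearletSubordinateToReciprocalShearlet}), then $j$ lies in the $\CalS^{\left(\beta\right)}$-index-cluster of $j_{i}$, of size at most $N_{\CalS^{\left(\beta\right)}}$ by admissibility of $\CalS^{\left(\beta\right)}$; the same reduction shows that the second supremum is controlled once one bounds, for each \emph{fixed} $j' = \left(\nu,\mu,d\right) \in J^{\left(\beta\right)}$, the number of $i \in I^{\left(\alpha\right)}$ with $S_{i}^{\left(\alpha\right)} \cap S_{j'}^{\left(\beta\right)} \neq \emptyset$. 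That bound is the part that must be carried out by hand, in the spirit of Lemmas \ref{lem:AlphaShearletCoveringSubordinateness} and \ref{lem:ReciprocalShearletCoveringIsAlmostStructuredGeneralized}: if $S_{i}^{\left(\alpha\right)} \cap S_{j'}^{\left(\beta\right)} \neq \emptyset$ with $i = \left(n,m,\varepsilon,\delta\right)$, a common frequency point has modulus $\asymp 2^{n} \asymp 2^{\beta\nu/2}$ (by Lemma \ref{lem:AlphaShearletCoveringAuxiliary} and part (1) of Lemma \ref{lem:ReciprocalShearletCoveringIsAlmostStructuredGeneralized}), which confines $n$ to a bounded window around $\frac{\beta}{2}\nu$ and forces $\delta$ to match $d$ up to the usual two-cone ambiguity; for each of the finitely many admissible scales $n$, the slope of the common point lies simultaneously in the angular corridor of $S_{i}^{\left(\alpha\right)}$, of width $\asymp 2^{n\left(\alpha-1\right)}$ about $2^{n\left(\alpha-1\right)}m$, and in that of $S_{j'}^{\left(\beta\right)}$, of width $\asymp 2^{\nu\left(1-\beta\right)/2}$. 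Since $2^{n\left(\alpha-1\right)} \asymp 2^{\nu\left(1-\beta\right)/2}$ when $n \asymp \frac{\beta}{2}\nu$, the elementary fact that an interval of length $R$ contains at most $1+R$ integers bounds the number of eligible $m$ by an absolute constant, and summing over the bounded set of scales $n$ and the two values of $\varepsilon$ completes the count. With weak equivalence in hand, all hypotheses of \cite[Lemma 6.11, part (2)]{DecompositionEmbedding} are satisfied, and the asserted equality $\mathscr{S}_{\beta,s}^{p,q}\left(\R^{2}\right) = \mathscr{S}_{\beta^{-1},s}^{p,q}\left(\R^{2}\right)$ with equivalent quasi-norms follows.
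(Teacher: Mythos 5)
Your overall strategy is identical to the paper's: invoke \cite[Lemma 6.11, part (2)]{DecompositionEmbedding} with $\CalP=\CalS^{(\alpha)}$, $\CalQ=\CalS^{(\beta)}$ and check the same list of hypotheses (weight compatibility, weak equivalence, and — for $p<1$ — almost subordination plus determinant comparability). Your verifications of weight compatibility, almost subordination, and determinant comparability match the paper's, modulo a typo ($\left|\det Y_{j}^{(\beta)}\right|=2^{\nu(\beta+1)/2}$, not $2^{n(\beta+1)/2}$).

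Where you diverge is the second half of weak equivalence: bounding, uniformly in $j\in J^{(\beta)}$, the number of $i\in I^{(\alpha)}$ meeting $S_{j}^{(\beta)}$. You propose a direct enumeration in the style of Lemma \ref{lem:AlphaShearletCoveringSubordinateness}: pin down $n$ from the modulus, then count the eligible shearing parameters $m$ via the angular corridors. This works, but your sketch hides the genuinely awkward part. Saying that a common point "forces $\delta$ to match $d$ up to the usual two-cone ambiguity" elides the cross-cone case ($\delta=0$, $d=1$ or vice versa), where the slope of $S_{i}^{(\alpha)}$ is governed by $\eta/\xi$ while that of $S_{j}^{(\beta)}$ is governed by $\xi/\eta$; the two angular corridors are not both intervals in the same variable, and one needs a separate Case 2--style argument (as in Lemma \ref{lem:AlphaShearletCoveringSubordinateness} and Case 2 of Lemma \ref{lem:ReciprocalShearletCoveringIsAlmostStructuredGeneralized}). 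That can be done, but it is not a one-liner.

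The paper sidesteps all of this with a short measure-theoretic argument: each $i\in M_{j}:=\{i\with S_{i}^{(\alpha)}\cap S_{j}^{(\beta)}\neq\emptyset\}$ has $S_{i}^{(\alpha)}\subset[S_{j}^{(\beta)}]^{\ast}$ (via Lemma \ref{lem:AlphaShearletSubordinateToReciprocalShearlet}), while admissibility of $\CalS^{(\alpha)}$ gives $\sum_{i\in M_{j}}\Indicator_{S_{i}^{(\alpha)}}\leq N_{\CalS^{(\alpha)}}$; integrating and using $\lambda_{2}\bigl(S_{i}^{(\alpha)}\bigr)\asymp\left|\det T_{i}^{(\alpha)}\right|=w_{i}^{1+\alpha}$, $\lambda_{2}\bigl(S_{\ell}^{(\beta)}\bigr)\asymp v_{\ell}^{1+\alpha}$, together with the already-established comparability $w_{i}\asymp v_{j}$ for $i\in M_{j}$, bounds $|M_{j}|$ by an absolute constant without any case analysis over $(n,m,\varepsilon,\delta)$. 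If you keep your combinatorial route, you must flesh out the cross-cone case; the packing argument buys you a cleaner, case-free proof.
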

\begin{proof}
Set $\alpha:=\beta^{-1}\in\left(0,1\right)$ for brevity. As in the
proof of Lemma \ref{lem:UnconnectedAlphaShearletSmoothnessIsBoring},
we want to invoke \cite[Lemma 6.11, part (2)]{DecompositionEmbedding},
with the choice $\CalP:=\CalS^{\left(\alpha\right)}$ and $\CalQ:=\CalS^{\left(\beta\right)}$,
recalling that $\mathscr{S}_{\beta^{-1},s}^{p,q}\left(\R^{2}\right)=\DecompSp{\CalS^{\left(\alpha\right)}}p{\ell_{w^{s}}^{q}}{}=\Fourier^{-1}\left[\FourierDecompSp{\CalS^{\left(\alpha\right)}}p{\ell_{w^{s}}^{q}}{}\right]$
and likewise $\mathscr{S}_{\beta,s}^{p,q}\left(\R^{2}\right)=\Fourier^{-1}\left[\FourierDecompSp{\CalS^{\left(\beta\right)}}p{\ell_{v^{s}}^{q}}{}\right]$.

To this end, we first have to verify that we have $v_{j}^{s}\asymp w_{i}^{s}$
if $S_{i}^{\left(\alpha\right)}\cap S_{j}^{\left(\beta\right)}\neq\emptyset$
and that the coverings $\CalS^{\left(\alpha\right)}$ and $\CalS^{\left(\beta\right)}$
are \textbf{weakly equivalent}. This means that
\[
\sup_{i\in I^{\left(\alpha\right)}}\left|\left\{ j\in J^{\left(\beta\right)}\with S_{j}^{\left(\beta\right)}\cap S_{i}^{\left(\alpha\right)}\neq\emptyset\right\} \right|<\infty\qquad\text{ and }\qquad\sup_{j\in J^{\left(\beta\right)}}\left|\left\{ i\in I^{\left(\alpha\right)}\with S_{i}^{\left(\alpha\right)}\cap S_{j}^{\left(\beta\right)}\neq\emptyset\right\} \right|<\infty.
\]

For the first point, let $K\geq1$ as in Corollary \ref{cor:ReciprocalShearletCoveringAlmostStructured},
i.e., such that $K^{-1}\cdot v_{j}\leq1+\left|\xi\right|\leq K\cdot v_{j}$
for all $j\in J^{\left(\beta\right)}$ and all $\xi\in S_{j}^{\left(\beta\right)}$.
Likewise, Lemma \ref{lem:AlphaShearletWeightIsModerate} shows $\frac{1}{13}\cdot w_{i}\leq1+\left|\xi\right|\leq13\cdot w_{i}$
for all $i\in I^{\left(\alpha\right)}$ and $\xi\in S_{i}^{\left(\alpha\right)}$.
Consequently, if $S_{i}^{\left(\alpha\right)}\cap S_{j}^{\left(\beta\right)}\neq\emptyset$,
we can choose some $\xi\in S_{i}^{\left(\alpha\right)}\cap S_{j}^{\left(\beta\right)}$,
so that
\[
\left(13K\right)^{-1}\cdot w_{i}\leq K^{-1}\cdot\left(1+\left|\xi\right|\right)\leq v_{j}\leq K\cdot\left(1+\left|\xi\right|\right)\leq13K\cdot w_{i}.
\]
Consequently, we get 
\begin{equation}
\left(13K\right)^{-\left|t\right|}\leq\frac{v_{j}^{t}}{w_{i}^{t}}\leq\left(13K\right)^{\left|t\right|}\qquad\forall t\in\R\quad\text{ if }i\in I^{\left(\alpha\right)}\text{ and }j\in J^{\left(\beta\right)}\text{ with }S_{i}^{\left(\alpha\right)}\cap S_{j}^{\left(\beta\right)}\neq\emptyset.\label{eq:ReciprocalShearletCoveringWeightEquivalentToUsualOne}
\end{equation}

It remains to show that $\CalS^{\left(\alpha\right)}$ and $\CalS^{\left(\beta\right)}$
are weakly equivalent. To this end, let $i\in I^{\left(\alpha\right)}$
be arbitrary and note from Lemma \ref{lem:AlphaShearletSubordinateToReciprocalShearlet}
that $S_{i}^{\left(\alpha\right)}\subset S_{j_{i}}^{\left(\beta\right)}$
for some $j_{i}\in J^{\left(\beta\right)}$. Thus, for arbitrary $j\in J^{\left(\beta\right)}$
with $\emptyset\neq S_{j}^{\left(\beta\right)}\cap S_{i}^{\left(\alpha\right)}\neq\emptyset$,
we get $\emptyset\subsetneq S_{j}^{\left(\beta\right)}\cap S_{i}^{\left(\alpha\right)}\subset S_{j}^{\left(\beta\right)}\cap S_{j_{i}}^{\left(\beta\right)}$
and thus $j\in j_{i}^{\ast}$. This implies
\[
\sup_{i\in I^{\left(\alpha\right)}}\left|\left\{ j\in J^{\left(\beta\right)}\with S_{j}^{\left(\beta\right)}\cap S_{i}^{\left(\alpha\right)}\neq\emptyset\right\} \right|\leq\sup_{i\in I^{\left(\alpha\right)}}\left|j_{i}^{\ast}\right|\leq\sup_{j\in J^{\left(\beta\right)}}\left|j^{\ast}\right|=N_{\CalS^{\left(\beta\right)}}<\infty,
\]
since $\CalS^{\left(\beta\right)}$ is an almost structured covering
of $\R^{2}$.

For the second part of weak equivalence, we have to work harder: Let
$j\in J^{\left(\beta\right)}$ be arbitrary. For each $i\in I^{\left(\alpha\right)}$
with $S_{i}^{\left(\alpha\right)}\cap S_{j}^{\left(\beta\right)}\neq\emptyset$,
Lemma \ref{lem:AlphaShearletSubordinateToReciprocalShearlet} yields
some $j_{i}\in J^{\left(\beta\right)}$ satisfying $S_{i}^{\left(\alpha\right)}\subset S_{j_{i}}^{\left(\beta\right)}$.
Hence, $\emptyset\subsetneq S_{i}^{\left(\alpha\right)}\cap S_{j}^{\left(\beta\right)}\subset S_{j_{i}}^{\left(\beta\right)}\cap S_{j}^{\left(\beta\right)}$,
so that $j_{i}\in j^{\ast}$. Thus, with $\left[\smash{S_{j}^{\left(\beta\right)}}\right]^{\ast}:=\bigcup_{\ell\in j^{\ast}}S_{\ell}^{\left(\beta\right)}$,
we have shown $S_{i}^{\left(\alpha\right)}\subset S_{j_{i}}^{\left(\beta\right)}\subset\left[\smash{S_{j}^{\left(\beta\right)}}\right]^{\ast}$
for arbitrary $i\in I^{\left(\alpha\right)}$ with $S_{i}^{\left(\alpha\right)}\cap S_{j}^{\left(\beta\right)}\neq\emptyset$.

Now, we will need the easily verifiable identities $\left|\det\smash{T_{i}^{\left(\alpha\right)}}\right|=w_{i}^{1+\alpha}$
and $\left|\det\smash{Y_{j}^{\left(\beta\right)}}\right|=v_{j}^{1+\beta^{-1}}=v_{j}^{1+\alpha}$
for $i\in I^{\left(\alpha\right)}$ and $j\in J^{\left(\beta\right)}$.
To use these identities, set $M_{j}:=\left\{ i\in I^{\left(\alpha\right)}\with S_{i}^{\left(\alpha\right)}\cap S_{j}^{\left(\beta\right)}\neq\emptyset\right\} $,
as well as 
\[
C_{1}:=\min\left\{ \lambda_{2}\left(\left(-1,1\right)^{2}\right),\lambda_{2}\left(\vphantom{U^{\left(3\right)}}\smash{U_{\left(-1,1\right)}^{\left(3^{-1},3\right)}}\right)\right\} >0\quad\text{ and }\quad C_{2}:=\max\left\{ \lambda_{2}\left(\left(-1,1\right)^{2}\right),\,\lambda_{2}\left(P\right)\right\} >0,
\]
with $P$ as in Definition \ref{def:ReciprocalBetaShearletCovering}.
We clearly have $\sum_{i\in M_{j}}\Indicator_{S_{i}^{\left(\alpha\right)}}\leq\sum_{i\in I^{\left(\alpha\right)}}\Indicator_{S_{i}^{\left(\alpha\right)}}\leq N_{\CalS^{\left(\alpha\right)}}$.
But because of $S_{i}^{\left(\alpha\right)}\subset\left[\smash{S_{j}^{\left(\beta\right)}}\right]^{\ast}$
for $i\in M_{j}$, this implies
\begin{align*}
0<C_{1}\cdot\sum_{i\in M_{j}}w_{i}^{1+\alpha}=C_{1}\cdot\sum_{i\in M_{j}}\left|\det\smash{T_{i}^{\left(\alpha\right)}}\right| & \leq\sum_{i\in M_{j}}\lambda_{2}\left(\smash{S_{i}^{\left(\alpha\right)}}\right)=\int_{\R^{2}}\sum_{i\in M_{j}}\Indicator_{S_{i}^{\left(\alpha\right)}}\left(\xi\right)\d\xi\\
 & \leq N_{\CalS^{\left(\alpha\right)}}\cdot\lambda_{2}\left(\left[\smash{S_{j}^{\left(\beta\right)}}\right]^{\ast}\right)\leq N_{\CalS^{\left(\alpha\right)}}\cdot\sum_{\ell\in j^{\ast}}\lambda_{2}\left(\smash{S_{\ell}^{\left(\beta\right)}}\right)\\
 & \leq C_{2}N_{\CalS^{\left(\alpha\right)}}\cdot\sum_{\ell\in j^{\ast}}\left|\det\smash{Y_{\ell}^{\left(\beta\right)}}\right|=C_{2}N_{\CalS^{\left(\alpha\right)}}\cdot\sum_{\ell\in j^{\ast}}v_{\ell}^{1+\alpha}\\
\left({\scriptstyle \text{Corollary }\ref{cor:ReciprocalShearletCoveringAlmostStructured}}\right) & \leq C_{2}N_{\CalS^{\left(\alpha\right)}}\cdot\left|j^{\ast}\right|\cdot K^{2\left(1+\alpha\right)}\cdot v_{j}^{1+\alpha}\\
\left({\scriptstyle \text{eq. }\eqref{eq:ReciprocalShearletCoveringWeightEquivalentToUsualOne}\text{ and }S_{i}^{\left(\alpha\right)}\cap S_{j}^{\left(\beta\right)}\neq\emptyset\text{ for }i\in M_{j}}\right) & \leq C_{2}N_{\CalS^{\left(\alpha\right)}}N_{\CalS^{\left(\beta\right)}}\cdot K^{2\left(1+\alpha\right)}\cdot\left(13K\right)^{1+\alpha}\cdot\inf_{i\in M_{j}}w_{i}^{1+\alpha}.
\end{align*}
Now, observe $\inf_{i\in M_{j}}w_{i}^{1+\alpha}\geq1>0$, so that
the preceding inequality shows that $M_{j}$ is finite with
\[
\left|M_{j}\right|\leq C_{1}^{-1}\cdot C_{2}N_{\CalS^{\left(\alpha\right)}}N_{\CalS^{\left(\beta\right)}}\cdot K^{2\left(1+\alpha\right)}\cdot\left(13K\right)^{1+\alpha},
\]
where the right-hand side is independent of $j\in J^{\left(\beta\right)}$.

\medskip{}

We have thus verified the main requirements of \cite[Lemma 6.11]{DecompositionEmbedding}.
But since we want to apply that lemma also in case of $p\in\left(0,1\right)$,
we still have to verify the extra condition that $\CalP=\CalS^{\left(\alpha\right)}=\left(\smash{T_{i}^{\left(\alpha\right)}}Q_{i}'\right)_{i\in I^{\left(\alpha\right)}}$
is almost subordinate to $\CalQ=\CalS^{\left(\beta\right)}=\left(\smash{Y_{j}^{\left(\beta\right)}}P_{j}'\right)_{j\in J^{\left(\beta\right)}}$
and that we have 
\[
\left|\det\left[\left(\smash{T_{i}^{\left(\alpha\right)}}\vphantom{T_{i}^{\alpha}}\right)^{-1}Y_{j}^{\left(\beta\right)}\right]\right|\lesssim1\quad\text{if }\quad S_{j}^{\left(\beta\right)}\cap S_{i}^{\left(\alpha\right)}\neq\emptyset
\]
But Lemma \ref{lem:AlphaShearletSubordinateToReciprocalShearlet}
shows that $\CalP=\CalS^{\left(\alpha\right)}$ is subordinate (and
thus also almost subordinate, cf.\@ \cite[Definition 2.10]{DecompositionEmbedding})
to $\CalQ=\CalS^{\left(\beta\right)}$. Furthermore, in case of $S_{j}^{\left(\beta\right)}\cap S_{i}^{\left(\alpha\right)}\neq\emptyset$,
equation \eqref{eq:ReciprocalShearletCoveringWeightEquivalentToUsualOne}
yields
\[
\left|\det\left[\left(\smash{T_{i}^{\left(\alpha\right)}}\right)^{-1}Y_{j}^{\left(\beta\right)}\right]\right|=\left(w_{i}^{1+\alpha}\right)^{-1}\cdot v_{j}^{1+\alpha}\asymp1,
\]
as desired. The claim now follows from \cite[Lemma 6.11]{DecompositionEmbedding}.
\end{proof}
Now, we show that a suitable $\beta$-shearlet system generated by
\emph{bandlimited} functions yields a Banach frame for the reciprocal
shearlet smoothness spaces. We restrict ourselves to bandlimited functions,
since this simplifies the proof.

But first, we review the precise definition of a $\beta$-shearlet
system from \cite[Definition 3.10]{AlphaMolecules}.
\begin{defn}
\label{def:BetaShearletSystem}For $c\in\left(0,\infty\right)$ and
$\beta\in\left(1,\infty\right)$ and given generators $\varphi,\psi,\theta\in L^{2}\left(\R^{2}\right)$,
the \textbf{cone-adapted $\beta$-shearlet system} ${\rm SH}\left(\varphi,\psi,\theta;c,\beta\right)$
with sampling density $c$ generated by $\varphi,\psi,\theta$ is
defined as
\[
{\rm SH}\left(\varphi,\psi,\theta;\,c,\beta\right):=\Phi\left(\varphi;\,c,\beta\right)\cup\Psi\left(\psi;\,c,\beta\right)\cup\Theta\left(\theta;\,c,\beta\right),
\]
where
\begin{align*}
\Phi\left(\varphi;\,c,\beta\right) & =\left\{ \varphi\left(\mybullet-ck\right)\with k\in\Z^{2}\right\} ,\\
\Psi\left(\psi;\,c,\beta\right) & =\left\{ 2^{j\left(\beta+1\right)/4}\cdot\psi\left(S_{\ell}A_{\beta^{-1},2^{\beta j/2}}\mybullet-ck\right)\with j\in\N_{0},k\in\Z^{2}\text{ and }\ell\in\Z\text{ with }\left|\ell\right|\leq\left\lceil \smash{2^{j\left(\beta-1\right)/2}}\right\rceil \right\} ,\\
\Theta\left(\theta;\,c,\beta\right) & =\left\{ 2^{j\left(\beta+1\right)/4}\cdot\theta\left(S_{\ell}^{T}\widetilde{A}_{\beta^{-1},2^{\beta j/2}}\mybullet-ck\right)\with j\in\N_{0},k\in\Z^{2}\text{ and }\ell\in\Z\text{ with }\left|\ell\right|\leq\left\lceil \smash{2^{j\left(\beta-1\right)/2}}\right\rceil \right\} ,
\end{align*}
where $A_{\alpha,s}={\rm diag}\left(s,\,s^{\alpha}\right)$ and $\widetilde{A}_{\alpha,s}={\rm diag}\left(s^{\alpha},\,s\right)$,
as well as $S_{\ell}=\left(\begin{smallmatrix}1 & \ell\\
0 & 1
\end{smallmatrix}\right)$ for $\alpha\in\left[0,1\right]$, $s\in\left(0,\infty\right)$ and
$\ell\in\R$.
\end{defn}
\begin{prop}
\label{prop:BanachFrameForReciprocalShearletSmoothnessSpace}Let $\varphi,\psi\in\Schwartz\left(\R^{2}\right)$
with $\widehat{\varphi},\widehat{\psi}\in\TestFunctionSpace{\R^{2}}$
and the following additional properties:

\begin{enumerate}
\item We have $\widehat{\varphi}\left(\xi\right)\neq0$ for all $\xi\in\left[-1,1\right]^{2}$.
\item We have $\widehat{\psi}\left(\xi\right)\neq0$ for all $\xi\in\overline{P}$
with $P$ as in Definition \ref{def:ReciprocalBetaShearletCovering}.
\item We have $\supp\widehat{\psi}\subset\R^{\ast}\times\R$.
\end{enumerate}
Then, for $\beta\in\left(1,\infty\right)$, $p_{0},q_{0}\in\left(0,1\right]$
and $s_{0},s_{1}\in\R$ with $s_{0}\leq s_{1}$, there is some $\delta_{0}=\delta_{0}\left(\beta,p_{0},q_{0},s_{0},s_{1},\varphi,\psi\right)>0$
such that for every $0<\delta\leq\delta_{0}$, all $p\in\left[p_{0},\infty\right]$,
all $q\in\left[q_{0},\infty\right]$ and all $s\in\R$ with $s_{0}\leq s\leq s_{1}$,
the family ${\rm SH}\left(\varphi,\psi,\theta;\delta,\beta\right)$
forms a Banach frame for $\mathscr{S}_{\beta,s}^{p,q}\left(\R^{2}\right)$,
where $\theta:=\psi\circ R$, i.e., $\theta\left(x,y\right)=\psi\left(y,x\right)$.

Precisely, this means with the coefficient space $C_{v^{s}}^{p,q}$
as in Definition \ref{def:CoefficientSpace} (with $\CalQ=\CalS^{\left(\beta\right)}$
and $w=v^{s}$) and with
\[
\gamma^{\left[i,k,\delta\right]}:=\begin{cases}
\varphi\left(\mybullet-\delta k\right), & \text{if }i=0\\
2^{j\left(\beta+1\right)/4}\cdot\psi\left(S_{\ell}A_{\beta^{-1},2^{\beta j/2}}\mybullet-\delta k\right), & \text{if }i=\left(j,\ell,0\right),\\
2^{j\left(\beta+1\right)/4}\cdot\theta\left(S_{\ell}^{T}\widetilde{A}_{\beta^{-1},2^{\beta j/2}}\mybullet-\delta k\right), & \text{if }i=\left(j,\ell,1\right)
\end{cases}
\]
for $i\in J^{\left(\beta\right)}$ and $k\in\Z^{2}$ that the following
hold:

\begin{enumerate}
\item For each $0<\delta\leq1$, the analysis map
\[
A^{\left(\delta\right)}:\mathscr{S}_{\beta,s}^{p,q}\left(\smash{\R^{2}}\right)\to C_{v^{s}}^{p,q},f\mapsto\left(\left\langle f,\,\smash{\gamma^{\left[i,k,\delta\right]}}\right\rangle _{Z'\left(\R^{2}\right),Z\left(\R^{2}\right)}\right)_{i\in J^{\left(\beta\right)},\,k\in\Z^{2}}
\]
is well-defined and bounded.
\item For all $0<\delta\leq\delta_{0}$, there is a bounded linear reconstruction
map $R^{\left(\delta\right)}:C_{v^{s}}^{p,q}\to\mathscr{S}_{\beta,s}^{p,q}\left(\smash{\R^{2}}\right)$
satisfying $R^{\left(\delta\right)}\circ A^{\left(\delta\right)}=\identity_{\mathscr{S}_{\beta,s}^{p,q}\left(\smash{\R^{2}}\right)}$.
\item We have the following consistency statement: If $f\in\mathscr{S}_{\beta,s}^{p,q}\left(\smash{\R^{2}}\right)$
and if $p_{1}\in\left[p_{0},\infty\right]$ and $q_{1}\in\left[q_{0},\infty\right]$
and $s_{0}\leq r\leq s_{1}$, then we have the following equivalence:
\[
f\in\mathscr{S}_{\beta,r}^{p_{1},q_{1}}\left(\smash{\R}^{2}\right)\quad\Longleftrightarrow\quad A^{\left(\delta\right)}f\in C_{v^{r}}^{p_{1},q_{1}}.\qedhere
\]
\end{enumerate}
\end{prop}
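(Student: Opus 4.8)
The plan is to obtain the statement as a direct application of the abstract Banach frame criterion, Theorem~\ref{thm:BanachFrameTheorem}, to the reciprocal $\beta$-shearlet covering $\CalS^{\left(\beta\right)}=\bigl(Y_{j}^{\left(\beta\right)}P_{j}'\bigr)_{j\in J^{\left(\beta\right)}}$ — which is an almost structured covering of $\R^{2}$ by Corollary~\ref{cor:ReciprocalShearletCoveringAlmostStructured} — equipped with the $\CalS^{\left(\beta\right)}$-moderate weight $v^{s}$. In the notation of Assumption~\ref{assu:CrashCourseStandingAssumptions} there are exactly two base sets, $Q_{0}^{\left(1\right)}:=P$ (with $k_{i}=1$ for $i\in J_{0}^{\left(\beta\right)}$) and $Q_{0}^{\left(2\right)}:=\left(-1,1\right)^{2}$ (with $k_{0}=2$), and I would feed Theorem~\ref{thm:BanachFrameTheorem} the \emph{reflected} prototypes $\gamma_{1}^{\left(0\right)}:=\widetilde{\psi}$ and $\gamma_{2}^{\left(0\right)}:=\widetilde{\varphi}$, so that the resulting system is oriented as a genuine shearlet system rather than its reflection. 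The first task is then an elementary matrix computation, entirely parallel to Remark~\ref{rem:AlphaShearletsYieldUsualShearlets}: using $\bigl(Y_{j,\ell,0}^{\left(\beta\right)}\bigr)^{T}=S_{\ell}A_{\beta^{-1},\,2^{\beta j/2}}$ together with $R^{T}=R$, $RS_{\ell}^{T}R=S_{\ell}$ and $\widetilde{A}_{\beta^{-1},\,2^{\beta j/2}}=R\,A_{\beta^{-1},\,2^{\beta j/2}}\,R$ one gets $\bigl(Y_{j,\ell,1}^{\left(\beta\right)}\bigr)^{T}=R\,S_{\ell}^{T}\,\widetilde{A}_{\beta^{-1},\,2^{\beta j/2}}$, and combining this with $\bigl|\det Y_{j,\ell,\delta}^{\left(\beta\right)}\bigr|^{1/2}=2^{j\left(\beta+1\right)/4}$ and $\widetilde{\widetilde{\psi}}=\psi$ shows that the generalized shift-invariant system $\bigl(L_{\delta T_{i}^{-T}k}\,\widetilde{\gamma^{\left[i\right]}}\bigr)_{i\in J^{\left(\beta\right)},k\in\Z^{2}}$ furnished by Theorem~\ref{thm:BanachFrameTheorem} coincides, up to the bijective reindexing $k\mapsto Rk$ in the vertical cone (which is irrelevant for the frame property), with ${\rm SH}\left(\varphi,\psi,\theta;\delta,\beta\right)=\bigl(\gamma^{\left[i,k,\delta\right]}\bigr)_{i,k}$ for $\theta=\psi\circ R$. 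Since $\widehat{\varphi},\widehat{\psi}\in\TestFunctionSpace{\R^{2}}$, every $\widehat{\gamma^{\left[i,k,\delta\right]}}$ lies in $\TestFunctionSpace{\R^{2}}$, so the pairing $\bigl\langle f,\gamma^{\left[i,k,\delta\right]}\bigr\rangle_{Z'\left(\R^{2}\right),Z\left(\R^{2}\right)}$ is well defined and agrees with the value $\bigl[\gamma^{\left[i\right]}\ast f\bigr]\left(\delta T_{i}^{-T}k\right)$ of the special convolution \eqref{eq:SpecialConvolutionDefinition} (as in Lemma~\ref{lem:SpecialConvolutionClarification} in the $L^{2}$-case); with this, the analysis operator, the coefficient space $C_{v^{s}}^{p,q}$ of Definition~\ref{def:CoefficientSpace}, the reconstruction operator and the consistency statement of Theorem~\ref{thm:BanachFrameTheorem} become exactly the objects asserted in items (1)--(3).

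It then remains to verify the hypotheses (1)--(4) of Theorem~\ref{thm:BanachFrameTheorem} for $\gamma_{k}^{\left(0\right)}\in\{\widetilde{\psi},\widetilde{\varphi}\}$, uniformly over $p\in\left[p_{0},\infty\right]$, $q\in\left[q_{0},\infty\right]$ and $s\in\left[s_{0},s_{1}\right]$. Hypotheses (1) and (2) are immediate because $\widetilde{\psi},\widetilde{\varphi}\in\Schwartz\left(\R^{2}\right)$ have compactly supported smooth Fourier transforms. Hypothesis (3) — $\widehat{\gamma_{k}^{\left(0\right)}}\left(\xi\right)\neq0$ for $\xi\in\overline{Q_{0}^{\left(k\right)}}$ — follows from the nonvanishing assumptions on $\widehat{\psi},\widehat{\varphi}$ together with the symmetries $P=-P$ (built into Definition~\ref{def:ReciprocalBetaShearletCovering}) and $\left[-1,1\right]^{2}=-\left[-1,1\right]^{2}$, via $\widehat{\widetilde{\psi}}\left(\xi\right)=\widehat{\psi}\left(-\xi\right)$ and $\widehat{\widetilde{\varphi}}\left(\xi\right)=\widehat{\varphi}\left(-\xi\right)$. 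The substance is in hypothesis (4), that is, $C_{1}=\sup_{i}\sum_{j}M_{j,i}<\infty$ and $C_{2}=\sup_{j}\sum_{i}M_{j,i}<\infty$ with uniform bounds.

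Here the bandlimitation hypothesis $\supp\widehat{\psi}\subset\R^{\ast}\times\R$ is what makes the proof short: it turns (4) into a finite-band-matrix estimate. Setting $N_{0}:=\lceil\left(2+\varepsilon\right)/p_{0}\rceil$ (which dominates the exponent $N$ of Theorem~\ref{thm:BanachFrameTheorem} for every admissible $p$), the functions $\varrho:=\max_{\left|\beta\right|\le1}\max_{\left|\alpha\right|\le N_{0}}\bigl|\partial^{\alpha}\widehat{\partial^{\beta}\widetilde{\psi}}\bigr|$ and $\varrho_{0}:=\max_{\left|\beta\right|\le1}\max_{\left|\alpha\right|\le N_{0}}\bigl|\partial^{\alpha}\widehat{\partial^{\beta}\widetilde{\varphi}}\bigr|$ are bounded with fixed compact supports, and $\supp\varrho\subset\R^{\ast}\times\R$. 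Consequently $M_{j,i}=0$ unless $S_{i}^{\left(\beta\right)}\cap Y_{j}^{\left(\beta\right)}\bigl(\supp\varrho\bigr)\neq\emptyset$ (for $i,j\in J_{0}^{\left(\beta\right)}$), while the contributions with $i=0$ or $j=0$ are, by the scale estimate $1+\left|\xi\right|\asymp v_{i}$ on $S_{i}^{\left(\beta\right)}$ of Corollary~\ref{cor:ReciprocalShearletCoveringAlmostStructured}, supported on finitely many index pairs and can be bounded by hand. Choosing a single unconnected shearlet-type set $U=U_{\left(a,b\right)}^{\left(\gamma,\mu\right)}\cup\bigl(-U_{\left(a,b\right)}^{\left(\gamma,\mu\right)}\bigr)$ with $\supp\varrho\cup P\subset U$ and applying Lemma~\ref{lem:ReciprocalShearletCoveringIsAlmostStructuredGeneralized} to the auxiliary family $\bigl(Y_{j}^{\left(\beta\right)}U\bigr)_{j\in J^{\left(\beta\right)}}$ gives: for each fixed $j$ (resp. $i$) there are at most $N$ indices $i$ (resp. $j$) with $M_{j,i}\neq0$; on these clusters $\left\Vert T_{j}^{-1}T_{i}\right\Vert=\bigl\Vert\bigl(Y_{j}^{\left(\beta\right)}\bigr)^{-1}Y_{i}^{\left(\beta\right)}\bigr\Vert\le C$ and, since $1+\left|\xi\right|$ is comparable both to $v_{i}$ on $S_{i}^{\left(\beta\right)}$ and to $v_{j}$ on $Y_{j}^{\left(\beta\right)}U$, one has $v_{j}^{s}/v_{i}^{s}\le K^{2\max\{\left|s_{0}\right|,\left|s_{1}\right|\}}$; finally, the change of variables $\zeta=T_{j}^{-1}\xi$ together with $\lambda\bigl(T_{j}^{-1}S_{i}^{\left(\beta\right)}\bigr)=\bigl|\det T_{j}\bigr|^{-1}\bigl|\det T_{i}\bigr|\,\lambda\bigl(Q_{i}'\bigr)$ gives $\bigl|\det T_{i}\bigr|^{-1}\int_{S_{i}^{\left(\beta\right)}}\varrho\bigl(T_{j}^{-1}\xi\bigr)\,d\xi\le\left\Vert\varrho\right\Vert_{\sup}\cdot\lambda\left(P\right)$, uniformly in $i,j$. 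Because $\tau=\min\{1,p,q\}\ge\min\{1,p_{0},q_{0}\}$ and $\sigma=\tau\bigl(2/\min\{1,p\}+N\bigr)\le\tau\bigl(2/p_{0}+N_{0}\bigr)$ are controlled over the admissible ranges, all these bounds are uniform in $\left(p,q,s\right)$, whence $C_{1},C_{2}<\infty$ with a uniform bound, and Theorem~\ref{thm:BanachFrameTheorem} delivers one $\delta_{0}=\delta_{0}\left(\beta,p_{0},q_{0},s_{0},s_{1},\varphi,\psi\right)>0$ serving all the stated parameters (with the analysis operator already bounded for every $\delta\in(0,1]$); this yields (1)--(3).

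I expect the matrix identification in the first step to be routine but somewhat fiddly (it is the analogue of Remark~\ref{rem:AlphaShearletsYieldUsualShearlets}), and the genuine obstacle to be the verification of hypothesis (4): namely, setting up the finite-band-matrix bookkeeping so that every constant is visibly uniform in $\left(p,q,s\right)$, and invoking Lemma~\ref{lem:ReciprocalShearletCoveringIsAlmostStructuredGeneralized} for the auxiliary covering $\bigl(Y_{j}^{\left(\beta\right)}U\bigr)_{j}$ correctly — so that it simultaneously bounds the number of nonzero entries in each row and column of $\left(M_{j,i}\right)$, the transition-matrix norms, and the weight ratios on the relevant index clusters — while treating the finitely many low-frequency index pairs ($i=0$ or $j=0$) separately.
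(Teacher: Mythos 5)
Your proposal follows essentially the same route as the paper's proof: apply Theorem \ref{thm:BanachFrameTheorem} to $\CalS^{\left(\beta\right)}$ with the (reflected) bandlimited prototypes, identify the resulting generalized shift-invariant system with ${\rm SH}\left(\varphi,\psi,\theta;\delta,\beta\right)$ via a matrix computation and a harmless reindexing $k\mapsto Rk$ in the $\delta=1$ cone, and verify hypothesis (4) uniformly in $(p,q,s)$ by exploiting the compact Fourier supports together with Lemma \ref{lem:ReciprocalShearletCoveringIsAlmostStructuredGeneralized} applied to an auxiliary dilated family. The only trivial deviation is that the paper folds the low-pass index $j=0$ into that Lemma's auxiliary covering by choosing $U_0'=(-A,A)^2$ there (which the Lemma explicitly allows), whereas you handle the finitely many pairs with $i=0$ or $j=0$ by hand; both work.
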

\begin{proof}
We want to verify that Theorem \ref{thm:BanachFrameTheorem} applies
in the current setting, i.e., with $\CalQ=\CalS^{\left(\beta\right)}=\left(\smash{Y_{j}^{\left(\beta\right)}}P_{j}'\right)_{j\in J^{\left(\beta\right)}}$.
To this end, we first recall the notation introduced in Assumption
\ref{assu:CrashCourseStandingAssumptions}: If we set $n:=2$ and
$Q_{0}^{\left(1\right)}:=P$ with $\mu_{0}=3\cdot2^{\beta/2}$ and
$P=U_{\left(-3,3\right)}^{\left(\mu_{0}^{-1},\mu_{0}\right)}\cup\left(\vphantom{U^{\left(\mu\right)}}-\smash{U_{\left(-3,3\right)}^{\left(\mu_{0}^{-1},\mu_{0}\right)}}\right)$
as in Definition \ref{def:ReciprocalBetaShearletCovering}, as well
as $Q_{0}^{\left(2\right)}:=\left(-1,1\right)^{2}$ and finally $k_{j}:=1$
for $j\in J_{0}^{\left(\beta\right)}$ and $k_{0}:=2$, then we have
$P_{j}'=Q_{0}^{\left(k_{j}\right)}$ for all $j\in J^{\left(\beta\right)}$.

Now, we set $\gamma_{1}^{\left(0\right)}:=\psi$ and $\gamma_{2}^{\left(0\right)}:=\varphi$,
as well as $\varepsilon:=1$. With these choices, we want to verify
the prerequisites of Theorem \ref{thm:BanachFrameTheorem}. We clearly
have $\gamma_{k}^{\left(0\right)},\Fourier\gamma_{k}^{\left(0\right)}\in\Schwartz\left(\R^{2}\right)\subset W^{1,1}\left(\R^{2}\right)\cap W^{1,\infty}\left(\R^{2}\right)\cap C^{\infty}\left(\R^{2}\right)$
and all partial derivatives of these functions are (polynomially)
bounded, so that the first two prerequisites of Theorem \ref{thm:BanachFrameTheorem}
clearly hold. Next, our assumptions on $\widehat{\varphi},\widehat{\psi}$
ensure that $\Fourier\gamma_{1}^{\left(0\right)}\left(\xi\right)=\widehat{\psi}\left(\xi\right)\neq0$
for all $\xi\in\overline{P}=\overline{Q_{0}^{\left(1\right)}}$ and
likewise that $\Fourier\gamma_{2}^{\left(0\right)}\left(\xi\right)=\widehat{\varphi}\left(\xi\right)\neq0$
for all $\xi\in\left[-1,1\right]^{2}=\overline{Q_{0}^{\left(2\right)}}$.

Consequently, since we are interested in the decomposition space $\mathscr{S}_{\beta,s}^{p,q}\left(\R^{2}\right)=\DecompSp{\CalS^{\left(\beta\right)}}p{\ell_{v^{s}}^{q}}{}$
in $\R^{\dimension}=\R^{2}$, it remains to verify
\[
C_{1}:=\sup_{i\in J^{\left(\beta\right)}}\:\sum_{j\in J^{\left(\beta\right)}}M_{j,i}<\infty\quad\text{ and }\quad C_{2}:=\sup_{j\in J^{\left(\beta\right)}}\:\sum_{i\in J^{\left(\beta\right)}}M_{j,i}<\infty,
\]
where
\[
M_{j,i}:=\left(\frac{v_{j}^{s}}{v_{i}^{s}}\right)^{\tau}\cdot\left(1+\left\Vert Y_{j}^{-1}Y_{i}\right\Vert \right)^{\sigma}\cdot\max_{\left|\nu\right|\leq1}\left(\left|\det Y_{i}\right|^{-1}\cdot\int_{S_{i}^{\left(\beta\right)}}\:\max_{\left|\alpha\right|\leq N}\left|\left(\left[\partial^{\alpha}\widehat{\partial^{\nu}\gamma_{j}}\right]\left(Y_{j}^{-1}\xi\right)\right)\right|\d\xi\right)^{\tau},
\]
with
\[
N:=\left\lceil \frac{\dimension+\varepsilon}{\min\left\{ 1,p\right\} }\right\rceil \leq\left\lceil \frac{3}{p_{0}}\right\rceil =:N_{0},\quad\tau:=\min\left\{ 1,p,q\right\} \geq\tau_{0}:=\min\left\{ p_{0},q_{0}\right\} \quad\text{ and }\quad\sigma:=\tau\cdot\left(\frac{\dimension}{\min\left\{ 1,p\right\} }+N\right).
\]
and where $\gamma_{j}:=\gamma_{k_{j}}^{\left(0\right)}$ for $j\in J^{\left(\beta\right)}$,
i.e., $\gamma_{j}=\psi$ for $j\in J_{0}^{\left(\beta\right)}$ and
$\gamma_{0}=\varphi$.

Now, since $\widehat{\varphi}\in\TestFunctionSpace{\R^{2}}$, there
is some $A>1$ satisfying $\supp\widehat{\varphi}\subset\left(-A,A\right)^{2}$.
Furthermore, since $\supp\widehat{\psi}\subset\R^{\ast}\times\R$
is compact, there are $0<\lambda<\mu$ and $B>0$ with 
\begin{align*}
\supp\widehat{\psi} & \subset\left\{ \left(\xi,\eta\right)\in\R^{2}\with\lambda<\left|\xi\right|<\mu\text{ and }\left|\eta\right|<\lambda B\right\} \\
 & \subset\left\{ \left(\xi,\eta\right)\in\R^{2}\with\lambda<\left|\xi\right|<\mu\text{ and }-B<\eta/\xi<B\right\} =U_{\left(-B,B\right)}^{\left(\lambda,\mu\right)}\cup\left[-U_{\left(-B,B\right)}^{\left(\lambda,\mu\right)}\right]=:U.
\end{align*}
By possibly shrinking $\lambda$ and enlarging $\mu$ and $B$, we
can assume $\lambda\leq\mu_{0}^{-1}$, $\mu\geq\mu_{0}$ and $B\geq3$,
so that $U\supset P=Q_{0}^{\left(1\right)}$. Setting $U_{0}':=\left(-A,A\right)^{2}$
and $U_{j}':=U$ for $j\in J_{0}^{\left(\beta\right)}$, we have just
shown $\supp\widehat{\gamma_{j}}\subset U_{j}'$ for all $j\in J^{\left(\beta\right)}$.
But standard properties of the Fourier transform (see e.g.\@ \cite[Theorem 8.22]{FollandRA})
show $\widehat{\partial^{\nu}\gamma_{j}}\left(\xi\right)=\left(2\pi i\xi\right)^{\nu}\cdot\widehat{\gamma_{j}}\left(\xi\right)$
and thus again $\supp\partial^{\alpha}\widehat{\partial^{\nu}\gamma_{j}}\subset U_{j}'$
for all $j\in J^{\left(\beta\right)}$ and arbitrary $\alpha,\nu\in\N_{0}^{2}$.
Therefore, we get
\[
\max_{\left|\nu\right|\leq1}\:\max_{\left|\alpha\right|\leq N}\left|\left(\left[\partial^{\alpha}\widehat{\partial^{\nu}\gamma_{j}}\right]\left(Y_{j}^{-1}\xi\right)\right)\right|\leq\left[\sup_{j\in J^{\left(\beta\right)}}\max_{\left|\nu\right|\leq1}\:\max_{\left|\alpha\right|\leq N_{0}}\left\Vert \partial^{\alpha}\widehat{\partial^{\nu}\gamma_{j}}\right\Vert _{\sup}\right]\cdot\Indicator_{U_{j}'}\left(Y_{j}^{-1}\xi\right)=:K_{1}\cdot\Indicator_{U_{j}'}\left(Y_{j}^{-1}\xi\right)=K_{1}\cdot\Indicator_{Y_{j}U_{j}'}\left(\xi\right)
\]
for all $\xi\in\R^{2}$ and $j\in J^{\left(\beta\right)}$. Here,
we emphasize that the constant $K_{1}$ is finite since $\left\{ \gamma_{j}\with j\in J^{\left(\beta\right)}\right\} =\left\{ \varphi,\psi\right\} \subset\Schwartz\left(\R^{2}\right)$
is a finite set.

Next, if we set $U_{j}:=Y_{j}^{\left(\beta\right)}U_{j}'$ for $j\in J^{\left(\beta\right)}$,
then Lemma \ref{lem:ReciprocalShearletCoveringIsAlmostStructuredGeneralized}
yields constants $L_{1},C>0$ and $M\in\N$ (depending only on $\lambda,\mu,A,B,\beta$)
such that
\begin{align*}
\varUpsilon_{j}:=\left\{ i\in J^{\left(\beta\right)}\with U_{i}\cap U_{j}\neq\emptyset\right\} \quad\text{ satisfies }\quad\left|\varUpsilon_{j}\right|\leq M & \qquad\forall j\in J^{\left(\beta\right)},\\
\left\Vert Y_{i}^{-1}Y_{j}\right\Vert \leq C & \qquad\forall i,j\in J^{\left(\beta\right)}\text{ with }U_{i}\cap U_{j}\neq\emptyset,\\
L_{1}^{-1}\cdot v_{j}\leq\left|\xi\right|\leq L_{1}\cdot v_{j} & \qquad\forall j\in J_{0}^{\left(\beta\right)}\text{ and }\xi\in U_{j}.
\end{align*}
As a slight modification, the last estimate yields because of $v_{j}\geq1$
that $\left(1+L_{1}\right)^{-1}\cdot v_{j}\leq\left|\xi\right|\leq1+\left|\xi\right|\leq\left(1+L_{1}\right)\cdot v_{j}$
for all $\xi\in U_{j}$ and $j\in J_{0}^{\left(\beta\right)}$. Likewise,
for $\xi\in U_{0}=\left(-A,A\right)^{2}$, we have 
\[
\left(1+2A\right)^{-1}\cdot v_{j}\leq1\leq1+\left|\xi\right|\leq1+2A=\left(1+2A\right)\cdot v_{j},
\]
so that there is a constant $L_{2}=L_{2}\left(A,B,\lambda,\mu,\beta\right)>0$
satisfying $L_{2}^{-1}\cdot v_{j}\leq1+\left|\xi\right|\leq L_{2}\cdot v_{j}$
for all $\xi\in U_{j}$ and $j\in J^{\left(\beta\right)}$. In particular,
for $i\in\varUpsilon_{j}$ there is some $\xi\in U_{i}\cap U_{j}\neq\emptyset$,
so that $v_{i}\leq L_{2}\cdot\left(1+\left|\xi\right|\right)\leq L_{2}^{2}\cdot v_{j}$.
By symmetry, we also get $v_{j}\leq L_{2}^{2}\cdot v_{i}$ and thus
$\left(v_{j}^{s}/v_{i}^{s}\right)\leq L_{2}^{2\left|s\right|}$ for
all $j\in J^{\left(\beta\right)}$ and $i\in\varUpsilon_{j}$.

Putting everything together and recalling $P_{j}'\subset U_{j}'$
for all $j\in J^{\left(\beta\right)}$, we thus see
\begin{align*}
M_{j,i} & =\left(\frac{v_{j}^{s}}{v_{i}^{s}}\right)^{\tau}\cdot\left(1+\left\Vert Y_{j}^{-1}Y_{i}\right\Vert \right)^{\sigma}\cdot\max_{\left|\nu\right|\leq1}\left(\left|\det Y_{i}\right|^{-1}\cdot\int_{S_{i}^{\left(\beta\right)}}\:\max_{\left|\alpha\right|\leq N}\left|\left(\left[\partial^{\alpha}\widehat{\partial^{\nu}\gamma_{j}}\right]\left(Y_{j}^{-1}\xi\right)\right)\right|\d\xi\right)^{\tau}\\
 & \leq\left(\frac{v_{j}^{s}}{v_{i}^{s}}\right)^{\tau}\cdot\left(1+\left\Vert Y_{j}^{-1}Y_{i}\right\Vert \right)^{\sigma}\cdot\left(K_{1}\cdot\left|\det Y_{i}\right|^{-1}\cdot\int_{U_{i}}\Indicator_{Y_{j}U_{j}'}\left(\xi\right)\d\xi\right)^{\tau}\\
 & =\left(\frac{v_{j}^{s}}{v_{i}^{s}}\right)^{\tau}\cdot\left(1+\left\Vert Y_{j}^{-1}Y_{i}\right\Vert \right)^{\sigma}\cdot\left(K_{1}\cdot\left|\det Y_{i}\right|^{-1}\cdot\lambda_{2}\left(U_{i}\cap U_{j}\right)\right)^{\tau}\\
 & \leq\Indicator_{\varUpsilon_{j}}\left(i\right)\cdot\left(\frac{v_{j}^{s}}{v_{i}^{s}}\right)^{\tau}\cdot\left(1+\left\Vert Y_{j}^{-1}Y_{i}\right\Vert \right)^{\sigma}\cdot\left(K_{1}\cdot\left|\det Y_{i}\right|^{-1}\cdot\lambda_{2}\left(Y_{i}U_{i}'\right)\right)^{\tau}\\
 & \leq\Indicator_{\varUpsilon_{j}}\left(i\right)\cdot L_{2}^{2\tau\left|s\right|}\cdot\left(1+C\right)^{\sigma}\cdot\left(K_{1}\cdot\sup_{i\in J^{\left(\beta\right)}}\lambda_{2}\left(U_{i}'\right)\right)^{\tau}\\
 & =\Indicator_{\varUpsilon_{j}}\left(i\right)\cdot L_{2}^{2\tau\left|s\right|}\cdot\left(1+C\right)^{\sigma}\cdot K_{2}^{\tau}
\end{align*}
for $K_{2}:=K_{1}\cdot\max\left\{ \lambda_{2}\left(\smash{\left(-A,A\right)^{2}}\right),\lambda_{2}\left(U\right)\right\} $.
Hence, using $\Indicator_{\varUpsilon_{j}}\left(i\right)=\Indicator_{\varUpsilon_{i}}\left(j\right)$,
we finally get
\begin{align*}
C_{1}^{1/\tau}=\sup_{i\in J^{\left(\beta\right)}}\:\left(\smash{\sum_{j\in J^{\left(\beta\right)}}}\vphantom{\sum}M_{j,i}\right)^{1/\tau} & \leq L_{2}^{2\left|s\right|}\cdot\left(1+C\right)^{\frac{\sigma}{\tau}}\cdot K_{2}\cdot\sup_{i\in J^{\left(\beta\right)}}\:\left|\varUpsilon_{i}\right|^{1/\tau}\\
 & \leq L_{2}^{2\left|s\right|}\cdot\left(1+C\right)^{\frac{\sigma}{\tau}}\cdot K_{2}\cdot M^{1/\tau}\\
 & \leq L_{2}^{2\max\left\{ \left|s_{0}\right|,\left|s_{1}\right|\right\} }\cdot\left(1+C\right)^{\frac{2}{p_{0}}+N_{0}}\cdot K_{2}\cdot M^{1/\tau_{0}}=:K_{3},
\end{align*}
where the last step used $\frac{\sigma}{\tau}=\frac{\dimension}{\min\left\{ 1,p\right\} }+N\leq\frac{2}{p_{0}}+N_{0}$.
Precisely the same arguments also show $C_{2}^{1/\tau}\leq K_{3}<\infty$.
Observe that $K_{3}$ is independent of $p,q,s$, as long as $p\geq p_{0}$,
$q\geq q_{0}$ and $s_{0}\leq s\leq s_{1}$.

\medskip{}

Consequently, all assumptions of Theorem \ref{thm:BanachFrameTheorem}
are satisfied. Furthermore, since the sets $\R^{\ast}\times\R$, $P$
and $\left(-1,1\right)^{2}$ are symmetric, we see analogously that
all assumptions of Theorem \ref{thm:BanachFrameTheorem} are still
satisfied (possibly with a slightly different constant $K_{3}$) if
$\varphi$ is replaced by $\widetilde{\varphi}$ and $\psi$ by $\widetilde{\psi}$,
where $\widetilde{f}\left(x\right)=f\left(-x\right)$. Thus, $\gamma_{j}=\widetilde{\psi}$
for $j\in J_{0}^{\left(\beta\right)}$ and $\gamma_{0}=\widetilde{\varphi}$.
Consequently, with a fixed regular partition of unity $\Phi=\left(\varphi_{\ell}\right)_{\ell\in J^{\left(\beta\right)}}$
for $\CalS^{\left(\beta\right)}$, Theorem \ref{thm:BanachFrameTheorem}
yields a constant $K=K\left(p_{0},q_{0},\CalS^{\left(\beta\right)},\Phi,\widetilde{\varphi},\widetilde{\psi}\right)=K\left(p_{0},q_{0},\beta,\varphi,\psi\right)>0$,
such that for arbitrary
\[
0<\delta\leq\delta_{00}=\left(1+K\cdot C_{\CalS^{\left(\beta\right)},v^{s}}^{4}\cdot\left(C_{1}^{1/\tau}+C_{2}^{1/\tau}\right)^{2}\right)^{-1},
\]
the family
\[
\left(L_{\delta\cdot Y_{i}^{-T}k}\:\widetilde{\gamma^{\left[i\right]}}\right)_{i\in J^{\left(\beta\right)},k\in\Z^{2}}\quad\text{ with }\quad\gamma^{\left[i\right]}=\left|\det Y_{i}\right|^{1/2}\cdot M_{c_{i}}\left[\gamma_{i}\circ Y_{i}^{T}\right]\quad\text{ and }\quad\widetilde{\gamma^{\left[i\right]}}\left(x\right)=\gamma^{\left[i\right]}\left(-x\right)
\]
yields a Banach frame for $\mathscr{S}_{\beta,s}^{p,q}\left(\R^{2}\right)=\DecompSp{\CalS^{\left(\beta\right)}}p{\ell_{v^{s}}^{q}}{}$,
as precisely described in Theorem \ref{thm:BanachFrameTheorem}. But
with what we just saw and thanks to Corollary \ref{cor:ReciprocalShearletCoveringAlmostStructured},
we have
\[
\delta_{00}=\!\left(1\!+\!K\cdot C_{\CalS^{\left(\beta\right)},v^{s}}^{4}\cdot\left(C_{1}^{1/\tau}\!\!+\!C_{2}^{1/\tau}\right)^{\!2}\right)^{\!-1}\geq\left(1\!+\!K\cdot K_{4}^{8\left|s\right|}\cdot\left(2K_{3}\right)^{2}\right)^{\!-1}\geq\left(1\!+\!4\cdot K\cdot K_{4}^{8\max\left\{ \left|s_{0}\right|,\left|s_{1}\right|\right\} }\cdot K_{3}^{2}\right)^{-1}\!=:\delta_{0}
\]
for a suitable constant $K_{4}=K_{4}\left(\beta\right)\geq1$ which
is provided by Corollary \ref{cor:ReciprocalShearletCoveringAlmostStructured}.

Finally, note that the coefficient map $A^{\left(\delta\right)}f=\left(\left[\gamma^{\left[i\right]}\ast f\right]\left(\delta\cdot Y_{i}^{-T}k\right)\right)_{i\in J^{\left(\beta\right)},\,k\in\Z^{2}}$
from Theorem \ref{thm:BanachFrameTheorem} uses a somewhat peculiar
definition of the convolution $\gamma^{\left[i\right]}\ast f$, cf.\@
equation \eqref{eq:SpecialConvolutionDefinition}. Precisely, with
the regular partition of unity $\Phi=\left(\varphi_{\ell}\right)_{\ell\in J^{\left(\beta\right)}}$
from above, we have
\begin{align*}
\left[\gamma^{\left[i\right]}\ast f\right]\left(x\right) & =\sum_{\ell\in J}\Fourier^{-1}\left(\widehat{\gamma^{\left[i\right]}}\cdot\varphi_{\ell}\cdot\widehat{f}\:\right)\left(x\right)\\
\left({\scriptstyle \text{series is finite sum, since }\Phi\text{ is a locally finite and }\widehat{\gamma^{\left[i\right]}}\in\TestFunctionSpace{\R^{2}}}\right) & =\Fourier^{-1}\left(\sum_{\ell\in J}\varphi_{\ell}\cdot\widehat{\gamma^{\left[i\right]}}\cdot\widehat{f}\:\right)\left(x\right)\\
\left({\scriptstyle \sum_{\ell\in J}\varphi_{\ell}\equiv1\text{ on }\R^{2}}\right) & =\Fourier^{-1}\left(\widehat{\gamma^{\left[i\right]}}\cdot\widehat{f}\right)\left(x\right)=\left\langle \widehat{f},\,e^{2\pi i\left\langle x,\mybullet\right\rangle }\cdot\widehat{\gamma^{\left[i\right]}}\right\rangle _{\DistributionSpace{\R^{2}},\TestFunctionSpace{\R^{2}}}\\
 & =\left\langle f,\,\Fourier\left[e^{2\pi i\left\langle x,\mybullet\right\rangle }\cdot\widehat{\gamma^{\left[i\right]}}\right]\right\rangle _{Z'\left(\R^{2}\right),Z\left(\R^{2}\right)}=\left\langle f,\,L_{x}\cdot\widehat{\widehat{\gamma^{\left[i\right]}}}\right\rangle _{Z'\left(\R^{2}\right),Z\left(\R^{2}\right)}\\
\left({\scriptstyle \text{Fourier inversion}}\right) & =\left\langle f,\,L_{x}\cdot\widetilde{\gamma^{\left[i\right]}}\right\rangle _{Z'\left(\R^{2}\right),Z\left(\R^{2}\right)}
\end{align*}
for all $x\in\R^{2}$ and $i\in J^{\left(\beta\right)}$.

\medskip{}

It remains to verify that the family $\left(L_{\delta\cdot Y_{i}^{-T}k}\:\widetilde{\gamma^{\left[i\right]}}\right)_{i\in J^{\left(\beta\right)},k\in\Z^{2}}$
is (almost) identical to the family $\left(\gamma^{\left[i,k,\delta\right]}\right)_{i\in J^{\left(\beta\right)},k\in\Z^{2}}$
from the statement of the theorem. Recall that $c_{i}=0$ for all
$i\in J^{\left(\beta\right)}$. Now, for $i=0$, $Y_{i}=Y_{0}=\identity$
and thus
\[
L_{\delta\cdot Y_{i}^{-T}k}\:\widetilde{\gamma^{\left[i\right]}}=L_{\delta\cdot k}\:\widetilde{\gamma_{i}}=L_{\delta\cdot k}\:\varphi=\varphi\left(\mybullet-\delta k\right)=\gamma^{\left[i,k,\delta\right]}.
\]
Next, in case of $i=\left(j,\ell,0\right)\in J_{0}^{\left(\beta\right)}$,
recall from Definition \ref{def:ReciprocalBetaShearletCovering} that
$Y_{i}^{T}=S_{\ell}\cdot{\rm diag}\left(2^{\beta j/2},\,2^{j/2}\right)=S_{\ell}\cdot A_{\beta^{-1},2^{\beta j/2}}$
and $\left|\det Y_{i}\right|=2^{\frac{j}{2}\left(1+\beta\right)}$,
so that
\[
L_{\delta\cdot Y_{i}^{-T}k}\:\widetilde{\gamma^{\left[i\right]}}=L_{\delta\cdot\left[S_{\ell}\cdot A_{\beta^{-1},2^{\beta j/2}}\right]^{-1}k}\:\widetilde{\gamma^{\left[i\right]}}=2^{\frac{j}{4}\left(1+\beta\right)}\cdot\psi\left(S_{\ell}\cdot A_{\beta^{-1},2^{\beta j/2}}\mybullet-\delta k\right)=\gamma^{\left[i,k,\delta\right]}.
\]
Finally, in case of $i=\left(j,\ell,1\right)\in J_{0}^{\left(\beta\right)}$,
a direct calculation shows
\[
Y_{i}^{T}=\left(\begin{matrix}2^{j/2}\ell & 2^{\beta j/2}\\
2^{j/2} & 0
\end{matrix}\right)=R\cdot S_{\ell}^{T}\cdot\widetilde{A}_{\beta^{-1},2^{\beta j/2}},
\]
so that
\begin{align*}
L_{\delta\cdot Y_{i}^{-T}k}\:\widetilde{\gamma^{\left[i\right]}} & =2^{\frac{j}{4}\left(1+\beta\right)}\cdot\psi\left(Y_{i}^{T}\left[\mybullet-\delta Y_{i}^{-T}k\right]\right)\\
 & =2^{\frac{j}{4}\left(1+\beta\right)}\cdot\psi\left(R\left[S_{\ell}^{T}\cdot\widetilde{A}_{\beta^{-1},2^{\beta j/2}}\mybullet-\delta Rk\right]\right)\\
 & =2^{\frac{j}{4}\left(1+\beta\right)}\cdot\theta\left(S_{\ell}^{T}\cdot\widetilde{A}_{\beta^{-1},2^{\beta j/2}}\mybullet-\delta Rk\right)\\
 & =\gamma^{\left[i,Rk,\delta\right]}.
\end{align*}
But since $\Z^{2}\to\Z^{2},k\mapsto Rk$ is bijective, it is not hard
to see directly from the definition of the coefficient space $C_{v^{s}}^{p,q}$
(cf.\@ Definition \ref{def:CoefficientSpace}) that if we set $\delta_{i}:=1$
for $i$ of the form $i=\left(j,\ell,1\right)$ and $\delta_{i}:=0$
otherwise, then
\[
\Omega:C_{v^{s}}^{p,q}\to C_{v^{s}}^{p,q},\left(\smash{c_{k}^{\left(i\right)}}\right)_{i\in J,k\in\Z^{2}}\mapsto\left(c_{R^{\delta_{i}}\cdot k}^{\left(i\right)}\right)_{i\in J,k\in\Z^{2}}
\]
is an isometric isomorphism. All in all, we have shown
\begin{align*}
A^{\left(\delta\right)}f & =\left(\left[\gamma^{\left[i\right]}\ast f\right]\left(\delta\cdot Y_{i}^{-T}k\right)\right)_{i\in J^{\left(\beta\right)},\,k\in\Z^{2}}=\left(\left\langle f,\,L_{\delta\cdot Y_{i}^{-T}k}\:\widetilde{\gamma^{\left[i\right]}}\right\rangle _{Z'\left(\R^{2}\right),Z\left(\R^{2}\right)}\right)_{i\in J^{\left(\beta\right)},\,k\in\Z^{2}}\\
 & =\left(\left\langle f,\,\gamma^{\left[i,R^{\delta_{i}}k,\delta\right]}\right\rangle _{Z'\left(\R^{2}\right),Z\left(\R^{2}\right)}\right)_{i\in J^{\left(\beta\right)},\,k\in\Z^{2}}=\Omega\left[\left(\left\langle f,\,\gamma^{\left[i,k,\delta\right]}\right\rangle _{Z'\left(\R^{2}\right),Z\left(\R^{2}\right)}\right)_{i\in J^{\left(\beta\right)},\,k\in\Z^{2}}\right].
\end{align*}
In conjunction with Theorem \ref{thm:BanachFrameTheorem}, this easily
yields all claimed properties.
\end{proof}
Now, we can finally provide the proof of Proposition \ref{prop:CartoonLikeFunctionsBoundedInAlphaShearletSmoothness}
for the general case $\beta\in\left(1,2\right]$.
\begin{proof}[Proof of Proposition \ref{prop:CartoonLikeFunctionsBoundedInAlphaShearletSmoothness}
for $\beta\in\left(1,2\right)$]
Set $\alpha:=\beta^{-1}\in\left(0,1\right)$. For $j\in\N$, let
$L_{j}:=2^{\left\lfloor j\left(1-\alpha\right)\right\rfloor }$ and
$M:=M_{0}\times\Z^{2}$, with $M_{0}:=\left\{ 0\right\} \cup\left\{ \left(j,\ell\right)\in\N\times\Z\with\ell\in\left\{ 0,\dots,L_{j}-1\right\} \right\} $.
Furthermore, let $\left(\psi_{\mu}\right)_{\mu\in M}$ be the tight
$\alpha$-curvelet frame constructed in \cite[Section 3]{CartoonApproximationWithAlphaCurvelets};
see also \cite[Definition 2.2]{AlphaMolecules}. Then, \cite[Theorem 4.2]{CartoonApproximationWithAlphaCurvelets}
yields a constant $C=C\left(\beta,\nu\right)>0$ such that we have
\[
\left|\theta_{N}^{\ast}\left(f\right)\right|\leq C\cdot\left[N^{-1}\cdot\left(1+\log N\right)\right]^{-\frac{1+\beta}{2}}\qquad\forall N\in\N\text{ and }f\in\mathcal{E}^{\beta}\left(\R^{2};\nu\right),
\]
where $\theta_{N}^{\ast}\left(f\right)$ denotes the $N$-th largest
(in absolute value) $\alpha$-curvelet coefficient of $f$ with respect
to the $\alpha$-curvelet frame $\left(\psi_{\mu}\right)_{\mu\in M}$.
This easily implies
\begin{equation}
\left\Vert \left(\left\langle f,\,\psi_{\mu}\right\rangle _{L^{2}}\right)_{\mu\in M}\right\Vert _{\ell^{p}}=\left\Vert \left(\theta_{N}^{\ast}\left(f\right)\right)_{N\in\N}\right\Vert _{\ell^{p}}\leq C_{1}^{\left(p\right)}\qquad\forall f\in\mathcal{E}^{\beta}\left(\R^{2};\nu\right)\text{ and }p>\frac{2}{1+\beta},\label{eq:CurveletAnalysisSparsity}
\end{equation}
for a suitable constant $C_{1}^{\left(p\right)}=C_{1}^{\left(p\right)}\left(\beta,\nu\right)$.

Now, let $\varphi,\psi$ be real-valued functions satisfying the requirements
of Proposition \ref{prop:BanachFrameForReciprocalShearletSmoothnessSpace}
and let $\theta:=\psi\circ R$. Let $p_{0}=q_{0}=\frac{2}{1+\beta}$,
$s_{0}=0$ and $s_{1}=\frac{1}{2}\left(1+\beta\right)$ and choose
$\delta_{0}=\delta_{0}\left(\beta,p_{0},q_{0},s_{0},s_{1},\varphi,\psi\right)>0$
as provided by Proposition \ref{prop:BanachFrameForReciprocalShearletSmoothnessSpace},
so that the cone-adapted $\beta$-shearlet system ${\rm SH}\left(\varphi,\psi,\theta;\,\delta,\beta\right)$
forms a Banach frame for $\mathscr{S}_{\beta,s}^{p,q}\left(\R^{2}\right)$
for all $0<\delta\leq\delta_{0}$, $p\geq p_{0}$, $q\geq q_{0}$
and $s_{0}\leq s\leq s_{1}$, in the sense of Proposition \ref{prop:BanachFrameForReciprocalShearletSmoothnessSpace}.

From this point on, the proof heavily uses the results and terminology
of \cite{AlphaMolecules}: Since $\varphi,\psi,\theta$ are bandlimited,
\cite[Proposition 3.11(ii)]{AlphaMolecules} shows\footnote{Before \cite[Proposition 3.11]{AlphaMolecules}, it is required that
the generators $\varphi,\psi,\theta$ of a \emph{band-limited} $\beta$-shearlet
system satisfy $\supp\varphi\subset Q$, $\supp\psi\subset W$ and
$\supp\theta\subset\widetilde{W}$, where $Q\subset\R^{2}$ is a cube
centered at the origin and $W,\widetilde{W}\subset\R^{2}$ satisfy
$W\subset\left[-a,a\right]\times\left(\left[-c,-b\right]\cup\left[b,c\right]\right)$
and $\widetilde{W}\subset\left(\left[-c,-b\right]\cup\left[b,c\right]\right)\times\left[-a,a\right]$
for certain $a>0$ and $0<b<c$. This is of course impossible, since
$\varphi,\psi,\theta$ would then need to be simultaneously bandlimited
and compactly supported. What is actually meant is $\supp\widehat{\varphi}\subset Q$,
$\supp\widehat{\psi}\subset\widetilde{W}$ and $\supp\widehat{\theta}\subset W$,
with $Q,W,\widetilde{W}$ as above. Note the interchange of the sets
$\widetilde{W}$ and $W$ compared to the condition in \cite{AlphaMolecules}.
It is not hard to see that our generators $\varphi,\psi,\theta$ satisfy
these corrected assumptions, since $\supp\widehat{\psi}\subset\R^{\ast}\times\R$.} that ${\rm SH}\left(\varphi,\psi,\theta;\,\delta,\beta\right)$ is
a system of \textbf{$\beta^{-1}$-molecules} of order $\left(\infty,\infty,\infty,\infty\right)$
with respect to the \textbf{parametrization} $\left(\Lambda^{s},\Phi^{s}\right)$
with $\tau=\delta$, $\sigma=2^{\beta/2}$, $\eta_{j}=\sigma^{-j\left(1-\alpha\right)}$
and $L_{j}=\left\lceil \sigma^{j\left(1-\alpha\right)}\right\rceil $,
cf.\@ \cite[Definitions 3.7 and 3.8]{AlphaMolecules} for details
of this parametrization. Furthermore, \cite[Proposition 3.3(iii)]{AlphaMolecules}
shows that the $\alpha$-curvelet frame $\left(\psi_{\mu}\right)_{\mu\in M}$
from above is a system of $\alpha$-molecules of order $\left(\infty,\infty,\infty,\infty\right)$
with respect to the parametrization $\left(\Lambda^{c},\Phi^{c}\right)$
given in \cite[Definition 3.2]{AlphaMolecules}, with parameters $\sigma=2$,
$\tau=1$ and $L_{j}=2^{\left\lfloor j\left(1-\alpha\right)\right\rfloor }$
as above.

Next, \cite[Theorem 5.7]{AlphaMolecules} shows that the $\alpha$-curvelet
parametrization $\left(\Lambda^{c},\Phi^{c}\right)$ (defined in \cite[Definition 3.2]{AlphaMolecules})
and the $\alpha$-shearlet parametrization $\left(\Lambda^{s},\Phi^{s}\right)$
are \textbf{$\left(\alpha,k\right)$-consistent} for all $k>2$; cf.\@
\cite[Definition 5.5]{AlphaMolecules} for the definition of $\left(\alpha,k\right)$-consistency.
Now, for arbitrary $p\in\left(2/\left(1+\beta\right),1\right]$, \cite[Theorem 5.6]{AlphaMolecules}
shows that $\left(\psi_{\mu}\right)_{\mu\in M}$ and ${\rm SH}\left(\varphi,\psi,\theta;\,\delta,\beta\right)=\left(\gamma^{\left[i,k,\delta\right]}\right)_{\left(i,k\right)\in J^{\left(\beta\right)}\times\Z^{2}}$
are \textbf{sparsity equivalent} in $\ell^{p}$, which means (cf.\@
\cite[Definition 5.3]{AlphaMolecules}) that the operator $A:\ell^{p}\left(M\right)\to\ell^{p}\left(J^{\left(\beta\right)}\times\Z^{2}\right)$
given by the infinite matrix $\left(\left\langle \psi_{\mu},\,\gamma^{\left[i,k,\delta\right]}\right\rangle _{L^{2}}\right)_{\mu\in M,\left(i,k\right)\in J^{\left(\beta\right)}\times\Z^{2}}$
is well-defined and bounded. Now, since $\left(\psi_{\mu}\right)_{\mu\in M}$
is a tight frame, we get\footnote{Note that an infinite matrix $\left(A_{i,j}\right)_{i\in I,j\in J}$
usually would yield an operator $\ell^{p}\left(J\right)\to\ell^{p}\left(I\right)$,
not $\ell^{p}\left(I\right)\to\ell^{p}\left(J\right)$. But the convention
used here is the same as in \cite{AlphaMolecules}, see e.g.\@ the
proof of \cite[Proposition 5.2]{AlphaMolecules}.}
\[
f=\sum_{\mu\in M}\left\langle f,\psi_{\mu}\right\rangle _{L^{2}}\cdot\psi_{\mu}\qquad\text{ and thus }\qquad\left\langle f,\,\smash{\gamma^{\left[i,k,\delta\right]}}\right\rangle _{L^{2}}=\sum_{\mu\in M}\left\langle \psi_{\mu},\,\smash{\gamma^{\left[i,k,\delta\right]}}\right\rangle _{L^{2}}\left\langle f,\psi_{\mu}\right\rangle _{L^{2}}=A\left[\left(\left\langle f,\psi_{\mu}\right\rangle _{L^{2}}\right)_{\mu\in M}\right].
\]
Consequently, since $\varphi,\psi$ and thus also $\gamma^{\left[i,k,\delta\right]}$
are real-valued, we get from equation \eqref{eq:CurveletAnalysisSparsity}
that
\begin{align*}
\left\Vert \left(\left\langle f,\,\smash{\gamma^{\left[i,k,\delta\right]}}\right\rangle _{Z'\left(\R^{2}\right),Z\left(\R^{2}\right)}\right)_{\left(i,k\right)\in J^{\left(\beta\right)}\times\Z^{2}}\right\Vert _{\ell^{p}} & =\left\Vert \left(\left\langle f,\,\smash{\gamma^{\left[i,k,\delta\right]}}\right\rangle _{L^{2}}\right)_{\left(i,k\right)\in J^{\left(\beta\right)}\times\Z^{2}}\right\Vert _{\ell^{p}}\\
 & =\left\Vert A\left[\left(\left\langle f,\psi_{\mu}\right\rangle _{L^{2}}\right)_{\mu\in M}\right]\right\Vert _{\ell^{p}}\leq\vertiii A_{\ell^{p}\to\ell^{p}}C_{1}^{\left(p\right)}=:C_{2}^{\left(p\right)}<\infty
\end{align*}
for all $f\in\mathcal{E}^{\beta}\left(\R^{2};\nu\right)\subset L^{2}\left(\R^{2}\right)=\mathscr{S}_{\beta,0}^{2,2}\left(\R^{2}\right)$
(cf.\@ \cite[Lemma 6.10]{DecompositionEmbedding}) and $p\in\left(2/\left(1+\beta\right),1\right]$.
But since $\ell^{1}\hookrightarrow\ell^{p}$ for $p\geq1$, this estimate
in fact holds for all $p\in\left(2/\left(1+\beta\right),\infty\right]$,
with $C_{2}^{\left(p\right)}:=C_{2}^{\left(1\right)}$ for $p\geq1$.

In view of the consistency statement in Proposition \ref{prop:BanachFrameForReciprocalShearletSmoothnessSpace},
and since the remark after Definition \ref{def:CoefficientSpace}
shows $C_{v^{\left(1+\beta^{-1}\right)\left(\frac{1}{p}-\frac{1}{2}\right)}}^{p,p}=\ell^{p}\left(\smash{J^{\left(\beta\right)}}\times\Z^{2}\right)$,
we thus get $f\in\mathscr{S}_{\beta,\left(1+\beta^{-1}\right)\left(\frac{1}{p}-\frac{1}{2}\right)}^{p,p}\left(\R^{2}\right)$,
with
\begin{align*}
\left\Vert f\right\Vert _{\mathscr{S}_{\beta,\left(1+\beta^{-1}\right)\left(\frac{1}{p}-\frac{1}{2}\right)}^{p,p}} & =\left\Vert R^{\left(\delta\right)}A^{\left(\delta\right)}f\right\Vert _{\mathscr{S}_{\beta,\left(1+\beta^{-1}\right)\left(\frac{1}{p}-\frac{1}{2}\right)}^{p,p}}\\
 & \leq\vertiii{\smash{R^{\left(\delta\right)}}}_{C_{v^{\left(1+\beta^{-1}\right)\left(\frac{1}{p}-\frac{1}{2}\right)}}^{p,p}\to\mathscr{S}_{\beta,\left(1+\beta^{-1}\right)\left(\frac{1}{p}-\frac{1}{2}\right)}^{p,p}}\cdot\left\Vert \left(\left\langle f,\,\smash{\gamma^{\left[i,k,\delta\right]}}\right\rangle _{Z'\left(\R^{2}\right),Z\left(\R^{2}\right)}\right)_{\left(i,k\right)\in J^{\left(\beta\right)}\times\Z^{2}}\right\Vert _{\ell^{p}}\leq C_{3}^{\left(p\right)}
\end{align*}
for all $f\in\mathcal{E}^{\beta}\left(\R^{2};\nu\right)$ and arbitrary
$p\in\left(2/\left(1+\beta\right),2\right]$, for a suitable constant
$C_{3}^{\left(p\right)}=C_{3}^{\left(p\right)}\left(\varphi,\psi,\beta,\nu,p,\delta\right)$.
Here, $R^{\left(\delta\right)}$ is the reconstruction operator provided
by Proposition \ref{prop:BanachFrameForReciprocalShearletSmoothnessSpace}.
This uses that we indeed have $p\geq p_{0}=q_{0}=2/\left(1+\beta\right)$
and
\[
s_{0}=0\leq\left(1+\beta^{-1}\right)\left(\frac{1}{p}-\frac{1}{2}\right)\leq\left(1+\beta^{-1}\right)\left(\frac{1+\beta}{2}-\frac{1}{2}\right)=\frac{1}{2}\left(1+\beta\right)=s_{1},
\]
so that Proposition \ref{prop:BanachFrameForReciprocalShearletSmoothnessSpace}
applies. Since Lemma \ref{lem:ReciprocalShearletSmoothnessSpacesAreBoring}
shows $\mathscr{S}_{\beta,\left(1+\beta^{-1}\right)\left(\frac{1}{p}-\frac{1}{2}\right)}^{p,p}=\mathscr{S}_{\beta^{-1},\left(1+\beta^{-1}\right)\left(\frac{1}{p}-\frac{1}{2}\right)}^{p,p}\left(\R^{2}\right)$,
the proof is complete.
\end{proof}

\section{A slight twist for achieving polynomial search depth}

\label{sec:PolynomialSearchDepth}In Theorem \ref{thm:CartoonApproximationWithAlphaShearlets},
we saw for $\beta\in\left(1,2\right]$ and $\alpha=\beta^{-1}$ that
suitable $\alpha$-shearlet systems achieve the approximation rate
$\left\Vert f-f_{N}\right\Vert _{L^{2}}\lesssim N^{-\left(\frac{\beta}{2}-\varepsilon\right)}$
for arbitrary $\varepsilon>0$ and $C^{\beta}$-cartoon-like functions
$f\in\mathcal{E}^{\beta}\left(\R^{2}\right)$. Furthermore, we recalled
from \cite[Theorem 2.8]{CartoonApproximationWithAlphaCurvelets} that
this approximation rate is essentially optimal, in the sense that
no system $\Phi=\left(\varphi_{n}\right)_{n\in\N}$ can achieve an
approximation rate better than $N^{-\beta/2}$ for the whole class
$\mathcal{E}^{\beta}\left(\R^{2};\nu\right)$, if one imposes a \emph{polynomial
search depth} for forming the $N$-term approximation $f_{N}$. This
means that $f_{N}$ is assumed to be a linear combination of $N$
elements of $\left\{ \varphi_{1},\dots,\varphi_{\pi\left(N\right)}\right\} $,
where $\pi$ is a \emph{fixed} polynomial, independent of $f$. We
did \emph{not} show, however, that the $N$-term approximations $f_{N}$
constructed in Theorem \ref{thm:CartoonApproximationWithAlphaShearlets}
satisfy such a polynomial search depth restriction. The goal of this
section is precisely to show that this is possible for a suitable
enumeration $\left(\psi_{n}\right)_{n\in\N}$ of the $\alpha$-shearlet
system under consideration.

The proof, however, is surprisingly nontrivial: In the proof of Theorem
\ref{thm:CartoonApproximationWithAlphaShearlets}, we used that $f=\sum_{i\in V\times\Z^{2}}c_{i}\psi_{i}$
for a sequence $c=\left(c_{i}\right)_{i\in V\times\Z^{2}}$ with $c\in\bigcap_{p>2/\left(1+\beta\right)}\ell^{p}\left(V\times\Z^{2}\right)$
and then truncated $c$ to $c\cdot\Indicator_{J_{N}}$ to form $f_{N}=\sum_{i\in J_{N}}c_{i}\psi_{i}$,
where $J_{N}\subset V\times\Z^{2}$ contains the indices of the $N$
largest entries of $c$. But the positions of these indices depend
heavily on $c=c\left(f\right)$ and thus on $f$, while the polynomial
search depth restriction requires us to use only indices in $\left\{ 1,\dots,\pi\left(N\right)\right\} $,
where $\pi$ is \emph{independent} of $f$.

Thus, what we essentially need is a certain (weak) decay of the coefficients,
\emph{uniformly} over the whole class $\mathcal{E}^{\beta}\left(\R^{2};\nu\right)$.
But with our present decomposition space formalism, we can not express
such a decay, cf.\@ Theorem \ref{thm:AnalysisAndSynthesisSparsityAreEquivalent}:
By choosing the exponent $s$ for the weight $u^{s}$ suitably, we
can enforce a decay of the coefficients \emph{with the scale}. But
since the weight is independent of the translation variable $k\in\Z^{2}$
and since the space $\ell^{p}\left(\Z^{2}\right)$ is permutation
invariant, the current formalism \emph{cannot} impose a decay of the
coefficients as $\left|k\right|\to\infty$.

Ultimately, this is caused by the definition of the decomposition
spaces: It is not hard to see that the spaces $\DecompSp{\CalQ}p{\ell_{w}^{q}}{}$
are \emph{isometrically} translation invariant. What we need, therefore,
is a modified type of decomposition spaces which does not have this
property. Luckily, such a type of decomposition spaces already exists.
In fact, the theory of structured Banach frame decompositions in \cite{StructuredBanachFrames}
was developed for the spaces $\DecompSp{\CalQ}p{\ell_{w}^{q}}v$,
where the Lebesgue spaces $L^{p}\left(\R^{\dimension}\right)$ are
replaced by the \textbf{weighted Lebesgue spaces} $L_{v}^{p}\left(\R^{\dimension}\right)=\left\{ f:\R^{\dimension}\to\Compl\with v\cdot f\in L^{p}\left(\R^{\dimension}\right)\right\} $
with $\left\Vert f\right\Vert _{L_{v}^{p}}=\left\Vert v\cdot f\right\Vert _{L^{p}}$,
where $v:\R^{\dimension}\to\left(0,\infty\right)$ is measurable.
This theory is briefly discussed in the next subsection.

\subsection{Structured Banach frame decompositions of weighted decomposition
spaces}

\label{subsec:StructuredWeightedBFD}The weight $v$ from above needs
to satisfy certain regularity properties to ensure that the spaces
$\DecompSp{\CalQ}p{\ell_{w}^{q}}v$ are well-defined. Precisely, we
say that a measurable weight $v:\R^{\dimension}\to\left(0,\infty\right)$
is \textbf{$v_{0}$-moderate} for some weight $v_{0}:\R^{\dimension}\to\left(0,\infty\right)$
if we have
\begin{equation}
v\left(x+y\right)\leq v\left(x\right)\cdot v_{0}\left(y\right)\qquad\forall x,y\in\R^{\dimension}.\label{eq:ModerateSpaceWeightDefinition}
\end{equation}

Now, as in Section \ref{sec:BanachFrameDecompositionCrashCourse},
let us fix an almost structured covering $\CalQ=\left(T_{i}Q_{i}'+b_{i}\right)_{i\in I}$
of an open set $\emptyset\neq\CalO\subset\R^{\dimension}$ with associated
regular partition of unity $\Phi=\left(\varphi_{i}\right)_{i\in I}$
for the remainder of the subsection and assume that $\CalQ$ satisfies
Assumption \ref{assu:CrashCourseStandingAssumptions}. The weight
$v_{0}$ is called \textbf{$\left(\CalQ,\Omega_{0},\Omega_{1},K\right)$-regular},
for $\Omega_{0},\Omega_{1}\in\left[1,\infty\right)$ and $K\in\left[0,\infty\right)$,
if it satisfies the following:
\begin{enumerate}
\item $v_{0}$ is measurable and symmetric, i.e., $v_{0}\left(-x\right)=v_{0}\left(x\right)$
for all $x\in\R^{\dimension}$.
\item $v_{0}$ is \textbf{submultiplicative}, i.e., $v_{0}\left(x+y\right)\leq v_{0}\left(x\right)\cdot v_{0}\left(y\right)$
for all $x,y\in\R^{\dimension}$.
\item We have $v_{0}\left(x\right)\leq\Omega_{1}\cdot\left(1+\left|x\right|\right)^{K}$
for all $x\in\R^{\dimension}$.
\item We have $K=0$, or $\left\Vert T_{i}^{-1}\right\Vert \leq\Omega_{0}$
for all $i\in I$.
\end{enumerate}
We note that the preceding assumptions imply $v_{0}\left(x\right)\geq1$
for all $x\in\R^{\dimension}$. Indeed, $v_{0}\left(0\right)=v_{0}\left(x+\left(-x\right)\right)\leq\left[v_{0}\left(x\right)\right]^{2}$
for all $x\in\R^{\dimension}$ by symmetry and submultiplicativity.
For $x=0$, this yields $v_{0}\left(0\right)\geq1$, since $v_{0}\left(0\right)>0$.
Finally, we then see $1\leq v_{0}\left(0\right)\leq\left[v_{0}\left(x\right)\right]^{2}$
and hence $v_{0}\left(x\right)\geq1$ for all $x\in\R^{\dimension}$.

The following example introduces the class of weights in which we
will be mainly interested.
\begin{example}
\label{exa:StandardShearletSpaceWeight}The \textbf{standard weight}
$\omega_{0}$ is given by $\omega_{0}:\R^{\dimension}\to\left(0,\infty\right),x\mapsto1+\left|x\right|$.
It is submultiplicative, since
\[
1+\left|x+y\right|\leq1+\left|x\right|+\left|y\right|\leq\left(1+\left|x\right|\right)\cdot\left(1+\left|y\right|\right)\qquad\forall x,y\in\R^{\dimension}.
\]
Hence, if we have $K=0$ and $\Omega_{0}=1$, or if $K>0$ and $\left\Vert T_{i}^{-1}\right\Vert \leq\Omega_{0}$
for all $i\in I$, then $\omega_{0}^{K}$ is $\left(\CalQ,\Omega_{0},1,K\right)$-regular.

Furthermore, if $L\in\R$ with $\left|L\right|\leq K$, then $\omega_{0}^{L}$
is $\omega_{0}^{K}$-moderate. For $L\geq0$, this follows from submultiplicativity
of $\omega_{0}$, since $\omega_{0}^{L}\left(x+y\right)\leq\omega_{0}^{L}\left(x\right)\omega_{0}^{L}\left(y\right)\leq\omega_{0}^{L}\left(x\right)\omega_{0}^{K}\left(y\right)$.
If $L<0$, then our considerations for $L\geq0$ show $\omega_{0}^{-L}\left(x\right)=\omega_{0}^{-L}\left(\left[x+y\right]-y\right)\leq\omega_{0}^{-L}\left(x+y\right)\omega_{0}^{-L}\left(-y\right)\leq\omega_{0}^{-L}\left(x+y\right)\omega_{0}^{K}\left(y\right)$.
Rearranging again yields the claim.

Finally, in case of the unconnected $\alpha$-shearlet covering $\CalQ=\CalS_{u}^{\left(\alpha\right)}=\left(B_{v}W_{v}'\right)_{v\in V^{\left(\alpha\right)}}$,
we have $\left\Vert B_{v}^{-1}\right\Vert \leq3=:\Omega_{0}$ for
all $v\in V^{\left(\alpha\right)}$. Indeed, for $v=0$, this is trivial
and for $v=\left(j,\ell,\delta\right)\in V_{0}^{\left(\alpha\right)}$,
we have
\[
\left\Vert B_{\left(j,\ell,\delta\right)}^{-1}\right\Vert =\left\Vert \left(\begin{matrix}2^{-j} & 0\\
-2^{-j}\ell & 2^{-\alpha j}
\end{matrix}\right)R^{-\delta}\right\Vert =\left\Vert \left(\begin{matrix}2^{-j} & 0\\
-2^{-j}\ell & 2^{-\alpha j}
\end{matrix}\right)\right\Vert \leq2^{-j}+2^{-\alpha j}+\left|-2^{-j}\ell\right|\leq3.
\]
Here, the last step used that $\left|\ell\right|\leq\left\lceil 2^{\left(1-\alpha\right)j}\right\rceil \leq2^{j}$.
Therefore, $\omega_{0}^{K}$ is $\left(\smash{\CalS_{u}^{\left(\alpha\right)}},3,1,K\right)$-regular
for $K\geq0$.
\end{example}
Now, we can define the modified, weighted decomposition spaces.
\begin{defn}
\label{def:SpaceWeightedDecompositionSpace}Let $p,q\in\left(0,\infty\right]$
and let $w=\left(w_{i}\right)_{i\in I}$ be $\CalQ$-moderate. Further,
let $v_{0}$ be $\left(\CalQ,\Omega_{0},\Omega_{1},K\right)$-regular
and let $v$ be $v_{0}$-moderate.

Then, the \textbf{(weighted) decomposition space (quasi)-norm} of
$g\in Z'\left(\CalO\right)$ is defined as
\[
\left\Vert g\right\Vert _{\DecompSp{\CalQ}p{\ell_{w}^{q}}v}:=\left\Vert \left(\left\Vert \Fourier^{-1}\left(\varphi_{i}\cdot\widehat{g}\right)\right\Vert _{L_{v}^{p}}\right)_{i\in I}\right\Vert _{\ell_{w}^{q}}\in\left[0,\infty\right]
\]
and the associated \textbf{(weighted) decomposition space} is $\DecompSp{\CalQ}p{\ell_{w}^{q}}v:=\left\{ g\in Z'\left(\CalO\right)\with\left\Vert g\right\Vert _{\DecompSp{\CalQ}p{\ell_{w}^{q}}v}<\infty\right\} $.
\end{defn}
\begin{rem*}
It is a consequence of \cite[Proposition 2.24, Lemma 5.5, and Corollary 6.5]{StructuredBanachFrames}
that the resulting space is a well-defined Quasi-Banach space, with
equivalent (quasi)-norms for different choices of $\Phi$. Indeed,
\cite[Proposition 2.24]{StructuredBanachFrames} shows that the definition
is independent of the $\CalQ$-$v_{0}$-BAPU $\Phi$, while \cite[Corollary 6.5]{StructuredBanachFrames}
ensures that every regular partition of unity is a $\CalQ$-$v_{0}$-BAPU.
Finally, \cite[Lemma 5.5]{StructuredBanachFrames} establishes completeness
of $\DecompSp{\CalQ}p{\ell_{w}^{q}}v$.
\end{rem*}
Recall from Section \ref{sec:BanachFrameDecompositionCrashCourse}
that the Banach frame and atomic decomposition results for $\DecompSp{\CalQ}p{\ell_{w}^{q}}{}$
were formulated in terms of the coefficient space $C_{w}^{p,q}$ from
Definition \ref{def:CoefficientSpace}. This coefficient space needs
to be slightly adjusted in the present case.
\begin{defn}
\label{def:WeightedCoefficientSpace}Under the assumptions of Definition
\ref{def:SpaceWeightedDecompositionSpace} and for $\delta\in\left(0,\infty\right)$,
define the \textbf{weighted coefficient space} $C_{w,v,\delta}^{p,q}$
as
\[
C_{w,v,\delta}^{p,q}:=\left\{ c=\left(\smash{c_{k}^{\left(i\right)}}\right)_{i\in I,k\in\Z^{\dimension}}\with\left\Vert c\right\Vert _{C_{w,v,\delta}^{p,q}}:=\left\Vert \left(\left|\det T_{i}\right|^{\frac{1}{2}-\frac{1}{p}}\cdot w_{i}\cdot\left\Vert \left[v\left(\delta\cdot T_{i}^{-T}k\right)\cdot c_{k}^{\left(i\right)}\right]_{k\in\Z^{\dimension}}\right\Vert _{\ell^{p}}\right)_{i\in I}\right\Vert _{\ell^{q}}<\infty\right\} .\qedhere
\]
\end{defn}
The corresponding ``weighted version'' of Theorem \ref{thm:BanachFrameTheorem}
on the existence of Banach frames for decomposition spaces reads as
follows:
\begin{thm}
\label{thm:WeightedBanachFrameTheorem}Assume that $\CalQ$ satisfies
Assumption \ref{assu:CrashCourseStandingAssumptions}. Let $\Omega_{0},\Omega_{1}\in\left[1,\infty\right)$,
$K\in\left[0,\infty\right)$ and $\varepsilon,p_{0},q_{0}\in\left(0,1\right]$.
Let $v_{0}$ be $\left(\CalQ,\Omega_{0},\Omega_{1},K\right)$-regular.
Let $w=\left(w_{i}\right)_{i\in I}$ be a $\CalQ$-moderate weight
and let $v$ be $v_{0}$-moderate. Finally, let $p,q\in\left(0,\infty\right]$
with $p\geq p_{0}$ and $q\geq q_{0}$.

Define
\[
N:=\left\lceil K+\frac{\dimension+\varepsilon}{\min\left\{ 1,p\right\} }\right\rceil ,\qquad\tau:=\min\left\{ 1,p,q\right\} \qquad\text{ and }\qquad\sigma:=\tau\cdot\left(\frac{\dimension}{\min\left\{ 1,p\right\} }+K+N\right).
\]
Let $\gamma_{1}^{\left(0\right)},\dots,\gamma_{n}^{\left(0\right)}:\R^{\dimension}\to\Compl$
be given and define $\gamma_{i}:=\gamma_{k_{i}}^{\left(0\right)}$
for $i\in I$. Assume that the following conditions are satisfied:

\begin{enumerate}
\item We have $\gamma_{k}^{\left(0\right)}\in L_{\left(1+\left|\mybullet\right|\right)^{K}}^{1}\left(\R^{\dimension}\right)$
and $\Fourier\gamma_{k}^{\left(0\right)}\in C^{\infty}\left(\R^{\dimension}\right)$
for all $k\in\underline{n}$, where all partial derivatives of $\Fourier\gamma_{k}^{\left(0\right)}$
are polynomially bounded.
\item We have $\left[\Fourier\gamma_{k}^{\left(0\right)}\right]\left(\xi\right)\neq0$
for all $\xi\in\overline{Q_{0}^{\left(k\right)}}$ and all $k\in\underline{n}$.
\item We have $\gamma_{k}^{\left(0\right)}\in C^{1}\left(\R^{\dimension}\right)$
and $\nabla\gamma_{k}^{\left(0\right)}\in L_{v_{0}}^{1}\left(\R^{\dimension}\right)\cap L^{\infty}\left(\R^{\dimension}\right)$
for all $k\in\underline{n}$.
\item We have
\[
C_{1}:=\sup_{i\in I}\:\sum_{j\in I}M_{j,i}<\infty\quad\text{ and }\quad C_{2}:=\sup_{j\in I}\:\sum_{i\in I}M_{j,i}<\infty,
\]
where
\[
\qquad\qquad M_{j,i}:=\left(\frac{w_{j}}{w_{i}}\right)^{\tau}\cdot\left(1+\left\Vert T_{j}^{-1}T_{i}\right\Vert \right)^{\sigma}\cdot\max_{\left|\beta\right|\leq1}\left(\left|\det T_{i}\right|^{-1}\cdot\int_{Q_{i}}\:\max_{\left|\alpha\right|\leq N}\left|\left(\left[\partial^{\alpha}\widehat{\partial^{\beta}\gamma_{j}}\right]\left(T_{j}^{-1}\left(\xi-b_{j}\right)\right)\right)\right|\d\xi\right)^{\tau}.
\]
\end{enumerate}
Then there is some $\delta_{0}=\delta_{0}\left(p,q,w,v,v_{0},\varepsilon,\left(\gamma_{i}\right)_{i\in I}\right)>0$
such that for arbitrary $0<\delta\leq\delta_{0}$, the family 
\[
\left(L_{\delta\cdot T_{i}^{-T}k}\:\widetilde{\gamma^{\left[i\right]}}\right)_{i\in I,k\in\Z^{\dimension}}\quad\text{ with }\quad\gamma^{\left[i\right]}=\left|\det T_{i}\right|^{1/2}\cdot M_{b_{i}}\left[\gamma_{i}\circ T_{i}^{T}\right]\quad\text{ and }\quad\widetilde{\gamma^{\left[i\right]}}\left(x\right)=\gamma^{\left[i\right]}\left(-x\right)
\]
forms a \textbf{Banach frame} for $\DecompSp{\CalQ}p{\ell_{w}^{q}}v$.
Precisely, this means the following:

\begin{itemize}[leftmargin=0.7cm]
\item The \textbf{analysis operator} 
\[
A^{\left(\delta\right)}:\DecompSp{\CalQ}p{\ell_{w}^{q}}v\to C_{w,v,\delta}^{p,q},f\mapsto\left(\left[\smash{\gamma^{\left[i\right]}}\ast f\right]\left(\delta\cdot T_{i}^{-T}k\right)\right)_{i\in I,k\in\Z^{\dimension}}
\]
is well-defined and bounded for each $\delta\in\left(0,1\right]$.
Here, the convolution $\gamma^{\left[i\right]}\ast f$ is defined
as in equation \eqref{eq:SpecialConvolutionDefinition}, where now
the series converges normally in $L_{\left(1+\left|\mybullet\right|\right)^{-K}}^{\infty}\left(\R^{\dimension}\right)$
and thus absolutely and locally uniformly, for each $f\in\DecompSp{\CalQ}p{\ell_{w}^{q}}v$.
Of course, the simplified expression from Lemma \ref{lem:SpecialConvolutionClarification}
still holds if $f\in L^{2}\left(\R^{\dimension}\right)\subset Z'\left(\CalO\right)$.
\item For $0<\delta\leq\delta_{0}$, there is a bounded linear \textbf{reconstruction
operator} $R^{\left(\delta\right)}:C_{w,v,\delta}^{p,q}\to\DecompSp{\CalQ}p{\ell_{w}^{q}}v$
satisfying $R^{\left(\delta\right)}\circ A^{\left(\delta\right)}=\identity_{\DecompSp{\CalQ}p{\ell_{w}^{q}}v}$.
\item We have the following \textbf{consistency property}: If $\CalQ$-moderate
weights $w^{\left(1\right)}=\left(\smash{w_{i}^{\left(1\right)}}\right)_{i\in I}$
and $w^{\left(2\right)}=\left(\smash{w_{i}^{\left(2\right)}}\right)_{i\in I}$
and exponents $p_{1},p_{2},q_{1},q_{2}\in\left(0,\infty\right]$,
as well as two $v_{0}$-moderate weights $v_{1},v_{2}:\R^{\dimension}\to\left(0,\infty\right)$
are chosen such that the assumptions of the current theorem are satisfied
for $p_{1},q_{1},w^{\left(1\right)},v_{1}$, as well as for $p_{2},q_{2},w^{\left(2\right)},v_{2}$
and if $0<\delta\leq\min\left\{ \delta_{0}\left(p_{1},q_{1},w^{\left(1\right)},v_{1},v_{0},\varepsilon,\left(\gamma_{i}\right)_{i\in I}\right),\delta_{0}\left(p_{2},q_{2},w^{\left(2\right)},v_{2},v_{0},\varepsilon,\left(\gamma_{i}\right)_{i\in I}\right)\right\} $,
then we have the following equivalence:
\[
\forall f\in\DecompSp{\CalQ}{p_{2}}{\ell_{w^{\left(2\right)}}^{q_{2}}}{v_{2}}:\quad f\in\DecompSp{\CalQ}{p_{1}}{\ell_{w^{\left(1\right)}}^{q_{1}}}{v_{1}}\Longleftrightarrow\left(\left[\smash{\gamma^{\left[i\right]}}\ast f\right]\left(\delta\cdot T_{i}^{-T}k\right)\right)_{i\in I,k\in\Z^{\dimension}}\in C_{w^{\left(1\right)},v_{1},\delta}^{p_{1},q_{1}}.
\]
\end{itemize}
Finally, there is an estimate for the size of $\delta_{0}$ which
is independent of the choice of $p\geq p_{0}$ and $q\geq q_{0}$
and of $v,v_{0}$: There is a constant $L=L\left(p_{0},q_{0},K,\varepsilon,\dimension,\CalQ,\Phi,\Omega_{0},\Omega_{1},\gamma_{1}^{\left(0\right)},\dots,\gamma_{n}^{\left(0\right)}\right)>0$
such that we can choose 
\[
\delta_{0}=\left(1+L\cdot C_{\CalQ,w}^{4}\cdot\left(C_{1}^{1/\tau}+C_{2}^{1/\tau}\right)^{2}\right)^{-1}.\qedhere
\]
\end{thm}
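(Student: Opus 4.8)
The plan is to deduce Theorem~\ref{thm:WeightedBanachFrameTheorem} from the main Banach frame theorem of \cite{StructuredBanachFrames}, of which it is essentially a restatement in the notation of the present paper; the work therefore lies almost entirely in matching the hypotheses. First I would note that, by Assumption~\ref{assu:CrashCourseStandingAssumptions} together with the existence theorem for regular partitions of unity, $\CalQ$ is a semi-structured covering and $\Phi$ a regular partition of unity subordinate to it; since $v_0$ is $\left(\CalQ,\Omega_0,\Omega_1,K\right)$-regular, \cite[Corollary 6.5]{StructuredBanachFrames} shows that $\Phi$ is a $\CalQ$-$v_0$-BAPU, so that $\DecompSp{\CalQ}p{\ell_{w}^{q}}v$ is well-defined and independent of $\Phi$ by \cite[Proposition 2.24]{StructuredBanachFrames}.

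Next I would translate the generator conditions into the admissibility conditions of \cite{StructuredBanachFrames}. Condition~(1), namely $\gamma_k^{\left(0\right)}\in L_{\left(1+\left|\mybullet\right|\right)^K}^1\left(\R^{\dimension}\right)$ with smooth, polynomially bounded $\Fourier\gamma_k^{\left(0\right)}$, is exactly what is needed so that the series in \eqref{eq:SpecialConvolutionDefinition} converges normally in $L_{\left(1+\left|\mybullet\right|\right)^{-K}}^\infty\left(\R^{\dimension}\right)$ and $\gamma^{\left[i\right]}\ast f$ is defined on all of $\DecompSp{\CalQ}p{\ell_{w}^{q}}v$. Condition~(3), $\gamma_k^{\left(0\right)}\in C^1$ with $\nabla\gamma_k^{\left(0\right)}\in L_{v_0}^1\cap L^\infty$, supplies the oscillation estimates in \cite{StructuredBanachFrames} controlling the gap between the continuous reproducing integral and its $\delta$-sampled version, hence yielding the existence of $\delta_0$. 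Condition~(2), nonvanishing of $\Fourier\gamma_k^{\left(0\right)}$ on each $\overline{Q_0^{\left(k\right)}}$, gives---after convolving with a suitably localized dual window---the reproducing formula, i.e.\@ the lower frame bound and the reconstruction operator $R^{\left(\delta\right)}$. Finally, the Schur conditions $C_1,C_2<\infty$ of~(4) are precisely finiteness of the two block-matrix quasi-norms governing boundedness of $A^{\left(\delta\right)}$ into $C_{w,v,\delta}^{p,q}$; here one only checks that the exponents $N,\sigma,\tau$ prescribed coincide with those of \cite{StructuredBanachFrames}, the additive $K$ in $N$ and $\sigma$ being exactly the shift forced by the weight $v_0$ (the clause $\left\Vert T_i^{-1}\right\Vert\le\Omega_0$ in the definition of regularity being what handles the anisotropy when $K>0$).

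Granting all this, the cited theorem delivers $\delta_0>0$, the bounded analysis operator $A^{\left(\delta\right)}$ and a bounded left inverse $R^{\left(\delta\right)}$; the consistency statement follows because the reconstruction operator of \cite{StructuredBanachFrames} is assembled from a fixed dual system independent of $p,q,w,v$, so that two admissible parameter choices yield operators agreeing on the overlap of the corresponding spaces. The genuinely delicate point---and the main obstacle---is the last assertion: that $\delta_0$ can be taken of the displayed form, uniformly in $p\ge p_0$, $q\ge q_0$ and in all admissible $v,v_0$. This I would establish by inspecting the proof of the cited theorem and checking that every $p$-, $q$- and $v$-dependent constant there is dominated by a function of the fixed data $p_0,q_0,K,\varepsilon,\dimension,\CalQ,\Phi,\Omega_0,\Omega_1,\gamma_1^{\left(0\right)},\dots,\gamma_n^{\left(0\right)}$ times a suitable power of $C_{\CalQ,w}$ and of $C_1^{1/\tau}+C_2^{1/\tau}$---in particular using $\ell^{\tau_0}\hookrightarrow\ell^\tau$ norm-decreasingly for $\tau\ge\tau_0:=\min\{p_0,q_0\}$ to control the $\tau$-dependent geometric series, in the same spirit as the constant-bookkeeping at the end of Section~\ref{subsec:UpperConeRightCone}.
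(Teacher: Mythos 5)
Your proposal is correct and follows essentially the same route as the paper's own proof: reduce everything to the cited machinery in \cite{StructuredBanachFrames} (verify \cite[Assumption 3.6]{StructuredBanachFrames} via \cite[Lemma 3.7]{StructuredBanachFrames} using condition (2) and compactness, then invoke \cite[Corollary 6.6]{StructuredBanachFrames} and \cite[Theorem 4.7]{StructuredBanachFrames}), and then carry out constant bookkeeping to make $\delta_0$ explicit and uniform. You correctly identify the uniformity of $\delta_0$ in $p,q,v,v_0$ as the genuinely delicate point; be aware that this bookkeeping—tracing $\Omega_2^{(p,K)}$, the BAPU constant $C_{\CalQ,\Phi,v_0,p}$, the clustering-map norm $\vertiii{\Gamma_\CalQ}$, and the constants $L_1^{(0)}$, $L_2^{(0)}$ from \cite[Lemma 4.6]{StructuredBanachFrames} through $p\ge p_0$, $q\ge q_0$ and the regularity parameters $\Omega_0,\Omega_1,K$—is where the bulk of the paper's proof lies, so the ``inspection'' you defer is the real work, not a footnote.
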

\begin{proof}
For brevity, set $N_{0}:=\left\lceil K+p_{0}^{-1}\cdot\left(\dimension+\varepsilon\right)\right\rceil $
and note $N\leq N_{0}$.

First of all, we verify that the family $\Gamma=\left(\gamma_{i}\right)_{i\in I}$
satisfies \cite[Assumption 3.6]{StructuredBanachFrames}. To this
end, we want to apply \cite[Lemma 3.7]{StructuredBanachFrames} (with
$N=n$). Recall that $\gamma_{i}=\gamma_{k_{i}}^{\left(0\right)}$
and from Assumption \ref{assu:CrashCourseStandingAssumptions} that
$Q_{i}'=Q_{0}^{\left(k_{i}\right)}$ for all $i\in I$. Thus, in the
notation of \cite[Lemma 3.7]{StructuredBanachFrames}, we have for
$k\in\underline{n}$ that
\[
Q^{\left(k\right)}=\bigcup\left\{ Q_{i}'\with i\in I\text{ and }k_{i}=k\right\} \subset Q_{0}^{\left(k\right)}.
\]
But by our assumption, by continuity of $\Fourier\gamma_{k}^{\left(0\right)}$
and by compactness of the sets $\overline{Q_{0}^{\left(k\right)}}$,
there is some $c>0$ satisfying $\left|\left[\Fourier\smash{\gamma_{k}^{\left(0\right)}}\right]\left(\xi\right)\right|\geq c$
for all $\xi\in\overline{Q_{0}^{\left(k\right)}}\supset Q^{\left(k\right)}$
and all $k\in\underline{n}$. Consequently, \cite[Lemma 3.7]{StructuredBanachFrames}
shows that $\Gamma$ satisfies \cite[Assumption 3.6]{StructuredBanachFrames}
and also yields the estimate $\Omega_{2}^{\left(p,K\right)}\leq\Omega_{3}$
for a constant $\Omega_{3}=\Omega_{3}\left(\CalQ,\gamma_{1}^{\left(0\right)},\dots,\gamma_{n}^{\left(0\right)},p_{0},K,\dimension\right)>0$.
Here, $\Omega_{2}^{\left(p,K\right)}$ is a constant defined in \cite[Assumption 3.6]{StructuredBanachFrames}.
To obtain this estimate, we used that $p\geq p_{0}$.

Now, since the family $\Gamma$ satisfies \cite[Assumption 3.6]{StructuredBanachFrames},
the assumptions of the present theorem easily imply that all assumptions
of \cite[Corollary 6.6]{StructuredBanachFrames} are satisfied. This
uses the special structure of the family $\Gamma=\left(\gamma_{i}\right)_{i\in I}$,
i.e., that $\gamma_{i}=\gamma_{k_{i}}^{\left(0\right)}$ for each
$i\in I$.

In particular, \cite[Corollary 6.6]{StructuredBanachFrames} shows
that the operators $\overrightarrow{A}$ and $\overrightarrow{B}$
from \cite[Assumption 3.1 and Assumption 4.1]{StructuredBanachFrames}
are well-defined and bounded with $\vertiii{\smash{\overrightarrow{A}}}^{\max\left\{ 1,\frac{1}{p}\right\} }\leq L_{1}^{\left(0\right)}\cdot\left(C_{1}^{1/\tau}+C_{2}^{1/\tau}\right)$
and $\vertiii{\smash{\overrightarrow{B}}}^{\max\left\{ 1,\frac{1}{p}\right\} }\leq L_{1}^{\left(0\right)}\cdot\left(C_{1}^{1/\tau}+C_{2}^{1/\tau}\right)$
for
\begin{align*}
L_{1}^{\left(0\right)} & =\Omega_{0}^{K}\Omega_{1}\cdot\dimension^{1/\min\left\{ 1,p\right\} }\cdot\left(4\dimension\right)^{1+2N}\cdot\left(\varepsilon^{-1}\cdot s_{\dimension}\right)^{1/\min\left\{ 1,p\right\} }\cdot\max_{\left|\alpha\right|\leq N}C^{\left(\alpha\right)}\\
 & \leq\Omega_{0}^{K}\Omega_{1}\cdot\dimension^{1/p_{0}}\cdot\left(4\dimension\right)^{1+2N_{0}}\cdot\left(1+\varepsilon^{-1}\cdot s_{\dimension}\right)^{1/p_{0}}\cdot\max_{\left|\alpha\right|\leq N_{0}}C^{\left(\alpha\right)}=:L_{1}.
\end{align*}
Note that $L_{1}=L_{1}\left(\dimension,\varepsilon,\CalQ,\Phi,p_{0},\Omega_{0},\Omega_{1},K\right)$,
since the constants $C^{\left(\alpha\right)}$ from Definition \ref{def:RegularPartitionOfUnity}
only depend on $\alpha,\CalQ,\Phi$.

Since \cite[Corollary 6.6]{StructuredBanachFrames} is applicable
to $\Gamma$, we see that $\Gamma$ satisfies \cite[Assumption 4.1]{StructuredBanachFrames}.
Therefore, \cite[Lemma 4.3]{StructuredBanachFrames} shows that the
series in equation \eqref{eq:SpecialConvolutionDefinition} converges
normally in $L_{\left(1+\left|\mybullet\right|\right)^{-K}}^{\infty}\left(\R^{\dimension}\right)$
for all $f\in\DecompSp{\CalQ}p{\ell_{w}^{q}}v$. Since each of the
summands of the series is a continuous functions, this yields absolute
and locally uniform convergence of the series.

Next, since $\overrightarrow{A}$ and $\overrightarrow{B}$ are bounded,
\cite[Theorem 4.7]{StructuredBanachFrames} is applicable. This shows
that the family $\left(L_{\delta\cdot T_{i}^{-T}k}\:\widetilde{\gamma^{\left[i\right]}}\right)_{i\in I,k\in\Z^{\dimension}}$
yields a Banach frame for $\DecompSp{\CalQ}p{\ell_{w}^{q}}v$ as in
the statement of the current theorem, as soon as $0<\delta\leq\delta_{00}$
for $\delta_{00}=1\big/\left(1+2\cdot\smash{\vertiii{F_{0}}^{2}}\vphantom{\vertiii{F_{0}}}\right)$,
where the operator $F_{0}$ is defined in \cite[Lemma 4.6]{StructuredBanachFrames}.
That lemma also yields the estimate
\begin{align*}
\vertiii{F_{0}} & \leq2^{\frac{1}{q}}C_{\CalQ,\Phi,v_{0},p}^{2}\cdot\vertiii{\Gamma_{\CalQ}}^{2}\cdot\left(\vertiii{\smash{\overrightarrow{A}}}^{\max\left\{ 1,\frac{1}{p}\right\} }+\vertiii{\smash{\overrightarrow{B}}}^{\max\left\{ 1,\frac{1}{p}\right\} }\right)\cdot L_{2}^{\left(0\right)}\\
 & \leq2^{\frac{1}{q_{0}}}C_{\CalQ,\Phi,v_{0},p}^{2}\cdot\vertiii{\Gamma_{\CalQ}}^{2}\cdot\left(C_{1}^{1/\tau}+C_{2}^{1/\tau}\right)\cdot2L_{1}\cdot L_{2}^{\left(0\right)},
\end{align*}
where
\[
C_{\CalQ,\Phi,v_{0},p}=\sup_{i\in I}\left[\left|\det T_{i}\right|^{\frac{1}{\min\left\{ 1,p\right\} }-1}\cdot\left\Vert \Fourier^{-1}\varphi_{i}\right\Vert _{L_{v_{0}}^{\min\left\{ 1,p\right\} }}\right]
\]
and where $\Gamma_{\CalQ}:\ell_{w}^{q}\left(I\right)\to\ell_{w}^{q}\left(I\right)$
is the \textbf{$\CalQ$-clustering map} given by $\Gamma_{\CalQ}\left(c_{i}\right)_{i\in I}=\left(c_{i}^{\ast}\right)_{i\in I}$
where $c_{i}^{\ast}=\sum_{\ell\in i^{\ast}}c_{\ell}$. Further, with
$M:=\left\lceil K+\frac{\dimension+1}{\min\left\{ 1,p\right\} }\right\rceil \leq\left\lceil K+\frac{\dimension+1}{p_{0}}\right\rceil =:M_{0}$,
the constant $L_{2}^{\left(0\right)}$ is given by
\begin{align*}
L_{2}^{\left(0\right)} & =\begin{cases}
\frac{\left(2^{16}\cdot768/\dimension^{\frac{3}{2}}\right)^{\frac{\dimension}{p}}}{2^{42}\cdot12^{\dimension}\cdot\dimension^{15}}\cdot\left(2^{52}\cdot\dimension^{\frac{25}{2}}\cdot M^{3}\right)^{M+1}\cdot N_{\CalQ}^{2\left(\frac{1}{p}-1\right)}\left(1+R_{\CalQ}C_{\CalQ}\right)^{\dimension\left(\frac{4}{p}-1\right)}\cdot\Omega_{0}^{13K}\Omega_{1}^{13}\Omega_{2}^{\left(p,K\right)}, & \text{if }p<1,\\
\left(2^{12+6\left\lceil K\right\rceil }\cdot\sqrt{\dimension}\right)^{-1}\cdot\left(2^{17}\cdot\dimension^{5/2}\cdot M\right)^{\left\lceil K\right\rceil +\dimension+2}\cdot\left(1+R_{\CalQ}\right)^{\dimension}\cdot\Omega_{0}^{3K}\Omega_{1}^{3}\Omega_{2}^{\left(p,K\right)}, & \text{if }p\geq1
\end{cases}\\
 & \leq\begin{cases}
2^{26\dimension/p_{0}}\cdot\left(2^{52}\cdot\dimension^{13}\cdot M_{0}^{3}\right)^{M_{0}+1}\cdot N_{\CalQ}^{\frac{2}{p_{0}}}\cdot\left(1+R_{\CalQ}C_{\CalQ}\right)^{\frac{4\dimension}{p_{0}}}\cdot\Omega_{0}^{13K}\Omega_{1}^{13}\Omega_{3}, & \text{if }p<1,\\
\left(2^{17}\cdot\dimension^{5/2}\cdot M_{0}\right)^{\left\lceil K\right\rceil +\dimension+2}\cdot\left(1+R_{\CalQ}\right)^{\dimension}\cdot\Omega_{0}^{3K}\Omega_{1}^{3}\Omega_{3}, & \text{if }p\geq1
\end{cases}\\
\left({\scriptstyle \text{since }C_{\CalQ}\geq1}\right) & \leq2^{26\dimension/p_{0}}\cdot\left(2^{52}\cdot\dimension^{13}\cdot M_{0}^{3}\right)^{M_{0}+1}\cdot N_{\CalQ}^{\frac{2}{p_{0}}}\cdot\left(1+R_{\CalQ}C_{\CalQ}\right)^{\frac{4\dimension}{p_{0}}}\cdot\Omega_{0}^{13K}\Omega_{1}^{13}\Omega_{3}=:L_{2}.
\end{align*}
Here, the last step used that $M_{0}=\left\lceil K+p_{0}^{-1}\cdot\left(\dimension+1\right)\right\rceil \geq\left\lceil K+\dimension+1\right\rceil =\left\lceil K\right\rceil +\dimension+1$,
as well as $\Omega_{0},\Omega_{1}\geq1$. Note as above that $L_{2}=L_{2}\left(\dimension,p_{0},\CalQ,\Omega_{0},\Omega_{1},\Omega_{3},K,M_{0}\right)=K_{2}\left(\dimension,p_{0},\CalQ,\Omega_{0},\Omega_{1},K,\smash{\gamma_{1}^{\left(0\right)},\dots,\gamma_{n}^{\left(0\right)}}\right)$.

As seen in \cite[Lemma 4.13]{DecompositionEmbedding}, we have $\vertiii{\Gamma_{\CalQ}}\leq C_{\CalQ,w}\cdot N_{\CalQ}^{1+\frac{1}{q}}\leq C_{\CalQ,w}\cdot N_{\CalQ}^{1+\frac{1}{q_{0}}}$.
Furthermore, \cite[Corollary 6.5]{StructuredBanachFrames} shows that
there is a function $\varrho\in\TestFunctionSpace{\R^{\dimension}}$
(which only depends on $\CalQ$) such that
\begin{align*}
C_{\CalQ,\Phi,v_{0},p} & \leq\Omega_{0}^{K}\Omega_{1}\cdot\left(4\dimension\right)^{1+2N}\cdot\left(\frac{s_{\dimension}}{\varepsilon}\right)^{1/\min\left\{ 1,p\right\} }\cdot2^{N}\cdot\lambda_{\dimension}\left(\overline{\bigcup_{i\in I}Q_{i}'}\right)\cdot\max_{\left|\alpha\right|\leq N}\left\Vert \partial^{\alpha}\varrho\right\Vert _{\sup}\cdot\max_{\left|\alpha\right|\leq N}C^{\left(\alpha\right)}\\
 & \leq\Omega_{0}^{K}\Omega_{1}\cdot\left(8\dimension\right)^{1+2N_{0}}\cdot\left(1+\frac{s_{\dimension}}{\varepsilon}\right)^{1/p_{0}}\cdot\left(2R_{\CalQ}\right)^{\dimension}\cdot\max_{\left|\alpha\right|\leq N_{0}}\left\Vert \partial^{\alpha}\varrho\right\Vert _{\sup}\cdot\max_{\left|\alpha\right|\leq N_{0}}C^{\left(\alpha\right)}=:L_{3}.
\end{align*}
Here, we used that $Q_{i}'\subset\overline{B_{R_{\CalQ}}}\left(0\right)\subset\left[-R_{\CalQ},R_{\CalQ}\right]^{\dimension}$
for all $i\in I$. Since the constants $C^{\left(\alpha\right)}=C^{\left(\alpha\right)}\left(\Phi,\CalQ\right)$
from Definition \ref{def:RegularPartitionOfUnity} only depend on
$\alpha,\Phi,\CalQ$ and since $\varrho$ only depends on $\CalQ$,
we see $L_{3}=L_{3}\left(\dimension,\varepsilon,p_{0},\CalQ,\Phi,\Omega_{0},\Omega_{1},K\right)$.

All in all, we arrive at
\[
\vertiii{F_{0}}\leq2^{1+\frac{1}{q_{0}}}N_{\CalQ}^{2+\frac{2}{q_{0}}}\cdot L_{1}L_{2}L_{3}^{2}\cdot C_{\CalQ,w}^{2}\cdot\left(C_{1}^{1/\tau}+C_{2}^{1/\tau}\right)=L_{4}\cdot C_{\CalQ,w}^{2}\cdot\left(C_{1}^{1/\tau}+C_{2}^{1/\tau}\right)
\]
for a suitable constant $L_{4}=L_{4}\left(\dimension,\varepsilon,p_{0},q_{0},\CalQ,\Phi,\gamma_{1}^{\left(0\right)},\dots,\gamma_{n}^{\left(0\right)},\Omega_{0},\Omega_{1},K\right)$,
so that the family $\left(L_{\delta\cdot T_{i}^{-T}k}\:\widetilde{\gamma^{\left[i\right]}}\right)_{i\in I,k\in\Z^{\dimension}}$
yields a Banach frame for $\DecompSp{\CalQ}p{\ell_{w}^{q}}v$ as soon
as $0<\delta\leq\delta_{0}$ for $\delta_{0}:=\left(1+2\left[L_{4}\cdot C_{\CalQ,w}^{2}\cdot\left(C_{1}^{1/\tau}+C_{2}^{1/\tau}\right)\right]^{2}\right)^{-1}$,
since $\delta_{0}\leq\delta_{00}$. Now, setting $L:=2\cdot L_{4}^{2}$
yields the claim.
\end{proof}
Finally, we present a ``weighted version'' of Theorem \ref{thm:AtomicDecompositionTheorem}
concerning the existence of atomic decompositions for decomposition
spaces.
\begin{thm}
\label{thm:WeightedAtomicDecompositionTheorem}Assume that $\CalQ$
satisfies Assumption \ref{assu:CrashCourseStandingAssumptions}. Let
$\Omega_{0},\Omega_{1}\in\left[1,\infty\right)$, $K\in\left[0,\infty\right)$
and $\varepsilon,p_{0},q_{0}\in\left(0,1\right]$. Let $v_{0}$ be
$\left(\CalQ,\Omega_{0},\Omega_{1},K\right)$-regular. Let $w=\left(w_{i}\right)_{i\in I}$
be a $\CalQ$-moderate weight and let $v$ be $v_{0}$-moderate. Finally,
let $p,q\in\left(0,\infty\right]$ with $p\geq p_{0}$ and $q\geq q_{0}$.

Define
\[
N:=\left\lceil K+\frac{\dimension+\varepsilon}{\min\left\{ 1,p\right\} }\right\rceil ,\qquad\tau:=\min\left\{ 1,p,q\right\} ,\qquad\vartheta:=\left(\frac{1}{p}-1\right)_{+}\:,\qquad\text{ and }\qquad\varUpsilon:=K+1+\frac{\dimension}{\min\left\{ 1,p\right\} },
\]
as well as
\[
\sigma:=\begin{cases}
\tau\cdot N, & \text{if }p\in\left[1,\infty\right],\\
\tau\cdot\left(p^{-1}\cdot\dimension+K+N\right), & \text{if }p\in\left(0,1\right).
\end{cases}
\]
Let $\gamma_{1}^{\left(0\right)},\dots,\gamma_{n}^{\left(0\right)}:\R^{\dimension}\to\Compl$
be given and define $\gamma_{i}:=\gamma_{k_{i}}^{\left(0\right)}$
for $i\in I$. Assume that there are functions $\gamma_{1}^{\left(0,j\right)},\dots,\gamma_{n}^{\left(0,j\right)}$
for $j\in\left\{ 1,2\right\} $ such that the following conditions
are satisfied:

\begin{enumerate}
\item We have $\gamma_{k}^{\left(0,1\right)}\in L_{\left(1+\left|\mybullet\right|\right)^{K}}^{1}\left(\R^{\dimension}\right)$
for all $k\in\underline{n}$.
\item We have $\gamma_{k}^{\left(0,2\right)}\in C^{1}\left(\R^{\dimension}\right)$
for all $k\in\underline{n}$.
\item We have
\[
\Omega^{\left(p\right)}:=\max_{k\in\underline{n}}\left\Vert \gamma_{k}^{\left(0,2\right)}\right\Vert _{\varUpsilon}+\max_{k\in\underline{n}}\left\Vert \nabla\gamma_{k}^{\left(0,2\right)}\right\Vert _{\varUpsilon}<\infty,
\]
where $\left\Vert f\right\Vert _{\varUpsilon}=\sup_{x\in\R^{\dimension}}\left(1+\left|x\right|\right)^{\varUpsilon}\cdot\left|f\left(x\right)\right|$
for $f:\R^{\dimension}\to\Compl^{\ell}$ and (arbitrary) $\ell\in\N$.
\item We have $\Fourier\gamma_{k}^{\left(0,j\right)}\in C^{\infty}\left(\R^{\dimension}\right)$
and all partial derivatives of $\Fourier\gamma_{k}^{\left(0,j\right)}$
are polynomially bounded for all $k\in\underline{n}$ and $j\in\left\{ 1,2\right\} $.
\item We have $\gamma_{k}^{\left(0\right)}=\gamma_{k}^{\left(0,1\right)}\ast\gamma_{k}^{\left(0,2\right)}$
for all $k\in\underline{n}$.
\item We have $\left\Vert \gamma_{k}^{\left(0\right)}\right\Vert _{\varUpsilon}<\infty$
for all $k\in\underline{n}$.
\item We have $\left[\Fourier\gamma_{k}^{\left(0\right)}\right]\left(\xi\right)\neq0$
for all $\xi\in\overline{Q_{0}^{\left(k\right)}}$ and all $k\in\underline{n}$.
\item We have
\[
K_{1}:=\sup_{i\in I}\:\sum_{j\in I}N_{i,j}<\infty\qquad\text{ and }\qquad K_{2}:=\sup_{j\in I}\:\sum_{i\in I}N_{i,j}<\infty,
\]
where $\gamma_{j,1}:=\gamma_{k_{j}}^{\left(0,1\right)}$ for $j\in I$
and
\[
\qquad\qquad N_{i,j}:=\left(\frac{w_{i}}{w_{j}}\cdot\left(\left|\det T_{j}\right|\big/\left|\det T_{i}\right|\right)^{\vartheta}\right)^{\tau}\!\!\cdot\left(1\!+\!\left\Vert T_{j}^{-1}T_{i}\right\Vert \right)^{\sigma}\!\cdot\left(\left|\det T_{i}\right|^{-1}\!\cdot\int_{Q_{i}}\:\max_{\left|\alpha\right|\leq N}\left|\left[\partial^{\alpha}\widehat{\gamma_{j,1}}\right]\left(T_{j}^{-1}\left(\xi\!-\!b_{j}\right)\right)\right|\d\xi\right)^{\tau}.
\]
\end{enumerate}
Then there is some $\delta_{0}\in\left(0,1\right]$ such that the
family 
\[
\Psi_{\delta}:=\left(L_{\delta\cdot T_{i}^{-T}k}\:\gamma^{\left[i\right]}\right)_{i\in I,\,k\in\Z^{\dimension}}\qquad\text{ with }\qquad\gamma^{\left[i\right]}=\left|\det T_{i}\right|^{1/2}\cdot M_{b_{i}}\left[\gamma_{i}\circ T_{i}^{T}\right]
\]
forms an \textbf{atomic decomposition} of $\DecompSp{\CalQ}p{\ell_{w}^{q}}v$,
for all $\delta\in\left(0,\delta_{0}\right]$. Precisely, this means
the following:

\begin{itemize}[leftmargin=0.7cm]
\item The \textbf{synthesis map}
\[
S^{\left(\delta\right)}:C_{w,v,\delta}^{p,q}\to\DecompSp{\CalQ}p{\ell_{w}^{q}}v,\left(\smash{c_{k}^{\left(i\right)}}\right)_{i\in I,\,k\in\Z^{\dimension}}\mapsto\sum_{i\in I}\:\sum_{k\in\Z^{\dimension}}\left[c_{k}^{\left(i\right)}\cdot L_{\delta\cdot T_{i}^{-T}k}\:\gamma^{\left[i\right]}\right]
\]
is well-defined and bounded for every $\delta\in\left(0,1\right]$.
\item For $0<\delta\leq\delta_{0}$, there is a bounded linear \textbf{coefficient
map} $C^{\left(\delta\right)}:\DecompSp{\CalQ}p{\ell_{w}^{q}}v\to C_{w,v,\delta}^{p,q}$
satisfying 
\[
S^{\left(\delta\right)}\circ C^{\left(\delta\right)}=\identity_{\DecompSp{\CalQ}p{\ell_{w}^{q}}v}.
\]
\end{itemize}
Finally, there is an estimate for the size of $\delta_{0}$ which
is independent of $p\geq p_{0}$, $q\geq q_{0}$ and of $v,v_{0}$:
There is a constant $L=L\left(p_{0},q_{0},\varepsilon,\dimension,\CalQ,\Phi,\gamma_{1}^{\left(0\right)},\dots,\gamma_{n}^{\left(0\right)},\Omega_{0},\Omega_{1},K\right)>0$
such that we can choose\vspace{-0.1cm}
\[
\delta_{0}=\min\left\{ 1,\,1\big/\left[L\cdot\Omega^{\left(p\right)}\cdot\left(K_{1}^{1/\tau}+K_{2}^{1/\tau}\right)\right]\right\} .\qedhere
\]
\end{thm}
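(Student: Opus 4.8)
The plan is to mirror, almost verbatim in structure, the proof of Theorem \ref{thm:WeightedBanachFrameTheorem} presented above, with the Banach‑frame apparatus of \cite{StructuredBanachFrames} replaced throughout by its atomic‑decomposition counterpart. A useful preliminary sanity check: Theorem \ref{thm:AtomicDecompositionTheorem} is exactly the special case $\Omega_0=\Omega_1=1$, $v=v_0\equiv 1$ of the present statement, and under that specialization the parameters $N,\tau,\vartheta,\varUpsilon,\sigma$ collapse to the ones appearing there, so the formulas are internally consistent.

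First I would verify that the family $\Gamma=(\gamma_i)_{i\in I}$, together with the factorizations $\gamma_i=\gamma_{i,1}\ast\gamma_{i,2}$ (setting $\gamma_{i,j}:=\gamma_{k_i}^{(0,j)}$), satisfies the relevant standing assumption of \cite{StructuredBanachFrames} for atomic decompositions. Exactly as in the Banach‑frame proof, this is done by applying \cite[Lemma 3.7]{StructuredBanachFrames} to the finitely many prototypes $\gamma_k^{(0)},\gamma_k^{(0,1)},\gamma_k^{(0,2)}$: conditions (1)--(6) of the theorem encode precisely the weighted‑$L^1$ decay, the $C^1$‑smoothness with decay of $\gamma_k^{(0,2)}$ (i.e.\ finiteness of $\Omega^{(p)}$), the polynomial boundedness of $\Fourier\gamma_k^{(0,j)}$, the convolution factorization, and the weighted decay of $\gamma_k^{(0)}$ itself; condition (7), together with continuity of $\Fourier\gamma_k^{(0)}$ and compactness of $\overline{Q_0^{(k)}}$, yields a uniform lower bound $|\Fourier\gamma_k^{(0)}|\ge c$ on $\bigcup\{Q_i':k_i=k\}\subset\overline{Q_0^{(k)}}$. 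This step simultaneously produces a bound $\Omega^{(p)}\le$ (a constant depending only on the prototypes, $p_0$, $K$, $\dimension$), playing the role that $\Omega_2^{(p,K)}\le\Omega_3$ played there.

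Next I would feed this into the atomic‑decomposition analog of \cite[Corollary 6.6]{StructuredBanachFrames}, which gives well‑definedness and boundedness of the synthesis map $S^{(\delta)}:C_{w,v,\delta}^{p,q}\to\DecompSp{\CalQ}p{\ell_{w}^{q}}v$ for every $\delta\in(0,1]$, with operator norm $\lesssim K_1^{1/\tau}+K_2^{1/\tau}$, together with the stated mode of convergence of the defining series (pointwise absolute for the inner $k$‑sum, unconditional weak‑$\ast$ in $Z'(\CalO)$ for the outer $j$‑sum); here condition (8) is exactly the Schur‑type hypothesis invoked. One then applies the atomic‑decomposition existence theorem of \cite{StructuredBanachFrames} (the analog of \cite[Theorem 4.7]{StructuredBanachFrames}) to obtain, for $\delta$ below a threshold, a bounded coefficient map $C^{(\delta)}$ with $S^{(\delta)}\circ C^{(\delta)}=\identity_{\DecompSp{\CalQ}p{\ell_{w}^{q}}v}$. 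The threshold $\delta_0$ comes out of the form $1/(1+2\vertiii{F_0}^2)$ (or similar), with $\vertiii{F_0}$ controlled by $C_{\CalQ,\Phi,v_0,p}^2\cdot\vertiii{\Gamma_{\CalQ}}^2\cdot\Omega^{(p)}\cdot(K_1^{1/\tau}+K_2^{1/\tau})$ times combinatorial constants; it is the presence of the factor $\Omega^{(p)}$ — rather than of the second convolution factor directly — that makes the factorization hypothesis indispensable, just as in Theorem \ref{thm:AtomicDecompositionTheorem}.

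The main obstacle is the last clause: extracting a threshold $\delta_0$ that depends on $p,q$ only through $p_0,q_0$ and not at all on $v,v_0$. As in the proof of Theorem \ref{thm:WeightedBanachFrameTheorem}, one must trace each ingredient of $\vertiii{F_0}$ — the regular‑partition constants $C^{(\alpha)}$, the BAPU constant $C_{\CalQ,\Phi,v_0,p}$, the clustering norm $\vertiii{\Gamma_{\CalQ}}\le C_{\CalQ,w}\,N_{\CalQ}^{1+1/q}$, and the combinatorial exponents $N,M$ — and bound them uniformly using $p\ge p_0$, $q\ge q_0$, the $(\CalQ,\Omega_0,\Omega_1,K)$‑regularity of $v_0$ (which absorbs all $v_0$‑dependence into $\Omega_0,\Omega_1,K$) and the $v_0$‑moderateness of $v$. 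The bookkeeping is lengthy but routine, and once it is carried out one reads off the asserted $\delta_0=\min\{1,\,1/[L\cdot\Omega^{(p)}\cdot(K_1^{1/\tau}+K_2^{1/\tau})]\}$ with $L$ depending only on the admissible parameters. This is essentially the only part of the argument requiring genuine work beyond citing \cite{StructuredBanachFrames}, and completing it finishes the proof.
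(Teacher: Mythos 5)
Your proposal is correct and mirrors the paper's actual proof almost step for step: the paper invokes \cite[Lemma 3.7]{StructuredBanachFrames} to verify the standing assumption, then the atomic-decomposition counterparts you allude to (namely \cite[Corollary 6.7]{StructuredBanachFrames} in place of Corollary 6.6, and \cite[Theorem 5.6]{StructuredBanachFrames} in place of Theorem 4.7) to get boundedness of the synthesis-related operator $\overrightarrow{C}$ and the existence of the coefficient map, followed by the same bookkeeping to make $\delta_0$ depend only on the admissible parameters. The one small imprecision is that in the atomic-decomposition branch there is no operator $F_0$ and the threshold is not of the form $1/(1+2\vertiii{F_0}^2)$ — the threshold $\delta_{00}^{-1}$ is bounded directly by a product of the form (combinatorial constant)$\cdot\Omega^{(p)}\cdot\vertiii{\smash{\overrightarrow{C}}}^{\max\{1,1/p\}}$ — but since you hedge with ``(or similar)'' and correctly identify $\Omega^{(p)}$ as the factor that makes the convolution factorization indispensable, this does not constitute a gap.
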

\begin{rem*}
Convergence of the series defining $S^{\left(\delta\right)}c$ has
to be understood as in the remark to Theorem \ref{thm:AtomicDecompositionTheorem}.
Also as in that remark, the action of the coefficient map $C^{\left(\delta\right)}$
on a given $f\in\DecompSp{\CalQ}p{\ell_{w}^{q}}v$ is \emph{independent}
of the precise choice of $p,q,v,w$, as long as $C^{\left(\delta\right)}f$
is defined at all.
\end{rem*}
\begin{proof}
For brevity, set $N_{0}:=\left\lceil K+p_{0}^{-1}\cdot\left(\dimension+\varepsilon\right)\right\rceil $.
As in the proof of Theorem \ref{thm:WeightedBanachFrameTheorem},
we see as a consequence of \cite[Lemma 3.7]{StructuredBanachFrames}
that $\Gamma=\left(\gamma_{i}\right)_{i\in I}$ satisfies \cite[Assumption 3.6]{StructuredBanachFrames},
with $\Omega_{2}^{\left(p,K\right)}\leq\Omega_{3}$ for a suitable
constant $\Omega_{3}=\Omega_{3}\left(\CalQ,\gamma_{1}^{\left(0\right)},\dots,\gamma_{n}^{\left(0\right)},p_{0},\dimension,K\right)>0$.
For brevity, set $\Omega_{5}:=\Omega_{0}^{16K}\Omega_{1}^{16}\Omega_{3}$.

Now, since we have $\gamma_{i}=\gamma_{k_{i}}^{\left(0\right)}=\gamma_{k_{i}}^{\left(0,1\right)}\ast\gamma_{k_{i}}^{\left(0,2\right)}$
for all $i\in I$, it is easy to see that all assumptions of \cite[Corollary 6.7]{StructuredBanachFrames}
are satisfied. Consequently, \cite[Corollary 6.7]{StructuredBanachFrames}
shows that the operator $\overrightarrow{C}$ defined in \cite[Assumption 5.1]{StructuredBanachFrames}
is well-defined and bounded, with
\[
\vertiii{\smash{\overrightarrow{C}}}^{\max\left\{ 1,\frac{1}{p}\right\} }\leq L_{1}^{\left(0\right)}\cdot\left(K_{1}^{1/\tau}+K_{2}^{1/\tau}\right),
\]
where
\[
L_{1}^{\left(0\right)}=\Omega_{0}^{K}\Omega_{1}\cdot\left(4\dimension\right)^{1+2N}\cdot\left(\frac{s_{\dimension}}{\varepsilon}\right)^{1/\min\left\{ 1,p\right\} }\cdot\max_{\left|\alpha\right|\leq N}C^{\left(\alpha\right)}\leq\Omega_{0}^{K}\Omega_{1}\left(4\dimension\right)^{1+2N_{0}}\cdot\left(1+\frac{s_{\dimension}}{\varepsilon}\right)^{1/p_{0}}\cdot\max_{\left|\alpha\right|\leq N_{0}}C^{\left(\alpha\right)}=:L_{1},
\]
where the constants $C^{\left(\alpha\right)}=C^{\left(\alpha\right)}\left(\CalQ,\Phi\right)$
are as in Definition \ref{def:RegularPartitionOfUnity}. Thus, $L_{1}=L_{1}\left(\dimension,p_{0},\varepsilon,\CalQ,\Phi,\Omega_{0},\Omega_{1},K\right)$.

Finally, \cite[Corollary 6.7]{StructuredBanachFrames} shows that
$\Gamma$ satisfies all assumptions of \cite[Theorem 5.6]{StructuredBanachFrames},
so that the family $\Psi_{\delta}$ defined in the statement of the
theorem yields an atomic decomposition of $\DecompSp{\CalQ}p{\ell_{w}^{q}}v$
as soon as $0<\delta\leq\min\left\{ 1,\delta_{00}\right\} $, where
$\delta_{00}>0$ is defined by
\begin{align*}
\delta_{00}^{-1}\! & :=\!\begin{cases}
\!\frac{2s_{\dimension}}{\sqrt{\dimension}}\cdot\left(2^{17}\!\cdot\!\dimension^{2}\!\cdot\!\left(K+2+\dimension\right)\right)^{K+\dimension+3}\!\!\!\cdot\left(1+R_{\CalQ}\right)^{\dimension+1}\cdot\Omega_{0}^{4K}\Omega_{1}^{4}\Omega_{2}^{\left(p,K\right)}\Omega^{\left(p\right)}\cdot\vertiii{\smash{\overrightarrow{C}}}\,, & \text{if }p\geq1,\\
\frac{\left(2^{14}/\dimension^{\frac{3}{2}}\right)^{\!\frac{\dimension}{p}}}{2^{45}\cdot\dimension^{17}}\!\cdot\!\left(\frac{s_{\dimension}}{p}\right)^{\!\frac{1}{p}}\left(2^{68}\dimension^{14}\!\cdot\!\left(K+1+\frac{\dimension+1}{p}\right)^{\!3}\right)^{K+2+\frac{\dimension+1}{p}}\!\!\!\cdot\!\left(1\!+\!R_{\CalQ}\right)^{1+\frac{3\dimension}{p}}\!\cdot\!\Omega_{0}^{16K}\Omega_{1}^{16}\Omega_{2}^{\left(p,K\right)}\Omega^{\left(p\right)}\cdot\vertiii{\smash{\overrightarrow{C}}}^{\frac{1}{p}}, & \text{if }p<1
\end{cases}\\
 & \leq\begin{cases}
2s_{\dimension}\cdot\left(2^{17}\!\cdot\!\dimension^{2}\!\cdot\!\left(K+2+\dimension\right)\right)^{K+\dimension+3}\!\!\!\cdot\left(1+R_{\CalQ}\right)^{\dimension+1}\cdot\Omega_{0}^{4K}\Omega_{1}^{4}\Omega_{3}\Omega^{\left(p\right)}\cdot L_{1}\cdot\left(K_{1}^{1/\tau}+K_{2}^{1/\tau}\right)\,, & \text{if }p\geq1,\\
2^{14\dimension/p_{0}}\left(1\!+\!\frac{s_{\dimension}}{p_{0}}\right)^{\!\frac{1}{p_{0}}}\!\!\cdot\left(2^{68}\dimension^{14}\!\cdot\!\left(K\!+\!1\!+\!\frac{\dimension+1}{p_{0}}\right)^{\!3}\right)^{K+2+\frac{\dimension+1}{p_{0}}}\!\!\!\cdot\!\left(1\!+\!R_{\CalQ}\right)^{1+\frac{3\dimension}{p_{0}}}\!\cdot\!\Omega_{5}\Omega^{\left(p\right)}\cdot L_{1}\cdot\left(\!K_{1}^{1/\tau}\!+\!K_{2}^{1/\tau}\right), & \text{if }p<1
\end{cases}\\
 & \leq L_{2}\cdot\Omega^{\left(p\right)}\cdot\left(K_{1}^{1/\tau}+K_{2}^{1/\tau}\right)
\end{align*}
for\vspace{-0.1cm}
\[
L_{2}:=\left[2^{14\dimension}\cdot\left(1+p_{0}^{-1}\cdot s_{\dimension}\right)\right]^{1/p_{0}}\cdot\left(2^{68}\dimension^{14}\!\cdot\!\left(K+1+\left(\dimension+1\right)\cdot p_{0}^{-1}\right)^{\!3}\right)^{K+2+\frac{\dimension+1}{p_{0}}}\!\!\!\cdot\!\left(1+R_{\CalQ}\right)^{1+\frac{3\dimension}{p_{0}}}\!\cdot\!\Omega_{5}\cdot L_{1}.
\]
Here, our application of \cite[Theorem 5.6]{StructuredBanachFrames}
implicitly used that the constant $\Omega^{\left(p\right)}$ from
the statement of Theorem \ref{thm:WeightedAtomicDecompositionTheorem}
satisfies $\Omega^{\left(p\right)}=\Omega_{4}^{\left(p,K\right)}$
with $\Omega_{4}^{\left(p,K\right)}$ as in \cite[Assumption 5.1]{StructuredBanachFrames}.

Note that $L:=L_{2}=L_{2}\left(\dimension,\varepsilon,p_{0},\CalQ,\Phi,\gamma_{1}^{\left(0\right)},\dots,\gamma_{n}^{\left(0\right)},\Omega_{0},\Omega_{1},K\right)$
and finally observe that if\vspace{-0.1cm}
\[
\delta_{0}=\min\left\{ 1,\,\left[L\cdot\Omega^{\left(p\right)}\cdot\left(K_{1}^{1/\tau}+K_{2}^{1/\tau}\right)\right]^{-1}\right\} 
\]
is defined as in the statement of Theorem \ref{thm:WeightedAtomicDecompositionTheorem},
then $\delta_{0}\leq\min\left\{ 1,\delta_{00}\right\} $, so that
the family $\Psi_{\delta}$ indeed yields an atomic decomposition
of $\DecompSp{\CalQ}p{\ell_{w}^{q}}v$ as soon as $\delta\in\left(0,\delta_{0}\right]$.
Finally, the remark associated to \cite[Theorem 5.6]{StructuredBanachFrames}
shows that convergence of the series in the definition of $S^{\left(\delta\right)}$
occurs as claimed in the remark after Theorem \ref{thm:AtomicDecompositionTheorem}
and that the action of $C^{\left(\delta\right)}$ on a given $f\in\DecompSp{\CalQ}p{\ell_{w}^{q}}v$
is independent of the precise choice of $p,q,w,v$, as claimed in
the remark to Theorem \ref{thm:WeightedAtomicDecompositionTheorem}.
\end{proof}

\subsection{Cartoon approximation with \texorpdfstring{$\alpha$}{α}-shearlets
and polynomial search depth}

In view of the results in the preceding subsection, we first define
a new variant of the $\alpha$-shearlet smoothness spaces:
\begin{defn}
\label{def:SpaceWeightedAlphaShearletSmoothnessSpaces}Let $\alpha\in\left[0,1\right]$,
let $\omega_{0}$ be the standard weight from Example \ref{exa:StandardShearletSpaceWeight}
and let $u=\left(u_{v}\right)_{v\in V^{\left(\alpha\right)}}$ be
as in Definition \ref{def:UnconnectedAlphaShearletCovering}. For
$p,q\in\left(0,\infty\right]$ and $s,\kappa\in\R$, we define the
\textbf{(weighted) $\alpha$-shearlet smoothness space} as
\[
\mathscr{S}_{\alpha,s,\kappa}^{p,q}\left(\R^{2}\right):=\DecompSp{\CalS_{u}^{\left(\alpha\right)}}p{\ell_{u^{s}}^{q}}{\omega_{0}^{\kappa}}.\qedhere
\]
\end{defn}
In this section, we will only consider exponents $\kappa\geq0$, for
which clearly $\mathscr{S}_{\alpha,s,\kappa}^{p,q}\left(\R^{2}\right)\hookrightarrow\mathscr{S}_{\alpha,s}^{p,q}\left(\R^{2}\right)\hookrightarrow\Schwartz'\left(\R^{2}\right)$,
cf.\@ Lemma \ref{lem:AlphaShearletIntoTemperedDistributions}. Now,
for $0\leq\kappa\leq\kappa_{0}$, Example \ref{exa:StandardShearletSpaceWeight}
shows that the weight $\omega_{0}^{\kappa}$ used above is $\omega_{0}^{\kappa_{0}}$-moderate
and that $\omega_{0}^{\kappa_{0}}$ is $\left(\smash{\CalS_{u}^{\left(\alpha\right)}},3,1,\kappa_{0}\right)$-regular.
Then, by repeating the proofs of Theorems \ref{thm:NicelySimplifiedAlphaShearletFrameConditions}
and \ref{thm:ReallyNiceShearletAtomicDecompositionConditions} for
the modified values of $N,\sigma,\tau$ or $N,\sigma,\tau,\varUpsilon$,
one easily sees that Theorems \ref{thm:NicelySimplifiedUnconnectedAlphaShearletFrameConditions}
and \ref{thm:ReallyNiceUnconnectedShearletAtomicDecompositionConditions}
remain valid (with the proper modifications) for the more general
spaces $\mathscr{S}_{\alpha,s,\kappa}^{p,q}\left(\R^{2}\right)$,
cf.\@ Theorems \ref{thm:WeightedAlphaShearletBanachFrameCondition}
and \ref{thm:WeightedAlphaShearletAtomicDecompositionCondition} below.

The only nontrivial modification in the proof is the following: In
the proof of Theorem \ref{thm:ReallyNiceShearletAtomicDecompositionConditions},
Proposition \ref{prop:ConvolutionFactorization} (with $N=N_{0}$)
is used to obtain factorizations $\varphi=\varphi_{1}\ast\varphi_{2}$
and $\psi=\psi_{1}\ast\psi_{2}$, where one still has a certain control
over $\varphi_{1},\varphi_{2},\psi_{1},\psi_{2}$. Indeed, Proposition
\ref{prop:ConvolutionFactorization} ensures that $\varphi_{2},\psi_{2},\nabla\varphi_{2},\nabla\psi_{2}$
decay faster than any polynomial, so that the constant $\Omega^{\left(p\right)}$
from Theorem \ref{thm:WeightedAtomicDecompositionTheorem} is finite.
But Theorem \ref{thm:WeightedAtomicDecompositionTheorem} requires
$\varphi_{1},\psi_{1}\in L_{\left(1+\left|\mybullet\right|\right)^{\kappa_{0}}}^{1}\left(\R^{2}\right)$,
whereas Theorem \ref{thm:AtomicDecompositionTheorem} only required
$\varphi_{1},\psi_{1}\in L^{1}\left(\R^{2}\right)$. But this is still
guaranteed by Proposition \ref{prop:ConvolutionFactorization}, since
it implies $\left\Vert \varphi_{1}\right\Vert _{N_{0}},\left\Vert \psi_{1}\right\Vert _{N_{0}}<\infty$,
where now $N_{0}=\left\lceil \kappa_{0}+p_{0}^{-1}\cdot\left(2+\varepsilon\right)\right\rceil \geq\kappa_{0}+2+\varepsilon>\kappa_{0}+2$,
from which we easily get $\varphi_{1},\psi_{1}\in L_{\left(1+\left|\mybullet\right|\right)^{\kappa_{0}}}^{1}\left(\R^{2}\right)$.

The precise statements of the ``weighted'' versions of Theorems
\ref{thm:NicelySimplifiedUnconnectedAlphaShearletFrameConditions}
and \ref{thm:ReallyNiceUnconnectedShearletAtomicDecompositionConditions}
are as follows:
\begin{thm}
\label{thm:WeightedAlphaShearletBanachFrameCondition}Let $\alpha\in\left[0,1\right]$,
$\varepsilon,p_{0},q_{0}\in\left(0,1\right]$, $\kappa_{0}\in\left[0,\infty\right)$
and $s_{0},s_{1}\in\R$ with $s_{0}\leq s_{1}$. Assume that $\varphi,\psi:\R^{2}\rightarrow\Compl$
satisfy the following:

\begin{itemize}[leftmargin=0.6cm]
\item $\varphi,\psi\in L_{\left(1+\left|\mybullet\right|\right)^{\kappa_{0}}}^{1}\left(\R^{2}\right)$
and $\widehat{\varphi},\widehat{\psi}\in C^{\infty}\left(\R^{2}\right)$,
where all partial derivatives of $\widehat{\varphi},\widehat{\psi}$
have at most polynomial growth.
\item $\varphi,\psi\in C^{1}\left(\R^{2}\right)$ and $\nabla\varphi,\nabla\psi\in L_{\left(1+\left|\mybullet\right|\right)^{\kappa_{0}}}^{1}\left(\R^{2}\right)\cap L^{\infty}\left(\R^{2}\right)$.
\item We have 
\begin{align*}
\widehat{\psi}\left(\xi\right)\neq0 & \text{ for all }\xi=\left(\xi_{1},\xi_{2}\right)\in\R^{2}\text{ with }\left|\xi_{1}\right|\in\left[3^{-1},3\right]\text{ and }\left|\xi_{2}\right|\leq\left|\xi_{1}\right|,\\
\widehat{\varphi}\left(\xi\right)\ne0 & \text{ for all }\xi\in\left[-1,1\right]^{2}.
\end{align*}
\item $\varphi,\psi$ satisfy equation \eqref{eq:ShearletFrameFourierDecayCondition}
for all $\theta\in\N_{0}^{2}$ with $\left|\theta\right|\leq N_{0}$,
where $N_{0}:=\left\lceil \kappa_{0}+p_{0}^{-1}\cdot\left(2+\varepsilon\right)\right\rceil $
and
\begin{align*}
K & :=\varepsilon+\max\left\{ \frac{1-\alpha}{\min\left\{ p_{0},q_{0}\right\} }+2\left(\frac{2}{p_{0}}+\kappa_{0}+N_{0}\right)-s_{0},\,\frac{2}{\min\left\{ p_{0},q_{0}\right\} }+\frac{2}{p_{0}}+\kappa_{0}+N_{0}\right\} ,\\
M_{1} & :=\varepsilon+\frac{1}{\min\left\{ p_{0},q_{0}\right\} }+\max\left\{ 0,\,s_{1}\right\} ,\\
M_{2} & :=\max\left\{ 0,\,\varepsilon+\left(1+\alpha\right)\left(\frac{2}{p_{0}}+\kappa_{0}+N_{0}\right)-s_{0}\right\} ,\\
H & :=\max\left\{ 0,\,\varepsilon+\frac{1-\alpha}{\min\left\{ p_{0},q_{0}\right\} }+\frac{2}{p_{0}}+\kappa_{0}+N_{0}-s_{0}\right\} .
\end{align*}
\end{itemize}
Then there is some $\delta_{0}\in\left(0,1\right]$ such that for
$0<\delta\leq\delta_{0}$ and all $p,q\in\left(0,\infty\right]$ and
$\kappa,s\in\R$ with $p\geq p_{0}$, $q\geq q_{0}$ and $s_{0}\leq s\leq s_{1}$,
as well as $0\leq\kappa\leq\kappa_{0}$, the following is true: The
family 
\[
{\rm SH}_{\alpha}\!\left(\smash{\tilde{\varphi},\tilde{\psi}};\delta\right)\!=\!\left(\!L_{\delta\cdot B_{v}^{-T}k}\:\widetilde{\gamma^{\left[v\right]}}\right)_{\!v\in V^{\left(\alpha\right)},k\in\Z^{2}}\;\text{ with }\;\widetilde{\gamma^{\left[v\right]}}\left(x\right)=\gamma^{\left[v\right]}\!\left(-x\right)\;\text{ and }\;\gamma^{\left[v\right]}\!:=\!\begin{cases}
\left|\det B_{v}\right|^{\frac{1}{2}}\cdot\left(\psi\circ B_{v}^{T}\right), & \text{if }v\in V_{0}^{\left(\alpha\right)},\\
\varphi, & \text{if }v=0
\end{cases}
\]
forms a Banach frame for $\mathscr{S}_{\alpha,s,\kappa}^{p,q}\left(\R^{2}\right)$.

The precise interpretation of this statement is as in Theorem \ref{thm:NicelySimplifiedAlphaShearletFrameConditions},
with the obvious changes. In particular, the coefficient space $C_{u^{s}}^{p,q}$
needs to be replaced by $C_{u^{s},\omega_{0}^{\kappa},\delta}^{p,q}$.
\end{thm}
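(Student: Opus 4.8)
The plan is to derive Theorem~\ref{thm:WeightedAlphaShearletBanachFrameCondition} from the weighted Banach frame theorem, Theorem~\ref{thm:WeightedBanachFrameTheorem}, applied to the unconnected $\alpha$-shearlet covering $\CalS_{u}^{(\alpha)}$ with the weight $w=u^{s}$, the space weight $v=\omega_{0}^{\kappa}$ and the moderateness weight $v_{0}=\omega_{0}^{\kappa_{0}}$, following the same scheme as the proof of Theorem~\ref{thm:NicelySimplifiedUnconnectedAlphaShearletFrameConditions} (which itself goes through the connected case, Theorem~\ref{thm:NicelySimplifiedAlphaShearletFrameConditions}, and Lemma~\ref{lem:MEstimate}) but with the $\kappa_{0}$-inflated parameters dictated by Theorem~\ref{thm:WeightedBanachFrameTheorem}. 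By Lemma~\ref{lem:UnconnectedAlphaShearletCoveringIsAlmostStructured} the covering $\CalS_{u}^{(\alpha)}$ is almost structured and satisfies Assumption~\ref{assu:CrashCourseStandingAssumptions} with $n=2$, $Q_{0}^{(1)}=Q_{u}$, $Q_{0}^{(2)}=(-1,1)^{2}$, $k_{v}=1$ for $v\in V_{0}^{(\alpha)}$ and $k_{0}=2$; by Lemma~\ref{lem:WeightUnconnectedModerate} the weight $u^{s}$ is $\CalS_{u}^{(\alpha)}$-moderate with $C_{\CalS_{u}^{(\alpha)},u^{s}}\le 39^{|s|}$; and by Example~\ref{exa:StandardShearletSpaceWeight}, for $0\le\kappa\le\kappa_{0}$ the weight $\omega_{0}^{\kappa}$ is $\omega_{0}^{\kappa_{0}}$-moderate and $\omega_{0}^{\kappa_{0}}$ is $(\CalS_{u}^{(\alpha)},3,1,\kappa_{0})$-regular. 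Setting $\gamma_{1}^{(0)}=\psi$ and $\gamma_{2}^{(0)}=\varphi$, the integrability hypotheses of Theorem~\ref{thm:WeightedBanachFrameTheorem} hold because $\varphi,\psi\in L_{(1+|\mybullet|)^{\kappa_{0}}}^{1}(\R^{2})=L_{v_{0}}^{1}(\R^{2})$, $\nabla\varphi,\nabla\psi\in L_{v_{0}}^{1}(\R^{2})\cap L^{\infty}(\R^{2})$, and $\widehat{\varphi},\widehat{\psi}\in C^{\infty}$ with polynomially bounded derivatives; the nonvanishing conditions hold because $\overline{Q_{u}}\subset\{(\xi_{1},\xi_{2}):3^{-1}\le|\xi_{1}|\le 3,\ |\xi_{2}|\le|\xi_{1}|\}$, which is exactly where $\widehat{\psi}\neq 0$ is assumed, while $\widehat{\varphi}\neq 0$ on $[-1,1]^{2}=\overline{Q_{0}^{(2)}}$ by hypothesis. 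Note also $b_{v}=0$ for all $v\in V^{(\alpha)}$, so that $\gamma^{[v]}$ in the statement coincides with $|\det B_{v}|^{1/2}M_{b_{v}}[\gamma_{v}\circ B_{v}^{T}]$.

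Next I would reprove the pointwise Fourier estimate exactly as in equations~\eqref{eq:NicelySimplifiedShearletFrameConditionsDerivativeEstimate1}--\eqref{eq:NicelySimplifiedShearletFrameConditionsDerivativeEstimate2} and the two-case argument following them: Leibniz's rule together with the decay assumption~\eqref{eq:ShearletFrameFourierDecayCondition} (now imposed for all $|\theta|\le N_{0}=\lceil\kappa_{0}+p_{0}^{-1}(2+\varepsilon)\rceil$) yields constants $K_{1},K_{2}>0$, with $K_{1}=2^{2+M_{2}+N_{0}}\pi C$ depending only on $\varepsilon,p_{0},\kappa_{0},M_{2},C$, such that $\max_{|\beta|\le 1}\max_{|\theta|\le N_{0}}|\partial^{\theta}\widehat{\partial^{\beta}\psi}(\xi)|\le K_{1}\varrho(\xi)$ and $\max_{|\beta|\le 1}\max_{|\theta|\le N_{0}}|\partial^{\theta}\widehat{\partial^{\beta}\varphi}(\xi)|\le K_{2}\varrho_{0}(\xi)$, where $\varrho,\varrho_{0}$ are as in~\eqref{eq:MotherShearletMainEstimate} for the present $K,M_{1},M_{2},H$. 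Writing $C_{\diamondsuit}:=\max\{K_{1},K_{2}\}$, the quantity in condition~(4) of Theorem~\ref{thm:WeightedBanachFrameTheorem} associated to the choices $\gamma_{v}=\psi$, $\gamma_{0}=\varphi$ is then bounded by $C_{\diamondsuit}^{\tau}\,\widetilde{M}_{r,v}^{(0)}$ with $\widetilde{M}_{r,v}^{(0)}$ the quantity of Lemma~\ref{lem:MEstimate}. That lemma gives $\widetilde{M}_{r,v}^{(0)}\le 2^{\tau}M_{\iota_{1}(r),\iota_{1}(v)}^{(0)}$ with $M_{\cdot,\cdot}^{(0)}$ as in Lemma~\ref{lem:MainShearletLemma}, so — using injectivity of $\iota_{1}$ — the sums $C_{1}=\sup_{v}\sum_{r}\widetilde{M}_{r,v}$ and $C_{2}=\sup_{r}\sum_{v}\widetilde{M}_{r,v}$ required by Theorem~\ref{thm:WeightedBanachFrameTheorem} reduce to the two suprema controlled by Lemma~\ref{lem:MainShearletLemma}, and Lemma~\ref{lem:UnconnectedAlphaShearletSmoothnessIsBoring} identifies the target space as $\mathscr{S}_{\alpha,s,\kappa}^{p,q}(\R^{2})=\DecompSp{\CalS_{u}^{(\alpha)}}p{\ell_{u^{s}}^{q}}{\omega_{0}^{\kappa}}$.

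The key remaining point — and the only genuinely new one — is that Lemma~\ref{lem:MainShearletLemma} must now be invoked with the \emph{weighted} parameters of Theorem~\ref{thm:WeightedBanachFrameTheorem}, namely $N=\lceil\kappa_{0}+(2+\varepsilon)/\min\{1,p\}\rceil$, $\tau=\min\{1,p,q\}$ and $\sigma=\tau\big(2/\min\{1,p\}+\kappa_{0}+N\big)$. With $\tau_{0}:=\min\{p_{0},q_{0}\}$ and $\omega:=2/p_{0}+\kappa_{0}+N_{0}$ one has $\tau\ge\tau_{0}$, $N\le N_{0}$ and $\sigma/\tau\le\omega$; substituting these into the definitions of $K_{0},M_{1}^{(0)},M_{2}^{(0)},H_{0}$ from Lemma~\ref{lem:MainShearletLemma} and using $s_{0}\le s\le s_{1}$, the same four short case distinctions as in the proof of Theorem~\ref{thm:NicelySimplifiedAlphaShearletFrameConditions} give $H\ge H_{0}+\varepsilon$, $K\ge K_{0}+\varepsilon$, $M_{1}\ge M_{1}^{(0)}+\varepsilon$ and $M_{2}\ge M_{2}^{(0)}+\varepsilon$ — which is precisely what the $\kappa_{0}$-shifts built into the definitions of $K,M_{1},M_{2},H$ in the statement are for. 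Thus Lemma~\ref{lem:MainShearletLemma} (with $c=\varepsilon$) yields $C_{0}=C_{0}(\alpha,\tau_{0},\omega,\varepsilon,K,H,M_{1},M_{2})$, independent of $p,q,s,\kappa$, with $\max\{C_{1},C_{2}\}\le(2C_{\diamondsuit}C_{0})^{\tau}$; Theorem~\ref{thm:WeightedBanachFrameTheorem} then supplies the analysis operator into $C_{u^{s},\omega_{0}^{\kappa},\delta}^{p,q}$, the reconstruction operator, and the consistency statement, valid for all $0<\delta\le\delta_{0}$ with $\delta_{0}=(1+L\,C_{\CalS_{u}^{(\alpha)},u^{s}}^{4}(C_{1}^{1/\tau}+C_{2}^{1/\tau})^{2})^{-1}$, the convolution $\gamma^{[v]}\ast f$ being defined as in~\eqref{eq:SpecialConvolutionDefinition} relative to a regular partition of unity subordinate to $\CalS_{u}^{(\alpha)}$ and converging normally in $L_{(1+|\mybullet|)^{-\kappa_{0}}}^{\infty}(\R^{2})$. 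Since $C_{\CalS_{u}^{(\alpha)},u^{s}}\le 39^{\max\{|s_{0}|,|s_{1}|\}}$ and $C_{1}^{1/\tau}+C_{2}^{1/\tau}\le 2C_{\diamondsuit}C_{0}$ are bounded uniformly over $p\ge p_{0}$, $q\ge q_{0}$, $s\in[s_{0},s_{1}]$ and $\kappa\in[0,\kappa_{0}]$ — here it is crucial that $C_{\diamondsuit}$ and $C_{0}$ are independent of $p,q,s,\kappa$ — shrinking $\delta_{0}$ to the resulting explicit constant produces a threshold valid simultaneously over the whole stated parameter range. The main obstacle is purely the bookkeeping of these inequalities: verifying that the enlarged $N$ and $\sigma$ are still dominated by the $\kappa_{0}$-inflated decay exponents, and tracking the dependence of every auxiliary constant carefully enough that a single $\delta_{0}$ is admissible; the functional-analytic substance is entirely delegated to Theorem~\ref{thm:WeightedBanachFrameTheorem} and Lemma~\ref{lem:MainShearletLemma}.
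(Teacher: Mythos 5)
Your proof is correct and follows the same route the paper indicates (the paper itself only sketches it: ``by repeating the proofs of Theorems \ref{thm:NicelySimplifiedAlphaShearletFrameConditions} and \ref{thm:ReallyNiceShearletAtomicDecompositionConditions} for the modified values of $N,\sigma,\tau$''), namely applying Theorem \ref{thm:WeightedBanachFrameTheorem} directly to $\CalS_u^{(\alpha)}$ with $v_0=\omega_0^{\kappa_0}$, reducing the summability condition to Lemma \ref{lem:MainShearletLemma} via Lemma \ref{lem:MEstimate}, and checking that the $\kappa_0$-inflated decay exponents $K,M_1,M_2,H$ dominate the correspondingly inflated $K_0,M_1^{(0)},M_2^{(0)},H_0$ arising from $\omega=2/p_0+\kappa_0+N_0$. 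One small slip worth flagging: the identity $\mathscr{S}_{\alpha,s,\kappa}^{p,q}(\R^2)=\DecompSp{\CalS_u^{(\alpha)}}p{\ell_{u^s}^q}{\omega_0^\kappa}$ is simply Definition \ref{def:SpaceWeightedAlphaShearletSmoothnessSpaces}, not a consequence of Lemma \ref{lem:UnconnectedAlphaShearletSmoothnessIsBoring} — unlike the unweighted setting, the weighted space is defined over the unconnected covering from the outset, so no connected-to-unconnected identification is required.
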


\begin{thm}
\label{thm:WeightedAlphaShearletAtomicDecompositionCondition}Let
$\alpha\in\left[0,1\right]$, $\varepsilon,p_{0},q_{0}\in\left(0,1\right]$,
$\kappa_{0}\in\left[0,\infty\right)$ and $s_{0},s_{1}\in\R$ with
$s_{0}\leq s_{1}$. Assume that $\varphi,\psi:\R^{2}\rightarrow\Compl$
satisfy the following:

\begin{itemize}[leftmargin=0.6cm]
\item We have $\left\Vert \varphi\right\Vert _{\kappa_{0}+1+\frac{2}{p_{0}}}<\infty$
and $\left\Vert \psi\right\Vert _{\kappa_{0}+1+\frac{2}{p_{0}}}<\infty$,
where $\left\Vert g\right\Vert _{\Lambda}=\sup_{x\in\R^{2}}\left(1+\left|x\right|\right)^{\Lambda}\left|g\left(x\right)\right|$
for $g:\R^{2}\to\Compl^{\ell}$ (with arbitrary $\ell\in\N$) and
$\Lambda\geq0$.
\item We have $\widehat{\varphi},\widehat{\psi}\in C^{\infty}\left(\R^{2}\right)$,
where all partial derivatives of $\widehat{\varphi},\widehat{\psi}$
are polynomially bounded.
\item We have
\begin{align*}
\widehat{\psi}\left(\xi\right)\neq0 & \text{ for all }\xi=\left(\xi_{1},\xi_{2}\right)\in\R^{2}\text{ with }\left|\xi_{1}\right|\in\left[3^{-1},3\right]\text{ and }\left|\xi_{2}\right|\leq\left|\xi_{1}\right|,\\
\widehat{\varphi}\left(\xi\right)\ne0 & \text{ for all }\xi\in\left[-1,1\right]^{2}.
\end{align*}
\item $\varphi,\psi$ satisfy equation \eqref{eq:ShearletAtomicDecompositionFourierDecayCondition}
for all $\xi=\left(\xi_{1},\xi_{2}\right)\in\R^{2}$ and all $\beta\in\N_{0}^{2}$
with $\left|\beta\right|\leq N_{0}:=\left\lceil \kappa_{0}+p_{0}^{-1}\cdot\left(2+\varepsilon\right)\right\rceil $,
where
\begin{align*}
\qquad\qquad\Lambda_{0} & :=\begin{cases}
3+2\varepsilon+\max\left\{ 2,\,\frac{1-\alpha}{\min\left\{ p_{0},q_{0}\right\} }+N_{0}+s_{1}\right\} , & \text{if }p_{0}=1,\\
3+2\varepsilon+\max\left\{ 2,\,\frac{1-\alpha}{\min\left\{ p_{0},q_{0}\right\} }+\frac{1-\alpha}{p_{0}}+\kappa_{0}+N_{0}+1+\alpha+s_{1}\right\} , & \text{if }p_{0}\in\left(0,1\right),
\end{cases}\\
\qquad\qquad\Lambda_{1} & :=\varepsilon+\frac{1}{\min\left\{ p_{0},q_{0}\right\} }+\max\left\{ 0,\,\left(1+\alpha\right)\left(\frac{1}{p_{0}}-1\right)-s_{0}\right\} ,\\
\qquad\qquad\Lambda_{2} & :=\begin{cases}
\varepsilon+\max\left\{ 2,\,\left(1+\alpha\right)N_{0}+s_{1}\right\} , & \text{if }p_{0}=1,\\
\varepsilon+\max\left\{ 2,\,\left(1+\alpha\right)\left(1+\frac{1}{p_{0}}+\kappa_{0}+N_{0}\right)+s_{1}\right\} , & \text{if }p_{0}\in\left(0,1\right),
\end{cases}\\
\qquad\qquad\Lambda_{3} & :=\begin{cases}
\varepsilon+\max\left\{ \frac{1-\alpha}{\min\left\{ p_{0},q_{0}\right\} }+2N_{0}+s_{1},\,\frac{2}{\min\left\{ p_{0},q_{0}\right\} }+N_{0}\right\} , & \text{if }p_{0}=1,\\
\varepsilon+\max\left\{ \frac{1-\alpha}{\min\left\{ p_{0},q_{0}\right\} }+\frac{3-\alpha}{p_{0}}+2\kappa_{0}+2N_{0}+1+\alpha+s_{1},\,\frac{2}{\min\left\{ p_{0},q_{0}\right\} }+\frac{2}{p_{0}}+\kappa_{0}+N_{0}\right\} , & \text{if }p_{0}\in\left(0,1\right).
\end{cases}
\end{align*}
\end{itemize}
Then there is some $\delta_{0}\in\left(0,1\right]$ such that for
all $0<\delta\leq\delta_{0}$ and all $p,q\in\left(0,\infty\right]$
and $\kappa,s\in\R$ with $p\geq p_{0}$, $q\geq q_{0}$ and $s_{0}\leq s\leq s_{1}$,
as well as $0\leq\kappa\leq\kappa_{0}$, the following is true: The
family 
\[
{\rm SH}_{\alpha}\left(\varphi,\psi;\delta\right)=\left(L_{\delta\cdot B_{v}^{-T}k}\:\gamma^{\left[v\right]}\right)_{v\in V^{\left(\alpha\right)},\,k\in\Z^{2}}\quad\text{ with }\quad\gamma^{\left[v\right]}:=\begin{cases}
\left|\det B_{v}\right|^{1/2}\cdot\left(\psi\circ B_{v}^{T}\right), & \text{if }v\in V_{0}^{\left(\alpha\right)},\\
\varphi, & \text{if }v=0
\end{cases}
\]
forms an atomic decomposition for $\mathscr{S}_{\alpha,s,\kappa}^{p,q}\left(\R^{2}\right)$.
Precisely, this has to be understood as in Theorem \ref{thm:ReallyNiceShearletAtomicDecompositionConditions},
with the obvious changes. In particular, the coefficient space $C_{u^{s}}^{p,q}$
needs to be replaced by $C_{u^{s},\omega_{0}^{\kappa},\delta}^{p,q}$.
\end{thm}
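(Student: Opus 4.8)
The plan is to run the proof of Theorem \ref{thm:ReallyNiceShearletAtomicDecompositionConditions} essentially verbatim, but with the weighted atomic decomposition result Theorem \ref{thm:WeightedAtomicDecompositionTheorem} in place of Theorem \ref{thm:AtomicDecompositionTheorem}, and on the \emph{unconnected} covering directly (exactly as Theorem \ref{thm:ReallyNiceUnconnectedShearletAtomicDecompositionConditions} is deduced from Theorem \ref{thm:ReallyNiceShearletAtomicDecompositionConditions}). Concretely, I would apply Theorem \ref{thm:WeightedAtomicDecompositionTheorem} with $\CalQ=\CalS_{u}^{(\alpha)}$, $n=2$, $Q_{0}^{(1)}=Q_{u}$, $Q_{0}^{(2)}=(-1,1)^{2}$, $k_{v}=1$ for $v\in V_{0}^{(\alpha)}$, $k_{0}=2$, index weight $w=u^{s}$, space weight $v=\omega_{0}^{\kappa}$ and moderating weight $v_{0}=\omega_{0}^{\kappa_{0}}$ (where $\omega_{0}(x)=1+|x|$). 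The structural prerequisites are already available: $\CalS_{u}^{(\alpha)}$ is an almost structured covering of $\R^{2}$ by Lemma \ref{lem:UnconnectedAlphaShearletCoveringIsAlmostStructured}, hence satisfies Assumption \ref{assu:CrashCourseStandingAssumptions}; $u^{s}$ is $\CalS_{u}^{(\alpha)}$-moderate with $C_{\CalS_{u}^{(\alpha)},u^{s}}\leq39^{|s|}$ by Lemma \ref{lem:WeightUnconnectedModerate}; and by Example \ref{exa:StandardShearletSpaceWeight} the weight $\omega_{0}^{\kappa}$ is $\omega_{0}^{\kappa_{0}}$-moderate while $\omega_{0}^{\kappa_{0}}$ is $(\CalS_{u}^{(\alpha)},3,1,\kappa_{0})$-regular (using $\|B_{v}^{-1}\|\leq3$).

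Next I would carry out the same convolution factorization as in the proof of Theorem \ref{thm:ReallyNiceShearletAtomicDecompositionConditions}, but with $N=N_{0}:=\lceil\kappa_{0}+p_{0}^{-1}(2+\varepsilon)\rceil\geq\dimension+1$: Proposition \ref{prop:ConvolutionFactorization} applied to $\gamma=\varphi$ and to $\gamma=\psi$ — the extra factor $(1+|\xi|)^{-(3+\varepsilon)}$ in equation \eqref{eq:ShearletAtomicDecompositionFourierDecayCondition} being exactly what makes the controlling functions $\varrho_{1},\varrho_{2}$ integrable — yields $\varphi=\varphi_{1}\ast\varphi_{2}$, $\psi=\psi_{1}\ast\psi_{2}$ with $\varphi_{2},\psi_{2}\in C^{1}(\R^{2})$ decaying faster than any polynomial, so that the quantity $\Omega^{(p)}$ of Theorem \ref{thm:WeightedAtomicDecompositionTheorem} satisfies $\Omega^{(p)}\leq\max\{\|\varphi_{2}\|_{\varUpsilon_{0}},\|\psi_{2}\|_{\varUpsilon_{0}}\}+\max\{\|\nabla\varphi_{2}\|_{\varUpsilon_{0}},\|\nabla\psi_{2}\|_{\varUpsilon_{0}}\}=:C_{6}<\infty$, uniformly in $p\geq p_{0}$ and $\kappa\leq\kappa_{0}$, because $\varUpsilon\leq\varUpsilon_{0}:=\kappa_{0}+1+2/p_{0}$. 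The factors $\widehat{\varphi_{1}},\widehat{\psi_{1}}$ satisfy the estimates \eqref{eq:AlphaShearletAtomicDecompositionGeneratorsMainEstimate} with $M_{1}:=\Lambda_{1}$, $M_{2}:=\Lambda_{2}$, $K:=\Lambda_{3}$, $H:=\Lambda_{0}-3-\varepsilon$; the nonvanishing conditions and $\|\varphi\|_{\varUpsilon},\|\psi\|_{\varUpsilon}<\infty$ carry over unchanged from $\|\varphi\|_{\kappa_{0}+1+2/p_{0}},\|\psi\|_{\kappa_{0}+1+2/p_{0}}<\infty$. The \emph{only} genuinely new point is hypothesis (1) of Theorem \ref{thm:WeightedAtomicDecompositionTheorem}, namely $\varphi_{1},\psi_{1}\in L^{1}_{(1+|\mybullet|)^{\kappa_{0}}}(\R^{2})$; this is immediate since Proposition \ref{prop:ConvolutionFactorization} gives $\|\varphi_{1}\|_{N_{0}},\|\psi_{1}\|_{N_{0}}<\infty$ and $N_{0}-\kappa_{0}>2=\dimension$.

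It then remains to verify the Schur-type conditions $K_{1}=\sup_{i}\sum_{j}N_{i,j}<\infty$ and $K_{2}=\sup_{j}\sum_{i}N_{i,j}<\infty$ from Theorem \ref{thm:WeightedAtomicDecompositionTheorem}. Here I would first bound $N_{i,j}\leq C_{5}^{\tau}\cdot\widetilde{M}^{(0)}_{j,i}$ with $s$ replaced by $s^{\natural}:=(1+\alpha)\vartheta-s$ (exactly the step marked $(\ast)$ in the proof of Theorem \ref{thm:ReallyNiceShearletAtomicDecompositionConditions}, using $w=u^{s}$ and $|\det B_{v}|=u_{v}^{1+\alpha}$), then apply Lemma \ref{lem:MEstimate} — which holds for arbitrary $\sigma,\tau,N$ and does not involve $\omega_{0}$ at all — to dominate the unconnected Schur sums by $2^{\tau}$ times the connected ones, and finally invoke Lemma \ref{lem:MainShearletLemma}. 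The point is that the parameters of the weighted Theorem \ref{thm:WeightedAtomicDecompositionTheorem} now carry the summand $\kappa_{0}$ ($=K$): one has $\tau=\min\{1,p,q\}\geq\tau_{0}:=\min\{p_{0},q_{0}\}$, $N\leq N_{0}$, and $\sigma/\tau\leq 2/p_{0}+\kappa_{0}+N_{0}=:\omega$ in both cases $p\geq1$ and $p<1$. Applying Lemma \ref{lem:MainShearletLemma} with $c=\varepsilon$ requires $M_{1}\geq M_{1}^{(0)}+\varepsilon$, $M_{2}\geq M_{2}^{(0)}+\varepsilon$, $K\geq K_{0}+\varepsilon$, $H\geq H_{0}+\varepsilon$, and — by the same case distinction on $p\in[1,\infty]$ versus $p\in(0,1)$, using $N_{0}\geq\lceil 2+\varepsilon\rceil=3$ — these are precisely the inequalities encoded in the definitions of $\Lambda_{0},\dots,\Lambda_{3}$ in the statement (now $\kappa_{0}$-shifted relative to Theorem \ref{thm:ReallyNiceShearletAtomicDecompositionConditions}). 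Thus $\max\{K_{1},K_{2}\}\leq C_{5}^{\tau}C_{0}^{\tau}$ with $C_{0}=C_{0}(\alpha,\tau_{0},\omega,\varepsilon,\Lambda_{0},\dots,\Lambda_{3})$, and Theorem \ref{thm:WeightedAtomicDecompositionTheorem} produces a $\delta_{0}=\min\{1,[L\cdot\Omega^{(p)}\cdot(K_{1}^{1/\tau}+K_{2}^{1/\tau})]^{-1}\}$; since $\Omega^{(p)}\leq C_{6}$ and $K_{i}^{1/\tau}\leq C_{5}C_{0}$ independently of $p\in[p_{0},\infty]$, $q\in[q_{0},\infty]$, $s\in[s_{0},s_{1}]$, $\kappa\in[0,\kappa_{0}]$ (and $C_{\CalS_{u}^{(\alpha)},u^{s}}\leq39^{\max\{|s_{0}|,|s_{1}|\}}$), one extracts a single $\delta_{0}$ valid on the whole parameter range, giving the atomic decomposition of $\mathscr{S}_{\alpha,s,\kappa}^{p,q}(\R^{2})=\DecompSp{\CalS_{u}^{(\alpha)}}p{\ell_{u^{s}}^{q}}{\omega_{0}^{\kappa}}$ with synthesis map $S^{(\delta)}:C_{u^{s},\omega_{0}^{\kappa},\delta}^{p,q}\to\mathscr{S}_{\alpha,s,\kappa}^{p,q}(\R^{2})$ and coefficient map $C^{(\delta)}$ satisfying $S^{(\delta)}\circ C^{(\delta)}=\identity$; the convergence mode of $S^{(\delta)}$ and the independence of $C^{(\delta)}$ from $p,q,s,\kappa$ are inherited from the remark to Theorem \ref{thm:WeightedAtomicDecompositionTheorem}.

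There is no deep obstacle here — all the required machinery (weighted decomposition spaces, Theorem \ref{thm:WeightedAtomicDecompositionTheorem}, Lemma \ref{lem:MainShearletLemma}, Lemma \ref{lem:MEstimate}, Proposition \ref{prop:ConvolutionFactorization}) is already in place. The one new ingredient is the weighted integrability $\varphi_{1},\psi_{1}\in L^{1}_{\omega_{0}^{\kappa_{0}}}$ of the first convolution factor, which follows instantly once $N_{0}$ is chosen larger than $\kappa_{0}+\dimension$. The actual work, exactly as in the unweighted theorem, is the bookkeeping of polynomial-order exponents: one must check that the $\kappa_{0}$-shifted values of $N$ and $\sigma$ still make the four threshold inequalities for Lemma \ref{lem:MainShearletLemma} come out correctly, which is precisely why the constants $\Lambda_{0},\dots,\Lambda_{3}$ in the statement are larger than their counterparts in Theorem \ref{thm:ReallyNiceShearletAtomicDecompositionConditions} by terms proportional to $\kappa_{0}$.
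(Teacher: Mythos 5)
Your proposal is correct and follows exactly the route the paper sketches (the paper itself gives no detailed proof, only the remark preceding Theorems \ref{thm:WeightedAlphaShearletBanachFrameCondition}--\ref{thm:WeightedAlphaShearletAtomicDecompositionCondition} that one ``repeats the proof'' of Theorem \ref{thm:ReallyNiceUnconnectedShearletAtomicDecompositionConditions} with the modified values of $N,\sigma,\tau,\varUpsilon$ and checks the single new hypothesis $\varphi_{1},\psi_{1}\in L^{1}_{\omega_{0}^{\kappa_{0}}}$). You correctly locate that one genuine new point, correctly trace the $\kappa_{0}$-shift through $N_{0}$, $\varUpsilon$, $\sigma/\tau$ and the Schur bounds via Lemmas \ref{lem:MEstimate} and \ref{lem:MainShearletLemma}, and your verification that the $\Lambda_{0},\dots,\Lambda_{3}$ of the statement dominate the threshold quantities $H_{0}+\varepsilon$, $M_{1}^{(0)}+\varepsilon$, $M_{2}^{(0)}+\varepsilon$, $K_{0}+\varepsilon$ with $s^{\natural}=(1+\alpha)\vartheta-s$ in both cases $p\geq1$ and $p<1$ is what makes the constants in the statement come out as written.
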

\begin{rem}
\label{rem:WeightedBFDConditionsForSeparable}Of course, Remark \ref{rem:NiceTensorConditionsForUnconnectedCovering}
(cf.\@ Corollaries \ref{cor:ReallyNiceAlphaShearletTensorAtomicDecompositionConditions}
and \ref{cor:ReallyNiceAlphaShearletTensorBanachFrameConditions})
also applies in the current setting; one simply needs to replace the
old values of $N_{0}$ and $K,M_{1},M_{2},H$ or $\Lambda_{0},\dots,\Lambda_{3}$
with the modified ones.
\end{rem}
We can now finally show that the approximation rate stated in Theorem
\ref{thm:CartoonApproximationWithAlphaShearlets} can also be achieved
when restricting to polynomial search depth:
\begin{thm}
\label{thm:CartoonApproximationWithPolynomialSearchDepth}Let $\beta\in\left(1,2\right]$
be arbitrary and set $\alpha:=\beta^{-1}\in\left[0,1\right]$. Let
$\varepsilon\in\left(0,1\right]$ be arbitrary and set $\pi\left(x\right):=40000\cdot x^{14+4\left\lceil 1/\varepsilon\right\rceil }$
for $x\in\R$. There is an enumeration $\varrho:\N\to V^{\left(\alpha\right)}\times\Z^{2}$,
with the index set $V^{\left(\alpha\right)}$ from Definition \ref{def:UnconnectedAlphaShearletCovering},
such that the following is true:

Assume that $\varphi,\psi$ satisfy the assumptions of Theorem \ref{thm:WeightedAlphaShearletAtomicDecompositionCondition}
for the choices $p_{0}=q_{0}=\frac{2}{1+\beta}$, $\kappa_{0}=\varepsilon$
and $s_{0}=0$, as well as $s_{1}:=\frac{1}{2}\left(1+\beta\right)$
and for $\varepsilon$ as above. Then there is some $\delta_{0}\in\left(0,1\right]$
such that every $0<\delta\leq\delta_{0}$ satisfies the following:
If $\left(\gamma^{\left[v,k\right]}\right)_{v\in V^{\left(\alpha\right)},k\in\Z^{2}}={\rm SH}_{\alpha}\left(\varphi,\psi;\delta\right)$
denotes the $\alpha$-shearlet system generated by $\varphi,\psi$,
then there is for each $f\in\mathcal{E}^{\beta}\left(\R^{2}\right)$
and each $N\in\N$ a function $f_{N}$ which is a linear combination
of $N$ elements of the set $\left\{ \gamma^{\left[\varrho\left(n\right)\right]}\with n=1,\dots,\pi\left(N\right)\right\} $
and such that for all $\sigma,\nu>0$ there is a constant $C=C\left(\varphi,\psi,\delta,\varepsilon,\sigma,\nu,\beta\right)>0$
(independent of $f,N$) satisfying
\[
\left\Vert f-f_{N}\right\Vert _{L^{2}}\leq C\cdot N^{-\left(\frac{\beta}{2}-\sigma\right)}\qquad\forall f\in\mathcal{E}^{\beta}\left(\R^{2};\nu\right)\text{ and all }N\in\N.\qedhere
\]
\end{thm}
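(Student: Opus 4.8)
The plan is to combine the weighted Banach frame / atomic decomposition machinery of Subsection~\ref{subsec:StructuredWeightedBFD} (Theorems~\ref{thm:WeightedAlphaShearletBanachFrameCondition} and~\ref{thm:WeightedAlphaShearletAtomicDecompositionCondition}) with a Sobolev bound for cartoon-like functions, so that the $N$-term approximation can be selected from within a \emph{polynomially large, $f$-independent} index set. Write $\alpha=\beta^{-1}$. First I would fix the enumeration: put $c_1:=4$, $c_2:=\lceil 1/\varepsilon\rceil$, and for $N\in\N$ set $A_N:=\{(v,k)\in V^{(\alpha)}\times\Z^2:\ u_v\le N^{c_1}\text{ and }|B_v^{-T}k|\le N^{c_2}\}$. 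Using $u_v=2^n$, $|\det B_v|=u_v^{1+\alpha}$, $\|B_v^{-1}\|\lesssim 1$ and $|m|\le G_n\le 2^n$, a lattice-point count in the sheared ellipse $B_v^T\overline{B_\rho}(0)$ (area plus perimeter plus one) and a geometric sum in $n$ give $|A_N|\le 40000\,N^{D}$ with $D\le 14+4\lceil1/\varepsilon\rceil$, hence $|A_N|\le\pi(N)$. I then let $\varrho\colon\N\to V^{(\alpha)}\times\Z^2$ be any bijection enumerating the index pairs in non-decreasing order of $\max\{u_v^{1/c_1},(1+|B_v^{-T}k|)^{1/c_2}\}$ (ties broken arbitrarily); this is independent of $\delta,\varphi,\psi$ and satisfies $A_N\subset\{\varrho(1),\dots,\varrho(|A_N|)\}\subset\{\varrho(1),\dots,\varrho(\pi(N))\}$.

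Next I would establish two ingredients, both uniform over $\mathcal{E}^\beta(\R^2;\nu)$. (i) \emph{Weighted smoothness.} Repeating the proof of Proposition~\ref{prop:CartoonLikeFunctionsBoundedInAlphaShearletSmoothness} — for $\beta=2$ via equation~\eqref{eq:ShearletCoefficientDecay}, for $\beta\in(1,2)$ via \cite{CartoonApproximationWithAlphaCurvelets} and the $\alpha$-molecule sparsity transfer of \cite{AlphaMolecules} — one checks that the shearlet coefficients of $f\in\mathcal{E}^\beta(\R^2;\nu)$ lie in $\ell^p$ for all $p>2/(1+\beta)$ and moreover, since $f$ is supported in $[0,1]^2$ while the shearlets $\gamma^{[v,k,\delta]}$ have uniformly small support (diameter $\lesssim\|B_v^{-1}\|\lesssim 1$), the spatially weighted coefficients $((1+|\delta B_v^{-T}k|)^{\varepsilon}\langle f,\gamma^{[v,k,\delta]}\rangle)_{v,k}$ lie in the \emph{same} $\ell^p$, with uniform norm bound; feeding this into the weighted Banach frame Theorem~\ref{thm:WeightedAlphaShearletBanachFrameCondition} (whose hypotheses follow from those of Theorem~\ref{thm:WeightedAlphaShearletAtomicDecompositionCondition}) shows $\mathcal{E}^\beta(\R^2;\nu)$ is a bounded subset of $\mathscr{S}_{\alpha,(1+\alpha)(p^{-1}-2^{-1}),\varepsilon}^{p,p}(\R^2)$ for all $p\in(2/(1+\beta),2]$. (ii) \emph{Sobolev bound.} Every $f\in\mathcal{E}^\beta(\R^2;\nu)$ is a sum of a $C_c^\beta$-function and $\Indicator_B\cdot(C_c^\beta\text{-function})$ with $\partial B$ of class $C^\beta$, $\beta>1$; since $\Indicator_B\in H^r$ for every $r<1/2$, we get $\mathcal{E}^\beta(\R^2;\nu)\subset H^{1/4}(\R^2)$ with uniform bound, and combining this with the identification $\mathscr{S}_{\alpha,1/4}^{2,2}(\R^2)=H^{1/4}(\R^2)$ (an $\ell^2$-$L^2$ computation using Plancherel, the BAPU property, and $u_v\asymp 1+|\xi|$ on $S_v^{(\alpha)}$, in the spirit of \cite[Lemma~6.10]{DecompositionEmbedding}) shows $\mathcal{E}^\beta(\R^2;\nu)$ is bounded in $\mathscr{S}_{\alpha,1/4}^{2,2}(\R^2)$.

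Now fix $\delta_0$ as the minimum of the thresholds from Theorems~\ref{thm:WeightedAlphaShearletBanachFrameCondition} and~\ref{thm:WeightedAlphaShearletAtomicDecompositionCondition} (for $p_0=q_0=2/(1+\beta)$, $\kappa_0=\varepsilon$, $s_0=0$, $s_1=(1+\beta)/2$) and from Theorem~\ref{thm:ReallyNiceUnconnectedShearletAtomicDecompositionConditions} for the $H^{1/4}$-parameters; for $0<\delta\le\delta_0$ let $c=(c_k^{(v)})=C^{(\delta)}f$, which by the remark after Theorem~\ref{thm:WeightedAtomicDecompositionTheorem} is the \emph{same} sequence for all these parameter choices, so that simultaneously (a) $\|((1+|\delta B_v^{-T}k|)^\varepsilon c_k^{(v)})\|_{\ell^2}\le C$, (b) $\|(u_v^{1/4}c_k^{(v)})\|_{\ell^2}\le C$, and (c) $\|(c_k^{(v)})\|_{\ell^p}\le C^{(p)}$ for each $p\in(2/(1+\beta),2]$ — using $|\det B_v|=u_v^{1+\alpha}$ and the explicit form of the coefficient-space norm. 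Given $N$, take $J_N\subset A_N$ realizing the $N$ largest values of $|c_k^{(v)}|$ over $A_N$ (possible since $c\in\ell^2$), and set $f_N:=\sum_{(v,k)\in J_N}c_k^{(v)}\gamma^{[v,k,\delta]}$, a linear combination of $N$ elements of $\{\gamma^{[\varrho(n)]}:n\le\pi(N)\}$. Since $\mathscr{S}_{\alpha,(1+\alpha)(p^{-1}-2^{-1})}^{p,p}(\R^2)\hookrightarrow L^2(\R^2)$ (Theorem~\ref{thm:AnalysisAndSynthesisSparsityAreEquivalent}) and $S^{(\delta)}\colon\ell^2\to\mathscr{S}_{\alpha,0}^{2,2}=L^2$ is bounded, $f=S^{(\delta)}c$ and $\|f-f_N\|_{L^2}\lesssim\|c-c\,\Indicator_{J_N}\|_{\ell^2}$, and
\[
\|c-c\,\Indicator_{J_N}\|_{\ell^2}^2=\sum_{(v,k)\in A_N\setminus J_N}|c_k^{(v)}|^2+\sum_{u_v>N^{c_1}}|c_k^{(v)}|^2+\sum_{\substack{u_v\le N^{c_1}\\ |B_v^{-T}k|>N^{c_2}}}|c_k^{(v)}|^2.
\]
Given $\sigma>0$, pick $p=p(\sigma)\in(2/(1+\beta),2)$ with $p^{-1}-2^{-1}\ge\beta/2-\sigma$; Stechkin's lemma \cite[Lemma~3.3]{StechkinLemma} and (c) bound the first sum by $(C^{(p)})^2N^{-2(p^{-1}-1/2)}\le(C^{(p)})^2N^{-(\beta-2\sigma)}$; (b) bounds the second by $C^2N^{-c_1/2}=C^2N^{-2}\le C^2N^{-(\beta-2\sigma)}$; and (a), using $(1+|\delta B_v^{-T}k|)^\varepsilon>(\delta N^{c_2})^\varepsilon$ on the last range, bounds the third by $\delta^{-2\varepsilon}C^2N^{-2\varepsilon c_2}\le\delta^{-2\varepsilon}C^2N^{-(\beta-2\sigma)}$ (as $2\varepsilon c_2\ge 2\ge\beta$). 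Thus $\|f-f_N\|_{L^2}\le C\,N^{-(\beta/2-\sigma)}$ with $C$ depending on $\delta,\varphi,\psi,\varepsilon,\sigma,\nu,\beta$; enlarging $C$ absorbs the finitely many $N$ with $\delta N^{c_2}<1$ (take $f_N=0$ and bound $\|f\|_{L^2}$), giving the claim.

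The main obstacle is ingredient~(i): upgrading the known analysis-sparsity of cartoon-like functions to a \emph{spatially weighted} analysis-sparsity that is uniform over $\mathcal{E}^\beta(\R^2;\nu)$. Although this is morally forced by the compact support of $f$ and the uniform spatial localization of the shearlets, it requires re-running the (already lengthy) sparsity estimates with the extra factor $(1+|\delta B_v^{-T}k|)^\varepsilon$ — and, for $\beta\in(1,2)$, carrying the additional bookkeeping of the $\beta$-shearlet versus $\alpha$-shearlet translation through the $\alpha$-molecule transfer. A secondary, purely technical point is making $\varrho$ and $\pi$ genuinely independent of $\delta$; this is handled above by pushing the $\delta$-dependence entirely into the constant $C$ and into the threshold $\delta N^{c_2}\ge 1$, while the $\delta$-independent cap $\{u_v\le N^{c_1},\,|B_v^{-T}k|\le N^{c_2}\}$ carries the polynomial count.
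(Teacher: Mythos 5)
Your enumeration, polynomial bookkeeping, and the three-way split of $\|c - c\Indicator_{J_N}\|_{\ell^2}$ (Stechkin on $A_N$, scale tail, translation tail) all match the paper's strategy, and your exponent choices are compatible with the given $\pi$. Two points merit comment, one substantive and one stylistic.

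The substantive gap is in ingredient (i). You justify the spatially weighted $\ell^p$-bound for the analysis coefficients by asserting that the shearlets $\gamma^{[v,k,\delta]}$ ``have uniformly small support (diameter $\lesssim\|B_v^{-1}\|\lesssim1$).'' That is false under the hypotheses of Theorem~\ref{thm:WeightedAlphaShearletAtomicDecompositionCondition}: those only require polynomial decay $\|\varphi\|_{\kappa_0+1+2/p_0}<\infty$, $\|\psi\|_{\kappa_0+1+2/p_0}<\infty$, not compact support. Once the generators merely decay polynomially, the observation that $\supp f\subset[0,1]^2$ no longer forces $|B_v^{-T}k|\lesssim1$ on the support of the coefficient sequence, and the far-off coefficients are merely small, not zero; dominating the weight $(1+|\delta B_v^{-T}k|)^\varepsilon$ then genuinely does require ``re-running the sparsity estimates,'' which you flag as an obstacle but do not resolve. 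The paper sidesteps this entirely: it introduces an \emph{auxiliary, compactly supported and symmetric} generating pair $\varphi_0,\psi_0$ (whose existence is guaranteed by Remark~\ref{rem:NiceTensorConditionsForUnconnectedCovering}), uses the weighted Banach frame property of ${\rm SH}_\alpha(\varphi_0,\psi_0;\tau)$ — for this auxiliary frame, compact support really does force $|\tau B_v^{-T}k|\le 4A$ on the nonzero analysis coefficients, so the weight is a uniformly bounded factor $\le 9A$ — to conclude that $\mathcal{E}^\beta(\R^2;\nu)$ is a bounded subset of $\mathscr{S}_{\alpha,\frac{1+\alpha}{2},\kappa_0}^{1,1}(\R^2)$, and then transfers this to the synthesis coefficients $c^{(f)}=C^{(\delta)}f$ of the \emph{given} (not compactly supported) system via the boundedness of $C^{(\delta)}$ and its parameter-independence (remark after Theorem~\ref{thm:WeightedAtomicDecompositionTheorem}). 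Your argument needs this auxiliary-frame detour, or else a genuinely new estimate; as written it has a hole.

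The stylistic difference is in the scale tail. You bound $\sum_{u_v>N^{c_1}}|c_{v,k}|^2$ via a Sobolev embedding $\mathcal{E}^\beta(\R^2;\nu)\subset H^{1/4}$ together with an identification $\mathscr{S}_{\alpha,1/4}^{2,2}=H^{1/4}$. This route is viable (with $\Indicator_B\in B^{1/2}_{2,\infty}\hookrightarrow H^r$ for $r<1/2$ and $C^\beta_c$-functions acting as multipliers, and you would need to verify the uniformity of these bounds over ${\rm STAR}^\beta(\nu)$), but the paper avoids both the Sobolev detour and the $\mathscr{S}^{2,2}=H^s$ identification by a simpler $\ell^\infty$-$\ell^1$ interpolation: the $\ell^\infty$ bound $|c_{v,k}|\le C_9 u_v^{-(1+\alpha)/2}$ (which follows from $\|f\|_{L^\infty}\lesssim1$ and $\|\gamma^{[v,k]}\|_{L^1}\lesssim u_v^{-(1+\alpha)/2}$) gives $|c_{v,k}|^2\le C_9\, 2^{-2n}\,|c_{v,k}|$ on the tail, and then the unweighted (actually, the weighted, which only helps) $\ell^1$ bound finishes. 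Your version works but buys nothing and adds exposure to unverified uniformity claims.
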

\begin{rem*}
Using Remark \ref{rem:WeightedBFDConditionsForSeparable}, one can
show similarly to Remark \ref{rem:CartoonApproximationConstantSimplification}
that the above theorem is applicable (with a suitable choice of $\varepsilon$),
if $\varphi,\psi$ satisfy the assumptions stated in Remark \ref{rem:CartoonApproximationConstantSimplification}.
\end{rem*}
\begin{proof}
Let $N\in\N$ be arbitrary and choose $n\in\N_{0}$ with $2^{n}\leq N<2^{n+1}$,
i.e., $n=\left\lfloor \log_{2}N\right\rfloor $. For $v\in V^{\left(\alpha\right)}$,
we denote by $s\left(v\right)$ the \emph{scale} encoded by $v$,
i.e., $s\left(0\right):=-1$ and $s\left(j,m,\iota\right):=j$ for
$\left(j,m,\iota\right)\in V_{0}^{\left(\alpha\right)}$. Then, we
define
\begin{equation}
W_{N}:=\left\{ \left(v,k\right)\in V^{\left(\alpha\right)}\times\Z^{2}\with s\left(v\right)\leq4n\text{ and }\left|B_{v}^{-T}k\right|\leq2^{2\left\lceil n/\varepsilon\right\rceil }\right\} .\label{eq:SearchDepthSpecialSetDefinition}
\end{equation}
Now, note that if $\left(v,k\right)=\left(\left(j,m,\iota\right),k\right)\in\left(\smash{V_{0}^{\left(\alpha\right)}}\times\smash{\Z^{2}}\right)\cap W_{N}$,
then $j\leq4n$ and $\left|m\right|\leq\left\lceil 2^{\left(1-\alpha\right)j}\right\rceil \leq2\cdot2^{\left(1-\alpha\right)j}$,
so that we get
\begin{align*}
\left|k\right| & =\left|B_{v}^{T}B_{v}^{-T}k\right|\leq\left\Vert B_{v}^{T}\right\Vert \cdot2^{2\left\lceil n/\varepsilon\right\rceil }\leq2^{2\left(1+n/\varepsilon\right)}\cdot\left\Vert \left(\begin{smallmatrix}2^{j} & 0\\
2^{\alpha j}m & 2^{\alpha j}
\end{smallmatrix}\right)\right\Vert \\
 & \leq4\cdot2^{2n/\varepsilon}\cdot\left(2^{j}+2^{\alpha j}+2^{\alpha j}\left|m\right|\right)\leq16\cdot2^{2n/\varepsilon}\cdot2^{j}\leq16\cdot2^{\left(4+2\left\lceil 1/\varepsilon\right\rceil \right)n},
\end{align*}
and thus $k\in\left\{ -16\cdot2^{n_{0}n},\dots,16\cdot2^{n_{0}n}\right\} ^{2}$,
where we defined $n_{0}:=4+2\left\lceil 1/\varepsilon\right\rceil \in\N$
for brevity. Furthermore, clearly $\left|m\right|\leq\left\lceil 2^{\left(1-\alpha\right)j}\right\rceil \leq2^{j}\leq2^{4n}$
and thus $m\in\left\{ -2^{4n},\dots,2^{4n}\right\} $. Finally, in
case of $\left(v,k\right)=\left(0,k\right)\in V^{\left(\alpha\right)}\times\Z^{2}$,
we get $\left|k\right|=\left|B_{v}^{-T}k\right|\leq2^{2\left\lceil n/\varepsilon\right\rceil }\leq2^{n_{0}n}$
and hence $k\in\left\{ -2^{n_{0}n},\dots,2^{n_{0}n}\right\} ^{2}$.
All in all, we have shown
\[
W_{N}\subset\left[\left\{ 0\right\} \times\left\{ -2^{n_{0}n},\dots,2^{n_{0}n}\right\} ^{2}\right]\cup\bigcup_{j=0}^{4n}\left[\left\{ j\right\} \times\left\{ -2^{4n},\dots,2^{4n}\right\} \times\left\{ 0,1\right\} \times\left\{ -16\cdot2^{n_{0}n},\dots,16\cdot2^{n_{0}n}\right\} ^{2}\right],
\]
and thus
\begin{align*}
\left|W_{N}\right| & \leq\left(1+2\cdot2^{n_{0}n}\right)^{2}+\left(1+4n\right)\cdot\left(1+2\cdot2^{4n}\right)\cdot2\cdot\left(1+32\cdot2^{n_{0}n}\right)^{2}\\
 & \leq\left(3\cdot2^{n_{0}n}\right)^{2}+5n\cdot3\cdot2^{4n}\cdot2\cdot\left(33\cdot2^{n_{0}n}\right)^{2}\\
\left({\scriptstyle \text{since }n\leq2^{n}\text{ and }2^{n}\leq N}\right) & \leq9\cdot2^{2n_{0}n}+30\cdot33^{2}\cdot2^{\left(5+2n_{0}\right)n}\leq40000\cdot2^{\left(5+2n_{0}\right)n}\leq40000\cdot N^{5+2n_{0}}.
\end{align*}

Next, note for arbitrary $\left(v,k\right)\in V^{\left(\alpha\right)}\times\Z^{2}$
that there is some $n\in\N_{0}$ with $s\left(v\right)\leq4n$ and
$\left|B_{v}^{-T}k\right|\leq2^{2\left\lceil n/\varepsilon\right\rceil }$,
so that $\left(v,k\right)\in W_{N}$ for $N=2^{n}$. Hence, $W:=V^{\left(\alpha\right)}\times\Z^{2}=\bigcup_{N\in\N}W_{N}$.
Now, choose the enumeration $\varrho:\N\to W$ such that $\varrho$
first enumerates $W_{1}$ (in an arbitrary way), then $W_{2}\setminus W_{1}$
(again arbitrarily), then $W_{3}\setminus\left(W_{1}\cup W_{2}\right)$,
and so on. Formally, if we define $M_{0}:=0$ and $M_{N}:=\left|W_{N}\setminus\bigcup_{\ell=1}^{N-1}W_{\ell}\right|\in\N_{0}$,
then $\varrho$ satisfies $\varrho\left(\underline{\sum_{\ell=1}^{N}M_{\ell}}\right)=\bigcup_{\ell=1}^{N}W_{\ell}$
for all $N\in\N$. Because of $\sum_{\ell=1}^{N}M_{\ell}\leq\sum_{\ell=1}^{N}\left|W_{\ell}\right|\leq40000\cdot\sum_{\ell=1}^{N}\ell^{5+2n_{0}}\leq40000\cdot N^{6+2n_{0}}=\pi\left(N\right)$,
we thus have $\varrho\left(\smash{\underline{\pi\left(N\right)}}\right)\supset\bigcup_{\ell=1}^{N}W_{\ell}\supset W_{N}$
for all $N\in\N$. For brevity, let us set $Z_{N}:=\varrho\left(\smash{\underline{\pi\left(N\right)}}\right)\subset W$
for $N\in\N$.

\medskip{}

We have thus constructed the enumeration $\varrho:\N\to V^{\left(\alpha\right)}\times\Z^{2}$
from the statement of the theorem. Now, let $\varphi,\psi$ be as
in the assumptions of the theorem. Then Theorem \ref{thm:WeightedAlphaShearletAtomicDecompositionCondition}
yields some $\delta_{0}\in\left(0,1\right]$ such that if $0<\delta\leq\delta_{0}$,
then the system ${\rm SH}_{\alpha}\left(\varphi,\psi;\delta\right)$
forms an atomic decomposition simultaneously for all $\alpha$-shearlet-smoothness
spaces $\mathscr{S}_{\alpha,s,\kappa}^{p,q}\left(\R^{2}\right)$ for
$p,q\geq p_{0}$, $0=s_{0}\leq s\leq s_{1}$ and $0\leq\kappa\leq\kappa_{0}=\varepsilon$.
Let $0<\delta\leq\delta_{0}$ be arbitrary and let $S^{\left(\delta\right)},C^{\left(\delta\right)}$
be the associated synthesis and coefficient operators. As noted in
Theorem \ref{thm:WeightedAlphaShearletAtomicDecompositionCondition}
(see Theorem \ref{thm:ReallyNiceShearletAtomicDecompositionConditions}),
the domain and codomain of these operators strictly speaking depend
on the choice of $p,q,s,\kappa$, but the \emph{action} of these operators
does not. Hence, we commit the weak notational crime of not indicating
this dependence.

For $f\in\mathcal{E}^{\beta}\left(\R^{2}\right)\subset L^{2}\left(\R^{2}\right)=\mathscr{S}_{\alpha,0,0}^{2,2}\left(\R^{2}\right)$,
let $c^{\left(f\right)}:=\left(\smash{c_{w}^{\left(f\right)}}\right)_{w\in W}:=C^{\left(\delta\right)}f\in C_{u^{0},\omega_{0}^{0},\delta}^{2,2}=\ell^{2}\left(W\right)$
and choose a subset $J_{N}^{\left(f\right)}\subset Z_{N}$ satisfying
$\left|\smash{J_{N}^{\left(f\right)}}\right|=N$ and $\left|\smash{c_{j}^{\left(f\right)}}\right|\geq\left|\smash{c_{i}^{\left(f\right)}}\right|$
for all $j\in J_{N}^{\left(f\right)}$ and all $i\in Z_{N}\setminus J_{N}^{\left(f\right)}$.
Such a choice is possible, since $Z_{N}$ is finite with $\left|Z_{N}\right|=\pi\left(N\right)\geq N$.
Finally, set 
\[
f_{N}:=S^{\left(\delta\right)}\left(\smash{\Indicator_{J_{N}^{\left(f\right)}}}\cdot\smash{c^{\left(f\right)}}\right).
\]
By definition of $S^{\left(\delta\right)}$, $f_{N}$ is then a linear
combination of $N$ elements of the set $\left\{ \gamma^{\left[\varrho\left(\ell\right)\right]}\with\ell=1,\dots,\pi\left(N\right)\right\} $,
as desired. It remains to verify the claimed approximation rate. Thus,
let $\sigma,\nu>0$ be arbitrary.

\medskip{}

We start with some preliminary considerations: In view of Remark \ref{rem:WeightedBFDConditionsForSeparable},
we see that there are symmetric, real-valued functions $\varphi_{0},\psi_{0}\in C_{c}\left(\R^{2}\right)$
which satisfy the assumptions of Theorem \ref{thm:WeightedAlphaShearletBanachFrameCondition}
for the choices of $p_{0},q_{0},\kappa_{0},s_{0},s_{1},\varepsilon$
from the current theorem. Hence, there is $\tau>0$ such that the
$\alpha$-shearlet system$\left(\theta^{\left[v,k\right]}\right)_{v\in V^{\left(\alpha\right)},k\in\Z^{2}}:={\rm SH}_{\alpha}\left(\varphi_{0},\psi_{0};\tau\right)$
forms a Banach frame for all $\alpha$-shearlet smoothness spaces
$\mathscr{S}_{\alpha,s,\kappa}^{p,q}\left(\R^{2}\right)$, for the
same range of parameters as above. Note that the distinction between
${\rm SH}_{\alpha}\left(\smash{\tilde{\varphi_{0}}},\smash{\tilde{\psi_{0}}};\tau\right)$
and ${\rm SH}_{\alpha}\left(\varphi_{0},\psi_{0};\tau\right)$ does
not matter by symmetry of $\varphi_{0},\psi_{0}$. As a consequence
of Lemma \ref{lem:SpecialConvolutionClarification} and of the symmetry
and real-valuedness of $\varphi_{0},\psi_{0}$, we then see that the
analysis operator $A^{\left(\delta\right)}$ from Theorem \ref{thm:WeightedAlphaShearletBanachFrameCondition}
satisfies $A^{\left(\delta\right)}f=\left(\left\langle f,\,\theta^{\left[v,k\right]}\right\rangle _{L^{2}}\right)_{v\in V^{\left(\alpha\right)},k\in\Z^{2}}$
for all $f\in L^{2}\left(\R^{2}\right)=\mathscr{S}_{\alpha,0,0}^{2,2}\left(\R^{2}\right)$
and thus in particular for $f\in\mathcal{E}^{\beta}\left(\R^{2}\right)$.

Now, for $v\in V^{\left(\alpha\right)}$ and $f\in\mathcal{E}^{\beta}\left(\R^{2};\nu\right)$,
we have $\left\Vert f\right\Vert _{L^{\infty}}\leq C_{1}=C_{1}\left(\nu\right)$
and thus
\[
\left|\left\langle f,\,\smash{\theta^{\left[v,k\right]}}\right\rangle _{L^{2}}\right|\leq\left\Vert f\right\Vert _{L^{\infty}}\cdot\left\Vert \smash{\theta^{\left[v,k\right]}}\right\Vert _{L^{1}}\leq C_{1}\cdot\left|\det\smash{B_{v}^{\left(\alpha\right)}}\right|^{-1/2}\cdot\max\left\{ \left\Vert \varphi_{0}\right\Vert _{L^{1}},\left\Vert \psi_{0}\right\Vert _{L^{1}}\right\} =C_{1}C_{2}\cdot u_{v}^{-\frac{1+\alpha}{2}},
\]
with $C_{2}:=\max\left\{ \left\Vert \varphi_{0}\right\Vert _{L^{1}},\left\Vert \psi_{0}\right\Vert _{L^{1}}\right\} $.
By the consistency statement of Theorem \ref{thm:WeightedAlphaShearletBanachFrameCondition}
(see Theorem \ref{thm:NicelySimplifiedAlphaShearletFrameConditions}),
this shows $f\in\mathscr{S}_{\alpha,0}^{\infty,\infty}\left(\R^{2}\right)$
with $\left\Vert f\right\Vert _{\mathscr{S}_{\alpha,0}^{\infty,\infty}}\leq C_{3}\cdot\left\Vert A^{\left(\delta\right)}f\right\Vert _{C_{u^{0}}^{\infty,\infty}}\leq C_{1}C_{2}C_{3}$
with $C_{3}:=\vertiii{R^{\left(\delta\right)}}_{C_{u^{0}}^{\infty,\infty}\to\mathscr{S}_{\alpha,0}^{\infty,\infty}}$,
with the reconstruction operator $R^{\left(\delta\right)}$ provided
by Theorem \ref{thm:WeightedAlphaShearletBanachFrameCondition} (for
$\varphi_{0},\psi_{0}$). Here, we used the easily verifiable identity
$C_{u^{0}}^{\infty,\infty}=\ell_{u^{\left(1+\alpha\right)/2}}^{\infty}\left(V^{\left(\alpha\right)}\times\Z^{2}\right)$,
where $u^{\left(1+\alpha\right)/2}=\left(\smash{u_{v}^{\left(1+\alpha\right)/2}}\right)_{v\in V^{\left(\alpha\right)}}$
is interpreted as a weight on $V^{\left(\alpha\right)}\times\Z^{2}$
in the obvious way.

Now, choose $A\geq1$ with $\supp\varphi_{0},\supp\psi_{0}\subset\left(-A,A\right)^{2}$.
Further, note that $R=R^{-1}=R^{T}=\left(\begin{smallmatrix}0 & 1\\
1 & 0
\end{smallmatrix}\right)$ preserves the $\ell^{\infty}$-norm, so that every $\left(j,m,\iota\right)\in V_{0}^{\left(\alpha\right)}$
satisfies 
\[
\left\Vert B_{j,m,\iota}^{-T}\right\Vert _{\ell^{\infty}\to\ell^{\infty}}=\left\Vert \left(\begin{smallmatrix}2^{-j} & 0\\
0 & 2^{-\alpha j}
\end{smallmatrix}\right)\left(\begin{smallmatrix}\vphantom{2^{-j}}1 & -m\\
0 & \vphantom{2^{-\alpha j}}1
\end{smallmatrix}\right)\right\Vert _{\ell^{\infty}\to\ell^{\infty}}=\left\Vert \left(\begin{smallmatrix}2^{-j} & -2^{-j}m\\
0 & 2^{-\alpha j}
\end{smallmatrix}\right)\right\Vert _{\ell^{\infty}\to\ell^{\infty}}\leq2^{-j}+2^{-\alpha j}+\left|-2^{-j}m\right|\leq3,
\]
since $\left|m\right|\leq\left\lceil 2^{\left(1-\alpha\right)j}\right\rceil \leq2^{j}$.
Further, clearly $\left\Vert B_{0}^{-T}\right\Vert _{\ell^{\infty}\to\ell^{\infty}}=\left\Vert \identity\right\Vert _{\ell^{\infty}\to\ell^{\infty}}=1\leq3$.
Now, since each $f\in\mathcal{E}^{\beta}\left(\R^{2}\right)$ satisfies
$\supp f\subset\left[-1,1\right]^{2}$, we see that $\left\langle f,\theta^{\left[v,k\right]}\right\rangle _{L^{2}}\neq0$
can only hold if
\begin{align*}
\emptyset & \subsetneq\left[-1,1\right]^{2}\cap\supp\theta^{\left[v,k\right]}\\
\left({\scriptstyle \text{with }\theta_{v}=\psi\text{ for }v\in V_{0}^{\left(\alpha\right)}\text{ and }\theta_{0}=\varphi}\right) & =\left[-1,1\right]^{2}\cap\supp L_{\tau\cdot B_{v}^{-T}k}\left[\theta_{v}\circ B_{v}^{T}\right]\\
 & \subset\left[-1,1\right]^{2}\cap\left[\tau\cdot B_{v}^{-T}k+B_{v}^{-T}\left(-A,A\right)^{2}\right],
\end{align*}
which implies $\tau\cdot B_{v}^{-T}k\in\left[-1,1\right]^{2}-B_{v}^{-T}\left(-A,A\right)^{2}\subset\left[-1,1\right]^{2}+3\left(-A,A\right)^{2}\subset\left[-4A,4A\right]^{2}$,
since $A\geq1$.

Hence, $\omega_{0}^{\kappa_{0}}\left(\tau\cdot B_{v}^{-T}k\right)=\left(1+\left|\tau\cdot B_{v}^{-T}k\right|\right)^{\varepsilon}\leq\left(1+8A\right)^{\varepsilon}\leq9A$
for all $\left(v,k\right)\in W$ with $\left\langle f,\,\theta^{\left[v,k\right]}\right\rangle _{L^{2}}\neq0$,
since $\varepsilon\leq1$. But Proposition \ref{prop:CartoonLikeFunctionsBoundedInAlphaShearletSmoothness}
shows because of $1\in\left(2/\left(1+\beta\right),2\right]$ that
$\mathcal{E}^{\beta}\left(\R^{2};\nu\right)\subset\mathscr{S}_{\alpha,\left(1+\alpha\right)\left(1-2^{-1}\right)}^{1,1}\left(\R^{2}\right)$
is bounded, i.e., $\left\Vert f\right\Vert _{\mathscr{S}_{\alpha,\left(1+\alpha\right)\left(1-2^{-1}\right)}^{1,1}\left(\R^{2}\right)}\leq C_{4}=C_{4}\left(\beta,\nu\right)$.
Since the associated coefficient space is $C_{u^{\left(1+\alpha\right)/2}}^{1,1}=\ell^{1}\left(W\right)$,
this implies $\left\Vert A^{\left(\delta\right)}f\right\Vert _{\ell^{1}}\leq C_{5}=C_{5}\left(\beta,\nu\right)$.
But since we just saw that $\omega_{0}^{\kappa_{0}}\left(\tau\cdot B_{v}^{-T}k\right)\leq9A$
for those $\left(v,k\right)\in W$ for which $\left(A^{\left(\delta\right)}f\right)_{v,k}\neq0$,
this implies $\left\Vert A^{\left(\delta\right)}f\right\Vert _{C_{u^{\left(1+\alpha\right)/2},\omega_{0}^{\kappa_{0}},\tau}^{1,1}}\leq9A\cdot C_{5}$
for all $f\in\mathcal{E}^{\beta}\left(\R^{2};\nu\right)$, as one
can see directly from Definition \ref{def:WeightedCoefficientSpace}.
By the consistency statement of Theorem \ref{thm:WeightedAlphaShearletBanachFrameCondition}
(see Theorem \ref{thm:NicelySimplifiedAlphaShearletFrameConditions}),
this shows as above that $f\in\mathscr{S}_{\alpha,\frac{1+\alpha}{2},\kappa_{0}}^{1,1}\left(\R^{2}\right)$
with $\left\Vert f\right\Vert _{\mathscr{S}_{\alpha,\frac{1+\alpha}{2},\kappa_{0}}^{1,1}}\leq C_{6}=C_{6}\left(\beta,\nu,\kappa_{0},\varphi_{0},\psi_{0},\tau\right)$,
for all $f\in\mathcal{E}^{\beta}\left(\R^{2};\nu\right)$. Here, we
used that $s_{0}=0\leq\frac{1+\alpha}{2}\leq\frac{1+\beta}{2}=s_{1}$,
since $\alpha\leq1\leq\beta$.

\medskip{}

Now, we continue with the proof of the approximation rate: Since we
have $p^{-1}-2^{-1}\to\beta/2$ as $p\downarrow\frac{2}{1+\beta}$
and $\frac{\beta}{2}-\sigma<\frac{\beta}{2}$, there is some $p=p\left(\beta,\sigma\right)\in\left(2/\left(1+\beta\right),\,2\right]$
with $p^{-1}-2^{-1}>\frac{\beta}{2}-\sigma$. By Proposition \ref{prop:CartoonLikeFunctionsBoundedInAlphaShearletSmoothness},
$\mathcal{E}^{\beta}\left(\R^{2};\nu\right)\subset\mathscr{S}_{\alpha,\left(1+\alpha\right)\left(p^{-1}-2^{-1}\right)}^{p,p}\left(\R^{2}\right)$
is bounded and the associated coefficient space to this $\alpha$-shearlet
smoothness space is $C_{u^{\left(1+\alpha\right)\left(p^{-1}-2^{-1}\right)}}^{p,p}=\ell^{p}\left(W\right)$,
so that we get $\left\Vert c^{\left(f\right)}\right\Vert _{\ell^{p}}=\left\Vert C^{\left(\delta\right)}f\right\Vert _{C_{u^{\left(1+\alpha\right)\left(p^{-1}-2^{-1}\right)}}^{p,p}}\leq C_{7}=C_{7}\left(\varphi,\psi,\beta,\delta,p,\nu\right)$.
Here, we used that $s_{0}=0\leq\left(1+\alpha\right)\left(p^{-1}-2^{-1}\right)\leq\left(1+\alpha\right)\left(\frac{1+\beta}{2}-\frac{1}{2}\right)=\frac{\beta+1}{2}=s_{1}$
and $p\geq p_{0}=\frac{2}{1+\beta}$, so that $\mathscr{S}_{\alpha,\left(1+\alpha\right)\left(p^{-1}-2^{-1}\right)}^{p,p}\left(\R^{2}\right)$
is in the ``allowed'' range.

Likewise, our considerations from above showed that $\mathcal{E}^{\beta}\left(\R^{2};\nu\right)$
is a bounded subset of $\mathscr{S}_{\alpha,0}^{\infty,\infty}\left(\R^{2}\right)$,
and of $\mathscr{S}_{\alpha,\frac{1+\alpha}{2},\kappa_{0}}^{1,1}\left(\R^{2}\right)$,
so that there are constants $C_{8},C_{9}$ (only dependent on $\varphi,\psi,\delta,\beta,\nu,\varepsilon$)
with $\left\Vert c^{\left(f\right)}\right\Vert _{C_{u^{\left(1+\alpha\right)/2},\omega_{0}^{\kappa_{0}},\delta}^{1,1}}\leq C_{8}$
and $\left\Vert c^{\left(f\right)}\right\Vert _{\ell_{u^{\left(1+\alpha\right)/2}}^{\infty}}\leq C_{9}$,
since $C_{u^{0}}^{\infty,\infty}=\ell_{u^{\left(1+\alpha\right)/2}}^{\infty}\left(V^{\left(\alpha\right)}\times\Z^{2}\right)$.
Finally, set $C_{10}:=\vertiii{S^{\left(\delta\right)}}_{\ell^{2}\to L^{2}}$.

Because of $S^{\left(\delta\right)}\circ C^{\left(\delta\right)}=\identity_{\mathscr{S}_{\alpha,0}^{2,2}}=\identity_{L^{2}}$
and since $c^{\left(f\right)}=C^{\left(\delta\right)}f$, we have
\begin{align}
\left\Vert f-f_{N}\right\Vert _{L^{2}} & =\left\Vert S^{\left(\delta\right)}\left[\smash{c^{\left(f\right)}}-\smash{\Indicator_{J_{N}^{\left(f\right)}}\cdot c^{\left(f\right)}}\right]\right\Vert _{L^{2}}\leq C_{10}\cdot\left\Vert \smash{c^{\left(f\right)}}-\smash{\Indicator_{J_{N}^{\left(f\right)}}\cdot c^{\left(f\right)}}\right\Vert _{\ell^{2}\left(W\right)}\nonumber \\
\left({\scriptstyle \text{since }J_{N}^{\left(f\right)}\subset Z_{N}}\right) & \leq C_{10}\cdot\left(\left\Vert c^{\left(f\right)}\right\Vert _{\ell^{2}\left(W\setminus Z_{N}\right)}+\left\Vert c^{\left(f\right)}-\Indicator_{J_{N}^{\left(f\right)}}\cdot c^{\left(f\right)}\right\Vert _{\ell^{2}\left(Z_{N}\right)}\right).\label{eq:PolynomialSearchDepthSubdivision}
\end{align}
Now, our choice of the set $J_{N}^{\left(f\right)}$, together with
Stechkin's estimate (see e.g.\@ \cite[Proposition 2.3]{RauhutCompressiveSensing}),
shows 
\[
\left\Vert c^{\left(f\right)}-\Indicator_{J_{N}^{\left(f\right)}}\cdot c^{\left(f\right)}\right\Vert _{\ell^{2}\left(Z_{N}\right)}\leq N^{-\left(\frac{1}{p}-\frac{1}{2}\right)}\cdot\left\Vert c^{\left(f\right)}\right\Vert _{\ell^{p}\left(Z_{N}\right)}\leq C_{7}\cdot N^{-\left(\frac{1}{p}-\frac{1}{2}\right)}\leq C_{7}\cdot N^{-\left(\frac{\beta}{2}-\sigma\right)},
\]
since $p^{-1}-2^{-1}\geq\frac{\beta}{2}-\sigma$, so that it suffices
to further estimate the first term in equation \eqref{eq:PolynomialSearchDepthSubdivision}.

But for $\left(v,k\right)\in W\setminus Z_{N}\subset W\setminus W_{N}$,
we have $s\left(v\right)\geq4n$ (and thus in particular $v\in V_{0}^{\left(\alpha\right)}$),
or $\left|B_{v}^{-T}k\right|>2^{2\left\lceil n/\varepsilon\right\rceil }$,
where we recall that $2^{n}\leq N<2^{n+1}$. In the first case, we
have $\left|c_{v,k}^{\left(f\right)}\right|\leq C_{9}\cdot u_{v}^{-\left(1+\alpha\right)/2}\leq C_{9}\cdot u_{v}^{-1/2}\leq C_{9}\cdot2^{-2n}$
and in the second case, we get $\omega_{0}^{\kappa_{0}}\left(\delta\cdot B_{v}^{-T}k\right)=\left(1+\left|\delta\cdot B_{v}^{-T}k\right|\right)^{\varepsilon}\geq\delta^{\varepsilon}\cdot2^{2n}\geq\delta\cdot2^{2n}$
and thus 
\[
\left|c_{v,k}^{\left(f\right)}\right|^{2}\leq C_{9}\cdot\left|c_{v,k}^{\left(f\right)}\right|\leq C_{9}\cdot\frac{2^{-2n}}{\delta}\cdot\omega_{0}^{\kappa_{0}}\left(\delta\cdot B_{v}^{-T}k\right)\cdot\left|c_{v,k}^{\left(f\right)}\right|.
\]
Therefore,
\begin{align*}
\left\Vert c^{\left(f\right)}\right\Vert _{\ell^{2}\left(W\setminus Z_{N}\right)}^{2} & \leq\sum_{\substack{v\in V_{0}^{\left(\alpha\right)}\\
\text{ with }s\left(v\right)\geq4n
}
}\;\sum_{k\in\Z^{2}}\left|c_{v,k}^{\left(f\right)}\right|^{2}+\sum_{v\in V_{0}^{\left(\alpha\right)}}\:\sum_{\substack{k\in\Z^{2}\\
\text{with }\left|B_{v}^{-T}k\right|>2^{2\left\lceil n/\varepsilon\right\rceil }
}
}\left|c_{v,k}^{\left(f\right)}\right|^{2}\\
 & \leq C_{9}\cdot2^{-2n}\sum_{v\in V^{\left(\alpha\right)}}\;\sum_{k\in\Z^{2}}\left|c_{v,k}^{\left(f\right)}\right|+\frac{C_{9}}{\delta}\cdot2^{-2n}\cdot\sum_{v\in V^{\left(\alpha\right)}}\:\sum_{k\in\Z^{2}}\omega_{0}^{\kappa_{0}}\left(\delta\cdot B_{v}^{-T}k\right)\left|c_{v,k}^{\left(f\right)}\right|\\
\left({\scriptstyle \text{since }\omega_{0}^{\kappa_{0}}\geq1}\right) & \leq C_{9}\cdot\left(1\!+\!\delta^{-1}\right)\cdot2^{-2n}\cdot\left\Vert \left(\!\left|\det B_{v}\right|^{\frac{1}{2}-\frac{1}{1}}\cdot u_{v}^{\frac{1+\alpha}{2}}\cdot\left\Vert \left(\omega_{0}^{\kappa_{0}}\left(\delta\cdot B_{v}^{-T}k\right)\cdot c_{v,k}^{\left(f\right)}\right)_{\!k\in\Z^{2}}\right\Vert _{\ell^{1}}\right)_{\!\!v\in V^{\left(\alpha\right)}}\right\Vert _{\ell^{1}}\\
 & =C_{9}\cdot\left(1+\delta^{-1}\right)\cdot2^{-2n}\cdot\left\Vert c^{\left(f\right)}\right\Vert _{C_{u^{\left(1+\alpha\right)/2},\omega_{0}^{\kappa_{0}},\delta}^{1,1}}\\
 & \leq C_{8}C_{9}\cdot\left(1+\delta^{-1}\right)\cdot2^{-2n}\\
\left({\scriptstyle \text{since }N\leq2^{n+1}\text{ and }\frac{\beta}{2}-\sigma\leq\frac{\beta}{2}\leq1}\right) & \leq4C_{8}C_{9}\cdot\left(1+\delta^{-1}\right)\cdot N^{-2}\leq4C_{8}C_{9}\cdot\left(1+\delta^{-1}\right)\cdot N^{-2\left(\frac{\beta}{2}-\sigma\right)}.
\end{align*}
Taking the square root and recalling equation \eqref{eq:PolynomialSearchDepthSubdivision}
finishes the proof.
\end{proof}
\bibliographystyle{plain}
\bibliography{felixbib}

\end{document}